\documentclass[12pt]{article}
\usepackage{amssymb,amsmath,amsthm,latexsym}
\usepackage[cp1251]{inputenc}
\usepackage{dsfont}
\usepackage{amssymb}
\usepackage{indentfirst}
\usepackage{graphicx}
\usepackage{caption}
\usepackage{multicol}
\usepackage{enumitem}
\usepackage{hyperref}
\usepackage{tikz-cd}
\hypersetup{citecolor=red,}   
\usepackage{amsfonts, array, hhline} 
\usepackage{color}
\usepackage{lmodern}
\usepackage[left=1in,right=1in,top=1in,bottom=1.5in]{geometry}

\makeatletter
\theoremstyle{definition}
\newtheorem*{rep@theorem}{\rep@title}
\newcommand{\newreptheorem}[2]{%
\newenvironment{rep#1}[1]{%
 \def\rep@title{#2 \ref{##1}}%
 \begin{rep@theorem}}%
 {\end{rep@theorem}}}
\makeatother

\numberwithin{equation}{section}

\theoremstyle{definition}
\newtheorem{dfn}{Definition}[section]
\newtheorem{exmp}[dfn]{Example}
\newtheorem{thm}[dfn]{Theorem}
\newtheorem{lm}[dfn]{Lemma}
\newtheorem{prop}[dfn]{Proposition}
\newtheorem{crl}[dfn]{Corollary}

\newtheorem{thml}{Theorem}
\newreptheorem{lm}{Lemma}

\newtheorem{crll}{Corollary}

\theoremstyle{remark}
\newtheorem{rmk}[dfn]{Remark}
\newtheorem{cla}{Claim}[dfn]

\newcommand{\e}{\varepsilon}
\newcommand{\pt}{\partial}
\newcommand{\mc}[1]{\mathcal{#1}}
\newcommand{\mf}[1]{\mathfrak{#1}}
\newcommand{\ol}[1]{\overline{#1}}
\newcommand{\ra}{\rightarrow}
\newcommand{\R}{\mathbb{R}}
\newcommand{\Z}{\mathbb{Z}}
\newcommand{\C}{\mathbb{C}}
\renewcommand{\H}{\mathbb{H}}
\newcommand{\E}{\mathbb{E}}
\renewcommand{\S}{\mathbb{S}}
\newcommand{\len}{{\rm len}}

\newcommand{\area}{{\rm area}}
\newcommand{\conv}{{\rm conv}}
\newcommand{\clconv}{\overline{\rm conv}}
\newcommand{\ang}{{\angle}}
\newcommand{\dev}{{\rm dev}}
\renewcommand{\area}{{\rm area}}
\newcommand{\curv}{{\rm curv}}
\newcommand{\inter}{{\rm int}}

\newcommand{\dS}{d\mathbb{S}}

\renewcommand{\hat}{\widehat}
\renewcommand{\tilde}{\widetilde}

\DeclareMathOperator{\sgn}{sgn}
\let\sp\relax
\DeclareMathOperator{\sp}{sp}
\DeclareMathOperator{\cosp}{cosp}
\DeclareMathOperator{\sys}{sys}
\DeclareMathOperator{\tr}{tr}
\DeclareMathOperator{\arcsinh}{arcsinh}
\DeclareMathOperator{\curl}{curl}

\title{Hyperbolic 3-manifolds with boundary of polyhedral type}
\author{Roman Prosanov \thanks{This research was funded in whole by the Austrian Science Fund (FWF) \url{https://doi.org/10.55776/ESP12}. For open access purposes, the author has applied a CC BY public copyright license to any author-accepted manuscript version arising from this submission.}}
\date{}

\begin{document}
\maketitle
\abstract{Let $M$ be a compact orientable 3-manifold with hyperbolizable interior and non-empty boundary such that all boundary components have genii at least 2. We study an Alexandrov-Weyl-type problem for convex hyperbolic cone-metrics on $\partial M$. We consider a class of hyperbolic metrics on $M$ with convex boundary, which we call \emph{bent} metrics, and which naturally generalize hyperbolic metrics on $M$ with convex polyhedral boundary. We show that for each convex hyperbolic cone-metric $d$ on $\partial M$, with few simple exceptions, there exists a bent metric on $M$ such that the induced intrinsic metric on $\partial M$ is $d$. Next, we prove that if a bent realization is what we call controllably polyhedral, then it is unique up to isotopy. We exhibit a large subclass of hyperbolic cone-metrics on $\partial M,$ called \emph{balanced}, which is open and dense among all convex hyperbolic cone-metrics in the sense of Lipschitz topology, and for which we show that their bent realizations are controllably polyhedral. We additionally prove that any convex realization of a convex hyperbolic cone-metric on $\partial M$ is bent. 

Finally, we deduce that there exists an open subset of the space of convex cocompact metrics on the interior of $M$, including all metrics with polyhedral convex cores, such that the metrics in this subset are (1) globally rigid with respect to the induced intrinsic metrics on the boundaries of their convex cores; (2) infinitesimally rigid with respect to their bending laminations. This gives partial progress towards conjectures of W.~Thurston.}

\section{Introduction}

\subsection{Motivation}

Two remarkable related results of convex geometry state that (1) every non-negatively curved metric on the 2-sphere can be realized as the induced intrinsic metric on the boundary of a convex body in the Euclidean 3-space $\E^3$, and (2) convex bodies in $\E^3$ are uniquely determined by the intrinsic geometry of the boundary. This story has two classical chapters: polyhedral and smooth, though there is a common generalization obtained in the works of Alexandrov and Pogorelov. It was further extended to convex bodies in the other two model constant-curvature spaces, $\S^3$ and $\H^3$. (We will say \emph{model spaces} for short for all three.) In this paper we will be mostly interested in the polyhedral side of the story, and will generalize these results to the case of non-trivial topology, for which purpose we need to restrict ourselves to the realm of hyperbolic geometry.

Let $P$ be a compact convex polyhedron in one of the model spaces. The boundary of $P$ is the topological 2-sphere, and it is not hard to see that the induced intrinsic metric on the boundary is what is called a convex cone-metric.

\begin{dfn}
Let $S$ be a surface. We say that $d$ is a \emph{Euclidean cone-metric} (resp. \emph{spherical} or \emph{hyperbolic}) on $S$ if it is locally isometric to the Euclidean plane (resp. to the standard sphere or the hyperbolic plane) except a discrete set of points where it is locally isometric to a Euclidean cone (resp. spherical or hyperbolic) of total angle $\neq 2\pi$. The exceptional points are called \emph{vertices} of $d$. We say that $d$ is \emph{convex} if all cone angles are $<2\pi$. 
\end{dfn}

We note that we allow the set of vertices to be empty, in which case $d$ is a smooth metric on $S$ of constant curvature $0$ or $\pm 1$.

In~\cite{Ale} Alexandrov proved 

\begin{thm}
\label{alex}
Every Euclidean convex cone-metric on the 2-sphere can be realized on the boundary of a convex polyhedron $P \subset \E^3$, which is unique up to isometry.
\end{thm}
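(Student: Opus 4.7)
The plan is to follow Alexandrov's classical continuity (deformation) method. Fix $n \geq 3$ and let $\mc{M}_n$ denote the space of isometry classes of convex Euclidean cone-metrics on $S^2$ with at most $n$ marked distinguished points (allowing total angle $2\pi$ at some, so that the true cone set has at most $n$ elements). Let $\mc{P}_n$ denote the space of convex polyhedra in $\E^3$ with $n$ marked boundary points, including as degenerate limits the doubly-covered convex polygons, taken modulo orientation-preserving isometry. Both $\mc{M}_n$ and $\mc{P}_n$ are real-analytic manifolds of dimension $3n-6$: a triangulation having the marked points as vertices has $3n-6$ edges whose lengths locally parametrize $\mc{M}_n$ (subject to triangle and cone-angle inequalities), while $3n$ vertex coordinates modulo the $6$-dimensional group of direct isometries parametrize $\mc{P}_n$. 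The theorem is then equivalent to the statement that the natural map $\Phi \colon \mc{P}_n \to \mc{M}_n$ sending a polyhedron to the induced intrinsic metric on its boundary is a homeomorphism.

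I would verify three properties of $\Phi$: (i) continuity, which follows directly from the definitions; (ii) properness, obtained by bounding the extrinsic diameter of a polyhedron by the intrinsic one, so that vertices stay in a bounded region of $\E^3$, and by arguing that two marked points cannot collide in $\E^3$ because their intrinsic distance stays bounded below on compact subsets of $\mc{M}_n$; and (iii) local injectivity, which is equivalent to infinitesimal rigidity: any first-order deformation of a convex polyhedron that preserves the intrinsic metric of its boundary must be an infinitesimal ambient isometry. Connectedness of $\mc{M}_n$ follows by linearly interpolating edge lengths in a common triangulation toward a reference metric such as the boundary of a regular tetrahedron with $n-4$ additional smooth marked points. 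Together with invariance of domain these facts force $\Phi$ to be a homeomorphism onto a clopen subset of the connected manifold $\mc{M}_n$, hence to be surjective. Global injectivity (uniqueness up to isometry) then follows either from the local homeomorphism plus monodromy, or more classically from a direct Cauchy-style argument: given two realizations $P_1,P_2$ with isometric boundaries, one overlays the two induced triangulations, signs each edge of the overlay by comparing its dihedral angles in $P_1$ and $P_2$, and applies Cauchy's combinatorial lemma on sign changes around every vertex link to force all signs to vanish, so $P_1 \cong P_2$.

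The main obstacle is the infinitesimal rigidity step (iii). The linearization of $\Phi$ sends a vertex displacement $\dot v$ to the collection of orthogonal projections of $\dot v$ onto the adjacent edge directions, and ruling out a nontrivial kernel requires a careful Cauchy-style sign-count on the edge graph, together with the use of convexity of each vertex star to bound the number of sign changes in the link of a vertex. This is the Dehn--Weyl--Alexandrov infinitesimal rigidity theorem, and adapting its proof to cover all degenerate strata (in particular the doubly-covered polygons, where the ambient rigidity claim must be interpreted carefully) and to respect the cone-angle inequality constraints at marked points is the most delicate part of the argument.
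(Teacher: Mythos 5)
The paper does not prove this theorem; it is classical and is cited to Alexandrov. Your proposal reproduces the classical continuity-method strategy (which is indeed the right one, and which the paper explicitly says it generalizes). However, two steps as written contain genuine gaps.

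First, the claim in (iii) that local injectivity of $\Phi$ is \emph{equivalent} to infinitesimal rigidity is false precisely on the degenerate stratum you insist on including. A doubly-covered convex polygon with $n\geq 4$ vertices is infinitesimally flexible for the map $\Phi$: push one vertex $A$ off the plane to $A(t)=A+t\nu$ ($\nu$ the normal) and compute --- every edge length $|A(t)-v|$ has derivative $\langle \nu, A-v\rangle/|A-v|=0$ at $t=0$, and every face angle at $A(t)$ likewise has zero $t$-derivative. So this vertex displacement is a nonzero element of $\ker d\Phi$ that is not a Killing field, and $\Phi$ fails to be an immersion there. (This does not happen for non-degenerate polyhedra with a non-triangular face, since then there are also adjacent faces not parallel to it, which do feel the perturbation to first order.) Infinitesimal rigidity thus gives you local injectivity only on the non-degenerate stratum, and you have no argument at all for local injectivity on the degenerate one --- which is codimension 1 when $n=4$, so you cannot simply discard it. The clean way out, which you mention but only as an alternative for \emph{uniqueness}, is to first establish \emph{global} injectivity by the Cauchy overlay argument (which does extend to the degenerate case with care), and then run invariance of domain using global injectivity + properness + connectedness. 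This dispenses with infinitesimal rigidity entirely, and is in fact closer to what Alexandrov actually does.

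Second, the connectedness argument is too naive. Two metrics in $\mc{M}_n$ need not share a geodesic triangulation, and even if they do, the convexity constraint $\kappa_v\leq 2\pi$ is not a convex condition on the vector of edge lengths (the angle of a Euclidean triangle at a vertex is neither convex nor concave in the side lengths), so linear interpolation of edge lengths can leave the admissible region. One needs a nonlinear interpolation along which cone angles demonstrably do not increase --- for instance the Ptolemy-type scaling $\sinh(\ell_e/2)\mapsto e^{t}\sinh(\ell_e/2)$ used in the paper's Lemma~\ref{increase} (in the hyperbolic setting; a Euclidean analogue is $\ell_e\mapsto e^t\ell_e$, for which the angle sum is scale-invariant, together with a separate argument for raising angles toward $2\pi$), combined with a flipping or Delaunay argument to pass between triangulations. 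Without such a device the path you describe may step outside $\mc{M}_n$.

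The remaining items --- continuity, properness via the Busemann--Feller bound on extrinsic diameter, and the global Cauchy rigidity --- are fine in outline and are what the classical sources do.
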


Later Alexandrov extended Theorem~\ref{alex} to the other model spaces, see~\cite{Ale2} and Alexandrov's books~\cite{Ale4, Ale3}. The existence part of Theorem~\ref{alex} is a polyhedral counterpart to the renowned \emph{Weyl problem}, which asks if a smooth metric of positive curvature on the 2-sphere can always be realized as the induced intrinsic metric on the boundary of a convex body in $\E^3$. It was resolved positively by Nirenberg~\cite{Nir} employing the methods of partial differential equations (preceded by a proof in the analytic class due to Lewy~\cite{Lew}), and independently by Alexandrov~\cite{Ale4} and Pogorelov~\cite{Pog}. The uniqueness part of Theorem~\ref{alex} extends the classical Legendre--Cauchy theorem on the rigidity of polyhedra with the same combinatorics, and is parallel to the rigidity of smooth convex bodies in $\E^3$ established by Herglotz~\cite{Her} and preceded by a proof in the analytic class due to Cohn-Vossen~\cite{CoV}. 

In 70s Thurston rocked the field of low-dimensional topology by formulating his geometrization conjecture (whose proof was finalized by Perelman), showing many its spectacular corollaries and providing non-trivial evidences in its favor. One of the main parts of the conjecture highlighted the unexpected ubiquity and diversity of hyperbolic manifolds in dimension 3. This attracted many geometers to a deeper study of hyperbolic 3-manifolds and their properties. Some attention was paid to hyperbolic 3-manifolds with convex boundary. In particular, Labourie proved in~\cite{Lab} an analogue of the Weyl problem for compact hyperbolic 3-manifolds with boundary. For the rest of the paper, let $M$ be an oriented smooth compact 3-manifold with non-empty boundary, distinct from the solid torus and from the 3-ball, admitting a hyperbolic metric with convex boundary, and let $N$ be the interior of $M$. The result of Labourie is

\begin{thm}
\label{lab}
For every smooth Riemannian metric $d$ on $\pt M$ of Gaussian curvature $>-1$ there exists a hyperbolic metric $g$ on $M$ such that the boundary is convex and the induced intrinsic metric is isometric to $d$.
\end{thm}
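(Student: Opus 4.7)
My plan is to prove Theorem~\ref{lab} by the continuity method. Let $\mathcal{H}$ denote the space of hyperbolic metrics on $M$ with smooth strictly convex boundary (all principal curvatures of $\partial M$ positive), modulo diffeomorphisms isotopic to the identity, and let $\mathcal{R}$ denote the space of smooth Riemannian metrics on $\partial M$ of Gaussian curvature $>-1$, modulo $\mathrm{Diff}_0(\partial M)$. Both carry natural infinite-dimensional manifold structures, and the induced-metric map $\Phi\colon \mathcal{H}\to\mathcal{R}$ is smooth. I would prove $\Phi$ is surjective by showing it is a local homeomorphism, it is proper, $\mathcal{H}$ is non-empty, and $\mathcal{R}$ is path-connected.

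The local step rests on \emph{infinitesimal rigidity}. Given $g\in\mathcal{H}$ and a first-order deformation $\dot g$, I would analyze the induced variations of the first and second fundamental forms on $\partial M$. Using the Codazzi equation together with a Minkowski/Herglotz-type integration by parts (testing against a suitable extrinsic position vector field along the boundary), one shows that if the induced boundary metric is preserved to first order then the variation of the second fundamental form also vanishes; combined with unique continuation for the linearized Einstein equations in the interior, this forces $\dot g$ to be gauge-trivial. A Fredholm-index computation for the linearization of $\Phi$ then promotes injectivity of $d\Phi$ to local invertibility, and the inverse function theorem yields openness of the image.

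For properness, consider a sequence $g_n\in\mathcal{H}$ with $\Phi(g_n)\to d_\infty$ in $\mathcal{R}$. The bound on Gaussian curvature $>-1$ gives a uniform upper bound on the shape operator of $\partial M$ via the Gauss equation; combined with uniform diameter estimates coming from convexity and bounded boundary geometry, together with a positive lower bound on injectivity radius (obtained by a Margulis-lemma argument exploiting the absence of cusps in the compact setting), this permits extraction of a Cheeger--Gromov subsequential limit $g_\infty\in\mathcal{H}$ realizing $d_\infty$. For non-emptiness of $\mathcal{H}$, take any convex cocompact hyperbolic structure on $N=\mathrm{int}(M)$ and smooth an outward equidistant push-off of its convex core to obtain a strictly convex boundary. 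Path-connectedness of $\mathcal{R}$ follows from a straight-line interpolation in the open cone of smooth metrics with $K>-1$, after a small $C^2$-perturbation to keep the curvature strictly above $-1$ along the path.

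The main obstacle is the properness step: controlling the ambient geometry of $(M,g_n)$ from bounded boundary data alone is delicate, since one must rule out both collapsing thin parts in the interior and degenerations in which the hyperbolic structure on the interior approaches a boundary of its deformation space. Serious convex-geometric input is needed, typically in the form of uniform estimates for the nearest-point projection onto convex subsets of $\mathbb{H}^3$ and a compactness theorem for equivariant convex immersions of $\tilde M$, both of which were central to Labourie's original argument.
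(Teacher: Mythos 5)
The paper does not prove Theorem~\ref{lab}: it is quoted as a background result due to Labourie~\cite{Lab}, with an alternative proof given by Schlenker~\cite{Sch}, and the present paper's own contribution concerns the polyhedral/bent analogues. There is thus no ``paper proof'' to compare against, so I will assess your argument on its own terms. Your continuity-method scaffold is indeed the right general framework (it is essentially what both Labourie and Schlenker do), but there is a genuine gap in the infinitesimal rigidity step. You propose to run a Minkowski/Herglotz-type integration by parts ``against a suitable extrinsic position vector field along the boundary.'' That is the classical argument for closed convex surfaces in $\E^3$ (and, via the hyperboloid model, in $\H^3$), but it does \emph{not} transfer to a compact hyperbolic $3$-manifold with non-trivial fundamental group: the position vector field one needs lives on the universal cover $\tilde M\subset\H^3$ and is \emph{not} equivariant under the holonomy representation, so it does not descend to $M$. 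This is exactly why convex-boundary rigidity for non-trivial hyperbolic $3$-manifolds resisted the classical arguments and required genuinely new methods (pseudo-holomorphic curve techniques for Labourie, a different analytic argument for Schlenker). The introduction of the paper under review makes precisely this point: the core proof-ideas here are specific to the setting, and the standard model-space rigidity arguments do not generalize to non-trivial $3$-manifolds.

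There is a secondary problem in the properness step. You claim that $K>-1$ gives a uniform upper bound on the shape operator ``via the Gauss equation.'' In a hyperbolic ambient the Gauss equation reads $K=-1+\det B$, which controls $\det B$ but not $\|B\|$: one principal curvature can shrink to $0$ while the other blows up, keeping the product bounded. Obtaining an a~priori bound on $\|B\|$ along a sequence with converging induced metrics is in fact one of the central technical estimates in Labourie's proof and does not come for free from the Gauss equation. (Your path-connectedness argument by straight-line interpolation of metrics with $K>-1$ is also not justified, since the curvature is not affine in the metric; a more careful deformation, e.g.\ within conformal classes, is needed, though this is easier to repair than the two issues above.)
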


Thanks to the foundational work of Mostow~\cite{Mos} it is known that for any two hyperbolic metrics on a closed manifold of dimension at least 3 there exists an isometry between them homotopic to identity. In dimension 3 this was strengthened by Gabai--Meyerhoff--Thurston~\cite{GMT} to isotopy.

\begin{thm}
\label{GBM}
Every two hyperbolic metrics on a closed 3-manifold are isotopic.
\end{thm}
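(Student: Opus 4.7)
The plan is to reduce the global isotopy statement to a local question about diffeomorphisms and then appeal to the Smale conjecture for hyperbolic 3-manifolds.

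First, I would apply Mostow rigidity. Let $g_1, g_2$ be hyperbolic metrics on the closed orientable 3-manifold $M$. Mostow's theorem produces an isometry $f \colon (M, g_1) \to (M, g_2)$. After composing $f$ with a suitable self-isometry of $(M, g_2)$, we may assume that the induced map on $\pi_1(M)$ is the identity (with respect to a fixed basepoint, up to conjugation). Because a closed hyperbolic 3-manifold is aspherical, such an $f$ is homotopic to the identity. Since $f$ is an isometry, $f^*g_2 = g_1$, and any isotopy $\phi_t$ from $\mathrm{id}$ to $f$ would yield a smooth family $\phi_t^*g_2$ of hyperbolic metrics connecting $g_2$ to $g_1$. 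Thus the problem reduces to showing that every self-diffeomorphism of $M$ homotopic to the identity is isotopic to the identity.

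Next, I would invoke the Smale conjecture for hyperbolic 3-manifolds, which asserts that the inclusion $\mathrm{Isom}(M) \hookrightarrow \mathrm{Diff}(M)$ is a homotopy equivalence. Since the isometry group of a closed hyperbolic 3-manifold is finite, this forces the identity component $\mathrm{Diff}_0(M)$ to be weakly contractible. In particular, any diffeomorphism homotopic to the identity lies in $\mathrm{Diff}_0(M)$ and is therefore isotopic to the identity, which closes the argument.

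The main obstacle is, of course, the Smale conjecture itself. The natural approach is to work with the thick-thin decomposition: on the thick part, where the injectivity radius is bounded below, compactness and standard differential-topological techniques let one straighten parametrized families of diffeomorphisms into isometries. The delicate regime is the thin part, consisting of Margulis tubes around arbitrarily short geodesics; uniformly controlling the rigidity of the tube geometry as the core geodesics shrink is genuinely subtle, since the tubes become long and thin and their isometry groups interact nontrivially with the ambient topology. This is precisely the step that Gabai--Meyerhoff--Thurston treat with rigorous computer-assisted estimates on the parameter space of Margulis tube geometries, and it is the technical heart of the proof. Without such estimates one would need a substantially new geometric insight into the rigidity of very thin Margulis tubes.
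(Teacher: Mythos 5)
The paper does not prove Theorem~\ref{GBM} at all: it is quoted directly from Gabai--Meyerhoff--Thurston \cite{GMT}, and your argument ultimately rests on the same body of work, since after the (correct) reduction via Mostow rigidity and asphericity what remains is exactly the homotopy-implies-isotopy statement --- equivalently the $\pi_0$ part of Gabai's Smale conjecture, whose proof is the insulator-family and computer-assisted thin-tube analysis you describe but do not reproduce. So your proposal is correct as a reduction and matches the paper's treatment (a citation of this deep external theorem); the only glosses are that the full Smale conjecture is stronger than needed, and that the step ``homotopic to the identity implies lying in $\mathrm{Diff}_0(M)$'' tacitly uses the standard injectivity of $\mathrm{Isom}(M)\to\mathrm{Out}(\pi_1(M))$ for closed hyperbolic $3$-manifolds.
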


Theorem~\ref{GBM} together with the rigidity of smooth convex bodies in $\H^3$ make it reasonable to conjecture that the geometry of compact hyperbolic 3-manifolds with non-empty smooth convex boundary should be dictated by their topology and by the intrinsic geometry of the boundary. This was confirmed by Schlenker in~\cite{Sch} provided that the boundary is strictly convex, which means that the shape operator of the boundary is everywhere positive-definite.

\begin{thm}
\label{sch}
Let $g_1$, $g_2$ be two hyperbolic metrics on $M$ such that the boundary is smooth, strictly convex and the induced intrinsic boundary metrics are isotopic. Then $g_1$ and $g_2$ are isotopic.
\end{thm}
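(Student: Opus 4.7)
The strategy is to show that the induced-boundary-metric map
\[ I \colon \{ \text{hyperbolic metrics on $M$ with smooth strictly convex boundary} \}/\text{isotopy} \longrightarrow \{ \text{smooth metrics on $\pt M$ with Gaussian curvature $>-1$} \}/\text{isotopy} \]
is injective. Surjectivity is already provided by Theorem~\ref{lab}, so the content of the theorem is that $I$ is a bijection. The approach splits into an infinitesimal (local) rigidity argument and a global covering-space argument.

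\textbf{Step 1: Infinitesimal rigidity.} Given a hyperbolic metric $g$ on $M$ with smooth strictly convex boundary, I consider a first-order deformation $h=\dot g$ that preserves the induced boundary metric to first order and satisfies the linearized Einstein equation in the bulk. After subtracting the Lie derivative of a suitable vector field (a trivial deformation obtained from an isotopy) I put $h$ into a canonical gauge -- divergence-free and trace-free -- on $(M,g)$. The heart of the proof is a Bochner/Weitzenb\"ock-type identity applied to an appropriate quadratic expression in $h$: the bulk integral becomes manifestly non-negative by the specific curvature $-1$ of $(M,g)$, while the boundary integral, produced by integration by parts, is expressible as a pairing of the restriction $h|_{\pt M}$ against the shape operator of $\pt M$. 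Strict convexity -- positive-definiteness of the shape operator at every point -- is exactly what makes this boundary integral sign-definite when combined with the hypothesis that $h|_{\pt M}$ is trivial to first order, forcing $h\equiv 0$ modulo trivial deformations.

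\textbf{Step 2: From infinitesimal to global rigidity.} Infinitesimal rigidity combined with an implicit function theorem (in the appropriate Banach or Fr\'echet category of smooth tensors, modulo isotopy) promotes $I$ to a local homeomorphism. Next I would prove that $I$ is proper: if $I(g_n)$ converges in $C^\infty$ to a smooth metric $d$ with curvature $>-1$, then $g_n$ is controlled in a collar of $\pt M$, and interior compactness follows from standard convergence results for hyperbolic metrics together with the assumption that $\pt M$ has no torus components and that $M$ is neither a solid torus nor a $3$-ball (so no geometric degenerations into cusps or fillings are possible). A proper local homeomorphism is a covering map. The target is path-connected (any two admissible boundary metrics can be joined by a path of such metrics, e.g.\ by conformal interpolation staying inside curvature $>-1$), so it suffices to exhibit a single boundary metric whose fiber contains one point. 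This can be done by specializing to a metric obtained by slightly perturbing the induced metric on the boundary of the convex core of a fixed convex cocompact hyperbolic structure on $N$, where uniqueness of the filling is forced by Mostow rigidity applied after an appropriate doubling construction.

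\textbf{Main obstacle.} The crux is the infinitesimal rigidity of Step~1: finding the right gauge for $h$ and the correct Bochner-type quadratic form so that the resulting boundary term pairs $h|_{\pt M}$ against a positive operator. Whether one formulates it as a statement about first-order isometric deformations of a strictly convex surface in $\H^3$ (in the spirit of Pogorelov) or directly as a variational identity on $(M,g)$, the essential input is that the shape operator of $\pt M$ is positive-definite; without strict convexity the boundary integrand loses sign-definiteness, and indeed the analogous statement for merely convex (e.g.\ polyhedral) boundary is subtle and is the very problem taken up in the main body of this paper.
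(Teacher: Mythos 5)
Theorem~\ref{sch} is not proved in this paper; it is quoted from Schlenker's work~\cite{Sch}, and the paper only remarks that it shares with Schlenker the broad \emph{continuity method} framework while diverging in the technical implementation. So there is no internal proof to compare against, and I will instead assess your proposal on its own terms.

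Your overall scaffolding --- infinitesimal rigidity, local homeomorphism via an inverse/implicit function theorem, properness, then a covering-space degree argument --- is exactly the standard continuity-method skeleton, and it is the same skeleton the paper uses for the polyhedral analogue (Lemma~\ref{infrig}, Corollary~\ref{locrig}, Lemma~\ref{proper}, Proposition~\ref{prop1}). So the shape of the argument is sound. Two concrete points, however, keep this from being a proof.

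First, Step~1 does not actually exhibit the Bochner/Weitzenb\"ock quadratic form whose boundary term is sign-definite under strict convexity; you flag this yourself as ``the main obstacle,'' and that is precisely where all the content lies. Without an explicit identity (or an explicit reduction, e.g.\ via an infinitesimal Pogorelov map, to a Euclidean rigidity statement) the infinitesimal rigidity claim is an ansatz, not an argument. It is also worth noting that your step ``after subtracting the Lie derivative of a suitable vector field ... divergence-free and trace-free'' is itself a nontrivial elliptic gauge-fixing claim on a manifold with boundary, where one must choose boundary conditions compatible with the rest of the computation; that too needs to be spelled out.

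Second, the closing move of Step~2 is actually wrong as stated. You propose to find a one-point fiber by ``specializing to a metric obtained by slightly perturbing the induced metric on the boundary of the convex core ... where uniqueness of the filling is forced by Mostow rigidity applied after an appropriate doubling construction.'' The double of $(M,g)$ across a \emph{strictly convex} boundary is not a smooth hyperbolic manifold: the two copies meet along a surface with positive-definite shape operator, so the doubled metric has a codimension-one singularity of ``concave ridge'' type rather than being hyperbolic. Mostow rigidity therefore does not apply. The convex core itself has merely bent (not smooth, not strictly convex) boundary and is excluded from your domain, and doubling it is no better. Establishing that the covering has degree one genuinely requires a different mechanism. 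In the paper's polyhedral version this is handled by the lifting-and-monodromy argument in Proposition~\ref{prop1}, using the contractibility results Lemma~\ref{connect1} and Lemma~\ref{connect2}, rather than by producing a one-point fiber; some analogue of that (or of Schlenker's original global argument) is needed here.
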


We note that in~\cite{Sch} Schlenker also gave another prove of Theorem~\ref{lab} and considered the toy case of the solid torus. 

Curiously enough, there were no polyhedral counterparts of Theorems~\ref{lab} and~\ref{sch}, and the main purpose of the present paper is to fill this gap. One of the main difficulties is that convex hyperbolic cone-metrics on $\pt M$ admit convex realizations that are not really polyhedral and that are somewhat more difficult to handle. Another difficulty is that the core proof-ideas of Theorems~\ref{lab} and~\ref{sch} in~\cite{Lab} and~\cite{Sch} are quite specific to the smooth setting, and do not admit direct polyhedral generalizations. In the same time, standard approaches to the rigidity of polyhedra in the model spaces do not have direct generalizations to non-trivial 3-manifolds with polyhedral boundary. Thus, although our general framework (\emph{the continuity method}) is similar to the one employed by Schlenker in~\cite{Sch}, in our implementation of all main technical steps we have to choose a different route to follow.

\subsection{Our results}
\label{ressec}

Let us specify what we mean by a hyperbolic metric on $M$ with (convex) polyhedral boundary.

\begin{dfn}
We say that a hyperbolic metric $g$ on $M$ is \emph{polyhedral} if $\partial M$ is locally convex and each point $p \in \partial M$ has a neighborhood isometric to $U \cap P$, where $P \subset \H^3$ is a convex polyhedron in $\H^3$ and $U \subset \H^3$ is an open domain intersecting $\pt P$.
\end{dfn}

Now we generalize this definition.

\begin{dfn}
We say that a hyperbolic metric $g$ on $M$ is \emph{bent} if $\partial M$ is locally convex and each point $p \in \partial M$, except finitely many, belongs to the relative interior of a geodesic segment. The exceptional points are called \emph{vertices} of $g$.
\end{dfn}

Here and further by a geodesic segment we mean a geodesic segment of $g$. If we need to consider a geodesic segment in $\partial M$ with respect to the induced intrinsic metric, possibly bent in $(M, g)$, we will be referring to it as to an \emph{intrinsic geodesic segment}. It is easy to see that a polyhedral hyperbolic metric on $M$ is bent. Moreover, one can notice that in the model spaces a closed convex surface satisfying the bent condition is actually a polyhedron. This is no longer true for compact hyperbolic 3-manifolds with boundary. Well-known examples of non-polyhedral bent hyperbolic metrics without vertices are given by the convex cores of so-called convex cocompact hyperbolic 3-manifolds, which we define now.

\begin{dfn}
A subset $C$ of a complete Riemannian manifold is called \emph{totally convex} if it is closed and contains every geodesic segment between any two points of $C$.
\end{dfn}

\begin{dfn}
A hyperbolic metric $\overline g$ on $N$ is called \emph{convex cocompact} if it is complete and $(N, \overline g)$ contains a compact totally convex subset. 
\end{dfn}

It is not hard to see that $M$ admits a hyperbolic metric with convex boundary if and only if its interior $N$ admits a convex cocompact metric. Moreover, for every hyperbolic metric $g$ on $M$ with convex boundary there exists a unique convex cocompact metric $\ol g$ on $N$ and a unique isometric embedding $(M, g) \hookrightarrow (N, \ol g)$, where the uniqueness is in some sense, which we will clarify in Section~\ref{cocosec}. We will mostly use the notation $\ol g$ to denote the metric on $N$ obtained from $g$ by this construction, but sometimes we will just write $\ol g$ to denote a convex cocompact metric on $N$ in the absence of metric $g$. Our restrictions on $M$ imply that $M$ is irreducible and atoroidal. Since $M$ is distinct from the solid torus and the 3-ball, this particularly means that all components of $\pt M$ have genii at least 2. Thurston's hyperbolization theorem implies that the converse holds: every compact 3-manifold satisfying these topological conditions admits a hyperbolic metric with convex boundary, see~\cite{Mar2, Kap}. In particular, there are plenty of such 3-manifolds.

\begin{dfn}
\label{cocodef}
The \emph{convex core} $C(\ol g)$ of a convex cocompact hyperbolic metric $\ol g$ on $N$ is the inclusion-minimal closed totally convex subset of $(N, \ol g)$. 
\end{dfn}

We will recall in Section~\ref{cocosec} that the convex core is well-defined and, except few degenerate cases, is homeomorphic to $M$. It was shown by Thurston in~\cite{Thu} that the induced intrinsic metric on $\pt C(\ol g)$ is hyperbolic, but $\pt C(\ol g)$ may be bent in $(N, \ol g)$ along an irrational geodesic lamination, in which case it is neither smooth nor polyhedral. However, $\pt C(\ol g)$ satisfies our bent condition. In Section~\ref{bentsec} we will sketch an argument showing the existence of bent metrics with vertices. 

We need to introduce two important subclasses of polyhedral hyperbolic metrics on~$M$.

\begin{dfn}
\label{scpdef}
We say that a polyhedral hyperbolic metric $g$ on $M$ is \emph{strictly polyhedral} if $\partial M$ is at positive distance from $C(\overline g)$. We say that a polyhedral hyperbolic metric $g$ on $M$ is \emph{controllably polyhedral} if $\partial M \cap C(\overline g)$ is empty or consists only of simple closed geodesics.
\end{dfn}

In particular, every strictly polyhedral metric is controllably polyhedral. We also need to distinguish a special class of metrics on $\pt M$.

\begin{dfn}
\label{fucdef}
We say that a convex hyperbolic cone-metric $d$ on $\pt M$ is \emph{proto-Fuchsian} if~$M$~is homeomorphic to $S \times [-1, 1]$ for a surface $S$ and either (1) $S$ is closed, $d$ does not have vertices and the restrictions of $d$ to the components of $\pt M$ are isotopic as metrics on $S$; or (2) $S$ is compact with non-empty boundary and $(\pt M, d)$ is the double of a hyperbolic metric on $S$ with convex piecewise-geodesic boundary and with no cone-points in the interior of $S$. Otherwise, we say that $d$ is \emph{non-proto-Fuchsian}.
\end{dfn}

Note that in case (2) $M$ is a handlebody. Now we can formulate the main result of our paper.

\begin{thml}
\label{main1}
For each non-proto-Fuchsian convex hyperbolic cone-metric $d$ on $\pt M$ there exists a bent metric $g$ on $M$ such that the induced intrinsic metric on $\partial M$ is $d$. If either (1) $g$ is controllably polyhedral, or (2) $g$ is polyhedral and does not have vertices, then $g$ is unique up to isotopy (among bent metrics on $M$).
\end{thml}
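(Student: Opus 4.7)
The plan is to implement the continuity method. On the target side, consider the space $\mathcal{D}$ of non-proto-Fuchsian convex hyperbolic cone-metrics on $\pt M$, stratified by the number and marking of cone points; on the source side, take the space $\mathcal{B}$ of isotopy classes of bent hyperbolic metrics on $M$, stratified by the combinatorial type of the bending locus (isolated edges versus laminar pieces). The induced-metric map $\Phi \colon \mathcal{B} \to \mathcal{D}$ is continuous. The first step is to equip both spaces with a stratified manifold structure so that, stratum by stratum, a dimension count matches: on the controllably polyhedral stratum a deformation of $g$ is controlled by a finite-dimensional datum (the ambient convex cocompact structure $\ol g$ on $N$ together with a convex polyhedral hull inside it), while a convex hyperbolic cone-metric with $n$ marked cone points has its own finite-dimensional Teichm\"uller-type moduli. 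Matching vertices of a realization with cone points of $d$ should make these dimensions coincide.

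For existence, I would prove three standard ingredients. \textbf{Openness} (local surjectivity) should follow from infinitesimal rigidity at controllably polyhedral metrics: any first-order deformation of such a $g$ whose induced-metric variation vanishes must come from an ambient isotopy. This is the technical core; I would adapt a Hilbert--Einstein / dual-volume type variational argument in the spirit of Schlenker's proof of Theorem~\ref{sch}, replacing the smooth second fundamental form with the discrete bending data concentrated on edges. \textbf{Properness} of $\Phi$: given $d_n \to d$ in $\mathcal{D}$ and bent realizations $g_n$, use Thurston-type compactness for the associated convex cocompact structures $\ol{g_n}$ on $N$, together with a priori bounds on the bending data forced by the convexity of $d_n$, to extract (after isotopy) a limit $g$ with $\Phi(g)=d$. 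The proto-Fuchsian exclusion is precisely what prevents the $\ol{g_n}$ from escaping to the Fuchsian/doubled boundary of convex cocompact deformation space. \textbf{Connectedness}: each component of $\mathcal{D}$ restricted to non-proto-Fuchsian metrics should contain at least one cone-metric admitting an explicit polyhedral realization, for instance by deforming all cone angles close to $2\pi$ while keeping the remaining boundary geometry close to a smooth convex metric which is realized by Theorem~\ref{lab}, then connecting via paths.

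For uniqueness, given two bent realizations $g_1, g_2$ of the same $d$ with $g_1$ controllably polyhedral (or polyhedral without vertices), the local rigidity from the openness step produces an isotopically rigid neighborhood of $g_1$ in $\Phi^{-1}(d)$; combined with properness and a deformation of $d$ through a family of controllably polyhedral targets back to a reference realization, one would argue that $g_2$ lies in the same component of $\Phi^{-1}(d)$ as $g_1$, hence is isotopic to it. The main obstacle, as in most continuity-method proofs of this flavor, will be \textbf{properness}: sequences of bent metrics can concentrate bending mass along laminations that become singular in the limit, and ruling this out for exactly the non-proto-Fuchsian targets is the delicate point where the proto-Fuchsian exceptions must enter the argument. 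A secondary difficulty is establishing infinitesimal rigidity at controllably polyhedral metrics when $\pt M$ meets $C(\ol g)$ along simple closed geodesics, since the textbook setup assumes $\pt M$ is at positive distance from the convex core and the geodesic-contact case requires a refined variational formula handling the boundary terms along those curves.
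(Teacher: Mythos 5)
Your high-level scaffolding (continuity method: openness via infinitesimal rigidity, properness, connectedness, then local-to-global for uniqueness) matches the paper's general strategy, and your identification of properness and of the geodesic-contact case as the hard spots is accurate. But there are genuine gaps in the proposed execution.

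The central one is your plan for infinitesimal rigidity. You propose to ``adapt a Hilbert--Einstein / dual-volume type variational argument in the spirit of Schlenker's proof,'' but the paper explicitly states that the core ideas of Labourie's and Schlenker's smooth proofs do not admit direct polyhedral generalizations, and it takes a completely different route: double $M(g)$ along its boundary to obtain a closed hyperbolic cone-3-manifold $K$; use the polyhedral decomposition of Lemma~\ref{triang1} and pass to a finite cover (via residual finiteness, Lemma~\ref{triang2}) to get a large geodesic triangulation; introduce \emph{affine automorphic vector fields} to translate edge-length deformations into $E$-valued 1-forms in $H^1(L, E)$; and apply Weiss's rigidity theorem for hyperbolic cone-3-manifolds (Theorem~\ref{weiss}). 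The dual-volume variational machinery relies on the shape operator being defined almost everywhere with control, which fails for bent boundaries; there is no evidence your adaptation would close. Moreover, even setting up the variational formula requires knowing $\mc I_V$ is differentiable, and the paper shows that at peculiar (controllably but not strictly polyhedral) metrics this is already a delicate matter (Section~\ref{stripssec} on hyperbolic strips): $\mc I_V$ is only differentiable, not $C^1$, there, which rules out a naive inverse-function-theorem argument.

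A second gap is organizational but consequential: the continuity method cannot run directly on ``the controllably polyhedral stratum,'' since controllable polyhedrality is a property of the \emph{realization}, not of the target $d$, and it is not obvious which $d$ admit such realizations. The paper introduces \emph{balanced} convex cone-metrics (open, dense in Lipschitz topology) and proves every realization of a balanced metric is controllably polyhedral (Corollary~\ref{balreal}). Connectedness is obtained not by deforming cone angles near $2\pi$ but by enlarging $V$ to a larger set $W$ balancing an entire path, using the angle-decreasing flow of Lemma~\ref{increase}. Your ``stratified manifold'' picture glosses over exactly this issue. Finally, the uniqueness argument is more subtle than ``connectedness of the fiber'': the paper must pass to the covers $\mc P_b^\sharp$, $\mc D_c^\sharp$ (quotient only by isotopies fixing $V$ pointwise), because the covering-space argument requires simple connectivity of the target through balanced metrics, which the paper establishes only for $\mc D^\sharp$ (using contractibility, Theorem~\ref{topold}, and Lemma~\ref{connect2}), not for $\mc D$.
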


We make few remarks to explain the roles of classes of metrics participating in Theorem~\ref{main1}.
First, when $d$ is proto-Fuchsian, its realization degenerates to a surface and it is not possible to realize such $d$ by a hyperbolic metric on $M$ with convex boundary. Thereby, the realization part of Theorem~\ref{main1} covers all realizable convex hyperbolic cone-metrics on $\pt M$. 
Second, in the smooth setting the condition that the Gaussian curvature of a metric $d$ is $>-1$ implies that a convex realization of $d$ is at positive distance from the convex core. In the polyhedral setting this corresponds to a strictly polyhedral boundary. Hence, Theorem~\ref{main1} includes a polyhedral counterpart to Theorem~\ref{sch}. This research was actually initiated by an attempt to prove the rigidity in the strictly polyhedral class. However, we found out that it is not really possible without extending our consideration to the larger class of controllably polyhedral manifolds. Third, the class of bent metrics on $M$ is the right class to consider convex realizations of convex hyperbolic cone-metrics on $\pt M$. Indeed, we will prove in Section~\ref{bentsec} that the induced metric on the boundary of a bent metric on $M$ is a convex hyperbolic cone-metric. Furthermore, we will establish

\begin{thml}
\label{sub1}
Let $g$ be a hyperbolic metric on $M$ such that the boundary is convex and the induced intrinsic metric $d$ on $\pt M$ is a hyperbolic cone-metric. Then $g$ is bent.
\end{thml}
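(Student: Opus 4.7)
The plan is to reduce Theorem~\ref{sub1} to a local statement in $\H^3$ about Lipschitz convex surface pieces with hyperbolic intrinsic metric, and to combine the Gauss equation with Alexandrov's theory of extrinsic curvature measures.

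\textbf{Step 1 (Localization).} Let $V \subset \pt M$ be the (finite) set of cone points of $d$. Given $p \in \pt M \setminus V$, choose an intrinsic radius $r>0$ small enough that the open disk $U := B_d(p,r)$ in $(\pt M, d)$ is disjoint from $V$ and isometric to a round disk in $\H^2$. Lift $(M,g)$ to its universal cover and embed this isometrically in $\H^3$ using convex cocompactness, cf.\ Section~\ref{cocosec}; then the lift $\tilde U$ of $U$ is a locally convex Lipschitz embedded disk in $\H^3$ whose induced intrinsic metric has Gaussian curvature identically $-1$.

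\textbf{Step 2 (Local ruling).} The core local claim is: any such $\tilde U$ is ruled by $\H^3$-geodesics, i.e.\ every interior point of $\tilde U$ lies in the relative interior of an open $\H^3$-geodesic segment contained in $\tilde U$. In the $C^2$ case this is immediate from the Gauss equation $K_{\mathrm{int}} = -1 + \det(\mathrm{II})$: local convexity gives a positive semidefinite shape operator $\mathrm{II}\geq 0$, while $K_{\mathrm{int}}\equiv -1$ forces $\det(\mathrm{II})\equiv 0$, so $\mathrm{II}$ has a continuous one-dimensional kernel, whose integral curves are $\H^3$-geodesics lying in $\tilde U$. In the general Lipschitz case one invokes Alexandrov's theory of curvature measures of convex surfaces in spaces of constant curvature: the intrinsic and extrinsic curvature measures of $\tilde U$ satisfy $\mu_{\mathrm{int}} = \mu_{\mathrm{ext}} - \mathrm{area}$, and $K_{\mathrm{int}}\equiv -1$ forces the extrinsic curvature measure to vanish identically on $\tilde U$. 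A classical theorem of Alexandrov--Pogorelov type then asserts that a convex surface of vanishing extrinsic curvature coincides locally with the envelope of its supporting planes and is ruled by ambient geodesics.

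\textbf{Step 3 (Conclusion and main obstacle).} Step 2 produces, through each $p \in \pt M \setminus V$, a geodesic segment of $g$ containing $p$ in its relative interior; combined with the hypothesis that $\pt M$ is locally convex and the finiteness of $V$, this exhibits $g$ as bent with exceptional set contained in $V$. The substantive work is entirely in Step 2. A locally convex boundary of a hyperbolic 3-manifold need not be $C^1$, so the Gauss-equation argument does not apply directly, and one must upgrade it to the Lipschitz setting, either by a smooth convex approximation compatible with the intrinsic metric or by a direct appeal to the Alexandrov curvature-measure machinery together with the classification of convex surfaces of vanishing extrinsic curvature in $\H^3$. Bridging these smooth and measure-theoretic viewpoints to actually exhibit a concrete $\H^3$-geodesic segment through the given point is the main technical obstacle.
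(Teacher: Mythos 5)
Your proposal identifies the right conceptual core (the extrinsic curvature measure of a convex surface, the Gauss-theorem relation $\curv = \curv_I + \area$, and the observation that a hyperbolic cone-metric forces the extrinsic curvature measure to vanish off the cone-points), but it is structured very differently from the paper and has a substantive gap where you acknowledge the main technical obstacle.

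The paper works globally rather than locally: in the Klein model it forms the compact convex body $B := \tilde M(g) \cup \Lambda(\ol g) \subset \R^3$, applies the Straszewicz theorem (extreme points lie in the closure of the set of exposed points), and is thereby reduced to showing that every exposed point of $B$ lies in $\tilde V(d) \cup \Lambda(\ol g)$. This last step is a single curvature-measure argument: around an exposed point not in $\tilde V(d)$ the intrinsic metric is smooth hyperbolic, so the extrinsic curvature measure of a small neighborhood vanishes, yet slicing off a small convex cap by a supporting plane produces a set of strictly positive extrinsic curvature by Lemma~\ref{nonflat} (cited from~\cite{Pro2}) — a contradiction. Equivariance then gives the theorem.

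Your Step 2 instead localizes first to a small disk $\tilde U$, observes the extrinsic curvature measure vanishes there, and then invokes a ``classical Alexandrov--Pogorelov type theorem'' that such a disk is ruled by ambient geodesics. That is the gap. This statement, while true, is not an off-the-shelf result with a standard citation for Lipschitz convex surfaces in $\H^3$; proving it requires essentially the same ingredients the paper uses (a Straszewicz-type argument on a small convex body cut off by a supporting plane, together with positivity of cap curvature), so you have deferred the content of the proof into a black box. A secondary issue is in your $C^2$ heuristic: the Gauss equation gives $\det(\mathrm{II})=0$, but concluding that the integral curves of $\ker(\mathrm{II})$ are \emph{ambient} geodesics contained in the surface also needs the Codazzi equations (this is the Hartman--Nirenberg/Massey classification of flat surfaces, not an immediate consequence of $\det(\mathrm{II})=0$). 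The paper's route is cleaner precisely because the global compact body $B$ lets Straszewicz do the work in one stroke, avoiding any need to prove a local ruling theorem.
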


Our approach to Theorem~\ref{main1} implies a special corollary, which we present now. By $\mc{CH}(N)$ we denote the space of convex cocompact hyperbolic metrics on $N$ up to isotopy, endowed with a natural topology, which we define in Section~\ref{cocosec}. As we mentioned, the induced intrinsic metric on the boundary of the convex core of a convex cocompact metric is hyperbolic. This defines a map $\mc I: \mc{CH}(N) \ra \mc T(\pt M)$, where $\mc T(\pt M)$ is the Teichm\"uller space of $\pt M$. It follows from the work of Labourie~\cite{Lab} and from an approximation argument that this map is surjective. (The present paper provides an alternative proof of the surjectivity of this map as a particular case of Theorem~\ref{main1}.) There is another object called \emph{the bending lamination} (some authors say \emph{pleating lamination}) of the convex core measuring how much the boundary is bent, see Section~\ref{cocoresec} for details. (In the polyhedral case this is just the data of the exterior dihedral angles.) This defines a map $\mc I^*: \mc{CH}(N) \ra \mc {ML}(\pt M)$, where $\mc {ML}(\pt M)$ is the space of \emph{measured geodesic laminations} on $\pt M$. In this case the map is not surjective, and its image is described in the works~\cite{BO, Lec} of Bonahon--Otal and Lecuire. Note that the space $\mc{ML}(\pt M)$ lacks a natural differential structure, but has a natural PL-structure. 

Thurston conjectured that $\mc I$ is a homeomorphism and $\mc I^*$ is a homeomorphism onto the image outside the Fuchsian locus. The injectivity of these maps remains an open problem, though it was shown in~\cite{BO} that if $\ol g$ has a non-degenerate polyhedral convex core, then $\mc I^*(\ol g)$ has a single $\mc I^*$-preimage.\footnote{After this paper was finished, Thurston's conjecture for $\mc I^*$ was resolved in a spectacular work \cite{DS} by Dular--Schlenker. It is interesting that one of the ingredients of their proof does not work for $\mc I$.} By the work~\cite{Bon} of Bonahon, the map $\mc I$ is $C^1$ and the map $\mc I^*$ is tangentiable, which is a relaxation of the notion of differentiability to the PL-setting. 
We show

\addtocounter{thml}{1}
\begin{crll}
\label{main2}
Let $\ol g \in \mc{CH}(N)$ have non-degenerate polyhedral convex core. Then there exists a neighborhood $U$ of $\ol g$ in $\mc{CH}(N)$ such that\\
(1) the restriction of $\mc I$ to $U$ is a $C^1$-diffeomorphism onto the image. Moreover, for each $\ol g' \in U$ the metric $\mc I(\ol g')$ has a single $\mc I$-preimage in $\mc {CH}(N)$.\\
(2) For each $\ol g' \in U$ the tangent map of $\mc I^*$ is an isomorphism at $\ol g'$.
\end{crll}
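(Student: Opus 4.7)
The plan is to combine Theorem~\ref{main1} with Bonahon's $C^1$-regularity of $\mc I$ \cite{Bon} and with an infinitesimal rigidity statement established alongside the uniqueness part of Theorem~\ref{main1}.

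Since the polyhedral convex core $C(\ol g)$ is non-degenerate, its bending locus is a finite union of simple closed geodesics and the induced intrinsic boundary metric $d := \mc I(\ol g) \in \mc T(\pt M)$ is smooth. Identifying $M$ with $C(\ol g)$ endows $M$ with a polyhedral bent hyperbolic metric without vertices that realizes $d$. By Theorem~\ref{main1}(2), this is the unique bent realization of $d$ on $M$ up to isotopy; combined with the standard fact, recalled in Section~\ref{cocosec}, that a convex cocompact structure on $N$ is determined by the isotopy class of its convex core, I deduce $\mc I^{-1}(d) = \{\ol g\}$ in $\mc{CH}(N)$.

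Next, I would show that $d_{\ol g}\mc I$ is an isomorphism. This requires the infinitesimal counterpart of Theorem~\ref{main1}(2): any first-order deformation of the polyhedral bent structure on $M$ preserving $d$ to first order is an infinitesimal isotopy. I expect this to be the principal technical hurdle of the corollary, obtained by linearizing the uniqueness argument of Theorem~\ref{main1} at the polyhedral point. Since $\mc{CH}(N)$ and $\mc T(\pt M)$ have the same finite dimension, an injective $d_{\ol g}\mc I$ is an isomorphism. Bonahon's $C^1$-regularity together with the inverse function theorem then produce a neighborhood $U$ of $\ol g$ on which $\mc I|_U$ is a $C^1$-diffeomorphism onto its image. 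To upgrade this to global uniqueness of $\mc I$-preimages for each $\ol g' \in U$, I would shrink $U$ and argue by contradiction: a sequence $\ol g_n' \to \ol g$ in $U$ with additional preimages $\ol g_n'' \notin U$ satisfying $\mc I(\ol g_n'') = \mc I(\ol g_n')$ would, via a suitable properness input for $\mc I$, accumulate to a second $\mc I$-preimage of $d$ in $\mc{CH}(N)$, contradicting the uniqueness just obtained.

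For part (2), at $\ol g$ itself the tangent map of $\mc I^*$ is an isomorphism by a dual infinitesimal rigidity statement (polyhedral bent structures on $M$ are infinitesimally determined by their bending data), extractable from the proof of Theorem~\ref{main1} via a Schl\"afli-type duality between the intrinsic boundary metric and the bending lamination. Extending this to every $\ol g' \in U$ is delicate since the bending lamination of a generic nearby $\ol g'$ is not a weighted multicurve, and because $\mc I^*$ is only tangentiable \cite{Bon} so that no classical implicit function theorem applies. I would handle this by combining tangentiability from \cite{Bon} with the density of the polyhedral locus in $U$ and a stability argument showing that the infinitesimal isomorphism property persists on the whole neighborhood.
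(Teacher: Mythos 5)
Your treatment of Part (1) is broadly in line with the paper, though with one inverted dependency, but your sketch of Part (2) misses the key mechanism and proposes a route that does not close.

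On Part (1): you correctly isolate the infinitesimal rigidity of $\mc I$ at $\ol g$ as the decisive ingredient, but the phrase ``obtained by linearizing the uniqueness argument of Theorem~\ref{main1}'' has the logic backwards. In the paper the infinitesimal rigidity (Lemma~\ref{infrig}) is proved independently, via the rigidity theory of hyperbolic cone-3-manifolds (Weiss's theorem combined with the affine automorphic vector field construction), and the global uniqueness in Theorem~\ref{main1}(2) is \emph{deduced} from it through the local-to-global step (Proposition~\ref{prop1}). You cannot linearize the conclusion to recover the hypothesis; the linear statement has to be established on its own, and that is precisely the content of Section~\ref{infrigsec}. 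Your properness-and-accumulation argument for the global uniqueness of $\mc I$-preimages is a reasonable alternative to the paper's path-lifting argument through balanced cone-metrics, provided you supply the needed properness of $\mc I$ (excluding Fuchsian degenerations, as in Lemma~\ref{proper} and Lemma~\ref{fuchsian}).

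On Part (2): there is no Schl\"afli-type duality available here, and your fall-back ``density of the polyhedral locus plus stability'' argument has no engine driving it --- as you yourself note, $\mc I^*$ is only tangentiable, so there is no implicit function theorem to propagate injectivity. The actual argument (Corollary~\ref{locrig3}) is quite different and applies uniformly on a neighborhood with no separate appeal to the polyhedral locus. It uses Bonahon's coupling of $\mc I$ and $\mc I^*$ via the Thurston parametrization $\mc J: \mc T(\pt M) \times \mc{ML}(\pt M) \to \mc X(\pt M, G)$, together with the fact that $\mc X(\pi_1(M), G)$ sits in $\mc X(\pt M, G)$ as a half-dimensional complex submanifold. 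Concretely, if $\dot{\ol g} \in \ker d\mc I^*_{\ol g'}$, then multiplying by $i$ --- which by Lemma~\ref{diffcore2} and the description of $\mc J_\lambda$ interchanges the $T\mc T(\pt M)$-direction and the $T\mc H_\R(\pt M,\lambda)$-direction --- produces a tangent vector $i\dot{\ol g}$ to $\mc{CH}(N)$ lying in $\ker d\mc I_{\ol g'}$, which must vanish once $d\mc I_{\ol g'}$ is non-degenerate. That non-degeneracy holds on an entire neighborhood of $\ol g$ simply because $\mc I$ is $C^1$ (Bonahon) and $d\mc I_{\ol g}$ is an isomorphism by Lemma~\ref{infrig}. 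The concern you raise about irrational bending laminations of nearby $\ol g'$ never enters: the transverse-cocycle formalism and the choice of a maximal $\lambda$ handle all cases at once.
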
 

The key point here is the infinitesimal rigidity of $\mc I$ at $\ol g$. Together with the inverse function theorem it implies the first claim of Part (1). We will show that local rigidity implies global rigidity, obtaining the second claim of Part (1). The infinitesimal rigidity problems for $\mc I$ and for $\mc I^*$ are equivalent as implied by the works of Bonahon~\cite{Bon, Bon4}. Note that the rigidity part of~\cite{BO} does not imply infinitesimal or local injectivity of $\mc I$ or $\mc I^*$ at $\ol g$.

A convex realization of a convex hyperbolic cone-metric might be also obtained from Theorem~\ref{lab} together with an approximation argument, though some non-trivial work is required to exclude possible degenerations. In the case when $M$ is homeomorphic to $S \times [-1, 1]$ this approach was implemented by Slutskiy in~\cite{Slu}. Theorem~\ref{sub1} says that such a realization is necessarily bent, leading to an alternative proof of the existence part of Theorem~\ref{main1}. The most non-trivial contribution of the present paper is the proof of the uniqueness part of Theorem~\ref{main1}. The proof of Theorem~\ref{sub1} follows the ideas of Olovyanishnikov, who similarly proved that a convex realization of a Euclidean cone-metric on the 2-sphere in the Euclidean 3-space is a polyhedron~\cite{Olo}. 
We provide Theorem~\ref{sub1} to complement our main result Theorem~\ref{main1}.

As we already mentioned, our proof of Theorem~\ref{main1} is obtained from a generalization of the classical continuity method popularized by Alexandrov. Let $V \subset \pt M$ be a finite set, possibly empty. By $\mc D_c(\pt M, V)$ we denote the space of convex hyperbolic cone-metrics on $\pt M$ with the vertex set $V$ up to isotopy. By $\mc P_b(M, V)$ we denote the space of bent hyperbolic metrics on $M$ with the vertex set $V$ up to isotopy. (There are some technical restrictions on the isotopies, see details in Sections~\ref{conesec} and~\ref{bentsec}.) We will endow these spaces with some natural smooth topologies making them finite-dimensional manifolds of the same dimension. There is the realization map
$$\mc I_V: \mc P_b(M, V) \ra \mc D_c(\pt M, V)$$
sending a bent metric on $M$ to the induced intrinsic metric on the boundary. 

We will frequently abuse the notation and write $g  \in \mc P_b(M, V)$ or $d \in \mc D_c(\pt M, V)$ meaning that $g$ or $d$ are metrics with their isotopy classes belonging to the spaces $\mc P_b(M, V)$ and $\mc D_c(\pt M, V)$. The uniqueness part of Theorem~\ref{main1} follows from the following intermediate steps. 

\begin{prop}
\label{prop1}
If $g \in \mc P_b(M, V)$ is locally rigid, i.e., the map $\mc I_V$ is a local homeomorphism around $g$, then $g$ is globally rigid, i.e., for every other $g' \in \mc P_b(M, V)$ such that $\mc I_V(g)=\mc I_V(g')$, the metrics $g$ and $g'$ are isotopic.
\end{prop}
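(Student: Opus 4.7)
The plan is to apply a monodromy-style continuity argument. I take for granted two general features of the setup that should be established elsewhere in the paper: (a) $\mc P_b(M, V)$ and $\mc D_c(\pt M, V)$ are finite-dimensional manifolds of the same dimension, and (b) $\mc I_V$ is continuous and proper, so that each fiber $\mc I_V^{-1}(d)$ is compact. Set $d := \mc I_V(g)$. The local homeomorphism hypothesis yields an open $U \ni g$ on which $\mc I_V$ is a homeomorphism onto an open $W \ni d$; in particular $g$ represents an isolated point of the compact set $\mc I_V^{-1}(d) \subset \mc P_b(M, V)$. I argue by contradiction: suppose some $g'$, not isotopic to $g$, also satisfies $\mc I_V(g') = d$.

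Next I would choose a continuous path $(d_t)_{t \in [0,1]}$ in $\mc D_c(\pt M, V)$ with $d_0 = d$, running to a ``simple'' reference metric $d_1$ whose $\mc I_V$-fiber is a single isotopy class, such that for every $t \in (0, 1]$ each preimage of $d_t$ under $\mc I_V$ is a point of local rigidity. Starting from $g$, local rigidity provides a unique lift $(g_t) \subset U$ for small $t$, and fiber-wise rigidity along the path combined with properness extends $(g_t)$ uniquely to all of $[0, 1]$. Starting from $g'$, a lift $(g'_t)$ on $(0, 1]$ is obtained by picking, for small $t > 0$, a preimage of $d_t$ close to $g'$ (such preimages exist by properness and compactness of $\mc I_V^{-1}(d)$), then extending uniquely on $(0, 1]$ by fiber-wise rigidity and properness; by construction $g'_t \to g'$ in $\mc P_b(M, V)$ as $t \to 0^+$. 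At $t = 1$ the two endpoints $g_1$ and $g'_1$ lie in the single orbit of $d_1$, hence coincide as isotopy classes; unique lifting in the rigid regime then forces $g_t = g'_t$ in $\mc P_b(M, V)$ for every $t \in (0, 1]$. Continuity at $t = 0$ gives $g = g'$ in $\mc P_b(M, V)$, contradicting the assumption.

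The hard part will be constructing the path $(d_t)$ with the two required properties: a reference endpoint $d_1$ whose $\mc I_V$-fiber is a single isotopy class, and an open set in $\mc D_c(\pt M, V)$ of metrics with fully rigid fibers through which the interior of the path can pass. This is where I expect the paper's distinguished subclasses---\emph{balanced} hyperbolic cone-metrics and their \emph{controllably polyhedral} realizations---to enter, since Theorem A picks out these as the regime of local rigidity and asserts that balanced metrics form an open dense subset in a suitable topology. An appropriate $d_1$ might arise from a degeneration toward the boundary of $\mc D_c(\pt M, V)$, perhaps a limit near the proto-Fuchsian locus where the realization problem becomes essentially unique. A subsidiary technical point is justifying continuity of $(g'_t)$ at $t = 0$ without assuming local rigidity at $g'$, but this should follow from properness of $\mc I_V$ together with the explicit neighborhood choice sketched above.
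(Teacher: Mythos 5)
Your overall strategy---path lifting over a locus where the realization map is locally rigid and proper---is in the spirit of the paper's argument, but two ingredients you take for granted do not exist in this setting, and they are precisely what the paper's proof is engineered to avoid. First, there is no available reference metric $d_1$ whose $\mc I_V$-fiber is known to be a single isotopy class: uniqueness of a realization is exactly what is being proved, and no base case is known. In particular your suggestion of degenerating toward the proto-Fuchsian locus fails, since proto-Fuchsian metrics are not realizable at all and properness (Lemma~\ref{proper}) is only available away from them, while nearby non-proto-Fuchsian metrics carry no known uniqueness. The paper never connects to a distinguished basepoint; instead it turns the two realizations $g$, $g'$ into a loop in (a cover of) the space of induced metrics, lifts this loop---after enlarging the vertex set to some $W \supseteq V$---into the space $\ol{\mc D}_{bc}^\sharp(\pt M, W)$ of balanced metrics where it is contractible (Lemma~\ref{connect2}), and then uses that $\mc I_W$ restricts to a covering map over the balanced locus to conclude that the monodromy is trivial. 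Second, you keep the vertex set $V$ fixed along your path, but there is no reason the fibers over the interior points $d_t$ should consist of locally rigid metrics: local rigidity is only established for controllably polyhedral realizations, and the only mechanism producing those is balancedness of the induced metric, which in general forces adding vertices ($V$ by itself need not balance anything). Once vertices are added, $g$ and $g'$ sit in $\ol{\mc P}_b(M, W)$ with $V(g)=V \subsetneq W$, and the endpoints of the lifted path are degenerate there; the paper handles this with the outward-pushing homotopy $\eta_s$, the isolation statement of Proposition~\ref{prop6}, and a Hausdorff-limit argument, none of which your sketch anticipates.

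Finally, your ``subsidiary technical point'' is a genuine gap rather than a formality: without local rigidity at $g'$, properness does not produce preimages of $d_t$ converging to $g'$. The map $\mc I_V$ need not be open at $g'$, so for small $t$ there may be no preimage of $d_t$ anywhere near $g'$, and your lift $(g'_t)$ with $g'_t \to g'$ may simply fail to exist. The paper's loop-and-contractibility formulation is designed so that openness is only ever used at the locally rigid point $g$ (via Proposition~\ref{prop6}), while $g'$ enters only as the second endpoint of a connected limit arc in a compact fiber, which is what produces the contradiction.
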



\begin{prop}
\label{prop2-3}
Controllably polyhedral metrics on $M$ and polyhedral metrics without vertices are locally rigid.
\end{prop}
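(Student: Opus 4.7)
The plan is to apply the inverse function theorem. Since $\mc P_b(M, V)$ and $\mc D_c(\pt M, V)$ are smooth finite-dimensional manifolds of the same dimension, it suffices to verify that $\mc I_V$ is $C^1$ near $g$ and that $d\mc I_V|_g$ is injective. Smoothness of $\mc I_V$ near a polyhedral $g$ should follow from the fact that under the controllably polyhedral or no-vertex hypothesis the combinatorial type of the polyhedral boundary is stable under small perturbation: a neighborhood of $g$ in $\mc P_b(M, V)$ can be parametrized by the positions of the vertices and the edge lengths inside the convex cocompact extension $\ol g$, and the induced intrinsic metric on $\pt M$ depends smoothly on this data. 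The separation (possibly through isolated simple closed geodesics) between $\pt M$ and $C(\ol g)$ provided by the controllably polyhedral assumption is exactly what makes the extension $g \mapsto \ol g$ depend smoothly on $g$ near our basepoint.

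The substantive content is the infinitesimal rigidity of $\mc I_V$ at $g$: given $\dot g \in T_g \mc P_b(M, V)$ with $d\mc I_V(\dot g) = 0$, I must show $\dot g = 0$. My plan is to extend $\dot g$ to an infinitesimal deformation $\dot{\ol g}$ of $\ol g$ on $N$ and then to interpret $\dot g$, via the developing map of $\ol g$, as an infinitesimal deformation of a hyperbolic polyhedral surface in $\H^3$ (equivariant under the holonomy representation). The vanishing of $d\mc I_V(\dot g)$ means that $\dot g$ restricts to an infinitesimal isometry of each 2-face of $\pt M$, so the only remaining degrees of freedom are the first-order variations $\dot\theta_e$ of the dihedral angles along the edges and, on the convex core side, the first-order variation of the bending data. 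A Cauchy--Alexandrov-type sign argument along $\pt M$, coupled with a Schl\"afli-type identity applied to the region of $(N, \ol g)$ bounded by $\pt M$ and $\pt C(\ol g)$, should then force all $\dot\theta_e$ to vanish, after which $\dot g$ integrates to a trivial infinitesimal ambient deformation, giving $\dot g = 0$ in $T_g \mc P_b(M, V)$.

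The main obstacle is precisely the controllably polyhedral case in which $\pt M$ meets $C(\ol g)$ along simple closed geodesics. Along each such geodesic the first-order variation of the polyhedral angle on $\pt M$ is coupled to the first-order variation of the bending transverse measure of $\pt C(\ol g)$, so the Schl\"afli-type identity acquires additional boundary terms that mix the two kinds of data. The technical heart of the proof will be to verify that these coupling terms combine with the polyhedral contributions into a non-degenerate bilinear pairing, which is exactly what forces the statement to be made for the controllably polyhedral class rather than for arbitrary polyhedral metrics. The case of polyhedral $g$ without vertices is considerably simpler: the bending locus of $\pt M$ is a disjoint union of simple closed geodesics, there are no cone-point compatibility constraints to enforce, and the relevant bilinear form decouples into contributions from individual curves, each of which is handled directly.
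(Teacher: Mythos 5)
Your overall framework (inverse function theorem on two manifolds of equal dimension) matches the paper's, but both of the substantive steps in your plan have genuine gaps. First, the differentiability step: you claim that under the controllably polyhedral hypothesis the combinatorial type of $\pt M(g)$ is stable under small perturbations, so that $\mc I_V$ is $C^1$ near $g$. This fails precisely at the metrics for which the controllably polyhedral class was introduced: when $\pt M(g)$ meets $C(\ol g)$ along a simple closed geodesic $\psi$, an arbitrarily small deformation makes the rotation part of the holonomy of $\psi$ nonzero, and the faces adjacent to $\psi$ break up into strips realizing infinitely many different triangulations in every neighborhood of $g$ --- the combinatorics is not locally constant, and this is exactly why the paper cannot run the strictly polyhedral argument (its Lemma~\ref{c1}) there. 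Instead it proves only \emph{differentiability} (not $C^1$) of $\mc I_V$ at such peculiar metrics via the hard strip estimates of Section~\ref{stripssec} (Lemmas~\ref{est1}--\ref{gest2*}), and then has to invoke an inverse function theorem valid for merely differentiable maps with nondegenerate differential; it also needs openness of the controllably polyhedral locus (Lemmas~\ref{open1},~\ref{open2}), which you take for granted. Your parametrization by vertex positions and edge lengths only covers the strictly polyhedral case.

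Second, the infinitesimal rigidity step: your proposed Cauchy--Alexandrov sign-counting argument on $\pt M$ combined with a Schl\"afli-type identity for the region between $\pt M$ and $\pt C(\ol g)$ is exactly the ``standard approach to rigidity of polyhedra in the model spaces'' that the paper points out does not generalize to nontrivial $3$-manifolds. The Cauchy sign lemma is an Euler-characteristic argument on the $2$-sphere and has no analogue on boundary components of genus $\geq 2$; moreover the region between $\pt M(g)$ and $\pt C(\ol g)$ need not be polyhedral (the convex core can be bent along an irrational lamination away from $\pt M(g)$), so there is no elementary Schl\"afli identity with controllable boundary terms, and you give no argument that your ``bilinear pairing'' is nondegenerate --- which is the entire content of the statement. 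The paper's route is completely different: it doubles $M(g)$ across its boundary to obtain a closed hyperbolic cone-$3$-manifold with cone angles $<2\pi$, triangulates a finite cover (Lemmas~\ref{triang1},~\ref{triang2}), converts the edge-length deformation into a cohomology class via affine automorphic vector fields and canonical lifts, and applies Weiss's infinitesimal rigidity of cone-$3$-manifolds (Theorem~\ref{weiss}); in addition, a separate substitution argument (Lemmas~\ref{subst2},~\ref{subst2.5}), relying on Corollary~\ref{diff} and, in the vertex-free case, on Bonahon's tangentiability results for bending (Lemma~\ref{diffcore2}), is needed to handle edges crossing the locus where $\pt M$ touches the convex core. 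Your claim that the vertex-free case ``decouples into contributions from individual curves'' is likewise unsubstantiated: deformations of the bending angles along different closed geodesics are coupled through the global holonomy representation, and the paper handles this case with the same global machinery rather than curve by curve.
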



Proposition~\ref{prop2-3} for controllably polyhedral metrics is proved in three steps: we show that the subset of controllably polyhedral metrics is open in $\mc P_b(M, V)$, that the map $\mc I_V$ is differentiable over this subset, and that the differential is non-degenerate. We remark that two most technically complicated places in our paper are the proofs of the latter two claims occupying Sections~\ref{diffsec} and~\ref{infrigsec}. Interestingly, while the differential $d\mc I_V$ is continuous over strictly polyhedral metrics, we cannot really show this over all controllably polyhedral metrics, but we found a way to deal with it. To show the infinitesimal rigidity we develop the theory of affine automorphic vector fields, which allow us to connect together separate approaches to the infinitesimal theory of hyperbolic cone-3-manifolds.

Contrary to the smooth case, in our setting it is not immediately obvious if there are many convex hyperbolic cone-metrics on $\pt M$ admitting controllably polyhedral realizations. We will, however, show that this is a generic case, by exhibiting a large subclass of such metrics, which we call \emph{balanced}. The actual definition is slightly technical, and we postpone it to Section~\ref{balsec}, but the heuristic is that a cone-metric is balanced if the set of vertices is sufficiently dense. We~have

\begin{prop}
\label{prop4}
The subset of balanced metrics is open and dense in the space of convex hyperbolic cone-metrics on a closed surface $S$ with respect to the Lipschitz topology.
\end{prop}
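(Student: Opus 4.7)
The plan is to prove density and openness separately.

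For \emph{density}, given any convex hyperbolic cone-metric $d$ on $S$ and any $\e > 0$, I would produce a balanced metric $d'$ with Lipschitz distance from $d$ at most $\e$. The idea is to enlarge the vertex set by inserting additional cone-points carrying tiny angle deficits, distributed so as to meet the density requirements in the definition of balanced (Section~\ref{balsec}). Concretely, I would first fix a finite set $W \subset S$ containing the existing vertices of $d$ together with an $\e/10$-dense sample whose combinatorial distribution satisfies the balanced requirements. Then, at each newly inserted point $w \in W$ I would introduce a cone singularity of angle $2\pi - \delta_w$, with $\delta_w > 0$ chosen small enough that the modification is realized by a $(1+O(\delta_w))$-bi-Lipschitz local surgery supported in a small geodesic disk around $w$. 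Since these surgeries have pairwise disjoint supports and each contributes only small Lipschitz distortion, the cumulative distortion can be made at most $\e$.

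For \emph{openness}, the key observation is that the balanced condition should be a finite collection of quantitative inequalities involving geometric quantities such as the covering radius of the vertex set, lower bounds on cone-angle deficits, and similar local geometric data. Each such quantity varies continuously under Lipschitz perturbations of the metric: distances and diameters are bi-Lipschitz-invariant up to the Lipschitz constant, and by Gauss--Bonnet the total curvature absorbed in any region is a Lipschitz-stable functional. Hence if $d$ is balanced with strict quantitative margins, any $d'$ obtained from $d$ by a $(1+\e)$-bi-Lipschitz perturbation with $\e$ sufficiently small still satisfies the same inequalities with only slightly weakened constants, and therefore remains balanced.

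The principal obstacle is the openness part. Under Lipschitz perturbation, the combinatorial vertex set of a cone-metric can change discontinuously: a smooth region of $d$ may develop additional small cone-points in $d'$, and conversely two nearby cone-points of $d$ may effectively merge. The argument must therefore use a formulation of the balanced condition that is robust with respect to such combinatorial jumps --- ideally, a condition phrased only in terms of the curvature measure and distance functionals, both of which behave well under Lipschitz convergence. Verifying this robustness, starting from the precise definition in Section~\ref{balsec}, will be the most delicate step; the density part, by comparison, reduces to a straightforward local surgery argument.
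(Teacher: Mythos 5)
Your density argument is fine and matches the paper, which dismisses density as evident: adding cone points of tiny deficit at a sufficiently dense sample makes $\cosp$ small at negligible Lipschitz cost. The problem is openness, where you stop exactly at the point where the proof has to happen: you say the balanced condition ``should be'' a robust collection of inequalities and that verifying this ``will be the most delicate step,'' but you never produce the verification, so the proposal has a genuine gap. You also slightly misread the definition: balancedness is the single inequality $\cosp(d)<\delta(\sys(d))$; it involves no lower bounds on cone-angle deficits, and the two phenomena you worry about are asymmetric in a way you do not exploit. If a nearby metric $d'$ develops extra small cone points, or if cone points of $d'$ sit where $d$ has a single one, this only enlarges the vertex set of $d'$ relative to what is needed and hence can only \emph{decrease} $\cosp(d')$; it cannot destroy balancedness. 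The only dangerous direction is that $d'$ might have \emph{no} vertex near some vertex of $d$, and that is precisely what must be ruled out.

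The paper's route, which is what your sketch is missing, is: (i) $\sys$ is continuous and $\delta$ is a fixed continuous function, so $\delta(\sys(\cdot))$ varies continuously under Lipschitz convergence; (ii) $\cosp$ is upper semicontinuous in the Lipschitz topology, i.e.\ if $d_i\to d$ then $\limsup_i \cosp(d_i)\le \cosp(d)$, and these two facts immediately give openness (if $d_i$ were all unbalanced, passing to the limit in $\cosp(d_i)\ge\delta(\sys(d_i))$ would contradict $\cosp(d)<\delta(\sys(d))$). The heart of (ii) is the claim that every vertex $v$ of the limit metric $d$ is a limit of vertices $v_i\in V(d_i)$: otherwise a neighborhood of $v$ would be smoothly hyperbolic in $d_i$ for infinitely many $i$, and comparing the lengths of a small metric circle of radius $r$ about $v$ (length $\kappa_v\sinh r$ with $\kappa_v\neq 2\pi$ in $d$, versus $2\pi\sinh r$ in $d_i$) contradicts the distortions tending to zero. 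Given this persistence of vertices, for $p_i$ realizing $\cosp(d_i)$ with $p_i\to p$ one gets $d_i(p_i,V(d_i))\le d_i(p_i,v_i)\to d(p,v)$ for each $v\in V(d)$, hence $\limsup_i\cosp(d_i)\le d(p,V(d))\le\cosp(d)$. Without this persistence statement (or an equivalent, e.g.\ via the curvature measure of small disks) your openness argument does not go through.
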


\begin{prop}
\label{prop5}
Every convex realization of a convex balanced hyperbolic cone-metric on $\pt M$ is controllably polyhedral.
\end{prop}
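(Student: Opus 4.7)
The plan is to combine Theorem~\ref{sub1} with a structural analysis of the bending of $\pt M$ in terms of the vertex set and the convex core. Let $g$ be a convex realization of a balanced convex hyperbolic cone-metric $d$ on $\pt M$. By Theorem~\ref{sub1}, $g$ is bent, and its vertex set $V$ coincides with the vertex set of $d$. I would first describe the extrinsic bending of $\pt M$ in $(M,g)$ by a measured geodesic lamination $\lambda$ supported on $\pt M \setminus V$, whose leaves record where two totally geodesic pieces of $\pt M$ meet and whose transverse measure records the exterior dihedral angles. A polyhedral metric corresponds exactly to the case where $\lambda$ has finitely many leaves, each carrying atomic measure. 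The polyhedrality part of the claim then amounts to proving that, under the balanced hypothesis, $\lambda$ must be of this finite atomic form.

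The key mechanism I would exploit is that any non-atomic part of $\lambda$, or any infinite-leaf portion, can only be sustained where $\pt M$ runs into the boundary of the convex core $C(\ol g)$. Indeed, on a region of $\pt M$ disjoint from $C(\ol g)$, one can push $\pt M$ outward along the normal flow of a totally geodesic surface in $(N,\ol g)$ and get that the bent boundary in that region decomposes into finitely many totally geodesic polygons meeting along a finite graph of edges with endpoints in $V$ or along closed geodesics. So the analysis reduces to understanding $\pt M \cap C(\ol g)$, where $\pt M$ must share its bending structure with the well-studied pleated surface $\pt C(\ol g)$.

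For the second condition of controllable polyhedrality, namely that $\pt M \cap C(\ol g)$ is empty or a union of simple closed geodesics, I would argue as follows. The intersection cannot contain a cone-point of $d$, since $\pt C(\ol g)$ has no cone-points, so $\pt M \cap C(\ol g) \subset \pt M \setminus V$. If this intersection had non-empty interior in $\pt M$, the balanced hypothesis---which asserts that $V$ is dense enough that no large totally geodesic disk in $\pt M$ avoids $V$---would give a contradiction. Hence the intersection is at most $1$-dimensional and lies in the bending lamination of $\pt C(\ol g)$. The same density aspect of balanced excludes infinite-leaf or non-closed components: any such leaf would have to accumulate in a totally geodesic piece of $\pt M$ whose closure stays off $V$, which is forbidden. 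What remains are isolated closed leaves, i.e.\ simple closed geodesics.

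The main obstacle I foresee is the first paragraph: making rigorous the trichotomy between the flat, edge-like, and convex-core-touching parts of $\pt M$, and then quantifying the balanced hypothesis so as to rule out any non-atomic part of $\lambda$ on the interior side. This requires unpacking the precise definition of balanced in Section~\ref{balsec} and translating its density condition into a geometric constraint, via a Gauss--Bonnet type accounting, between the transverse measure of $\lambda$ on a small neighborhood of a vertex $v\in V$ and the cone angle $\theta_v$ of $d$ at $v$, which in turn forces $\lambda$ to be a finite 1-complex with vertices in $V$.
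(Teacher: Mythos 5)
Your high-level strategy is aligned with the paper: invoke Theorem~\ref{sub1} to see that the realization is bent, and then argue that the balanced hypothesis on $d$ is incompatible with $\pt M \cap \pt C(\ol g)$ containing anything more than simple closed geodesics. However, there is a concrete gap exactly where you flag the ``main obstacle,'' and your proposed fix (Gauss--Bonnet accounting relating transverse measure near a vertex to cone angles) is not the mechanism that actually makes the definition of balanced bite. You also never pin down what $\pt M(g) \cap \pt C(\ol g)$ actually looks like as a subset of the boundary, which is essential before any quantitative estimate can be run.

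The paper's route is: (i) Corollary~\ref{bs2c}, which shows that the connected components of $\pt M(g) \cap \pt C(\ol g)$ are either simple closed geodesics (edges of $\pt C(\ol g)$) or \emph{compact hyperbolic surfaces with geodesic boundary}; this in turn rests on Lemma~\ref{halfinf}, Lemma~\ref{face}, and Lemma~\ref{bs2}, which show that an irrational bending sub-lamination in $\pt M(g)$ is always shielded from the vertices by a simple closed geodesic. (ii) Corollary~\ref{pants}: a hyperbolic pair of pants with boundary lengths $\geq \Delta$ has a point at distance $>\delta(\Delta)$ from the boundary, for a fixed function $\delta$. The definition of balanced, $\cosp(d) < \delta(\sys(d))$, is tuned precisely to this. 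If the intersection is not just simple closed geodesics, it contains such a compact surface, hence a pair of pants $P$ whose geodesic boundary components are non-contractible in $\pt M$, so have length $\geq \sys(d)$; Corollary~\ref{pants} produces a point of $P$ at distance $>\delta(\sys(d)) > \cosp(d)$ from $\pt P$, which would force a vertex of $d$ inside $P$ --- impossible since vertices lie at positive distance from $C(\ol g)$ (Lemma~\ref{bas1}). Your sketch gestures at the same contradiction (``density of $V$'' versus a flat region touching the convex core) but without Corollary~\ref{bs2c} you have no compact surface to work with, and without Corollary~\ref{pants} you have no quantitative bound to confront $\cosp(d)$ against; the words ``no large totally geodesic disk avoids $V$'' don't capture the role of the systole, which is what turns ``large'' into an actual number.
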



\subsection{Related work}
\label{relsec}

Here we review some related work. The only previously known case of the rigidity part of Theorem~\ref{main1} was established by Fillastre~\cite{Fil}, who considered the case of so-called \emph{Fuchsian manifolds} with convex boundary. These are homeomorphic to ${S \times [-1, 1]}$, where $S$ is a closed surface of genus greater than one, and contain an embedded totally geodesic surface isotopic to $S \times \{0\}$. Fillastre also proved that for every convex hyperbolic cone-metric on $S$ with vertices there exists a Fuchsian manifold with convex boundary (which is necessarily strictly polyhedral) realizing it. In~\cite{Fil3} Fillastre went further to allow so-called ideal and hyperideal vertices. There is a similar work of Fillastre--Izmestiev~\cite{FI} on the case when the manifold is homeomorphic to $T \times (-1, 1]$, where $T$ is the 2-torus, but this does not fit to our restrictions on $M$.

Similar results were obtained for the model Lorentzian spaces of constant curvature. In the case of the \emph{de Sitter 3-space} $\dS^3$ (of constant curvature 1) there is a well-known duality between points in $\dS^3$ and planes in $\H^3$, which allows to consider the corresponding questions as ``dual'' to questions in $\H^3$. The cornerstone is the work~\cite{HR} of Rivin--Hodgson, where they gave a description of the dual metrics on convex polyhedra in $\H^3$. It builds on a previous work of Andreev~\cite{And} characterizing hyperbolic polyhedra with acute dihedral angles. It was further generalized by Rivin~\cite{Riv} to \emph{ideal} (non-compact) hyperbolic polyhedra. This is of special interest as it allowed to resolve a long-standing question of Steiner to describe the combinatorial types of polyhedra admitting Euclidean realizations with all vertices on the standard sphere. The work~\cite{HR} was generalized to the smooth setting by Schlenker~\cite{Sch2}, and was recently generalized by the author to the setting of strictly polyhedral hyperbolic manifolds~\cite{Pro3}. The case of strictly polyhedral manifolds with ideal vertices was previously considered by Schlenker in an unpublished manuscript~\cite{Sch3}. In~\cite{Fil3} Fillastre examined similar questions for Fuchsian Lorentzian spacetimes, and in~\cite{FI2} Fillastre--Izmestiev dealt with the case of dual metrics on $T \times (-1, 1]$. Quite recently Smith~\cite{Smi2} resolved the smooth Weyl problem for \emph{Minkowski spacetimes}. In~\cite{Tam} Tamburelli solved the realization part for \emph{anti-de Sitter spacetimes}.

Up to now we were mentioning in parallel the results on convex realizations in the smooth and in the polyhedral cases. It is natural to expect that there should be some common generalization of these settings. This is the topic of the Alexandrov--Pogorelov theory of general convex surfaces. Consider convex surfaces in the Euclidean 3-space $\E^3$. Alexandrov managed to give a complete intrinsic description of the induced intrinsic metrics on them. These are so-called \emph{CBB(0) metrics} in modern terminology, whose definition is slightly technical and we refer to the textbooks~\cite{Ale3, BBI}. This notion saw a huge development in last decades due to the connection with degenerations of Riemannian manifolds. With the help of an approximation argument, Alexandrov showed in~\cite{Ale} that every CBB(0) metric on the 2-sphere is realized as the induced intrinsic metric on the boundary of a convex body. The respective rigidity question was solved by Pogorelov~\cite{Pog} and is known as notoriously difficult, another approach was given by Volkov~\cite{Vol2}. In recent years there was a streamline of works transferring the general Alexandrov realization theorem to various settings, see~\cite{FIV, FS2, Lab2, Slu}. Of special relevance is the above-mentioned work of Slutskiy~\cite{Slu}, where he considered general convex realization results for $M$ homeomorphic to $S \times [-1, 1]$. However, there are no generalizations of the Pogorelov rigidity theorem, except that to the other model spaces (due to Pogorelov himself, see~\cite{Pog}, and to Milka~\cite{Mil2}), and except the recent work of the author~\cite{Pro2} on the case of Fuchsian manifolds.

Speaking of the injectivity problems for the maps $\mc I$ and $\mc I^*$, in the work~\cite{Bon5} Bonahon considered manifolds $N$ homeomorphic to $S \times (-1, 1)$ for a closed surface $S$ of higher genus and showed that $\mc I^*$ is a homeomorphism onto the image near the so-called \emph{Fuchsian locus} of $\mc {CH}(N)$. In the work~\cite{Ser} Series proved the conjecture on $\mc I^*$ for manifolds homeomorphic to $S \times (-1, 1)$, where $S$ is a once-punctured torus (the latter manifolds have cusps and are outside of the scope of this paper). As the infinitesimal rigidity problems for $\mc I$ and $\mc I^*$ are equivalent, these works also imply the corresponding infinitesimal and local rigidity results for $\mc I$. We also remark that bending laminations can be considered as degenerate cases of the dual metrics, which we mentioned just above.

The questions whether the maps $\mc I$ and $\mc I^*$ are homeomorphisms onto the images are very similar to the known parametrization of the space $\mc{CH}(N)$. Every convex cocompact hyperbolic metric on $N$ determines a conformal structure ``at infinity'' on $\pt M$, defining another map $\mc{CH}(N) \ra \mc T(\pt M)$ to the Teichm\"uller space $\mc T(\pt M)$. The famous Bers \emph{double uniformization} theorem~\cite{Ber} asserts that this map is a homeomorphism in the case of $N$ homeomorphic to $S \times (-1, 1)$. It was extended to the case of arbitrary $N$ admitting a convex cocompact hyperbolic metric (and, more generally, a geometrically finite hyperbolic metric) in the collective works of Ahlfors, Bers, Kra, Marden, Maskit and Sullivan. See, e.g., the discussion in~\cite[Chapter 5]{Mar2}. A long string of efforts was devoted to generalize it to the case of all complete hyperbolic metrics on $N$ (where we need other type of invariants at infinity, called \emph{ending laminations}) culminated in the works~\cite{Min, BCM}. Recently Chen and Schlenker~\cite{CH} explored the case when on each boundary component one prescribes either a conformal structure at infinity or a (smooth) induced metric/dual metric.

A recognized trouble with the continuity method is that while it gives a proof of the existence of a convex realization, it provides no recipe how to construct it. The first approach to overcome it in the case of Theorem~\ref{alex} was undertook by Volkov in~\cite{Vol} by a variational method, and another approach was proposed in~\cite{BI} by Bobenko--Izmestiev. It is interesting to ask if any kind of constructive approach can be given to the realization part of Theorem~\ref{main1}. In the work~\cite{Pro2} the author built on this approach to show the rigidity of Fuchsian manifolds with general convex boundary. Hence, it can be helpful also on the way to generalize Pogorelov's rigidity to other hyperbolic manifolds, and to prove the injectivity of $\mc I$ and $\mc I^*$ for non-polyhedral convex cores.

The key infinitesimal rigidity statement in our paper is obtained with the help of the works on the rigidity of hyperbolic cone-3-manifolds, see Section~\ref{cone3sec} for the definition. There was a long string of results showing their rigidity in various settings, see~\cite{HK, MM3, Mon, Wei, Wei2}. We specifically employ the work~\cite{Wei2} of Weiss. The rigidity of hyperbolic cone-3-manifolds has interesting applications to some other geometry problems, see~\cite{BLP, Bro, HK2, Pro} for examples.
\vskip+0.2cm

\textbf{Acknowledgments.} We thank Fran\c{c}ois Fillastre, Ivan Izmestiev and Jean-Marc Schlenker for many valuable discussions and thank Michael Eichmair for his support.

\section{Preliminaries}

For a background in hyperbolic geometry we refer to~\cite{BP, Mar2, Mar3}. We will mostly rely either on the hyperboloid model or on the Klein model of the hyperbolic space. We will frequently use the hyperbolic version of the well-known Busemann--Feller lemma~\cite{Mil}

\begin{lm}
\label{bflemma}
Let $C \subset \H^3$ be a closed convex set with non-empty interior, $\tau \subset (\H^3 \backslash \inter(C))$ be a rectifiable curve and $\tau' \subset \pt C$ be its nearest-point projection on $C$. Then $\tau'$ is rectifiable and its length is at most the length of $\tau$. 
\end{lm}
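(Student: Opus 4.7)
The plan is to establish the lemma by first verifying that the nearest-point projection $\pi_C \colon \H^3 \setminus \inter(C) \to \partial C$ is well-defined and $1$-Lipschitz, then deducing the length inequality by approximating $\tau$ with geodesic polygons. For well-definedness, I would observe that for each $x \notin \inter(C)$ the function $y \mapsto \dist(x, y)$ on $C$ is continuous, proper, and strictly convex along hyperbolic geodesics in $\H^3$, so it attains its minimum at a unique point $\pi_C(x)$. This point lies in $\partial C$: for $x \in \partial C$ it equals $x$, while for $x \notin C$ one could otherwise move slightly inside $C$ towards $x$ and decrease the distance, contradicting minimality.

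The main step is $1$-Lipschitzness. For $x, y \notin \inter(C)$ set $x' = \pi_C(x)$ and $y' = \pi_C(y)$. Since $C$ is convex, the geodesic segment $[x', y']$ lies in $C$. The minimality of $x'$ implies that the first variation of $z \mapsto \dist(x, z)$ along $[x', y']$ at $z = x'$ is non-negative, which is equivalent to $\ang(xx'y') \geq \pi/2$; symmetrically $\ang(x'y'y) \geq \pi/2$. It remains to show that in the hyperbolic (possibly degenerate) quadrilateral $xx'y'y$ with two base angles $\geq \pi/2$ one has $\dist(x', y') \leq \dist(x, y)$. I would justify this by splitting the quadrilateral by a diagonal (say $[x, y']$) and applying the hyperbolic law of cosines twice, or equivalently by invoking the CAT$(0)$ property of $\H^3$ together with the fact that the nearest-point projection onto a complete geodesic is $1$-Lipschitz.

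Once $1$-Lipschitzness is available, the length bound follows routinely. The map $\pi_C$ is continuous as a $1$-Lipschitz map, so $\tau' = \pi_C(\tau)$ is a curve. For any partition $x_0, \dots, x_n$ along $\tau$ the sequence $x'_i = \pi_C(x_i)$ lies on $\tau'$ and satisfies
\[
\sum_{i=0}^{n-1} \dist(x'_i, x'_{i+1}) \leq \sum_{i=0}^{n-1} \dist(x_i, x_{i+1}) \leq \len(\tau).
\]
Taking the supremum over partitions of $\tau$ that induce arbitrarily fine partitions of $\tau'$ yields $\len(\tau') \leq \len(\tau)$.

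The main obstacle is the $1$-Lipschitz inequality itself: in $\E^3$ it is immediate from the Pythagorean theorem, while in $\H^3$ one has to handle a hyperbolic quadrilateral with two non-acute base angles, either through explicit hyperbolic trigonometry (splitting into triangles and applying the law of cosines) or by referring to the general CAT$(0)$ comparison geometry of $\H^3$.
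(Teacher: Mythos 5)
The paper does not actually prove this lemma: it is quoted as the hyperbolic version of the classical Busemann--Feller lemma with a reference to the literature, and the trigonometric tools it develops (the trapezoid cosine/sine laws of Lemmas~\ref{coslaw} and~\ref{sinlaw}) are used elsewhere. Your proposal supplies the standard textbook proof, and its overall structure is sound: uniqueness and existence of the nearest point via convexity and properness of $\dist(x,\cdot)$, the first-variation argument giving $\ang(xx'y')\geq\pi/2$ and $\ang(x'y'y)\geq\pi/2$ since the segment $[x',y']$ lies in $C$, the resulting $1$-Lipschitz estimate $\dist(x',y')\leq\dist(x,y)$, and then rectifiability and the length bound by taking suprema over partitions pushed forward through the (continuous) projection. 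This is exactly what a self-contained proof of Lemma~\ref{bflemma} should look like.

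The only step you state too casually is the quadrilateral comparison. ``Splitting by the diagonal $[x,y']$ and applying the law of cosines twice'' does not go through directly: in the triangle $xx'y'$ the obtuse angle at $x'$ gives $\dist(x,y')\geq\dist(x',y')$, but the angle $\ang(xy'y)$ is only bounded below by $\ang(x'y'y)-\ang(x'y'x)$, which need not be $\geq\pi/2$, so the second application of the cosine law does not yield $\dist(x,y)\geq\dist(x,y')$ without further work. Your alternative route is the correct one and should be made the main argument: project $x$ and $y$ orthogonally onto the geodesic line $\ell$ through $x'$ and $y'$ (note the four points need not be coplanar in $\H^3$). Since $\dist(x,\cdot)$ is convex along $\ell$ and nondecreasing at $x'$ in the direction of $y'$ by the angle condition, the foot of $x$ lies on the ray from $x'$ away from $y'$, and symmetrically for $y$; hence the feet are at distance $\geq\dist(x',y')$, and the $1$-Lipschitzness of the projection onto a geodesic line (a direct computation, or the trapezoid formulas of Lemma~\ref{coslaw}, which give $\sinh$ of the projected distance contracted by $\cosh$ factors) finishes the estimate. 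With that correction the proof is complete.
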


We mentioned in Section~\ref{relsec} that the space of oriented planes of $\H^3$ can be identified with \emph{the de Sitter space} $\dS^3$, the model Lorentzian space of curvature 1. Vice versa, points of $\H^3$ correspond to space-like planes in $\dS^3$. Sometimes we will be using some simple facts on this duality. We refer to~\cite{HR, Sch2, Pro3} for more details.

\subsection{Trigonometry}

In this subsection we review some elementary facts from hyperbolic trigonometry that we will use in the remainder. 

By a trapezoid we mean a convex hyperbolic quadrilateral $p_1p_2q_2q_1$ such that $\ang p_1q_1q_2=\ang p_2q_2q_1=\pi/2$. Take a trapezoid $p_1p_2q_2q_1$ and denote the lengths $p_1q_1$, $p_2q_2$, $p_1p_2$, $q_1q_2$ by $a$, $b$, $c$, $\gamma$ and the angles $\ang q_1p_1p_2$, $\ang q_2p_2p_1$ by $\beta$ and $\alpha$ respectively. In what follows we will be using the analogues of the cosine and sine laws for trapezoids. The proofs are straightforward following, e.g., the approach of Thurston in~\cite[Chapter 2.6]{Thu}.

\begin{lm}[The cosine laws]
\label{coslaw}
For a trapezoid we have
$$\cos (\alpha)=\frac{\sinh (b)\cosh (c) - \sinh (a)}{\cosh (b)\sinh (c)},$$
$$\cosh (\gamma) = \frac{\sinh (a) \sinh (b) +\cosh (c)}{\cosh (a) \cosh (b)}.$$
\end{lm}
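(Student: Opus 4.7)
My plan is to work in the hyperboloid model $\H^2 \subset \R^{2,1}$ with Lorentzian form $\langle x, y \rangle = x_1 y_1 + x_2 y_2 - x_3 y_3$, and exploit the two standard identities $\cosh d(P, Q) = -\langle P, Q \rangle$ for $P, Q \in \H^2$ and $\cos \theta = \langle u, v \rangle$ for the angle between unit tangent vectors at a common point. The two right angles at $q_1$ and $q_2$ will let me place all four vertices into explicit coordinates, reducing both identities to a direct computation.

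Concretely, I would place $q_1 = (0, 0, 1)$ and parameterize the geodesic $q_1 q_2$ along the $x_1$-axis so that $q_2 = (\sinh \gamma, 0, \cosh \gamma)$. The right angle at $q_1$ forces $p_1$ to lie on the perpendicular geodesic there, yielding $p_1 = (0, \sinh a, \cosh a)$. Similarly, the perpendicular geodesic to $q_1 q_2$ at $q_2$ has unit tangent $(0, 1, 0)$ in the ambient space, so $p_2 = (\sinh \gamma \cosh b, \sinh b, \cosh \gamma \cosh b)$. Plugging these into $\cosh c = -\langle p_1, p_2 \rangle$ immediately gives
$$\cosh c = \cosh a \cosh b \cosh \gamma - \sinh a \sinh b,$$
which is exactly the stated cosine law for $\gamma$.

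For the angle $\alpha$ at $p_2$, I would use the standard formula $w = (p_1 - p_2 \cosh c)/\sinh c$ for the unit tangent at $p_2$ toward $p_1$, and obtain the unit tangent $u$ at $p_2$ toward $q_2$ by differentiating the perpendicular geodesic from $q_2$ at parameter $t = b$ and flipping sign. Then $\cos \alpha = \langle w, u \rangle$; using $\langle p_2, u \rangle = 0$ to kill one term, this reduces to an expression in $\sinh a$, $\sinh b$, $\cosh b$, $\cosh \gamma$ and $\sinh c$. Substituting the already-established identity in the form $\cosh a \cosh \gamma = (\cosh c + \sinh a \sinh b)/\cosh b$ to eliminate $\cosh \gamma$, and then simplifying via $\cosh^2 b - \sinh^2 b = 1$, the expression collapses to $(\sinh b \cosh c - \sinh a)/(\cosh b \sinh c)$, as claimed.

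The whole derivation is mechanical, and I do not anticipate any serious obstacle beyond careful sign tracking when orienting the unit tangent vectors at $p_2$. An alternative approach would be to drop the diagonal $p_1 q_2$, split the trapezoid into the two right triangles $p_1 q_1 q_2$ and $p_1 p_2 q_2$, and iterate the classical right-triangle cosine and sine laws; however, the coordinate method above is more transparent and avoids introducing auxiliary angles and lengths.
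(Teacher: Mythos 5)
Your proof is correct and follows exactly the approach the paper points to: the paper does not write out a proof but refers to Thurston's hyperboloid-model treatment in \cite[Chapter~2.6]{Thu}, which is precisely the Lorentzian inner-product computation you carry out. The coordinates you chose are valid (both $p_1$ and $p_2$ lie on the same side of $q_1q_2$, so the quadrilateral is the trapezoid), the identity $\cosh c = \cosh a\cosh b\cosh\gamma - \sinh a\sinh b$ follows immediately from $-\langle p_1,p_2\rangle$, and the tangent-vector computation for $\alpha$ with $u = -(\sinh b\, q_2 + \cosh b\,(0,1,0))$ reduces correctly to $\cos\alpha = (\sinh b\cosh c - \sinh a)/(\cosh b\sinh c)$ after eliminating $\cosh\gamma$.
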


\begin{lm}[The sine law]
\label{sinlaw}
For a trapezoid we have
$$\frac{\sin (\alpha)}{\cosh (a)}=\frac{\sin (\beta)}{\cosh (b)}=\frac{\sinh (\gamma)}{\sinh (c)}.$$
\end{lm}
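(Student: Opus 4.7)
The plan is to derive the sine law purely algebraically from the two cosine laws of Lemma~\ref{coslaw}, by computing one common symmetric quantity from each end of the claimed chain of equalities. The key observation is that both $\sin^2\alpha$ and $\sinh^2\gamma$, once cleared of denominators, collapse onto the same symmetric polynomial in $\sinh a,\sinh b,\cosh c$.

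First I would compute $\sin^2\alpha = 1-\cos^2\alpha$ from the first cosine law, giving
$$\sin^2\alpha = \frac{\cosh^2 b\,\sinh^2 c - (\sinh b\cosh c - \sinh a)^2}{\cosh^2 b\,\sinh^2 c}.$$
Expanding the square in the numerator and using the identities $\cosh^2 b = 1+\sinh^2 b$ and $\sinh^2 c = \cosh^2 c -1$, the terms containing $\sinh^2 b\cosh^2 c$ cancel, and after simplification one obtains
$$\sin^2\alpha \cdot \cosh^2 b \cdot \sinh^2 c \;=\; \Delta, \qquad \Delta := \sinh^2 c - \sinh^2 a - \sinh^2 b + 2\sinh a\sinh b\cosh c.$$
Notice that $\Delta$ is symmetric in $a\leftrightarrow b$.

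Next I would compute $\sinh^2\gamma = \cosh^2\gamma - 1$ from the second cosine law. Clearing denominators yields
$$\sinh^2\gamma\cdot\cosh^2 a\cosh^2 b \;=\; (\sinh a\sinh b + \cosh c)^2 - \cosh^2 a\cosh^2 b,$$
and expanding together with the identity $\cosh^2 a\cosh^2 b - \sinh^2 a\sinh^2 b = 1+\sinh^2 a+\sinh^2 b$ produces exactly the same polynomial $\Delta$. Comparing the two expressions, I get
$$\frac{\sin^2\alpha}{\cosh^2 a} \;=\; \frac{\Delta}{\cosh^2 a\cosh^2 b\sinh^2 c} \;=\; \frac{\sinh^2\gamma}{\sinh^2 c}.$$
Since the trapezoid is convex, $\alpha\in(0,\pi)$, so $\sin\alpha>0$; the other quantities are manifestly positive. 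Taking positive square roots gives $\sin\alpha/\cosh a = \sinh\gamma/\sinh c$.

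The remaining equality $\sin\beta/\cosh b = \sinh\gamma/\sinh c$ follows from the symmetry of the trapezoid under the reflection $p_1\leftrightarrow p_2$, $q_1\leftrightarrow q_2$, which interchanges $(\alpha,a)$ with $(\beta,b)$ while preserving $\gamma$ and $c$; applying the first cosine law with the roles of $a$ and $b$ swapped reproduces the same derivation. There is no substantive obstacle in this argument: it is a direct algebraic consequence of Lemma~\ref{coslaw}, and the only care needed is bookkeeping in the expansion to confirm that $\Delta$ genuinely appears on both sides.
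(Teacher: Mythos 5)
Your algebra checks out: from the first cosine law, $\sin^2\alpha\,\cosh^2 b\,\sinh^2 c = \sinh^2 c - \sinh^2 a - \sinh^2 b + 2\sinh a\sinh b\cosh c$, and from the second, $\sinh^2\gamma\,\cosh^2 a\,\cosh^2 b$ equals the same symmetric quantity $\Delta$, so $\sin\alpha/\cosh a=\sinh\gamma/\sinh c$ after the (correctly justified) positive square roots, and the $\beta$-equality follows from the $a\leftrightarrow b$, $\alpha\leftrightarrow\beta$ symmetry since $\Delta$ is symmetric. This is a genuinely different route from the paper's: the paper gives no computation at all and simply points to Thurston's method (Chapter 2.6 of his notes), i.e.\ a direct derivation in the hyperboloid/Minkowski model where both the cosine and sine laws drop out of inner-product (Gram matrix) identities for the vectors representing vertices and edge normals. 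Your derivation buys self-containedness within the paper, since it uses only Lemma~\ref{coslaw} plus elementary identities, at the cost of being logically downstream of both cosine laws (in particular of the second one, which pairs $\gamma$ with $a,b,c$), whereas the model-space computation establishes each law independently and also explains the otherwise slightly mysterious pairing $\sin\alpha\leftrightarrow\cosh a$ via duality. Either way, your proof is complete and correct.
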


Next, we recall few standard formulas on right-angled hyperbolic pentagons and hexagons. Proofs can be found in~\cite[Chapter 2.3]{Bus}.

\begin{lm}
\label{pent}
Consider a right-angled hyperbolic pentagon with side lengths $a$, $b$, $\alpha$, $c$, $\beta$ clockwise. Then we have
$$\cosh(c)=\sinh(a)\sinh(b),$$
$$\cosh(c)=\coth(\alpha)\coth(\beta).$$
\end{lm}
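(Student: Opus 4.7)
My plan is to prove the first identity by decomposing the pentagon into two trapezoids (three-right-angled quadrilaterals) and invoking Lemma~\ref{coslaw}, then to deduce the second identity by purely cyclic algebra. Label the vertices $V_1,\ldots,V_5$ clockwise so that $|V_1V_2|=a$, $|V_2V_3|=b$, $|V_3V_4|=\alpha$, $|V_4V_5|=c$, $|V_5V_1|=\beta$; all interior angles equal $\pi/2$.

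For the first identity, I would drop the perpendicular from $V_2$ onto the line through $V_4V_5$, with foot $H$. Convexity of the pentagon together with the right angles at $V_4$ and $V_5$ forces $H$ to lie strictly between $V_4$ and $V_5$, so the segment $V_2H$ splits the pentagon into two convex quadrilaterals $V_2V_3V_4H$ and $V_1V_2HV_5$, each with three right angles (hence each a trapezoid in the sense of Lemmas~\ref{coslaw}--\ref{sinlaw}). Applying the first cosine formula of Lemma~\ref{coslaw} to each trapezoid, with the acute vertex placed at $V_2$, yields expressions for $\cos\ang V_3V_2H$ and $\cos\ang V_1V_2H$ in terms of $a,b,\alpha,\beta$ and the unknown lengths $|V_4H|$, $|HV_5|$. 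Combining these with the angle-sum constraint $\ang V_3V_2H + \ang V_1V_2H = \pi/2$ and with the identity $c=|V_4H|+|HV_5|$, expanded via the addition formula for $\cosh$, produces a small system that collapses, after standard hyperbolic manipulations, to $\cosh(c)=\sinh(a)\sinh(b)$.

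Given the first identity, the second follows purely algebraically. Applying the first identity cyclically to the other sides of the pentagon: $\alpha$ is non-adjacent to $a$ and $\beta$, and $\beta$ is non-adjacent to $b$ and $\alpha$, so
\begin{equation*}
\cosh(\alpha) = \sinh(a)\sinh(\beta), \qquad \cosh(\beta) = \sinh(b)\sinh(\alpha).
\end{equation*}
Multiplying and substituting $\sinh(a)\sinh(b)=\cosh(c)$ yields $\cosh(\alpha)\cosh(\beta) = \cosh(c)\sinh(\alpha)\sinh(\beta)$, i.e.\ $\cosh(c) = \coth(\alpha)\coth(\beta)$.

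The main obstacle is the symbolic cleanup in the first step: two trapezoid cosine identities plus the angle-sum constraint produce a rather opaque expression, and one must verify that it collapses cleanly to $\sinh(a)\sinh(b)$ rather than to a weaker relation. A convenient shortcut is to bring in Lemma~\ref{sinlaw} as well, so that both $\sin\ang V_1V_2H$ and $\cos\ang V_3V_2H$ are expressed in the same four variables; the angle-sum constraint $\sin\ang V_1V_2H = \cos\ang V_3V_2H$ then becomes a single clean equation which, combined with the $\cosh$ addition formula for $c$, gives the result. Once the first identity is in hand, the second is essentially immediate.
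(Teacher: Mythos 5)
Your argument is correct, but note that the paper does not actually prove this lemma — it only cites \cite[Chapter 2.3]{Bus} — so what you have is a genuinely self-contained alternative built on the paper's own trapezoid formulas (Lemmas~\ref{coslaw} and~\ref{sinlaw}). Two remarks. First, your list of unknowns should also include the height $h=|V_2H|$, not just $c_1=|V_4H|$ and $c_2=|HV_5|$; this is harmless because each Lambert quadrilateral supplies more relations than you quoted, and the collapse you were worried about is in fact clean: reading the first cosine law of Lemma~\ref{coslaw} at the right angle at $H$ (i.e.\ with the roles of the two non-right vertices exchanged) gives $\sinh b=\sinh c_1\cosh h$ and $\sinh a=\sinh c_2\cosh h$; the sine law gives $\sin\ang V_1V_2H=\cosh c_2/\cosh a$, while the cosine law at $V_2$, simplified by the previous identity, gives $\cos\ang V_3V_2H=\sinh c_1\sinh h/\cosh b$; the angle-sum constraint and its mirror image then yield $\cosh c_1\cosh c_2=\sinh c_1\sinh c_2\sinh^2 h$, and the addition formula turns $\cosh(c_1+c_2)$ into $\sinh c_1\sinh c_2\cosh^2 h=\sinh a\sinh b$, as desired. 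Second, your location claim for $H$ is justified exactly as you suspect: the lines containing $V_3V_4$ and $V_5V_1$ are the perpendiculars to the line through $V_4V_5$ at its endpoints, convexity places $V_2$ strictly between them, and the nearest-point projection of that region onto the line lands in the open segment $V_4V_5$; convexity also keeps $V_2H$ inside the pentagon. Finally, the cyclic deduction of $\cosh(c)=\coth(\alpha)\coth(\beta)$ from the first identity is correct, since the first identity, once established for an arbitrary right-angled pentagon, applies to every cyclic relabeling of the sides.
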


\begin{lm}
\label{hexcos}
Consider a right-angled hyperbolic hexagon with side lengths $a$, $\gamma$, $b$, $\alpha$, $c$, $\beta$ clockwise. Then we have
$$\cosh(\alpha)=\frac{\cosh(b)\cosh(c)+\cosh(a)}{\sinh(a)\sinh(b)}.$$
\end{lm}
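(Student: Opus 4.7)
The plan is to drop the common perpendicular between the two opposite sides of length $a$ and $\alpha$, splitting the hexagon into two right-angled pentagons, and then to apply Lemma~\ref{pent} to each of them. This mirrors the proof in~\cite[Chapter 2.4]{Bus}.

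Label the vertices of the hexagon $V_1,\ldots,V_6$ so that $V_iV_{i+1}$ (indices taken mod $6$) carries the $i$-th listed side; in particular $V_1V_2$ has length $a$ and $V_4V_5$ has length $\alpha$. The geodesic lines through these two sides are disjoint (they lie on opposite sides of a compact convex hexagon with right angles at every vertex), hence admit a unique common perpendicular. A short argument using the right angles at $V_1,V_2,V_4,V_5$ and the convexity of the hexagon shows that this perpendicular is a segment $PQ$ with $P\in V_1V_2$ and $Q\in V_4V_5$, lying entirely inside the hexagon. Set $h=|PQ|$, $a_1=|V_1P|$, $a_2=|PV_2|$, $\alpha_1=|V_4Q|$, $\alpha_2=|QV_5|$; then $PQ$ splits the hexagon into two right-angled pentagons $PV_2V_3V_4Q$ and $V_1PQV_5V_6$, with clockwise sides $(a_2,\gamma,b,\alpha_1,h)$ and $(a_1,h,\alpha_2,c,\beta)$ respectively.

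Applying the cyclic form of Lemma~\ref{pent} to each pentagon produces, among others, the relations
\[
\cosh\alpha_1=\sinh a_2\sinh\gamma,\qquad \cosh\alpha_2=\sinh\beta\sinh a_1,\qquad \cosh b=\sinh h\sinh a_2,
\]
\[
\cosh c=\sinh h\sinh a_1,\qquad \cosh h=\sinh\gamma\sinh b=\sinh c\sinh\beta.
\]
Expand $\cosh\alpha=\cosh\alpha_1\cosh\alpha_2+\sinh\alpha_1\sinh\alpha_2$ and $\cosh a=\cosh a_1\cosh a_2+\sinh a_1\sinh a_2$ via the addition formulas, substitute the pentagon identities above, and use $\sinh^2=\cosh^2-1$ to eliminate the auxiliary lengths $a_1,a_2,\alpha_1,\alpha_2,h,\gamma,\beta$. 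This yields the stated identity. The main obstacle is the algebraic bookkeeping in this final reduction: the cleanest route is to first use the two expressions for $\cosh h$ to replace $\sinh\gamma$ and $\sinh\beta$ by expressions in $b,c,h$, collapsing the computation to a single manipulation in the variables $a_1,a_2,h$ (or equivalently $a,b,c,h$) before invoking $\sinh^2=\cosh^2-1$ one last time.
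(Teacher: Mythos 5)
The paper itself gives no proof of Lemma~\ref{hexcos} (it only cites Buser), and your two-pentagon decomposition is exactly the standard argument, so the route is sound. Your pentagon relations are all correct instances of Lemma~\ref{pent}, and the final elimination does close up, in fact more cleanly than you suggest: from the two pentagons one also has $\cosh a_2=\sinh\alpha_1\sinh b$ and $\cosh a_1=\sinh\alpha_2\sinh c$, so
$\sinh\alpha_1\sinh\alpha_2\,\sinh b\sinh c=\cosh a_1\cosh a_2$, while
$\cosh\alpha_1\cosh\alpha_2\,\sinh b\sinh c=\sinh a_1\sinh a_2(\sinh\gamma\sinh b)(\sinh\beta\sinh c)=\sinh a_1\sinh a_2\cosh^2 h=\sinh a_1\sinh a_2+\cosh b\cosh c$,
using $\cosh h=\sinh\gamma\sinh b=\sinh\beta\sinh c$, $\cosh b=\sinh a_2\sinh h$, $\cosh c=\sinh a_1\sinh h$; adding and using $\cosh(\alpha_1+\alpha_2)=\cosh\alpha$, $\cosh(a_1+a_2)=\cosh a$ gives $\cosh\alpha\,\sinh b\,\sinh c=\cosh a+\cosh b\cosh c$, with no square roots at all. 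Your claim about the common perpendicular is also true and does admit a short proof (minimize $d(x,y)$ over $x$ in the side $a$ and $y$ in the side $\alpha$: at an endpoint, say $V_2$, convexity forces the minimizing segment into the right-angled corner, so it meets the side at angle at least $\pi/2$, while minimality forces at most $\pi/2$; hence the minimizer is orthogonal to both lines, and a foot at a vertex would produce a triangle with two right angles), but you should include it rather than wave at it.

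The one substantive issue is what identity your computation actually proves: it is $\cosh\alpha=\bigl(\cosh a+\cosh b\cosh c\bigr)/\bigl(\sinh b\,\sinh c\bigr)$, with the two sides \emph{adjacent} to $\alpha$ in the denominator. The statement as printed, with denominator $\sinh(a)\sinh(b)$, is not a valid identity for a general right-angled hexagon (it would force $\sinh a=\sinh c$), so no argument can yield it verbatim; the printed formula is a typo. The corrected version is clearly the intended one: in the proof of Lemma~\ref{hex} the relation actually invoked is $\cosh a\,\sinh\beta\,\sinh\gamma=\cosh\alpha+\cosh\beta\cosh\gamma$, i.e.\ the same rule applied to the side $a$ and its two neighbours. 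So your proof is correct, but you should state explicitly that it establishes the identity with $\sinh(b)\sinh(c)$ in the denominator (and note the misprint), rather than asserting that the bookkeeping ``yields the stated identity'' as written.
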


The next lemma, due to Kubota~\cite{Kub}, is a hyperbolic analogue of the well-known Ptolemy theorem.

\begin{lm}
\label{ptolem}
Consider a hyperbolic quadrilateral inscribed in a circle with side lengths $a$, $b$, $c$ and $d$ clockwise and diagonal lengths $x$ and $y$. Then
$$\sinh\left(\frac{a}{2}\right)\sinh\left(\frac{c}{2}\right)+\sinh\left(\frac{b}{2}\right)\sinh\left(\frac{d}{2}\right)=\sinh\left(\frac{x}{2}\right)\sinh\left(\frac{y}{2}\right).$$
\end{lm}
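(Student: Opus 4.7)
The plan is to reduce the identity to the classical Euclidean Ptolemy theorem by passing to the Poincar\'e disk model. Two facts make this reduction clean: (i) a hyperbolic circle in $\H^2$ is, in the Poincar\'e disk, also a Euclidean circle (with a different Euclidean center in general), so four concyclic points in the hyperbolic sense are concyclic in the Euclidean sense; and (ii) the half-sine of a hyperbolic distance has a particularly nice closed form in terms of the Euclidean chord length. Specifically, for two points $P, Q$ in the open unit disk one has
$$\sinh\!\left(\frac{d_{\H}(P,Q)}{2}\right)=\frac{|P-Q|}{\sqrt{(1-|P|^2)(1-|Q|^2)}},$$
where $|{\cdot}|$ and $|P-Q|$ denote the Euclidean norm and the Euclidean chord length.

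First I would verify (or simply recall) this half-distance formula; it follows by a direct computation from $\cosh d_{\H}(P,Q) = 1 + \frac{2|P-Q|^2}{(1-|P|^2)(1-|Q|^2)}$ combined with the half-angle identity $\sinh^2(t/2)=(\cosh t-1)/2$. Next I would place the four vertices $A, B, C, D$ (in cyclic order along the hyperbolic circle) inside the unit disk and write $a=d_{\H}(A,B)$, $b=d_{\H}(B,C)$, $c=d_{\H}(C,D)$, $d=d_{\H}(D,A)$, $x=d_{\H}(A,C)$, $y=d_{\H}(B,D)$. Applying the half-distance formula to each of the six pairs and multiplying the relevant pairs, the common denominator in every product becomes
$$\sqrt{(1-|A|^2)(1-|B|^2)(1-|C|^2)(1-|D|^2)}.$$

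Finally I would invoke the classical Ptolemy identity for Euclidean cyclic quadrilaterals: since $A, B, C, D$ lie on a common Euclidean circle by (i) above, we have
$$|A-B|\cdot|C-D|+|B-C|\cdot|D-A|=|A-C|\cdot|B-D|.$$
Dividing both sides of this Euclidean identity by the common denominator displayed above produces exactly
$$\sinh(a/2)\sinh(c/2)+\sinh(b/2)\sinh(d/2)=\sinh(x/2)\sinh(y/2),$$
which is the claim.

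The main obstacle is essentially bookkeeping: one must make sure that the cyclic order of $A,B,C,D$ on the hyperbolic circle matches their cyclic order on the Euclidean circle (so that the same pair of diagonals is identified on both sides), and that the points can be simultaneously placed in the Poincar\'e disk as claimed, which is automatic after applying an isometry sending an arbitrary point of the disk to the origin. Once (i) and the half-distance formula are in hand, the proof is a one-line algebraic consequence of Euclidean Ptolemy, with no further trigonometric computation required.
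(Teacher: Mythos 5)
The paper does not prove Lemma~\ref{ptolem}; it simply cites Kubota~\cite{Kub} and records the identity for later use. Your argument supplies an actual proof, so there is no competing proof in the paper to compare it against — but your proof is correct, and it is the standard clean derivation. The two facts you need both hold: in the Poincar\'e disk a hyperbolic metric circle is also a Euclidean circle, so the four vertices are concyclic in the Euclidean sense and their cyclic order (a purely topological notion along the circle) is unchanged; and the half-distance identity
\[
\sinh\!\left(\tfrac{d_{\H}(P,Q)}{2}\right)=\frac{|P-Q|}{\sqrt{(1-|P|^2)(1-|Q|^2)}}
\]
follows directly from $\cosh d_{\H}(P,Q)=1+\tfrac{2|P-Q|^2}{(1-|P|^2)(1-|Q|^2)}$ and $\sinh^2(t/2)=(\cosh t-1)/2$. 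All three products $\sinh(a/2)\sinh(c/2)$, $\sinh(b/2)\sinh(d/2)$, $\sinh(x/2)\sinh(y/2)$ then acquire the common denominator $\sqrt{(1-|A|^2)(1-|B|^2)(1-|C|^2)(1-|D|^2)}$, and Euclidean Ptolemy (valid since $ABCD$ is a genuine cyclic quadrilateral, i.e., the vertices occur in cyclic order on the circle) gives the claimed identity after clearing denominators. The only superfluous remark is the one about moving a point to the origin by an isometry — the four vertices already lie in the disk once you fix the Poincar\'e model, and the half-distance formula is valid for any pair of points in the disk, so no normalization is required. The argument is complete and self-contained; it is exactly the kind of proof one would write in place of the paper's bare citation.
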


Last, we will need a geometric estimate on hyperbolic pairs of pants following from

\begin{lm}
\label{hex}
Let $H=p_1p_2p_3p_4p_5p_6 \subset \H^2$ be a convex right-angled hexagon with sides $a, \gamma, b, \alpha, c, \beta$ ($p_1p_2=a$ and then cyclically). Assume that for some $\Delta>0$ we have $a, b, c \geq \Delta$. Then there exist a number $\delta=\delta(\Delta)>0$ and a point $p \in H$ such that
$$d_{\H^2}(p, \partial H) \geq \delta.$$
\end{lm}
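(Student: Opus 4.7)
I would argue by compactness on the moduli of convex right-angled hexagons in $\H^2$. By Lemma~\ref{hexcos} (applied cyclically), such a hexagon is determined up to isometry by the three alternating side lengths $(a, b, c)$, and the remaining ``short'' side lengths $(\alpha, \beta, \gamma)$ are smooth functions of $(a, b, c) \in (0, \infty)^3$. The inradius $\rho(a, b, c) := \sup_{p \in H} d_{\H^2}(p, \partial H)$ is a positive continuous function on this open octant, and it suffices to show $\inf_{[\Delta, \infty)^3} \rho > 0$ for every fixed $\Delta > 0$.

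Suppose for contradiction that $(a_n, b_n, c_n) \in [\Delta, \infty)^3$ yields $\rho_n \to 0$, and extract a subsequence so each of $a_n, b_n, c_n$ converges in $[\Delta, +\infty]$. If all three limits are finite, $H_n$ converges to a nondegenerate hexagon and $\rho_n \to \rho_\ast > 0$, contradicting $\rho_n \to 0$. Otherwise I would use Lemma~\ref{hexcos} to track the asymptotics of $\alpha_n, \beta_n, \gamma_n$ and identify a right-angle corner of $H_n$ at which both adjacent sides remain of length bounded below by a positive constant depending only on $\Delta$. Concretely: (i) if exactly one of $a_n, b_n, c_n \to \infty$, say $a_n$, then $\alpha_n \to \infty$ while $\beta_n \to \operatorname{arccosh}(\coth c_\ast)$ and $\gamma_n \to \operatorname{arccosh}(\coth b_\ast)$ are positive, and the corner $p_5$ (sides $\alpha, c$) qualifies; (ii) if exactly two of $a_n, b_n, c_n \to \infty$, say $a_n, b_n$, then $\gamma_n \to 0$ while $\alpha_n, \beta_n$ converge to positive finite limits, and the corner $p_5$ still works; (iii) if all three diverge to $\infty$, all three of $\alpha_n, \beta_n, \gamma_n$ tend to $0$ and, after centering each $H_n$ at its approximate incenter, the hexagons converge in the pointed Hausdorff topology to an ideal triangle of positive inradius $\operatorname{arcsinh}(1/\sqrt{3})$.

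In regimes (i)--(ii), the inscribed disk at the identified corner would be constructed via the Lambert-quadrilateral trigonometry derivable from Lemma~\ref{coslaw}: one walks along the angular bisector of the right angle by a prescribed distance into the hexagon and verifies simultaneously the distances to all six sides. The main obstacle here is that the perpendicular distance between the opposite sides $c$ and $\gamma$ can tend to $0$ as $c \to \infty$ (visible from the pentagon formulas of Lemma~\ref{pent}), so one must \emph{not} place the disk near the midpoint of $c$; placing the center in the corner near $p_5$, where the two adjacent sides $\alpha$ and $c$ remain of length at least $\Delta$, leaves room for a definite-radius disk that is unaffected by the thinning ``neck'' in the middle. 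Regime (iii) is handled by standard pointed geometric convergence once the hexagons are recentered appropriately, and the inscribed disk of the limiting ideal triangle persists in $H_n$ for all large $n$. In every case the inradius is bounded below by a positive constant $\delta(\Delta)$, contradicting $\rho_n \to 0$.
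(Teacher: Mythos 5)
Your compactness strategy is a genuinely different route from the paper's direct construction (which either drops an incircle in a right triangle at a corner when some short side is $\ge\delta_1$, or, when all short sides are $\le\delta_1$, computes $\tanh(h_a)$ in closed form at the concurrence point of the altitudes via Lemmas~\ref{pent} and~\ref{hexcos} and bounds it away from zero). A compactness argument can be made to work and would be shorter and computation-free, at the price of not yielding an explicit $\delta(\Delta)$. However, as written, your case analysis has two gaps, one cosmetic and one substantive.

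The cosmetic one: the asymptotics of $\alpha_n,\beta_n,\gamma_n$ are not determined by which of $a_n,b_n,c_n$ diverge --- they depend on relative rates. Take $a_n=e^n$, $b_n=c_n=n$ (so all three of $a_n,b_n,c_n\to\infty$, your regime (iii)); then
\[
\cosh\alpha_n=\frac{\cosh a_n+\cosh b_n\cosh c_n}{\sinh b_n\sinh c_n}\ge\frac{\cosh a_n}{\sinh^2 n}\longrightarrow\infty,
\]
so $\alpha_n\to\infty$, not $0$; similarly your regime (ii) claim that $\alpha_n,\beta_n$ converge to finite limits need not hold. The correct organization is to pass to a further subsequence along which $\alpha_n,\beta_n,\gamma_n$ also converge in $[0,\infty]$, and then dichotomize on whether some short side stays bounded below by a fixed $\delta_1>0$ (in which case the incircle of the right triangle at the corresponding corner works, as in the paper's first case) or all three short sides tend to $0$.

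The substantive gap is the latter case, which is exactly the crux. ``Centering each $H_n$ at its approximate incenter'' is undefined and, in effect, circular: the incenter's distance to $\partial H_n$ is precisely what you are trying to bound below, so you cannot use it as the normalization anchor without further argument. To make this rigorous you need a concrete base point --- for instance, place the common perpendicular between the geodesic lines through sides $a$ and $b$ along a fixed geodesic with its midpoint at the origin --- then show that all three long geodesic lines converge, that they converge to a pairwise-asymptotic triple precisely because $\alpha_n,\beta_n,\gamma_n\to0$, and that the short sides escape to the ideal vertices, so that $\partial H_n$ converges locally uniformly to the sides of an ideal triangle. Only then does the persistence of the inscribed disk of radius $\operatorname{arcsinh}(1/\sqrt3)$ in $H_n$ follow. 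This is all doable, but it is the heart of the lemma and is exactly where the paper instead does the real work through explicit trigonometry.
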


\begin{proof}
Take any $\delta_1 > 0$. First suppose that $\alpha \geq \delta_1$. It is easy to see that there exists $\delta_2=\delta_2(\delta_1, \Delta)>0$ such that since $b \geq \Delta$ and $\alpha \geq \delta_1$, the inradius of the right-angled triangle $p_3p_4p_5$ is at least $\delta_2$.  We can take the incenter of $p_3p_4p_5$ as $p$. 

The same procedure applies if $\beta$ or $\gamma$ are at least $\delta_1$. Hence, it remains to suppose that all $\alpha,\beta,\gamma \leq \delta_1$.

Let $q$ be the intersection of the altitudes and let the foot of the altitude divide $p_1p_2$ in the segments of lengths $a_1, a_2$ respectively. We define similarly  $\gamma_1, \gamma_2$, $b_1, b_2$ and so on (in the cyclic order). By $h_a$ denote $d_{\H^2}(q, p_1p_2)$. 
Then
$$\tanh (h_a) = \cosh (a_1) \tanh (\beta_2).$$
From Lemma~\ref{pent} we can also express
$$\tanh (\beta_2) = \frac{\coth (\gamma)}{\cosh (a)}.$$
Substitute this and get 
$$\tanh (h_a)=\frac{\cosh (a_1) \cosh (\gamma)}{\cosh (a) \sinh (\gamma)}=\frac{\cosh (a_1) \cosh (\gamma) \sinh (\beta)}{\cosh (\beta) \cosh (\gamma) + \cosh (\alpha)}=$$ $$=\frac{\sinh (a_1)\cosh (a_1)\cosh(\gamma) \sinh (\beta)}{\sinh (a_1)(\cosh (\beta) \cosh (\gamma) + \cosh (\alpha))}=\frac{\coth (a_1)\cosh(\gamma){\cosh (\alpha_1)}}{\cosh (\beta) \cosh (\gamma) + \cosh (\alpha)}.$$
Here we used Lemma~\ref{pent} and Lemma~\ref{hexcos}. It is not hard to see that under the condition $\alpha,\beta,\gamma \leq \delta_1$ the last expression is bounded away from zero. Hence, for some $\delta_3=\delta_3(\delta_1)>0$ we have $h_a \geq \delta_3$.

Let $q_a$ be the foot of the altitude to $p_1p_2$. Either $a_1$ or $a_2$ is $\geq \Delta/2$. Assume that $a_1 \geq \Delta/2$. Consider the right-angled triangle $qq_ap_1$. There exists $\delta_4=\delta_4(\delta_1, \Delta)>0$ such that since $a_1 \geq \Delta/2$ and $h_a \geq \delta_3$, the inradius of $qq_ap_1$ is at least $\delta_4$. In this case we can take the incenter of $qq_ap_1$ as $p$.

Hence, we can set $\delta$ as the minimum of $\delta_2$ and $\delta_4$.
\end{proof}

\begin{crl}
\label{pants}
There exists a continuous function $\delta=\delta(\Delta): \R_{>0} \rightarrow \R_{>0}$ such that if $P$ is a hyperbolic pair of pants with the length of each boundary component at least $\Delta>0$, then for some $p \in P$ we have $d(p, \partial P)>\delta$, where $d$ is the metric on $P$.
\end{crl}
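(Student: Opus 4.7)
The plan is to reduce to Lemma~\ref{hex} via the classical decomposition of a hyperbolic pair of pants into two congruent right-angled hexagons. Recall that every hyperbolic pair of pants $P$ with boundary lengths $\ell_1,\ell_2,\ell_3$ admits three disjoint simple geodesic arcs $\sigma_1,\sigma_2,\sigma_3$ (the \emph{seams}), each being the common perpendicular between a pair of boundary components; cutting $P$ along them yields two congruent right-angled hexagons $H,H'\subset\H^2$. Their alternating sides are, in cyclic order, $\ell_1/2, s_3, \ell_2/2, s_1, \ell_3/2, s_2$, where the $\ell_i/2$-sides are the halves of the boundary geodesics and the $s_i$-sides are the seams. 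Inside $P$ the seams lie in the interior, while $\partial P$ is exactly the union of the six half-boundary sides coming from $H$ and $H'$; in particular, $\partial P\cap H\subset\partial H$.

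Assuming $\ell_1,\ell_2,\ell_3\geq\Delta$, the three alternating ``boundary'' sides of $H$ have lengths at least $\Delta/2$. Applying Lemma~\ref{hex} to $H$ with the lower bound $\Delta/2$ will produce a point $p\in H$ and a constant $\delta_0(\Delta)>0$ such that $d_{\H^2}(p,\partial H)\geq\delta_0(\Delta)$. I take this $p$ as the desired point in $P$ and claim that $d_P(p,\partial P)\geq\delta_0(\Delta)$.

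To justify the transfer, let $\gamma\colon[0,1]\to P$ be any rectifiable curve with $\gamma(0)=p$ and $\gamma(1)\in\partial P$. Since $p$ lies in the interior of $H$ and $\partial P\cap H\subset\partial H$, there is a first parameter $t_0\in(0,1]$ at which $\gamma$ meets $\partial H$: either $\gamma|_{[0,t_0]}$ stays in $H$ and reaches $\gamma(t_0)=\gamma(1)\in\partial P\cap H\subset\partial H$, or $\gamma$ exits $H$ by crossing a seam at $\gamma(t_0)$. In either case $\gamma|_{[0,t_0]}$ is a curve in $H$ from $p$ to $\partial H$, so $\mathrm{length}(\gamma)\geq d_{\H^2}(p,\partial H)\geq\delta_0(\Delta)$. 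Taking the infimum over $\gamma$ gives $d_P(p,\partial P)\geq\delta_0(\Delta)$; setting $\delta(\Delta):=\delta_0(\Delta)/2$ yields the strict inequality in the statement.

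For continuity of $\delta$ in $\Delta$: inspecting the proof of Lemma~\ref{hex} with the auxiliary parameter $\delta_1$ fixed (e.g.\ $\delta_1=1$), the quantities $\delta_2(\delta_1,\Delta)$ and $\delta_4(\delta_1,\Delta)$ arise as inradii of right-angled hyperbolic triangles whose parameters depend continuously on $\Delta$; hence $\delta_0$, and thus $\delta=\delta_0/2$, may be taken continuous. Should the explicit bookkeeping become awkward, one can instead replace $\delta_0$ by any continuous positive minorant. The only place requiring a moment's care, though technically easy, is the transfer step: one must observe that the seams, despite being non-boundary in $P$, are nevertheless part of $\partial H$, so crossing them cannot shortcut the hexagonal distance estimate.
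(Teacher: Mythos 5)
Your proof is correct and follows the route the paper implicitly intends: decompose the pair of pants along its seams into two congruent convex right-angled hexagons, apply Lemma~\ref{hex} with lower bound $\Delta/2$ on the boundary sides, and note that since $H$ is convex and $\partial P\cap H\subset\partial H$, any path in $P$ from the interior point $p$ to $\partial P$ must first traverse $H$ and so has length at least $d_{\H^2}(p,\partial H)$. Your handling of the strict inequality and of continuity (halving the bound, replacing by a continuous minorant if needed) is also appropriate, so this matches the paper's intent.
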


Until the end of the paper we assume that we fixed a function $\delta$ from Corollary~\ref{pants}.

\subsection{Cone-metrics on surfaces}
\label{conesec}

Let $S$ be a closed surface and $V \subset S$ be a finite set of marked points. In this section we review some facts about hyperbolic cone-metrics on $S$. In what follows we will be saying just ``cone-metric'' always meaning ``hyperbolic cone-metric''. We denote the set of cone-points of a cone-metric $d$ by $V(d)$. We say that $d$ is a cone-metric on $(S, V)$ if $d$ is a cone-metric on $S$ and $V \supseteq V(d)$. For $v \in V$ we denote its cone-angle by $\kappa_v(d)$ or just $\kappa_v$ (which is equal to $2\pi$ if $v \notin V(d)$). 

\begin{dfn}
A \emph{triangulation} $\mathcal T$ of $(S, V)$ is a collection of simple disjoint arcs with endpoints in $V$ that cut $S$ into triangles. Here a triangle is a topological 2-disk with three marked points at the boundary (possibly coinciding as points of $V$). We call two triangulations of $(S, V)$ \emph{equivalent} if they are isotopic on $S$ by an isotopy fixing $V$. 
\end{dfn}

We denote the set of edges of $\mc T$ be $E(\mc T)$.

\begin{dfn}
A \emph{geodesic triangulation} of $(S, V, d)$ is a triangulation of $(S, V)$ such that all edges are geodesics in $d$. 
\end{dfn}

\begin{dfn}
Let $\mathcal T$ be a triangulation of $(S, V)$ and $d$ be a cone-metric on $(S, V)$. We say that $\mathcal T$ is \emph{realized} by $d$ if there is a geodesic triangulation of $(S, V, d)$ equivalent to $\mathcal T$.
\end{dfn}

It is not hard to notice that for every cone-metric $d$ on $(S, V)$ there exists a geodesic triangulation of $(S, V, d)$. Of special importance are \emph{Delaunay triangulations}. Generically a Delaunay triangulation of $(S, V, d)$ is unique, but in some cases it is not and we need to consider a more general \emph{Delaunay decomposition}.

\begin{dfn}
Let $d$ be a cone-metric on $(S, V)$. A \emph{Delaunay decomposition} of $(S, V, d)$ is a collection of simple disjoint geodesic arcs with endpoints in $V$ that cut $S$ into cells isometric to compact convex hyperbolic polygons such that each of the polygons can be inscribed in a circle and if we develop any two adjacent cells to $\H^2$, then the closed disk bounded by the circumcircle of each polygon does not contain vertices of the other polygon except the endpoints of the common edge. 
\end{dfn}

Note that here a priori by a circle we mean either a metric circle in $\H^2$, or a horocycle, or a hypercycle. However, it is known that actually the circumcircle of each cell in the Delaunay decomposition of a closed surface is a metric circle, see~\cite[Theorem 14]{GGLSW} or~\cite[Remark 3.7]{Pro}. Another well-known fact is 

\begin{thm}[\cite{MS}, Section 4]
A Delaunay decomposition of $(S, V, d)$ exists and is unique.
\end{thm}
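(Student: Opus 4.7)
My plan is to prove both existence and uniqueness by the convex-hull lifting to Minkowski $3$-space. First I would pass to the universal cover $\tilde S$ of $S \setminus V$ with its lifted hyperbolic structure and fix a developing map $\Phi: \tilde S \to \H^2$, equivariant with respect to the holonomy $\rho: \pi_1(S \setminus V) \to \mathrm{Isom}(\H^2)$. The map $\Phi$ extends to the metric completion by sending each lifted cone-point to a well-defined point in $\H^2$; the image $\tilde V$ of the extended vertex set is a $\rho$-invariant locally finite subset of $\H^2$, local finiteness coming from compactness of $S$ and finiteness of $V$.

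For existence, view $\H^2$ as the upper sheet of the hyperboloid in Minkowski $3$-space $\R^{2,1}$ and let $K$ be the closed convex hull of $\tilde V \subset \R^{2,1}$. Each face $F$ of $\partial K$ visible from the origin lies on a unique affine plane $H$, and the radial projection of $F$ from the origin onto $\H^2$ is a compact convex polygon inscribed in the curve $H \cap \H^2$, which is a priori a metric circle, a horocycle, or a hypercycle. Convexity of $K$ forces all other points of $\tilde V$ to lie strictly on the far side of $H$, which translates on $\H^2$ to the empty-disk Delaunay condition. The collection of these polygons tiles $\H^2$ $\rho$-equivariantly and descends to the desired decomposition of $(S, V, d)$; the fact that the inscribing curves are genuine metric circles for a closed surface is the quoted result of \cite[Theorem 14]{GGLSW}.

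For uniqueness, the same picture runs in reverse: any Delaunay decomposition lifts equivariantly to a cell decomposition of $\H^2$ by circle-inscribed polygons with the empty-disk property. Each lifted cell spans a unique affine plane in $\R^{2,1}$, and the empty-disk condition forces all of $\tilde V$ to lie in one of the closed half-spaces bounded by this plane, so the plane supports $K$ and the lifted cell is a face of $\partial K$ visible from the origin. Since $K$ is canonically determined by $\tilde V$, any two Delaunay decompositions coincide.

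The main technical obstacle is verifying that the visible faces of $\partial K$ actually project onto all of $\H^2$, so that the construction yields a decomposition and not merely a partial one, and that the resulting tiling is locally finite. Local finiteness is a consequence of local finiteness of $\tilde V$ together with $\rho$-equivariance, while coverage reduces to showing that no face of $\partial K$ is ``hidden behind the light cone''; this follows because $S$ is closed with cone angles strictly less than $2\pi$, so the extended vertex set $\tilde V$ has no limit points on the ideal boundary of $\H^2$ in any compact direction, and a standard exhaustion argument closes the gap.
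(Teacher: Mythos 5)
There is a genuine gap, and it sits exactly where your construction needs to get off the ground. For a hyperbolic cone-surface the developing map $\Phi$ of the universal cover of $S\setminus V$ is in general neither injective nor proper, and the holonomy group $\rho(\pi_1(S\setminus V))$ is typically \emph{not} discrete: a loop around a cone point of angle $\theta$ has elliptic holonomy of rotation angle $\theta$, and when $\theta$ is an irrational multiple of $2\pi$ this element has infinite order with non-discrete orbits. Consequently the developed image $\tilde V$ of the (completed) lifted vertex set is in general not locally finite in $\H^2$ --- your claim that local finiteness follows from compactness of $S$ and finiteness of $V$ is false, since distinct lifts of the same vertex can accumulate. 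Without a discrete, group-invariant point configuration the convex hull $K\subset\R^{2,1}$ has no locally finite face structure, and the whole Epstein--Penner/DeBlois mechanism (which the paper does invoke later, in Lemma~\ref{triang1}, but there for the genuinely discrete orbit of a convex cocompact group) does not apply. A second, related problem is that even when the developed vertex set happens to be discrete, the cone-surface is not isometric to a quotient of a region of $\H^2$, so faces of $\partial K$ do not correspond to intrinsic cells of $(S,V,d)$; note that the paper's Delaunay condition is deliberately local --- one develops only two adjacent cells --- i.e.\ it is an \emph{immersed} empty-disk condition, not the global embedded one your picture presumes. The same defect undermines the uniqueness direction, since a lifted cell need not span a supporting plane of any globally defined hull.

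For comparison: the paper does not prove this statement at all; it quotes it from \cite{MS} (Section 4). The standard argument there, and in its hyperbolic adaptations, is intrinsic: one builds the Voronoi decomposition of $(S,d)$ with respect to $V$ (via the distance functions to the vertices, which are well defined on the surface itself), shows its cells are finite-sided, and dualizes to obtain the Delaunay decomposition; existence and uniqueness then follow from the canonicity of the Voronoi construction, with the circumscribability of the dual cells checked locally by developing a single cell and its neighbors. If you want to salvage a hull-type proof, you would have to work with such local developments (or with the canonical lift of each immersed circumdisk), not with a single global development of the universal cover.
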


\begin{dfn}
A \emph{Delaunay triangulation} of $(S, V, d)$ is a triangulation subdividing the Delaunay decomposition.
\end{dfn}

In particular, there are finitely many Delaunay triangulations, and any two can be connected by a finite sequence of flips via Delaunay triangulations.

For a map $f: (S_1, d_1) \rightarrow (S_2, d_2)$ between two compact metric spaces we define its \emph{distortion} ${\rm dist}(f)$ as
$${\rm dist}(f):=\sup_{p, q\in S_1,~p \neq q}\left|\ln\frac{d_2(f(p), f(q))}{d_1(p,q)}\right|.$$
We say that a sequence of metrics $\{d_i\}$ on a surface $S$ converges to a metric $d$ \emph{in the strong Lipschitz sense} if the identity maps $(S, d_i) \rightarrow (S, d)$ have distortions $\e_i \rightarrow 0$. By $\mf D(S, V)$ we denote the space of hyperbolic cone-metrics on $(S, V)$ endowed with the topology of the strong Lipschitz convergence. 

We need to introduce two equivalence relations on $\mf D(S,V)$. Let $H_0(S, V)$ be the group of self-homeomorphisms of $S$ fixing $V$ and isotopic to identity, endowed with the compact-open topology, and let $H_0^\sharp(S, V)$ be its normal subgroup consisting of homeomorphisms for which the isotopy can be chosen to fix $V$. Denote the respective quotients of $\mf D(S, V)$ by $\mc D(S, V)$ and $\mc D^\sharp(S,V)$. It follows that $\mc D(S, V)$ is the quotient of $\mc D^\sharp(S, V)$ by \emph{the pure braid group} of $(S,V)$, which is $B_0(S, V):=H_0(S, V)/H_0^\sharp(S,V)$. In most of the paper we will use the space $\mc D(S, V)$, but for technical reasons we also have to appeal to $\mc D^\sharp(S, V)$. We will abuse the notation, and denote by $d$ sometimes a metric, sometimes its class in $\mc D(S, V)$, but we will denote the elements of $\mc D^\sharp (S, V)$ by~$d^\sharp$.

For a triangulation $\mathcal T$ of $(S, V)$ by $\mf D(S, \mc T) \subset \mf D(S, V)$ and $\mathcal D^\sharp(S, \mathcal T) \subset \mathcal D^\sharp(S, V)$ we denote the subsets of (classes of) metrics realizing $\mathcal T$. It is evident that if $d^\sharp$ realizes $\mc T$, then $d^\sharp$ is determined by $\mathcal T$ and by the edge lengths. Hence, the set $\mathcal D^\sharp(S, \mathcal T)$ can be considered as an open polyhedron in $\R^{E(\mathcal T)}_+$ defined by strict triangle inequalities. The charts $\mathcal D^\sharp(S, \mathcal T)$ endow $\mathcal D^\sharp(S, V)$ with the structure of a smooth manifold of dimension $3(n-k)$ where $n:=|V|$ and $k:=\chi(S)$. It is not hard to check that this topology coincides with the topology induced from $\mf D(S, V)$. 
One can also check that $B_0(S, V)$ acts on $\mc D^\sharp(S, V)$ freely and properly discontinuously. Thus, $\mc D(S, V)$ also gets endowed with the structure of a smooth manifold so that the projection $\mc D^\sharp(S, V) \ra \mc D(S, V)$ is a covering map.

It is known from the works~\cite{Tro} of Troyanov or~\cite{GGLSW} of Gu--Guo--Luo--Sun--Wu that $\mathcal D^\sharp(S, V)$ is diffeomorphic to the product of $\R^n$ with the Teichm\"uller space $\mc T(S, V)$ of $(S, V)$, hence 
\begin{thm}
\label{topold}
The space $\mc D^\sharp(S, V)$ is diffeomorphic to $\R^{3(n-k)}$.
\end{thm}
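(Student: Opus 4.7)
The plan is to establish the intermediate statement made in the paragraph preceding the theorem: namely, a diffeomorphism
$$\Phi: \mc D^\sharp(S, V) \longrightarrow \mc T(S, V) \times \mc K(S, V),$$
where $\mc K(S, V) \subset \R^n_{>0}$ is the open convex subset cut out by the Gauss--Bonnet positivity condition $\sum_{v \in V} \kappa_v < 2\pi(n - k)$. From this the conclusion is immediate: $\mc T(S, V)$ is classically diffeomorphic to $\R^{2n - 3k}$, and $\mc K(S, V)$ is diffeomorphic to $\R^n$ as a convex open subset, so $\mc D^\sharp(S, V) \cong \R^{3(n - k)}$.

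To define $\Phi$, I would send a cone-metric $d$ to the pair $(J(d), (\kappa_v(d))_{v \in V})$. On $S \setminus V$ the hyperbolic metric $d$ is conformal, hence determines a complex structure $J(d)$. A standard local computation near each cone point of angle $\kappa$ shows that in some holomorphic coordinate $z$ one has $d = |z|^{\kappa/\pi - 2} e^{2u} |dz|^2$ with $u$ continuous at $0$, so $J(d)$ extends across $V$ and defines a well-defined class in $\mc T(S, V)$. Smoothness of $\Phi$ in the triangulation charts $\mc D^\sharp(S, \mc T) \subset \R^{E(\mc T)}_+$ is routine: inside each triangle the conformal structure is the restriction of the hyperbolic one on a triangle with the given edge lengths, hence depends smoothly on them; the cone-angle map is linear in the cross-triangle angle data.

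For the inverse, I would invoke Troyanov's uniformization theorem (together with the extension to large cone angles in the work of Gu--Guo--Luo--Sun--Wu): for every $(J, \kappa) \in \mc T(S, V) \times \mc K(S, V)$ there is a unique hyperbolic cone-metric in the conformal class $J$ with cone angles $\kappa$. Uniqueness gives $\Phi \circ \Phi^{-1} = {\rm id}$ and $\Phi^{-1} \circ \Phi = {\rm id}$. Smoothness of $\Phi^{-1}$ would follow by the implicit function theorem applied to the Liouville-type PDE satisfied by the conformal factor: its linearization is Fredholm and invertible on a weighted H\"older space adapted to the conical points, producing smooth dependence of the conformal factor on $(J, \kappa)$.

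The main obstacle is packaging Troyanov's existence--uniqueness result as a genuine smooth-parameter statement, since his argument works one conformal structure at a time. The cleanest way around this is to appeal to Gu--Guo--Luo--Sun--Wu, whose proof is already formulated in smooth families of triangulated cone surfaces and produces the desired diffeomorphism $\mc D^\sharp(S, V) \cong \mc T(S, V) \times \R^n$ directly; the author references them for precisely this reason.
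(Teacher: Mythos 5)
Your proposal is correct and takes essentially the same route as the paper, which simply cites Troyanov and Gu--Guo--Luo--Sun--Wu for the diffeomorphism $\mc D^\sharp(S, V) \cong \mc T(S, V) \times \R^n$ and derives the theorem from it. You have merely unpacked the cited decomposition (conformal class plus cone angles, with the Gauss--Bonnet constraint cutting out a convex subset of $\R^n_{>0}$) in more detail than the paper bothers to.
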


%

By $\mf D_c(S, V)$ we denote the subset of $\mf D_c(S, V)$ consisting of convex cone-metrics with $V=V(d)$. By $\ol{\mf D}_c(S, V)$ we denote the subset of $\mf D_c(S, V)$ consisting of convex cone-metrics.  It is easy to see that $\mf D_c(S, V)$ is open in $\mf D_c(S, V)$ and $\ol{\mf D}_c(S, V)$ is its closure. The same notation applies to the respective subsets of $\mc D(S, V)$ and $\mc D^\sharp(S, V)$.

By $\S^1_l$ we denote the metric circle of length $l$. A \emph{closed $\e$-quasi-geodesic} is the image of $S^1_l$ under an $\e$-quasi-isometry, i.e., under a map $\S^1_l \rightarrow (S, d)$ with distortion at most $\e$. The following consequence of the Arzel\'a--Ascoli theorem was shown in~\cite[Theorem 6.1]{HR}:

\begin{lm}
\label{geodconv}
Let $\{d_i\}$ be a sequence of metrics on $S$ converging in the strong Lipschitz sense to a metric $d$. Let $\{\psi_i \subset S\}$ be closed $\e_i$-quasi-geodesics in $d_i$ with $\e_i \rightarrow 0$ and the lengths $l_i$ of $\psi_i$ converging to $l>0$. Then, up to extracting a subsequence, $\psi_i$ converge to a closed geodesic $\psi \subset S$ in $d$ of length $l$.
\end{lm}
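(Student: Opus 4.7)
The plan is to apply Arzelà–Ascoli on a common domain and then identify the limit as an isometric embedding of a circle. First I would pull back all the maps to a fixed circle. Each quasi-geodesic is, by definition, the image of some parametrization $\psi_i : \S^1_{l_i} \to (S, d_i)$ with ${\rm dist}(\psi_i) \leq \e_i$. Precomposing with the affine identification $\S^1_l \to \S^1_{l_i}$, $t \mapsto (l_i/l)t$, I obtain maps $\tilde\psi_i : \S^1_l \to (S, d_i)$ whose distortion differs from ${\rm dist}(\psi_i)$ by at most $|\ln(l_i/l)|$, hence still tends to $0$. Composing further with the identity $(S, d_i) \to (S, d)$, which has distortion $\delta_i \to 0$ by hypothesis, I get maps $\tilde\psi_i : \S^1_l \to (S, d)$ with distortions $\e'_i \to 0$; note that these are in particular uniformly Lipschitz (with constant $e^{\e'_i}$).

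Next I would apply the Arzelà–Ascoli theorem. The family $\{\tilde\psi_i\}$ is equicontinuous because the distortions are uniformly bounded, and its image lies in the compact space $S$ equipped with the continuous metric $d$. Therefore some subsequence converges uniformly to a continuous map $\psi : \S^1_l \to (S, d)$. Passing to the limit in the distortion bound $e^{-\e'_i} d_{\S^1_l}(s,t) \leq d(\tilde\psi_i(s), \tilde\psi_i(t)) \leq e^{\e'_i} d_{\S^1_l}(s,t)$ yields $d(\psi(s), \psi(t)) = d_{\S^1_l}(s,t)$ for all $s,t \in \S^1_l$. Hence $\psi$ is an isometric embedding of $\S^1_l$ into $(S, d)$; its image is therefore a simple closed curve of length $l$ on which the intrinsic distance coincides with the ambient $d$-distance on the shorter arc. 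Such a curve is locally length-minimizing and so a closed geodesic of $d$, and the uniform convergence of the $\tilde\psi_i$ yields the desired convergence of the sets $\psi_i$ to $\psi$.

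The step I expect to require the most care is passing from the distortion bound on maps to genuine local length-minimality in a cone-metric: one must argue that the limit closed curve cannot detour through a cone point in a non-geodesic way. This is automatic from the global isometric embedding property obtained above, since any corner at a cone point would create a shortcut violating $d(\psi(s),\psi(t)) = d_{\S^1_l}(s,t)$ for nearby $s,t$ separated by the corner. Beyond this, the remaining bookkeeping (that $\e'_i \to 0$, and that uniform convergence of parametrizations implies Hausdorff convergence of images) is straightforward.
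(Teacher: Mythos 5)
Your proof is correct and follows the route the paper indicates: the lemma is stated in the paper as a citation to Rivin--Hodgson (described there as a consequence of Arzel\`a--Ascoli), so no in-text proof is given, and your argument supplies a clean one along precisely that line. Reparametrizing to a fixed circle, composing with the identity maps whose distortion controls the strong Lipschitz convergence, extracting a uniform limit via Arzel\`a--Ascoli, and then passing to the limit in the bilipschitz bound to obtain an isometric embedding $\S^1_l \hookrightarrow (S,d)$ all check out; and the observation that an isometric embedding of a circle is automatically a closed geodesic (no cone-point corner is possible, as it would shorten nearby pairs) closes the argument without any extra work.
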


We recall the following well-known notion

\begin{dfn}
Let $d$ be a metric on $S$. The \emph{systole} $\sys(d)$ of $d$ is the infimum of lengths of non-contractible loops of $S$.
\end{dfn}

Lemma~\ref{geodconv} implies that the infimum is achieved.

\begin{lm}
\label{sys}
The function $\sys(.)$ is continuous over $\mf D(S, V)$.
\end{lm}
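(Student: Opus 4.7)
The plan is to observe that the strong Lipschitz convergence directly controls lengths of arbitrary rectifiable curves, not merely pairwise distances, and then to deduce a bi-Lipschitz comparison of systoles with explicit constants.

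First I would unpack the definition of distortion. If $\{d_i\}$ converges to $d$ in the strong Lipschitz sense with the identity maps $(S,d_i)\to(S,d)$ having distortions $\e_i\to 0$, then for all $p\neq q$ in $S$ one has
\[
e^{-\e_i}d(p,q)\leq d_i(p,q)\leq e^{\e_i}d(p,q).
\]
Since the length of a rectifiable curve $\gamma:[0,1]\to S$ is the supremum of $\sum_k d(\gamma(t_k),\gamma(t_{k+1}))$ over partitions, applying the above inequality term-by-term and passing to the supremum yields
\[
e^{-\e_i}\len_d(\gamma)\leq \len_{d_i}(\gamma)\leq e^{\e_i}\len_d(\gamma),
\]
with the same inequality valid for any curve rectifiable in either metric. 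In particular, the notion of being rectifiable is the same for $d$ and $d_i$.

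Next I would compare systoles. Fix $\eta>0$ and choose a non-contractible loop $\gamma\subset S$ with $\len_d(\gamma)<\sys(d)+\eta$ (such a loop exists since $\sys(d)$ is an infimum of lengths of non-contractible loops). Then $\len_{d_i}(\gamma)\leq e^{\e_i}(\sys(d)+\eta)$, so $\sys(d_i)\leq e^{\e_i}(\sys(d)+\eta)$; letting $\eta\to 0$ gives $\sys(d_i)\leq e^{\e_i}\sys(d)$. Symmetrically, approximating $\sys(d_i)$ by a non-contractible loop in $d_i$ and transferring it back yields $\sys(d)\leq e^{\e_i}\sys(d_i)$. Combining,
\[
e^{-\e_i}\sys(d)\leq \sys(d_i)\leq e^{\e_i}\sys(d),
\]
and since $\e_i\to 0$, we conclude $\sys(d_i)\to\sys(d)$, which is the required sequential continuity.

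There is essentially no serious obstacle here: the whole statement is a direct consequence of the fact that the strong Lipschitz topology is by construction a bi-Lipschitz topology, and the systole, being an infimum of lengths of loops in a fixed homotopy class of curves (namely, the non-contractible ones, which is a notion intrinsic to $S$ and independent of the metric), transforms in a controlled way under bi-Lipschitz comparisons. The one point worth being careful about is that the argument uses lengths of loops rather than pairwise distances, so one needs the passage from pointwise bi-Lipschitz bounds to bi-Lipschitz bounds on lengths, which is immediate from the supremum-over-partitions definition of length.
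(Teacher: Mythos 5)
Your proof is correct, and it is genuinely different from the paper's: you give a direct bi-Lipschitz estimate, observing that the distortion bound on pairwise distances passes to lengths of curves via the supremum-over-partitions definition, and then noting that $\sys$ is an infimum over a \emph{topologically defined} family of loops (non-contractible loops), so the infimum itself inherits the bi-Lipschitz comparison $e^{-\e_i}\sys(d)\leq\sys(d_i)\leq e^{\e_i}\sys(d)$. The paper instead invokes Lemma~\ref{geodconv} (an Arzel\'a--Ascoli compactness result for quasi-geodesics) and argues with $\liminf$/$\limsup$, extracting convergent subsequences of systolic loops and checking that the limit is still non-contractible. Your route buys simplicity: no compactness, no extraction of subsequences, no need to check that the infimum is attained, and no need to verify that non-contractibility passes to limits. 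The paper's route is consistent with its systematic use of Lemma~\ref{geodconv} elsewhere (and as a by-product it records that the systole is achieved by a closed geodesic), but for this particular continuity claim your argument is the more elementary one.
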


\begin{proof}
Let $\{d_i\}$ be a sequence of metrics on $S$ converging in the strong Lipschitz sense to a metric $d$ and $\{\psi_i \subset S\}$ be a sequence of systolic loops. Lemma~\ref{geodconv} shows that, up to taking a subsequence, they converge to a loop $\psi \subset S$ of length $\liminf_{i \ra \infty} \sys(d_i)$. Obviously, since $\psi_i$ are non-contractible, so is $\psi$. Hence
$$\liminf_{i \ra \infty} \sys(d_i) \geq \sys(d).$$
On the other hand, if $\psi'$ is a systolic loop of $d$, then for a sequence $\e_i \ra 0$ the lengths of $\psi'$ in $d_i$ is $\sys(d)+\e_i$. Thus,
$$\limsup_{i \ra \infty} \sys(d_i) \leq \sys(d).$$
\end{proof} 

%

\subsubsection{Balanced cone-metrics}
\label{balsec}

\begin{dfn}
Let $S$ be a closed surface, $V \subset S$ be a finite set and $d$ be a metric on $S$. The \emph{sparsity} $\sp(V, d)$ of $V$ in $d$ is
$$\sp(V, d):=\sup_{p \in S}d(p, V).$$
For a cone-metric $d$ we define its \emph{cone-sparsity} $\cosp(d):=\sp(V(d), d)$.
\end{dfn}

Due to compactness, it is easy to see that there exist $p \in S$, $v \in V$ such that $d(p,v)=\sp(V, d)$. 

\begin{lm}
\label{cosp}
The function $cosp$ is upper semicontinuous over $\mf D(S, V)$.
\end{lm}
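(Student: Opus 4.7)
The plan is to prove that for any sequence $d_i \to d$ in $\mf D(S, V)$ with distortions $\e_i \to 0$, one has $\limsup_i \cosp(d_i) \leq \cosp(d)$. The mechanism driving the argument is that, while $V(d_i)$ can be strictly larger than $V(d)$ (a cone-point may ``smooth out'' in the limit as its angle approaches $2\pi$), it cannot be strictly smaller for large $i$; this one-sided monotonicity combined with bi-Lipschitz control on distances gives the estimate.

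First I would verify that the cone angles $\kappa_v$ depend continuously on the metric in $\mf D(S, V)$. Pick any geodesic triangulation $\mc T$ of $(S, V, d)$ whose vertex set is exactly $V$ (for instance, any refinement of the Delaunay decomposition including auxiliary edges through the extra ``flat'' points of $V\setminus V(d)$). Since being realized is an open condition in the chart $\mc D^\sharp(S, \mc T)$, and this chart induces the same topology as $\mf D(S, V)$, the metrics $d_i$ realize $\mc T$ for all sufficiently large $i$, with the edge lengths in $d_i$ converging to those of $d$. By the hyperbolic law of cosines, the angles of each triangle converge, and summation at every $v \in V$ yields $\kappa_v(d_i) \to \kappa_v(d)$.

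Consequently, for every $v \in V(d)$ we have $\kappa_v(d) \neq 2\pi$, so $\kappa_v(d_i) \neq 2\pi$ for all large $i$, i.e.\ $v \in V(d_i)$. Thus $V(d) \subseteq V(d_i)$ eventually, which gives
$$\cosp(d_i) \;=\; \sup_{p\in S} d_i(p, V(d_i)) \;\leq\; \sup_{p\in S} d_i(p, V(d)).$$
The bi-Lipschitz bound $d_i(p,q) \leq e^{\e_i} d(p,q)$ coming from the distortion definition then produces
$$\cosp(d_i) \;\leq\; e^{\e_i}\, \sup_{p\in S} d(p, V(d)) \;=\; e^{\e_i}\, \cosp(d),$$
and letting $i \to \infty$ completes the proof.

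The only substantive step is the continuity of cone angles; everything afterwards is elementary. The argument also makes transparent why $\cosp$ is only upper, not lower, semicontinuous: a genuine new cone-point may appear in $d_i$ at a point of $V\setminus V(d)$ that is smooth in the limit, enlarging $V(d_i)$ relative to $V(d)$ and possibly shrinking $\cosp(d_i)$ strictly below $\cosp(d)$.
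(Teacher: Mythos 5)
Your proof is correct and follows essentially the same approach as the paper's: the key step in both is that continuity of the cone angles $\kappa_v$ forces $V(d)\subseteq V(d_i)$ for all large $i$, so passing from $V(d_i)$ to the smaller set $V(d)$ only increases the distance. The paper then extracts a convergent subsequence of points realizing $\cosp(d_i)$, whereas you close with the bi-Lipschitz estimate $\cosp(d_i)\leq e^{\e_i}\cosp(d)$; both are valid and the difference is cosmetic.
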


\begin{proof}
Let $\{d_i\}$ be a sequence of cone-metrics on $(S, V)$ converging in the strong Lipschitz sense to a metric $d$ and $\{p_i \subset S\}$ be a sequence of points realizing $\cosp(d_i)$. Up to taking a subsequence, they converge to a point $p \in S$. Since for each $v \in V$ the function $\kappa_v$ is continuous over $\mf D(S, V)$, if $v \notin V(d_i)$ for infinitely many $d_i$, then $v \notin V(d)$. Hence, $$d(p, V(d))\geq \limsup_{i \rightarrow \infty} d(p_i, V(d_i)),$$ $$\cosp(d) \geq \limsup_{i \rightarrow \infty}\cosp(d_i).$$
\end{proof}

\begin{dfn}
A cone-metric $d$ on a surface $S$ is called \emph{balanced} if $$\cosp(d)<\delta(\sys(d)),$$ where $\delta$ is the function from Corollary~\ref{pants}.
\end{dfn}

By ${\mf D}_{bc}(S, V)$ we denote the subset of $\mf D(S, V)$ consisting of balanced convex cone-metrics. Lemma~\ref{sys} and Lemma~\ref{cosp} imply

\begin{lm}
The set ${\mf D}_{bc}(S, V)$ is open.
\end{lm}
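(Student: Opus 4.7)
The plan is straightforward: being balanced is a strict inequality between an upper semicontinuous quantity ($\cosp$) and a continuous one ($\delta \circ \sys$), so its satisfaction is automatically an open condition. All the analytic content has already been packaged in the two preceding lemmas, together with the stated continuity of the fixed function $\delta$ from Corollary~\ref{pants}.

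Concretely, I would fix $d \in \mf{D}_{bc}(S,V)$ and set $\eta := \delta(\sys(d)) - \cosp(d) > 0$. By Lemma~\ref{sys} combined with continuity of $\delta$, the composition $d' \mapsto \delta(\sys(d'))$ is continuous at $d$, so one can pick a neighborhood $U_1$ of $d$ on which $\delta(\sys(d')) > \delta(\sys(d)) - \eta/2$. By Lemma~\ref{cosp}, upper semicontinuity of $\cosp$ yields a neighborhood $U_2$ on which $\cosp(d') < \cosp(d) + \eta/2$. For any $d' \in U_1 \cap U_2$, chaining the two estimates gives
\[
\cosp(d') \;<\; \cosp(d) + \tfrac{\eta}{2} \;=\; \delta(\sys(d)) - \tfrac{\eta}{2} \;<\; \delta(\sys(d')),
\]
so $d'$ is balanced. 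Convexity of $d'$ for $d'$ near $d$ in the ambient space of convex cone-metrics is preserved by continuity of each cone-angle function $\kappa_v$ on $\mf{D}(S,V)$, as already noted in the proof of Lemma~\ref{cosp}: cone angles at points of $V(d)$ stay strictly less than $2\pi$ under small perturbations.

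The main obstacle, if any, is conceptual rather than technical: one must keep track of the direction of semicontinuity. Since $\cosp$ is only upper semicontinuous and $\sys$ is fully continuous, the balanced condition must put $\cosp$ on the side of the inequality where a controlled increase is harmless and $\delta(\sys(\cdot))$ on the side where a small two-sided change is tolerated; this is exactly the form of the defining inequality. No further estimates or geometric constructions are required beyond invoking Lemmas~\ref{sys} and~\ref{cosp} and the continuity of $\delta$.
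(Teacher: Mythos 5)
Your argument is correct and is exactly the one the paper intends: the lemma is stated as an immediate consequence of Lemma~\ref{sys} (continuity of $\sys$), Lemma~\ref{cosp} (upper semicontinuity of $\cosp$), and the continuity of $\delta$ from Corollary~\ref{pants}, and your $\eta/2$-chaining just spells out that standard semicontinuity argument. The remark about cone angles staying below $2\pi$ correctly handles the (routine) point that convexity with vertex set $V$ is also an open condition.
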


For two metrics $d_1$ and $d_2$ on $S$ \emph{the Lipschitz distance} between $d_1$ and $d_2$ is defined as the infimum of the distortions of all homeomorphisms $(S, d_1) \ra (S, d_2)$. We say that a sequence of metrics $\{d_i\}$ on a surface $S$ converges to a metric $d$ \emph{in the Lipschitz sense} if the Lipschitz distance between them tends to zero. The obtained Lipschitz topology is one of the most common to endow the space of all metrics on $S$. Now we are going to prove Proposition~\ref{prop4} stating that the set of convex balanced cone-metrics is open and dense in the space of all convex cone-metrics on $S$ equipped with the Lipschitz topology. It is evident that it is dense, so it remains only to show that it is open. It follows from

\begin{lm}
The function $\cosp$ is upper semicontinuous on the space of all cone-metrics on $S$ in the Lipschitz topology.
\end{lm}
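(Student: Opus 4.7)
The plan is to reduce the statement to a strong-Lipschitz-type convergence in which the vertex sets are allowed to vary, and then to detect each cone-point of $d$ through an area comparison. Lemma~\ref{cosp} cannot be invoked directly because the relevant vertex sets will not be contained in any fixed finite subset.

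First, choose homeomorphisms $f_i$ realizing $d_i \to d$ in the Lipschitz sense, i.e., with distortions $\e_i \to 0$ as maps $(S, d_i) \to (S, d)$. Let $d_i' := (f_i)_\ast d_i$. Then $d_i'$ is a cone-metric on $S$ with $V(d_i') = f_i(V(d_i))$, the identity $(S, d_i') \to (S, d)$ has distortion $\e_i$, and $\cosp(d_i') = \cosp(d_i)$ since $f_i \colon (S, d_i) \to (S, d_i')$ is an isometry. Hence it suffices to bound $\limsup \cosp(d_i')$ by $\cosp(d)$.

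Next I would argue by contradiction. Suppose $\cosp(d_i') > \cosp(d) + \alpha$ for some $\alpha>0$ along a subsequence. Pick $p_i' \in S$ realizing $\cosp(d_i')$ and, passing to a further subsequence, arrange $p_i' \to q$ in $S$ by compactness. Choose $v_0 \in V(d)$ with $d(q, v_0) \leq \cosp(d)$. The distortion bound $d_i' \leq e^{\e_i} d$ together with the triangle inequality give $d_i'(p_i', v_0) \leq \cosp(d) + \alpha/2$ for $i$ large, and consequently
$$d_i'(v_0, V(d_i')) \geq \cosp(d_i') - d_i'(p_i', v_0) > \alpha/2.$$
Thus $v_0$ sits in a smooth hyperbolic disk of $d_i'$-radius $\alpha/2$, despite being a cone-point of $d$.

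Finally, I would compare areas of small balls at $v_0$. For small $r$ one has $\area_d(B_d(v_0, r)) = \kappa_{v_0}(d)(\cosh r - 1)$ and, for $r < \alpha/2$, $\area_{d_i'}(B_{d_i'}(v_0, r)) = 2\pi(\cosh r - 1)$. Because the identity is $e^{\pm \e_i}$-bi-Lipschitz, the two area measures on $S$ compare up to a factor $e^{\pm 2\e_i}$, and one has the nesting $B_d(v_0, e^{-\e_i} r) \subseteq B_{d_i'}(v_0, r) \subseteq B_d(v_0, e^{\e_i} r)$. Sandwiching $\area_d(B_{d_i'}(v_0, r))$ using these two nested balls on one side and the Lipschitz area-comparison with $\area_{d_i'}(B_{d_i'}(v_0, r))$ on the other, and letting $r \to 0$ (so that $\cosh s - 1 \sim s^2/2$), yields
$$2\pi e^{-4\e_i} \leq \kappa_{v_0}(d) \leq 2\pi e^{4\e_i}.$$
Passing to $\e_i \to 0$ forces $\kappa_{v_0}(d) = 2\pi$, contradicting $v_0 \in V(d)$.

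The main obstacle I anticipate is the area comparison step. It requires that for an $L$-bi-Lipschitz map between hyperbolic cone-surfaces the area measures are comparable up to a factor $L^{\pm 2}$; this follows from the general Hausdorff-measure scaling $\mathcal{H}^2(f(A)) \leq L^2 \mathcal{H}^2(A)$ combined with the identification of $\mathcal{H}^2$ with the hyperbolic area away from the (discrete, hence null) cone-sets of both $d_i'$ and $d$. Once this comparison is secured, the rest is an elementary Taylor expansion of $\cosh r - 1$ at $r = 0$.
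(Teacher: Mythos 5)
Your argument is correct in spirit but takes a different route from the paper. The paper isolates the key claim that every $v\in V(d)$ must be a Hausdorff limit of points of $V(d_i)$, and then falls back on the proof of Lemma~\ref{cosp}; it detects a would-be ``orphaned'' vertex $v$ by comparing the length of a small metric circle $\sigma$ around $v$ in $d$ (length $\kappa_v(d)\sinh r$) with the length of its image in a developed smooth hyperbolic chart (length $\to 2\pi\sinh r$). You instead argue the contradiction directly on $\cosp$, and detect the orphaned vertex by comparing areas of small metric balls ($\kappa_{v_0}(d)(\cosh r-1)$ vs.\ $2\pi(\cosh r-1)$), together with the bi-Lipschitz comparison of Hausdorff measures. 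The two local invariants serve exactly the same purpose; your version is perhaps slightly more self-contained because the area comparison follows from the general Hausdorff-measure scaling rather than from a convergence-of-lengths statement for developed curves, but otherwise they are of comparable complexity.

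One point you should make explicit: the identity $\area_{d_i'}(B_{d_i'}(v_0,r))=2\pi(\cosh r-1)$ is \emph{not} automatic merely from the absence of $d_i'$-cone points within radius $\alpha/2$. It requires the ball to be isometric to a hyperbolic disk of radius $r$, i.e.\ the $d_i'$-exponential map at $v_0$ must be injective on $B(0,r)$, and on a negatively curved surface of small injectivity radius a metric ball can wrap around and have strictly smaller area. (Without this the chain of inequalities only yields the upper bound $\kappa_{v_0}(d)\le 2\pi$, which carries no information, not the lower bound you need for the contradiction.) The fix is easy: first shrink $r$ so that $B_d(v_0,2r)$ is an embedded cone disk containing no other vertex of $d$; then for $i$ large $B_{d_i'}(v_0,r)\subset B_d(v_0,e^{\varepsilon_i}r)\subset B_d(v_0,2r)$ sits inside a smooth embedded disk with no $d_i'$-cone points, and a Gauss--Bonnet argument rules out a nontrivial geodesic loop at $v_0$, so $\exp^{d_i'}_{v_0}$ is injective on $B(0,r)$ and the area formula holds exactly. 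With this emendation the rest of your computation is sound.
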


\begin{proof}
The proof is almost identical to the proof of Lemma~\ref{cosp}, but we need to establish one additional fact. Let $\{d_i\}$ be a sequence of hyperbolic cone-metrics converging to a hyperbolic cone-metric $d$ in the Lipschitz sense.

\begin{cla}
For each $v \in V(d)$ there exists a sequence $\{v_i \in V(d_i)\}$ converging to $v$.
\end{cla}

Indeed, otherwise there exists $v \in V(d)$, a small neighborhood $U$ of $v$ not containing other point of $V(d)$, a domain $U' \subset \H^2$ and a sequence of homeomorphisms $f_i: U \rightarrow U'$ with distortions converging to zero. Let $\sigma$ be a circle of small radius $r>0$ around $v$ in $U$. Then $f_i(\sigma)$ converges in the Hausdorff sense to a circle of radius $r$ around $f(u)$ in $U'$. Hence, their lengths must converge. On the other hand, their lengths are clearly different.

The rest of the proof is straightforward.
\end{proof}

\begin{dfn}
For a cone-metric $d$ and $V \supseteq V(d)$ we call $V$ \emph{balancing} $d$ if $$\sp(V, d) <\delta(\sys(d))$$ where $\delta$ is the function from Corollary~\ref{pants}.
\end{dfn}

By $\ol{\mf D}_{bc}(S, V)$ we denote the subset of $\mf D_c(S, V)$ consisting of metrics, for which $V$ is balancing. Note that in contrast with $\ol{\mf D}_{c}(S, V)$, which is the closure of ${\mf D}_{c}(S, V)$, $\ol{\mf D}_{bc}(S, V)$ is not the closure of ${\mf D}_{bc}(S, V)$ in ${\mf D}(S, V)$ as we do not add metrics with $\sp(V, d)=\delta(\sys(d))$. In particular, $\ol{\mf D}_{bc}(S, V)$ is open in $\ol{\mf D}_{c}(S, V)$ endowed with the subspace topology. 

Now we note that the functions $\sys(.)$, $\cosp(.)$ and $\sp(.)$ are constant over $H_0(S, V)$-orbits. Thereby, we can define similar subsets of the spaces $\mc D(S, V)$ and $\mc D^\sharp(S, V)$, for which we use the obvious notation.

Balanced convex cone-metrics play a special role in our paper. Of particular importance are the ``weak'' connectivity and simple-connectivity properties of $\mc D_c(S, V)$, $\mc D_c^\sharp(S, V)$ through balanced convex cone-metrics, Lemmas~\ref{connect1} and~\ref{connect2}. To obtain them, first we need the following statement.

\begin{lm}
\label{increase}
Let $d^\sharp \in \mc D^\sharp(S, V)$, $\mc T$ be its Delaunay triangulation and $t\geq 0$ be a number. For each edge $e \in E(\mc T)$ of length $l_e$ in $d^\sharp$ we set a new length $l_{e,t}$ given by
$$\sinh\left(\frac{l_{e,t}}{2}\right):=e^t\sinh\left(\frac{l_e}{2}\right).$$
Then the new edge-lengths define a metric class $d_t^\sharp \in \mc D^\sharp(S, V)$ with the same Delaunay decomposition as $d^\sharp$ and for every $v \in V$ and $t>0$ we have $\kappa_v(d_t^\sharp)<\kappa_v(d^\sharp)$.
\end{lm}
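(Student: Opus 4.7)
Set $s_e := \sinh(l_e/2)$, so the prescribed rescaling becomes the uniform scaling $s_{e,t} = e^t s_e$ of all half-sinh lengths. The proof splits into three parts.

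First, for validity of $d_t^\sharp$, in each triangle $T$ of $\mc T$ with sides $a, b, c$ the rescaled sides still satisfy all triangle inequalities. Using the identity $\sinh\bigl((b \pm c)/2\bigr) = s_b\sqrt{1+s_c^2} \pm s_c\sqrt{1+s_b^2}$ rewrites $|b - c| < a < b + c$ as explicit inequalities in $s_a, s_b, s_c$; substituting $s_\bullet \mapsto e^t s_\bullet$ for $t \geq 0$ strictly weakens the lower bound and strictly strengthens the upper one, so the inequality persists. Hence $d_t^\sharp$ is a well-defined class in $\mc D^\sharp(S, V)$ realizing $\mc T$.

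Second, for Delaunay preservation, at each interior edge $e = AC$ of $\mc T$ bounding two triangles forming a convex quadrilateral $ABCD$, the local Delaunay condition (position of the fourth vertex relative to the circumcircle of the opposite triangle) can be formulated as a relation among the six half-sinh lengths of $ABCD$ whose sign is invariant under the uniform scaling $s_\bullet \mapsto e^t s_\bullet$. Equality (the cyclic case) is exactly Kubota's identity (Lemma~\ref{ptolem}), where both sides scale by $e^{2t}$ and the identity is preserved. The strict cases are handled by continuity: the Delaunay sign is a continuous function of $t$ that changes only when the adjacent quadrilateral becomes cyclic, but along the one-parameter family $d_t^\sharp$ cyclicity at any edge is preserved if and only if it already holds at $t=0$, since Kubota's identity is scale-invariant. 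Consequently the Delaunay decomposition of $d_t^\sharp$ equals that of $d^\sharp$.

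Third, for the cone angle monotonicity $\kappa_v(d_t^\sharp) < \kappa_v(d^\sharp)$, one works in the log-half-sinh coordinates $L_e := \log s_e$ on $\mc D^\sharp(S, \mc T)$. The variational principle for discrete hyperbolic conformal structures (see~\cite{GGLSW}) provides a smooth function $F(L)$ whose partial derivatives (up to signs and additive constants) are the cone angles $\kappa_v$, and whose Hessian has sign properties—valid precisely because $\mc T$ is Delaunay—that force the directional derivative of each component $\kappa_v$ along the uniform direction $\mathbf{1}$ to be strictly negative. This yields $\kappa_v(d_t^\sharp) < \kappa_v(d^\sharp)$ for every $t > 0$.

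The main obstacle is the cone-angle monotonicity: a naive triangle-by-triangle calculation from the hyperbolic law of cosines shows that the angle at $v$ in an individual incident triangle need not decrease under the rescaling---one can construct Delaunay triangles in which the angle opposite the shortest side actually increases. The strict decrease of the full sum $\kappa_v$ therefore must exploit the global variational structure enabled by the Delaunay property rather than a purely local per-triangle estimate.
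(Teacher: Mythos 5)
Your parts (1) and (2) follow essentially the same route as the paper: the triangle inequalities persist because the rescaling of half-sinh lengths is an increasing, subadditive change of the lengths themselves, and Delaunayness persists because the local Delaunay condition admits a formulation in the half-sinh lengths of the two triangles adjacent to an edge that is homogeneous of degree zero, hence scale-invariant together with its equality locus; Kubota's relation (Lemma~\ref{ptolem}) then gives independence of the choice of Delaunay triangulation. One caveat on (2): your argument that ``cyclicity is preserved because Kubota's identity is scale-invariant'' is circular as written, since the second diagonal of the quadrilateral is not an edge of $\mc T$ and you do not know a priori that its length in $d_t^\sharp$ obeys the same scaling rule; the clean statement is the scale-invariance of the five-length criterion of~\cite[Proposition 10]{GGLSW} (the one the paper quotes), whose equality case is the cyclic one, after which Ptolemy applied to the scaled cyclic quadrilateral gives the flip-consistency.

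The genuine gap is in part (3), which is the heart of the lemma. You correctly note that the naive per-triangle statement fails (an angle opposite a short side can indeed increase under the uniform rescaling), so a global cancellation is required --- but the mechanism you invoke does not deliver it. The ``sign property'' furnished by the variational principle of~\cite{GGLSW} is convexity, i.e.\ definiteness of the Hessian, and definiteness only controls the quadratic form: along the direction $\mathbf 1$ it yields $\sum_{v}\sum_{w}\pt\kappa_v/\pt t_w<0$, that is, the decrease of the \emph{total} cone angle, not of each $\kappa_v$ separately (a positive-definite symmetric matrix can have rows whose sums have either sign). What is actually needed is the componentwise row-sum inequality $\sum_{w\in V}\pt\kappa_v(d^\sharp_{\ol t})/\pt t_w<0$ for every $v$, for the per-vertex deformation $\sinh(l_{e,\ol t}/2)=e^{(\ol t_v+\ol t_w)/2}\sinh(l_e/2)$; this is a diagonal-dominance/discrete-maximum-principle type statement that rests on the explicit angle-derivative formulas in a hyperbolic triangle combined with the sign information on off-diagonal entries that the Delaunay condition provides, and it is exactly what the paper imports as~\cite[Lemma 5.3.iii]{Pro}, applied at every $t$ (legitimate because Delaunayness persists along the family, which is why step (2) comes first). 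Without stating and proving, or precisely citing, this row-sum inequality, your part (3) asserts the conclusion rather than deriving it.
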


\begin{proof}
Let us first show that $d_t^\sharp$ is well-defined, i.e., that the obtained edge-lengths satisfy the triangle inequalities for each triangle of $\mc T$. Consider the function $$f(x):=2\arcsinh(e^{t/2}\sinh(x/2)).$$ Its derivatives are given by
$$f'(x)=\frac{e^{t/2}\cosh(x/2)}{\sqrt{e^{t}\sinh^2(x/2)+1}},$$
$$f''(x)=\frac{e^{t/2}\sinh(x/2)(1-e^{t})}{2(e^{t}\sinh^2(x/2)+1)^{3/2}}.$$
Hence, for $t > 0$ and $x>0$ it is strictly increasing and concave. Since $f(0)=0$, $f$ is subadditive under these conditions. Hence, if three edge-lengths of $\mc T$ in $d^\sharp$ satisfy the triangle inequalities, the modified lengths satisfy them too.

It was shown in~\cite[Proposition 10]{GGLSW} that a triangulation $\mc T$ is Delaunay for $d^\sharp$ if and only if for every two adjacent triangles with the length of the common edge $l_0$ and with the other edge-lengths $l_1$ and $l_2$ in one triangle and $l'_0$, $l'_1$ in another, we have
$$\frac{\sinh^2(l_1/2)+\sinh^2(l_2/2)-\sinh^2(l_0/2)}{\sinh(l_1/2)\sinh(l_2/2)}+\frac{\sinh^2(l'_1/2)+\sinh^2(l'_2/2)-\sinh^2(l_0/2)}{\sinh(l'_1/2)\sinh(l'_2/2)} \geq 0.$$
This implies that $\mc T$ remains Delaunay in all $d_t^\sharp$. Moreover, Lemma~\ref{ptolem} implies that $d_t^\sharp$ are independent on the choice of a Delaunay triangulation $\mc T$.

It remains to show that for all $t$ and all $v$ we have $\pt \kappa_v(d_t^\sharp)/\pt t<0$. This was basically done in~\cite[Lemma 5.3.iii]{Pro}. It is enough to consider $t=0$. Let $\ol t \in \R^V$ be a vector close to zero. Define a length $l_{e, \ol t}$ for $e \in E(\mc T)$ with endpoints $v$ and $w$ by 
$$\sinh\left(\frac{l_{e,\ol t}}{2}\right):=\exp\left(\frac{\ol t_v+\ol t_w}{2}\right)\sinh\left(\frac{l_e}{2}\right).$$
If $\ol t$ is sufficiently close to zero, then all triangle inequalities still hold for the new lengths and define a cone-metric class on $(S, V)$, which we denote by $d_{\ol t}^\sharp$. For $v, w \in V$ we denote $\pt \kappa_v(d_{\ol t}^\sharp)/\pt t_w$ by $X_{vw}$. Lemma 5.3.iii in~\cite{Pro} states that for each $v$ we have $\sum_{w \in V}X_{vw}<0$, which implies the desired claim. Note that~\cite{Pro} uses a slightly different setting, but the connection with the present context is explained in~\cite[Section 4.3]{Pro}.
\end{proof}

We employ Lemma~\ref{increase} to prove the following fact.

\begin{lm}
\label{homotop}
Let $D \subset \mc D^\sharp(S, V)$ be a compactum. There exists an isotopy $\iota_t: D \hookrightarrow \mc D^\sharp(S, V)$, $t \in [0,1]$, of the inclusion map $\iota_0: D \hookrightarrow \mc D^\sharp(S, V)$ such that $\iota_1(D) \subset \mc D_c^\sharp(S, V)$ and for each $d^\sharp \in D$ the cone-angle $\kappa_v$ of each $v \in V$ strictly decreases along the respective path $\{\iota_t(d^\sharp): t \in [0,1]\}$. In particular, if $D \subset \ol{\mc D}_c^\sharp(S, V)$, then $\iota_t(D) \subset \mc D_c^\sharp(S, V)$ for all $t>0$.
\end{lm}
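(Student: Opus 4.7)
The strategy is to rescale all edge lengths of the Delaunay decomposition of each $d^\sharp \in D$ via Lemma~\ref{increase}, choosing a uniform terminal time by compactness of $D$.

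For $s \ge 0$, I would first define a global map $\Phi_s : \mc D^\sharp(S,V) \to \mc D^\sharp(S,V)$ by $\Phi_s(d^\sharp) := d_s^\sharp$, the metric class produced by Lemma~\ref{increase}, and verify joint continuity in $(s,d^\sharp)$. On each open cell $\mc D^\sharp_{\mc T} \subset \mc D^\sharp(S,V)$ on which a fixed triangulation $\mc T$ is Delaunay, $\Phi_s$ acts in edge-length coordinates by the smooth formula $l_e \mapsto 2\arcsinh(e^s \sinh(l_e/2))$. On the overlap of two such cells (where the Delaunay decomposition is not a triangulation), Lemma~\ref{increase} asserts, through the Ptolemy-type identity of Lemma~\ref{ptolem}, that all refinements of the Delaunay decomposition yield the same metric, so the local formulas glue into a globally continuous map. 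Strict monotonicity of $l \mapsto 2\arcsinh(e^s \sinh(l/2))$ gives injectivity of each $\Phi_s$, and hence $\Phi_s|_D$ is a topological embedding since $D$ is compact and the target is Hausdorff.

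The next step is an asymptotic estimate for cone-angles. Fix $d^\sharp \in D$ and a Delaunay triangulation $\mc T$ for it; by Lemma~\ref{increase}, $\mc T$ remains Delaunay for $\Phi_s(d^\sharp)$ for all $s \ge 0$. For a triangle of $\mc T$ with sides $l_1,l_2,l_3$ we have $l_i(s) = 2s + 2\ln(2\sinh(l_i/2)) + o(1)$, hence by the hyperbolic cosine law (Lemma~\ref{coslaw} in its triangle degeneration) one obtains
$$1-\cos(\text{angle opposite } l_3(s)) = O(e^{-2s}),$$
with implicit constant depending continuously on $(l_1,l_2,l_3)$. Summing over the star of a vertex $v$ gives $\kappa_v(\Phi_s(d^\sharp)) \to 0$ as $s \to \infty$. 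Only finitely many cells $\mc D^\sharp_{\mc T}$ meet the compactum $D$, and on the intersection of each such cell with $D$ the initial edge lengths are bounded, so the convergence is uniform on $D$. I may therefore fix $T > 0$ with $\kappa_v(\Phi_T(d^\sharp)) < 2\pi$ for all $v \in V$ and all $d^\sharp \in D$.

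Setting $\iota_t(d^\sharp) := \Phi_{Tt}(d^\sharp)$ for $t \in [0,1]$ then yields the required isotopy: $\iota_0$ is the inclusion, $\iota_1(D) \subset \mc D_c^\sharp(S,V)$ by the choice of $T$, each $\iota_t$ is an embedding, and the strict decrease of cone-angles along every trajectory $\{\iota_t(d^\sharp)\}$ is exactly Lemma~\ref{increase}. The final assertion follows because for $d^\sharp \in \ol{\mc D}_c^\sharp(S,V)$ one has $\kappa_v(d^\sharp) \le 2\pi$, so strict monotonicity gives $\kappa_v(\iota_t(d^\sharp)) < 2\pi$ for every $v$ as soon as $t > 0$. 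The main technical obstacle I expect is the joint continuity of $(s,d^\sharp) \mapsto \Phi_s(d^\sharp)$ across the locus of non-unique Delaunay triangulations; this is not formally contained in Lemma~\ref{increase}, which is stated pointwise, and requires pairing its independence-of-refinement clause with the Delaunay chart structure of $\mc D^\sharp(S,V)$.
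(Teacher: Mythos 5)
Your proof follows essentially the same route as the paper: deform edge lengths via Lemma~\ref{increase}, use Lemma~\ref{ptolem} to glue the cell-wise formulas into a globally well-defined and continuous homotopy, and invoke compactness of $D$ to choose a uniform terminal time. The explicit asymptotic estimate $l_i(s) = 2s + 2\ln\bigl(2\sinh(l_i/2)\bigr) + o(1)$ forcing $\kappa_v(\Phi_s(d^\sharp)) \to 0$ is a worthwhile addition, as the paper's one-line assertion that some $t_0$ works is not automatic from strict monotonicity of $\kappa_v$ alone; the continuity concern you flag at the end is precisely what the paper settles with the Ptolemy-invariance clause of Lemma~\ref{increase} and is not a gap.
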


\begin{proof}
For each $d^\sharp \in D$ choose a Delaunay triangulation $\mathcal T$ and define $\iota_t(d^\sharp)$ by setting for each edge $e \in E(\mc T)$ and for each $t > 0$ 
$$\sinh\left(\frac{l_e(\iota_t(d))}{2}\right):=e^t\sinh\left(\frac{l_e(d)}{2}\right),$$
where $l_e(d^\sharp)$ is the length of $e$ in metric $d^\sharp$.
Lemma~\ref{increase} shows that this indeed defines a class of metrics in $\mc D^\sharp(S, \mc T)$ and that for each $v \in V$ the cone-angle $\kappa_v$ strictly decreases. Lemma~\ref{ptolem} implies that $\iota_t(d^\sharp)$ is independent on the choice of a Delaunay triangulation. This means that $\iota_t$ is a continuous homotopy. Thus, for some $t_0 >0$ we obtain $\iota_{t_0}(D) \subset \mc D_c^\sharp(S, V)$. It only remains to reparametrize the obtained path. If $D \subset \ol{\mc D}_c^\sharp(S, V)$, then $\iota_t(D) \subset \mc D_c^\sharp(S, V)$ for all $t>0$ as the cone-angles strictly decrease.
\end{proof}

Now we can obtain the desired connectivity properties.

\begin{lm}
\label{connect1}
For every $d_0,~d_1 \in \mathcal D_c(S, V)$ there exist $W \supseteq V$ and a continuous path $\alpha: [0, 1] \rightarrow \ol{\mc D}_{bc}(S, W)$ such that $\alpha(0)$, $\alpha(1)$ are lifts of $d_0$, $d_1$, and for all $t\in (0,1)$ we have $\alpha(t) \in \mathcal D_{bc}(S, W)$.
\end{lm}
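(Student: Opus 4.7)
The plan is to build $\alpha$ in three stages: first produce a continuous path in $\mc D_c^\sharp(S, V)$ between lifts of $d_0$ and $d_1$; then enlarge $V$ to a finite $W \supseteq V$ balancing along this whole path; finally perturb the path via Lemma~\ref{increase} so that the auxiliary points $W\setminus V$ become genuine cone points for interior times while the endpoints are preserved. For Stage 1, lift $d_0, d_1$ to $d_0^\sharp, d_1^\sharp \in \mc D_c^\sharp(S, V)$. By Theorem~\ref{topold}, $\mc D^\sharp(S, V)$ is diffeomorphic to a Euclidean space, so I pick any continuous path $\beta: [0,1] \to \mc D^\sharp(S, V)$ between the lifts. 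Apply Lemma~\ref{homotop} to the compactum $\beta([0,1])$ to obtain an isotopy $\iota_t$ with $\iota_1(\beta([0,1])) \subset \mc D_c^\sharp(S, V)$; since $d_0^\sharp, d_1^\sharp \in \ol{\mc D}_c^\sharp(S, V)$, the addendum in Lemma~\ref{homotop} places the auxiliary segments $s \mapsto \iota_s(d_i^\sharp)$ into $\mc D_c^\sharp(S, V)$. The concatenation of $s \mapsto \iota_s(d_0^\sharp)$, $\iota_1 \circ \beta$, and the reverse of $s \mapsto \iota_s(d_1^\sharp)$ yields, after reparametrization, a continuous path $\gamma: [0,1] \to \mc D_c^\sharp(S, V)$ from $d_0^\sharp$ to $d_1^\sharp$.

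For Stage 2, compactness of $\gamma([0,1])$ together with Lemma~\ref{sys} gives $\delta_0 := \min_t \delta(\sys(\gamma(t))) > 0$. For each $t$ I take a finite $(\delta_0/4)$-net in $(S, \gamma(t))$ and set $W_t$ to be its union with $V$, so $\sp(W_t, \gamma(t)) < \delta_0/2$; continuity of $d \mapsto \sp(W_t, d)$ extends this strict inequality to an open neighborhood of $t$ in $[0,1]$. Cover $[0,1]$ by finitely many such neighborhoods and let $W$ be the union of the corresponding $W_{t_i}$; since enlarging a net only decreases sparsity, $\sp(W, \gamma(t)) < \delta_0/2$ uniformly in $t$. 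For Stage 3, regard $\gamma$ as taking values in $\mc D^\sharp(S, W)$ (treating $W\setminus V$ as smooth marked points), and let $\alpha(t)$ be the image of $\gamma(t)$ under the scaling from Lemma~\ref{increase} with parameter $s(t) := \e\, t(1-t)$. Lemma~\ref{ptolem} ensures that this is independent of the choice of Delaunay triangulation, so $\alpha$ is well defined and continuous. At $t \in \{0,1\}$ one has $\alpha(t) = \gamma(t)$, the required lifts of $d_0, d_1$; for $t \in (0,1)$, Lemma~\ref{increase} gives $\kappa_v(\alpha(t)) < \kappa_v(\gamma(t)) \leq 2\pi$ for every $v \in W$, so $\alpha(t)$ is convex with $V(\alpha(t)) = W$. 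For $\e$ sufficiently small, uniform continuity of the scaling on the compactum $\gamma([0,1])$ combined with Lemmas~\ref{sys} and~\ref{cosp} yields $\sp(W, \alpha(t)) < \delta(\sys(\alpha(t)))$ for all $t$, whence $\alpha(t) \in \ol{\mc D}_{bc}^\sharp(S, W)$; for $t \in (0,1)$ the equality $V(\alpha(t)) = W$ upgrades this to $\alpha(t) \in \mc D_{bc}^\sharp(S, W)$. Projecting to $\mc D(S, W)$ gives the path of the statement.

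The main obstacle is Stage 3: checking that $\alpha$ is simultaneously continuous, convex, and balanced. Continuity is delicate because the Delaunay combinatorics of $\gamma(t)$ may jump as $t$ varies, but the flip invariance supplied by the Ptolemy identity (Lemma~\ref{ptolem}) ensures that Lemma~\ref{increase} descends to a well-defined continuous operation on $\mc D^\sharp(S, W)$. The parameter $\e$ must be chosen uniformly small in $t$ so that the strict balancing inequality survives along the whole path, which is possible thanks to the uniform gap $\delta_0/2$ produced in Stage 2.
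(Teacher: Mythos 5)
Your proof is correct and follows the paper's own three-stage plan (build a path in $\mc D_c^\sharp(S,V)$ via Lemma~\ref{homotop}, choose a balancing set $W$ by compactness along a lift of the path, then perturb via the scaling of Lemma~\ref{increase} so the extra marked points become cone points for interior $t$), with the minor variation that you run the scaling with a $t$-dependent parameter $s(t)=\e\,t(1-t)$ in a single sweep, whereas the paper applies Lemma~\ref{homotop} a second time and concatenates sides of the resulting homotopy square. Both produce the same endpoint-preserving perturbation into $\mc D_{bc}^\sharp(S,W)$ for $t\in(0,1)$; the one presentational point worth tightening is that extracting nets $W_t$ in Stage~2 requires genuine metrics rather than isotopy classes, so one should first lift $\gamma$ to a path in $\mf D_c(S,V)$ (as the paper does explicitly), which also makes the asserted continuity of $d\mapsto\sp(W_t,d)$ along the path precise.
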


\begin{proof}
Let $d_0^\sharp, d_1^\sharp$ be some lifts of $d_0, d_1$ to $\mc D^\sharp(S, V)$ and $\alpha': [0,1] \rightarrow \mathcal D^\sharp(S, V)$ be a path connecting them. Lemma~\ref{homotop} shows that it can be transformed to a path $\alpha'': [0,1] \rightarrow \mc D_c^\sharp(S, V)$ connecting $d_0^\sharp$ and $d_1^\sharp$: just do the respective homotopy and use as $\alpha''$ the concatenation of the upper and the horizontal sides of the homotopy square. 
Lift $\alpha''$ to a path $\mathfrak a'': [0,1] \rightarrow \mathfrak D_c(S, V)$. By compactness of the image of $\mf a''$, we can choose a sufficiently dense set $W \subset S$ such that $W$ balances $\mathfrak a''(t)$ for all $t$. This projects to a path $\alpha''': [0,1] \ra \ol{\mc D}_{bc}^\sharp(S, W)$. Applying once again Lemma~\ref{homotop} and projecting to $\mc D(S, W)$, we obtain the desired path $\alpha$ with the property $\alpha(t) \in \mc D_{bc}(S, W)$ for all $t \in (0,1)$.
\end{proof}

We highlight that while the previous lemma is equally true with $\mc D_c(S, V)$ replaced by $\mc D_c^\sharp(S, V)$, the lemma below is true only for $\mc D_c^\sharp(S, V)$, and it is the main reason for us to introduce $\mc D_c^\sharp(S, V)$.

\begin{lm}
\label{connect2}
Let $\beta: S^1 \rightarrow \overline{\mathcal D}_c^\sharp(S, V)$ be a continuous loop. Then there exist $W \supseteq V$ and a lift $\alpha: S^1 \rightarrow \ol{\mc D}_c^\sharp(S, W)$ of $\beta$ such that $\alpha(S^1) \subset \ol{\mc D}_{bc}^\sharp(S, W)$ and $\alpha$ is contractible in $\overline{\mathcal D}_{bc}^\sharp(S, W)$.
\end{lm}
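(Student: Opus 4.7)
The plan is to mimic the proof of Lemma~\ref{connect1} one dimension up: contract $\beta$ inside the ambient contractible manifold $\mc D^\sharp(S, V)$, use Lemma~\ref{homotop} to push the filling disk into the convex part without letting the boundary escape $\ol{\mc D}_c^\sharp(S, V)$, lift the resulting disk to honest metrics, and finally throw in enough extra marked points so that every metric in the compact image becomes balanced.

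By Theorem~\ref{topold}, $\mc D^\sharp(S, V)$ is diffeomorphic to $\R^{3(n-k)}$ and hence contractible, so $\beta$ extends to a continuous map $B \colon D^2 \to \mc D^\sharp(S, V)$. Applying Lemma~\ref{homotop} to the compactum $B(D^2)$ yields an isotopy $\iota_t$, $t \in [0,1]$, with $\iota_1 \circ B$ taking values in $\mc D_c^\sharp(S, V)$. Since $\beta(S^1) \subset \ol{\mc D}_c^\sharp(S, V)$, the ``In particular'' clause of Lemma~\ref{homotop} keeps $\iota_t \circ \beta$ in $\ol{\mc D}_c^\sharp(S, V)$ for every $t \in [0,1]$. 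Parameterizing $D^2$ by polar coordinates $(r, \theta)$, I glue the isotopy annulus $\iota_{2(1-r)}(\beta(e^{i\theta}))$ on $\{1/2 \leq r \leq 1\}$ to the compressed disk $\iota_1(B(2r e^{i\theta}))$ on $\{0 \leq r \leq 1/2\}$; they agree on $\{r = 1/2\}$ because $B|_{S^1} = \beta$, yielding a continuous map $\tilde B \colon D^2 \to \ol{\mc D}_c^\sharp(S, V)$ with $\tilde B|_{S^1} = \beta$.

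Next I lift $\tilde B$ to a continuous map $\mf{\tilde B} \colon D^2 \to \ol{\mf D}_c(S, V)$ of honest metrics: locally, a choice of Delaunay triangulation $\mc T$ gives a chart $\mc D^\sharp(S, \mc T) \cong \R^{E(\mc T)}_+$ along which a canonical representative is obtained by gluing hyperbolic triangles with the prescribed edge-lengths, and since $D^2$ is simply connected, these local sections can be glued to a global lift. The image $\mf{\tilde B}(D^2)$ is compact, so by Lemma~\ref{sys} the systole attains a positive minimum $\sigma > 0$ on it and hence $\delta(\sys(d)) \geq \delta(\sigma)$ uniformly. A standard covering argument, using that small Lipschitz perturbations preserve $(\delta(\sigma)/2)$-density, then produces a finite set $W \supseteq V$ which is $\delta(\sigma)/2$-dense in $(S, d)$ for every $d \in \mf{\tilde B}(D^2)$ simultaneously, so $\sp(W, d) \leq \delta(\sigma)/2 < \delta(\sys(d))$ and $W$ balances every such metric. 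Projecting $\mf{\tilde B}$ through the quotient $\mf D(S, W) \to \mc D^\sharp(S, W)$ gives a continuous map $\alpha' \colon D^2 \to \ol{\mc D}_{bc}^\sharp(S, W)$, and $\alpha := \alpha'|_{S^1}$ is the required lift of $\beta$ contracted by $\alpha'$.

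The main obstacle is the lift to honest metrics in the previous paragraph, and this is exactly why the lemma is formulated for $\mc D^\sharp$ rather than for $\mc D$: in $\mc D$ one quotients by all of $H_0(S, V)$, and the pure braid group $B_0(S, V)$ would obstruct lifting the loop $\beta$ to a loop in $\mc D(S, W)$ (the positions of the added points in $W \setminus V$ might be permuted along the way around $S^1$); in $\mc D^\sharp$ only $H_0^\sharp(S, V)$ is modded out, so the fiber obstruction vanishes and the simple connectivity of $D^2$ lets both the lift of $\tilde B$ and the resulting loop $\alpha$ exist.
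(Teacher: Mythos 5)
Your proposal follows the same route as the paper's proof: contract $\beta$ inside the contractible ambient manifold $\mc D^\sharp(S, V)$ (Theorem~\ref{topold}), push the filling disk into $\ol{\mc D}_c^\sharp(S, V)$ with Lemma~\ref{homotop}, lift to honest metrics, use compactness to choose a balancing $W$, and project. The explicit polar-coordinate gluing of the isotopy annulus to the compressed disk is a correct unpacking of the paper's terser ``obtain $\xi''$'' step, and your observation about the role of $\mc D^\sharp$ versus $\mc D$ is exactly the point.

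There is one step where your justification does not hold as written: the lift of $\tilde B$ to a map $D^2 \to \mf D(S, V)$ of honest metrics. You claim that Delaunay charts furnish local sections of $\mf D(S, V) \to \mc D^\sharp(S, V)$ (``a canonical representative is obtained by gluing hyperbolic triangles'') and that simple connectivity of $D^2$ lets you glue them. Neither half is quite right. Gluing triangles with prescribed edge-lengths produces an abstract cone-surface, but identifying it with the marked surface $(S, V)$ requires a choice of homeomorphism in the correct isotopy class, which is determined only up to $H_0^\sharp(S, V)$; so the ``canonical representative'' is not canonical, and you do not actually have a local section this way. And even granting local sections, a fiber bundle with non-discrete fiber over a simply connected base need not admit a global section; that requires the pulled-back bundle to be trivial, which follows from contractibility of $D^2$ (stronger than simple connectivity) together with knowing $\mf D(S, V) \to \mc D^\sharp(S, V)$ is a fiber bundle. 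The paper sidesteps all of this by citing that this map is a \emph{trivial} topological bundle (\cite[Section~2.2]{Pro2}), so the global section exists outright. Your conclusion is correct, but the argument needs either this citation or the fiber-bundle-plus-contractibility route.

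A smaller slip: you write $\delta(\sys(d)) \geq \delta(\sigma)$ where $\sigma$ is the minimum systole over the compact image, which tacitly assumes $\delta$ is non-decreasing. Corollary~\ref{pants} asserts only continuity of $\delta$. The fix is immediate and avoids the assumption: $\delta \circ \sys$ is continuous on the compact image $\mf{\tilde B}(D^2)$, hence attains a positive minimum $\delta_0 > 0$, and one then picks $W$ so that $\sp(W, d) < \delta_0$ for all $d$ in the image.
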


\begin{proof}
By $\ol D^2$ we denote the closed topological 2-disk with the boundary identified with $S^1$. By Theorem~\ref{topold}, $\mc D^\sharp(S, V)$ is contractible. Let $\xi': \ol D^2 \rightarrow \mc D^\sharp(S, V)$ be a map contracting $\beta$ in $\mc D^\sharp(S, V)$. Lemma~\ref{homotop} allows us to obtain from $\xi'$ a map $\xi'': \ol D^2 \rightarrow \ol{\mc D}_c^\sharp(S, V)$ contracting $\beta$. 
Since $\mf D(S, V)$ is a trivial topological bundle over $\mc D^\sharp(S,V)$ (see~\cite[Section 2.2]{Pro2}), we can lift $\xi''$ to a map $\mf x'': \ol D^2 \rightarrow \mf D(S, V)$. 
By compactness of the image of $\mf x''$, we can choose a sufficiently dense set $W \subset S$ such that $W$ balances $\mf x''(z)$ for all $z \in \ol D^2$. This projects to the desired map $\xi: \ol D^2 \ra \overline{\mathcal D}_{bc}^\sharp(S, W)$ such that $\xi|_{S^1}$ is a lift of $\beta$.
\end{proof}

\subsection{Hyperbolic cone-3-manifolds}
\label{cone3sec}

A hyperbolic cone-3-manifold is a 3-dimensional generalization of a hyperbolic cone-surface. We do not require that much theory about the set of cone-metrics on a given 3-manifold as we do about cone-metrics on surfaces, so we present the former theory in a simplified form.

\begin{dfn}
\emph{A hyperbolic cone-3-manifold} is a pair $(K, \Sigma)$, where $K$ is a 3-manifold, possibly with boundary, equipped with an intrinsic metric and $\Sigma$ is an embedded geodesic graph such that 
\begin{itemize}
\item every interior point belonging to $K \backslash \Sigma$ has a neighborhood isometric to a domain in $\H^3$;
\item every boundary point belonging to $K \backslash \Sigma$ has a neighborhood isometric to a domain of a boundary point in a half-space of $\H^3$;
\item every interior point belonging to $\Sigma$ has a neighborhood isometric to a cone of curvature $-1$ over a 2-dimensional spherical cone-surface homeomorphic to the 2-sphere, but not isometric to the standard sphere;
\item every boundary point belonging to $\Sigma$ has a neighborhood isometric to a cone of curvature $-1$ over a 2-dimensional spherical cone-surface with boundary, homeomorphic to the 2-disk, but not isometric to a half of the standard sphere.
\end{itemize}
\end{dfn}

We refer to~\cite{CHK, HK, MM3, Wei} for an introduction to the theory of closed hyperbolic cone-3-manifolds. If a point of $\Sigma$ has a neighborhood isometric to a cone over the sphere with two conical points (in the case of an interior point), or to a spherical lune (in the case of a boundary point), we call it an \emph{edge point}, otherwise we call it a \emph{vertex}. 

The main source of cone-3-manifolds are gluings of hyperbolic polyhedra along isometric 2-dimensional faces. We say that a cone-3-manifold $K$ is \emph{triangulated} if $\mathcal T$ is its geodesic triangulation such that all vertices of $\mathcal T$ belong to $\Sigma$ and $\Sigma$ is included in the 1-skeleton of $\mathcal T$. We note that we do not necessarily require that all vertices of $\mathcal T$ are vertices of $\Sigma$ or that all edges of $\mathcal T$ belong to $\Sigma$. A triangulated hyperbolic cone-3-manifold $(K, \Sigma, \mc T)$ is uniquely determined by the topology of $(K, \mc T)$ and by the edge lengths of $\mc T$. Let $(K, \mc T)$ be a triangulated 3-manifold and $E(\mc T)$ be the set of edges. There is a domain $\mc C(K, \mc T) \subset \R^{E(\mc T)}$ corresponding to the condition that every $l \in \mc C(K, \mc T)$ turns every tetrahedron of $\mc T$ to a non-degenerate hyperbolic tetrahedron. Hence, we can consider $\mc C(K, \mc T)$ as the space of hyperbolic cone-structures on $(K, \mc T)$. Note, however, that the singular locus of such structures is variable, though it stays belonging to the union of the edges of $\mc T$. For $e \in E(\mc T)$ and $l \in \mc C(K, \mc T)$ by $\nu_e$ we denote the total dihedral angle of $e$ in the cone-structure determined by $l$. This is an analytic function over $\mc C(K, \mc T)$. 

Our proof of the infinitesimal rigidity is based on a comparison of infinitesimal variations of $l \in \mc C(K, \mc T)$ with the standard approach to the infinitesimal variations of hyperbolic cone-structures via the bundle of infinitesimal isometries developed in~\cite{HK, Wei, Wei2}. This occupies Section~\ref{infrigsec}.

In what follows we will need triangulations with few special properties

\begin{dfn}
\label{largedfn}
Let $\mc T$ be a triangulation of a hyperbolic cone-3-manifold $(K, \Sigma)$, and $L:=K \backslash \Sigma$. The triangulation $\mc T$ is called \emph{large} if for every tetrahedron $\tilde T$ of $\mc T$ lifted to $\tilde L$ and every $\gamma \in \pi_1(L)$, $\gamma \neq e$, the tetrahedron $\gamma\tilde T$ does not share with $\tilde T$ any 2-face, or any edge coming from $e \in E(\mc T)$ that is not in $\Sigma$. It is called \emph{super-large} if $\gamma \tilde T$ never shares a vertex with $\tilde T$.
\end{dfn}

\subsection{Convex cocompact hyperbolic 3-manifolds}
\label{cocosec}

This section largely follows Section 2.3 in~\cite{Pro3}. Here we repeat the basic definitions and give a brief summary of the basic facts that we will use, but for additional explanations we refer to~\cite[Section 2.3]{Pro3}.


Recall that $M$ is an oriented smooth compact 3-manifold with non-empty boundary such that its interior $N $ admits a cocompact hyperbolic metric, and that $M$ is distinct from the solid torus. Let $g$ be a hyperbolic metric on $M$ such that $\partial M$ is locally convex (which implies that it is globally convex). It is well-known that

\begin{lm}
\label{extens}
There exists a unique up to isotopy convex cocompact metric $\overline g$ on $N $ such that there is a (unique) isometric embedding $(M, g) \hookrightarrow (N , \overline g)$ such that the complement to the image is a neighborhood of $\pt M$ intersected with $N$, homeomorphic to $\pt M \times (0,1)$, and the induced map $\pi_1(\pt M) \ra \pi_1(\pt M)$ is identity.
\end{lm}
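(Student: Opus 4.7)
The plan is to construct $\overline g$ by developing $(M,g)$ into $\H^3$, quotienting by the holonomy group, and then deducing uniqueness from the standard principle that convex cocompact hyperbolic 3-manifolds are determined by their holonomy representations.

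For existence, I would pass to the universal cover $(\tilde M, \tilde g)$. Local convexity of $\partial M$ lifts to local convexity of $\partial \tilde M$ in the simply connected $(\tilde M, \tilde g)$, and a standard Cartan--Hadamard-type argument for locally convex simply connected sets shows that the developing map $\dev: \tilde M \to \H^3$ is an isometric embedding onto a closed convex subset $\tilde D \subset \H^3$ with nonempty interior. Let $\rho: \pi_1(M) \to \mathrm{Isom}(\H^3)$ be the holonomy, so that $\dev$ is $\rho$-equivariant, and set $\Gamma := \rho(\pi_1(M))$. Since $\Gamma$ acts freely, cocompactly, and properly discontinuously on $\tilde D$, and $\tilde D$ has nonempty interior in $\H^3$, the group $\Gamma$ is a torsion-free discrete subgroup of $\mathrm{Isom}(\H^3)$ acting freely and properly on all of $\H^3$. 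I would take $\overline g$ to be the quotient metric on $\widehat N := \H^3/\Gamma$; the inclusion $\tilde D \hookrightarrow \H^3$ descends to an isometric embedding $\iota: (M,g) \hookrightarrow (\widehat N, \overline g)$, and $\overline g$ is convex cocompact because the convex core of $\Gamma$ is contained in the compact image $\iota(M)$. The $\Gamma$-equivariant nearest-point projection $\H^3 \setminus \inter \tilde D \to \partial \tilde D$, coupled with the distance to $\tilde D$, gives a homeomorphism $\H^3 \setminus \inter \tilde D \cong \partial \tilde D \times [0,\infty)$, which quotients to $\widehat N \setminus \iota(\inter M) \cong \partial M \times [0,\infty)$. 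This identifies $\widehat N$ with the interior $N$ of $M$ so that the complement of $\iota(M)$ in $N$ is a collar of $\iota(\partial M)$ homeomorphic to $\partial M \times (0,1)$; choosing the identification to match $\iota(\partial M)$ with $\partial M$ via a map isotopic to the identity makes the induced map on $\pi_1(\partial M)$ the identity.

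For uniqueness, let $\overline g_1, \overline g_2$ be two such metrics with embeddings $\iota_i: (M,g) \hookrightarrow (N, \overline g_i)$ satisfying the listed conditions. Both holonomies $\rho_i: \pi_1(N) \to \mathrm{Isom}(\H^3)$ restrict to the holonomy of $(M,g)$ on $\pi_1(M) = \pi_1(N)$; the collar condition together with the identity-on-$\pi_1(\partial M)$ requirement ensures that after a single conjugation the $\rho_i$ coincide as representations. Since a convex cocompact hyperbolic 3-manifold is determined up to isometry by its holonomy, there is an isometry $\phi: (N, \overline g_1) \to (N, \overline g_2)$ with $\phi \circ \iota_1 = \iota_2$, and $\phi$ is visibly homotopic to the identity on $N$. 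Because $M$ is compact, irreducible, and atoroidal, Waldhausen's theorem applied to the natural compactification of the ends then upgrades this homotopy to an isotopy, yielding both the isotopy between $\overline g_1$ and $\overline g_2$ and the uniqueness of the embedding. The main obstacle is precisely this last step of converting a homotopy equivalence into an isotopy of metrics: the holonomy argument alone only produces an isometry, and one must use the 3-manifold topology of $M$ together with the collar description of the ends to promote this to an isotopy, and simultaneously to verify that the collar/$\pi_1(\partial M)$ conditions are exactly what is needed to pin down $\rho$ itself (rather than merely its conjugacy class in $\mathrm{Isom}(\H^3)$).
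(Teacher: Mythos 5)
The paper itself does not prove this lemma; it refers to Section 2.3 of \cite{Pro3}, so I am evaluating your proposal on its own terms.

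The existence part of your argument is sound: developing the locally convex simply connected manifold $(\tilde M,\tilde g)$ onto a closed convex $\tilde D\subset\H^3$, observing that $\Gamma:=\rho(\pi_1(M))$ is discrete (it acts properly discontinuously on a set with interior) and torsion-free, hence acts freely and properly on $\H^3$, and using the equivariant nearest-point retraction of $\H^3\setminus\inter\tilde D$ onto $\partial\tilde D$ to obtain the collar structure of the complement.

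The uniqueness part, however, contains a genuine gap. You invoke Waldhausen's theorem to upgrade the homotopy from $\phi$ to the identity to an isotopy, but Waldhausen's homotopy-implies-isotopy result requires Haken, in particular \emph{incompressible} boundary. In the present setting $\partial M$ can be compressible: the only topological exclusions are the solid torus and the 3-ball, so handlebodies of genus at least two and more general compression bodies are in scope. The paper itself flags this distinction in Section 2.4, pointing to McCullough--Miller \cite{MM2} for the compressible case (where $MCG_h(M)$, the group of isotopy classes of self-diffeomorphisms homotopic to identity, can be non-trivial). What your argument actually needs is that a self-diffeomorphism of $M$ whose restriction to $\partial M$ is isotopic to the identity is itself isotopic to the identity; this is precisely the injectivity of the restriction $MCG(M)\to MCG(\partial M)$, which for compressible boundary is not a consequence of Waldhausen and must be sourced from the handlebody/compression-body literature or \cite{MM2}. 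Once one has this, the cleaner route is to first show that the collar condition together with the identity-on-$\pi_1(\partial M)$ requirement determines the isotopy class of the embedding $\iota$, and then that for a fixed isotopy class of $\iota$ the developing-map construction already forces $\overline g$ up to isotopy.

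A secondary confusion: you write that the collar and $\pi_1(\partial M)$ conditions "pin down $\rho$ itself, rather than merely its conjugacy class." That is not what they do. Any isometric embedding of $(M,g)$ into a hyperbolic $N$ necessarily produces a holonomy conjugate to the holonomy of $g$, independent of those conditions, and the holonomy is in any case only meaningful up to conjugation. What the conditions pin down is the \emph{marking}, i.e.\ the isotopy class of the identification of the quotient $\H^3/\Gamma$ with $N$, which is exactly the extra data distinguishing isotopy classes of metrics among homotopy classes.
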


For a proof look the beginning of~\cite[Section 2.3]{Pro3}.
Recall that we are going to use the notation $\ol g$ to denote the metric on $N$ obtained from $g$ by Lemma~\ref{extens}, but sometimes we will just write $\ol g$ to denote a convex cocompact metric on $N$ in the absence of metric $g$. 
For a convex cocompact hyperbolic metric $\overline g$ we denote the hyperbolic manifold $(N, \ol g)$ by $N(\ol g)$. We denote the image of $(M, g)$ in $N(\ol g)$ under the embedding from Lemma~\ref{extens} by $M(g) \subset N(\ol g)$. By $\pt_{\infty} N(\ol g)$ we denote the boundary at infinity of $N(\ol g)$, i.e., $\pt M$ equipped with the respective conformal structure.

By $G$ we denote the group $\rm{Iso}^+(\H^3)$ of orientation-preserving isometries of $\H^3$, which we identify with $PSL(2, \C)$. We will frequently assume that some developing map $\tilde N(\ol g) \rightarrow \H^3$ is fixed, so we identify the universal cover $\tilde N(\ol g)$ with $\H^3$ and $\tilde M(g)$ with a convex subset of $\H^3$. By $\rho_{\ol g}: \pi_1(M) \ra G$ we denote the corresponding holonomy representation. By $\Lambda(\rho_{\ol g})=\Lambda(\overline g)$ we denote its limit set and by $\tilde C(\overline g)$ we denote its convex hull. The latter is $\rho_{\overline g}$-invariant and projects to the \emph{convex core} $C(\overline g) \subset N(\ol g)$, an alternative definition of which was given in the introduction as Definition~\ref{cocodef}. 

We recall that $\ol g$ and $\rho_{\ol g}$ are called \emph{Fuchsian} if $\Lambda(\ol g)$ is contained in a circle. In the Fuchsian case $C(\ol g)$ is 2-dimensional, and we will frequently have to say something exceptional about Fuchsian metrics. We need to distinguish the subcase when $\Lambda(\ol g)$ is a circle itself, so $\ol g$ is called \emph{Fuchsian of the first kind}. It is called \emph{Fuchsian of the second kind} in the remaining case when $\Lambda(\ol g)$ is a proper subset of a circle. In the former case $M$ is the oriented interval bundle over a closed surface and $C(\ol g)$ is a totally geodesic surface. In the latter case $M$ is the oriented interval bundle over a compact surface with non-empty boundary, i.e., a handlebody and $C(\ol g)$ is a totally geodesic surface with non-empty geodesic relative boundary. A well-known observation of Thurston is




%
%

\begin{lm}
\label{cocobound}
Let $\ol g$ be non-Fuchsian. The induced intrinsic metric on $\partial C(\overline g)$ is hyperbolic.
\end{lm}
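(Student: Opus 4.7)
The plan is to analyze $\partial \tilde C(\ol g) \subset \H^3$ and transfer the conclusion to the quotient $\partial C(\ol g)$. Since $\ol g$ is non-Fuchsian, the limit set $\Lambda(\ol g)$ is not contained in any round circle of $\partial_\infty \H^3$, so its convex hull $\tilde C(\ol g)$ has non-empty interior and $\partial \tilde C(\ol g)$ is a genuine topological surface disjoint from $\partial_\infty \H^3$. The goal is to show that $\partial \tilde C(\ol g)$, with its intrinsic metric, is locally isometric to $\H^2$; since the action of $\rho_{\ol g}(\pi_1(M))$ is by isometries and preserves $\partial \tilde C(\ol g)$, the same will hold on the quotient $\partial C(\ol g)$.

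The core geometric step is to establish a \emph{pleated} structure: through every point $p \in \partial \tilde C(\ol g)$ there passes a complete geodesic of $\H^3$ lying entirely in $\partial \tilde C(\ol g)$. For this I would pick a support plane $P$ of $\tilde C(\ol g)$ at $p$ (which exists by convexity and nonempty interior). Since $\tilde C(\ol g)$ lies on one closed side of $P$, the limit set $\Lambda(\ol g)$ is contained in the corresponding closed half of $\partial_\infty \H^3$, and one checks that $P \cap \tilde C(\ol g)$ coincides with the convex hull inside $P \cong \H^2$ of the set $\Lambda(\ol g)\cap\partial_\infty P$. I would argue that $p$ always lies on a complete geodesic of this planar convex hull: if $p$ is an interior point of that convex hull in $P$, any complete geodesic through $p$ works; if $p$ is a boundary point inside $P$, one uses that boundary points of a convex subset of $\H^2$ with non-empty interior lie on geodesics contained in the closure (possibly extremal supporting lines).

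With the pleated structure in hand, I would show that any simply-connected neighborhood $U \subset \partial \tilde C(\ol g)$ admits a developing map $U \to \H^2$ which is a local isometry for the intrinsic metric. The mechanism is the classical ``unfolding along bending lines'' argument: the boundary decomposes into maximal totally geodesic pieces (intersections with support planes) separated by bending geodesics, and folding or unfolding two totally geodesic pieces along their common geodesic is an intrinsic isometry (the hyperbolic analogue of folding a sheet of paper). Concretely, one picks a point, develops the totally geodesic piece containing it isometrically into $\H^2$, and extends across each bending line by applying the unique rotation of $\H^2$ around that line that matches the next flat piece. The Busemann--Feller Lemma~\ref{bflemma} together with convexity guarantees that the resulting map is a local isometry.

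The main obstacle I expect is that the bending locus need not be a discrete collection of geodesics; in the general convex cocompact case it is only a (possibly irrational) geodesic lamination, so the unfolding must be carried out with respect to a lamination rather than individual lines. I would handle this by approximating $\tilde C(\ol g)$ from outside by $\rho_{\ol g}$-invariant convex polyhedral regions (intersections of finitely many supporting half-spaces chosen from a countable dense family), noting that the induced intrinsic metrics on the approximating boundaries are hyperbolic cone-metrics with cone angles $\leq 2\pi$. Since $\tilde C(\ol g)$ has no interior vertices (all ``vertices'' of its supporting structure accumulate only at $\Lambda(\ol g) \subset \partial_\infty \H^3$, which does not lie in $\partial \tilde C(\ol g)$), the Lipschitz limit of these polyhedral hyperbolic cone-metrics has no cone points, hence is hyperbolic. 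This local argument passes to $\partial C(\ol g)$ because $\rho_{\ol g}(\pi_1(M))$ acts by isometries preserving the pleated structure.
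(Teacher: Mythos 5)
Your first two paragraphs set up the pleated structure of $\pt\tilde C(\ol g)$ correctly, and the overall plan is in the spirit of Thurston and of \cite{EM}, which is exactly where the paper sends the reader for a proof of Lemma~\ref{cocobound}. (The paper's companion Lemma~\ref{indmetric} is proved in one stroke from Proposition~\ref{graham}, which already asserts that $\pt\conv(X)\setminus X$ is locally isometric to $\H^2$ once $\conv(X)$ has non-empty interior; your proposal is in effect trying to reprove that proposition.)

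The last paragraph, however, has a genuine gap. You conclude that, because the vertices of the approximating $\rho_{\ol g}$-invariant polyhedra ``accumulate only at $\Lambda(\ol g)$,'' the Lipschitz limit of the polyhedral cone-metrics ``has no cone points, hence is hyperbolic.'' Neither step holds as stated. The disappearance of vertices does not rule out positive curvature in the limit: individual cone defects can tend to zero while the atomic curvature measures converge weakly to a strictly positive \emph{absolutely continuous} limit. A geodesic sphere in $\H^3$ approximated from outside by circumscribed polytopes is exactly such an example --- no single vertex survives, yet the limit surface has Gaussian curvature $>-1$ everywhere. Moreover, even granting that the limit has no cone points, a Lipschitz limit of convex polyhedral surfaces is a priori only a CBB$(-1)$ metric, and a cone-point-free CBB$(-1)$ surface need not be hyperbolic (again, the sphere). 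What actually distinguishes $\pt\tilde C(\ol g)$ from a sphere is precisely the pleated structure you establish in the second paragraph --- but your final step never uses it.

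To close the gap you need to feed the pleated structure into the curvature estimate. One route is the extrinsic Gauss formula used in Section~\ref{bentsec} (cf.\ Lemma~\ref{nonflat} and the proof of Theorem~\ref{sub1}): the extrinsic curvature $\curv(U)$ of a Borel subset $U$ of a convex surface in $\H^3$ equals the area of the dual set $U^*\subset\dS^3$ of its support planes. Since all extreme points of $\tilde C(\ol g)\cup\Lambda(\ol g)$ lie in $\Lambda(\ol g)$, the support planes along $\pt\tilde C(\ol g)$ sweep out a set of zero two-dimensional measure in $\dS^3$, so $\curv\equiv 0$, which forces the intrinsic metric to be hyperbolic. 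The other route is to carry out the unfolding along a transverse arc by integrating the bending cocycle, as done carefully in \cite{EM}, which handles the irrational lamination directly and dispenses with the polyhedral approximation altogether. Either way, the approximation step together with the remark that ``vertices escape to infinity'' does not, on its own, prove the lemma.
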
 

See, e.g.,~\cite{EM} and~\cite{Smi} for a detailed proof. Thus, a connected component of $\partial C(\overline g)$ is an embedded hyperbolic surface, which is, however, non-geodesic as it may be bent along a geodesic lamination.

By $\mc{CH}(N)$ we denote the space of convex cocompact hyperbolc metrics on $N$ up to isotopy. As before, we will not introduce a special notation for an equivalence class of metrics, and will write $\ol g$ sometimes for a precise metric, and sometimes for an equivalence class. By $\mc T(\pt M)$ we denote the Teichm\"uller space of $\pt M$. The following version of the generalized Ahlfors--Bers theorem holds

\begin{thm}
The map $\mc {AB}: \mc{CH}(N) \ra \mc T(\pt M)$ sending $\ol g$ to the conformal structure at infinity of $\pt_{\infty} N(\ol g)$ is a bijection.
\end{thm}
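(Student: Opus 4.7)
This statement is the classical generalized Ahlfors--Bers uniformization theorem, referenced a few paragraphs earlier in the introduction as part of the collective works of Ahlfors, Bers, Kra, Marden, Maskit and Sullivan; my plan is to cite it via \cite[Chapter 5]{Mar2} while sketching the two halves of the argument for the reader's orientation.

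For injectivity, I would argue as follows. Suppose $\ol g_1, \ol g_2 \in \mc{CH}(N)$ induce the same conformal structure on $\pt_{\infty}N$. Lift the biholomorphism between the boundaries at infinity to an equivariant conformal map $\Omega(\rho_{\ol g_1})\to\Omega(\rho_{\ol g_2})$ between the domains of discontinuity. Such a map extends to an equivariant quasiconformal homeomorphism of $\S^2$ whose Beltrami coefficient is supported on the limit set. By the Sullivan rigidity theorem (the Ahlfors conjecture for convex cocompact Kleinian groups), the limit set has Lebesgue measure zero outside the Fuchsian-of-first-kind case, so the extension is M\"obius. This makes $\rho_{\ol g_1}$ and $\rho_{\ol g_2}$ conjugate in $G$, and the developing map argument combined with Mostow-type rigidity (or directly from the reconstructibility of a convex cocompact structure from the holonomy representation) then gives that $\ol g_1$ and $\ol g_2$ are isotopic. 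The Fuchsian-of-first-kind case reduces to classical Fuchsian uniformization on the totally geodesic $C(\ol g)$, and the Fuchsian-of-second-kind case reduces to the Maskit-combination-of-uniformizations of a compact surface with boundary.

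For surjectivity, I would start from any reference metric $\ol g_0 \in \mc{CH}(N)$, which exists by our standing hypothesis on $M$ (Thurston's hyperbolization plus Lemma~\ref{extens}). Given any target conformal structure $[c] \in \mc T(\pt M)$, choose a quasiconformal homeomorphism $f: \pt_{\infty}N(\ol g_0)\to (\pt M, [c])$ in the correct isotopy class, and pull back its Beltrami coefficient $\mu$ to a measurable $\rho_{\ol g_0}$-equivariant Beltrami differential on $\Omega(\rho_{\ol g_0})$, extended by zero to $\Lambda(\ol g_0)$. The measurable Riemann mapping theorem produces a quasiconformal homeomorphism $\Phi$ of $\S^2$ with Beltrami coefficient this $\mu$, and $\Phi \rho_{\ol g_0}\Phi^{-1}$ is then a new convex cocompact representation whose conformal structure at infinity is $[c]$.

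The main obstacle in this scheme is ensuring that the quasiconformal deformation of $\rho_{\ol g_0}$ really yields a convex cocompact hyperbolic structure on the \emph{same} $N$ up to isotopy rather than on a topologically different quotient. This is precisely the content of the Marden isomorphism theorem and the stability of convex cocompactness under quasiconformal deformations, and is the step for which the classical reference~\cite[Chapter 5]{Mar2} is essential; I would invoke it as a black box rather than reprove it here.
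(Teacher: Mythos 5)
Your proposal is correct and follows the same route the paper takes: the paper itself offers no argument for this statement, citing \cite[Chapter 5]{Mar2} and deferring the precise derivation to \cite[Section 2.3]{Pro3}, and your sketch (Sullivan rigidity and measure-zero limit sets for injectivity, the measurable Riemann mapping theorem plus Marden's isomorphism/stability theorem for surjectivity, with the Fuchsian cases handled separately) is exactly the classical argument those references assemble. The only point worth flagging is that passing from ``$\rho_{\ol g_1}$ and $\rho_{\ol g_2}$ conjugate in $G$'' to ``$\ol g_1$ and $\ol g_2$ isotopic'' is genuinely delicate when $\pt M$ is compressible, since $MCG_h(M)$ is then nontrivial; the paper's framework handles this by observing that $\mc{AB}$ is $MCG_h(M)$-equivariant and that the quotient bijection $\mc{CH}_h(N)\ra\mc T_h(\pt M, M)$ lifts, which is a finer statement than your ``reconstructibility from the holonomy'' gloss and is precisely where \cite{MM2} enters.
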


We did not find an exact reference with this formulation of this result, but in~\cite[Section 2.3]{Pro3} we explain how to derive it from the existing literature.
We endow $\mc {CH}(N)$ with the smooth topology of $\mc T(\pt M)$. In order to introduce in what follows the topology on the space of bent metrics on $M$ we need to recall few other objects from~\cite[Section 2.3]{Pro3}. By $\mc {CH}_h(N)$ we denote the space of convex cocompact hyperbolic metrics on $N$ up to homotopy. By $MCG_h(M)$ we denote the group of isotopy classes of self-diffeomorphisms of $M$ homotopic to identity. We recal that in the case of incompressible boundary Waldhausen~\cite{Wal} showed that $MCG_h(M)$ is trivial, while the case of compressible boundary is considered in the monograph~\cite{MM2} of McCullough and Miller. The group $MCG_h(M)$ acts freely and properly discontinuously on $\mc {CH}(N)$ and $\mc T(\pt M)$. The quotient of the former is exactly $\mc {CH}_h(N)$, the quotient of the latter is denoted by $\mc T_h(\pt M, M)$. The map $\mc{AB}$ commutes with the action of $MCG_h(M)$, hence $\mc {CH}_h(N)$ and $\mc T_h(\pt M, M)$ are diffeomorphic. Note, however, that there is another more standard way to introduce the topology on $\mc{CH}_h(N)$. By $\mc R(\pi_1(M), G)$ we denote the space of representations of $\pi_1(M)$ in $G$ endowed with the topology of a complex algebraic variety coming from the complex algebraic group structure on $G$, and by $\mc X(\pi_1(M), G)$ we denote the quotient of $\mc R(\pi_1(M), G)$ by the conjugation action of $G$. The developing maps allow us to embed $\mc{CH}_h(N)$ into $\mc X(\pi_1(M), G)$. The global topology of $\mc X(\pi_1(M), G)$ is not well-behaved, but results of Marden~\cite{Mar} imply that the image of $\mc{CH}_h(N)$ belongs to the smooth part of $\mc X(\pi_1(M), G)$, so this embedding endows $\mc{CH}_h(N)$ with the topology of a complex manifold. It is another part of the Ahlfors--Bers theorem that these topologies on $\mc{CH}_h(N)$ coincide. See~\cite[Section 2.3]{Pro3} for more explanations.

We will use the following result on the behavior of limit sets under the convergence in $\mc R(\pi_1(M), G)$.

\begin{thm}
\label{hausd}
Let $\{\rho_i: \pi_1(M) \ra G\}$ be a sequence converging in $\mc R(\pi_1(M), G)$ to $\rho: \pi_1(M) \ra G$ such that all $\rho_i$ and $\rho$ are holonomy maps of convex cocompact hyperbolic metrics on $N$. Then $\Lambda(\rho_i)$ converge to $\Lambda(\rho)$ in the Hausdorff sense.
\end{thm}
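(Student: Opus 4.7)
The plan is to prove the two inclusions of Hausdorff convergence separately, i.e., that $\Lambda(\rho)\subseteq \liminf_i\Lambda(\rho_i)$ and $\limsup_i\Lambda(\rho_i)\subseteq \Lambda(\rho)$, since together these are equivalent to Hausdorff convergence on the compact space $\pt\H^3=S^2$.

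For the lower semi-continuity, I would rely on the classical fact that, for a non-elementary Kleinian group, the attracting fixed points of loxodromic elements are dense in the limit set. Since $\rho$ is convex cocompact and (by the exclusion of the $3$-ball and solid torus) the group $\rho(\pi_1(M))$ is non-elementary, the set of loxodromic fixed points is dense in $\Lambda(\rho)$. For any loxodromic $\gamma\in\pi_1(M)$ with respect to $\rho$, algebraic convergence $\rho_i(\gamma)\to\rho(\gamma)$ in $G\cong PSL(2,\C)$ forces $\rho_i(\gamma)$ to be loxodromic for $i$ large, with fixed points in $S^2$ converging continuously to those of $\rho(\gamma)$. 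A standard diagonal extraction, approximating an arbitrary $x\in\Lambda(\rho)$ by loxodromic fixed points, yields $x\in \liminf_i\Lambda(\rho_i)$.

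For the upper semi-continuity I would crucially use that $\rho$ is convex cocompact. Fix a basepoint $o\in\H^3$ and a compact fundamental domain $D\subset\tilde C(\rho)$ for the $\rho(\pi_1(M))$-action. By Marden's stability theorem (applicable on the open locus of convex cocompact representations in $\mc R(\pi_1(M),G)$), algebraic convergence $\rho_i\to\rho$ to a convex cocompact $\rho$ implies \emph{strong convergence}: the groups $\rho_i(\pi_1(M))$ converge to $\rho(\pi_1(M))$ both algebraically and geometrically, and moreover the convex cores $C(\rho_i)$ have uniformly bounded diameter and converge geometrically to $C(\rho)$. In particular, for any $R>0$, the cardinality of $\{\gamma\in\pi_1(M):\rho_i(\gamma)o\in B_R(o)\}$ is uniformly bounded in $i$. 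Given $x_i\in\Lambda(\rho_i)$ with $x_i\to x$, we write $x_i=\lim_j\rho_i(\gamma_{i,j})o$, diagonalize to get $\rho_{i_k}(\gamma_k)o\to x$, and use the uniform geometric control to replace $\rho_{i_k}(\gamma_k)$ by $\rho(\gamma_k)$ up to an error tending to zero; hence $\rho(\gamma_k)o\to x$, showing $x\in\Lambda(\rho)$.

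The hard part is the upper semi-continuity: in general algebraic convergence of Kleinian groups allows the limit set of the algebraic limit to be a proper subset of the Hausdorff limit of the limit sets (the algebraic-vs-geometric limit phenomenon). The assumption that $\rho$ itself is convex cocompact is what rules this out, via the openness of the convex cocompact locus and the stability theorem; once strong convergence is in hand, Hausdorff convergence of limit sets is a standard consequence. If a self-contained argument were desired, one could instead work directly with the convex hulls $\tilde C(\rho_i)$ and use the Busemann--Feller projection (Lemma~\ref{bflemma}) together with the compactness of $C(\rho)$ to establish uniform control on nearest-point projections, but invoking strong convergence is the most economical route.
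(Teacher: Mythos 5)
The paper itself does not prove Theorem~\ref{hausd}: it cites Proposition 4.5.3 of Marden's book and Theorem E of Anderson--Canary. Your sketch is a reasonable reconstruction of the argument that sits behind those references, and your overall decomposition (lower semi-continuity of $\Lambda(\rho_i)$ via density of loxodromic fixed points; upper semi-continuity via the convex-cocompactness of the limit) is exactly the right one. The lower-semicontinuity half is correct as written.

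The one place your write-up is a bit too loose is the diagonalization step for upper semi-continuity. Having chosen $\gamma_k \in \pi_1(M)$ with $\rho_{i_k}(\gamma_k)o \to x$, you want to replace $\rho_{i_k}(\gamma_k)o$ by $\rho(\gamma_k)o$ with error $\to 0$. But algebraic convergence $\rho_i(\gamma) \to \rho(\gamma)$ is only pointwise in $\gamma$, not uniform, and the elements $\gamma_k$ will typically have word length going to infinity; the uniform bound on orbit points in a fixed ball does not by itself let you compare $\rho_{i_k}(\gamma_k)o$ and $\rho(\gamma_k)o$. The clean way to close this gap is to use what Marden's stability theorem actually gives: in a neighborhood of a convex cocompact $\rho$, each $\rho_i$ is quasi-conformally conjugate to $\rho$ by a map $\phi_i: S^2 \to S^2$ whose dilatation tends to $1$ and which (after normalization) converges uniformly to the identity. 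Then $\Lambda(\rho_i) = \phi_i(\Lambda(\rho))$ and Hausdorff convergence $\Lambda(\rho_i) \to \Lambda(\rho)$ is immediate, giving both inclusions at once and subsuming your first half as well. If you prefer to avoid quasi-conformal deformation theory, one can instead run the orbit-point argument using geometric (Chabauty) convergence of the groups together with uniform quasi-convexity of the orbits of a convex cocompact group, but this requires controlling Gromov products rather than raw distances; the uniform bound on orbit counts alone is not enough.
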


See Proposition 4.5.3 in~\cite{Mar2} or Theorem E in~\cite{AC}.

\section{Hyperbolic manifolds with boundary of polyhedral type}

\subsection{Polyhedral and bent manifolds}
\label{bentstruct}

%
%
We start to study bent metrics on $M$.
Denote the set of vertices of a bent metric $g$ by $V(g)$. If $p \in \partial M$ belongs to the relative interiors of two geodesic segments with distinct tangents at $p$, then the convexity implies that $p$ has a neighborhood isometric to a neighborhood of a point in the boundary of a half-space in $\H^3$. Then we say that $p$ is \emph{regular} in $g$. A connected component of the set of regular points is called a \emph{face} of $g$. If $p$ is not a vertex and not regular, it is called an \emph{edge-point} in $g$. Let it belong to the relative interior of a geodesic segment $\psi$. There is a neighborhood of $p$ in the relative interior of $\psi$ consisting only of edge-points. A maximal geodesic segment consisting of edge-points is called an \emph{edge} of $g$. It is straightforward that the edges are disjoint and simple. We can also speak about bent surfaces in hyperbolic 3-manifolds and about their vertices, edges and faces.

We will frequently consider $\tilde M(g) \subset \H^3$ and we denote its set of vertices, i.e. the full preimage of $V(g)$ in $\tilde M(g)$, by $\tilde V(g)$. We will use all our notions interchangeably between $M(g)$ and $\tilde M(g)$. We note that in $\pt \tilde M(g)$ every edge extends from both sides either to a point from $\tilde V(g)$, or to a point in $\pt_{\infty} \H^3$. In the latter case it necessarily belongs to $\Lambda(\ol g)$ as otherwise the distance to $\tilde C(\ol g)$ would grow to infinity along this edge, which contradicts to the compactness of $M$. This provides a classification of the edges of $g$: either a lift of an edge has two endpoints in $\tilde V(g)$, or two endpoints in $\Lambda(\ol g)$, or it is mixed. We will show below that the third case is impossible. In the second case note that by definition such an edge also belongs to $\pt C(\ol g)$.

We recall that for any non-Fuchsian convex cocompact metric $\ol g$ on $N$ a homeomorphism between $C(\ol g)$ and $M$ provides an example of a bent metric on $M$ without vertices. We devote the next subsection to recall the basic facts about the boundary structure of $C(\ol g)$.

\subsubsection{Boundary geometry of convex cores}
\label{cocoresec}

Let $\ol g$ be a convex cocompact hyperbolic metric on $N$. Due to Lemma~\ref{cocobound}, the induced intrinsic metric on $\pt C(\ol g)$ is hyperbolic. The union of edges of $\pt C(\ol g)$ constitutes what is called a \emph{geodesic lamination}. 

\begin{dfn}
Let $(S, d)$ be a closed hyperbolic surface. A \emph{geodesic lamination} $\lambda$ is a closed subset of $S$ foliated by simple disjoint geodesics, which we will call \emph{edges} of $\lambda$.
\end{dfn}

We refer to~\cite{Bon2, CB, CEG, PH} for various aspects of the theory of geodesic laminations. We remark that we step away from the conventional terminology in favor of coherence as the edges of a geodesic lamination are commonly known in the literature as \emph{leaves}.

A geodesic lamination is called \emph{minimal} if it does not contain any proper geodesic sub-laminations. A simple closed geodesic provides an example of a minimal geodesic lamination. Any other minimal geodesic lamination is called \emph{irrational}. Every minimal irrational lamination consists of uncountably many edges. A general geodesic lamination is called \emph{irrational} if it contains a minimal irrational sub-lamination, and is called \emph{rational} otherwise. For examples of irrational laminations we refer to \cite[Chapter 1.3]{Bon2} or~\cite{Bon3}.

\begin{thm}[\cite{CEG}, Theorem I.4.2.8]
\label{core0}
A geodesic lamination $\lambda$ on a closed hyperbolic surface consists of finitely many minimal sub-laminations and finitely many open geodesics, isolated in $\lambda$, spiralling onto a minimal sub-lamination from both ends.
\end{thm}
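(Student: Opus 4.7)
The plan is to extract the structure of $\lambda$ from the geometry of its complementary regions. First I would show that $S \setminus \lambda$ has only finitely many connected components. Each component $R$ is an open subset of $S$ inheriting a hyperbolic structure, and its path-metric completion $\bar R$ is a hyperbolic surface of finite type with piecewise-geodesic boundary and finitely many ideal vertices (the places where two boundary leaves come together in the completion). Gauss--Bonnet applied to $\bar R$ expresses its area as $\pi(n - 2\chi(\bar R))$ with $n$ the number of ideal vertices, a quantity bounded below by $\pi$ (realized by an ideal triangle). Summing over components, $\sum_R \area(R) \leq \area(S) = -2\pi\chi(S)$, so there are at most $-2\chi(S)$ complementary regions, and each has only finitely many boundary leaves.

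Next I would establish finiteness of the isolated leaves and of the minimal sub-laminations. An isolated leaf of $\lambda$ must appear as a boundary geodesic of a complementary region on each side, so combined with the previous step only finitely many isolated leaves can occur. For the minimal sub-laminations, note first that two distinct minimals are disjoint (a shared leaf would force equality by minimality, since the closure of any leaf of a minimal lamination is the whole minimal). Each minimal $\mu$ admits an open neighborhood in $S$ of definite positive area disjoint from the other minimals: a tubular neighborhood if $\mu$ is a simple closed geodesic, or the union of $\mu$ with the components of $S \setminus \mu$ whose entire frontier lies in $\mu$, in the irrational case. Comparing with $\area(S)$ again bounds the count.

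Finally, I would analyze every leaf $\ell$ not contained in a minimal sub-lamination. By compactness of $S$, each end of $\ell$ has a non-empty $\omega$-limit set $\Lambda^+ \subset \lambda$, which is closed and saturated by leaves of $\lambda$. By Zorn's lemma $\Lambda^+$ contains a minimal sub-lamination $\mu$; disjointness of leaves and the geodesic-flow argument force $\Lambda^+ = \mu$ and the corresponding end of $\ell$ to spiral onto $\mu$, and the same conclusion holds at the other end. Isolation of $\ell$ in $\lambda$ then follows by lifting to $\H^2$: the pair of endpoints of $\tilde\ell$ in $\pt\H^2$ is rigidly determined by the spiralling data at both ends, and any nearby admissible endpoint pair would produce a leaf crossing $\tilde\ell$, violating disjointness. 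The hard part is this last stage — verifying that $\Lambda^+$ equals a single minimal rather than merely containing one, and ruling out limit leaves accumulating on $\ell$ from the side. The classical route passes through minimal invariant sets of the geodesic flow on $T^1 S$ combined with the disjointness of leaves, and this is where I would expect the most delicate case analysis.
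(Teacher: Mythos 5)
The paper does not actually prove this statement --- it is quoted from~\cite{CEG} (Theorem I.4.2.8) --- so your sketch has to stand on its own, and as it stands it has a genuine gap at exactly the point where the theorem lives. Your first two stages (completion of a complementary region is a finite-type hyperbolic surface with geodesic boundary and spikes, Gauss--Bonnet giving area at least $\pi$, hence at most $-2\chi(S)$ complementary regions with finitely many boundary leaves in total, hence finitely many isolated leaves) follow the classical skeleton and are fine modulo standard facts. But the content of the theorem is your third stage: that \emph{every} leaf not contained in a minimal sublamination is isolated in $\lambda$, and that the limit set of each of its ends is \emph{equal} to a minimal sublamination (not merely contains one). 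Your step 2 only counts the leaves that are already known to be isolated; without step 3 you cannot conclude that these, together with the minimal pieces, exhaust $\lambda$. You explicitly defer this ("the hard part\dots the most delicate case analysis"), and the heuristic you offer for isolation --- that a nearby admissible endpoint pair in $\pt\H^2$ would have to cross $\tilde\ell$ --- does not work: leaves accumulating onto $\ell$ from one side have endpoints converging to those of $\tilde\ell$ without crossing it. This is precisely where~\cite{CEG} does the real work, using the finiteness of boundary leaves of complementary regions to control half-leaves that accumulate on other leaves; appealing to "minimal invariant sets of the geodesic flow" is not a substitute for that argument.

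There is also a flaw in your count of minimal sublaminations. For a simple closed geodesic component, a tubular neighborhood does \emph{not} have area bounded below by a universal constant: the collar of a closed geodesic of length $\ell$ has area tending to $0$ as $\ell \to \infty$, so the area comparison fails; finiteness here should instead come from the topological fact that pairwise disjoint, pairwise distinct closed geodesics are pairwise non-isotopic, hence at most $3g-3$ of them, or from the boundary-leaf count. For an irrational minimal component $\mu$ your proposed neighborhood ("the union of $\mu$ with the components of $S \setminus \mu$ whose entire frontier lies in $\mu$") is vacuous as written, since the frontier of \emph{every} component of $S\setminus\mu$ lies in $\mu$; what you presumably want is the subsurface with geodesic boundary filled by $\mu$, and proving that these filled subsurfaces for distinct minimal components have disjoint interiors again requires the same kind of "no leaf can live in or accumulate inside a crown region" analysis that you deferred in stage 3. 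So the proposal reproduces the standard outline but leaves the decisive lemmas unproved.
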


We will call connected components of $S \backslash \lambda$ \emph{faces} of $\lambda$. A metric completion of a face is a hyperbolic surface of finite type with piecewise-geodesic boundary, whose components may be either closed or open geodesics. Every end of an open boundary-geodesic is asymptotic to another such end forming what we call a \emph{spike}. An edge of $\lambda$ is called \emph{boundary} if it is a boundary component of a face. Note that even though an irrational lamination contains uncountably many leaves, only finitely many of them are boundary. A lamination is called \emph{maximal} if the completion of each face is isometric to an ideal triangle.

Identify the universal cover $(\tilde S, d)$ with $\H^2$ and let $\tilde \lambda$ be the full preimage of a minimal irrational lamination $\lambda$. Then every geodesic $\tilde \psi$ of $\tilde \lambda$ is a limit of other geodesics of $\tilde \lambda$. If $\tilde \psi$ is a lift of a boundary edge, then it is a limit from only one side.


\begin{dfn}
Let $\lambda$ be a geodesic lamination on $(S, d)$. A \emph{transverse arc} to $\lambda$ is a simple regular arc transverse to each edge of $\lambda$ with endpoints in faces. A \emph{transverse measure} $\mu$ on $\lambda$ is a locally finite Borel measure $\mu_{\tau}$ on each transverse arc $\tau$ such that 

(1) for a transverse sub-arc $\tau' \subset \tau$ the measure $\mu_{\tau'}$ is the restriction of $\mu_{\tau}$;

(2) the support of $\mu_{\tau}$ is $\tau \cap \lambda$;

(3) the measure is invariant through isotopies of $S$ preserving $\lambda$.

The pair $(\lambda, \mu)$ is called a \emph{measured (geodesic) lamination}.
\end{dfn}

Note that in particular $\mu_{\tau}(\tau)=0$ if and only if $\tau \cap \lambda = \emptyset$. We will denote $\mu_{\tau}(\tau)$ just by $\mu(\tau)$. We will sometimes abuse the notation and denote the measured lamination $(\lambda, \mu)$ just by $\mu$. We can speak about the measure of each geodesic of $\lambda$ meaning the measure of any point belonging to this geodesic on any transverse arc intersecting this geodesic. Not every geodesic lamination can support a transverse measure. A useful criterion is

\begin{lm}[\cite{Mar3}, Proposition 8.3.8]
\label{core1}
Let $\lambda$ be a geodesic lamination supporting a transverse measure and $\tilde \lambda$ be its full preimage in $\H^2$. Every point $q \in \pt \H^2$ can be adjacent to at most two geodesics from $\tilde \lambda$, and if it is adjacent to two, then both of them are lifts of boundary edges of $\lambda$ belonging to a minimal irrational sub-lamination.
\end{lm}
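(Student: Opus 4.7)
The plan is to argue by contradiction in both parts, exploiting the local finiteness of the lifted measure $\tilde\mu$ on $\tilde\lambda$, its $\pi_1(S)$-equivariance, and the structural dichotomy from Theorem~\ref{core0} that every minimal sub-lamination is either a closed geodesic or minimal irrational, and that every leaf is contained in one such minimal sub-lamination up to closure.

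For the first assertion, I would suppose three distinct leaves $\tilde\psi_1,\tilde\psi_2,\tilde\psi_3\in\tilde\lambda$ share an endpoint $q\in\pt\H^2$, with $\tilde\psi_2$ separating $\tilde\psi_1$ from $\tilde\psi_3$. Pick a compact transverse arc $\tau$ from $\tilde\psi_1$ to $\tilde\psi_3$ crossing $\tilde\psi_2$, with endpoints in the adjacent faces on the $\tilde\psi_2$ side; since $\mathrm{supp}(\tilde\mu)=\tilde\lambda$ and $\tau$ crosses $\tilde\psi_2$, we have $\tilde\mu(\tau)>0$. The geodesic $\psi_2=\pi(\tilde\psi_2)\subset S$ is recurrent because it lies in a minimal sub-lamination. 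I would then produce a sequence $\gamma_n\in\pi_1(S)$ whose axes approximate $\tilde\psi_2$ in $\H^2$ with one fixed point converging to $q$: in the closed-geodesic case, take $\gamma_n=\gamma^n$ where $\gamma$ is the primitive holonomy of $\psi_2$ (which fixes $q$ exactly); in the irrational case, use nearly-closing loops traced by $\psi_2$ in $S$ to obtain such $\gamma_n$. Applying $\gamma_n$ to $\tau$ yields disjoint translates $\gamma_n\tau$ of equal $\tilde\mu$-mass, clustering into a bounded tubular neighborhood of $\tilde\psi_2$ near $q$. Projecting to $S$ and using compactness of the projected neighborhood, these translates fit inside a single compact transverse arc in $S$, whose $\mu$-mass would be bounded below by $\sum_n\tilde\mu(\gamma_n\tau)=\infty$, contradicting local finiteness of $\mu$.

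For the second assertion, suppose $\tilde\psi_1,\tilde\psi_2\in\tilde\lambda$ share endpoint $q$. First, if $\tilde\psi_2$ is a lift of a closed geodesic $\psi_2$, its holonomy $\gamma\in\pi_1(S)$ fixes $q$, and then $\{\gamma^n\tilde\psi_1\}_{n\in\Z}$ provides infinitely many leaves of $\tilde\lambda$ all ending at $q$, contradicting the first part. Second, if $\tilde\psi_1,\tilde\psi_2$ project to leaves of distinct minimal sub-laminations, they would be asymptotic in $\H^2$, forcing one minimal sub-lamination to lie in the closure of a leaf of the other and hence the two sub-laminations to coincide, a contradiction. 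Thus both leaves project into a common minimal irrational sub-lamination $\lambda_0$. Finally, if either leaf, say $\tilde\psi_1$, were an interior edge of $\tilde\lambda_0$, it would be a two-sided accumulation of leaves of $\tilde\lambda_0$; choosing a transverse arc from $\tilde\psi_1$ to $\tilde\psi_2$ and using equivariance of $\tilde\mu$ under nearly-closing loops along $\psi_1$ (now with axes tending to $\tilde\psi_1$ and one end near $q$), one packs infinitely many disjoint arcs of equal mass near $\tilde\psi_1$ into a bounded transverse arc, again violating local finiteness. Hence both leaves must be boundary edges of the minimal irrational sub-lamination $\lambda_0$.

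The main technical obstacle will be executing the ``packing argument'' uniformly in both parts. In the closed-geodesic subcase it is straightforward because $\gamma$ acts cocompactly on a tubular neighborhood of its axis, so translates of $\tau$ land in a bounded strip automatically. In the irrational subcase the elements $\gamma_n$ only approximate the leaf $\tilde\psi_2$ (or $\tilde\psi_1$), and one must use uniform recurrence of $\psi_2$ in $S$ to control both the displacement and the distortion of $\gamma_n\tau$ so that they remain in a fixed compact transverse arc in $S$ while preserving a definite positive mass. The boundary-versus-interior discrimination in the final step is equally delicate: it relies on the asymmetric accumulation of leaves near boundary edges of minimal irrational sub-laminations, a property that ultimately traces back to Theorem~\ref{core0}.
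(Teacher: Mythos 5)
The paper does not give its own proof of this lemma---it is imported verbatim from \cite{Mar3}, Proposition 8.3.8---so I assess your proposal on its own terms. The decisive step in both halves of your argument is the ``packing'' move, and as stated it does not work. You take a transverse arc $\tau$ with $\tilde\mu(\tau)>0$, produce deck transformations $\gamma_n\in\pi_1(S)$, and claim that the disjoint translates $\gamma_n\tau\subset\H^2$ ``fit inside a single compact transverse arc in $S$'' with total mass $\sum_n\tilde\mu(\gamma_n\tau)=\infty$. But the $\gamma_n$ are covering transformations, so every $\gamma_n\tau$ projects onto the \emph{same} arc $\pi(\tau)\subset S$. Disjointness upstairs carries no information downstairs---the projections literally coincide---and equivariance of $\tilde\mu$ gives exactly $\tilde\mu(\gamma_n\tau)=\tilde\mu(\tau)=\mu(\pi(\tau))$ for every $n$: the masses do not accumulate, they repeat. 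Already in the simplest instance, where $\psi_2$ is a simple closed geodesic and $\gamma_n=\gamma^n$ for $\gamma$ its primitive holonomy, your scheme produces no contradiction whatsoever. The same flaw reappears in the packing you invoke at the end of the second assertion.

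The actual mechanism has a different shape. In the closed-geodesic subcase one lets $\gamma$ act not on the transversal $\tau$ but on the flanking leaf: the $\gamma^{-n}\tilde\psi_1$ are infinitely many distinct leaves of $\tilde\lambda$ ending at $q$ and converging to the axis $\tilde\psi_2$, so in the quotient $\psi_1$ accumulates onto $\psi_2$; a short transversal to $\psi_2$ then meets $\psi_1$ infinitely often, forcing the atom of $\psi_1$ to be zero, while (as $\psi_1$ cannot itself be a closed leaf, two distinct lifts of a simple closed geodesic never sharing an ideal endpoint) Theorem~\ref{core0} forces a leaf whose closure strictly contains the closed leaf $\psi_2$ to be an isolated spiralling leaf; an isolated non-closed leaf with zero atom cannot belong to $\mathrm{supp}(\mu)=\lambda$, which is the contradiction. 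For the irrational subcase and for identifying $\tilde\psi_1,\tilde\psi_2$ as boundary edges, the geometric input you omit is that a leaf of $\tilde\lambda$ contained in the wedge between two asymptotic leaves and not sharing their common endpoint $q$ stays at bounded distance from $q$ (an elementary estimate on geodesics trapped in a spike-shaped region); combined with the first part of the lemma this shows the wedge near $q$ is empty of $\tilde\lambda$, i.e.\ the two leaves bound a spike of a complementary face, and each is therefore a boundary edge. Once that observation is in place no packing estimates are needed at all, which is why I suspect the route you sketched cannot be made to close.
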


Theorem~\ref{core0} and Lemma~\ref{core1} imply (see a detailed proof also, e.g., in~\cite[Corollary 1.7.3]{PH})

\begin{lm}
The support of a measured lamination consists of finitely many minimal sub-laminations.
\end{lm}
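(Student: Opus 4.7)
The plan is to combine Theorem~\ref{core0} with Lemma~\ref{core1}. By Theorem~\ref{core0}, the support $\lambda$ of a geodesic lamination on a closed hyperbolic surface decomposes into finitely many minimal sub-laminations together with finitely many open edges that are isolated in $\lambda$ and spiral onto minimal sub-laminations at both ends. Hence it suffices to show that the presence of a transverse measure rules out any such isolated spiralling edge.

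Supposing for contradiction that $\psi \subset \lambda$ is isolated and spirals onto a minimal sub-lamination $\lambda_0$ at one end, I lift $\psi$ to $\tilde \psi \subset \tilde \lambda \subset \H^2$ and let $q \in \pt \H^2$ be the endpoint of $\tilde \psi$ on the spiralling side. The spiralling condition implies that $q$ is simultaneously an endpoint of some lift $\tilde \gamma$ of an edge $\gamma$ of $\lambda_0$: when $\lambda_0$ is a simple closed geodesic this is immediate, since $q$ is then a fixed point of the corresponding deck transformation and so an endpoint of the axis, which is a lift of $\lambda_0$; the irrational case yields the same conclusion by standard arguments on the accumulation of lifts in $\pt \H^2$. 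Thus $q$ is adjacent to at least two distinct geodesics of $\tilde \lambda$, namely $\tilde \psi$ and $\tilde \gamma$.

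By Lemma~\ref{core1} this is possible only if both $\tilde \psi$ and $\tilde \gamma$ project to boundary edges of some minimal irrational sub-lamination of $\lambda$. But every edge of a minimal irrational sub-lamination is accumulated within $\lambda$ by other edges of the same sub-lamination, so no such edge can be isolated in $\lambda$. This contradicts the assumption on $\psi$, leaving $\lambda$ with no isolated spiralling edges, hence a finite union of minimal sub-laminations. The main technical point I expect to require a careful check is the justification that ``spiralling in the surface'' genuinely produces the shared boundary endpoint in $\pt \H^2$ with a specific lift of a leaf of $\lambda_0$; once that is in place, the rest is a direct invocation of Lemma~\ref{core1}.
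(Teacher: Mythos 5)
Your proof is correct and takes exactly the route the paper intends: it states that Theorem~\ref{core0} and Lemma~\ref{core1} imply the lemma (delegating the ``detailed proof'' to~\cite{PH}), and your argument is precisely that implication. The point you flag as needing care --- that a spiralling end of an isolated leaf shares its ideal endpoint with a lift of a boundary leaf of the minimal sub-lamination $\lambda_0$ (via the attracting fixed point of the corresponding deck transformation in the closed case, or via the spike of the complementary region of $\lambda_0$ in the irrational case) --- is indeed the technical detail the paper outsources, and it does go through.
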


By identifying the universal cover $\tilde S$ with $\H^2$ a measured lamination on $S$ gives rise to a $\pi_1(S)$-invariant measure on the space of lines in $\H^2$. This allows to consider measured laminations from a metric independent point of view. By $\mc{ML}(S)$ we denote the space of measured laminations on $S$ equipped with the topology of *-weak convergence of measures. There is a natural PL-structure on the space $\mc{ML}(S)$ induced by so-called \emph{train tracks} on $S$. See the details in~\cite{Bon2, PH}. A well-known result is

\begin{thm}[\cite{Bon2, PH}]
The space $\mc{ML}(S)$ is PL-homeomorphic to $\R^{-3\chi(S)}$.
\end{thm}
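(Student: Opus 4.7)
The plan is to give $\mc{ML}(S)$ the structure of a PL-manifold via coordinate charts coming from train tracks, following~\cite{PH, Bon2}. Recall that a \emph{train track} $\tau \subset S$ is a smoothly embedded graph with trivalent vertices (switches) at which the three branches meeting share a common tangent line, such that each complementary region has negative generalized Euler characteristic. The \emph{weight space} $W(\tau) \subset \R_{\geq 0}^{E(\tau)}$ is the convex polyhedral cone of nonnegative weight assignments on branches satisfying, at each switch, the linear \emph{switch condition} equating the sum of weights on the two-branch side with the weight on the one-branch side.

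First I would build a continuous map $\Phi_\tau : W(\tau) \to \mc{ML}(S)$ for each $\tau$: replace each branch $b$ by a foliated rectangle of transverse width $w(b)$, glue the rectangles at switches using the common tangent (the switch conditions guarantee that widths match), then straighten each resulting leaf to its geodesic representative in $d$. Conversely, every $(\lambda, \mu) \in \mc{ML}(S)$ is \emph{carried} by some train track: a sufficiently thin regular neighborhood of $\lambda$ foliated by short arcs transverse to $\lambda$ becomes, after collapsing each tie to a point, a train track $\tau$ with $(\lambda, \mu) = \Phi_\tau(w)$ for the weight vector induced by $\mu$. Hence the images of the $\Phi_\tau$ cover $\mc{ML}(S)$, and one checks that $\Phi_\tau$ is injective on the relative interior of $W(\tau)$ consisting of strictly positive (recurrent) weights.

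Next I would prove that the $\Phi_\tau$ give PL-compatible charts. The key tool is the \emph{splitting} operation, a local modification at a branch which produces a new train track carrying the same class of measured laminations. Any two train tracks carrying a common measured lamination are connected by a finite sequence of splittings and their inverses, and each splitting induces an explicit PL change of coordinates between the relevant weight spaces. This is the most delicate part of the argument and is where I expect the main obstacle to lie: one must verify that splitting sequences give a consistent atlas which organizes the union $\bigcup_\tau \Phi_\tau(W(\tau))$ as a PL-manifold, not merely a union of polyhedral cones.

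Finally, for a maximal recurrent train track $\tau$ (all complementary regions are trigons), an Euler-characteristic count, using trivalence $2|E(\tau)| = 3|V(\tau)|$, the fact that each switch contributes exactly one cusp to the complement and each trigonal face has three cusps so $|V(\tau)| = 3|F(\tau)|$, together with $|V(\tau)| - |E(\tau)| + |F(\tau)| = \chi(S)$, yields $|V(\tau)| = -6\chi(S)$, $|E(\tau)| = -9\chi(S)$, and hence $\dim W(\tau) = |E(\tau)| - |V(\tau)| = -3\chi(S)$. To upgrade "PL-manifold of dimension $-3\chi(S)$" to "PL-homeomorphic to $\R^{-3\chi(S)}$", note that scaling the transverse measure exhibits $\mc{ML}(S) \setminus \{0\}$ as the open cone on the space $\mc{PML}(S)$ of projective measured laminations, which by Thurston's classical argument is a PL sphere of dimension $-3\chi(S) - 1$. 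Therefore $\mc{ML}(S)$ is PL-homeomorphic to $\R^{-3\chi(S)}$.
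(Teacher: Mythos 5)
The paper does not prove this statement at all: it is quoted as a classical theorem with a citation to \cite{Bon2, PH}, so there is no in-paper argument to compare with. Your sketch is the standard train-track proof from exactly those references, and its skeleton is sound: the weight cones $W(\tau)$ with the switch conditions, the construction of $\Phi_\tau$ by foliated rectangles and geodesic straightening, the fact that every measured lamination is carried by some train track, and the dimension count for a maximal (complete) track ($2|E|=3|V|$, one cusp per switch, three cusps per trigon, hence $|V|=-6\chi$, $|E|=-9\chi$, $\dim W(\tau)=|E|-|V|=-3\chi$, modulo a word on why the switch conditions are linearly independent for a maximal recurrent track) are all correct.

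Two caveats. First, the chart-compatibility step is stated too loosely: it is not literally true that any two train tracks carrying a common measured lamination are related by a finite sequence of splittings and inverse splittings; the argument in \cite{PH} instead compares an arbitrary carrying track with a fixed family of \emph{standard} train tracks subordinate to a pants decomposition, using that carrying maps induce linear (hence PL) maps on weight cells — you flag this as the delicate point, but as written the key lemma is not the one that is actually available. Second, and more seriously, your final upgrade from ``PL-manifold of dimension $-3\chi(S)$'' to ``PL-homeomorphic to $\R^{-3\chi(S)}$'' invokes Thurston's theorem that $\mc{PML}(S)$ is a PL sphere of dimension $-3\chi(S)-1$. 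Since $\mc{ML}(S)\setminus\{0\}$ is the cone on $\mc{PML}(S)$, that theorem is essentially equivalent to the statement you are proving, so this step is circular as a proof strategy. In the cited sources the global identification with $\R^{-3\chi(S)}$ is obtained directly, either from the parametrization by the standard models (Penner--Harer) or from Dehn--Thurston intersection/twisting coordinates adapted to a pants decomposition, and the sphericity of $\mc{PML}(S)$ is then a corollary rather than an input. To make your argument complete you would need to replace the last paragraph by one of these direct parametrizations.
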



Now we return to our setting when $\ol g$ is a convex cocompact hyperbolic metric on $N$. The set of edges of $\pt C(\ol g)$ clearly forms a geodesic lamination, which we denote $\lambda_{\ol g}$. There is a transverse measure $\mu_{\ol g}$ measuring how $\pt C(\ol g)$ is bent in $N(\overline g)$. A careful construction is given in~\cite{EM}. We sketch now another construction of $\mu_{\ol g}$. 

Recall that oriented planes in $\H^3$ correspond to point in the de Sitter space $\dS^3$. Let $\tau$ be a transverse arc and $\tilde \tau$ be its lift to $\pt \tilde C(\ol g)$. Consider the set of all pairs $(p, \Pi)$ where $p \in \tilde \tau$ and $\Pi$ is a supporting plane to $\tilde C(\ol g)$ at $p$, which we consider oriented outwards $\tilde C(\ol g)$. Denote the projection of this set to the second coordinate by $\tau^* \subset \dS^3$. It is not hard to see that $\tau^*$ is an arc in $\dS^3$, i.e., the image of a closed interval. One can show that $\tau^*$ is rectifiable. Define $\mu_{\ol g}(\tau)$ as the length of $\tau^*$. 

The pair $(\lambda_{\ol g}, \mu_{\ol g})$ is called \emph{the bending lamination} of $\ol g$. It is possible to extend the notion of geodesic laminations to cone-surfaces (a step towards that was taken, e.g., in~\cite{BS}) and the notion of bending lamination to bent surfaces. This would give alternative proofs to the basic facts about bent surfaces in the next section. We, however, chose to take a simpler, more geometric route. We will need the following easy statements.

\begin{lm}
\label{core2}
Let $\psi \subset \lambda_{\ol g}$ be a geodesic belonging to an irrational component, and $\tilde \psi \subset \pt \tilde C(\ol g)$ be its lift. Then there is a unique supporting plane to $\tilde C(\ol g)$ at $\tilde \psi$. 
\end{lm}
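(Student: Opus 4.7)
My plan is to argue by contradiction, using that $(\lambda_{\ol g}, \mu_{\ol g})$ is a measured lamination whose transverse measure is locally finite. I would first observe that any supporting plane $\Pi$ to $\tilde C(\ol g)$ at a point of $\tilde\psi$ must contain all of $\tilde\psi$: indeed $\tilde\psi$ is a geodesic lying entirely in $\partial \tilde C(\ol g)$, so no part of $\tilde\psi$ can lie strictly on the outer side of $\Pi$. It follows that the set $A_{\tilde\psi}$ of supporting planes along $\tilde\psi$ is a single arc in $\dS^3$, parametrised by the pencil of planes through the line $\tilde\psi$; the claim to prove is that its length $\alpha$ equals zero.

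Assuming $\alpha>0$, I would produce an atom of $\mu_{\ol g}$ of mass at least $\alpha$ at the point $p := \tau \cap \psi$, where $\tau$ is a short transverse arc meeting $\psi$ at a single point. Fixing a lift $\tilde\tau \subset \partial \tilde C(\ol g)$ crossing $\tilde\psi$ at a point $\tilde p$, I would apply the construction of $\mu_{\ol g}$ recalled in the text: as $\tilde\tau$ sweeps past $\tilde p$, the supporting planes at the moving point traverse the full arc $A_{\tilde\psi}$. Hence $\tilde\tau^*\subset \dS^3$ contains a sub-arc of length $\alpha$, and $\mu_{\ol g}(\{p\})\geq \alpha$.

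Next I would amplify this single atom into infinitely many, using that $\psi$ lies in a minimal irrational sub-lamination $\lambda_0$. By minimality $\psi$ is dense in $\lambda_0$, so by a flow-box argument the leaf $\psi$ re-enters any sufficiently small neighborhood of $p$ infinitely often, and at each return it must cross $\tau$ transversally. This produces infinitely many distinct points $p = p_0, p_1, p_2, \ldots \in \psi \cap \tau$. Each $p_n$ admits a lift $\tilde p_n$ sitting on some lift $\gamma_n \tilde\psi$ of $\psi$, with $\gamma_n \in \pi_1(M)$; by equivariance of $\tilde C(\ol g)$ under the isometric $\pi_1(M)$-action on $\H^3$ (and the induced action on $\dS^3$), the arc $\gamma_n A_{\tilde\psi}$ of supporting planes at $\gamma_n\tilde\psi$ again has length $\alpha$. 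Therefore every $p_n$ is an atom of $\mu_{\ol g}|_\tau$ of mass at least $\alpha$, forcing $\mu_{\ol g}(\tau)=\infty$ and contradicting local finiteness of the transverse measure.

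The hard part will be justifying carefully that $\psi$ crosses $\tau$ at infinitely many distinct points, independently of whether $\psi$ is a boundary leaf of $\lambda_0$. This is really a statement about recurrence of a dense leaf in a minimal geodesic lamination together with local transversality in a flow box; crucially it does not require other leaves to accumulate on $\psi$ from both sides, which matters because the boundary-leaf case is singled out in Lemma~\ref{core1}. Everything else, namely converting the length of $A_{\tilde\psi}$ into an atomic mass of the transverse measure and using $\pi_1(M)$-equivariance to propagate this mass to every crossing, is routine bookkeeping with the $\H^3 \leftrightarrow \dS^3$ duality.
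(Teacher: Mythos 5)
Your proposal is correct and follows essentially the same route as the paper: a nonzero bending angle along $\tilde\psi$ would give the leaf $\psi$ a positive atomic transverse measure, while recurrence of the open leaf (it meets a transverse arc infinitely often, since it accumulates in the compact surface / is dense in the minimal sub-lamination) forces that atom to be zero by local finiteness of $\mu_{\ol g}$. The step you flag as the ``hard part'' is exactly what the paper dispatches in one line, and your extra bookkeeping with lifts, equivariance and the $\dS^3$ arc is just a more explicit version of the same argument.
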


\begin{proof}
The geodesic $\psi$ is open, hence, there is an accumulation point of $\psi$ in $\pt C(\ol g)$. Thereby, there exists a transverse arc intersecting $\psi$ infinitely many times. Thus, the measure of $\psi$ is zero. If there are two distinct support planes to $\tilde C(\ol g)$ at $\tilde \psi$, the measure of $\psi$ is at least the exterior dihedral angle between these planes, which is non-zero.
\end{proof}

\begin{lm}
\label{core3}
Let $\tilde \psi_1, \tilde \psi_2 \subset \pt \tilde C(\ol g)$ be two edges of the same component of $\pt \tilde C(\ol g)$ ending in the same point $q \in \Lambda(\ol g)$. Then $\tilde \psi_1$ and $\tilde \psi_2$ belong to a minimal irrational sub-lamination of $\lambda_{\ol g}$, and there exists a face of $\pt \tilde C(\ol g)$ incident to both of them.
\end{lm}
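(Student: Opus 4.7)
The plan is to reduce everything to the intrinsic geometry of the component $\tilde F \subset \pt \tilde C(\ol g)$ containing $\tilde \psi_1, \tilde \psi_2$ and to apply Lemma~\ref{core1} there. By Lemma~\ref{cocobound}, passing to the universal cover, $\tilde F$ with its intrinsic metric is isometric to $\H^2$. The two edges $\tilde \psi_1, \tilde \psi_2$ are simultaneously geodesics in $\H^3$ (as bent-edges) and intrinsic geodesics of $\tilde F$ (as edges of $\tilde \lambda_{\ol g} \cap \tilde F$). Parametrizing by arclength toward $q$, the shared endpoint $q \in \pt \H^3$ forces $d_{\H^3}(\tilde \psi_1(t), \tilde \psi_2(t)) \to 0$.

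The central step is to upgrade this extrinsic asymptoticity to intrinsic asymptoticity in $\tilde F$. The stabilizer of $\tilde F$ in $\pi_1(M)$ is the surface subgroup $\pi_1(F)$ of the compact component $F \subset \pt C(\ol g)$ that $\tilde F$ covers, and it acts cocompactly on $\tilde F$. Since $\ol g$ is convex cocompact, $\pi_1(F)$ is a quasi-convex subgroup of $\pi_1(M)$, so the inclusion $\tilde F \hookrightarrow \H^3$ is a quasi-isometric embedding. Hence $d_{\tilde F}(\tilde \psi_1(t), \tilde \psi_2(t))$ remains bounded as $t \to +\infty$. Since two non-asymptotic geodesic rays in $\H^2$ diverge linearly, $\tilde \psi_1$ and $\tilde \psi_2$ must share an ideal endpoint $q' \in \pt \tilde F$.

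Now apply Lemma~\ref{core1} to the measured lamination $\lambda_{\ol g} \cap F$ on the closed hyperbolic surface $F$, of which $\tilde \lambda_{\ol g} \cap \tilde F$ is the full preimage in $\tilde F \cong \H^2$. Since $\tilde \psi_1, \tilde \psi_2$ are two distinct lifts adjacent to $q'$, the lemma yields that their projections are boundary edges of a minimal irrational sub-lamination of $\lambda_{\ol g}$ and that $\tilde \psi_1, \tilde \psi_2$ are the only leaves of the preimage adjacent to $q'$. This gives the first claim. For the second, work in the half-plane model of $\tilde F$ with $q'=\infty$; then $\tilde \psi_1, \tilde \psi_2$ become vertical rays from points $a_1, a_2 \in \R$, and the region $W$ between them is the strip $\{a_1<x<a_2\}$. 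Any other leaf of $\tilde \lambda_{\ol g} \cap \tilde F$ contained in $W$ has both ideal endpoints in $[a_1,a_2]$ (by the previous step no other leaf reaches $q'$, and leaves are pairwise disjoint), hence in the half-plane model is a semicircle of maximum height at most $(a_2-a_1)/2$. Above this height $W$ contains no leaves beyond $\tilde \psi_1, \tilde \psi_2$, so the face of $\tilde F \setminus \tilde \lambda_{\ol g}$ incident to $\tilde \psi_1$ on the $W$ side extends across this high strip and is also incident to $\tilde \psi_2$, furnishing the common face.

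The main obstacle is the second paragraph: passing from extrinsic to intrinsic asymptoticity via quasi-convexity of boundary surface subgroups in convex cocompact Kleinian groups. Once that is in place, the rest is a routine combination of Lemma~\ref{core1} with planar $\H^2$ geometry in the half-plane model centered at $q'$.
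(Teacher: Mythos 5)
Your approach has a genuine gap that the paper's proof is specifically designed to avoid. You write ``passing to the universal cover, $\tilde F$ with its intrinsic metric is isometric to $\H^2$,'' and then treat $\tilde F$ as $\H^2$ throughout: you invoke divergence of non-asymptotic rays in $\H^2$, and you apply Lemma~\ref{core1} to ``the full preimage in $\tilde F \cong \H^2$.'' This is only legitimate when $\pt M$ is incompressible, for then the component $\tilde F \subset \pt\tilde C(\ol g)$ is simply connected. The paper explicitly warns against this (``as the boundary of $M$ could be compressible, the boundary components of $\tilde M$ could be infinitely connected'') and, accordingly, develops the \emph{universal cover of the component} to $\H^2$ rather than identifying the component itself with $\H^2$.

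The reason this gap cannot be patched by merely inserting a further passage to the universal cover is your choice of estimate. Even granting that the stabilizer $\Gamma_F$ of $\tilde F$ is convex cocompact (true, but non-elementary, and you state it without a reference), a quasi-isometric embedding $\tilde F \hookrightarrow \H^3$ only yields $d_{\tilde F}(\tilde\psi_1(t),\tilde\psi_2(t)) = O(1)$; the additive constant is unavoidable. In the incompressible case $\tilde F = \H^2$ and a bounded distance between disjoint geodesics forces asymptoticity, so your argument closes. But when $\tilde F$ is not simply connected, a merely bounded $d_{\tilde F}$ does not pin down a consistent lift of $\tilde\psi_2$ near a fixed lift of $\tilde\psi_1$ in the universal cover, so you cannot conclude that some pair of lifts is asymptotic in $\H^2$. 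The paper instead proves the \emph{strictly stronger} statement that $d_{\tilde F}(\tilde\psi_1(t),\tilde\psi_2(t)) \to 0$, by an elementary geometric argument: a finite chain of supporting planes from $\tilde\psi_1$ to $\tilde\psi_2$ (via a lemma from~\cite{EM}), intersected with a shrinking horosphere at $q$, plus Busemann--Feller. With the distance tending to zero, the lift to $\H^2$ is eventually unambiguous and the rest of the argument (Lemma~\ref{core1} and the half-plane picture of the common face, which you handle correctly and essentially as the paper does) goes through in full generality.
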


\begin{proof}
Pick $p \in \tilde \psi_1$. We claim that the intrinsic distance in $\pt \tilde C(\ol g)$ from $p$ to $\tilde \psi_2$ tends to zero as we move $p$ towards $q$. Indeed, there exists a finite sequence $\Pi_1, \ldots, \Pi_r$ of supporting planes to $\tilde C(\ol g)$ such that $\Pi_1$ is tangent to $\tilde \psi_1$, $\Pi_r$ is tangent to $\tilde \psi_2$ and every two subsequent planes intersect. This follows from~\cite[Lemma II.1.8.3]{EM}. By the Busemann-Feller lemma, the intrinsic distance from $p$ to $\tilde \psi_2$ in $\pt \tilde C(\ol g)$ is at least the intrinsic distance from $p$ to $\tilde \psi_2$ in the union of pieces of these support planes forming a convex surface $C$. Take the horosphere at $q$ passing through $p$ and consider in $C$ the curve obtained in the intersection. The length of this curve from $p$ to the intersection with $\tilde \psi_2$ tends to zero as we move $p$ towards $q$, which implies our claim.

Develop the universal cover of this component of $\pt \tilde C(\ol g)$ to $\H^2$. (We remark that as the boundary of $M$ could be compressible, the boundary components of $\tilde M$ could be infinitely connected.) Due to our claim, there are lifts of $\tilde \psi_1$ and $\tilde \psi_2$ ending in the same point $q' \in \pt_{\infty} \H^2$. Due to Lemma~\ref{core1}, they belong to a minimal irrational sub-lamination of $\lambda_{\ol g}$ and there are no other edges of our lifted lamination ending in $q'$. Hence, in the universal cover there is a face between our lifts of $\tilde \psi_1$ and $\tilde \psi_2$, which comes from a face of $\pt \tilde C(\ol g)$ adjacent to $\tilde \psi_1$ and $\tilde \psi_2$.
\end{proof}


\subsubsection{Boundary geometry of bent manifolds}
\label{bentsec}

Let $g$ be a bent metric on $M$. We will be frequently considering the closed set $\partial M(g) \cap \partial C(\overline g)$. Let $p \in \partial M(g) \cap \partial C(\overline g)$. As $\partial C(\overline g)$ does not have vertices, $p$ belongs to the relative interior of a segment $\psi \subset \partial C(\overline g)$. It follows from convexity that $\psi \subset \partial M(g)$. Hence, $\partial M(g) \cap \partial C(\overline g)$ is the union of some edges and faces of $\partial C(\overline g)$. Note however that such an edge or a face of $\partial C(\overline g)$ is not necessarily an edge or a face of $\partial M(g)$. The following lemma is rather trivial

\begin{lm}
\label{bas1}
Let $g$ be a bent metric on $M$ with at least one vertex. Then $M(g)$ is the closed convex hull of its vertices in $N(\overline g)$ and all vertices are at positive distance from $C(\overline g)$. Conversely, let $\overline g$ be a convex cocompact hyperbolic metric on $N $ and $V \subset N(\overline g)$ be a finite set. Then $\clconv(V)$ produces a bent metric on $M$ except the case when $\ol g$ is Fuchsian and $V$ belongs to the complete totally geodesic surface containing $C(\ol g)$. 
\end{lm}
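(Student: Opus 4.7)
My plan is to work in the universal cover $\H^3$, where $\tilde M(g)$ is a closed convex $\rho_{\ol g}$-invariant set whose ideal boundary lies in $\Lambda(\ol g)$ (since $M$ is compact). I split the forward statement into two parts.

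For the claim that vertices lie at positive distance from $C(\ol g)$, I would argue by contradiction: suppose $v \in V(g) \cap C(\ol g)$. Since $C(\ol g) \subset M(g)$, one has $v \in \pt M(g) \cap \pt C(\ol g)$. By Lemma~\ref{cocobound} together with the description of the bending lamination in Section~\ref{cocoresec}, every point of $\pt C(\ol g)$ lies in the relative interior of a $\H^3$-geodesic segment contained in $\pt C(\ol g)$ (either across a face of the lamination, or along an edge, which is a $\H^3$-geodesic). As observed at the beginning of Section~\ref{bentsec}, such a segment is automatically contained in $\pt M(g)$ by convexity, contradicting $v$ being a vertex of $g$. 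Finiteness of $V(g)$ then upgrades ``not in $C(\ol g)$'' to ``at positive distance''.

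For the equality $M(g) = \clconv(V(g))$, one inclusion is immediate as $M(g)$ is closed and totally convex. For the reverse, I would lift to $\H^3$ and apply Krein--Milman in the projective (Klein) model, where the closure $\overline{\tilde M(g)} \subset \overline{\H^3}$ is a genuine closed convex subset of a closed Euclidean ball. The finite extreme points of this set are exactly the lifts of vertices of $g$: regular points and non-vertex edge points lie in the relative interior of a segment of $\pt \tilde M(g)$ and hence are not extreme. The ideal extreme points lie in $\pt_\infty \tilde M(g) \subset \Lambda(\ol g)$. Thus $\tilde M(g) \subset \clconv(\tilde V(g) \cup \Lambda(\ol g))$. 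Since $V(g) \neq \emptyset$ and $\rho_{\ol g}$ is non-elementary, the $\rho_{\ol g}$-orbit $\tilde V(g)$ accumulates on all of $\Lambda(\ol g)$, giving $\clconv(\tilde V(g)) \supset \clconv(\Lambda(\ol g)) = \tilde C(\ol g)$ and hence $\clconv(\tilde V(g) \cup \Lambda(\ol g)) = \clconv(\tilde V(g))$. Projecting to $N(\ol g)$ yields the equality.

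For the converse, set $\tilde K := \clconv(\tilde V) \subset \H^3$. The same accumulation argument gives $\tilde K \supset \tilde C(\ol g)$, so $\tilde K$ has non-empty interior precisely outside the excluded Fuchsian case (where $V$ lies on the complete totally geodesic surface carrying $C(\ol g)$, i.e.\ $\tilde V \subset \Pi$ for the $\rho_{\ol g}$-invariant plane $\Pi$). Discreteness of $\tilde V$ together with Krein--Milman forces the finite-distance extreme points of $\tilde K$ to lie in $\tilde V$, so after quotienting by $\rho_{\ol g}$ the bent structure has at most $|V|$ vertices. Every other boundary point of $\tilde K$ lies on a $2$-dimensional totally geodesic face or along an edge whose endpoints are vertices, ideal points of $\Lambda(\ol g)$, or both, and hence in the relative interior of a $\H^3$-geodesic segment. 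The quotient $K = \tilde K / \rho_{\ol g}$ is compact by cocompactness of the $\rho_{\ol g}$-action on $\tilde C(\ol g) \cup \tilde V$, and a standard deformation-retraction argument identifies $K$ with $M$ as a compact totally convex $3$-dimensional neighborhood of $C(\ol g)$ in $N(\ol g)$.

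The main obstacle I expect is the careful justification of the Krein--Milman step in the projective compactification $\overline{\H^3}$ (separating finite from ideal extreme points) and the analysis of those parts of $\pt \tilde K$ that coincide with parts of $\pt \tilde C(\ol g)$, since those inherit the bending lamination of the convex core and one must verify they produce only edge-type, not vertex-type, singularities. Cocompactness of $\tilde K$ and the correct homeomorphism type of $K$ in the converse are standard but will also warrant some care.
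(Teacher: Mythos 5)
Your proof is correct, and it takes a slightly different route from the paper's. For the forward direction, you invoke Krein--Milman in the Klein model to show that the extreme points of $\overline{\tilde M(g)}$ lie in $\tilde V(g) \cup \Lambda(\ol g)$, whereas the paper argues more concretely that edges and maximal geodesic segments through regular points of $\pt \tilde M(g)$ terminate at $\tilde V(g)$, at $\Lambda(\ol g)$, or at edge-points (relying on the preceding observation that an edge cannot end at an ideal point outside $\Lambda(\ol g)$ by compactness of $M$). Your version is arguably more self-contained: it only needs the trivial observation that regular and edge points lie in the relative interior of a segment of $\pt \tilde M(g)$ and hence are not extreme, avoiding any prior analysis of edge endpoints. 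For the converse, you use Milman's partial converse to Krein--Milman to locate the finite-distance extreme points of $\tilde K = \clconv(\tilde V)$ inside $\tilde V$; the paper instead applies Carath\'eodory in the projective model (each boundary point is in the convex span of at most four points of $\tilde V \cup \Lambda(\ol g)$; if exactly four in general position, it would be interior). Both tools deliver the same conclusion that all boundary points outside $\tilde V$ are non-extreme and hence lie in the relative interior of a geodesic segment. Your two stated ``obstacles'' are not real issues: in the Klein model the closure is a compact convex subset of $\overline{\mathbb{B}^3} \subset \R^3$, so the finite-dimensional Minkowski--Krein--Milman theorem and Milman's converse apply directly; and the worry about boundary points shared with $\pt \tilde C(\ol g)$ is already disposed of, since by Milman such points are non-extreme whenever they are not in $\tilde V$, and hence lie in the relative interior of a segment regardless of the bending lamination structure. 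The last step of your forward argument, that vertices are at positive distance from $C(\ol g)$, coincides with the paper's.
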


Here $\clconv(V)$ is \emph{the closed convex hull} of $V$, i.e., the inclusion-minimal closed totally convex subset of $N(\ol g)$ containing $V$. We will also apply the notion on closed convex hull to subsets of $\H^3$.

\begin{proof}
Consider $\tilde M(g) \subset \H^3$. A lift of an edge of $g$ is a geodesic segment in $\H^3$ that ends either at $\tilde V(g)$ or at $\Lambda(\overline g)$. A lift of a maximal geodesic segment of $M(g)$ passing through a regular point ends either at $\tilde V(g)$ or at $\Lambda(\overline g)$ or at an edge-point. Thus, $\tilde M(g)=\conv(\tilde V(g) \cup \Lambda(\overline g))$. But each point of $\Lambda(\ol g)$ is an accumulation point of the orbit of every vertex. We get $\tilde M(g)=\clconv(\tilde V(g))$ and $M(g)=\clconv(V(g))$.

If $p \in (\partial \tilde M(g) \cap \partial \tilde C(\overline g))$, then it lies in the relative interior of a geodesic segment $\psi \subset \partial \tilde C(\overline g)$. Then $\psi \subset \partial \tilde M(g)$ and $p$ is not a vertex of $g$. Thus, all vertices of $g$ are at positive distance from $C(\overline g)$.

For the last claim, it is easy to see that if $V \subset N(\overline g)$ is a finite point-set, then $\clconv(V)$ is homeomorphic to $M$ except the case when $\overline g$ is Fuchsian and $V$ belongs to the complete totally geodesic surface containing $C(\ol g)$. 
We need to show that the boundary is bent. Denote $\clconv(V)$ by $M(g)$ and consider $\tilde M(g) \subset \H^3$. The Caratheodory theorem (which we can apply to $\H^3 \cup \partial_{\infty} \H^3$ due to the Klein model) says that each $p \in \tilde M(g)$ is in the convex hull of at most four points from $\tilde V(g) \cup \Lambda(\overline g)$. If a point belongs to the convex hull of four of these points, but no less, then it belongs to the interior of the convex hull. If follows that every point in $\partial \tilde M(g)$ belongs to the relative interior of a geodesic segment except possibly points of $\tilde V$. This finishes the proof.
\end{proof}

We will also make use of the following easy observation

\begin{lm}
\label{isolvertex}
For every $v \in V(g)$ there exists a neighborhood of $v$ in $\partial M$ that does not contain edges of $g$ except those that are incident to $v$.
\end{lm}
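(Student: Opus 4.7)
The plan is to argue by contradiction. Assuming the claim fails, there is a sequence of edges $\psi_i$ of $g$ with $\psi_i\cap B_{\pt M}(v,1/i)\neq\emptyset$ and $v$ not being an endpoint of $\psi_i$. Lifting to $\tilde M(g)\subset\H^3$, we get lifts $\tilde\psi_i\subset\pt\tilde M(g)$ meeting $B(\tilde v,1/i)$, with $\tilde v$ not an endpoint of $\tilde\psi_i$. First I would invoke Lemma~\ref{bas1} to see that $\tilde v$ is at positive distance from $\tilde C(\ol g)$, and combine this with the discreteness of $\tilde V(g)$ (which follows from finiteness of $V(g)$ and proper discontinuity of the deck action) to fix a radius $R>0$ such that $B(\tilde v,R)$ is disjoint from $\tilde C(\ol g)$ and from $\tilde V(g)\setminus\{\tilde v\}$.

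Next I would classify the $\tilde\psi_i$ according to the trichotomy of edges recorded earlier in the subsection. Case (c) edges lie on $\pt\tilde C(\ol g)$ and hence outside $B(\tilde v,R)$; so for large $i$ each $\tilde\psi_i$ falls into case (a) or case (b) and has at least one endpoint $\tilde v_i\in\tilde V(g)\setminus\{\tilde v\}$ at distance at least $R$ from $\tilde v$. Picking $p_i\in\tilde\psi_i\cap B(\tilde v,1/i)$ and extracting a subsequence via Arzel\`a--Ascoli, the unit tangent directions of $\tilde\psi_i$ at $p_i$ converge to some $\xi\in T_{\tilde v}\H^3$, and the segments $\tilde\psi_i$ converge in the Hausdorff sense on compacta to a geodesic segment $\tilde\psi_\infty\subset\pt\tilde M(g)$ through $\tilde v$ of length at least $R$ on each side. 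I would then argue that every $q\in\tilde\psi_\infty\cap(B(\tilde v,R)\setminus\{\tilde v\})$ is an edge-point: it is not a vertex by the choice of $R$, and if it were regular then a planar neighborhood of $q$ in $\pt\tilde M(g)$ would eventually engulf the approaching $\tilde\psi_i$ and force their nearby points to lie in this single supporting plane, contradicting their being on edges. Hence $\tilde\psi_\infty$ near $\tilde v$ is contained in a union of at most two edges of $g$ incident to $\tilde v$.

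The hard part will be converting this convergence into an actual contradiction. My plan is to exploit the convex-hull description $\tilde M(g)=\clconv(\tilde V(g))$ from Lemma~\ref{bas1}: inside $B(\tilde v,R)$ the only extreme point of $\tilde M(g)$ is $\tilde v$, so $\pt\tilde M(g)\cap B(\tilde v,R)$ has a local cone structure at $\tilde v$ whose $1$-dimensional faces are radial rays emanating from $\tilde v$. Any $1$-face of $\tilde M(g)$ meeting this ball must therefore emanate from $\tilde v$, contradicting the assumption on $\tilde\psi_i$. Making this local-cone reduction rigorous, and in particular ruling out $1$-faces of the hull $\clconv(\tilde V(g)\cup\Lambda(\ol g))$ that slice through $B(\tilde v,R)$ without meeting $\tilde v$, is the technical core of the argument, and will rely on both the discreteness of $\tilde V(g)$ in $\H^3$ and the positive distance from $\tilde v$ to $\tilde C(\ol g)$ that Lemma~\ref{bas1} provides.
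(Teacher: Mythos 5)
Your second paragraph already contains the winning move, but you do not play it. Once you have extracted a geodesic segment $\tilde\psi_\infty\subset\pt\tilde M(g)$ passing through $\tilde v$ with positive length on each side, you are done: by the definition of a bent metric (Definition~1.3), the vertices of $g$ are precisely the points of $\pt M$ that do \emph{not} lie in the relative interior of any geodesic segment of $g$, so the existence of $\tilde\psi_\infty$ immediately shows that $v\notin V(g)$, contradiction. This is exactly the paper's one-line conclusion after the compactness step. Instead, you detour into classifying points of $\tilde\psi_\infty$ as edge-points and then propose to derive the contradiction from a ``local cone structure'' of $\clconv(\tilde V(g)\cup\Lambda(\ol g))$ near $\tilde v$. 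That final step is both unnecessary and circular: ``ruling out $1$-faces of the hull that slice through $B(\tilde v,R)$ without meeting $\tilde v$'' \emph{is} the assertion of the lemma restricted to a ball, and you explicitly acknowledge it as unproved. Having only one extreme point in a ball does not by itself force a radial cone structure on the boundary faces there, so this route needs a genuinely new idea that your proposal does not supply.

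Two smaller remarks. First, the preparatory setup is heavier than needed: the paper does not invoke Lemma~\ref{bas1} or the distance to $\tilde C(\ol g)$ at all; it simply takes a small ball $B$ around $v$ avoiding the finitely many other vertices, observes that edges not incident to $v$ cross $B$ in chords with endpoints on $\pt B$, and passes to a subsequential limit. Second, your trichotomy labels are scrambled: it is the edges with \emph{both} endpoints in $\Lambda(\ol g)$ (your case (b), not (c)) that lie on $\pt\tilde C(\ol g)$, while the mixed edges need not; this does not sink your argument, since mixed edges still have an endpoint in $\tilde V(g)\setminus\{\tilde v\}$, but it should be fixed. None of this extra machinery is required once you notice that the limit segment through $\tilde v$ alone contradicts $v$ being a vertex.
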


\begin{proof}
Let $B$ be a small embedded geodesic ball around $v$ in $N(\overline g)$ that does not contain other vertices. If there is a sequence of edges not incident to $v$ that converge to $v$, then their intersections with $B$ are disjoint geodesic segments with the endpoints in $\partial B$. Then they converge to a geodesic segment in $M(g) \cap B$ containing $v$ in its relative interior. Thus, $v$ is not a vertex. 
\end{proof}

We now describe the intrinsic geometry of bent metrics.

\begin{lm}
\label{indmetric}
If $g$ is a bent metric on $M$, then the induced intrinsic metric $d$ on $\partial M$ is a convex hyperbolic cone-metric.
\end{lm}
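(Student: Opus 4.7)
The plan is to verify three properties of the induced intrinsic metric $d$ on $\partial M$: (i) away from $V(g)$, the metric $d$ is locally isometric to $\H^2$; (ii) at each $v \in V(g)$, it is locally isometric to a hyperbolic cone of some angle $\kappa_v$; and (iii) each $\kappa_v < 2\pi$. Together with the finiteness of $V(g)$, these imply that $d$ is a convex hyperbolic cone-metric.

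For (i), at a regular point $p$ the bent hypothesis directly yields a neighborhood in $M$ isometric to a neighborhood of a boundary point in a hyperbolic half-space, so $\partial M$ is locally $\H^2$ around $p$. At an edge-point $p$, all geodesic segments of $g$ through $p$ share a common tangent line (otherwise $p$ would be regular by the discussion preceding Lemma~\ref{bas1}), and the boundary near $p$ is the union of two planar half-disks glued along the common geodesic $\psi$ through $p$. Developing one of these half-disks across $\psi$ so that it becomes coplanar with the other produces a local isometry of a neighborhood of $p$ in $(\partial M, d)$ onto a disk in $\H^2$.

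The main technical step is the finiteness claim underlying (ii): at each $v \in V(g)$ only finitely many edges of $g$ are incident to $v$. I argue by contradiction. Choose a small geodesic ball $B$ around $v$ in $N(\ol g)$ containing no other vertex of $g$ and, by Lemma~\ref{isolvertex}, intersected by no edge disjoint from $v$. Infinitely many incident edges $\{\psi_i\}$ would meet $\partial B$ in distinct points $q_i$ accumulating at some $q \in \partial B \cap \partial M$. Either $q \notin \{q_i\}$, in which case $q$ is not a vertex and not on any edge, so $q$ is regular and has a planar neighborhood $U$ in $\partial M$; or $q = q_j$ for some $j$, in which case $\partial M$ near $q$ is the union of two planar pieces meeting along $\psi_j$. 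In either case, the $q_i$ for large $i$ would lie in the planar part of $\partial M$ near $q$ while simultaneously being edge-points on the bends $\psi_i$, a contradiction. With only finitely many edges $\psi_1, \ldots, \psi_k$ at $v$ separating planar face-sectors of angles $\alpha_1, \ldots, \alpha_k$ at $v$, simultaneously developing these sectors produces a local isometry of a neighborhood of $v$ in $(\partial M, d)$ onto a hyperbolic sector of angle $\kappa_v = \sum_i \alpha_i$ with its two bounding rays identified, that is, onto a hyperbolic cone of angle $\kappa_v$.

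For (iii), the link $L_v$ of $v$ on the unit sphere $S^2 \subset T_v N(\ol g)$ is a closed convex piecewise-geodesic spherical polygon of perimeter $\kappa_v$, each arc contributing the corresponding face angle $\alpha_i$. If $L_v$ contained a pair of antipodal points, the corresponding opposite tangent directions at $v$ would lie either in a common face or on two faces meeting along a common edge through $v$; in either case they combine into a geodesic segment of $\partial M$ containing $v$ in its relative interior, contradicting $v \in V(g)$. Hence $L_v$ lies in an open hemisphere of $S^2$, so its perimeter $\kappa_v$ satisfies $\kappa_v < 2\pi$. The main obstacle in the whole argument is the finiteness step in the third paragraph; the rest is then bookkeeping.
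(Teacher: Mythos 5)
There is a genuine gap. Your proof relies, twice, on the claim that the boundary of $M$ near an edge-point $p$ is ``the union of two planar half-disks glued along $\psi$.'' This is not implied by the bent hypothesis and is false in general: the bent condition only requires that each non-vertex boundary point lie in the relative interior of \emph{some} geodesic segment, and this permits a continuum of edges accumulating on one another --- precisely what happens on the boundary of a convex core bent along an irrational geodesic lamination, which Section~\ref{bentsec} flags as the prototypical non-polyhedral bent surface. Near an edge-point of such a surface one side (or both) may be ruled by uncountably many edges, with no flat face at all. This false local model invalidates part (i) of your argument (the intrinsic flatness at edge-points, where you propose to ``develop one half-disk across $\psi$''), and it also invalidates the finiteness argument in the case $q = q_j$: there the $q_i$ could indeed accumulate at $q_j$ because the nearby edges $\psi_i$ accumulate on $\psi_j$, so no contradiction is forced. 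Your case analysis is in effect circular --- it assumes the polyhedral local picture it is trying to establish.

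The paper sidesteps all of this. For the locally hyperbolic structure at non-vertices, it applies Proposition~\ref{graham} (a result of Smith: for any closed $X \subset \H^3$ whose convex hull has non-empty interior, $\partial\conv(X) \setminus X$ is locally isometric to $\H^2$), a genuinely non-trivial input that handles edge-points and regular points uniformly with no structural assumptions. For the cone structure at a vertex $v$, it pushes a supporting plane at $\tilde v$ to cut a closed planar convex curve $\alpha \subset \partial\tilde M(g)$ and observes that the neighborhood of $\tilde v$ bounded by $\alpha$ is the geodesic cone over $\alpha$ from $\tilde v$, which is isometric to a convex hyperbolic cone regardless of how many edges meet $v$. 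Finiteness of edges at a vertex is only established later (as the corollary following Lemma~\ref{halfinf}), and its proof requires the half-infinite-edge analysis via the limit set and proper discontinuity --- an elementary local argument of the kind you attempt cannot deliver it.
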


This relies on the following proposition, which strengthens the observation of Thurston that the induced intrinsic metric on the boundary of a convex core is intrinsically hyperbolic.

\begin{prop}[\cite{Smi}]
\label{graham}
Let $X$ be a closed subset of $\H^3$. If $\conv(X)$ has non-empty interior, then $\partial \conv (X) \backslash X$ is locally isometric to $\H^2$.
\end{prop}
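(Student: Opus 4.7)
The plan is to work in the Klein model of $\H^3$, in which hyperbolic convex hulls coincide with Euclidean ones (intersected with the open Klein ball), so the problem reduces to classical convex analysis in $\R^3$. The induced intrinsic metric on $\pt\conv(X)$ is the restriction of the ambient hyperbolic metric, so local isometry to $\H^2$ at a point $p \in \pt\conv(X) \setminus X$ will follow once we show that a small neighborhood of $p$ in $\pt\conv(X)$ is contained in a union of at most two hyperbolic planes meeting cleanly along a common geodesic.

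The first key observation is that every extreme point of $\conv(X)$ lying in $\H^3$ must belong to $X$: indeed, an extreme point $q$ of $\conv(X)$ is also an extreme point of the Euclidean closure $\ol{\conv(X)}$, and by Milman's converse to the Krein--Milman theorem such a $q$ lies in the Euclidean closure of $X$; since $q \in \H^3$ and $X$ is closed in $\H^3$, this forces $q \in X$. Consequently, any $p \in \pt\conv(X) \setminus X$ is non-extreme, so the minimal face $F_p$ of $\conv(X)$ containing $p$ (the intersection of $\conv(X)$ with all supporting planes through $p$) has dimension at least $1$; and since $\conv(X)$ has non-empty interior while $p \in \pt\conv(X)$, we have $\dim F_p \leq 2$.

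If $\dim F_p = 2$, then $F_p$ is a flat convex region in some plane $\Pi$ and $p$ lies in its relative interior; by the defining property of $F_p$, a small neighborhood of $p$ in $\pt\conv(X)$ coincides with a small disk in $F_p \subset \Pi$, which is isometric to a disk in $\H^2$. If $\dim F_p = 1$, then $F_p$ is a geodesic segment whose endpoints are extreme points of $\conv(X)$, hence lie in $X$; as $p \notin X$, the point $p$ lies strictly between them. Choose $r>0$ with $\ol{B}(p,r) \cap X = \emptyset$ and such that $\ol{B}(p,r)$ avoids the endpoints of $F_p$. The outward unit normals of supporting planes at $p$ form a convex arc in the sphere of directions; let $\Pi_1, \Pi_2$ be the two supporting planes corresponding to its endpoints, and let $F_1, F_2$ be their associated exposed faces. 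Both $F_i$ are $2$-dimensional (otherwise the corresponding normal could be perturbed further while preserving support) and share $F_p$ as a common edge.

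The remaining step, which is the main technical obstacle, is to show that $\pt\conv(X) \cap B(p,r) = (F_1 \cup F_2) \cap B(p,r)$ for $r$ sufficiently small; once this is done, the intrinsic metric near $p$ is that of two hyperbolic half-planes glued isometrically along the geodesic $F_p$ at a fixed dihedral angle, which is locally isometric to $\H^2$ along the interior of $F_p$ by unfolding one half-plane across the edge into the plane of the other to produce an explicit developing map onto a convex domain of $\H^2$. I would obtain the local description by upper semicontinuity of the normal-cone map: any sequence $q_n \to p$ in $\pt\conv(X) \cap B(p,r) \setminus (F_1 \cup F_2)$ would carry supporting normals $\nu_n$ lying outside both endpoints of the normal arc at $p$, and a subsequential limit would be a supporting normal at $p$ whose exposed face differs from $F_1, F_2$ yet still contains $F_p$, contradicting the maximal choice of $\Pi_1, \Pi_2$. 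The hypothesis $\ol{B}(p,r) \cap X = \emptyset$ is essential here to rule out new support directions arising from local accumulations of $X$ at $p$.
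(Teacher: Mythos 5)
The paper does not prove this proposition at all --- it is quoted from~\cite{Smi} precisely because its proof is delicate --- so your attempt has to stand on its own, and it has a fatal gap at the very first reduction. You claim that local isometry to $\H^2$ at $p \in \pt\conv(X)\setminus X$ will follow once a neighborhood of $p$ in $\pt\conv(X)$ is shown to lie in a union of at most two planes meeting along a geodesic, and the rest of the proposal is devoted to establishing exactly that polyhedral local structure. But this structure simply does not hold in general, and the whole point of the proposition is that it fails: the boundary of the convex hull of a closed subset of $\H^3$ can be bent along a geodesic lamination with uncountably many leaves and with bending distributed continuously, so that near a point on a non-isolated leaf there is no flat $2$-dimensional piece on either side. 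The paper itself supplies such examples with $X$ closed and discrete in $\H^3$ (the construction at the end of Section~3.1.2, where $X$ is a group orbit and $\pt\conv(X)$ contains an irrational component of a bending lamination); an even more elementary counterexample to your intermediate claims is a ``drum''-type hull where the extremal supporting plane along an edge meets the hull only in that edge, so the exposed faces $F_1,F_2$ are not $2$-dimensional, contrary to your perturbation argument. Your concluding semicontinuity argument also does not yield a contradiction: limits of supporting normals at $q_n\to p$ are supporting normals at $p$ lying somewhere in the normal arc, and a normal in the interior of that arc whose exposed face is just $F_p$ is perfectly consistent, so no maximality of $\Pi_1,\Pi_2$ is violated. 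Since the conclusion (intrinsic flatness) is true even when the surface is nowhere locally polyhedral --- as for a cylinder, which is developable but not a union of two planes --- the strategy of forcing a two-plane local model cannot be repaired; the known proofs (Thurston's observation, Epstein--Marden, and~\cite{Smi}) instead control the induced intrinsic metric by approximation/limiting arguments.

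Two secondary inaccuracies in the same spirit: the Milman step is wrong as stated, since an extreme point of $\conv(X)$ need not be extreme in the closure $\ol{\conv(X)}$ (the needed fact that extreme points of $\conv(X)$ lie in $X$ follows instead directly from Carath\'eodory: a point that is a nontrivial convex combination of points of $X$ is not extreme); and a $1$-dimensional face of $\conv(X)$ need not be a compact segment with endpoints in $\H^3$ --- when $X$ accumulates at $\pt_\infty\H^3$ the edge can be a complete geodesic with ideal endpoints, so the assertion that its endpoints are extreme points lying in $X$ breaks down. Both of these are symptoms of implicitly assuming a compact, polyhedral-like situation, which is exactly the hypothesis the proposition is designed to avoid.
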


\begin{proof}[Proof of Lemma~\ref{indmetric}]
Pick a point $p \in \partial M \backslash V(g)$ and lift it to $\tilde p \in \partial \tilde M(g) \subset \H^3$. Consider a small open ball $B$ centered at $\tilde p$ that contains no vertices and does not intersect the components of $\pt \tilde M(g)$ except the one containing $\tilde p$. Set $X = \partial B \cap \partial \tilde M(g)$. Either $X$ is a great circle and $\partial \tilde M(g) \cap B$ is a planar disk or $\conv(X)$ has non-empty interior, $\partial \tilde M(g) \cap B$ is a connected component of $\partial \conv (X) \backslash X$ and we can apply Proposition~\ref{graham}.

We proved that outside the vertices $\partial M$ is locally isometric to the hyperbolic plane. Now pick $v \in V(g)$ and take a small simply connected neighborhood $U$ of $v$ in $\partial M$ that does not intersect neither edges nor faces not incident to $v$, which is guaranteed by Lemma~\ref{isolvertex}. Then each point of $U\backslash v$ lies on a segment emanating from $v$. Consider a lift $\tilde v$ to $\H^3$ and push slightly a supporting geodesic plane to $\tilde M(g)$ at $\tilde v$ so its intersection $\alpha$ with $\partial \tilde M$ is contained in the lift $\tilde U \ni \tilde v$ of $U$. Then the neighborhood of $\tilde v$ bounded by $\alpha$ is the geodesic cone over $\alpha$ from $\tilde v$ and $\alpha$ is a closed planar convex curve. Hence, the metric around $\tilde v$ is locally isometric to a convex hyperbolic cone.    
\end{proof}

As discussed in Section~\ref{cocoresec}, the support $\lambda_{\ol g}$ of the bending lamination of $\pt C(\ol g)$ consists of finitely many disjoint simple closed geodesics and of minimal irrational laminations. Let $\lambda$ be a minimal irrational component of $\lambda_{\ol g}$ and $\psi \subset \lambda$ be a geodesic. If $\psi \subset \pt M(g)$, then since $\pt M(g) \cap \pt C(\ol g)$ is closed and the closure of $\psi$ is $\lambda$, we see that $\lambda \subset \pt M(g)$. Another simple observation is


\begin{lm}
\label{face}
Let $\lambda$ be a minimal irrational sub-lamination of $\lambda_{\ol g}$ such that $\lambda \subset \pt M(g)$, $\psi \subset \lambda$ be a boundary edge and $F \subset \pt C(\ol g)$ be a face adjacent to $\psi$. Then $F \subset \pt M(g)$.
\end{lm}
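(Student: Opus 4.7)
My plan is to work in the universal cover and reduce the claim to an identification of supporting planes, where Lemma~\ref{core2} provides the crucial uniqueness. Lift $\psi$ to $\tilde\psi \subset \pt \tilde C(\ol g)$ and let $\tilde F$ be the lift of $F$ adjacent to $\tilde\psi$. Since $F$ is a face of $\pt C(\ol g)$, the piece $\tilde F$ lies in a supporting plane to $\tilde C(\ol g)$, and since $\tilde\psi$ is in its closure, this plane contains $\tilde\psi$. By Lemma~\ref{core2}, there is a unique supporting plane to $\tilde C(\ol g)$ at $\tilde\psi$; call it $\Pi$, so that $\tilde F \subset \Pi$. Moreover $\tilde F \subset \tilde C(\ol g) \subset \tilde M(g)$, the latter inclusion because $C(\ol g)$ is the inclusion-minimal closed totally convex subset and $M(g)$ is a closed totally convex subset of $N(\ol g)$.

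The heart of the argument is to show that $\Pi$ is the only supporting plane to $\tilde M(g)$ along $\tilde\psi$. The hypothesis $\lambda \subset \pt M(g)$ gives $\tilde\psi \subset \pt \tilde M(g)$, so at each point $p \in \tilde\psi$ there exists a supporting plane $\Pi'$ to $\tilde M(g)$. From $\tilde C(\ol g) \subset \tilde M(g)$ it follows that $\Pi'$ also supports $\tilde C(\ol g)$ at $p$. Now $\tilde\psi$ is a geodesic of $\H^3$ passing through $p \in \Pi'$ and contained in $\tilde C(\ol g)$, which lies on one side of $\Pi'$; a geodesic crossing $\Pi'$ transversally would have parts on both sides, so $\tilde\psi \subset \Pi'$. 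By the uniqueness in Lemma~\ref{core2}, $\Pi' = \Pi$.

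Therefore $\tilde M(g)$ lies entirely on one side of $\Pi$ (any convex set lies on the corresponding side of any of its supporting planes), which forces $\tilde M(g) \cap \Pi \subset \pt \tilde M(g)$. Since $\tilde F \subset \tilde M(g) \cap \Pi$, we obtain $\tilde F \subset \pt \tilde M(g)$, and projecting down gives $F \subset \pt M(g)$. The delicate step I expect to have to spell out carefully is the uniqueness of the supporting plane along $\tilde\psi$ as a supporting plane of $\tilde M(g)$; everything else is bookkeeping with the inclusions $\tilde F \subset \tilde C(\ol g) \subset \tilde M(g)$ and the half-space structure of convex bodies.
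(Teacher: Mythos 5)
Your proof is correct and follows essentially the same route as the paper: pass to the universal cover, identify the unique supporting plane $\Pi$ to $\tilde C(\ol g)$ along $\tilde\psi$ via Lemma~\ref{core2}, observe that any supporting plane to $\tilde M(g)$ at a point of $\tilde\psi$ must contain $\tilde\psi$ and hence equals $\Pi$, and conclude $\tilde F \subset \tilde M(g) \cap \Pi \subset \pt\tilde M(g)$. The only difference is that you spell out explicitly why a supporting plane at a single point of $\tilde\psi$ must in fact contain the whole geodesic, which the paper leaves implicit; note also that the paper's proof cites Lemma~\ref{core1} where Lemma~\ref{core2} (which you correctly invoke) is what is meant.
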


\begin{proof}
Lift all to the universal cover, let $\tilde F$ and $\tilde \psi$ be the respective lifts. By Lemma~\ref{core1}, $\tilde C(\ol g)$ has a unique supporting plane $\Pi$ at $\tilde \psi$, which contains $\tilde F$. We have $\tilde \psi \subset \pt \tilde M(g)$, and since any supporting plane to $\tilde M(g)$ at $\tilde \psi$ is a supporting plane to $\tilde C(\ol g)$, $\Pi$ is a supporting plane to $\tilde M(g)$ and $(\Pi \cap \pt \tilde C(\ol g))\subset (\Pi \cap \pt \tilde M(g))$. It follows that $\tilde F \subset \pt \tilde M(g)$.
\end{proof}

We use it first to prove


\begin{lm}
\label{halfinf}
Let $g$ be a bent metric on $M$. No edge of $\tilde M(g)$ has one end in $\tilde V(g)$ and another in $\Lambda(\overline g)$.
\end{lm}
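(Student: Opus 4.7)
I would argue by contradiction: assume that some edge $\tilde{e}$ of $\partial \tilde{M}(g)$ has one endpoint $\tilde{v}\in \tilde{V}(g)$ and the other at $q\in \Lambda(\overline{g})\subset \partial_\infty \mathbb{H}^3$, with positive dihedral angle $\theta>0$. Parametrize $\tilde{e}$ as a ray $\tilde{e}(t)$, $t\in[0,\infty)$, with $\tilde{e}(0)=\tilde{v}$ and $\tilde{e}(t)\to q$. Since $M$ is compact, $d(\tilde{e}(t), \tilde{C}(\overline{g}))\to 0$ as $t\to\infty$: otherwise the projection of the ray in $M$ would be an unbounded subset at positive distance from $C(\overline{g})$, contradicting compactness of $\partial M$.

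The core idea is to produce, by a compactness argument using the limit set, a closed bending leaf of $\partial \tilde{C}(\overline{g})$ which also sits in $\partial \tilde{M}(g)$, and then derive a contradiction from its loxodromic symmetry. Since $(N,\overline{g})$ is convex cocompact, every point of $\Lambda(\overline{g})$ is conical, so there exist $\gamma_n\in\pi_1(M)$ and $t_n\to\infty$ such that $\gamma_n\tilde{e}(t_n)$ lies in a fixed compact fundamental domain of $\tilde{C}(\overline{g})$ and converges to some $p_\infty\in \tilde{C}(\overline{g})$. After extracting a subsequence, the lines $\gamma_n\tilde{e}$ converge, on compact subsets of $\mathbb{H}^3$, to a complete geodesic $\tilde{e}_\infty$ through $p_\infty$ with ideal endpoints $q_\infty,q''_\infty\in\partial_\infty\mathbb{H}^3$; the vertex $\gamma_n\tilde{v}$, being at distance $t_n\to\infty$ from $\gamma_n\tilde{e}(t_n)$ in the direction opposite to $\gamma_n q$, converges to one of these ideal points, and $\gamma_n q$ to the other. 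For every fixed $s\in\mathbb{R}$, $\pi_1(M)$-invariance of $\tilde{C}(\overline{g})$ gives $d(\gamma_n\tilde{e}(t_n+s),\tilde{C}(\overline{g}))=d(\tilde{e}(t_n+s),\tilde{C}(\overline{g}))\to 0$; passing to the limit, $\tilde{e}_\infty\subset \tilde{C}(\overline{g})$, so $q_\infty,q''_\infty\in\Lambda(\overline{g})$. Because $\partial \tilde{M}(g)$ is closed and $\tilde{e}_\infty$ cannot enter $\mathrm{int}\,\tilde{C}(\overline{g})\subset\mathrm{int}\,\tilde{M}(g)$, one concludes $\tilde{e}_\infty\subset\partial \tilde{C}(\overline{g})$. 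Lower-semicontinuity of the dihedral angle under Hausdorff convergence of supporting planes gives that $\tilde{e}_\infty$ has dihedral angle at least $\theta$ in $\partial \tilde{M}(g)$, and since supporting planes to $\tilde{M}(g)$ support $\tilde{C}(\overline{g})$, the same holds in $\partial \tilde{C}(\overline{g})$. By Lemma~\ref{core2}, $\tilde{e}_\infty$ cannot belong to a minimal irrational sub-lamination, so it projects to a closed leaf $\sigma$ of $\lambda_{\overline{g}}$ with positive bending measure.

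Now let $\alpha\in\rho_{\overline{g}}(\pi_1(M))$ be the primitive loxodromic with axis $\tilde{e}_\infty$. The intrinsic spiralling structure (from $\gamma_n \tilde{e}\to\tilde{e}_\infty$) forces the ideal endpoint $q$ of $\tilde{e}$ to coincide with an endpoint of some lift of $\sigma$; after replacing $\tilde{e}_\infty$ by a $\pi_1(M)$-translate, we may take $q=q_\infty$, a fixed point of $\alpha$. Then $\alpha^k\tilde{e}$ ($k\in\mathbb{Z}$) are infinitely many pairwise distinct edges of $\partial \tilde{M}(g)$ all ending at $q$, with distinct vertex endpoints $\alpha^k\tilde{v}$. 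On the other hand, $\tilde{e}_\infty$ has exactly two adjacent faces $F_1,F_2$ of $\partial \tilde{M}(g)$, which $\alpha$ either fixes or swaps; hence $\alpha^2$ preserves each $F_i$ setwise. Each $F_i$, as a convex planar region in its supporting plane with $q$ as an ideal point, has exactly two boundary edges asymptotic to $q$: namely $\tilde{e}_\infty$ and one other edge $\tilde{e}_i$, which must be some $\alpha^{k_i}\tilde{e}$. Since $\alpha^2$ fixes $F_i$ and $\tilde{e}_\infty$, it must fix $\tilde{e}_i$ setwise. But $\alpha^2$ is a nontrivial loxodromic with axis $\tilde{e}_\infty\neq \tilde{e}_i$, so it cannot setwise preserve the distinct line $\tilde{e}_i$; this is the desired contradiction.

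The main obstacles will be justifying, in the bent (as opposed to polyhedral) setting, that a face of $\partial \tilde{M}(g)$ at its ideal vertex $q$ has only two boundary edges asymptotic to $q$ (convexity of the face in its supporting plane, together with the fact that $q$ is an ideal point of a convex planar region, does the work), and making precise the spiralling identification of $q$ with a fixed point of $\alpha$, which is the adaptation to cone surfaces of the classical fact that geodesic rays limiting onto a closed geodesic share an ideal endpoint with it. As a fallback, one could bypass the spiralling step by observing that if $\tilde{e}$ intersects $\tilde{C}(\overline{g})$ at some interior point then the whole line through $\tilde{e}$ is forced to lie in $\partial \tilde{C}(\overline{g})$ as a leaf of $\lambda_{\overline{g}}$ with both ideal endpoints in $\Lambda(\overline{g})$, which places the full line in $\tilde{C}(\overline{g})$ and hence $\tilde{v}\in \tilde{C}(\overline{g})$, directly contradicting Lemma~\ref{bas1}; the conical limit argument above is then needed only to treat the case where $\tilde{e}$ stays outside $\tilde{C}(\overline{g})$ and merely asymptotes to it.
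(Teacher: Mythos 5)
Your argument breaks into a correct fallback (the case where $\tilde e$ actually meets $\tilde C(\overline g)$ at an interior point; this matches the opening observation of the paper's proof) and a main argument for the case where $\tilde e$ stays strictly outside $\tilde C(\overline g)$. The main argument has a genuine gap at its crucial step. You introduce a ``positive dihedral angle $\theta>0$'' for the edge $\tilde e$, but the dihedral angle of an edge of a bent surface is a function of the point, not a constant, and there is no reason it should be bounded away from zero: for a bent (as opposed to polyhedral) metric, edge-points may have a unique supporting plane. More to the point, even if the dihedral angle at $\tilde e(t_n)$ were positive for each $n$, it typically decays to zero as $t_n\to\infty$ (the ray asymptotes to the convex core, whose bending at accumulation points of the lamination has no atoms). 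Hence ``lower semicontinuity'' yields no positive lower bound on the bending measure of $\tilde e_\infty$, and you cannot invoke Lemma~\ref{core2} to rule out that $\tilde e_\infty$ is an irrational leaf. This is exactly the case you do not treat, and it is the substance of the second half of the paper's proof (the ``spike'' case), which is resolved intrinsically on $\partial M(g)$ using Lemmas~\ref{core3} and~\ref{face} to show that $\tilde\chi$ would have to run through interior points of a face of $\partial\tilde M(g)$.

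Two further steps of the main argument are asserted rather than proved. First, you claim that ``the intrinsic spiralling structure forces the ideal endpoint $q$ of $\tilde e$ to coincide with an endpoint of some lift of $\sigma$.'' Having $\gamma_n\tilde e\to\tilde e_\infty$ on compacta, for an a priori unstructured sequence $\gamma_n$, does not by itself imply that the single original ray $\tilde e$ spirals onto (a specific lift of) $\sigma$ with shared ideal endpoint; that identification requires the intrinsic analysis of the accumulation set of $\chi=\tilde e$ in $\partial M(g)$ that the paper carries out (cutting $\partial M$ along the limit sub-lamination $\lambda$ and analyzing the boundary behavior). Second, the claim that each face $F_i$ adjacent to $\tilde e_\infty$ has exactly two boundary edges asymptotic to $q$, one of which is a $\pi_1$-translate of $\tilde e$, needs justification: a priori nothing rules out accumulation of edges in $\partial F_i$ at $q$, and even granting finiteness the translate claim does not follow. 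By contrast, the paper's Case~1 (closed geodesic) argument is shorter and more robust: once $\chi$ is known to spiral onto the closed geodesic $\psi$ with axis through $q$, the $\gamma$-orbit of $\tilde v$ lies in the plane spanned by $\tilde\chi$ and $\tilde\psi$, and $\tilde\chi$ lies in the relative interior of the convex hull of that orbit, which immediately contradicts $\tilde\chi$ being an edge. In summary, the overall strategy (reduce to a closed bending geodesic and exploit its loxodromic symmetry) is close to the paper's Case~1, but the irrational/spike case is not handled, and the reduction to a closed leaf is not justified.
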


\begin{proof}
Suppose that $\tilde \chi$ is such an edge ending at $\tilde v \in \tilde V(g)$ and $q \in \Lambda(\ol g)$. Let $\chi$, $v$ be the projections of $\tilde \chi$, $\tilde v$ to $\partial M(g)$. Note that the distance to $\tilde C(\overline g)$ goes to zero along $\tilde \chi$, but it never attains zero. Indeed, if it is zero at $p \in \tilde \chi$ and $p$ is the closest point to $\tilde v$ with this property (in terms of the intrinsic distance of $\tilde \chi$), then $p$ is an edge-point of $\pt \tilde C(\overline g)$ and belongs to the relative interior of an edge $\tilde \psi$ of $\pt \tilde C(\overline g)$ with a different tangent at $p$ than $\tilde \chi$. But then $\tilde \psi \subset \partial \tilde M(g)$, so $p$ is regular in $\partial \tilde M(g)$ and $\tilde \chi$ is not an edge.

Consider the closure $\lambda_0$ of $\chi$ in $\partial M(g)$. Because the distance to $C(\ol g)$ goes to zero along $\chi$, the set $\lambda=\lambda_0 \backslash \chi$ is at distance zero to $C(\overline g)$, thus it is a subset of $\partial M(g) \cap \partial C(\overline g)$. It is closed as the intersection of two closed subsets, $\lambda_0$ and $\partial M(g) \cap \partial C(\overline g)$. The points of $\lambda$ belong to the boundary of the set $\partial M(g) \cap \partial C(\overline g)$ in $\pt M(g)$ and hence $\lambda$ consists of edge-points of $\pt C(\ol g)$. If $p \in \lambda$, then one can see that the whole edge containing $p$ belongs to $\lambda$. Hence $\lambda$ is a sub-lamination of $\lambda_{\overline g}$. 

Cut $\pt M$ along $\lambda$, consider the connected component containing $\chi$ and let $S$ be its metric completion. Due to Lemma~\ref{isolvertex}, the cone-points of $\pt M(g)$ are isolated from $\lambda$. Hence, the local picture at the boundary points of $S$ is the same as the local picture for metric completions of faces of geodesic laminations on hyperbolic surfaces. Similarly to, e.g.,~\cite[Chapter 1.4]{Bon2}, one can see that $S$ is a convex hyperbolic cone-surface of finite type with geodesic boundary, whose components may be either closed or open geodesics corresponding to edges of $\lambda$. Each boundary component has a convex neighborhood not containing cone-points, so isometric to a convex subset of $\H^2$. It is not hard to see that if $\chi$ gets close enough to a closed component, then it either starts spiraling on it (i.e., in the universal cover of a convex neighborhood developed to $\H^2$ a lift of $\chi$ is asymptotically parallel to the respective lift of the boundary component), or $\chi$ intersects itself. If $\chi$ gets close enough to an open boundary component, then either it is asymptotic to this component (i.e., goes to a spike from one side of the component), or it intersects the boundary. The geodesic $\chi$ is simple, disjoint from the boundary of $S$ and does not have accumulation points in the interior of $\chi$, hence it either spirals onto a closed geodesic component or goes to a spike.




In the first case $\lambda$ is a simple closed geodesic $\psi$. There is its lift $\tilde \psi \subset \pt \tilde M(g)$ that is incident to $q$. Indeed, there is a lift $\tilde \psi$ of $\psi$ such that the intrinsic distance in $\pt \tilde M(g)$ from a point of $\tilde \chi$ to $\tilde \psi$ tends to zero along $\tilde \chi$ towards $q$, hence also the extrinsic distance in $\H^3$ tends to zero and such $\tilde \psi$ must end in $q$. Note that $\tilde \psi$ is an isolated edge of $\pt \tilde C(\ol g)$. Then $\tilde \psi$ is the axis of a pure loxodromic isometry $\gamma \in \rho_{\overline g}(\pi_1(M))$. The $\gamma$-orbit of $\tilde v$ is contained in the plane spanned by $\tilde \chi$ and $\tilde \psi$. But then $\tilde \chi$ belongs to the relative interior of the convex hull of this orbit, thus $\tilde \chi$ is not an edge of $\pt \tilde M(g)$.

In the second case there are edges $\psi_1$ and $\psi_2$ of $\lambda$ corresponding to the boundary components of $S$ ending in the same spike of $S$ as $\chi$. As the whole $\lambda$ is contained in $\pt M(g)$, $\psi_1$ and $\psi_2$ are boundary edges of the lamination $\lambda$ in $\pt \tilde C(\ol g)$. There are their lifts $\tilde \psi_1$, $\tilde \psi_2$ such that the intrinsic distances in $\pt \tilde M(g)$ from a point of $\tilde \chi$ to $\tilde \psi_1$ and $\tilde \psi_2$ tend to zero along $\tilde \chi$ towards $q$. Hence so do the extrinsic distances in $\H^3$ and $\tilde \psi_1, \tilde \psi_2$ end in $q$. Lemma~\ref{core2} says that they belong to the boundary of a face $\tilde F$ of $\pt \tilde C(\ol g)$. By Lemma~\ref{face}, $\tilde F$ belongs to $\pt \tilde M(g)$. The segment $\tilde \chi$ must belong to the support plane to $\tilde F$ and lie between $\tilde \psi_1$ and $\tilde \psi_2$. Thus, it passes through interior points of $\tilde F$ and can not be an edge.
\end{proof}

Now we can show that any bent metric is polyhedral around vertices. This follows from 

\begin{crl}
Let $g$ be a bent metric on $M$ and $v \in V(g)$. Then there are finitely many edges incident to $v$.
\end{crl}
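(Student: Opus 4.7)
The plan is to argue by contradiction, using that if edge directions at a vertex accumulate, the corresponding exit points on a small sphere around the vertex must accumulate at a point whose local structure cannot coexist with the edges $\chi_i$ piling up.

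More precisely, suppose $v$ has infinitely many incident edges and lift to $\tilde v \in \tilde V(g) \subset \H^3$. Among the edges at $\tilde v$, infinitely many will be distinct as well. By Lemma~\ref{isolvertex} (lifted to the universal cover, using that the metric is $\pi_1$-equivariant), choose a small $\e>0$ such that the closed metric ball $B=\overline{B_\e(\tilde v)} \subset \H^3$ satisfies: (i) $B$ contains no vertex other than $\tilde v$, and (ii) every edge of $\partial \tilde M(g)$ meeting $B$ is incident to $\tilde v$. Since the other endpoint of each such edge is at distance $\geq \e$ from $\tilde v$ (either another vertex outside $B$, or a point of $\Lambda(\overline g)$), each edge $\chi_i$ incident to $\tilde v$ crosses $\partial B$ at a unique point $p_i$. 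I would then choose a slightly larger radius $\e'>\e$ for which (i) and (ii) still hold, so that the resulting ball $B'$ contains $\partial B$ in its interior.

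By compactness of $\partial B$, after extraction $p_i \to p_* \in \partial B$, and $p_* \in \partial \tilde M(g)$ since the latter is closed. The point $p_*$ is not $\tilde v$ (distance $\e$) nor any other vertex (by property (i) of $B'$). Next I would rule out $p_*$ being regular: if it had a flat neighborhood $U \subset \partial \tilde M(g)$, then $p_i \in U$ for large $i$, but $U$ consists entirely of regular points, contradicting that $p_i$ lies on the edge $\chi_i$. Therefore $p_*$ is an edge-point, sitting on an edge $\chi_* \subset \partial \tilde M(g)$. Since $\chi_*$ passes through $p_* \in \inter(B')$, property (ii) of $B'$ forces $\chi_*$ to be incident to $\tilde v$.

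The contradiction now comes from the local model at an edge-point. A neighborhood of $p_*$ in $\partial \tilde M(g)$ is isometric to two planar half-disks $U_1, U_2$ meeting along a segment of $\chi_*$. For $i$ large, $p_i$ lies in this neighborhood; since $\chi_i$ and $\chi_*$ are distinct edges, they are disjoint except possibly at their common endpoint $\tilde v$, so $p_i \notin \chi_*$. Hence $p_i \in U_1$ or $U_2$, making $p_i$ a regular point, which contradicts $p_i$ being on the edge $\chi_i$. The only delicate bookkeeping is the enlargement $B \subsetneq B'$ which is needed to apply Lemma~\ref{isolvertex} to the accumulation point $p_*$ sitting on $\partial B$; everything else is standard compactness combined with the local structure of bent surfaces.
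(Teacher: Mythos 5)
Your argument through the extraction of the accumulation point $p_*$ and its identification as an edge-point on an edge $\chi_*$ incident to $\tilde v$ is sound and mirrors the compactness arguments the paper itself uses (e.g., in the proof of Lemma~\ref{isolvertex}). The gap is the assertion that a neighborhood of $p_*$ in $\partial\tilde M(g)$ is isometric to two planar half-disks meeting along $\chi_*$. That is the local model near an edge of a \emph{polyhedral} boundary, but a \emph{bent} metric need not be locally polyhedral away from the vertices: the boundary can be bent along a geodesic lamination whose leaves accumulate, as happens on the boundary of a convex core with irrational bending lamination. Worse, in your setting the hypothesis you are contradicting produces exactly this pathology: the geodesic rays from $\tilde v$ through $p_i$ converge to the ray through $p_*$, so the edges $\chi_i$ accumulate onto $\chi_*$ in every small neighborhood of $p_*$, and hence no punctured neighborhood of $\chi_*$ there consists only of regular points. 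You are implicitly assuming local polyhedrality near $\tilde v$, which is the statement to be proved, so the step is circular and the claimed contradiction does not follow.

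The paper instead argues globally, which is why Lemma~\ref{halfinf} is proved first. Since $\rho_{\ol g}(\pi_1(M))$ acts properly discontinuously, $\tilde V(g)$ is a discrete subset of $\H^3$, so if $\tilde v$ had infinitely many incident edges the second endpoints could not stay in any bounded region and would accumulate at a point $q\in\Lambda(\ol g)$; since the set of edge-points together with $\tilde V(g)$ is closed, the limiting geodesic ray $\tilde v q$ would consist of edge-points and would thus be an edge joining a vertex to the limit set, which Lemma~\ref{halfinf} forbids. Some global input of this sort is unavoidable here; a purely local inspection of $\partial\tilde M(g)$ near $p_*$ cannot close the gap, because nothing in the definition of a bent metric constrains the transverse structure at an edge-point.
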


\begin{proof}
Consider the universal cover $\tilde M(g)\subset \H^3$. Let $\tilde v$ be a lift of $v$. Suppose that there are infinitely many edges incident to $\tilde v$. We claim that there is an edge $\tilde \chi$ from $\tilde v$ to $\Lambda(\ol g)$. Indeed, this follows from the fact that $\rho_{\overline g}(\pi_1(M))$ acts properly discontinuously and the fact that the set of all edges and vertices is closed in $\partial M$. Thus, we get a contradiction with Lemma~\ref{halfinf}.
\end{proof}

We see from Lemma~\ref{halfinf} that every edge of $\tilde M(g)$ either has both ends at $\tilde V(g)$ or both ends at $\Lambda(\overline g)$, in which case it belongs to $\tilde C(\overline g) \cap \partial \tilde M(g)$. 
In the latter case it is either a simple closed geodesic from $\pt C(\ol g)$, or belongs to an irrational lamination of $\pt C(\ol g)$. A bent metric is polyhedral if and only if there are no edges of the third type.

It remains to understand how an irrational bending lamination in the boundary of a bent metric can coexist with the presence of vertices.

\begin{lm}
\label{bs2}
Let $F$ be a face of a bent metric $g$ on $M$ such that it is adjacent to a vertex $v \in V(g)$ and to an edge $\psi$ belonging to a minimal irrational sub-lamination $\lambda$ of $\lambda_{\overline g}$. Then there exists a simple closed geodesic $\chi \subset ( \ol F \cap \partial C(g))$, where $\ol F$ is the closure of $F$, separating $v$ from $\psi$.
\end{lm}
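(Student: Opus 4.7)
The plan is to work in the universal cover, using the unique supporting plane to $\tilde C(\ol g)$ along $\tilde \psi$, and to extract $\chi$ as a bi-infinite geodesic boundary of the ``vertex pocket'' of a lift of $v$.

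First I lift everything: let $\tilde F \subset \pt \tilde M(g)$ be a lift of $F$ with $\tilde v \in \ol{\tilde F} \cap \tilde V(g)$ and $\tilde \psi \subset \ol{\tilde F}$. By Lemma~\ref{core2} the supporting plane $\Pi$ to $\tilde C(\ol g)$ at $\tilde \psi$ is unique; since any supporting plane to $\tilde M(g)$ at a point of $\tilde \psi$ also supports $\tilde C(\ol g)$ (as $\tilde C(\ol g) \subset \tilde M(g)$), $\Pi$ is also the unique supporting plane to $\tilde M(g)$ along $\tilde \psi$, and $\tilde F \subset \Pi$. Set $A := \Pi \cap \tilde C(\ol g)$ and $B := \Pi \cap \tilde M(g)$: convex subsets of $\Pi$ with $A \subseteq B = \ol{\tilde F}$, with $\tilde \psi \subset \pt A$ and, by Lemma~\ref{bas1}, $\tilde v \in B \setminus A$.

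Next I analyze $B \setminus A$. Because $\tilde M(g) = \clconv(\tilde V(g) \cup \Lambda(\ol g))$ while $\tilde C(\ol g) = \clconv(\Lambda(\ol g))$, the set $B \setminus A$ is a disjoint union of ``vertex pockets'', one around each vertex of $\tilde V(g) \cap B$. Let $P$ be the pocket containing $\tilde v$. Its boundary in $B$ decomposes into $\pt P \cap \pt B$, consisting of edges of $\pt \tilde M(g)$ with endpoints in $\tilde V(g)$ (``Type~1'' edges, by Lemma~\ref{halfinf}), and $\pt P \cap \pt A$, consisting of arcs lying in $\pt \tilde C(\ol g)$.

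The key step is to show that $\pt P \cap \pt A$ is a single complete geodesic $\tilde \chi$ with both endpoints in $\Lambda(\ol g)$, invariant under a loxodromic in $\rho_{\ol g}(\pi_1(M))$. Convexity of both $P$ and $A$ in $\Pi$ forces this shared boundary to be a single connected arc; Lemma~\ref{bas1} rules out endpoints of $\tilde \chi$ at finite vertices of $\tilde V(g)$ (which would lie simultaneously on $\pt \tilde C(\ol g)$ and in $\tilde V(g)$), so both ends of $\tilde \chi$ must escape to $\pt_\infty \Pi \subset \pt_\infty \H^3$, and convex cocompactness places them in $\Lambda(\ol g)$. The pocket $P$ projects to a relatively compact region of $\pt M$ bounded on one side by the image of $\tilde \chi$; combined with the fact that $V(g)$ is finite, this forces the $\rho_{\ol g}(\pi_1(M))$-stabilizer of $P$ to be infinite cyclic and generated by a loxodromic element whose axis is exactly $\tilde \chi$.

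Projecting, we obtain a simple closed geodesic $\chi \subset \pt C(\ol g) \cap \ol F$ whose removal from $\ol F$ puts $v$ on the vertex-pocket side and $\psi$ on the $\pt A$-side. The main obstacle I anticipate is the rigorous verification that $\pt P \cap \pt A$ is a single complete geodesic (rather than a broken curve through some intermediate vertex) and that its stabilizer is loxodromic; this is precisely the place where the hypothesis that $\psi$ lies in an \emph{irrational} sub-lamination enters decisively, both through Lemma~\ref{core2} (uniqueness of $\Pi$) and indirectly by ensuring that $\tilde \psi$ itself, not being the axis of any loxodromic, cannot play the role of $\tilde \chi$.
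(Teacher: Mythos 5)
Your setup, working in the universal cover with the unique supporting plane $\Pi$ at $\tilde\psi$ (Lemma~\ref{core2}) and picturing $\tilde v$ sitting in a ``pocket'' of $\Pi\cap\tilde M(g)$ outside $\Pi\cap\tilde C(\ol g)$, is the right heuristic and is in fact how the paper thinks about the problem. But there are two genuine gaps, and each is exactly the place where the paper invokes the \emph{minimality} of $\lambda$, an ingredient you never deploy.

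First, you tacitly assume that $A:=\Pi\cap\tilde C(\ol g)$ is two-dimensional, i.e.\ that $\Pi$ meets $\tilde C(\ol g)$ along a whole face $\tilde F'$ of $\pt\tilde C(\ol g)$ adjacent to $\tilde\psi$, and not merely along the geodesic $\tilde\psi$ itself. Without this, $A=\tilde\psi$, the set $B\setminus A$ is just $B$ minus a boundary line, there is no ``pocket'' decomposition, and the shared boundary $\pt P\cap\pt A$ your construction would output is $\tilde\psi$, which is precisely what you must avoid. You flag this danger at the end but do not resolve it. The paper proves the existence of $\tilde F'$ by contradiction: if no face were adjacent to $\tilde\psi$ from the relevant side, a sequence of edges of $\tilde\lambda$ would approach $\tilde\psi$ from that side; since $\lambda$ is minimal and $\psi\subset\pt M(g)$, the whole lamination $\tilde\lambda$ lies in $\pt\tilde M(g)$, so those approaching edges eventually lie in $\tilde F\subset\Pi$; but geodesics interior to $\Pi\cap\tilde C(\ol g)$ cannot be edges of $\pt\tilde C(\ol g)$. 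That minimality step is absent from your argument.

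Second, your justification that $\chi$ is a \emph{simple closed} geodesic is not sound. You assert that the $\rho_{\ol g}(\pi_1(M))$-stabilizer of the pocket $P$ is infinite cyclic and loxodromic with axis $\tilde\chi$, but $P$ lies in a single supporting plane $\Pi$ which is not $\rho_{\ol g}$-invariant, so ``the stabilizer of $P$'' is not the right object, and the claim presupposes the conclusion. If $\chi$ lay in an irrational component $\lambda'$ of $\lambda_{\ol g}$, then $\tilde\chi$ would have trivial stabilizer and nothing in a finiteness count of vertices rules this out. The paper instead rules it out by the very same minimality device, now applied to $\tilde\chi$: if $\chi$ were in an irrational $\lambda'$, then $\lambda'\subset\pt M(g)$, a sequence of edges of $\tilde\lambda'$ would approach $\tilde\chi$ from the side opposite $\tilde F'$, and for reasons identical to the first step these would have to lie in $\Pi$, contradicting that $\Pi$ is supporting for $\tilde C(\ol g)$.

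So while the ``vertex pocket'' picture is morally correct, you have not established that the pocket's far boundary is a genuine bi-infinite geodesic (rather than nothing), and the loxodromicity claim as written is circular. Supplying both missing minimality arguments is what is required to turn your outline into a proof.
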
 

In particular, if $g$ has vertices and the bending lamination $\mu_{\ol g}$ has only irrational components, then $g$ is strictly polyhedral.

\begin{proof}
Let $\tilde F \subset \tilde M(g)$ be a lift of $F$ adjacent to a lift $\tilde v$ of $v$ and to a lift $\tilde \psi$ of $\psi$. By $\tilde \lambda$ we denote the full preimage of $\lambda$ in $\pt \tilde M(g)$ and by $\Pi$ we denote the supporting plane to $\tilde M(g)$ containing $\tilde F$. Note that since $\tilde \psi \subset \Pi$, the plane $\Pi$ is also supporting for $\tilde C(\ol g)$. The geodesic $\tilde \psi$ divides $\pt \tilde C(\ol g)$ in two parts and the nearest-point projection to $\tilde C(\ol g)$ maps $\tilde F$ to one of these parts (it is easy to see that $\tilde F$ cannot be mapped on $\tilde\psi$). We claim that there is a face $\tilde F'$ of $\pt \tilde C(\overline g)$ adjacent to $\tilde \psi$ from this side. Indeed, the only other option would be that there is a sequence of geodesics from $\tilde \lambda$ approaching $\tilde \psi$ from this side. As $\lambda$ is minimal, we have $\tilde \lambda \subset \partial \tilde M(g)$, so all but finitely many of these geodesics belong to $\tilde F \subset \Pi$. As $\Pi$ is a supporting plane to $\tilde C(\ol g)$, all these geodesics that are sufficiently close to $\tilde \psi$ can not be edges of $\pt \tilde C(\ol g)$, which is a contradiction.

Since $\lambda$ is irrational, Lemma~\ref{face} implies that $\tilde F'$ belongs to $\pt \tilde M(g)$. Moreover, it belongs to the same half-plane of $\Pi$ with respect to $\tilde \psi$ as $\tilde v$. As the distance from $\tilde v$ to $\tilde C(\overline g)$ is positive, $\tilde v$ is disjoint from the closure of $\tilde F'$. Thus, there must be a geodesic line $\tilde \chi$ that is a boundary component of $\tilde F'$ and that separates $\tilde v$ from $\tilde \psi$. It remains to show that the projection $\chi$ of $\tilde \chi$ is not from a minimal irrational sub-lamination $\lambda'$ of $\lambda_{\ol g}$. Indeed, otherwise $\lambda' \subset \pt M(g)$ and there is a sequence of geodesics of its full preimage $\tilde \lambda'$ approaching $\tilde \chi$ from the other side than $\tilde F'$. By the previous reasoning, all but finitely many of these geodesics belong to $\Pi$, which is a contradiction. Hence, $\chi$ is a simple closed geodesic that separates $v$ and $\psi$ in~$\ol F$.
\end{proof}

\begin{crl}
\label{bs2c}
The connected components of $\partial M(g) \cap \partial C(g)$ are simple closed geodesics, which are edges of $\partial C(g)$, and compact hyperbolic surfaces with geodesic boundary.
\end{crl}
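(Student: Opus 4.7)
The plan is to analyze each connected component $C$ of $\partial M(g) \cap \partial C(\overline g)$ and establish the dichotomy by case analysis, based on whether $C$ contains a face of $\partial C(\overline g)$. The essential preparatory observation, made at the beginning of Section \ref{bentsec}, is that $\partial M(g) \cap \partial C(\overline g)$ decomposes as a union of edges and faces of $\partial C(\overline g)$. I would also exploit throughout that this intersection is closed in $\partial M$ (as the intersection of two closed sets), and that if any edge of a minimal irrational sub-lamination $\lambda \subseteq \lambda_{\overline g}$ lies in $\partial M(g)$, then so does all of $\lambda$ (since its closure in $\partial C(\overline g)$ is all of $\lambda$).

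First I would dispose of the case where $C$ contains no face of $\partial C(\overline g)$. Edges of $\lambda_{\overline g}$ are pairwise disjoint, so the connected set $C$ must then be a single edge. If this edge were part of a minimal irrational sub-lamination $\lambda$, the observation above would give $\lambda \subset \partial M(g)$, and Lemma \ref{face} would force some adjacent face $F$ of $\partial C(\overline g)$ into $\partial M(g)$; since $F$ meets our edge, $F$ would lie in $C$ by connectivity, contradicting the assumption. Hence $C$ is a single edge that is a simple closed geodesic of $\partial C(\overline g)$.

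In the remaining case $C$ contains at least one face, and I would first show that every edge of $\partial C(\overline g)$ lying on the topological boundary of $C$ in $\partial M$ is a simple closed geodesic. For if such a boundary edge $\chi$ belonged to a minimal irrational sub-lamination $\lambda$, the observation again gives $\lambda \subset \partial M(g)$, and Lemma \ref{face} places both faces of $\partial C(\overline g)$ adjacent to $\chi$ into $\partial M(g)$; connectivity through $\chi$ then puts both adjacent faces inside $C$, so $\chi$ is interior to $C$, a contradiction. Since $\partial M$ is compact and $C$ is closed in $\partial M$, the set $C$ is compact; equipping $C$ with the restriction of the intrinsic hyperbolic metric of $\partial C(\overline g)$ (hyperbolic by Lemma \ref{cocobound}), and noting that the topological boundary of $C$ is a disjoint union of simple closed geodesics from the step above, yields a compact hyperbolic surface with geodesic boundary.

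The one delicate point I anticipate is verifying that $C$ with the restricted metric is genuinely a 2-dimensional submanifold with boundary, rather than some lower-dimensional or otherwise pathological object. Every boundary edge of $C$ has already been shown to be a simple closed geodesic, giving a collar-neighborhood model there; every edge of $\partial C(\overline g)$ in the interior of $C$ has both adjacent faces in $C$ (by the same argument as in the boundary case, applied internally), so the interior of $C$ inherits a genuine, locally 2-dimensional hyperbolic structure across each such edge. This closes the argument.
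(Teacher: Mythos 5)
Your overall architecture is sound and arrives at the right conclusion by a reasonable route (the preparatory observation that $\partial M(g)\cap\partial C(\overline g)$ is a union of edges and faces of $\partial C(\overline g)$, the closedness of this set, Lemma~\ref{face}, and connectivity). The paper places the corollary after Lemma~\ref{bs2}, which suggests the intended deduction is via the separating geodesics that Lemma~\ref{bs2} produces inside faces of $\partial M(g)$; you instead work directly with the faces of $\partial C(\overline g)$ via Lemma~\ref{face}. That substitution is legitimate. However, there are two concrete gaps, both rooted in treating $\lambda_{\overline g}$ as though it were a polygonal decomposition in which every edge has two adjacent $2$-cells; for irrational sub-laminations this is false, and the corollary is precisely about that case.

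First, in Case~1 you assert that ``edges of $\lambda_{\overline g}$ are pairwise disjoint, so the connected set $C$ must then be a single edge.'' This inference is invalid: a minimal irrational sub-lamination is itself a \emph{connected} closed set that is a union of uncountably many pairwise disjoint edges. So a component $C$ with no face could a priori be such a lamination, and your subsequent phrase ``this edge'' has no referent in that situation. The correct order is to rule out irrational edges first (which your second sentence almost does): if some edge of $C$ lies in a minimal irrational $\lambda$, then $\lambda\subset\partial M(g)\cap\partial C(\overline g)$ by closedness, $\lambda\subset C$ because minimal laminations are connected, and Lemma~\ref{face} applied to a \emph{boundary} edge $\psi$ of $\lambda$ (not necessarily the original edge) puts an adjacent face into $C$ via connectivity through $\psi$ --- contradicting the no-face hypothesis. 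Only then may you conclude that every edge of $C$ is a simple closed geodesic, hence isolated in $\lambda_{\overline g}$, so $C$ is a single such geodesic. Your ``since $F$ meets our edge'' also needs to be ``since $F$ meets the boundary edge $\psi$, which lies in $C$.''

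Second, in Case~2 you write that ``Lemma~\ref{face} places both faces of $\partial C(\overline g)$ adjacent to $\chi$ into $\partial M(g)$.'' An edge $\chi$ of a minimal irrational $\lambda$ does not have two adjacent faces: if $\chi$ is a boundary edge of $\lambda$, it has a face on one side and edges of $\lambda$ accumulating on the other; if $\chi$ is not a boundary edge, it has no adjacent face at all. To show $\chi$ is interior to $C$, the correct argument is as follows: $\lambda\subset C$ by closedness and connectedness of minimal laminations, and by Lemma~\ref{face} every face of $\lambda_{\overline g}$ adjacent to a boundary edge of $\lambda$ lies in $C$ as well; a small transverse arc to $\chi$ then meets $\partial C(\overline g)$ in points of $\lambda$ together with arcs through exactly such faces, so it is entirely contained in $C$. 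Since near $\chi$ the surface $\partial M(g)$ coincides with $\partial C(\overline g)$, this gives a $\partial M$-neighborhood of $\chi$ inside $C$. The same clarification is needed in your final ``delicate point'' paragraph, where ``has both adjacent faces in $C$'' again misdescribes the local picture at irrational lamination edges. Once these two points are repaired, the proof goes through.
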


%

Recall now Definition~\ref{scpdef} of strictly and controllably polyhedral metrics on $M$. We can show

\begin{crl}
\label{balreal}
Let $g$ be a bent metric on $M$ such that the induced cone-metric $d$ on $\pt M$ is balanced. Then $g$ is controllably polyhedral.
\end{crl}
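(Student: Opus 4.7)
The plan is to argue by contradiction. Suppose $g$ is not controllably polyhedral. Then, by Corollary~\ref{bs2c}, some connected component $\Sigma$ of $\pt M(g)\cap\pt C(\ol g)$ is a compact hyperbolic surface with geodesic boundary and $\chi(\Sigma)<0$.

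I would first record three basic properties of $\Sigma$. (a) By Lemma~\ref{bas1}, every vertex of $g$ lies at positive distance from $C(\ol g)$, so $V(g)\cap\Sigma=\emptyset$ and in particular $V(d)\cap\Sigma=\emptyset$. (b) Each boundary component $\gamma$ of $\Sigma$ is a closed geodesic in the intrinsic hyperbolic surface $\pt C(\ol g)$, hence non-contractible there; as $\pt C(\ol g)$ is homotopy equivalent to $\pt M$, $\gamma$ is non-contractible in $\pt M$ and so has $d$-length at least $\sys(d)$. The curve $\gamma$ is also a $d$-geodesic, since it appears as an edge of the bending structure of $\pt M(g)$ (where $\pt M$ leaves $\pt C(\ol g)$). (c) $\Sigma$ is incompressible in $\pt M$: a simple closed curve in $\Sigma$ that bounds a disk in $\pt C(\ol g)$ must, by a standard cut-and-paste argument using that each boundary component of $\Sigma$ is non-contractible by (b), already bound a disk in $\Sigma$; hence $\pi_1(\Sigma)\hookrightarrow\pi_1(\pt C(\ol g))\simeq\pi_1(\pt M)$.

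Next, I would prove the distance comparison
\[ d(p,v)\;\geq\;d_\Sigma(p,\pt\Sigma) \quad\text{for all }p\in\Sigma \text{ and all } v\in V(d), \]
where $d_\Sigma$ denotes the intrinsic metric on $\Sigma$. Since $v\notin\Sigma$ by (a), any $d$-path from $p$ to $v$ must cross $\pt\Sigma$; truncating at the first crossing gives a $d$-path inside $\Sigma\cup\pt\Sigma$ of length at least $d_\Sigma(p,\pt\Sigma)$, because $\Sigma$ is totally geodesic in $\pt M$, so the $d$-length of a path in $\Sigma$ equals its $d_\Sigma$-length. Thus, to contradict the balanced inequality $\cosp(d)<\delta(\sys(d))$, it suffices to produce a point $p\in\Sigma$ with $d_\Sigma(p,\pt\Sigma)>\delta(\sys(d))$.

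To produce such a $p$, I would use $\chi(\Sigma)<0$ to cut $\Sigma$ along simple closed geodesics of its intrinsic hyperbolic metric into hyperbolic pairs of pants. Each cutting curve is non-contractible in $\Sigma$ and thus, by (c), non-contractible in $\pt M$; its $\Sigma$-length equals its $d$-length (as $\Sigma$ is totally geodesic in $\pt M$) and is therefore at least $\sys(d)$. Combined with (b), every pair of pants $P$ in the decomposition has all three boundary components of $\Sigma$-length at least $\sys(d)$. Applying Corollary~\ref{pants} to any such $P$ yields a point $p\in P$ with $d_\Sigma(p,\pt P)>\delta(\sys(d))$, hence $d_\Sigma(p,\pt\Sigma)>\delta(\sys(d))$, completing the contradiction. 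The most delicate step to verify is the distance comparison (specifically, that $\pt\Sigma$ cannot be shortcut through $\pt M\setminus\Sigma$); the incompressibility statement (c) is the other place where I would be careful.
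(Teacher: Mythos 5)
Your argument is correct and follows the same route as the paper's three-sentence proof: assume a component $\Sigma$ of $\pt M(g)\cap\pt C(\ol g)$ is a hyperbolic surface with geodesic boundary, take a pair of pants $P\subset\Sigma$, invoke Corollary~\ref{pants} to get a point far from $\pt P$, and contradict balancedness via $V(d)\cap\Sigma=\emptyset$. What you add is a careful justification of the details the paper leaves implicit — that the boundary and cutting curves of $P$ are non-contractible in $\pt M$ (so their $d$-lengths are bounded below by $\sys(d)$), and the distance comparison $d(p,v)\geq d_\Sigma(p,\pt\Sigma)$ — all of which are sound and do not change the structure of the proof.
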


\begin{proof}
Suppose the converse, then $\partial M(g) \cap \partial C(\overline g)$ contains a compact hyperbolic surface with geodesic boundary. Pick a pair of pants $P$ in this surface. Since $\cosp(d)<\delta(\sys(d))$, Corollary~\ref{pants} implies that $V(d) \cap P$ must be non-empty, which is a contradiction.
\end{proof}

%


%

Together with Theorem~\ref{sub1}, Corollary~\ref{balreal} implies Proposition~\ref{prop5}. Now we deal with strictly polyhedral metrics. As we can see, this property has strong implications on the boundary structure of $\partial M(g)$.

\begin{crl}
\label{strpolyh}
Let $g$ be a strictly polyhedral metric on $M$. Then each edge is a segment between vertices and the closures of all faces are isometric to compact hyperbolic polygons. Conversely, if the closures of all faces of a bent metric are compact hyperbolic polygons, then the metric is strictly polyhedral.
\end{crl}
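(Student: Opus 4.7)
The plan is to establish both implications by combining local polyhedrality with Lemma~\ref{halfinf}, a closed-geodesic argument, and the classification in Corollary~\ref{bs2c}.

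For the forward direction, since $g$ is polyhedral, every $p \in \partial M$ has a neighborhood modeled on a convex polyhedron in $\mathbb{H}^3$, so only finitely many edges and vertices of $g$ appear near $p$. Compactness of $\partial M$ then makes $V(g)$ and the set of edges of $g$ finite. By Lemma~\ref{halfinf}, every edge has both endpoints in $V(g)$ or both in $\Lambda(\overline g)$; in the second case the edge lies in $\partial M(g) \cap \partial C(\overline g)$, which under strict polyhedrality is empty. Hence every edge is a compact geodesic segment between vertices, the 1-skeleton is a finite graph in $\partial M$, and its complement is a finite disjoint union of faces with compact closures.

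The nontrivial step is to prove that each face $F$ is a topological disk. Suppose $\pi_1(F) \neq 1$: since $F$ is locally isometric to $\mathbb{H}^2$ and $\overline F$ is compact, any nontrivial free homotopy class in $F$ is represented by a closed intrinsic geodesic $\chi \subset F$. Because $F$ lies in a supporting plane to $\widetilde M(g)$, it is totally geodesic in $(M,g)$, and $\chi$ is therefore a closed geodesic of $(N,\overline g)$. Every closed geodesic of a convex cocompact hyperbolic 3-manifold is contained in the convex core, so $\chi \subset C(\overline g) \cap \partial M(g)$, contradicting strict polyhedrality. Thus $\overline F$ is a simply connected compact surface with piecewise geodesic boundary that lifts into a supporting plane $\Pi \cong \mathbb{H}^2$, i.e., a compact convex hyperbolic polygon.

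For the converse, assume every face closure $\overline F_i$ is a compact hyperbolic polygon. Polyhedrality is immediate: around each vertex only finitely many polygon faces meet, so its link is a finite spherical polygon and a small neighborhood is isometric to a piece of a polyhedral cone in $\mathbb{H}^3$; near edge-points and regular points polyhedral charts are evident. To obtain strict polyhedrality, note that each side of each $\overline F_i$ is a compact finite-length geodesic segment whose endpoints lie in $V(g)$, so no edge of $g$ is an open leaf of an irrational sub-lamination (such a leaf has infinite length) nor a closed geodesic of $\partial C(\overline g)$ (which would force a disk in $\Pi \cong \mathbb{H}^2$ to be bounded by a closed geodesic, impossible since $\mathbb{H}^2$ contains none). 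Every component of $\partial M(g) \cap \partial C(\overline g)$ enumerated by Corollary~\ref{bs2c}, whether a simple closed geodesic or the boundary of a 2-dimensional compact hyperbolic surface component, would precisely produce one of these forbidden edges. Hence $\partial M(g) \cap C(\overline g) = \partial M(g) \cap \partial C(\overline g) = \emptyset$, and since both sets are compact in $N(\overline g)$ this gives positive distance.

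The main obstacle, as I see it, is the simply-connectedness step in the forward direction: strict polyhedrality has to be leveraged precisely through the fact that any nontrivial loop in a face traces out a closed geodesic of the ambient convex cocompact hyperbolic manifold, which must live in the convex core. The rest of the argument is essentially bookkeeping on the finite combinatorial structure together with the classification of $\partial M(g) \cap \partial C(\overline g)$ from Corollary~\ref{bs2c}.
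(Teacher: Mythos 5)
Your forward direction is correct and essentially the same as the paper's: after invoking Lemma~\ref{halfinf} and strict polyhedrality to rule out edges running to $\Lambda(\overline g)$, the key step is to show that a face with nontrivial $\pi_1$ would contain a closed geodesic of $(N,\overline g)$, which must lie in the convex core. The paper phrases this as ``lifting the loop to the universal cover, it goes to $\Lambda(\overline g)$,'' while you phrase it via the fact that closed geodesics in a convex cocompact manifold lie in the convex core, but these are the same observation.

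For the converse, your argument is edge-centric where the paper's is face-centric, and this introduces a gap. You claim that every component of $\partial M(g)\cap\partial C(\overline g)$ enumerated by Corollary~\ref{bs2c} ``would precisely produce one of these forbidden edges.'' That is not quite right. A simple closed geodesic component $\psi$ need not be an edge of $g$: it is an edge of $\partial C(\overline g)$, but $g$ may have only one supporting plane at $\psi$ (the extremal one), in which case $\psi$ sits in the \emph{interior} of a face of $g$ rather than being an edge. Likewise, a $2$-dimensional component of $\partial M(g)\cap\partial C(\overline g)$ may be a closed surface (a whole boundary component of $M(g)$ coinciding with a component of $\partial C(\overline g)$), in which case there is no ``boundary'' geodesic to point to as a forbidden edge. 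In both situations the correct conclusion is the one the paper uses: some face of $g$ contains a simple closed geodesic in its interior or boundary (or more generally has a non-disk or non-compact lift), so its closure cannot be a compact hyperbolic polygon. Your edge-length observation (``no edge is an irrational leaf or a closed geodesic'') is a true and useful consequence of the hypothesis, but by itself it does not exhaust the ways in which $\partial M(g)\cap\partial C(\overline g)\neq\emptyset$ can obstruct the polygon-face conclusion. To close the argument you need to also exclude closed geodesics and irrational leaves living in face interiors, which again follows from the simply-connectedness / compactness of polygon faces, i.e.\ precisely the face-level contradiction the paper states.
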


\begin{proof}
Let $g$ be a strictly polyhedral hyperbolic metric on $M$. The claim on the edges follows from our description of the edges of a bent metric. The closure of each face is isometric to a hyperbolic surface with convex boundary. As every edge ends at vertices, the boundary of a face is compact. If a face has more then one boundary component, then it contains a homotopically non-trivial  geodesic loop. Lifting this loop to the universal cover we see that it goes to $\Lambda(\ol g)$, hence it belongs to $\pt C(\ol g)$, which is a contradiction. 

In the other direction, suppose the converse. If $\partial M(g) \cap \partial C(\overline g)$ is non-empty, we saw that there is a face of $\pt M(g)$ containing a simple closed geodesic either in its interior or in its boundary. The closure of this face can not be isometric to a compact hyperbolic polygon, which finishes the proof.
\end{proof}

We end this section by sketching a construction of non-polyhedral bent metrics with vertices. 

\begin{exmp}
We assume that $N$ admits a convex cocompact hyperbolic metric $\ol g$ such that the bending lamination $\lambda_{\ol g}$ contains an irrational component, and that there is a face $F$ of $\pt C(\ol g)$ with boundary consisting of simple closed geodesics. (The works~\cite{BO, Lec} of Bonahon--Otal and Lecuire imply that such $\ol g$ exists on most manifolds under our consideration. In particular, it exists if $M$ has an incompressible boundary component of genus at least three.)

Let $\psi \subset \pt \tilde C(\ol g)$ be a boundary edge of a lift of $F$. By $\Pi_{\psi}$ we denote the supporting plane to $\tilde C(\ol g)$ at $\psi$ that makes equal angles with the both extremal supporting planes to $\tilde C(\ol g)$ at $\psi$. We call a supporting plane $\Pi$ to $\tilde C(\ol g)$ \emph{bad} if it contains a boundary edge $\psi$ of a lift $\tilde F$ of $F$ and if the dihedral angle between $\Pi$ and $\tilde F$ is smaller than the dihedral angle between $\Pi_\psi$ and $\tilde F$. Otherwise, we call a supporting plane \emph{good}. By $\tilde A$ we denote the intersection in $\H^3$ of all closed supporting half-spaces to $\tilde C(\ol g)$ bounded by good supporting planes. We note that the set $\tilde A$ is closed and $\rho_{\ol g}$-invariant, hence it projects to a closed set $A \subset N(\ol g)$. We also remark that
\begin{itemize}
\item every face of $\pt \tilde C(\ol g)$ that is not a lift of $F$ belongs to $\pt \tilde A$;
\item $\tilde A \backslash \tilde C(\ol g)$ is non-empty.
\end{itemize}

Pick a point $\tilde v$ in $\tilde A \backslash \tilde C(\ol g)$, consider its $\rho_{\ol g}$-orbit $\tilde V$ and take the closed convex hull of $\tilde V$ in $\H^3$. This determines a bent metric $g$ on $M$ with vertices. We have $\tilde M(g) \subset \tilde A$, hence $M(g) \subset A$ and every face of $C(\ol g)$ except $F$ belongs to $\pt M(g)$. In particular, $\pt M(g)$ contains the irrational component of $\lambda_{\ol g}$, thus $g$ is not polyhedral.
\end{exmp}

\subsubsection{Every convex realization of a convex cone-metric is bent}

In this subsection we prove Theorem~\ref{sub1}. We need some small preparations. In Section~\ref{relsec} we mentioned that Alexandrov gave a complete intrinsic characterization of the induced intrinsic metrics on the boundaries of general convex sets in $\E^3$. He similarly did this in other model spaces. For convex sets in $\H^3$ such metrics are called \emph{CBB($-1$) metrics}. Again, we are not going to give a definition, but refer to~\cite{Ale3, BBI}. Particularly, every convex hyperbolic cone-metric is CBB($-1$). If $S$ is a surface and $d$ is a CBB($-1$) metric, one can introduce two natural Borel measures: \emph{the area} $\area(.)$ and \emph{the intrinsic curvature} $\curv_I(.)$. The former is the standard Hausdorff measure, while the latter is a non-smooth analogue of the Gaussian curvature measure. We define \emph{the extrinsic curvature measure} $\curv(.):=\curv_I(.)+\area(.)$. If $d$ is a convex hyperbolic cone-metric and $U \subset S$ is a Borel set, then $\curv(U)$ is equal to the sum of the curvatures of all cone-point belonging to $U$. The curvature of a cone-point $v$ is equal to $2\pi$ minus the total angle of $v$.

We use the duality between $\H^3$ and $\dS^3$. If $C \subset \H^3$ is a closed convex set, one can define its \emph{dual} $C^* \subset \dS^3$ as the set of all planes in $\H^3$ that do not separate $C$ and are oriented outwards. If $C$ has a non-empty interior and $U \subset \pt C$ is a Borel set, its dual $U^* \subset \pt C^*$ is defined as the set of all support planes to $G$ at points of $U$. An analogue of the Gauss theorem in this setting states that $\curv(U)$ is equal to the area of $U^*$. See~\cite[Lemma 2.2.5]{Pro3} for a proof. It is easy to deduce from this

\begin{lm}[\cite{Pro2}, Lemma 3.2.2]
\label{nonflat}
Let $C \subset \H^3$ be a compact convex set with non-empty interior and $\Pi$ be a supporting plane such that the set $R:=\pt C \cap \Pi$ has non-empty relative interior in $\pt C$. Define $R^c:=\pt C \backslash R$. Then $\curv(R^c)>0$.
\end{lm}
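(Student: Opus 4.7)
My plan is to use the $\H^3/\dS^3$ duality to reduce the statement to an area estimate on the dual surface $\pt C^*$. By the formula recalled just above the lemma, one has $\curv(R^c) = \area((R^c)^*)$, so it suffices to produce an open subset of $\pt C^*$ contained in $(R^c)^*$. To do so I will exhibit a single support plane $\Pi_q$ whose contact set with $C$ lies in $R^c$ at positive distance from $R$, and then conclude by a perturbation argument.

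To construct $\Pi_q$: since $C$ has non-empty interior and $R \subset \Pi$, the function $p \mapsto d(p, \Pi)$ attains on the compact set $\pt C$ a strictly positive maximum $d > 0$ at some point $q \in R^c$. Consider the closed convex strip $\Sigma_d := \{p \in \H^3 : d(p,\Pi) \leq d\}$ on the side of $\Pi$ containing $C$; it is convex (as a sublevel set of the convex function $d(\cdot, \Pi)$), its far boundary component is the equidistant surface $E_d$, and $C \subset \Sigma_d$ with $q \in E_d \cap \pt C$. Let $\Pi_q$ be the hyperbolic plane through $q$ tangent to $E_d$ (equivalently, perpendicular at $q$ to the foot-geodesic $qq_0$, $q_0 \in \Pi$). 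The key geometric input is that $E_d$ is strictly convex---both principal curvatures equal $\tanh(d) > 0$---so $\Pi_q$ is a support plane to $\Sigma_d$ with $\Pi_q \cap \Sigma_d = \{q\}$ (note that $\Pi_q$ and $\Pi$ are ultraparallel with common perpendicular $qq_0$, so they do not meet in $\H^3$). Consequently $\Pi_q$ supports $C$ and $\Pi_q \cap C = \{q\} \subset R^c$, so $\Pi_q \in (R^c)^*$.

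The final step is a standard perturbation. By continuity of the support correspondence, if $\Pi' \in \pt C^*$ is close enough to $\Pi_q$ then $\Pi' \cap C$ is Hausdorff-close to $\{q\}$; since $q$ has positive distance from the closed set $R$, any such $\Pi'$ still satisfies $\Pi' \cap C \subset R^c$, i.e.\ $\Pi' \in (R^c)^*$. This gives an open neighborhood of $\Pi_q$ in the $2$-dimensional convex surface $\pt C^* \subset \dS^3$ contained in $(R^c)^*$, hence of positive area, and duality then yields $\curv(R^c) > 0$. The only nonroutine step is producing the support plane $\Pi_q$ with singleton contact set; this is where the strict convexity of hyperbolic equidistant surfaces does the work, a feature absent in the Euclidean setting where equidistant surfaces are themselves planes.
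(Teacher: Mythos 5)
Your proof is correct and follows exactly the route the paper sets up: immediately before the lemma the text introduces the $\H^3/\dS^3$ duality and the Gauss-type identity $\curv(U) = \area(U^*)$, and then says the lemma ``is easy to deduce from this,'' citing \cite{Pro2} for details. Reducing $\curv(R^c) > 0$ to exhibiting an open subset of $\pt C^*$ inside $(R^c)^*$ is precisely that deduction. Your construction of the witnessing support plane --- take the farthest point $q \in \pt C$ from $\Pi$, note $d(q,R) \geq d(q,\Pi) = d > 0$, and use that the equidistant surface $E_d$ is strictly convex (principal curvatures $\tanh d > 0$) and tangent from outside to $\Sigma_d = \{0 \le d(\cdot,\Pi) \le d\}$ so that the tangent plane $\Pi_q$ meets $\Sigma_d$ (hence $C$) only at $q$ --- is sound, and the upper-semicontinuity of the contact map $\Pi' \mapsto \Pi' \cap C$ then yields an open neighbourhood of $\Pi_q$ in $\pt C^*$ contained in $(R^c)^*$. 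The only point worth stating explicitly (you do, in passing) is that $\Sigma_d$ is convex because $d(\cdot,\Pi)$ is a convex function on the half-space containing $C$; with that, $\Pi_q \cap \Sigma_d = \{q\}$ follows since $\Pi_q$ is ultraparallel to $\Pi$ at distance $d$ along the common perpendicular $qq_0$, so every other point of $\Pi_q$ is at distance $> d$ from $\Pi$.
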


Now we can prove Theorem~\ref{sub1}.

\begin{proof}[Proof of Theorem~\ref{sub1}]
Consider $\tilde M(g)$ embedded in $\H^3$ in the Klein model. By $\tilde V$ we denote the full preimage of $V(d)$. By $B$ we denote $\tilde M(g)\cup \Lambda(\ol g)$, which is a compact convex set in $\R^3$ with non-empty interior. We recall few standard notions from convex geometry. A point $p \in \pt B$ is called \emph{extreme} if it does not belong to the relative interior of a segment with endpoints in $\pt B$. A point $p \in \pt B$ is called \emph{exposed} if there exists a plane $\Pi \subset \R^3$ such that $\Pi \cap B=\{p\}$. Obviously, exposed points are extreme, but not necessarily vice versa. All points from $\Lambda(\ol g)$ are exposed. It follows directly from the definition that $B$ is the convex hull of all its extreme points.

The Straszewicz theorem~\cite{Str} (see also~\cite[Theorem 1.4.7]{Schn}) says that the set of extreme points is contained in the closure of the set of exposed points. We are going to show that if $p \in \pt B$ is exposed, then $p \in \tilde V \cup \Lambda(\ol g)$. Since the latter set is closed, this also is true for every extreme point. In turn, this implies that every $p \in \pt \tilde M(g)$ such that $p \notin \tilde V$ belongs to the relative interior of a segment. Due to the equivariance of $\tilde M(g)$, this proves that $g$ is bent.

Suppose the converse that there is $p \in \pt \tilde M(g)$, $p \notin \tilde V$ such that $p$ is exposed. Then $p$ has a neighborhood $U$ in $\pt B$ distinct from $\tilde V \cup \Lambda(\ol g)$. Let $\Pi$ be a plane such that $\Pi \cap B=\{p\}$. Push $\Pi$ slightly towards $B$ such that the curve $\Pi \cap \pt B$ stays in $U$, by $U'$ we denote the open set bounded by this curve. By the choice of $U$ we have $\curv(U)=0$, thus also $\curv(U')=0$. On the other hand, Lemma~\ref{nonflat} implies $\curv(U')>0$, which is a contradiction.
\end{proof}

\subsection{Triangulations}

For our further investigations we need a geodesic triangulation of $M(g)$, where $g$ is polyhedral, with vertices at some subset of $\partial M(g)$ containing $V(g)$ (the examples of non-strictly polyhedral metrics show that just $V(g)$ might be not enough). Unfortunately, we can not prove the existence of such a triangulation in full generality, but we can show two weaker things, which are enough for our purposes: (1) there exists such a decomposition of $M(g)$ into convex hyperbolic polyhedra; (2) in a finite cover of $M(g)$ this decomposition can be triangulated. This situation is very similar to the treatment of hyperbolic cusp-3-manifolds, where the existence of an ideal geodesic triangulation would also be desirable for many applications. In that case the analogue of (1) is the well-known Epstein--Penner decomposition~\cite{EP}, and the analogue of (2) is shown in the work~\cite{LST} of Luo--Schleimer--Tillmann. A similar treatment was considered by Schlenker in~\cite{Sch3} for polyhedral hyperbolic 3-manifolds with ideal vertices. We will follow the mentioned works and will show that their approaches are also applicable in our situation. Note that the construction of the Epstein--Penner decomposition is itself very similar to a well-known convex-hull construction of the Delaunay decomposition of a point set in the Euclidean space via lifting the set to a surface in one dimension higher. In~\cite{DeB} DeBlois considers generalizations of this construction to discrete subsets of $\H^n$ invariant with respect to torsion-free lattices of finite covolume. 

In the above-mentioned situations the first step is to prove that the visible part of the convex hull in the Minkowski space has only space-like supporting planes. From this and from the discretness of the vertex set it follows that the convex hull provides a locally finite polyhedral decomposition. In our setting we can have also light-like and time-like supporting planes, so these conclusions become slightly more involved to make. Before we start, let us prove an auxiliary result.

\begin{lm}
\label{extremedual}
Let $g$ be a polyhedral metric on $M$, $s \in \Lambda(\ol g)$ and $\Pi$ be a plane passing through $s$, but not intersecting $\tilde M(g)$. Then there exists a supporting plane $\Pi_0$ to $\tilde M(g)$ passing through $s$ and asymptotically parallel to $\Pi$. (By saying that a plane is supporting to $\tilde M(g)$ we particularly mean that it contains points of $\pt\tilde M(g)$.)
\end{lm}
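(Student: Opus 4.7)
The plan is to work in the upper half-space model with $s$ placed at the point at infinity. After a hyperbolic isometry of $\H^3$ fixing $s$, the plane $\Pi$ becomes a Euclidean vertical plane, which we normalize to $\Pi = \{y = 0\} \cap \H^3$. Since $\Pi \cap \tilde M(g) = \emptyset$ and $\tilde M(g)$ is hyperbolically convex (hence connected), we may assume $\tilde M(g) \subset \{y > 0\}$. Hyperbolic planes through $s$ asymptotically parallel to $\Pi$ are exactly the vertical planes $V_c := \{y = c\} \cap \H^3$ with $c \in \R$, so the task reduces to finding $c$ such that $V_c$ is a supporting plane to $\tilde M(g)$ and $V_c \cap \tilde M(g) \neq \emptyset$.

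The key geometric observation is that hyperbolic geodesics in upper half-space project, under the vertical projection $\sigma \colon (x,y,z) \mapsto (x,y)$, to Euclidean segments in $\R^2$. Combining this with Lemma~\ref{bas1} (identifying $\tilde M(g) = \clconv(\tilde V(g) \cup \Lambda(\ol g))$), the shadow $\sigma(\tilde M(g))$ equals the closed Euclidean convex hull of $\sigma(\tilde V(g)) \cup (\Lambda(\ol g) \cap \R^2)$, a closed Euclidean-convex subset of $\{y > 0\}$. Moreover, $V_c$ is a supporting hyperbolic plane with $V_c \cap \tilde M(g) \neq \emptyset$ precisely when the Euclidean line $\{y = c\}$ is a supporting line to $\sigma(\tilde M(g))$ that touches it. The problem is thereby reduced to a two-dimensional question about this closed convex shadow.

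The natural candidates for $c$ are the extremal values $c_* := \inf_{\sigma(\tilde M(g))} y$ and $c^* := \sup_{\sigma(\tilde M(g))} y$. If either extremum is attained, the argument concludes quickly: an attaining point of $\sigma(\tilde M(g))$ is either the shadow of a vertex $v \in \tilde V(g)$ with $y(v)$ equal to the extremum, so that the corresponding $V_c$ is a supporting plane containing $v \in \pt \tilde M(g)$; or it is a limit point $q \in \Lambda(\ol g) \cap \R^2$, in which case the complete vertical geodesic from $q$ to $s$ lies in $\tilde C(\ol g) \subset \tilde M(g)$, lies inside the vertical plane $V_c$, and, being on the boundary of a supporting half-space, is contained in $\pt \tilde M(g)$, yielding the required $\Pi_0$.

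The main technical obstacle is therefore to prove that at least one of the extrema $c_*$ and $c^*$ is genuinely attained, rather than being merely an asymptotic value approached along sequences of points in $\sigma(\tilde M(g))$ escaping to infinity in $\R^2$. Here the polyhedrality of $g$ together with the $\rho_{\ol g}$-equivariance of $\tilde V(g) \cup \Lambda(\ol g)$ and the conical character of $s$ as a limit point enter crucially: the Busemann--Feller projection onto $\tilde C(\ol g)$ (Lemma~\ref{bflemma}), combined with the finiteness of $V(g)$ and the cocompactness of the $\rho_{\ol g}$-action on $\tilde M(g)$, should force any asymptotic-only scenario to push either a vertex of $\tilde V(g)$ or a limit point distinct from $s$ onto the plane $\Pi$ itself, contradicting $\Pi \cap \tilde M(g) = \emptyset$. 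I expect this compactness-contradiction, and the precise use of the equivariant structure near $s$, to be the most delicate part of the proof.
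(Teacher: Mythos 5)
Your reduction to the shadow picture in the upper half-space model (with $s$ at infinity, $\Pi=\{y=0\}$, and the observation that planes through $s$ asymptotically parallel to $\Pi$ are exactly the vertical planes $V_c=\{y=c\}$, whose supporting property is equivalent to $\{y=c\}$ supporting the Euclidean-convex shadow $\sigma(\tilde M(g))$) is correct and appealingly concrete, and it is a genuinely different route from the paper's proof, which instead works with the dual convex body $\tilde M(g)^*\subset\dS^3$, splits its boundary points into supporting-plane type and $\Lambda$-tangent type, and invokes the Straszewicz theorem together with Lemmas~\ref{core3} and~\ref{halfinf}.

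However, you have left the entire mathematical content of the lemma unproved. The heart of the statement is precisely the attainment of one of the extrema $c_*=\inf y$ or $c^*=\sup y$ over $\sigma(\tilde M(g))$, and you explicitly punt on that. Worse, your sketch of how to close the gap does not match the actual failure mode. If $c_*$ is not attained, a minimizing sequence $p_i\in\tilde M(g)$ has $y(p_i)\to c_*>0$ and (after extracting a limit in the compactification $\tilde M(g)\cup\Lambda(\ol g)$) necessarily escapes to $s=\infty$, i.e.\ $|x(p_i)|\to\infty$ while $y(p_i)$ stays bounded away from $0$. Nothing in this scenario pushes a vertex or a limit point onto the plane $\Pi=\{y=0\}$; the danger is a ``spiralling'' of $\pt\tilde M(g)$ toward $s$ along ever-lower, but never minimizing, heights, and ruling that out is exactly where the polyhedrality hypothesis must enter (in the paper it does so through Lemma~\ref{core3}, which detects irrational bending components). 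You also assert without justification that $\sigma(\tilde M(g))$ is a \emph{closed} convex subset of $\{y>0\}$; closedness is not automatic for the image of a non-compact set under $\sigma$, and if it were closed in $\R^2$ the inf would be attained trivially when finite, so the subtlety lies exactly in the failure of closedness. Finally, your dichotomy misses cases that must be ruled out together: $c_*$ could be $0$ (so $V_{c_*}=\Pi$ itself, which by hypothesis does not touch) while simultaneously $c^*=+\infty$; you need an argument showing this joint degeneration cannot happen, and it is not addressed. Until the attainment step is actually carried out, the proposal is a reformulation of the problem, not a proof.
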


\begin{proof}
Consider the dual convex set $\tilde M(g)^*\subset \dS^3$. Its boundary points can be divided in two types: (1) dual to supporting planes to $\tilde M(g)$ and (2) dual to planes $\Pi$ such that $\Pi \cap \tilde M(g)=\emptyset$, but $\pt_\infty \Pi \cap \Lambda(\ol g)\neq \emptyset$. 
%
We use the notions of exposed and extreme points from the previous section. We make a convention that $\pt \tilde M(g)^*$ means $\pt(\tilde M(g)^*)$. (Note that it is not the same as $(\pt \tilde M(g))^*$. The latter is exactly the set of points of type (1).)

Let us show that points of type (2) cannot be exposed for $\tilde M(g)^*$. Suppose that $\Pi^* \in \pt \tilde M(g)^*$ is such a point, dual to a plane $\Pi \subset \H^3$. Note that $\pt_\infty \Pi \cap \Lambda(\ol g)$ consists of a single point $s$. Indeed, otherwise $\Pi$ contains a line between two points of $\Lambda(\ol g)$, hence $\Pi \cap \tilde M(g) \neq \emptyset$ and $\Pi^*$ is not of type (2). Assume that there is a plane $p^* \subset \dS^3$ such that $p^* \cap \tilde M(g)^*=\Pi^*$. Note that $p^*$ cannot be time-like, thus the dual point $p$ is in $\H^3 \cup \pt_\infty\H^3$. If there is another plane in $\H^3$ passing through $p$ and not intersecting $\tilde M(g)$, then $p^*$ contains another point of $\tilde M(g)^*$, which contradicts to our choice. If $p \neq s$, then because $\tilde M(g) \cup \Lambda(\ol g)$ is closed in $\H^3 \cup \pt_\infty \H^3$ and $(\Pi \cup \pt_\infty \Pi) \cap (\tilde M(g)\cup \Lambda(\ol g))=s$, we can rotate $\Pi$ slightly around an axis in $\Pi$ through $p$, but not through $s$, so that the obtained plane does not intersect $\tilde M(g)\cup \Lambda(\ol g)$. If $p=s$, then we can act on $\Pi$ by a parabolic isometry preserving $s$ and sending $\Pi$ to an asymptotically parallel plane in the direction opposite from $\tilde M(g)$. The resulting plane again does not intersect $\tilde M(g)$. Hence, $\Pi^*$ cannot be exposed.

One can see that when $g$ is polyhedral, a point $\Pi^* \in \pt \tilde M(g)^*$ of type (1) is exposed if and only if the dual plane $\Pi$ contains a regular point of $\pt \tilde M(g)$. The set of such points $\Pi^*$ is discrete. The Straszewicz theorem applied to $\tilde M(g)^*\cup\Lambda(\ol g)$ says that the extreme points of $\tilde M(g)^*\cup\Lambda(\ol g)$ are in the closure of the set of exposed points. Then all exposed points of $\tilde M(g)^*$ are extreme. In particular, no point of type (2) is extreme.

Now consider the situation in the statement of the lemma. The point $\Pi^*$ is of the type (2). The segment from $s$ to $\Pi^*$ belongs to $\pt \tilde M(g)^*$. Let us extend it behind $\Pi^*$. It can not be extended infinitely, thus it ends at a point $\Pi_0^* \in \pt \tilde M(g)^*$. It remains to show that $\Pi_0^*$ is of type (1). Indeed, otherwise it is not extreme and belongs to the relative interior of a segment $I$ in $\pt\tilde M(g)^*$ such that each endpoint of $I$ either is in $\Lambda(\ol g)$, or is an extreme point of $\pt \tilde M(g)^*$. If one of the endpoints is $s_0 \in \Lambda(\ol g)$, then by construction $s_0 \neq s$, hence $\pt_\infty \Pi_0 \cap \Lambda(\ol g)$ contains $s$ and $s_0$, so $\Pi^*_0$ is of type (1). If both endpoints are extreme, then they are of type (1). We will show that so is $\Pi^*_0$. 

Denote the supporting planes to $\tilde M(g)$, dual to the endpoints of $I$, by $\Pi_1$ and $\Pi_2$. They must be distinct. Note that because they pass through $s$ and are supporting to $\tilde M(g)$, from Lemma~\ref{halfinf} they contain points of $\tilde C(\ol g)$, hence are supporting to it. Suppose that one of them, say $\Pi_1$, supports a face $\tilde F$ of $\tilde C(\ol g)$ such that there is no edge of this face ending in $s$. Then $s$ is a limit point for points of $\Lambda(\ol g) \cap \pt_\infty \Pi_1$ from both sides. Thus, every geodesic ray emanating from $s$ in $\Pi_1$ contains points of $\tilde F$. Then $\Pi_1$ is the only supporting plane to $\tilde C(\ol g)$ through $s$, which is a contradiction with that $\Pi_2$ is distinct from $\Pi_1$. Hence, there is an edge of $\pt\tilde C(\ol g)$ in $\Pi_1$ adjacent to $s$. Similarly, there is an edge adjacent to $s$ in $\Pi_2$. If these edges are distinct, then by Lemma~\ref{core3} they belong to an irrational component of $\lambda_{\ol g}$, which is contained in $\pt \tilde M(g)$. This contradicts to $g$ being polyhedral. Thus, it is the same edge, i.e., $\Pi_1$ and $\Pi_2$ intersects and the intersection line between $\Pi_1$ and $\Pi_2$ is an edge of $\pt \tilde M(g)$. Hence, $I$ consists of points of type (1), which finishes the proof.

\end{proof}

\begin{lm}
\label{triang1}
Let $g$ be a polyhedral hyperbolic metric on $M$. 
Let $W \subset \partial M(g)$ be a finite set of points containing $V(g)$ and containing at least one point in each edge that is a closed geodesic. Then there exists a decomposition of $M(g)$ into finitely many convex hyperbolic polyhedra with vertices in $W$.
\end{lm}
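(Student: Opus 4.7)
The plan is to adapt the convex hull construction of Epstein--Penner~\cite{EP}, as generalized by DeBlois~\cite{DeB} and employed in related contexts in~\cite{LST, Sch3}, to the present setting. I embed $\H^3$ as the upper sheet of the hyperboloid $\{x\in\R^{3,1}:\langle x,x\rangle=-1,\,x_0>0\}$ in the Minkowski space, identify $\pt_\infty\H^3$ with the projectivization of the future light cone, and fix a developing map $\tilde M(g)\hookrightarrow\H^3$. I lift the discrete $\rho_{\ol g}$-invariant set $\tilde W\subset\tilde M(g)$ coordinate-wise to the hyperboloid, obtaining $\hat{\tilde W}\subset\R^{3,1}$, and form the closed convex hull $\mathcal H:=\clconv(\hat{\tilde W})\subset\R^{3,1}$. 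Since $\tilde W$ is a discrete $\rho_{\ol g}$-invariant set whose set of accumulation points at infinity is exactly $\Lambda(\ol g)$, $\mathcal H$ contains the closed rays of the future light cone above $\Lambda(\ol g)$; together with Lemma~\ref{bas1} this shows that radially projecting $\mathcal H\cap\{\langle x,x\rangle\le 0,\,x_0>0\}$ onto the hyperboloid recovers $\tilde M(g)$. The set $\mathcal H$ is $\rho_{\ol g}$-invariant because $\rho_{\ol g}$ acts by linear isometries of $\R^{3,1}$.

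Next I would examine the ``visible'' part of $\pt\mathcal H$ from the origin side, that is, the facets whose supporting affine hyperplane separates the origin from $\mathcal H$; radial projection of these facets produces the candidate decomposition of $\tilde M(g)$. Each such facet is cut out by a supporting hyperplane $\Pi\subset\R^{3,1}$ that is either space-like, light-like, or time-like. The space-like case is the classical Epstein--Penner situation: $\Pi$ meets the light cone in a compact ellipse, so local finiteness of $\hat{\tilde W}$ implies that the facet is a bounded convex polytope with vertices in $\hat{\tilde W}$, whose radial projection is a compact convex hyperbolic polyhedron in $\H^3$ with vertices in $\tilde W$.

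The main obstacle, and the point at which the polyhedral hypothesis on $g$ together with the marking hypothesis on $W$ must be invoked, is to control the light-like and time-like supporting hyperplanes. A light-like supporting hyperplane to $\mathcal H$ corresponds to an affine plane in $\H^3$ tangent to $\pt_\infty\H^3$ at a single ideal point $s\in\Lambda(\ol g)$; Lemma~\ref{extremedual} provides a supporting plane $\Pi_0$ to $\tilde M(g)$ through $s$ that is asymptotically parallel to it, so the combinatorics of $\pt\tilde M(g)$ near $s$ dictates the combinatorics of $\mathcal H$ near the corresponding light-cone ray. Since $g$ is polyhedral, Lemma~\ref{halfinf} and the edge classification of Section~\ref{bentstruct} show that the edges of $\pt\tilde M(g)$ incident to $s$ are finite in number and are either lifts of simple closed geodesics in $\pt M(g)\cap\pt C(\ol g)$, each marked by a point of $W$, or belong to finite polyhedral faces whose vertices lie in $\tilde V(g)\subset\tilde W$. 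The $\rho_{\ol g}$-translates of the marked points on closed geodesic edges then give a discrete family of vertices of $\hat{\tilde W}$ that accumulates on the distinguished light-cone rays and forces the corresponding cell of $\mathcal H$ to be the limit of compact convex polytopes whose vertices all lie in $\hat{\tilde W}$; by cutting along the hyperplanes supported by these marked points, one subdivides this cell into finitely many bounded convex polytopes per fundamental domain. A parallel (and in fact easier) analysis, again using Lemma~\ref{extremedual}, handles any time-like facet that may a priori appear.

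Finally, because the whole construction is $\rho_{\ol g}(\pi_1(M))$-equivariant, it descends to $M(g)$, and finiteness modulo $\rho_{\ol g}$ follows from cocompactness of the action on $\tilde M(g)$ together with the local finiteness of $\tilde W$; each cell projects to a compact convex hyperbolic polyhedron with vertices in $W$. The hard part, as indicated, is the treatment of the non-space-like supporting hyperplanes of $\mathcal H$ above $\Lambda(\ol g)$: without the polyhedral assumption these could produce infinite or non-polyhedral combinatorics, and it is precisely the combination of Lemma~\ref{extremedual} with the condition that $W$ meets every closed geodesic edge that permits such cells to be split into the desired finite-vertex pieces.
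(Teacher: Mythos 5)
Your proposal correctly sets up the Epstein--Penner-type convex hull construction, and you correctly identify that the non-space-like supporting hyperplanes are the crux; this is the same framework the paper uses. However, there is a genuine gap in how you handle them.

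The paper's central step (Claim~\ref{noinfin}) is to prove that for \emph{every} supporting hyperplane $\Pi$ of the visible part of the convex hull, if $H=\Pi\cap\H^3$ is a horosphere or hypersphere, then $\pt_\infty H\cap\Lambda(\ol g)=\emptyset$. From this, combined with discreteness of $\tilde W$, it follows that every visible facet already has finitely many vertices, and local finiteness of the decomposition comes along essentially for free; no subdivision of ``infinite'' cells is ever needed. The proof of this claim is non-trivial: the horosphere case uses the Beardon--Maskit theorem that every limit point of a convex cocompact group is conical (so an orbit cannot avoid a horoball centered at a limit point); the hypersphere case reduces, via Lemma~\ref{extremedual}, to the geometry of a supporting plane $\Pi_0$ to $\tilde M(g)$ through $s$, and then a case analysis (edge versus face) invokes polyhedrality and, in the face case, the conical limit point argument again applied to a stabilizer subgroup.

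Your argument instead takes for granted that a light-like supporting hyperplane is tangent to the sphere at infinity at a point $s\in\Lambda(\ol g)$ (this is not justified, and in fact is precisely what must be ruled out), and then proposes to ``cut'' the resulting cell using marked points along closed-geodesic edges. This is not a valid substitute: you never show that such a cell is a polyhedron, that the cuts stay inside the cell, or that the pieces are finite in number per fundamental domain, and the phrase ``limit of compact convex polytopes'' does not produce a polytope. Moreover, the marked points on closed geodesics sit at finite distance from $C(\ol g)$; their $\rho_{\ol g}$-orbit accumulating onto a light-cone ray is itself what the conical-limit-point argument forbids. To repair the proof you should replace the subdivision heuristic by a proof that $\pt_\infty H\cap\Lambda(\ol g)=\emptyset$, invoking conical limit points (Beardon--Maskit) for the light-like case and Lemma~\ref{extremedual} plus polyhedrality for the time-like case, exactly as in the paper.
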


Here by a decomposition we mean that every point in $M(g)$ belongs to at least one polyhedron, the interiors of the polyhedra are disjoint, and if the intersection of two polyhedra is non-empty, then this intersection is a face of both polyhedra. In this section by a face of a polyhedron we sometimes mean a face of any dimension.

\begin{proof}
Consider the universal cover $\tilde M(g) \subset \H^3$ in the hyperboloid model, let $\tilde W$ be the full preimage of $W$. It is easy to see that after we added to $W$ a point on each edge that is a closed geodesic, $\tilde M(g)$ is the convex hull of $\tilde W$ in the usual sense, without taking the closure. 
So each point of $\tilde M(g)$ is a convex combination of finitely many points of $\tilde W$.

Let $C'$ be the convex hull of $\tilde W$ in $\R^{3,1}$ and $C$ be the set of points $p$ of $\partial C'$ visible from the origin $o$, i.e., such that the segment $op$ in $\R^{3,1}$ does not intersect $C'$ except $p$. Take $p_0 \in \tilde M(g)$. As it is a convex combination of finitely many points of $\tilde W$, the ray $op_0$ intersects $C'$. By definition, the first intersection point $p$ belongs to $C$. One can see that $C$ is homeomorphic to $\tilde M(g)$ via the central projection from the origin, and is closed in $\R^{3,1}$.

Consider a supporting plane $\Pi$ to $C$ that does not pass though the origin, let $H:=\Pi \cap \H^3$. Depending if $\Pi$ is space-like, light-like or time-like, $H$ is a sphere, a horosphere or a hypersphere respectively. We prove that in the latter two cases 

\begin{cla}
\label{noinfin}
$$\partial_{\infty} H \cap \Lambda(\overline g) = \emptyset.$$
\end{cla}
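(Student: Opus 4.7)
I would argue by contradiction, supposing $q \in \partial_\infty H \cap \Lambda(\overline g)$. Write $\Pi = \{x : \langle a, x \rangle = c\}$, orienting so that $\tilde W \subset C' \subset \{\langle a, x \rangle \leq c\}$; since the origin lies strictly on the opposite side of $\Pi$, one has $c < 0$. Let $v$ be a future light-like representative of $q$: the assumption $q \in \partial_\infty H$ forces $\langle a, v\rangle = 0$, while $q \in \Lambda(\overline g)$ supplies a sequence $w_n \in \tilde W$ with $w_n \to q$ in $\overline{\H^3}$.

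In the light-like case $a$ is itself future light-like, hence parallel to $v$, and $q = [a]$ is the ideal center of the horosphere $H$. The horoballs $B_{c'} = \{x \in \H^3 : \langle a, x\rangle > c'\}$ with $c' \in (-\infty, 0)$ form a neighborhood basis of $q$ in $\overline{\H^3}$, so $w_n \to q$ forces $w_n \in B_{c'}$ eventually for every $c' < 0$. Choosing $c' \in (c, 0)$ yields $\langle a, w_n\rangle > c' > c$ for $n$ large, contradicting $\tilde W \subset \{\langle a, x \rangle \leq c\}$.

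The time-like case is more delicate. First, a convexity computation on the hyperboloid upgrades the Euclidean constraint $\tilde W \subset \{\langle a, x\rangle \leq c\}$ to the hyperbolic constraint $\tilde M(g) \subset \{\langle a, x\rangle \leq 0\}$: for any convex combination $p^* = \sum_i \lambda_i w_i$ of points of $\tilde W \subset \H^3$ one has $-\langle p^*, p^*\rangle \geq 1$, whence $\langle a, p^*/\sqrt{-\langle p^*, p^*\rangle}\rangle = \langle a, p^*\rangle/\sqrt{-\langle p^*, p^*\rangle} \in [c, 0]$. Thus $\tilde M(g)$ is contained in a closed hyperbolic half-space bounded by the totally geodesic plane $\Pi_0 = \{\langle a, x\rangle = 0\} \cap \H^3$, and $q$ lies on the equator $\partial_\infty \Pi_0$. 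Now examine the closed face $F^* = \Pi \cap \overline{C'}$: since $v$ is a recession direction of $\overline{C'}$ (being a limit direction of $\tilde W$) and is parallel to $\Pi$ ($\langle a, v\rangle = 0$), the vector $v$ is also a recession direction of $F^*$, so $F^*$ is unbounded in direction $v$. Radial projection sends $F^*$ to a convex 2D region $\tilde F \subset \tilde M(g)$ contained in the totally geodesic plane $\Pi_V = V \cap \H^3$, where $V = \mathrm{span}(F^* \cup \{o\})$ is a 3D subspace of signature $(2,1)$ (its signature cannot be $(3,0)$ because $F^*$ meets the interior of the time-like cone); the explicit radial-projection formula shows that $\tilde F$ extends to infinity in $\overline{\H^3}$ towards $q$.

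To finish, the polyhedral assumption on $g$ gives the contradiction: every 2D flat region of $\overline{\tilde M(g)}$ whose vertex set is drawn from $\tilde W$ and whose edges follow the polyhedral structure --- segments between $\tilde V(g)$-vertices, or complete geodesic lifts of closed-geodesic edges of $\partial\tilde C(\overline g)$ --- is compact, so $\tilde F$ cannot reach $q \in \Lambda(\overline g)$. The main obstacle in the time-like case is precisely this last step: one must identify $\tilde F$ with a piece of the polyhedral face structure of $\tilde M(g)$ and extract its compactness from the polyhedral hypothesis without circularity, since the overall aim of Lemma~\ref{triang1} is to produce exactly such a compact decomposition.
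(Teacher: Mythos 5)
Your light-like case has a genuine gap at its key step. From $q\in\Lambda(\overline g)$ you only get a sequence $w_n\in\tilde W$ with $w_n\to q$ in $\overline{\H^3}$, and you then claim the horoballs $B_{c'}$ centered at $q$ form a neighborhood basis of $q$ in $\overline{\H^3}$, so that $w_n$ eventually enters each of them. That is false: a horoball is not a neighborhood of its ideal point in $\overline{\H^3}$ (in the upper half-space model with $q=\infty$, the points $w_n=(n,0,1)$ converge to $\infty$ but never enter the horoball $\{z>2\}$). What you actually need is that the orbit penetrates every horoball at $q$, i.e.\ that $q$ is a horospherical (in fact conical) limit point, and this is exactly where the paper invokes the Beardon--Maskit theorem that every limit point of a convex cocompact group is conical. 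Some such input is unavoidable: for general Kleinian groups there are limit points (e.g.\ bounded parabolic fixed points) whose orbit avoids a horoball, so an argument using only ``$q\in\Lambda(\overline g)$'' cannot close. (Smaller issues in the same vein: the assertion that the origin lies strictly on the non-$C'$ side, hence $c<0$, is only automatic in the light-like case and is left unproved in the time-like one, and the interval $[c,0]$ in your convexity computation is not quite right, though only the sign is needed.)

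The time-like case is incomplete by your own admission, and the step you would need -- that every flat $2$-dimensional region of $\partial\tilde M(g)$ with vertices in $\tilde W$ is compact -- is simply false for polyhedral metrics: faces of $\partial\tilde M(g)$ may be unbounded with closure meeting $\Lambda(\overline g)$, e.g.\ faces containing pieces of $\partial\tilde C(\overline g)$ (cf.\ Corollary~\ref{bs2c} and the peculiar metrics allowed in this paper), so no compactness contradiction is available. The paper's route is different and uses two mechanisms absent from your sketch: Lemma~\ref{extremedual} produces a supporting plane $\Pi_0$ to $\tilde M(g)$ through $s$ asymptotically parallel to $\Pi$, and then either $\Pi_0\cap\tilde M(g)$ is an edge, which by polyhedrality is the axis of a loxodromic $\gamma\in\rho_{\overline g}(\pi_1(M))$ whose power pushes points of $\tilde W\cap H$ into the forbidden region $H_+$, or it is a face, in which case $H\cap\Pi_0$ is a horocycle and $s$ is a conical limit point of the (convex cocompact) stabilizer of that face, so the orbit of a vertex lying in the face enters $H_+\cap\Pi_0$, again contradicting $H_+\cap\tilde W=\emptyset$. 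In short, both cases of the claim are ultimately settled by the conical limit point property rather than by a neighborhood-basis or compactness argument, and without that ingredient (plus Lemma~\ref{extremedual} in the time-like case) the proposal does not establish Claim~\ref{noinfin}.
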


\vskip+0.2cm
Note that as $\Pi \cap C \neq \emptyset$, we have $\Pi \cap \tilde W \neq \emptyset$. The set $H$ bounds an open convex set $H_+$, which does not contain points of $\tilde W$ since $\Pi$ is a supporting plane for~$C$.

In order to prove Claim~\ref{noinfin} we need to introduce a special notion. A point $s \in \partial_{\infty}\H^3$ is called a \emph{conical limit point} for a Kleinian group $\Gamma$ if for every geodesic line $\psi$ through $s$ and every $p \in \H^3$ there exists a sequence $\gamma_i \in \Gamma$ such that $\gamma_i(p)$ converges to $s$ and stays at a bounded distance from $\psi$. A theorem of Beardon--Maskit~\cite{BM} says that if $\Gamma$ is convex cocompact, then every limit point of $\Gamma$ is conical. See also a discussion in~\cite[Chapter 2]{Nic}, particularly Theorem 2.7.2 there.

Suppose that $H$ is a horoball and that there is a point $s \in\partial_{\infty} H \cap \Lambda(\overline g)$. We have a point $s \in \Lambda(\overline g)$ with an open horoball $H_+$ centered at it, which does not contain points of $\tilde W$. In particular it does not contain points from the $\rho_{\overline g}$-orbit of a point in $\H^3$. This contradicts to the claim that $s$ is a conical limit point for $\rho_{\ol g}(\pi_1(M))$.

Suppose that $H$ is a hypersphere and that there is a point $s \in\partial_{\infty} H \cap \Lambda(\overline g)$. The set $H$ is equidistant from a plane $H_0 \subset H_+$ with $\partial_{\infty} H=\partial_{\infty} H_0$. As $H_+$ does not contain points of $\tilde W$, $H_0$ is disjoint from $\tilde M(g)$. We remark that $\partial_{\infty} H \cap \Lambda(\overline g)=s$. Indeed, if there were more than one point, then $H_0$ would contain a line from $\tilde C(\overline g) \subset \tilde M(g)$, which is a contradiction. 

By Lemma~\ref{extremedual}, there exists a a supporting plane $\Pi_0$ to $\tilde M(g)$ passing through $s$ and asymptotically parallel to $\Pi$. Because of the conditions on $\Pi_0$, the set $\Pi_0 \cap \tilde M(g)$ is either an edge, or a face of $\pt \tilde M(g)$.
In the first case denote the edge by $\tilde \psi$. Since $g$ is polyhedral, $\tilde \psi$ is the axis of a purely hyperbolic isometry $\gamma \in \rho_{\ol g}(\pi_1(M))$. Either $\gamma$ or $\gamma^{-1}$ sends $H$ to $H_+$. This contradicts with $H_+ \cap \tilde W=\emptyset$. 

Otherwise denote the face $\Pi_0 \cap \tilde M(g)$ by $\tilde F$ and remark that $H \cap \Pi_0$ is a horocycle based at $s$. Indeed, there exists a unique one-parametric parabolic subgroup of ${\rm Iso}^+(\H^3)$ fixing $s$ and fixing $\Pi$ and $\Pi_0$ as sets. Thus, it also fixes as sets $H$ and $H \cap \Pi_0$. Hence, the orbit of every point from $H \cap \Pi_0$ is the whole $H \cap \Pi_0$. On the other hand, this orbit must be a horocycle. 

Now consider a subgroup $\Gamma$ of $\rho_{\ol g}(\pi_1(M))$ consisting of all elements fixing $\Pi_0$. This is a convex cocompact Kleinian group, and $s$ must be a conical limit point for $\Gamma$. Thus, the $\Gamma$-orbit of any point of $\tilde W \cap \tilde F$ must enter $H_+ \cap \tilde \Pi_0$. This again contradicts with $H_+ \cap \tilde W=\emptyset$ and finishes the proof of Claim~\ref{noinfin}.
\vskip+0.2cm

Now we claim that $\Pi \cap \tilde W$ is finite. If $\Pi$ is space-like, then $H:=\Pi \cap \H^3$ is compact, so the claim follows from the discreteness of $\tilde W$ in $\H^3$. If $\Pi$ is time-like or light-like and $\Pi \cap \tilde W$ is infinite, then the discreteness of $\tilde W$ implies that there is $s \in\partial_{\infty} H \cap \Lambda(\overline g)$, which is a contradiction with Claim~\ref{noinfin}. 

Let $p$ be in the relative interior of $C$ and $\Pi$ be a support plane at $p$. The point $p$ is a convex combination of at least two points of $\tilde W$, which are contained in $\Pi$. From the previous argument, $\Pi \cap \tilde W$ is finite, so $\Pi \cap C$ is the convex hull of finitely many points of $\tilde W$. We claim that there exists a supporting plane through $p$ containing at least four points of $\tilde W$. Indeed, if it is less than four and it is not possible to rotate $\Pi$ to increase $\Pi \cap \tilde W$, then there exists a sequence $\{\tilde w_i\} \subset \tilde W$ such that the distance from $\tilde w_i$ to $\Pi$ tends to zero. Due to the discreteness of $\tilde W$, up to taking a subsequence we assume that $\{\tilde w_i\}$ converges to $s \in \Lambda(\overline g)$. It follows that $\Pi$ is not space-like and $s \in \partial_{\infty} \Pi$, which is a contradiction. 


We see that $C$ decomposes into polyhedral 3-dimensional faces. We want to show that this decomposition is locally finite. It is enough to show that for each point $\tilde w \in \tilde W$ there are only finitely many segments from $\tilde w$ to other points of $\tilde W$ belonging to $C$. Indeed, otherwise from the discreteness of $\tilde W$ and the closeness of $C$ we obtain that there exists a light-like segment from $\tilde w$ belonging to $C$ pointing to $\Lambda(\overline g)$. A supporting plane $\Pi$ containing this segment is light-like and for $H=\Pi \cap \H^3$ we have $\partial_{\infty} H \cap \Lambda(\overline g) \neq \emptyset$, which is a contradiction with Claim~\ref{noinfin}.

By the central projection from the origin, this decomposition projects to a locally finite decomposition of $\tilde M(g)$ into convex hyperbolic polyhedra with vertices in $\tilde W$. As $C$ is invariant with respect to $\rho_{\overline g}$, so is the decomposition, and it produces a decomposition of $M(g)$ into convex polyhedra with vertices in $W$ as desired.

\end{proof}

\begin{lm}
\label{triang2}
Let $g$ be a polyhedral hyperbolic metric on $M$ and let $M(g)$ be decomposed into finitely many convex hyperbolic polyhedra with vertices in a set $W$ such that $V(g) \subseteq W \subset \partial M(g)$. Then there exists a finite cover $M'(g')$ of $M(g)$, where $g'$ is the lift of the metric $g$, such that the lift of the decomposition to $M'(g')$ can be subdivided to a triangulation without adding new vertices, and the obtained triangulation is super-large.
\end{lm}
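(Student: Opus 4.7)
The argument will follow the two-step template of Luo--Schleimer--Tillmann~\cite{LST}: first produce a simplicial subdivision of the lifted polyhedral decomposition that introduces no new vertices, then pass to a suitable finite cover to make the resulting triangulation super-large.

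Fix a total order on the finite set $W$. Working in any finite cover $M' \to M(g)$ in which every cell of the lifted decomposition is embedded, the pullback order restricts to an unambiguous order on the vertices of each cell. For every lifted $2$-face $F$, let $w_F$ be its smallest vertex and triangulate $F$ by drawing the diagonals from $w_F$; since $w_F$ depends only on $F$, the two incident $3$-cells agree on this triangulation. For every lifted $3$-cell $P$, let $w_P$ be its smallest vertex and triangulate $P$ by coning from $w_P$ over the chosen triangulations of the $2$-faces of $P$ that do not contain $w_P$. Any $2$-face of $P$ that does contain $w_P$ necessarily has $w_P$ as its smallest vertex, and its induced triangulation is exactly the cone from $w_P$, consistent with the cone-from-$w_P$ description of $P$. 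This yields a triangulation of $M'$ with vertex set $W'$ and no new vertices, which is $\pi_1(M')$-equivariant in the universal cover.

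It remains to choose $M' \to M(g)$ so that the resulting triangulation $\mc T'$ is super-large. Two types of finite obstructions must be avoided: distinct vertices of a single lifted cell mapping to the same point of $W$ (non-embeddedness), and pairs of tetrahedra of $\mc T'$ whose lifts to $\tilde L'$, with $L' = M' \backslash \Sigma'$, share a vertex under a nontrivial element of $\pi_1(L')$ that does not already lie in a vertex stabilizer. Because the decomposition of Lemma~\ref{triang1} has only finitely many cells and the subdivision is determined by the order on $W$, only finitely many combinatorial patterns of bad identification can arise in the base, each described by a finite list of non-identity cosets modulo vertex stabilizers in $\pi_1(L)$. Residual finiteness of $\pi_1(M)$, which holds because $M$ is hyperbolizable and hence covered by Hempel's theorem (or because $\pi_1(M)$ is a finitely generated linear group), allows us to pick a finite-index normal subgroup of $\pi_1(M)$ realizing all the required exclusions simultaneously; the associated cover $M'$ then carries the desired super-large triangulation $\mc T'$. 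The main obstacle, and the place where this plan needs real care, is the role of vertex stabilizers at vertices of $W$ lying on the singular locus $\Sigma$: since the link of such a vertex in $L$ has nontrivial fundamental group, the corresponding stabilizer in $\pi_1(L)$ cannot be killed in any finite cover, so the super-large condition must be read as excluding vertex-sharings only \emph{outside} those stabilizers, and the residual-finiteness argument has to be carried out at the level of appropriate double cosets --- paralleling the corresponding treatment in~\cite{LST} for the cusped case.
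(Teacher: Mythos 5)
Your triangulation step --- pull back a total order on $W$ and cone each $2$-face, then each $3$-cell, from its smallest vertex --- is a valid alternative to the paper's construction, which instead greedily builds a correspondence $f$ from cells to vertices satisfying a compatibility condition on shared $2$-faces and then cones each cell $P$ from $f(P)$.

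The genuine gap is in your handling of super-largeness. You correctly reduce to residual finiteness of $\pi_1(M)$ but then flag, as an unresolved obstacle requiring a double-coset refinement as in~\cite{LST}, the possibility that vertices on $\Sigma$ have nontrivial stabilizers in $\pi_1(L)$, where $L = M'(g') \backslash \Sigma$, on the grounds that the link of such a vertex in $L$ has nontrivial fundamental group. That premise is false here and the obstacle does not arise. In this lemma the cone-manifold is $M'(g')$ itself, so $\Sigma$ lies entirely on $\partial M'(g')$; deleting a $1$-complex contained in the boundary of a manifold does not change its fundamental group, and the link in $L$ of any $w \in W \subset \partial M(g)$ is a spherical disk with finitely many boundary points removed, still simply connected. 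Hence $\pi_1(L) \cong \pi_1(M'(g'))$ acts freely on the universal cover, vertex stabilizers are trivial, the set of ``bad'' elements $\gamma$ (those carrying some lifted vertex $\tilde w$ to another vertex of the same lifted cell) is finite, and a finite-index normal subgroup avoiding this set produces the super-large cover --- this is exactly the paper's argument. The analogy with the cusped case of~\cite{LST} breaks down because there the singularities (cusps) are interior with $\Z^2$ stabilizers, whereas here they lie on the boundary. As written your argument stops short of this resolution and is therefore incomplete. A secondary structural point: it is cleaner to pass to the super-large cover first and then triangulate there --- a subdivision of a super-large decomposition is automatically super-large, since each tetrahedron lies in a cell --- rather than to triangulate and then seek a further cover for super-largeness.
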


\begin{proof}
\begin{cla}
\label{large}
There exists a finite cover $M'(g')$ of $M(g)$ such that the polyhedral decomposition lifted to $M'(g')$ is super-large. 
\end{cla}
Here the notion of super-largeness from Definition~\ref{largedfn} is applied to polyhedral decompositions in a straightforward way.
\vskip+0.2cm
Indeed, let $P_1, \ldots, P_r$ be an enumeration of all polyhedra of the decomposition of $M$. Choose a lift $\tilde P_i$ for each $P_i$. Consider the set $\Gamma$ of all $\gamma \in \pi_1(M)$ such that for some vertex $\tilde w$ of some $\tilde P_i$, the vertex $\gamma\tilde w$ is a vertex of the same $\tilde P_i$. As the stabilizer of each $\tilde w$ is trivial, the set $\Gamma$ is finite. As $\pi_1(M)$ is linear, it is residually finite~\cite[Theorem III.7.11]{LS}, and there exists a normal subgroup $S \triangleleft \pi_1(M)$ of finite index not containing any element of $\Gamma$. 

Consider the cover $M'$ of $M$ corresponding to $S$. Suppose that it does not satisfy our claim. Then there exists $\tilde P'$ of $\tilde M(g)$, a vertex $\tilde w'$ of $\tilde P'$ and an element $\gamma' \in \pi_1(M')$ such that $\gamma'\tilde w'$ is a vertex of $\tilde P'$. Note that $\tilde P'=\gamma \tilde P_i$ for some $\gamma \in \pi_1(M)$. Then $\tilde w:=\gamma^{-1}\tilde w'$ is a vertex of $\tilde P_i$. Hence, $\gamma^{-1}\gamma'\gamma \tilde w$ is again a vertex of $\tilde P_i$. Thus, $\gamma^{-1}\gamma'\gamma \in \Gamma$. On the other hand, as $S$ is normal, $\gamma^{-1}\gamma'\gamma \in S$, which contradicts our construction of $S$. This finishes the proof of the claim.
\vskip+0.2cm

Now we consider the cover $M'$ of $M$ as from Claim~\ref{large}. The lift of the polyhedral decomposition to $M'$ is super-large and it is well-known that any such decomposition is triangulable. We give a quick proof of this fact. 

Let us construct a correspondence $f: \mathcal C \rightarrow W$ from the set of polyhedra $\mathcal C$ to the set of vertices with the following properties. \\
(1) For every $P$ in $\mathcal C$, $f(P)$ is a vertex of $P$.  \\
(2) If $P_1$ and $P_2$ are adjacent by a 2-face and both $f(P_1)$, $f(P_2)$ belongs to this 2-face, then $f(P_1)=f(P_2)$. 

To construct this correspondence, start from $P_1$ and let $f(P_1)$ be any its vertex $w$. Next, for each $P_i$ adjacent to $w$ set $f(P_i)=w$. Now at each next step take $P_j$ for which $f(P_j)$ is still not defined. Take any its vertex $w$ that is not in the image of $f$ and set $f(P_i)=w$ for each polyhedron $P_i$ adjacent to $w$. Note that for some of them $f(P_i)$ could have been already defined, but we redefine it now. After finitely many steps we obtain the map $f$ defined over all $\mathcal C$ satisfying the required properties. 

Now for each $P \in \mathcal C$ consider all 2-faces of $P$ adjacent to $f(P)$ and triangulate them by coning from the vertex $f(P)$. Clearly, there are no contradictions due to the condition (2). Next, triangulate all remaining 2-faces arbitrarily. For each $P$ the obtained triangulation of $\partial P$ can be easily extended to a triangulation of $P$ by coning it from $f(P)$. This provides the desired triangulation $\mathcal T$.
\end{proof}

\subsection{Topology of the space of bent metrics}

Let $V $ be a finite set. 
By $\mathcal{CH}(N, V)$ denote the space of equivalence classes of pairs $(\overline g, f)$, where $\overline g$ is a convex cocompact hyperbolic metric on $N$, ${f: V \hookrightarrow N}$ is an injective map, called \emph{marking}, and two pairs are equivalent if they differ by an isometry preserving the marking and isotopic to identity. Here we do not assume that the isotopy should fix the marked points.
As in Section~\ref{cocosec}, we similarly define the space $\mathcal{CH}_h(N, V)$ with isotopy replaced by homotopy. We are going to endow $\mathcal{CH}(N, V)$ with a natural smooth structure such that the forgetful map $\mathcal{CH}(N, V) \rightarrow \mathcal{CH}(N)$ is a smooth bundle. This is very similar to the treatment of the space of metrics with marked oriented planes performed by us in~\cite[Section 2.5]{Pro3}.

Again, it is easier to define a smooth topology on $\mathcal{CH}_h(N, V)$. By $(\H^3)^{V}$ we denote the space of maps $\tilde f:V \rightarrow \H^3$. Consider the direct product $$\mathcal R(\pi_1(M), G) \times (\H^3)^{V}.$$ Let $G$ act on it from the left, by conjugation on the first factor and by isometries of $\H^3$ on the second factor. Additionally, $\pi_1(M)$ acts from the left on each fiber by $\gamma(\rho, \tilde f)=(\rho, \rho(\gamma)\tilde f)$, where $(\rho, \tilde f) \in \mathcal R(\pi_1(M), G) \times (\H^3)^{V}$. One can check that these actions commute. By taking the quotient over both actions, we obtain a fibration over $\mathcal X(\pi_1(M), G)$. We denote the total space of the fibration by $\mc X(\pi_1(M), G, V)$. With the help of developing and holonomy maps the set $\mathcal{CH}_h(N, V)$ injects in $\mc X(\pi_1(M), G, V)$ as an open smooth subset so that the forgetful map $\mathcal{CH}_h(N, V) \rightarrow \mathcal{CH}_h(N)$ is a smooth bundle with fibers homeomorphic to the space of injective maps $f: V \hookrightarrow N$. The group $MCG_h(M)$ acts on $\mathcal{CH}(N)$ freely and properly discontinuously determining a covering map $\mathcal{CH}(N) \rightarrow \mathcal{CH}_h(N).$ We have the commutative diagram
\begin{center}
\begin{tikzcd}
\mathcal{CH}(N, V) \arrow[r] \arrow[d]
& \mathcal{CH}_h(N, V) \arrow[d] \\
\mathcal{CH}(N) \arrow[r]
& \mathcal{CH}_h(N)
\end{tikzcd}
\end{center}
from which we can locally pull back the topology on $\mathcal{CH}_h(N, V)$ to the topology on $\mathcal{CH}(N, V)$ preserving the bundle structure. This endows $\mathcal{CH}(N, V)$ with the topology of a smooth manifold of dimension $3(n-k)$ where $k$ is the Euler characteristic of $\partial M$.


Assume that $V \subset \pt M$. By $\mathcal P_b(M, V)$ we denote the space of bent hyperbolic metrics $g$ on $M$ with $V(g)=V$ considered up to isometries preserving $V$ pointwise and isotopic to identity. We will frequently abuse the notation and write $g \in \mathcal P_b(M, V)$ meaning that $g$ is a bent hyperbolic metric on $M$ representing a class in $\mathcal P_b(M, V)$.
A class in $\mathcal P_b(M, V)$ determines a class in $\mathcal{CH}(N, V)$ and this defines an embedding of $\mathcal P_b(M, V)$ to $\mathcal{CH}(N, V)$, so we will consider $\mathcal P_b(M, V)$ as a subset of $\mathcal{CH}(N, V)$ and endow it with the induced topology. 


\begin{lm}
$\mathcal P_b(M, V)$ is open in $\mathcal{CH}(N, V)$.
\end{lm}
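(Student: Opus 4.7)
The plan is to exhibit, for each $g \in \mc P_b(M, V)$, a neighborhood of its image $(\ol g, f) \in \mc{CH}(N, V)$ in which a bent realization can be built canonically as the closed convex hull of the lifted marked points in the universal cover. Fix a lift to $\mc{CH}_h(N, V)$ with representative $(\rho, \tilde f) \in \mc R(\pi_1(M), G) \times (\H^3)^{\tilde V}$ where $\tilde f$ is the $\rho$-equivariant lift of $f$; nearby classes in $\mc{CH}(N, V)$ correspond to nearby $(\rho', \tilde f')$, with $\rho'$ still a convex cocompact holonomy by openness of that locus.

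In the case $V = \emptyset$, one has $\mc{CH}(N, \emptyset) = \mc{CH}(N)$, and every $g \in \mc P_b(M, \emptyset)$ satisfies $\tilde M(g) = \tilde C(\ol g)$. Indeed, the extremal points of $\tilde M(g)$ in $\H^3 \cup \pt_\infty \H^3$ are $\tilde V(g) \cup \Lambda(\ol g) = \Lambda(\ol g)$, using Lemma~\ref{halfinf} to rule out mixed edges, so $\tilde M(g)$ is the closed convex hull of $\Lambda(\ol g)$, which is $\tilde C(\ol g)$. This forces $\ol g$ to be non-Fuchsian (otherwise $\tilde C(\ol g)$ is two-dimensional and cannot be homeomorphic to $M$), and conversely any non-Fuchsian $\ol g$ yields such a bent metric via its convex core. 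The Fuchsian locus is closed in $\mc{CH}(N)$ because Theorem~\ref{hausd} implies that a Hausdorff limit of circle-contained limit sets of non-elementary Kleinian groups is itself circle-contained. Hence $\mc P_b(M, \emptyset)$ is open.

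For $V \neq \emptyset$, Lemma~\ref{bas1} gives $\tilde M(g) = \clconv(\tilde f(\tilde V))$ in $\H^3$, with all marked points at positive distance from $\tilde C(\ol g)$ and, in the Fuchsian case, $\tilde f(\tilde V)$ not contained in the totally geodesic plane carrying $\tilde C(\ol g)$. For nearby $(\rho', \tilde f')$ set $\tilde M(g') := \clconv(\tilde f'(\tilde V))$; this is $\rho'$-equivariant and descends to $M(g') \subset N(\ol{g'})$. By the second part of Lemma~\ref{bas1}, $M(g')$ carries a bent metric homeomorphic to $M$ as soon as three open conditions hold: (i) non-planarity of $\tilde f'(\tilde V)$ in the potentially Fuchsian case; (ii) positivity of $\dist(\tilde f'(v), \tilde C(\rho'))$ for each $v \in V$; (iii) extremeness of each $\tilde f'(v)$ in $\clconv(\tilde f'(\tilde V) \cup \Lambda(\rho'))$, ensuring that $\tilde f'(v)$ remains a vertex of $g'$ rather than being absorbed into a face or an edge.

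All three conditions hold at $(\rho, \tilde f)$ and persist in a sufficiently small neighborhood. The first is closed; conditions (ii) and (iii) follow, via Theorem~\ref{hausd}, from Hausdorff-continuity of $\Lambda(\rho')$ and hence of $\tilde C(\rho')$ on compact subsets of the Klein model. The main technical subtlety is that $\tilde V$ is infinite, but the $\rho$-equivariance reduces every check to the finitely many orbit representatives corresponding to $V$, and equivariance propagates the conclusions to all of $\tilde V$. The resulting $g'$ lies in $\mc P_b(M, V)$ and its image in $\mc{CH}(N, V)$ is, by construction, the class $(\ol{g'}, f')$, proving that $\mc P_b(M, V)$ is open.
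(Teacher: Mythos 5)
Your overall strategy is the same as the paper's: characterize membership in $\mathcal P_b(M,V)$ of a pair $(\rho',\tilde f')$ by the condition that the marked orbit points are vertices of $\clconv(\tilde V'\cup\Lambda(\rho'))$, and prove this condition is open using the Hausdorff continuity of limit sets (Theorem~\ref{hausd}). Your separate treatment of $V=\emptyset$ (identifying $\mathcal P_b(M,\emptyset)$ with the non-Fuchsian locus and showing the Fuchsian locus is closed) and your conditions (i)--(ii) are correct, although (ii) is redundant once (iii) holds, by Lemma~\ref{bas1}.

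However, there is a genuine gap at the crucial step, the persistence of (iii). You locate the difficulty in the infinitude of $\tilde V$ and claim it is resolved by equivariance, but equivariance only tells you \emph{which} marked points need to be checked (one representative per element of $V$); it does nothing to control the set against which extremeness of that representative is tested. That set is $\clconv\bigl((\tilde V'\cup\Lambda(\rho'))\setminus\{\tilde f'(v)\}\bigr)$, and it contains the orbit points $\rho'(\gamma)\tilde f'(w)$ for \emph{all} $\gamma$, including arbitrarily large ones; Hausdorff continuity of $\Lambda(\rho')$ and of $\tilde C(\rho')$ says nothing about these by itself, and continuity of finitely many orbit points does not cover them either. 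What is needed (and what the paper's proof supplies) is a Carath\'eodory-plus-compactness argument: if openness failed, the offending marked point would lie in the convex span of a $4$-tuple drawn from the perturbed orbit and limit set; passing to subsequences in the closed ball, each such point either comes from a bounded set of group elements (and then converges to a point of $\tilde V$ by continuity) or escapes to infinity, in which case it accumulates on the limit set, which converges to $\Lambda(\rho)$ by Theorem~\ref{hausd}; in the limit the original vertex $\tilde f(v)$ would lie in the convex span of four points of $(\tilde V\cup\Lambda(\rho))\setminus\{\tilde f(v)\}$, contradicting that it is a vertex of $\tilde M(g)$. Without some uniform control of the far-out orbit points of the \emph{perturbed} structures (equivalently, without this compactness step), your assertion that (iii) ``follows via Theorem~\ref{hausd}'' is not justified, so the proof as written is incomplete precisely at the point that carries the content of the lemma.
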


\begin{proof}
A pair $(\rho, \tilde f) \in \mathcal R(\pi_1(M), G) \times (\H^3)^{V}$ projects to an element of $\mathcal P_b(M, V)$ if $\rho$ is a holonomy of a convex cocompact hyperbolic metric on $N$ and if $\Lambda(\rho)$ and $\tilde V$, where $\tilde V$ is the $\rho$-orbit of $\tilde f(V)$, are in a convex position. Take $g \in \mathcal P_b(M, V)$ and lift it to $(\rho, \tilde f) \in \mathcal R(\pi_1(M), G) \times (\H^3)^{V}$. Lemma~\ref{hausd} says that when convex cocompact representations converge in $\mathcal R(\pi_1(M), G)$, their limit sets converge in the Hausdorff sense. Hence, if $g$ does not have a neighborhood in $\mathcal{CH}(N, V)$ belonging to $\mathcal P_b(M, V)$, then for some $\tilde v \in \tilde V$ there exists a sequence of 4-tuples $(p^1_i, p^2_i, p^3_i, p^4_i)$, where $p^j_i$ is either in $\tilde V$ or in $\Lambda(\rho)$, such that the distance from $\tilde v$ to the convex span of the $i$-th 4-tuple tends to zero. Up to passing to subsequences, we can assume that all four sequences converge to points $p^1, p^2, p^3, p^4 \in \Lambda(\rho)$. Then $\tilde v$ belongs to the convex span of $p^1, p^2, p^3, p^4$, which is a contradiction with that $(\rho, \tilde f)$ projects to $\mathcal P_b(M, V)$.
\end{proof}

By $\ol{\mathcal P}_b(M, V)$ we denote the space of bent hyperbolic metrics $g$ on $M$ with $V(g)\subseteq V$ considered up to isometries preserving $V$ pointwise and isotopic to identity. Similarly to $\mathcal P_b(M, V)$, the set $\ol{\mathcal P}_b(M, V)$ embeds in $\mc{CH}(N, V)$ and is endowed with the induced topology. It is easy to see that $\ol{\mc P}_b(M, V)$ is exactly the closure of $\mc P_b(M, V)$ in $\mc{CH}(N, V)$.
From the description of the topology on $\overline{\mathcal P}_b(M, V)$ it follows that if a sequence $\{g_i\}$ converges to $g$, then one can choose the embeddings of $\tilde M(g_i)$ and $\tilde M(g)$ to $\H^3$ such that the respective holonomy maps $\{\rho_{\overline g_i}\}$ converge to $\rho_{\overline g}$. 

\begin{lm}
\label{hausdd}
Let a sequence $\{g_i\}$ converge to $g$ in $\overline{\mathcal P}_b(M, V)$. Choose the embeddings of $\tilde M(g_i)$ and $\tilde M(g)$ to $\H^3$ as above. Then $\{\tilde M(g_i)\}$ converges to $\tilde M(g)$ in the Hausdorff sense.
\end{lm}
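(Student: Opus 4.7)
The plan is to work in the Klein model of $\H^3$, in which the closures $\overline{\tilde M(g_i)}:=\tilde M(g_i)\cup\Lambda(\ol g_i)$ and $\overline{\tilde M(g)}:=\tilde M(g)\cup\Lambda(\ol g)$ become compact convex subsets of the closed ball $\overline{\H^3}\subset\R^3$, and the hyperbolic convex hull coincides with the Euclidean one. Let $\tilde V$ denote the orbit of the lifted marked set $V$ under the chosen holonomy for $g$, and analogously $\tilde V_i$ for each $g_i$; by the proof of Lemma~\ref{bas1}, together with the observation that adding boundary points of $\tilde M(g)$ does not enlarge the hull, one has $\overline{\tilde M(g)}=\conv_{\R^3}(\tilde V\cup\Lambda(\ol g))$ and likewise for each $g_i$. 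Set $X_i:=\tilde V_i\cup\Lambda(\ol g_i)$ and $X:=\tilde V\cup\Lambda(\ol g)$; both are closed in $\overline{\H^3}$, since $\Lambda$ absorbs every $\pt_\infty$-accumulation of the orbit. It suffices to show that $X_i\to X$ in Euclidean Hausdorff distance in $\overline{\H^3}$: the classical continuity of the convex-hull operation on compact subsets of $\R^3$ under Hausdorff convergence then yields $\overline{\tilde M(g_i)}\to\overline{\tilde M(g)}$, and intersecting with any compact subset of $\H^3$ gives the claim.

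The easy direction of $X_i\to X$ is direct: every $\rho_{\ol g}(\gamma)\tilde f(v)\in\tilde V$ is the Euclidean limit of $\rho_{\ol g_i}(\gamma)\tilde f_i(v)\in\tilde V_i$ by algebraic convergence of holonomies and markings, while every point of $\Lambda(\ol g)$ is approximated by Theorem~\ref{hausd}, possibly via a diagonal argument through orbit points. The reverse, and main obstacle, direction is to show that every accumulation point $p$ of a sequence $p_i\in X_i$ lies in $X$. If infinitely many $p_i$ lie in $\Lambda(\ol g_i)$, then $p\in\Lambda(\ol g)$ directly from Theorem~\ref{hausd}. Otherwise, passing to a subsequence, $p_i=\rho_{\ol g_i}(\gamma_i)\tilde f_i(v)$ for some fixed $v\in V$; if $\{\gamma_i\}$ admits a constant subsequence, algebraic convergence gives $p\in\tilde V$. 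The delicate case is when the $\gamma_i$ are pairwise distinct, where one has to show $p\in\Lambda(\ol g)$.

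To treat that case I would first apply Theorem~\ref{hausd} and Euclidean convex-hull continuity to the limit sets alone (the simpler vertex-free instance of the same type of statement), obtaining $\tilde C(\ol g_i)\to\tilde C(\ol g)$ in Hausdorff distance in $\overline{\H^3}$. By $\rho_{\ol g_i}$-equivariance of the nearest-point projection onto the convex core, $d_{\H^3}(p_i,\tilde C(\ol g_i))=d_{\H^3}(\tilde f_i(v),\tilde C(\ol g_i))$, which converges to the finite quantity $d_{\H^3}(\tilde f(v),\tilde C(\ol g))$ and is therefore uniformly bounded in $i$. If $p\in\H^3$, then $p_i$ stays in a compact subset of $\H^3$, so $\rho_{\ol g_i}(\gamma_i)$ has bounded displacement at $\tilde f(v)$ and, up to a subsequence, converges in $G$; combining with $\rho_{\ol g_i}\to\rho_{\ol g}$ and uniform discreteness of convex cocompact Kleinian groups along convergent sequences (via the Margulis lemma) forces the $\gamma_i$ to be eventually constant, contradicting distinctness. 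Hence $p\in\pt_\infty\H^3$, and choosing $q_i\in\tilde C(\ol g_i)$ closest to $p_i$ one has $d_{\H^3}(q_i,p_i)$ uniformly bounded with $p_i\to p$, so $q_i\to p$ also Euclideanly, because hyperbolic balls of a fixed radius shrink Euclideanly near $\pt_\infty\H^3$ in the Klein model. The Hausdorff convergence $\tilde C(\ol g_i)\to\tilde C(\ol g)$ then places $p$ in $\tilde C(\ol g)\cap\pt_\infty\H^3=\Lambda(\ol g)$, completing the argument.
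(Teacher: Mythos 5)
Your approach is the same as the paper's in outline: pass to the Klein model, write $\tilde M(g_i)\cup\Lambda(\ol g_i)=\conv(\tilde V_i\cup\Lambda(\ol g_i))$ and likewise for the limit, reduce to Hausdorff convergence of $\tilde V_i\cup\Lambda(\ol g_i)\to\tilde V\cup\Lambda(\ol g)$ in the closed ball, feed this into continuity of the Euclidean convex hull, and finally intersect with the open ball. The paper handles the Hausdorff convergence of $\tilde V_i\cup\Lambda(\ol g_i)$ in one sentence, asserting that the orbit sets converge ``from the description of the topology''; you correctly identify that the only genuinely delicate point is to show that a sequence of orbit points $p_i=\rho_i(\gamma_i)\tilde f_i(v)$ with pairwise distinct $\gamma_i$ cannot accumulate anywhere outside $\tilde V\cup\Lambda(\ol g)$, and you isolate and attack that sub-case explicitly, which is a worthwhile elaboration. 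Your use of the equivariance $d_{\H^3}(p_i,\tilde C(\ol g_i))=d_{\H^3}(\tilde f_i(v),\tilde C(\ol g_i))$, together with the prior Hausdorff convergence of the convex cores, is a clean way to control the escape to infinity.

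There is, however, a real soft spot in the sub-case $p\in\H^3$. You conclude from bounded displacement that $\rho_i(\gamma_i)\to g_\infty$ along a subsequence, and then invoke ``uniform discreteness of convex cocompact Kleinian groups along convergent sequences (via the Margulis lemma)'' to force the $\gamma_i$ to be eventually constant. The Margulis lemma by itself does not give this: algebraic convergence $\rho_i\to\rho$ does not a priori prevent elements $\rho_i(\gamma_i)$ with $\gamma_i$ distinct from converging in $G$. What makes the argument work is the theorem of J\o rgensen--Marden that, for a sequence of discrete faithful representations converging algebraically to a geometrically finite representation with no accidental parabolics (here: convex cocompact), the convergence is automatically geometric (strong), so the geometric limit of $\rho_i(\pi_1(M))$ is $\rho(\pi_1(M))$. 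That is the fact you should cite here. Once you have it, the sub-case is most efficiently closed not by seeking a contradiction but by observing directly that $g_\infty\in\rho(\pi_1(M))$ and hence $p=g_\infty\tilde f(v)\in\tilde V\subset X$, which is all you need; insisting that the $\gamma_i$ be eventually constant is both harder and unnecessary. With this repair (and the same strong-convergence input is anyway what lies behind Theorem~\ref{hausd}), your proof is correct and somewhat more complete than the paper's.
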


\begin{proof}
Let $\tilde V_i$ and $\tilde V$ be the lifts of $V$ in $\tilde M(g_i)$ and $\tilde M(g)$ to $\H^3$. From the description of the topology on $\overline{\mathcal P}_b(M, V)$ the discrete sets $\tilde V(g_i)$ converge to $\tilde V$ in the Hausdorff sense. Consider the Klein model of $\H^3$. Recall that the Hausdorff convergence of closed subsets of a metric space depends only on the topology of the space. By Lemma~\ref{hausd}, $\Lambda(\overline g_i)$ converge to $\Lambda(\overline g)$ in the Hausdorff sense. As $\tilde M(g_i)\cup\Lambda(\overline g_i)=\conv(\Lambda(\overline g_i)\cup \tilde V_i)$ and $\tilde M(g)\cup\Lambda(\overline g)=\conv(\Lambda(\overline g)\cup \tilde V)$, we get that the closed subsets $\tilde M(g_i)\cup\Lambda(\overline g_i)$ of the unit ball converge in the Hausdorff sense to $\tilde M(g)\cup\Lambda(\overline g)$. It is easy to check that then the same do their intersections with the interior of the unit ball, i.e., $\tilde M(g_i)$ converge in the Hausdorff sense to $\tilde M(g)$.
\end{proof}

It is easy to conclude from this

\begin{crl}
\label{hausdb}
Let a sequence $\{g_i\}$ converge to $g$ in $\overline{\mathcal P}_b(M, V)$. Choose the embeddings of $\tilde M(g_i)$ and $\tilde M(g)$ to $\H^3$ as above. Then $\{\partial \tilde M(g_i)\}$ converges to $\partial \tilde M(g)$ in the Hausdorff sense.
\end{crl}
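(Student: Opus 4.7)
The plan is to work in the Klein model of $\H^3$, where convex subsets of $\H^3$ correspond to Euclidean-convex subsets of the open unit ball. Under this identification, Lemma~\ref{hausdd} gives Euclidean Hausdorff convergence of $\tilde M(g_i)$ to $\tilde M(g)$ on any compact subset of $\H^3$, and we need to upgrade this to Hausdorff convergence of the boundaries. Note that $\tilde M(g_i)$ and $\tilde M(g)$ are closed Euclidean-convex sets with non-empty interior, since they are lifts of the 3-dimensional sets $M(g_i)$ and $M(g)$.

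The central step I would establish is the following interior stability property: for every $p \in \inter(\tilde M(g))$ there exist $r > 0$ and $i_0$ so that $\ol{B}(p, r) \subset \tilde M(g_i)$ for all $i \geq i_0$. To prove it, choose $r$ with $\ol{B}(p, 4r) \subset \inter(\tilde M(g))$ and let $K$ be a compact Euclidean neighborhood of it. Hausdorff convergence on $K$ supplies $\e_i \to 0$ such that every point of $\ol{B}(p, 2r)$ lies within Euclidean distance $\e_i$ of $\tilde M(g_i)$. Replace each point of a fine $\e_i$-net on the sphere $S(p, 2r)$ by a nearby point of $\tilde M(g_i)$; the Euclidean convex hull of the replacement points lies in $\tilde M(g_i)$ and, once $\e_i$ is small enough, contains $\ol{B}(p, r)$.

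Given this step, the two halves of boundary convergence are routine. If $p_i \in \partial \tilde M(g_i)$ converges to $p$, then $p \in \tilde M(g)$; if $p$ were interior, the central step would place a ball around $p$ inside $\tilde M(g_i)$ for large $i$, contradicting $p_i \in \partial \tilde M(g_i)$, so $p \in \partial \tilde M(g)$. Conversely, for $p \in \partial \tilde M(g)$ pick $q_j \to p$ with $q_j \notin \tilde M(g)$ (possible since $p$ lies on the boundary of a set with non-empty interior); for $i$ large Hausdorff convergence forces $q_j \notin \tilde M(g_i)$, because $d(q_j, \tilde M(g)) > 0$ while $\tilde M(g_i)$ is eventually contained in an arbitrarily small neighborhood of $\tilde M(g)$. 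Fixing an interior point $o$ of $\tilde M(g)$, the central step also puts $o \in \inter(\tilde M(g_i))$ for large $i$, so the Euclidean segment from $o$ to $q_j$ (a hyperbolic geodesic in the Klein model) crosses $\partial \tilde M(g_i)$ at a point $p_{i,j}$; along a subsequence $p_{i,j}$ converges as $i \to \infty$ to the point where this segment exits $\tilde M(g)$, which in turn tends to $p$ as $j \to \infty$. A diagonal extraction then yields the desired sequence of boundary points converging to $p$. The main technical obstacle is the central interior-stability step; the rest is routine convex-geometric bookkeeping.
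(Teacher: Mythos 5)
The paper offers no proof of Corollary~\ref{hausdb}, stating only that it is ``easy to conclude'' from Lemma~\ref{hausdd}, so there is no argument to compare yours against directly. Your proof is correct and fills in the details in the natural way: the key point that Hausdorff convergence of closed convex bodies (with nonempty interior) implies Hausdorff convergence of their boundaries is a standard fact of convex geometry, and your proof of it via the interior-stability step is sound. A few observations. The passage to the Klein model is exactly the right move, and it is what the paper itself does in the proof of Lemma~\ref{hausdd}; hyperbolic convexity of $\tilde M(g_i)$ becomes Euclidean convexity there, which your $\e$-net argument uses. In the converse direction, after establishing $o\in\inter(\tilde M(g_i))$ for large $i$ and $q_j\notin\tilde M(g_i)$ for large $i$, the exit parameter $t_{i,j}$ of the segment $oq_j$ from $\tilde M(g_i)$ actually converges to the exit parameter from $\tilde M(g)$ for the full sequence, not just along a subsequence (sandwich both tails using interior stability on one side and the exterior observation on the other), so the appeal to a subsequence and the diagonal extraction can be streamlined, though as written the argument is not wrong. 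Finally, your two directions establish precisely the two inclusions $\limsup_i\partial\tilde M(g_i)\subseteq\partial\tilde M(g)$ and $\partial\tilde M(g)\subseteq\liminf_i\partial\tilde M(g_i)$, i.e.\ Kuratowski convergence of the boundaries, which for closed subsets of a locally compact space is the relevant notion of Hausdorff convergence the paper is using; passing from the pointwise statements to uniform closeness on a fixed compactum needs a one-line compactness/finite-net argument, which is worth including.
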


We will make use from the following standard fact

\begin{lm}[\cite{Ale2}, Theorem 4]
\label{distconv}
Let $S_i \subset \H^3$ be a sequence of complete convex surfaces converging to a complete convex surface $S \subset \H^3$ in the Hausdorff sense, and $p_i, q_i \in S_i$ be two sequences of points converging to $p, q \in S$. Then the intrinsic distances between $p_i$ and $q_i$ on $S_i$ converge to the intrinsic distance between $p$ and $q$ on $S$.
\end{lm}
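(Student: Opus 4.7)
The plan is to prove matching inequalities $\limsup d_{S_i}(p_i,q_i) \le d_S(p,q)$ and $\liminf d_{S_i}(p_i,q_i) \ge d_S(p,q)$. Let $C_i$ and $C$ denote the closed convex bodies bounded by $S_i$ and $S$ in $\H^3$. Hausdorff convergence of the boundaries together with convexity yields Hausdorff convergence of $C_i$ to $C$.

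For the upper bound, I would fix $\e>0$ and pick a rectifiable curve $\tau\subset S$ from $p$ to $q$ of length at most $d_S(p,q)+\e$. Let $\pi_i : \H^3 \to C_i$ denote the nearest-point projection, which is well-defined and 1-Lipschitz. For $i$ large, $\tau$ lies in a small neighborhood of $S$ and hence outside $\inter(C_i)$ (or within a controlled tubular region that one excises), so Lemma~\ref{bflemma} applies and shows that $\pi_i(\tau) \subset S_i$ is rectifiable with length $\le \len(\tau)$. Since $p_i \to p$ and $p\in S$, we have $\pi_i(p) \to p$ and $|\pi_i(p)-p_i| \to 0$; similarly at $q$. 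Hence
\[
d_{S_i}(p_i, q_i) \le d_{S_i}(p_i, \pi_i(p)) + \len(\pi_i(\tau)) + d_{S_i}(\pi_i(q), q_i),
\]
and the two outer terms tend to $0$ (they can be bounded by distances inside small geodesic disks of $S_i$, which converge to flat disks on $S$). Letting $i\to\infty$ and then $\e\to 0$ yields the upper bound. A minor subtlety is that the projection may degenerate if $S$ is not strictly convex or if $\inter(C_i)$ is empty in the limit; this is handled by observing that $S$ and $S_i$ are complete convex surfaces in $\H^3$, so one can first slightly inflate $C_i$ to ensure non-empty interior and then pass to the limit.

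For the lower bound, I would pick shortest paths $\gamma_i \subset S_i$ from $p_i$ to $q_i$, which exist by completeness of $S_i$. Their lengths $L_i := d_{S_i}(p_i,q_i)$ are bounded above (thanks to the upper bound just proved applied to any $\limsup$-realizing subsequence, or by a direct extrinsic estimate). Parametrize each $\gamma_i$ by arc length on $[0, L_i]$; the curves are $1$-Lipschitz into $\H^3$ and lie in a bounded region. By Arzel\`a--Ascoli, a subsequence converges uniformly to a $1$-Lipschitz curve $\gamma: [0, L] \to \H^3$ with $L = \liminf L_i$, joining $p$ and $q$. By Hausdorff convergence of $S_i$ to $S$, the image of $\gamma$ lies in $S$. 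Hence $d_S(p,q) \le \len(\gamma) \le L = \liminf d_{S_i}(p_i,q_i)$, which is the lower bound.

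The main obstacle I anticipate is the upper-bound step when the approximating geodesic $\tau$ on $S$ runs along a flat region where $S$ and $S_i$ nearly coincide: one must verify that the nearest-point projection is well-defined and Lipschitz even when $\tau$ dips slightly into $C_i$. This is handled by the standard trick of replacing $C_i$ by $C_i^{-\delta_i}$ (inward $\delta_i$-parallel sets) for a sequence $\delta_i \to 0$ chosen so that $\tau$ stays outside $\inter(C_i^{-\delta_i})$, applying Lemma~\ref{bflemma} there, and then passing to the limit $\delta_i \to 0$; since Hausdorff convergence is preserved under such small perturbations, all conclusions persist.
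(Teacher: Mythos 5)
The paper does not prove this lemma at all --- it cites it as \cite{Ale2}, Theorem 4 (Alexandrov). So there is no in-paper argument to compare against; I can only assess your proposal on its own.

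The $\liminf$ direction is fine: Arzel\`a--Ascoli applied to arc-length parametrized shortest paths, uniform Lipschitz bound, Hausdorff convergence forcing the limit curve into $S$, and lower semicontinuity of length --- this is the standard argument and it goes through (you do need an a priori bound on $d_{S_i}(p_i,q_i)$ independent of the $\limsup$ direction, e.g.\ take the section of $S_i$ by a plane through $p_i$, $q_i$ and a common interior point and bound its length extrinsically, but this is routine).

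The $\limsup$ direction has a genuine gap. Your initial assertion ``For $i$ large, $\tau$ lies in a small neighborhood of $S$ and hence outside $\inter(C_i)$'' is simply false: Hausdorff convergence $\pt C_i\to\pt C$ gives no one-sided inclusion, and wherever $C_i$ bulges out past $C$, your curve $\tau\subset\pt C$ lies in $\inter(C_i)$, so the Busemann--Feller lemma cannot be applied to $\tau$ and $C_i$ directly. You acknowledge this and propose projecting $\tau$ onto the shrunken body $C_i^{-\delta_i}$; but this yields a curve lying on $\pt C_i^{-\delta_i}$, not on $S_i=\pt C_i$, and hence bounds the intrinsic distance on the wrong surface. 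There is no cheap way to transfer that bound to $S_i$: the outward ``inflation'' map $\pt C_i^{-\delta_i}\to\pt C_i$ is expanding with a Lipschitz constant governed by the (unbounded) curvature of $S_i$, and ``passing to the limit $\delta_i\to0$'' to compare the intrinsic metric of $\pt C_i^{-\delta_i}$ with that of $S_i$ is precisely an instance of the lemma you are trying to prove, hence circular. A correct proof of the $\limsup$ half requires a genuinely different mechanism --- e.g.\ Alexandrov's local graph representation (convexity gives locally uniform a.e.\ gradient convergence, hence convergence of lengths of curves in coordinates), or interior/exterior approximation by polyhedra with explicit combinatorial control --- not merely a one-sided Busemann--Feller projection.
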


For every $g \in \ol{\mathcal P}_b(M,V)$ the induced intrinsic metric on $\partial M(g)$ is a convex hyperbolic cone-metric $d$ with $V(d)\subseteq V$ defined up to isometries preserving $V$ pointwise and isotopic to identity. Hence, this defines the realization map
$$\mathcal I_V: \overline {\mathcal P}_b(M, V) \rightarrow \overline {\mathcal D}_c(\partial M,V).$$
(note that in Section~\ref{ressec} in order to simplify the exposition we reduced the domain and the codomain of $\mc I_V$). We can deduce

\begin{lm}
\label{contin}
The map ${\mathcal I}_V$ is continuous.
\end{lm}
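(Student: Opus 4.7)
The plan is to deduce strong Lipschitz convergence of the induced boundary metrics directly from Corollary~\ref{hausdb} and Lemma~\ref{distconv}. Let $g_i \ra g$ in $\ol{\mc P}_b(M, V)$, write $d_i := \mc I_V(g_i)$ and $d := \mc I_V(g)$. To check continuity at $g$, it suffices to exhibit a sequence of homeomorphisms $\phi_i \in H_0(\pt M, V)$ such that the distortion of $\phi_i : (\pt M, d) \ra (\pt M, d_i)$ tends to zero, since this is exactly the condition for $[d_i] \ra [d]$ in $\ol{\mc D}_c(\pt M, V)$.

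The first step is to lift everything coherently to $\H^3$. By definition of the topology on $\mc{CH}(N, V)$, one can pick representatives, developing maps, and the marked points in $\H^3$ so that the holonomies $\rho_i \ra \rho$ in $\mc R(\pi_1(M), G)$ and the lifted markings $\tilde f_i(v) \ra \tilde f(v)$ for each $v \in V$. With such choices, Corollary~\ref{hausdb} gives $\pt \tilde M(g_i) \ra \pt \tilde M(g)$ in the Hausdorff sense inside the Klein ball, and $\tilde V(g_i) \ra \tilde V(g)$ in the Hausdorff sense on compact subsets.

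The second step is to construct, equivariantly, a map $\tilde \phi_i : \pt \tilde M(g) \ra \pt \tilde M(g_i)$ which intertwines $\rho$ and $\rho_i$, sends $\tilde V(g)$ to $\tilde V(g_i)$ matching the markings, and whose extrinsic $\H^3$-displacement tends to zero uniformly on a compact fundamental domain $D \subset \pt \tilde M(g)$ for the action of the boundary subgroup. One way to obtain it: fix a finite $\rho$-equivariant triangulation of $\pt \tilde M(g)$ whose vertex set contains the full preimage $\tilde V(g)$; using the Hausdorff convergence of $\pt \tilde M(g_i)$, map each triangulation-vertex $p$ of $D$ to a nearby point on $\pt \tilde M(g_i)$ (with $\tilde v \mapsto \tilde f_i(v)$ at lifted marked points), then extend continuously to each 2-cell by the straight-line interpolation along the support surface (possible for $i$ large because each 2-cell lies close to a support plane whose perturbation stays transverse to the surface), and finally propagate by $\rho_i$-equivariance. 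For $i$ large, this is a homeomorphism onto $\pt \tilde M(g_i)$ fixing the vertex marking.

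The third and final step is to convert extrinsic displacement into intrinsic Lipschitz distortion. By Lemma~\ref{distconv}, for any two points $p, q$ in a compact subset of $\pt \tilde M(g)$ the intrinsic distances $d_{\pt \tilde M(g_i)}(\tilde \phi_i(p), \tilde \phi_i(q))$ converge to $d_{\pt \tilde M(g)}(p, q)$; uniform continuity of $d$ on $D \times D$, compactness of $D$, and equivariance promote this to a uniform statement, so the distortion of $\tilde \phi_i$ tends to zero on all of $\pt \tilde M(g)$. Descending $\tilde \phi_i$ to $\phi_i : (\pt M, d) \ra (\pt M, d_i)$ gives a homeomorphism with distortion going to zero that fixes $V$ pointwise. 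Since $\phi_i$ is $C^0$-close to the identity and a surface homeomorphism $C^0$-close to identity that fixes a finite point set is isotopic to identity rel that set, $\phi_i \in H_0(\pt M, V)$, which yields the desired convergence. The main technical obstacle is the construction in the second step: the maps $\tilde \phi_i$ cannot be restrictions of hyperbolic isometries because $\rho \neq \rho_i$, so the equivariant extension over the cells must be handled by hand while keeping vertex markings rigidly prescribed and using Hausdorff proximity to ensure the result is a homeomorphism rather than a folded or degenerate map.
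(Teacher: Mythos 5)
Your overall strategy is the same as the paper's at the top level (lift to $\H^3$ via coherent developing maps, invoke Corollary~\ref{hausdb} for Hausdorff convergence of $\partial\tilde M(g_i)$, and Lemma~\ref{distconv} for intrinsic distance comparison), but the route you take from there has a genuine gap and misses the key structural idea of the paper's proof.

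The paper does not try to build a homeomorphism with small distortion directly. Instead it exploits the fact, recorded in Section~\ref{conesec}, that the strong Lipschitz topology on $\mc D^\sharp(\pt M, V)$ is the one induced by the edge-length charts $\mc D^\sharp(\pt M, \mc T)$. It therefore suffices to find one fixed triangulation $\mc T$ of $(\pt M, W, d)$, for some finite $W\supseteq V$, such that $\mc T$ is still realized by $d_i$ for large $i$ and the edge lengths converge; distortion control then comes for free from the chart structure. To make that work it crucially chooses $W$ and $\mc T$ so that every edge is the \emph{unique} shortest arc between its endpoints (via a barycentric subdivision followed by medians), and then uses Alexandrov's compactness of shortest arcs on convergent convex surfaces together with uniqueness to identify the limit arc. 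None of this appears in your argument.

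Your third step is where the gap lives. Lemma~\ref{distconv} gives convergence of $d_i(\tilde\phi_i(p), \tilde\phi_i(q))$ to $d(p,q)$ for (sequences of) pairs of points, and compactness lets you make this uniform as a statement about the \emph{difference} of distances. But distortion is a supremum of $|\ln(d_i/d)|$, and a uniform bound $|d_i - d|\le\varepsilon_i$ does nothing to control this ratio when $d(p,q)$ is small; this is exactly the hard regime, and uniform continuity of $d$ on $D\times D$ does not touch it. To control distortion one needs a local Lipschitz statement, which is precisely what the paper obtains by working with a fixed geodesic triangulation and comparing side lengths of hyperbolic triangles. Your second step also does not produce a map with a usable local structure: a face of $\partial\tilde M(g)$ is in general bent along a lamination, so there is no single support plane along which to ``straight-line interpolate,'' and even if there were, you have not shown that the resulting map is close to a piecewise-geodesic map in the sense needed for distortion control. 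The acknowledged difficulty in your second step is therefore not merely technical housekeeping; it is the substance of the lemma, and as written it is not resolved.

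Finally, you skip the verification that the shortest arcs of $d_i$ between the relevant vertex pairs converge to the right isotopy class of arcs of $d$. Without the paper's uniqueness-of-shortest-arcs device, a subsequence of the $\tilde e_i$ could converge to some other geodesic of the same length, and your $\tilde\phi_i$ would then not determine the correct realization of $\mc T$.
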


\begin{proof}
Let $\{g_i\} \subset \overline {\mathcal P}_b(M, V)$ be a sequence converging to $g \in \overline {\mathcal P}_b(M, V)$. We note that here by $g_i$ and $g$ we mean some concrete metrics on $M$ such that their classes converge in $\overline {\mathcal P}_b(M, V)$. Choose the embeddings of $\tilde M(g_i)$ and $\tilde M(g)$ to $\H^3$ such that $\rho_{\overline g_i}$ converge to $\rho_{\overline g}$. Corollary~\ref{hausdb} implies that $\partial \tilde M(g_i)$ converges to $\partial \tilde M(g)$ in the Hausdorff sense. Let $d_i$ and $d$ be the induced intrinsic metrics on $\partial M(g_i)$ and $\partial M(g)$. 

We claim that there exists a finite set $W \subset \pt M$, $W \supseteq V$, and a geodesic triangulation $\mathcal T$ of $(\partial M, W, d)$ such that each edge is the unique shortest arc between its endpoints. Indeed, first we claim that there exists a finite set $W' \subset \pt M$, $W' \supseteq V$ and a geodesic triangulation $\mc T'$ of $(\pt M, W', d)$ such that each edge is a shortest arc between its endpoints, but possibly not unique. To this purpose it is enough to start with any geodesic triangulation of $(\pt M, V, d)$ and do the barycentric subdivision sufficiently many times. Next, it is easy to see that every proper sub-arc of a shortest arc is the unique shortest arc between the endpoints. Thus, if we divide every triangle of $\mc T'$ into six triangles by drawing all the medians, we will obtain the desired set $W$ and the triangulation $\mc T$.

By $\tilde W_i$ and $\tilde W$ we denote the full preimages of $W$ in $\pt M(g_i)$ and $\pt M(g)$ respectively. We make an isotopy of $g_i$ (and respectively of $d_i$) fixing $V$ such that for each $w \in W$ the orbit of $w$ under $\rho_{\overline g_i}$ converge to the orbit under $\rho_{\overline g}$.

We prove that after this the classes of $\{d_i\}$ converge to the class $d$ in $\overline{\mathcal D}_c(\partial M, W)$, which implies also that their classes converge in $\overline{\mathcal D}_c(\partial M, V)$ and proves the lemma. To this purpose we show that for all sufficiently large $i$ the triangulation $\mathcal T$ is realized in $d_i$ and that the lengths of the edges converge to those in $d$. Let $e$ be an edge of $\mathcal T$ of length $l_e$ in $d$, $\tilde e$ be a lift of $e$ with endpoints $\tilde v,\tilde w \in \tilde W$, let $\tilde v_i, \tilde w_i \in \tilde W_i$ be the respective points of $\tilde W_i$ converging to $\tilde v$, $\tilde w$. Lemma~\ref{distconv} shows that the distance between $\tilde v_i$ and $\tilde w_i$ converges to the distance between $\tilde v$ and $\tilde w$. Connect $\tilde v_i$ and $\tilde w_i$ by an intrinsic shortest arc $\tilde e_i$ in $\partial \tilde M(g_i)$, so the lengths of $\tilde e_i$ converge to the length of $\tilde e$. Parametrize the curves $\tilde e_i$ and $\tilde e$ by the arc length. We claim that $\tilde e_i$ converge to $\tilde e$ as parametrized curves. By~\cite[Chapter II.1, Theorems 4 and 5]{Ale3} from any subsequence of $\{\tilde e_i\}$ one can extract a further subsequence converging to a curve $\tilde e' \subset \partial \tilde M(g)$ of length at most $l_e$. Because $e$ is the unique shortest arc between its endpoints, the same is true for $\tilde e$, and we get that every arc $\tilde e'$ obtained this way is actually $\tilde e$. In particular, this argument implies that a subsequence of $\{\tilde e_i\}$ converges to $\tilde e$. If, however, $\{\tilde e_i\}$ does not converge to $\tilde e$ itself, by the Hausdorff maximal principle we can choose a maximal subsequence $\{\tilde e_{i_j}\}$ of $\{\tilde e_i\}$ converging to $\tilde e$. Then the complement to $\{\tilde e_{i_j}\}$ in $\{\tilde e_i\}$ should be infinite, but this means that we can again extract from the complement a subsequence converging to $\tilde e$, which contradicts to the maximality of $\{\tilde e_{i_j}\}$.

Let $e_i$ be the projection of $\tilde e_i$ to $\partial M$. It follows from the construction that for all sufficiently large $i$ the arc $e_i$ belongs to the same isotopy class of arcs in $\partial M\backslash W$ as~$e$. Doing the described procedure for all edges of $\mathcal T$ we obtain a collection of intrinsic geodesic arcs in $d_i$ constituting a triangulation equivalent to $\mathcal T$ for all sufficiently large~$i$. The lengths of the respective arcs converge to the lengths of the arcs of $\mathcal T$. This finishes the proof.
\end{proof}

%
%

Denote the space of classes of controllably polyhedral metrics by $\mathcal P_{cp}(M, V)$ and the space of classes of strictly polyhedral metrics by $\mathcal P_{sp}(M, V)$. We have
$$\mathcal P_{sp}(M, V) \subset \mathcal P_{cp}(M, V)\subset \mathcal P_{p}(M, V)\subset \mathcal P_{b}(M, V).$$
We also denote by $\overline{\mathcal P}_p(M,V)$, $\overline{\mathcal P}_{cp}(M,V)$, $\overline{\mathcal P}_{sp}(M,V)$ the respective subsets of metrics of $\overline{\mathcal P}_b(M,V)$.

\begin{dfn}
A hyperbolic metric $g$ on $M$ in $\mathcal P_{cp}(M,V)\backslash \mathcal P_{sp}(M, V)$ is called \emph{peculiar}.
\end{dfn}

\begin{lm}
\label{open1}
$\overline{\mathcal P}_{sp}(M, V)$ is open in $\overline{\mathcal P}_{b}(M, V)$.
\end{lm}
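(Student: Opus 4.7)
My plan is to argue by contradiction, producing a limit point that violates the positive distance inherent in the definition of strict polyhedrality. Assume there is a sequence $\{g_i\} \subset \overline{\mathcal P}_b(M,V) \setminus \overline{\mathcal P}_{sp}(M,V)$ converging to some $g \in \overline{\mathcal P}_{sp}(M,V)$. Strict polyhedrality of $g$ means that $\partial M(g)$ lies at some positive distance from $C(\overline g)$; my goal is to construct a point $p \in \partial M(g) \cap C(\overline g)$, contradicting this.

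The first step is to pick, for each $i$, a point $p_i \in \partial M(g_i) \cap C(\overline{g_i})$. Failure of strict polyhedrality for $g_i$ splits into two cases. Either $g_i$ is polyhedral but $\partial M(g_i) \cap C(\overline{g_i}) \neq \emptyset$ (by compactness of $\partial M(g_i)$ and closedness of $C(\overline{g_i})$, vanishing distance forces a nonempty intersection), and any point of the intersection works; or $g_i$ is not polyhedral, and then the classification of edges of bent metrics culminating in Lemma~\ref{halfinf} supplies an edge of $g_i$ with both endpoints in $\Lambda(\overline{g_i})$, which lies inside $\partial C(\overline{g_i}) \cap \partial M(g_i)$, so any point of such an edge serves as $p_i$.

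Next I pass to a subsequence so that $p_i \to p$ in the compact topological surface $\partial M$, and, using the description of the topology on $\overline{\mathcal P}_b(M,V)$ through $\mathcal{CH}(N,V)$, choose developing maps $\mathrm{dev}_i \to \mathrm{dev}$ realizing the convergence $g_i \to g$, with holonomies $\rho_{\overline{g_i}} \to \rho_{\overline g}$. Picking lifts $\hat p_i \to \hat p$ in $\tilde M$ and setting $\tilde p_i := \mathrm{dev}_i(\hat p_i)$, $\tilde p := \mathrm{dev}(\hat p)$, I obtain $\tilde p_i \to \tilde p$ in $\H^3$. By Corollary~\ref{hausdb} the boundaries $\partial \tilde M(g_i)$ converge to $\partial \tilde M(g)$ in the Hausdorff sense, so $\tilde p \in \partial \tilde M(g)$. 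Lemma~\ref{hausd} gives Hausdorff convergence $\Lambda(\overline{g_i}) \to \Lambda(\overline g)$ in $\partial_\infty \H^3$, and continuity of the convex-hull operation in the closed ball model upgrades this to Hausdorff convergence $\tilde C(\overline{g_i}) \to \tilde C(\overline g)$ on compact subsets of $\H^3$, so $\tilde p \in \tilde C(\overline g)$. Projecting back to $M$ yields $p \in \partial M(g) \cap C(\overline g)$, the desired contradiction.

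The main technical obstacle I anticipate is the equivariant bookkeeping required to ensure that the lifts $\tilde p_i$ stay in a compact region of $\H^3$ and actually converge to a finite point in $\H^3$, rather than drifting off towards $\partial_\infty \H^3$; this is the same equivariant continuity already underlying the proof of Lemma~\ref{hausdd}. Passing from the Hausdorff limit of limit sets on $\partial_\infty \H^3$ to a Hausdorff limit of the convex cores inside $\H^3$ is routine once one observes that in the closed ball model $\tilde C(\overline g) \cup \Lambda(\overline g)$ coincides with the convex hull of $\Lambda(\overline g)$, and that Hausdorff convergence of compact sets is preserved under taking convex hulls in a fixed ambient convex body.
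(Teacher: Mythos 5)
Your proof is correct, and it runs the Hausdorff-convergence argument in the direction that actually establishes openness. By contradiction you take non-strictly-polyhedral $g_i$ converging to a strictly polyhedral $g$, produce a point $p_i \in \pt M(g_i) \cap C(\ol{g_i})$, lift and pass to a Hausdorff limit via Corollary~\ref{hausdb} and Lemma~\ref{hausd} (plus continuity of convex hulls in the Klein ball), and land on a point of $\pt M(g) \cap C(\ol g)$, contradicting the positive distance in the definition of strict polyhedrality. This is also exactly the argument structure the paper uses for the parallel Lemma~\ref{open2}. The paper's own proof of Lemma~\ref{open1}, as written, goes the other way: it takes $g$ with $d_{\H^3}(\pt\tilde M(g), \pt\tilde C(\ol g))=0$, fixes a common point $p$, and concludes $d_{\H^3}(\pt\tilde M(g_i), \pt\tilde C(\ol{g_i}))\to 0$ for any $g_i\to g$. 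That is upper semi-continuity of the distance function at its zero locus, which does not by itself give closedness of the zero locus; what is needed, and what your argument supplies, is that vanishing distance for all $g_i\to g$ forces vanishing distance for $g$. So while you use the same ingredients, your write-up corrects what looks like a reversal of quantifiers in the paper's proof. One small streamlining: the case split between polyhedral and non-polyhedral $g_i$ is avoidable, since a bent metric at positive distance from the convex core has no edges with both ideal ends in $\Lambda(\ol g)$ and is therefore automatically polyhedral; hence failure of strict polyhedrality for a bent $g_i$ is already equivalent to $\pt M(g_i)\cap C(\ol{g_i}) \neq\emptyset$, and the appeal to Lemma~\ref{halfinf} in the non-polyhedral case can be replaced by this observation together with compactness of $\pt M(g_i)$ and $C(\ol{g_i})$.
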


\begin{proof}
One needs to check that the function $g(\partial M(g), \partial C(\overline g))=d_{\H^3}(\partial \tilde M(g), \partial \tilde C(\overline g))$ is continuous at zero over $\overline{\mathcal P}_b(M, V)$. 
Consider $g \in \mathcal \overline{\mathcal P}_b(M, V)$ such that $$g(\partial M(g), \partial C(\overline g))=0.$$ Hence, there is a common point $p \in \partial \tilde M(g)\cap \partial \tilde C(\overline g)$. Let $\{g_i\}$ be a sequence converging to $g$ in $\overline{\mathcal P}_b(M, V)$. Due to Corollary~\ref{hausdb}, we can choose the embeddings of $\tilde M(g_i)$ and $\tilde M(g)$ to $\H^3$ such that $\{\partial \tilde M(g_i)\}$ converges to $\partial \tilde M(g)$ in the Hausdorff sense. Corollary~\ref{hausdb} also implies $\partial \tilde C(\overline g_i)$ converges to $\partial \tilde C(\overline g)$ in the Hausdorff sense. Hence, there exists a sequence of points $\{p_i \in \partial \tilde M(g_i)\}$ and $\{p'_i \in \partial \tilde C(\overline g_i)\}$ converging to $p$. Thus, $$g_i(\partial M(g_i), \partial C(\overline g_i)) \rightarrow 0.$$
\end{proof}

\begin{lm}
\label{open2}
$\overline{\mathcal P}_{cp}(M, V)$ is open in $\overline{\mathcal P}_{b}(M, V)$.
\end{lm}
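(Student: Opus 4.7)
My plan is to reduce to the case where $g$ is peculiar, i.e., $g \in \overline{\mc P}_{cp}(M, V) \setminus \overline{\mc P}_{sp}(M, V)$, handling the strictly polyhedral case directly via Lemma~\ref{open1}. In the peculiar case, Corollary~\ref{bs2c} applied to $g$ tells us that $X := \partial M(g) \cap \partial C(\overline g)$ is a non-empty finite disjoint union of simple closed geodesics $\psi_1, \ldots, \psi_k$ in the induced intrinsic metric $d = \mc I_V(g)$ on $\partial M$.

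I would then argue by contradiction: suppose there exists a sequence $\{g_i\}$ converging to $g$ in $\overline{\mc P}_b(M, V)$ with $g_i \notin \overline{\mc P}_{cp}(M, V)$ for all $i$. Applying Corollary~\ref{bs2c} to each $g_i$, the set $X_i := \partial M(g_i) \cap \partial C(\overline g_i)$ must contain a $2$-dimensional connected component $Q_i$, which is a compact hyperbolic surface with geodesic boundary in the induced intrinsic metric $d_i$ on $\partial M(g_i)$. Since any such $Q_i$ has $\chi(Q_i) \leq -1$, Gauss--Bonnet delivers the uniform area lower bound $\area(Q_i, d_i) = -2\pi\chi(Q_i) \geq 2\pi$. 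The entire task is now to produce a contradicting upper bound by confining $Q_i$ to arbitrarily thin annular tubes around the $\psi_j$.

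The confinement is to be built from three convergence inputs. First, Corollary~\ref{hausdb} provides embeddings into $\H^3$ with $\partial \tilde M(g_i) \to \partial \tilde M(g)$ in Hausdorff sense. Second, Theorem~\ref{hausd} gives $\Lambda(\overline g_i) \to \Lambda(\overline g)$ in Hausdorff sense, which upon passing to closed convex hulls in the Klein model yields $\partial \tilde C(\overline g_i) \to \partial \tilde C(\overline g)$. Working inside a compact fundamental region for the action of $\pi_1(M)$ in a neighborhood of $\tilde C(\overline g)$ and using Lemma~\ref{distconv} to convert extrinsic closeness into intrinsic closeness, I expect to obtain that for every small $\epsilon > 0$ and all large $i$, the set $X_i$ lies in the intrinsic $\epsilon$-neighborhood $N_\epsilon^d(X) = T_\epsilon^1 \sqcup \cdots \sqcup T_\epsilon^k$, a disjoint union of annular tubes around the $\psi_j$; by connectedness then $Q_i \subseteq T_\epsilon^{j(i)}$ for some index $j(i)$. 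Third, Lemma~\ref{contin} gives $d_i \to d$ in strong Lipschitz sense (after adjusting $g_i$ by isotopies fixing $V$), so $\area(T_\epsilon^j, d_i) \to \area(T_\epsilon^j, d) = O(\epsilon \cdot \len(\psi_j))$. Choosing $\epsilon$ small enough that every $\area(T_\epsilon^j, d) < \pi$, and then $i$ large enough that $\area(T_\epsilon^j, d_i) < 2\pi$ for every $j$, forces $\area(Q_i, d_i) \leq \area(T_\epsilon^{j(i)}, d_i) < 2\pi$, the desired contradiction.

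The main technical obstacle I anticipate is the passage from Hausdorff convergence in $\H^3$, as delivered by Corollary~\ref{hausdb} and Theorem~\ref{hausd}, to intrinsic closeness of $X_i$ and $X$ inside $(\partial M, d)$, together with the verification that $Q_i$ really sits inside a single annular tube rather than spreading across several. When $\partial M$ is compressible the bookkeeping between the boundary components of $\partial \tilde M(g_i)$ and their projections to $\partial M$ is somewhat delicate, but no new geometric input beyond Lemma~\ref{distconv} and compactness of fundamental regions should be required; the remaining ingredients---Gauss--Bonnet, the structural classification in Corollary~\ref{bs2c}, and the area comparison from Lemma~\ref{contin}---are essentially off-the-shelf.
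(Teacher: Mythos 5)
Your proposal is correct but takes a genuinely different route from the paper's, and the differences are worth spelling out. You reduce to the peculiar case via Lemma~\ref{open1}, invoke Corollary~\ref{bs2c} to extract a two-dimensional component $Q_i$ of $\pt M(g_i) \cap \pt C(\ol g_i)$, and obtain the uniform lower bound $\area(Q_i, d_i) \geq 2\pi$ from Gauss--Bonnet; you then argue that $X_i$ must eventually be confined to thin intrinsic annular tubes around the closed geodesics constituting $X=\pt M(g) \cap \pt C(\ol g)$, which have small area. The paper instead bounds the systole of $d_i$ from below via Lemmas~\ref{sys} and~\ref{geodconv}, uses Corollary~\ref{pants} to find a point $p_i \in X_i$ uniformly far from $\pt Q_i$, extracts a geodesic triangle $T_i$ with area bounded below and diameter bounded above, and then applies Alexandrov's theorem on convergence of areas of compact subsets on Hausdorff-convergent convex surfaces (\cite[Chapter X.2, Theorem 2]{Ale3}) to the Hausdorff limit $\tilde T$ of $\tilde T_i$, which is a geodesic segment of zero area.

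The trade-off is this: your route replaces the systole/pants machinery and Alexandrov's area-convergence theorem with the more elementary Gauss--Bonnet, but in exchange you need the confinement step (that $X_i$ eventually lies in an arbitrarily thin tubular neighborhood of $X$, intrinsically in the converging metrics), which you flag as the main technical obstacle. That step is indeed delicate but workable with the ingredients you name: passing any sequence $\tilde x_i \in \tilde X_i$ through a fundamental domain, noting its Hausdorff accumulation points must lie in $\tilde X$ (since $\pt\tilde M(g_i)$ and $\pt\tilde C(\ol g_i)$ Hausdorff-converge to $\pt\tilde M(g)$ and $\pt\tilde C(\ol g)$), and then converting to intrinsic closeness using Lemma~\ref{distconv} and the strong Lipschitz convergence from Lemma~\ref{contin}. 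The paper's route is arranged precisely so that no such confinement statement is needed; it only needs $\tilde T_i$ to have a Hausdorff-convergent subsequence and the limit to lie in the degenerate set $\pt\tilde M(g) \cap \pt\tilde C(\ol g)$, which is the one-sided containment that comes for free. A small remark: once you have $Q_i \subset T_\e^{j(i)}$, you do not actually need the area comparison at all --- a compact subsurface of an annulus has non-negative Euler characteristic, contradicting $\chi(Q_i) \leq -1$ immediately, so Gauss--Bonnet and the area bound on tubes are more than you require.
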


\begin{proof}
Suppose the converse that there exists $g \in \overline{\mathcal P}_{cp}(M, V)$ and a sequence $$\{g_i\} \subset (\overline{\mathcal P}_{b}(M, V) \backslash \overline{\mathcal P}_{cp}(M, V))$$ converging to $g$ in $\overline{\mathcal P}_{b}(M, V)$. By $d_i$ and $d$ we denote the induced intrinsic metrics on $\partial M(g_i)$ and $\partial M(g)$. As ${\mathcal I}_V$ is continuous, the metrics $d_i$ converge to $d$ in the weak Lipschitz sense. Then Lemma~\ref{sys} implies that the systoles of $d_i$ are uniformly bounded from below by some constant~$\Delta>0$.

As $g_i \notin \overline{\mathcal P}_{cp}(M, V)$, Corollary~\ref{bs2c} implies that $\partial M(g_i)\cap\partial C(\overline g_i)$ contains at least one component, which is a hyperbolic surface with geodesic boundary. We can pick a hyperbolic pair of pants $P_i$ in this component. The lengths of the boundaries are bounded from below by $\Delta$. Hence, Corollary~\ref{pants} implies that there exists $\delta>0$ and a point $p_i$ in $P_i$ such that the intrinsic distance from $p_i$ to $\partial P_i$ is at least $\delta$. We use this to choose an intrinsic geodesic triangle $T_i \subset P_i$ containing $p_i$ with the areas of all $T_i$ uniformly bounded from below and the diameters uniformly bounded from above.

Consider the embeddings of $\tilde M(g_i)$ and $\tilde M(g)$ to $\H^3$ as in Corollary~\ref{hausdb}, so $\{\partial \tilde M(g_i)\}$ converges to $\partial \tilde M(g)$ in the Hausdorff sense and $\{\partial \tilde C(\overline g_i)\}$ converges to $\partial \tilde C(\overline g)$ in the Hausdorff sense. We can choose lifts $\tilde p_i \in \partial \tilde M(g_i)$ of $p_i$ such that the sequence $\{\tilde p_i\}$ is bounded in $\H^3$. Hence, up to passing to a subsequence we can assume that $\tilde p_i$ converge to a point $\tilde p \in \partial \tilde M(g)$. Now let $\tilde T_i \subset (\partial \tilde M(g_i)\cap \partial \tilde C(\overline g_i))$ be the lift of $T_i$ containing $\tilde p_i$. As $\tilde T_i$ are compact subsets of $\H^3$ staying in a bounded domain, up to taking a subsequence they must converge in the Hausdorff sense to a compact connected subset $\tilde T$ of $\partial \tilde M(g)\cap \partial \tilde C(\overline g)$. The latter consists only of disjoint geodesic lines, hence $\tilde T$ is a compact geodesic segment (possibly a point). However, Alexandrov showed that because $\tilde T_i$ and $\tilde T$ belong to complete convex surfaces $\partial \tilde M(g_i)$ and $\partial \tilde M(g)$, the areas of $\tilde T_i$ must converge to the area of $\tilde T$, which is zero. In the Euclidean case this is~\cite[Chapter X.2, Theorem 2]{Ale3}, the changes for the hyperbolic case are discussed in~\cite[Chapter XII.1]{Ale3}. This contradicts to the uniform lower bound on the areas of $\tilde T_i$.
\end{proof}

Note that Lemmas~\ref{open1} and~\ref{open2} particularly imply that $\mathcal P_{sp}(M, V)$ and $\mathcal P_{cp}(M, V)$ are open in $\mathcal P_{b}(M, V)$.

\subsection{Differentiability of the realization map}
\label{diffsec}

Our current aim is to study the differentiability properties of the realization map $\mathcal I_V$. In full generality this is a complicated task. We will restrict ourselves to controllably polyhedral metrics, and will prove that $\mathcal I_V$ is $C^1$ at strictly polyhedral metrics and differentiable at peculiar metrics.

\subsubsection{Differentiability for strictly polyhedral metrics}
\label{dssec}

\begin{lm}
\label{c1}
The realization map $\mathcal I_V$ is $C^1$ over $\mathcal P_{sp}(M, V)$.
\end{lm}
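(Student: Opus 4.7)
The plan is to exhibit, near any $g \in \mathcal P_{sp}(M,V)$, a real-analytic local chart in which $\mathcal I_V$ is given by an explicit expression in hyperbolic distances between boundary vertices. From this, $C^1$ (in fact real-analytic) regularity is immediate.

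First I would construct a boundary triangulation adapted to $g$. By Corollary~\ref{strpolyh}, every face of $\partial M(g)$ is isometric to a compact convex hyperbolic polygon and every edge of $g$ is a geodesic segment between two points of $V$; in particular all vertices of every face of $\partial M(g)$ belong to $V$. Triangulating each polygonal face by diagonals (without adding new vertices) produces a geodesic triangulation $\mathcal T_\partial$ of $(\partial M, V, \mathcal I_V(g))$ such that every edge of $\mathcal T_\partial$ lies inside a single planar face of $\partial M(g)$. Using openness of $\mathcal P_{sp}(M, V)$ (Lemma~\ref{open1}) together with the Hausdorff convergence of boundaries (Corollary~\ref{hausdb}), I would show that for all $g'$ in a small neighborhood $U$ of $g$ in $\mathcal P_{sp}(M, V)$ the combinatorial face structure of $\partial M(g')$ matches that of $\partial M(g)$: the supporting plane cutting out each face of $\partial M(g)$ is determined by a fixed triple of vertices in $V$, and under perturbation of the lifted data $(\rho,\tilde f)\in \mathcal R(\pi_1(M),G)\times(\H^3)^V$ this triple still determines a supporting plane whose intersection with $\partial M(g')$ is a compact convex polygon face incident to exactly the same subset of $V$. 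Consequently $\mathcal T_\partial$ is realized by $\mathcal I_V(g')$ for every $g' \in U$.

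Next I would write the realization map in charts. By the discussion of Section~\ref{conesec}, the edge-length map on $\mathcal D^\sharp(\partial M, \mathcal T_\partial)$ is a smooth chart on an open subset of $\mathcal D^\sharp(\partial M, V)$, which descends to $\mathcal D(\partial M, V)$. On the source side, $U$ is parametrized analytically by $(\rho,\tilde f)$. For each edge $e \in \mathcal T_\partial$ with endpoints $v, w \in V$ joined in the homotopy class of $e$ by some $\gamma \in \pi_1(M)$, the length of $e$ in the cone-metric $\mathcal I_V(g')$ coincides with the extrinsic hyperbolic distance
$$\ell_e(\rho, \tilde f)=d_{\H^3}\!\left(\tilde f(v),\,\rho(\gamma)\tilde f(w)\right),$$
because a lift of $e$ lies entirely in a single (flat) face of $\partial \tilde M(g')$ and is therefore a straight segment in $\H^3$. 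This distance is a real-analytic function of $(\rho,\tilde f)$. Composing with the edge-length chart on the target gives a real-analytic expression for $\mathcal I_V|_U$, and in particular $\mathcal I_V$ is $C^1$ at~$g$.

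The main obstacle is the combinatorial persistence argument of the second paragraph: one must rule out that small perturbations create new edges or faces of $\partial M(g')$ not present for $g$, or split existing faces of $\partial M(g)$ into several pieces. This uses crucially that for strictly polyhedral metrics $\partial M(g)$ stays at positive distance from $\partial C(\overline g)$ (so the closure of each face is a genuinely compact hyperbolic polygon with all vertices in $V$), in contrast to the peculiar case where faces have geodesic boundary components disjoint from $V$. With strict polyhedrality at hand this becomes a finite convexity check: each of the finitely many supporting planes through triples of vertices of $V$ deforms continuously in $(\rho,\tilde f)$, and Corollary~\ref{hausdb} guarantees that the perturbed planes still cut out exactly the faces of $\partial M(g')$ with the same incidences. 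Once the combinatorial type stabilizes, the analyticity of $\mathcal I_V$ in the chosen charts is essentially tautological.
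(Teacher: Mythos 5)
There is a genuine gap at exactly the place you flag as ``the main obstacle'': the combinatorial persistence claim in your second paragraph is false whenever some face of $\partial M(g)$ is a polygon with four or more vertices, and strict polyhedrality does nothing to exclude such faces. For such a face, the vertices are coplanar at $g$, and a generic arbitrarily small perturbation of $(\rho,\tilde f)$ destroys coplanarity; the face then splits into several smaller faces of $\partial M(g')$ (the correct statement, used in the paper, is only that the face decomposition of nearby metrics is a \emph{subdivision} of that of $g$). In particular the supporting plane through a chosen triple of vertices no longer contains the remaining vertices of the old face, and there is no face of $\partial M(g')$ ``incident to exactly the same subset of $V$''. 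This breaks your chart formula: for an edge $e$ of $\mathcal T_\partial$ that is a diagonal of a quadrilateral face, if the perturbed hull folds along the other diagonal, the intrinsic geodesic realizing $e$ in $\mathcal I_V(g')$ is a broken line crossing the new edge, and its length is strictly larger than the extrinsic distance $d_{\H^3}(\tilde f(v),\rho(\gamma)\tilde f(w))$. So $l_e$ is not given by your analytic expression on any neighborhood of $g$, and the asserted real-analyticity is in fact too strong: at such $g$ the map is in general only $C^1$ and not $C^2$, which is precisely why the lemma claims $C^1$ and no more. (If all faces of $g$ happen to be triangles your argument does go through and gives analyticity, but that is a proper open subset of $\mathcal P_{sp}(M,V)$.)

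The missing idea, which is the actual content of the paper's proof, is how to establish $C^1$-regularity of the diagonal lengths across the combinatorial transitions. The paper fixes a non-triangular face $Q$ with vertex set $V_Q$, lets the vertices move in $(\H^3)^{V_Q}$, and observes that the parameter domain decomposes into cells $U_1,\dots,U_r$ indexed by the triangulations of $Q$ realized on the upper part of $\partial\conv(V_Q)$; on each cell $l_e$ equals a smooth function $l_{i,e}$ (the intrinsic length computed in the surface triangulated by $\mathcal T_i$). The key computation (Claim~\ref{differential}) is that the first derivatives of all the $l_{i,e}$ agree along the walls: in coordinates adapted to the plane of $Q$, the ``vertical'' derivatives vanish because $\cosh(a)=\cosh(b)\cosh(c)$ gives $\partial a/\partial c=0$ at $c=0$, while the horizontal derivatives coincide with those of the extrinsic distance. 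So your intuition that the extrinsic distance governs the answer is correct at the level of the \emph{differential} at the coplanar configuration, but not at the level of the function, and the first-order matching argument is what your proposal is missing.
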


\begin{proof}
Pick $g \in \mathcal P_{sp}(M, V)$ and its neighborhood $U$ in $\mathcal P_{sp}(M, V)$. Lift $g$ to $(\rho, \tilde f) \in \mathcal R(\pi_1(M), G) \times (\H^3)^{V}$. Provided $U$ is small enough, there exists a neighborhood $\tilde U$ of $(\rho, \tilde f)$ projecting onto $U$ such that the action of $\pi_1(M)$ sends $\tilde U$ to disjoint domains. The action of $G$ on $\tilde U$ defines a smooth submersion $\tilde U \rightarrow U$. We will show that $\mathcal I_V$ is $C^1$ as a function over $\tilde U$. 

If there is an edge of $\partial \tilde M(g)$ between $\tilde v, \tilde w \in \tilde V(g)$, one can see that they are connected by an edge for all $(\rho', \tilde f')$ in $\tilde U$ provided that $\tilde U$ is sufficiently small. Indeed, the key is that the distance from any point of the edge to $\tilde C(\overline g)$ is positive and that the limit sets of converging representations converge in the Hausdorff sense. If there exists a sequence $(\rho_i, \tilde f_i)$ converging to $(\rho, \tilde f)$ such that $\tilde v$ and $\tilde w$ are not connected by an edge in all $\partial \tilde M(g_i)$, then there exists a sequence of 4-tuples of points $(p^1_i, p^2_i, p^3_i, p^4_i)$, where all $p^j_i$ are from $(\Lambda(\rho)\cup \tilde V(g))\backslash \{\tilde v, \tilde w\}$, such that the distance from the convex hull of each 4-tuple to the midpoint of the segment between $\tilde v$ and $\tilde w$ tends to zero. By passing to a subsequence we assume that each $p^j_i$ converge to $p^j \in \Lambda(\rho)$, which means that the distance from the midpoint to $\tilde C(\overline g)$ is zero, which is a contradiction. Note that the length of such an edge is a smooth function in $\tilde U$.

As $g\in \mathcal P_{sp}(M,V)$, all edges of $\partial \tilde M(g)$ are between pairs of vertices and all faces are convex compact hyperbolic polygons. In particular, the face decomposition of all $(\rho', \tilde f')$ in a sufficiently small neighborhood of $(\rho, \tilde f)$ is a subdivision of the face decomposition of $\partial \tilde M(g)$. Let $d$ be the induced intrinsic metric on $\partial M(g)$. Consider a geodesic triangulation $\mathcal T$ of $\partial M(g)$ subdividing the face decomposition. As $\mathcal T$ is realized in $(d, V)$, we have $d \in \mathcal D(V, \mathcal T)$. The lengths of all edges of $\mathcal T$ that are edges of $g$ are smooth functions around $g$. We need to show that the lengths of the other edges of $\mathcal T$ are $C^1$-functions.

Consider a non-triangular face $Q$ of $\partial \tilde M(g)$, let $V_Q$ be the set of its vertices and $e$ be a geodesic segment in the relative interior of $Q$ corresponding to an edge of $\mathcal T$ that is not an edge of $g$. When we move the vertices of $V_Q$, $\conv(V_Q)$ may become a convex polyhedron. If we move them sufficiently close to the initial positions, all boundary edges of $Q$ remain among the edges of $\conv(V_Q)$. They divide $\partial \conv(V_Q)$ into two parts. One of them, determined with the help of the orientation of $Q$ with respect to $\tilde M(g)$, is considered as a deformation of $Q$. We call this part \emph{upper}. We also require that we perturb the vertices slightly enough so that the intrinsic angles of all the vertices in the upper part stay less than $\pi$. The segment $e$ may become a broken line (i.e., after we move the vertices, we identify $e$ with the intrinsic shortest arc in the upper part between the vertices of $Q$). Consider a small neighborhood $U_Q \subset (\H^3)^{V_Q}$ of the initial position of $V_Q$ for which the boundary edges of $Q$ remain to be in the boundary of $\conv(V_Q)$ and for which the intrinsic angles of all the vertices in the upper part stay less than $\pi$. We can consider the length $l_e$ as a continuous function over $U_Q$. To finish the proof of the lemma, it is enough to show 

\begin{cla}
\label{differential}
The function $l_e$ is $C^1$ over $U_Q$ and its differential in the initial position coincides with the differential of the $\H^3$-distance between the endpoints of $e$.
\end{cla}

Let $\mathcal T_1, \ldots, \mathcal T_r$ be all triangulations of $Q$. Pick one of them, $\mathcal T_i$, and now, as we move the vertices, consider the deformations of $Q$ as of a surface triangulated with $\mathcal T_i$ (i.e., the edges of $\mathcal T_i$ remain geodesic segments of $\H^3$). It may become a non-convex surface, but near the initial position the edge $e$ is still defined in the obtained surface as the intrinsic shortest arc between its endpoints. Its length $l_{i,e}$ is a smooth function because the edge lengths of $\mathcal T_i$ change smoothly and the length of $e$ is a smooth function of them. We assume that $U_Q$ is small enough so that for all such polyhedral surfaces the intrinsic angles of all vertices stay less than $\pi$. Then $e$ is defined in all these surfaces over the whole $U_Q$. Let $U_i$ be the subset of $U_Q$ corresponding to the configurations where $\mathcal T_i$ subdivides the face decomposition of the upper part of $\conv(V_Q)$ is triangulated with $\mathcal T_i$. One can see that this produces a decomposition of $U_Q$ into cells $U_1, \ldots, U_r$ with non-empty disjoint interiors and piecewise-analytic boundaries. The function $l_e$ is equal to $l_{i,e}$ over $U_i$. In particular, it is smooth in the interiors of $U_i$. We need to check that if $U_i$, $U_j$ have a common boundary, then the first derivatives of $l_{i,e}$, $l_{j, e}$ coincide there.

Let $\Pi$ be the plane containing $Q$, we orient it arbitrarily. Set a local coordinate system in $U_Q$: for each $v \in V_Q$ choose two coordinates that are coordinates of the projections of $v$ to $\Pi$ and the third coordinate is the oriented distance from $v$ to $\Pi$. Call the first two coordinates \emph{horizontal} and the third \emph{vertical}. Note that in the initial moment all the horizontal derivatives of all $l_{i,e}$ coincide and are equal to the respective derivatives of the distance between the endpoints of~$e$. 

We now claim that all the vertical derivatives are zero. Indeed, let $ABC$ be a hyperbolic triangle with the corresponding sides equal to $a,b,c$ and with the right angle at $A$. Then we have
$$\cosh(a)=\cosh(b)\cosh(c).$$
Hence, for fixed $b>0$ as we move the vertex $B$ along the line $AB$ we have $\partial a/\partial c=0$ at $c=0$. This means that when we consider the bending of the polyhedral surface triangulated along $\mathcal T_i$, the vertical derivatives of all the edge lengths of $\mathcal T_i$ are zero at the initial moment. Hence, so are the vertical derivatives of $l_{i,e}$. This implies that all first derivatives of all $l_{i,e}$ coincide at the initial moment. The same argument implies that all first derivatives of any two $l_{i,e}$ and $l_{j,e}$ coincide at any point of the common boundary of $U_i$ and $U_j$. This shows that $l_e$ is $C^1$ in $U_Q$. In turn this finishes the proof.
\end{proof}

\subsubsection{Deformations of hyperbolic strips}
\label{stripssec}

In order to investigate the differentiability of the realization map for controllably polyhedral metrics, we need first to restrict ourselves to the case of closed solid hyperbolic tori with convex polyhedral boundary. 

Only in this subsection by $M$ we denote the closed solid torus and by $N$ we denote its interior. It is easy to see that the space $\mathcal{CH}(N)$ of convex cocompact hyperbolic metrics on $N$ up to isotopy is in one-to-one correspondence with the set of loxodromic isometries of $\H^3$ up to conjugation. Any such isometry $\gamma$ is determined up to conjugacy by two parameters: the shift $a>0$ along the geodesic line $\tilde \psi \subset \H^3$ fixed by $\gamma$ and the rotation angle $\alpha \in \R/2\pi\Z$ around $\tilde \psi$, measured clockwise to the shift direction. We make the convention that $\tilde \psi$ is oriented in the shift direction. Let $\Gamma < {\rm Iso}^+(\H^3)$ be the cyclic subgroup generated by $\gamma$. The quotient hyperbolic manifold $\H^3 / \Gamma$ is convex cocompact and homeomorphic to $N$. Its convex core is the closed geodesic $\psi$ that is the image of $\tilde \psi$. Note that for every $\overline g \in \mathcal{CH}(N)$ the hyperbolic manifold $N(\overline g)$ has a connected two-dimensional isometry group. Hence, for $V \neq \emptyset$ the space $\mathcal{CH}(N, V)$ has dimension $3n$, and so is the dimension of $\mathcal P_b(M, V)$. It is evident that every bent hyperbolic metric on $M$ is controllably polyhedral, so $\mathcal P_b(M, V)=\mathcal P_p(M, V)=\mathcal P_{cp}(M, V)$.


Let $g_0 \in \mathcal P_{cp}(M, V)$. We are interested in the case when $g_0$ is peculiar. Clearly, this is equivalent to the situation when all points of $\tilde V(g_0) \subset \H^3$ belong to a closed half-space containing $\tilde \psi$ in its boundary plane. Particularly, $\alpha_0=0$, where $\alpha_0$ is the rotation parameter for $g_0$. Also, if $\alpha_0=0$ and $\clconv(V)$ is homeomorphic to $M$, one can see that $n=|V| \geq 2$. 
By $\Sigma$ we denote the union of faces whose closures contain $\psi$ (there are either two or one of them), and by $\tilde \Sigma \subset \partial \tilde M(g)$ denote its universal cover. In this section we study deformations of the induced metric on $\Sigma$ when we vary $g_0$, so we expel from $V$ all vertices except the vertices of $\Sigma$. Of special interest are the deformations when $\alpha$ becomes nonzero. 
Our aim is to show that the induced metric is differentiable at $g_0$ and that the partial derivative of the metric by $\alpha$ is zero. Fix an intrinsically geodesic triangulation $\mathcal T$ of~$\Sigma$. There are two types of edges in $\mathcal T$: some connect vertices from different components of~$\partial \Sigma$ and some connect vertices from the same component. In order to consider less cases we assume that all interior edges of $\Sigma$ are of the first type. It is easy to see that such $\mathcal T$ exists. Let $e$ be an interior edge of~$\mathcal T$.

At the initial moment all vertices of $\tilde \Sigma$ belong to two half-planes intersecting at $\tilde \psi$, which we denote by $\Pi^{1}$ and $\Pi^{2}$. We denote the vertices with the lifts belonging to $\Pi^{1}$ by $v^1_1, \ldots, v^1_{n_1}$ and to $\Pi^{2}$ by $v^2_1, \ldots, v^2_{n_2},$ where $n_1+n_2=n$. For each $v^j_i$, where $j=1,2$, we fix a lift $\tilde v^j_i \in \partial \tilde \Sigma$. By $u^j_i$ we denote the projection of $\tilde v^j_i$ to $\tilde \psi$. By $x^j_{i, 0}$ we denote the distance from $\tilde v^j_i$ to $\tilde \psi$. By $y^j_{i, 0}$ we denote the oriented distance from $u^1_1$ to $u^j_i$. We assume that $v^1_1$ and $v^2_1$ are the endpoints of $e$ and consider its lift $\tilde e$, which is the unique intrinsic shortest arc in $\tilde \Sigma$ between $\tilde v^1_1$ and $\tilde v^2_1$. By possibly changing the labels, we assume that $\tilde \psi$ is oriented in the direction from $u^1_1$ to $u^2_1$. By $\phi_0 \geq \pi$ we denote the angle from $\Pi^1$ to $\Pi^2$ clockwise. 

Note that when we vary $g_0$ so that $\alpha$ becomes nonzero, the strip $\Sigma$ becomes triangulated. There are infinitely many triangulations of $\Sigma$ realized by metrics in any neighborhood of~$g_0$. This is the reason, why we can not really apply the methods of Section~\ref{dssec} to show the differentiability of the boundary metric, and have to look for another way.

Take $g \in \mathcal P_{cp}(M,V)$ sufficiently close to $g_0$ so that all edges of $g_0$ except possibly $\psi$ remain to be edges of~$g$. Then the boundary edges of $\Sigma$ divide $\partial M(g)$ into two strips. One of them is a deformation of $\Sigma$ and we will continue to denote it~$\Sigma$. We assume that $g$ is sufficiently close to $g_0$ so that the intrinsic angles of the vertices in $\Sigma$ remain to be less than $\pi$.
We choose the same lifts of all $v^j_i$, so $u^j_i$ remain defined also for $g$, and we define $x^j_i$ and $y^j_i$ in a similar fashion to $x^j_{i, 0}$ and $y^j_{i,0}$. Hence, $x^j_i$ and $y^j_i$ are smooth functions in $\mathcal P_{cp}(M,V)$. By $\Pi^j_i$ we denote the half-plane spanned by $\tilde \psi$ and $\tilde v^j_i$. 
By $\phi^j_i$ we denote the angle between the segments $u^j_i\tilde v^j_i$ and the half-plane $\Pi^1_1$. Since for $g_0$ we have $\phi^1_{i,0}=0$, $\phi^2_{i,0}=\phi_0$, we consider $\phi^1_i$ as a real number valuing in a neighborhood of 0, and $\phi^2_i$ as a real number valuing in a neighborhood of $\phi_0$.
One can see that the $3n$ variables $$x^1_1, \ldots, x^1_{n_1}, x^2_1, \ldots, x^2_{n_2}, y^1_2, \ldots, y^1_{n_1}, y^2_1, \ldots, y^2_{n_2}, \phi^1_2, \ldots, \phi^1_{n_1}, \phi^2_1, \ldots, \phi^2_{n_2}, a, \alpha$$ provide local coordinates in a neighborhood of $g_0$ in $\mathcal P_{cp}(M, V)$. Particularly we use these coordinates to consider a neighborhood of $g_0$ as a domain in a $(3n)$-dimensional real vector space, and by $\|.\|$ we denote the standard Euclidean norm there.

Since now and until the end of this section we are going to work only in the universal cover embedded in $\H^3$. To simplify the reading, until the end of this section we drop tilde in $\tilde M(g)$, $\tilde \psi$, $\tilde \Sigma$, $\tilde v^j_i$, writing just $M(g)$, $\psi$, $\Sigma$, $v^j_i$ instead.

We will continue to denote the intrinsic shortest arc in $\Sigma$ with the metric induced by $g$ between $v^1_1$ and $v^2_1$ by $e$. By $l_e$ we denote its length, so we consider $l_e$ as a function in a neighborhood of $g_0$, particularly a function of our $3n$ variables. The results of Section~\ref{dssec} apply also to the situation we have now provided that $\alpha \neq 0$. Hence, $l_e$ is $C^1$ when $\alpha\neq 0$. By $e'$ we denote the intrinsic shortest arc between $v^1_1$ and $v^2_1$ in $\Pi^1_1 \cup \Pi^2_1$. By $l'_e$ we denote the length of $e'$. The function $l'_e$ is a smooth function of three variables: $x^1_1$, $x^2_1$, $y^2_1$. Our aim is to show 

\begin{lm}
\label{est1}
For every $0<\mu<1$ the difference $l_e-l'_e$ is $O(\|\Delta g\|^{1+\mu})$ at $g_0$. 
\end{lm}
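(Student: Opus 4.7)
The plan is to unfold both surfaces into $\H^2$ and compare the resulting expressions for $l_e$ and $l'_e$ via their Taylor expansions at $g_0$. At $g_0$ one has $\phi^1_i = 0$ for all $i$ and $\phi^2_i = \phi_0$ for all $i$, so $\Pi^1_1 = \Pi^1$, $\Pi^2_1 = \Pi^2$, and the two-half-plane surface $\Pi^1_1 \cup \Pi^2_1$ coincides with the flat pieces on which $\Sigma(g_0)$ lies. The intrinsic geodesic from $\tilde v^1_1$ to $\tilde v^2_1$ on $\Sigma(g_0)$ therefore coincides with $e'(g_0)$, giving $l_e(g_0) = l'_e(g_0)$. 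Unfolding $\Pi^1_1 \cup \Pi^2_1$ along $\tilde \psi$ into a single copy of $\H^2$ and working in Fermi coordinates based at $u^1_1$, one obtains explicitly
$$\cosh(l'_e) = \cosh(x^1_1)\cosh(x^2_1)\cosh(y^2_1) + \sinh(x^1_1)\sinh(x^2_1),$$
confirming that $l'_e$ is a smooth function of $x^1_1, x^2_1, y^2_1$ alone and giving a target for the Taylor expansion of $l_e$ at $g_0$.

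The heart of the argument is to show that $l_e$ is $C^{1,\mu}$ at $g_0$ for every $\mu < 1$ and that its first-order Taylor expansion at $g_0$ matches that of $l'_e$. Agreement of the partial derivatives with respect to the endpoint-moving coordinates $x^1_1, x^2_1, y^2_1$ is immediate from the first-variation formula for arc length, since $e(g_0)$ and $e'(g_0)$ are the same curve with the same unit tangent vectors at the endpoints. For the remaining coordinates, I would argue that the partial derivatives of $l_e$ vanish at $g_0$ by an envelope-style argument in the spirit of Claim~\ref{differential} from the proof of Lemma~\ref{c1}: the curve $e(g_0)$ lies in the flat piece of $\Sigma(g_0)$, so perturbations of the other $v^j_i$, of $\phi^2_1$, or of the holonomy parameters $a, \alpha$ modify the extrinsic geometry of $\Sigma$ only in regions disjoint from $e(g_0)$; the vertical-derivatives trick then shows that the induced first-order change in $l_e$ vanishes.

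To promote this vanishing first-order variation into the quantitative bound $O(\|\Delta g\|^{1+\mu})$, I would produce a representation of $l_e$ via a direct unfolding of $\Sigma(g)$ into $\H^2$. Fixing an intrinsically geodesic triangulation $\mc T$ of $\Sigma(g_0)$ with interior edges connecting the two groups, for $g$ close to $g_0$ the strip $\Sigma(g)$ is a bent hyperbolic surface carrying a triangulation isotopic to $\mc T$, whose bending angles along interior edges are of order $\|\Delta g\|$ and vanish identically at $g_0$. The geodesic $e(g)$ crosses a uniformly bounded number of these interior edges, so unfolding $\Sigma(g)$ along them expresses $l_e$ as the hyperbolic distance between the unfolded images of $\tilde v^1_1$ and $\tilde v^2_1$, an analytic function of the parameters on each cell of constant combinatorial type. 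The main obstacle will be controlling the transitions between these cells, where $e$ passes through a vertex or the set of crossed edges changes: the vertical-derivatives matching of Claim~\ref{differential} ensures first-order agreement of $l_e$ across cell boundaries, while a careful hyperbolic-trigonometric accounting of the cumulative rotations produced by the small interior bending angles yields the $C^{1,\mu}$ regularity and hence the claimed $O(\|\Delta g\|^{1+\mu})$ estimate.
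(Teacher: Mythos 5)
The proposal has a genuine gap that the paper explicitly warns about and that your plan does not address.

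The central claim that your argument rests on—that "the geodesic $e(g)$ crosses a uniformly bounded number of these interior edges" after unfolding $\Sigma(g)$—is false. The paper notes (just before Lemma~\ref{est1}) that when $\alpha$ becomes nonzero, the strip $\Sigma$ develops a triangulation whose combinatorial type varies: "There are infinitely many triangulations of $\Sigma$ realized by metrics in any neighborhood of $g_0$." Concretely, when $0 < |\alpha| \ll 1$ the edges of $\Sigma$ are chords of the helices $\tau^j_i$ that spiral around $\psi$, and the number of such chords crossed by any arc running from $v^1_1$ to $v^2_1$ grows like $1/|\alpha|$ as $\alpha \to 0$. So the individual bending angles may be small, but the number of them is unbounded, and a naive cumulative-rotation accounting does not yield $O(\|\Delta g\|^{1+\mu})$; it is precisely the balance between many small bendings that requires quantitative control. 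This is also why your appeal to an "envelope-style argument in the spirit of Claim~\ref{differential}" cannot work here: the paper explicitly observes that the methods of Section~\ref{dssec} do not apply to this situation, and that Claim~\ref{differential} presupposes a locally finite family of combinatorial types, which fails near $g_0$.

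The paper's actual route is quite different. Rather than establishing a clean $C^{1,\mu}$ regularity for $l_e$ by unfolding the exact surface $\Sigma(g)$, it constructs an explicit comparison curve $e''$ on $\Sigma$ (split into two or three planar arcs depending on whether $\phi_0 > \pi$ or $\phi_0 = \pi$), verifies the sandwich $l'_e \le l_e \le l''_e$ via the Busemann--Feller lemma, and bounds $l''_e - l'_e$ directly using the trapezoid and pentagon trigonometry of Lemmas~\ref{compare}--\ref{compare3}. The hard analytic input is a quantitative flattening estimate near $\psi$ (Lemmas~\ref{gest1}, \ref{gest2}), proved not on $\Sigma$ itself but on an auxiliary $\Gamma^*$-invariant ruled surface $\Sigma^* \supset \Sigma$ whose generatrices can be analyzed explicitly (Lemmas~\ref{gener}, \ref{gener1}, \ref{asymptrap}). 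This surrogate $\Sigma^*$ is precisely what allows the argument to sidestep the uncontrolled combinatorics of the triangulation of $\Sigma$. Your plan, as written, needs a quantitative mechanism of similar strength to compensate for the blow-up in the number of crossed edges; without one, the $O(\|\Delta g\|^{1+\mu})$ bound does not follow.
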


Here $\Delta g=g-g_0$ is an abstract vector in our coordinates. 

\begin{crl}
\label{diff}
The function $l_e$ is differentiable at $g_0$. Moreover, its differential coincides with the differential of the function $l'_e$. In particular, its derivatives with respect to $\phi^j_i$ and to $\alpha$ are zero.
\end{crl}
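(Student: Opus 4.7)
The plan is to derive the corollary directly from Lemma~\ref{est1} combined with the smoothness of $l'_e$, exploiting the elementary principle that adding an $o(\|\Delta g\|)$ remainder to a differentiable function preserves differentiability and the value of the differential. The guiding picture is that $l'_e$ is an explicit analytic proxy for $l_e$ that already captures the full first-order behaviour around $g_0$.

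Step one: I would first verify the base identity $l_e(g_0) = l'_e(g_0)$. At $g_0$ the lifts of the two faces of $\Sigma$ meeting at $\psi$ are precisely $\Pi^1_1$ and $\Pi^2_1$, so the intrinsic shortest arc $e$ in $(\Sigma, g_0)$ between $v^1_1$ and $v^2_1$ coincides with $e'$, giving the identity. Applying Lemma~\ref{est1} with any fixed $\mu \in (0,1)$, for instance $\mu = 1/2$, then yields
$$(l_e - l'_e)(g) - (l_e - l'_e)(g_0) = (l_e - l'_e)(g) = O(\|\Delta g\|^{3/2}) = o(\|\Delta g\|)$$
as $\Delta g \to 0$.

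Step two: I would check that $l'_e$ is smooth on a neighborhood of $g_0$ in $\mc P_{cp}(M,V)$ and depends only on the coordinates $x^1_1, x^2_1, y^2_1$. Developing the bent surface $\Pi^1_1 \cup \Pi^2_1$ along $\tilde\psi$ into a single copy of $\H^2$ unfolds $e'$ to a genuine geodesic segment of $\H^2$; the images of $v^1_1, v^2_1$ land on opposite sides of (the image of) $\tilde\psi$ with perpendicular distances $x^1_1, x^2_1$ and with feet at signed distance $y^2_1$ apart along $\tilde\psi$, producing a hyperbolic trapezoid with $e'$ as its slanted side. The trapezoidal cosine law (Lemma~\ref{coslaw}) then expresses $l'_e$ as an analytic function of $(x^1_1, x^2_1, y^2_1)$, and since these three are themselves smooth coordinate functions on a neighborhood of $g_0$ in $\mc P_{cp}(M,V)$, so is $l'_e$.

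Step three: combining the two steps gives, for $\Delta g \to 0$,
$$l_e(g) - l_e(g_0) = [l'_e(g) - l'_e(g_0)] + o(\|\Delta g\|) = dl'_e|_{g_0}(\Delta g) + o(\|\Delta g\|),$$
which is precisely the definition of $l_e$ being differentiable at $g_0$ with $dl_e|_{g_0} = dl'_e|_{g_0}$. Since $l'_e$ factors through the three-variable map to $(x^1_1, x^2_1, y^2_1)$, the differential $dl'_e|_{g_0}$ annihilates every coordinate vector dual to the remaining variables of our chart, in particular those dual to the $\phi^j_i$ and to $\alpha$, yielding the final assertion. I do not anticipate any real obstacle in executing this plan: all the technical weight sits in Lemma~\ref{est1}, which the corollary invokes as a black box, and the remaining assembly is a routine application of the definition of differentiability.
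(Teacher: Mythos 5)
The paper gives no explicit proof of this corollary — it is presented as an immediate consequence of Lemma~\ref{est1} — and your proposal correctly supplies the missing details using exactly the intended route: $l_e - l'_e = o(\|\Delta g\|)$ together with the smoothness of $l'_e$ forces $dl_e|_{g_0} = dl'_e|_{g_0}$, and since $l'_e$ is a function of $(x^1_1, x^2_1, y^2_1)$ alone, the $\phi^j_i$- and $\alpha$-derivatives vanish. One small inaccuracy in Step two: after unfolding $\Pi^1_1 \cup \Pi^2_1$ to $\H^2$, the images of $\tilde v^1_1$ and $\tilde v^2_1$ lie on \emph{opposite} sides of the image of $\tilde\psi$, so the resulting quadrilateral is not a trapezoid in the paper's sense (which has both apex points on the same side of the base), and Lemma~\ref{coslaw} does not apply verbatim; what you want is the opposite-sides variant $\cosh(l'_e) = \cosh(x^1_1)\cosh(x^2_1)\cosh(y^2_1) + \sinh(x^1_1)\sinh(x^2_1)$, which is still analytic. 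Since the paper asserts the smoothness of $l'_e$ in $(x^1_1, x^2_1, y^2_1)$ directly, this does not affect the overall argument.
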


We have to consider separately the cases $\phi_0 > \pi$ and $\phi_0=\pi$. In the case $\phi_0>\pi$ we assume that the deformation is small enough so that all differences $\phi^2_{i_2}-\phi^1_{i_1}$ are greater than $\pi$. Then the realization of $e$ in $\partial M(g)$ belongs to the concave part of $\H^3 \backslash (\Pi^1_1 \cup \Pi^2_1)$, so by the Busemann--Feller lemma we have $l'_e \leq l_e$ and the non-trivial part of Lemma~\ref{est1} is to provide an upper bound on $l_e-l'_e$. The case $\phi=\pi$ is slightly more involved, but the upper bound still remains the main issue of the proof.

Let $p \in \partial M(g)$ and $q$ be its projection to $ \psi$. By $\phi_p \in \R /2\pi\Z$ we denote the angle between the segment $pq$ and $\Pi^1_1$ clockwise. By $t_p$ we denote the signed distance $u^1_1q$. This provides a parametrization of $\partial M(g)$. However, we are more interested in the coordinate $\phi_p$ extended to the universal cover of $\partial M(g)$ (recall that $\partial M(g)$ is topologically a cylinder). That is, we consider the universal cover of $\partial M$, fix a lift $\hat v^1_1$ of $v^1_1$ and for $p \in \partial M(g)$ by $\theta_p \in \R$ we denote the extended oriented angle from $\hat v^1_1$ to the lift $\hat p$ of $p$ such that $\hat p$ belongs to the same lift of $\Sigma$ or of $\partial M(g)\backslash \Sigma$ as~$\hat v^1_1$.


Fix $\mu \in \R$ such that $0<\mu<1$ and suppose that $\phi_0>\pi$. Set $\mu'=(1+\mu)/2.$ Consider in $ \Sigma$ the curve $ e''$ consisting of the following three arcs. Denote the intersection point of $e'$ with $\psi$ by $u$. Let $p_1 \in \Sigma$ be a point such that 
\\
(1) the plane spanned by $ v^1_1$, $u$ and $p_1$ is orthogonal to $\Pi^1_1$;
\\
(2) $\theta_{p_1}=\max_{1 \leq i \leq n_1}\phi^1_i+|\alpha|^{\mu'}$ 
\\
Similarly, let $p_2$ be the point such that 
\\
(1) the plane spanned by $ v^2_1$, $u$ and $p_2$ is orthogonal to $\Pi^2_1$; 
\\
(2)~$\theta_{p_2}=\min_{1 \leq i \leq n_2}\phi^2_i-|\alpha|^{\mu'}$.
\\
Now let $ e''_1$ be the curve connecting $ v^1_1$ with $p_1$ in the intersection of $\partial M(g)$ with the plane $ v^1_1up_1$ in the clockwise direction. Similarly, let $ e''_2$ be the curve connecting $ v^2_1$ with $p_2$ in the intersection of $\partial M(g)$ with the plane $ v^2_1up_2$ in the counterclockwise direction. Finally, let $ e''_3$ be the curve connecting $p_1$ with $p_2$ in the intersection of $\partial M(g)$ with the plane spanned by $u, p_1, p_2$ in the clockwise direction. Altogether the arcs $ e''_1$, $ e''_3$, $ e''_2$ form a curve in $\Sigma$ connecting $ v ^1_1$ with $ v^2_1$. By $l''_e$ we denote its length. We have $l''_e \geq l_e \geq l'_e$. Thus, Lemma~\ref{est1} for $\phi_0 > \pi$ follows from 


\begin{lm}
\label{est2}
Let $\phi_0 > \pi$. For every $0<\mu<1$ the difference $l''_e-l'_e$ is $O(\|\Delta g\|^{1+\mu})$ at~$g_0$. 
\end{lm}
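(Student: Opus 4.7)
The plan is to introduce an auxiliary three-piece polyline $\ell \subset \Pi^1_1 \cup \Pi^2_1$ that shares the endpoints of $e''$ and mirrors its internal structure, and then to control the decomposition
\begin{equation*}
l''_e - l'_e \;=\; (l''_e - |\ell|) \;+\; (|\ell| - l'_e),
\end{equation*}
bounding both summands by $O(\|\Delta g\|^{1+\mu})$. Concretely I would let $q_1$ be the foot of the perpendicular dropped from $p_1$ onto $\Pi^1_1$ within the half-plane $\mathrm{plane}(v^1_1, u, p_1)$, and analogously define $q_2$ in $\Pi^2_1$; then $\ell$ is the concatenation of $\ell_1 := v^1_1 q_1 \subset \Pi^1_1$, $\ell_3 := q_1 u q_2 \subset \Pi^1_1 \cup \Pi^2_1$, and $\ell_2 := q_2 v^2_1 \subset \Pi^2_1$, running parallel to $e''_1, e''_3, e''_2$.

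For the flat comparison $|\ell| - l'_e$: unfolding $\Pi^1_1 \cup \Pi^2_1$ along $\psi$ into a subset of $\H^2$, the arc $e'$ becomes a straight segment and $\ell$ becomes a detour around $u$ through two right-angled pieces of angular opening $\theta_{p_1}$ and $\theta_{p_2}$, both of which are $O(\|\Delta g\|^{\mu'})$. The excess $|\ell| - l'_e \geq 0$ is quadratic in these openings by the right-angled pentagon identities of Lemma~\ref{pent} (or equivalently Lemma~\ref{coslaw} applied to the resulting trapezoids), giving $|\ell| - l'_e = O(\|\Delta g\|^{2\mu'}) = O(\|\Delta g\|^{1+\mu})$.

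For the extrinsic comparison $l''_e - |\ell|$: I would bound each difference $|e''_j| - |\ell_j|$ separately by hyperbolic trigonometry applied face-by-face through $\Sigma$. Each $e''_j$ is the intersection of $\partial M(g)$ with a plane whose angle with the corresponding portion of $\Pi^1_1 \cup \Pi^2_1$ is at most $\max_i|\phi^j_i - \phi^j_{i,0}| + \theta_{p_j} + |\alpha| = O(\|\Delta g\|^{\mu'})$, because of the bending of $\partial M(g)$ and the angular extent of the cap. Writing $e''_j$ as a concatenation of geodesic segments, one per face of $\Sigma$ it crosses, and applying the trapezoid cosine law (Lemma~\ref{coslaw}) in each such face, I expect a quadratic correction per face in the local deflection angle. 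Summation then yields $\bigl||e''_j| - |\ell_j|\bigr| = O(\|\Delta g\|^{2\mu'}) = O(\|\Delta g\|^{1+\mu})$.

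The main obstacle I anticipate is verifying both that the three planes $\mathrm{plane}(v^1_1,u,p_1)$, $\mathrm{plane}(v^2_1,u,p_2)$, $\mathrm{plane}(u,p_1,p_2)$ cut $\Sigma$ in well-defined arcs $e''_1, e''_2, e''_3$ meeting at $p_1, p_2$ as claimed, and that the face-by-face trigonometric bounds are uniform for $g$ in a neighborhood of $g_0$. This is precisely the role of the angular padding $|\alpha|^{\mu'}$ in the definitions of $\theta_{p_1}$ and $\theta_{p_2}$: for $g$ sufficiently close to $g_0$, it keeps $p_1$ and $p_2$ away from all other vertices of $\Sigma$ by a positive margin, so that each $e''_j$ depends piecewise-analytically on the coordinates and the constants in the trigonometric estimates can be taken uniform. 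The exponent $\mu' = (1+\mu)/2$ is chosen precisely so that $2\mu' = 1 + \mu$, exactly balancing the quadratic trigonometric loss against the polynomial buffer width and yielding the required order $\|\Delta g\|^{1+\mu}$.
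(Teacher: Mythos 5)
There is a genuine gap, and it sits exactly where the difficulty of the lemma lies: the middle arc $e''_3$. You assert that each $e''_j$ lies in a plane making an angle $O(\|\Delta g\|^{\mu'})$ with the corresponding portion of $\Pi^1_1\cup\Pi^2_1$; for $j=3$ this is false. The plane through $u,p_1,p_2$ has an angular position around $\psi$ somewhere in the interior of the sector of opening $\phi_0>\pi$, so it is at an order-one dihedral angle from at least one (typically both) of the half-planes; the only sense in which it is ``close'' to them is that all three nearly contain $\psi$. Moreover, the face-by-face summation with ``a quadratic correction per face'' cannot produce the exponent $1+\mu$: as $\alpha\to 0$ the face structure of $\Sigma$ degenerates and the number of faces crossed by $e''_3$ is unbounded (this is precisely why the paper cannot reuse the method of Section~\ref{dssec} here), while the total deflection of $\Sigma$ swept by $e''_3$ is of order $\phi_0-\pi$, an order-one quantity, not $O(\|\Delta g\|^{\mu'})$. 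A bound of the form $\sum_i\e_i^2$ with $\sum_i\e_i=O(1)$ and no control on $\max_i\e_i$ gives nothing near $O(\|\Delta g\|^{1+\mu})$. The mechanism that actually saves the estimate in this region is not small bending but collapse: in the angular range $[\max_i\phi^1_i+|\alpha|^{\mu'},\min_i\phi^2_i-|\alpha|^{\mu'}]$ the whole strip lies within $O(\|\Delta g\|^2)$ of the core geodesic $\psi$ (Lemma~\ref{gest1}, whose proof requires the auxiliary ruled surface $\Sigma^*$ and the generatrix estimates), and then the equidistant-cylinder comparison (Lemma~\ref{compare2}) together with Busemann--Feller gives $\len(e''_3)-u_1u_2=O(\|\Delta g\|^2)$ regardless of how many faces the arc crosses or how much total bending it meets. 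Your proposal contains no substitute for this collapse estimate; the $|\alpha|^{\mu'}$ padding only places $p_1,p_2$ inside that angular range, it does not tame the combinatorics.

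Two further points. Your ``flat comparison'' is vacuous as defined: since the plane through $v^1_1,u,p_1$ is orthogonal to $\Pi^1_1$, the foot $q_1$ lies on the line $v^1_1u$ (it is the point called $r_1$ in the paper), and likewise for $q_2$, so $|\ell|=v^1_1u+uv^2_1=l'_e$ and there is no detour of angular opening $\theta_{p_1},\theta_{p_2}$ to estimate; the mismatch between your definition and your description signals that the geometric picture is off. For the end arcs $e''_1,e''_2$ the relevant smallness is likewise not the angle of their planes with $\Pi^j_1$ (those planes are orthogonal to $\Pi^j_1$), but the inclination of the arc to the segment $v^j_1u$ inside that orthogonal plane; establishing that this inclination is $O(\|\Delta g\|^{\mu'})$ requires the closeness of $\Sigma$ to the half-planes in the cap region (Lemma~\ref{gest2}) together with the fact that $e''_1$ leaves $v^1_1$ along a genuine geodesic segment of definite length, and only then does the chord-versus-convex-arc comparison (Lemma~\ref{compare}) deliver the $O(\|\Delta g\|^{2\mu'})=O(\|\Delta g\|^{1+\mu})$ excess. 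Your exponent bookkeeping $2\mu'=1+\mu$ matches the paper, but the quantitative inputs that justify those angle bounds are exactly the parts missing from the proposal.
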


Now consider the case $\phi_0=\pi$. In this case we propose a different construction of the auxiliary curve $e''$. Emit a geodesic ray from $u$ such that it is orthogonal to $\psi$, directed towards $\Sigma$, and the plane spanned by this ray and $\psi$ bisects the dihedral angle between $\Pi^1_1$ and $\Pi^2_1$. Let $p_0$ be the intersection point of this ray with $\Sigma$. Let $e''_1$ be the curve connecting $v^1_1$ with $p_0$ in the intersection of $\pt M(g)$ with the plane $v^1_1up_0$ in the clockwise direction, and $e''_2$ be the curve connecting $p_0$ with $v^2_1$ in the intersection of $\pt M(g)$ with the plane $v^2_1up_0$. Altogether the arcs $e''_1$ and $e''_2$ form a curve in $\Sigma$ connecting $v^1_1$ with $v^2_1$, which we again denote by $e''$. By $l''_e$ we denote its length. We have $l''_e \geq l_e$. We will similarly show

\begin{lm}
\label{est3}
Let $\phi_0 = \pi$ and $\phi^2_1 \geq \pi$. For every $0<\mu<1$ the difference $l''_e-l'_e$ is $O(\|\Delta g\|^{1+\mu})$ at $g_0$. 
\end{lm}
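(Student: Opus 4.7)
The plan is to mirror the structure of the proof of Lemma~\ref{est2}, adapted to the degenerate case $\phi_0 = \pi$ where the auxiliary curve $e''$ has only two arcs meeting at the single interior point $p_0$. I split the estimate into three parts.

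First, I would locate $p_0$ relative to $u$. At $g_0$, since $\phi_0 = \pi$, the half-planes $\Pi^1_1$ and $\Pi^2_1$ form together a single totally geodesic plane, and the bisector plane through $\psi$ is orthogonal to it; hence the perpendicular ray from $u$ meets $\Sigma$ only at $u$ itself, so $p_0 = u$ at $g_0$. For $g$ near $g_0$, the bisector plane tilts from the perpendicular by an amount controlled by $\alpha$ and $\phi^2_1 - \pi$, both of which are $O(\|\Delta g\|)$, while $\Sigma$ itself bends away from the flat configuration by $O(\|\Delta g\|)$. A direct transversality computation, using the hypothesis $\phi^2_1 \geq \pi$ to ensure the ray does enter $\Sigma$ on the correct side, gives $d_{\H^3}(p_0, u) = O(\|\Delta g\|)$.

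Next, I would estimate the length of each arc $e''_k$ for $k = 1, 2$. Each $e''_k$ is a piecewise geodesic curve lying in the plane through $v^k_1$, $u$, $p_0$, with breaks at edges of $g$ whose exterior angles are $O(\|\Delta g\|)$. The Busemann--Feller lemma (Lemma~\ref{bflemma}) applied to the nearest-point projection to the straight segment from $v^k_1$ to $p_0$, combined with the cosine and sine laws for hyperbolic trapezoids (Lemmas~\ref{coslaw} and \ref{sinlaw}) applied to each small corner, yields the quadratic estimate
\[
l(e''_k) - d_{\H^3}(v^k_1, p_0) = O(\|\Delta g\|^2) = O(\|\Delta g\|^{1+\mu}).
\]

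Finally, I would compare $d_{\H^3}(v^1_1, p_0) + d_{\H^3}(v^2_1, p_0)$ with $l'_e$. At $g_0$ the curve $e'$ is the broken geodesic $v^1_1 u v^2_1$ lying in the flat plane $\Pi^1_1 \cup \Pi^2_1$, and since the angle at $u$ equals $\phi_0 = \pi$, this is in fact a single $\H^3$-geodesic segment of length $x^1_{1,0} + x^2_{1,0}$. The key observation is that the construction of $p_0$ via the bisector makes the sum $f(q) := d_{\H^3}(v^1_1, q) + d_{\H^3}(v^2_1, q)$ stationary at $q = u$ at $g_0$, because the reflection principle guarantees that $u$ lies on the $\H^3$-geodesic between $v^1_1$ and $v^2_1$. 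Hence, for the deformed configuration,
\[
d_{\H^3}(v^1_1, p_0) + d_{\H^3}(v^2_1, p_0) - l'_e = O(\|\Delta g\|^{1+\mu}),
\]
where the first-order contribution from the displacement $p_0 - u$ vanishes and only second-order terms survive. Summing the three estimates gives $l''_e - l'_e = O(\|\Delta g\|^{1+\mu})$.

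The main obstacle is the last step: verifying the stationarity of $f$ at $u$ (rather than just at the endpoint of the true geodesic between $v^1_1$ and $v^2_1$) and showing that the resulting quadratic error does not pick up a spurious linear term from the simultaneous motion of $\Pi^j_1$ and of the bisector plane. This requires carefully tracking how the angles $\phi^j_1$ deform together with $\alpha$, and exploiting that the assumption $\phi^2_1 \geq \pi$ keeps $v^1_1$ and $v^2_1$ on opposite sides of the bisector so that the hyperbolic reflection argument applies uniformly.
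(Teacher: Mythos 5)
Your decomposition is a reasonable-looking alternative to the paper's (which instead applies Lemma~\ref{compare3} directly to the whole arc $e''$ in a developed picture, comparing it against the broken geodesic $v^1_1uv^2_1$), but Step~2 is incorrect in the case $\alpha \neq 0$, and this is precisely the hard case that the whole of Section~\ref{stripssec} is built to handle. You assert that the breaks in $e''_k$ have exterior angles $O(\|\Delta g\|)$ and conclude a quadratic bound $l(e''_k) - d_{\H^3}(v^k_1, p_0) = O(\|\Delta g\|^2)$. Both parts of this are wrong. When $\alpha \neq 0$, the total exterior dihedral angle of $\pt M(g)$ collected by $\Sigma$ around the geodesic $\psi$ is $\approx 2\pi - \phi^2_1 \approx \pi$, which is $\Theta(1)$, not $O(\|\Delta g\|)$; this bending is distributed over many edges, but the arc $e''_1$ runs from $v^1_1$ all the way down to $p_0 \approx u$ near $\psi$ and therefore accumulates a macroscopic amount of it. In particular the tangent direction to $e''_1$ at its near end is not close to the segment direction $v^1_1u$ up to $O(\|\Delta g\|)$; the best one can establish for the angle $\eta_1$ at $v^1_1$ is $O(\|\Delta g\|^{\mu'})$ with $\mu' = (1+\mu)/2 < 1$, and this requires the estimates on the auxiliary ruled surface $\Sigma^*$ in Lemma~\ref{gest2}, which you never invoke. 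Feeding this weaker angular bound into a comparison lemma (the paper uses Lemma~\ref{compare3}) only yields $O(\|\Delta g\|^{2\mu'}) = O(\|\Delta g\|^{1+\mu})$, which is exactly the exponent in the statement. Your claimed $O(\|\Delta g\|^2)$ would actually be a stronger result, and should itself be a warning sign: if it held, the whole case distinction $\mu < 1$ in Lemmas~\ref{est1}--\ref{est3} would be unnecessary.

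A secondary issue: your Step~3 stationarity argument is essentially sound at $g_0$ (there $v^1_1, u, v^2_1$ are collinear), but for $g \ne g_0$ you must track that $u$ is defined as the crossing point of $e'$ with $\psi$ — which is no longer on the $\H^3$-geodesic from $v^1_1$ to $v^2_1$ when $\phi^2_1 > \pi$ — so the gradient of $f(q) = d(v^1_1,q) + d(v^2_1,q)$ at $u$ is $O(\|\Delta g\|)$ rather than exactly zero, and the quadratic estimate only comes from pairing this with $d(p_0, u) = O(\|\Delta g\|)$. That part you could patch; the missing ingredient that cannot be patched without the $\Sigma^*$ machinery is the control of the bending along $e''_k$, and that is where the $O(\|\Delta g\|^{1+\mu})$ rather than $O(\|\Delta g\|^2)$ asymptotic is created.
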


In the case $\phi^2_1 \geq \pi$, by the Busemann--Feller lemma we have $l_e \geq l'_e$ and Lemma~\ref{est1} follows from Lemma~\ref{est3}. When $\phi^2_1 < \pi$, a simple additional argument is required, which we provide below. 

Denote the hyperbolic metric of $\H^3$ by $d_{\H^3}$. To prove our claims we need to make the following estimates on the geometry of $\Sigma$. 

\begin{lm}
\label{gest1}
Let $\phi_0>\pi$, $\alpha \neq 0$. For every $0<\mu<1$ and every $T>0$ we have
$$\sup\{d_{\H^3}(p,  \psi): p \in \Sigma, -T \leq t_p \leq T, \max_{1 \leq i \leq n_1}\phi^1_i+|\alpha|^\mu \leq  \theta_p \leq \min_{1 \leq i \leq n_2}\phi^2_i-|\alpha|^\mu \} = O(\|\Delta g\|^2).$$
\end{lm}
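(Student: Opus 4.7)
The approach is to bound $d_{\H^3}(p,\psi)$ by exploiting a hyperbolic contraction phenomenon for long chords around the axis $\psi$. The underlying geometric fact is the following: if $A,B\in\H^3$ lie at common Fermi distance $x$ from $\psi$, in the same half-plane through $\psi$, with $t$-coordinates differing by $\Delta t$, then the midpoint $M$ of the geodesic chord $AB$ satisfies
$$\sinh d_{\H^3}(M,\psi)=\frac{\sinh x}{\cosh(L/2)},\qquad \sinh(L/2)=\sinh(\Delta t/2)\cosh x,$$
so $d_{\H^3}(M,\psi)$ decays exponentially in $\Delta t$. A similar formula controls all interior points of the chord, not only the midpoint.

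First, for $p$ in the region of the supremum I lift to $\tilde p\in\pt\tilde M(g)$ and consider a supporting plane $\Pi$ to $\tilde M(g)$ at $\tilde p$. Since $\tilde M(g)=\clconv(\tilde V(g)\cup\{\psi_+,\psi_-\})$ and $\Pi$ is spanned by the face of $\tilde M(g)$ through $\tilde p$, its vertices are orbit points of the form $\gamma^k v^j_i$. The buffer condition $|\theta_p-\phi^j_i|\ge|\alpha|^\mu$ together with $|t_p|\le T$ will be used to show that the face of $\tilde p$ must include orbit points on both angular sides of the cross-section $P(\theta_p)$. Because the orbits under $\gamma$ are helical with pitch $\alpha$ in angle and $a$ in $t$, any vertex with angular coordinate within $|\alpha|^\mu$ of $\theta_p$ is reached by a $\gamma$-iterate with $|k|\gtrsim|\alpha|^\mu/|\alpha|=|\alpha|^{\mu-1}$, and therefore $|t|\gtrsim a/|\alpha|^{1-\mu}$.

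Second, I would make the estimate quantitative. Fix such a face vertex with large positive $t$ and its $\gamma$-image with large negative $t$ obtained by the $2\pi$-wrap-around iterate (i.e.\ pick $k^+,k^-$ so that $(k^+-k^-)\alpha\equiv 0\pmod{2\pi}$ and $t^\pm$ straddle $t_p$ with $|t^+-t^-|\gtrsim a/|\alpha|^{1-\mu}$). These two orbit points share an angular coordinate modulo $2\pi$, so the chord between them lies in a common half-plane $P(\theta_0)$ through $\psi$ with $\theta_0$ close to $\theta_p$. By the formula above, this chord passes within $O(e^{-c/|\alpha|^{1-\mu}})$ of $\psi$ at $t=t_p$. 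Since this chord is contained in $\tilde M(g)$, a transverse supporting-plane argument will show that the supporting plane $\Pi$ itself stays within the same order of $\psi$, and hence $d_{\H^3}(\tilde p,\psi)=O(e^{-c/|\alpha|^{1-\mu}})$. This is a much stronger bound than the $O(\|\Delta g\|^2)$ asserted.

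The main obstacle will be the second step: producing the supporting plane $\Pi$ close to $\psi$, which amounts to a dual statement to the chord-contraction. A clean execution is to work directly in the cross-section plane $P(\theta_p)\cong\H^2$: intersect $\tilde M(g)$ with $P(\theta_p)$ and bound the extent in the $x$-direction at height $t=t_p$ by applying the hyperbolic midpoint contraction to the convex hull of the projected orbit points in $P(\theta_p)$. The buffer $|\alpha|^\mu$ is used here precisely to guarantee the $t$-spread estimate $a/|\alpha|^{1-\mu}$ on the orbit-point pairs driving the contraction; without such a buffer the closest orbit points in angle would be unbounded in $t$ relative to the required spread, and the chord-contraction could fail to apply uniformly.
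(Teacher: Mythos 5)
Your chord-contraction formula is correct (it follows from Lemma~\ref{coslaw} applied to the isosceles trapezoid and the right angle at the midpoint), and your quantitative estimate — that $\gamma$-orbit points whose angular coordinate comes within $|\alpha|^\mu$ of $\theta_p$ must have $|t|\gtrsim a/|\alpha|^{1-\mu}$ — is exactly the content of the paper's Lemma~\ref{gener1}. But the proposal departs from the paper in a way that leaves two genuine gaps.

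First, the pair $k^+,k^-$ with $(k^+-k^-)\alpha\equiv 0\pmod{2\pi}$ does not exist when $\alpha/2\pi$ is irrational, which is the generic case. You would have to settle for an approximately-coplanar pair (Dirichlet), and then the chord is not contained in a half-plane $P(\theta_0)$ and the clean isosceles-trapezoid formula no longer applies directly. This is fixable but nontrivial, and the fix effectively reproduces the paper's Lemma~\ref{asymptrap}, which handles the non-isosceles, non-coplanar trapezoid by a rotation-to-planar reduction.

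Second, and more seriously, the step you flag as ``the main obstacle'' is not filled in, and I don't think the proposed cross-section-plane fix closes it. Knowing that a chord in $\tilde M(g)$ passes within $\epsilon$ of $\psi$ at $t=t_p$ does not bound $d(\tilde p,\psi)$. A convex body containing $\psi$ and that chord can still be wide at angle $\theta_p$ and height $t_p$: a convex combination of two orbit points at angles $\theta_p\pm\delta$ with $t$ near $t_p$ and $x$ of order one can lie at angle $\theta_p$ and at distance of order one from $\psi$. The buffer excludes orbit points with angle near $\theta_p$ and small $|t-t_p|$, but it does not exclude vertices of the face through $\tilde p$ at angles far from $\theta_p$ with $t$ near $t_p$, and a 2-face can straddle $\theta_p$ using such vertices. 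The slice $\tilde M(g)\cap P(\theta_p)$ is not the convex hull of ``projected orbit points'' in any useful sense, so the suggested reduction to a planar convex hull does not go through.

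The paper circumvents both problems at once by introducing the auxiliary surface $\Sigma^*=\pt\clconv(\bigcup\tau^j_i)$, the convex hull of the \emph{continuous} $\Gamma^*$-orbits of the vertices. Because $\Sigma^*$ contains no planar piece, it is a ruled surface: through every point there is a single generatrix with endpoints on $\tau_1$ and $\tau_2$. This replaces the 2-dimensional face of $\tilde p$ by a 1-dimensional segment whose endpoints' $t$-spread is controlled by Lemmas~\ref{gener}--\ref{gener1}, after which the (possibly skew) trapezoid $p_1p_2q_2q_1$ is rotated to a planar one and Lemma~\ref{asymptrap} gives the exponential bound. Since $\Sigma\subset\pt M^*(g)$ with matching $(\theta,t)$ coordinates in the region of interest, the bound on $\Sigma^*$ transfers to $\Sigma$. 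If you want to avoid $\Sigma^*$, you would need a separate argument bounding the cross-section of $\tilde M(g)$ at angle $\theta_p$ using all face vertices, not just same-angle pairs, and the trapezoid estimate for skew configurations; both are the nontrivial parts the paper packages via $\Sigma^*$.
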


\begin{lm}
\label{gest2}
Let $\alpha \neq 0$. For every $0<\mu<1$ and every $T>0$ we have
$$\sup\{d_{\H^3}(p, \Pi^1_1): p \in \Sigma, -T \leq t_p \leq T, 0 \leq  \theta_p \leq \frac{\pi}{2}  \} = O(\|\Delta g\|^\mu),$$
$$\sup\{d_{\H^3}(p, \Pi^2_1): p \in \Sigma, -T \leq t_p \leq T,  \phi^2_1 - \frac{\pi}{2} \leq  \theta_p \leq  \phi^2_1 \} = O(\|\Delta g\|^\mu).$$
\end{lm}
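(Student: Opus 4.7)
The plan is to reduce Lemma~\ref{gest2} to Lemma~\ref{gest1} by means of the hyperbolic-trigonometric formula for the distance from a point to a geodesic plane in Fermi coordinates around $\tilde\psi$. If $p\in\H^3\setminus\tilde\psi$ has Fermi coordinates $(r_p,\phi_p,t_p)$ around $\tilde\psi$, with $\phi_p\in\R/2\pi\Z$ measured from $\Pi^1_1$, then standard hyperbolic trigonometry gives
$$\sinh d_{\H^3}(p,\,\Pi^1_1\cup\Pi^{1,\mathrm{op}}_1)=\sinh(r_p)\,|\sin\phi_p|,$$
where $\Pi^{1,\mathrm{op}}_1$ is the half-plane from $\tilde\psi$ opposite to $\Pi^1_1$. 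For $\theta_p\in[0,\pi/2]$ the foot of perpendicular from $p$ onto the full plane lies in $\Pi^1_1$ (since $\tanh s_0=\tanh(r_p)\cos\theta_p\ge 0$), so the left-hand side equals $\sinh d_{\H^3}(p,\Pi^1_1)$, and $|\sin\phi_p|=\sin\theta_p$. First I would use Corollary~\ref{hausdb} to choose embeddings so that $\pt\tilde M(g)\to\pt\tilde M(g_0)$ in the Hausdorff sense; since the $g_0$-strip $\Sigma$ is contained in $\Pi^1\cup\Pi^2$ and its $r$-extent in any slab $|t_p|\le T$ is bounded by $\max\{x^1_{i,0},x^2_{j,0}\}$, this yields a constant $R_0$ with $r_p\le R_0$ for every $p\in\Sigma(g)$ with $|t_p|\le T$, uniformly in a small neighbourhood of $g_0$.

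Assume first $\phi_0>\pi$ and fix $\mu\in(0,1)$. I would split $\theta_p\in[0,\pi/2]$ at the threshold $\Phi:=\max_i\phi^1_i+|\alpha|^\mu$; since $|\alpha|\le\|\Delta g\|$ and each $\phi^1_i=O(\|\Delta g\|)$, we have $\Phi=O(\|\Delta g\|^\mu)$. When $\theta_p\in[0,\Phi]$ the inequality $\sin\theta_p\le\Phi$ together with $r_p\le R_0$ immediately gives $\sinh d_{\H^3}(p,\Pi^1_1)\le\sinh(R_0)\cdot\Phi=O(\|\Delta g\|^\mu)$, hence $d_{\H^3}(p,\Pi^1_1)=O(\|\Delta g\|^\mu)$. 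When $\theta_p\in[\Phi,\pi/2]$, for $\|\Delta g\|$ small the inequality $\pi/2\le\min_j\phi^2_j-|\alpha|^\mu$ also holds (because $\min_j\phi^2_j=\phi_0+O(\|\Delta g\|)\ge\pi+O(\|\Delta g\|)$), so $\theta_p$ lies in the range where Lemma~\ref{gest1} applies; this yields $r_p=O(\|\Delta g\|^2)$ and hence $d_{\H^3}(p,\Pi^1_1)\le r_p=o(\|\Delta g\|^\mu)$.

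In the remaining case $\phi_0=\pi$, the strip $\Sigma(g_0)$ already lies in a single totally geodesic plane $\Pi_0=\Pi^1\cup\Pi^2$. In the slab $|t_p|\le T$ only finitely many $\gamma^k$-translates of the original vertices are relevant, each displaced by $O(\|\Delta g\|)$ from $\Pi_0$. Since $d_{\H^3}(\cdot,\Pi_0)$ is convex along geodesics in $\H^3$, every face of $\Sigma(g)$---a flat hyperbolic polygon equal to the hyperbolic convex hull of its vertices---lies within $O(\|\Delta g\|)$ of $\Pi_0$; because $\Pi^1_1$ itself is $O(\|\Delta g\|)$ close to $\Pi_0$, this gives $d_{\H^3}(p,\Pi^1_1)=O(\|\Delta g\|)$ directly, which is stronger than required. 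The second bound of the lemma I would handle by the symmetric argument using the coordinate $\phi^2_1-\theta_p$, the half-plane $\Pi^2_1$, and the threshold $\Phi':=\max_j(\phi^2_1-\phi^2_j)+|\alpha|^\mu=O(\|\Delta g\|^\mu)$; the identity $\phi^2_1-\Phi'=\min_j\phi^2_j-|\alpha|^\mu$ puts the interior subrange exactly inside Lemma~\ref{gest1}'s range. The one technical point I anticipate is the bookkeeping of exponents: one must apply Lemma~\ref{gest1} with the same $\mu$ as in the conclusion here, and the fact that Lemma~\ref{gest1} is stated only for $\mu<1$ is exactly what prevents improving the present exponent to $\mu=1$ by this route.
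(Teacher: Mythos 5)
Your argument needs to be assessed separately in the two cases $\phi_0>\pi$ and $\phi_0=\pi$. For $\phi_0>\pi$ your route is essentially the paper's: you split $\theta_p$ at the threshold $\Phi=\max_i\phi^1_i+|\alpha|^\mu$, use the elementary trigonometric identity $\sinh d_{\H^3}(p,\Pi^1_1)=\sin(\theta_p)\sinh(d_{\H^3}(p,\tilde\psi))$ on the near-boundary piece, and then invoke Lemma~\ref{gest1} (rather than, as the paper does, Lemma~\ref{gest2*}) on the middle piece. Since Lemma~\ref{gest1} is itself deduced from the ruled surface $\Sigma^*$, this is the same mechanism repackaged, and your bound $d_{\H^3}(p,\Pi^1_1)\le d_{\H^3}(p,\tilde\psi)=O(\|\Delta g\|^2)$ for the middle range is even slightly stronger than needed.

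The case $\phi_0=\pi$ is where the real content of the statement lies, and there your argument has a genuine gap. Lemmas~\ref{gest1} and~\ref{gest1*} are stated and proved only for $\phi_0>\pi$, so that reduction is unavailable, and you fall back on a convexity argument: all vertices of the face $F\ni p$ are supposed to be within $O(\|\Delta g\|)$ of the plane $\Pi_0$, and convexity of $d_{\H^3}(\cdot,\Pi_0)$ would then control $p$. The claim that ``in the slab $|t_p|\le T$ only finitely many $\gamma^k$-translates of the original vertices are relevant'' misses the point: the face containing $p$ need not stay in that slab. As $\|\Delta g\|\to0$ the $t$-extent of a face of $\Sigma(g)$ grows without bound (in the limit the face is an infinite strip), and the faraway vertices have been rotated far off $\Pi_0$. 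Concretely, if $\theta_p$ is near $\pi/2$ the supporting plane at $p$ must touch orbit points with $\theta$ on both sides of $\pi/2$; an orbit point $\gamma^k\tilde v^1_1$ has $\theta\approx k\alpha$, so $\theta\approx\pi/2$ forces $|t|\approx\pi a/(2|\alpha|)\to\infty$, and there $d_{\H^3}(\gamma^k\tilde v^1_1,\Pi_0)\approx\sinh(x_1)\sin(\pi/2)=O(1)$, not $O(\|\Delta g\|)$. Convexity then only yields $d_{\H^3}(p,\Pi_0)=O(1)$. What rescues the statement is not convexity but the exponential decay coming from the ultra-parallel trapezoid estimate of Lemma~\ref{asymptrap}, used inside the paper's proof of Lemma~\ref{gest2*}: contributions to $d_{\H^3}(p,\Pi)$ from points on the generatrix at projected distance $\ge|\alpha|^{\mu-1}$ from $p$ carry a factor $\exp(-|\alpha|^{\mu-1})$, which is super-polynomially small. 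That exponential suppression is indispensable for $\phi_0=\pi$, and it is precisely the ingredient your argument omits.
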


We postpone proofs of Lemmas~\ref{gest1}--\ref{gest2} and first deduce Lemmas~\ref{est2} and~\ref{est3} from them. To this purpose we also need few less involved estimates

\begin{lm}
\label{compare}
Fix $0<\mu\leq 1$ and let $\xi>0$ be a parameter. Consider the points $p, p' \in \H^2$ connected by a geodesic segment $\psi$. Let $\chi$ and $\chi'$ be geodesic rays from $p$ and $p'$ at the same side of $\psi$, and $\sigma$ be a convex arc starting at $p$ tangent to $\chi$ and ending at a point $p_0 \in \chi'$. By $r_0$ denote the foot of the perpendicular from $p_0$ to $\psi$. Assume that
\\
(1) the angle between $\psi$ and $\chi$ is $O(\xi^\mu)$;
\\
(2) the angle between $\psi$ and $\chi'$ is $O(\xi^\mu)$;
\\
(3) the length of $\psi$ is $O(1)$.
\\
Here all asymptotics are as $\xi$ tends to zero. Then $$\len(\sigma)-pr_0=O(\xi^{2\mu}).$$
\end{lm}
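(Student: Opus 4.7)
My plan is to decompose the difference $\len(\sigma)-pr_0$ as
\[
\big(\len(\sigma)-pp_0\big)+\big(pp_0-pr_0\big)
\]
and bound each piece separately by $O(\xi^{2\mu})$.

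For the second piece, I would drop the perpendicular from $p_0$ to $\psi$, which lands precisely at $r_0$ by definition, producing a hyperbolic right triangle $pr_0p_0$ with the right angle at $r_0$. The hyperbolic Pythagoras theorem gives
\[
\cosh(pp_0)=\cosh(pr_0)\cosh(p_0r_0),
\]
and since $pr_0\le pp'+p'r_0=O(1)$ by assumption (3), a Taylor expansion yields $pp_0-pr_0=O((p_0r_0)^2)$. To estimate $p_0r_0$, note that $p_0$ lies on the ray $\chi'$ emanating from $p'\in\psi$ at angle $O(\xi^\mu)$ with $\psi$; hence if $s$ denotes the distance from $p'$ to $p_0$ along $\chi'$, then $\sinh(p_0r_0)\le\sinh(s)\sin(O(\xi^\mu))=O(s\cdot\xi^\mu)$. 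A key preliminary step is therefore to check that $s$ stays bounded as $\xi\to0$: this follows because the convex arc $\sigma$, being tangent to $\chi$ at $p$ (with $\chi$ nearly parallel to $\psi$) and constrained to lie on one side of $\chi$, cannot extend very far along the nearly-parallel ray $\chi'$ before it would have to cross $\chi$ on the wrong side. Consequently $p_0r_0=O(\xi^\mu)$, and so $pp_0-pr_0=O(\xi^{2\mu})$.

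For the first piece, $\len(\sigma)-pp_0$, I would use convexity. At $p$ the tangent to $\sigma$ is $\chi$, which makes angle $O(\xi^\mu)$ with $\psi$; and the chord $pp_0$ also makes angle at most $O(\xi^\mu)$ with $\psi$ (since $p_0r_0=O(\xi^\mu)$ and $pr_0=O(1)$, by the sine law in the right triangle $pr_0p_0$). Hence the angle between the initial tangent of $\sigma$ and the chord $pp_0$ is $O(\xi^\mu)$, and the analogous statement holds at $p_0$ using $\chi'$. Because $\sigma$ is convex, it is contained in the hyperbolic triangle formed by extending its two endpoint tangent rays until they meet, and its length is therefore bounded by the sum of the two tangent segments. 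A standard trigonometric estimate in a hyperbolic triangle of bounded size with two small angles of order $O(\xi^\mu)$ gives
\[
\len(\sigma)\le pp_0+O((pp_0)\cdot\xi^{2\mu})=pp_0+O(\xi^{2\mu}),
\]
and the reverse inequality $\len(\sigma)\ge pp_0$ is trivial.

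Summing the two pieces gives $\len(\sigma)-pr_0=O(\xi^{2\mu})$. The main technical obstacle I foresee is justifying the boundedness of $s$ rigorously from the convexity of $\sigma$ and the almost-parallelism of $\chi,\chi'$ with $\psi$; everything else reduces to standard hyperbolic triangle estimates and the first few terms of the Taylor expansion of $\cosh$ and $\sinh$.
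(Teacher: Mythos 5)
Your decomposition $\len(\sigma)-pr_0 = \bigl(\len(\sigma)-pp_0\bigr) + \bigl(pp_0-pr_0\bigr)$ is a reasonable alternative to the paper's route, and the second piece (hyperbolic Pythagoras in the right triangle $p\,r_0\,p_0$ together with the bound $p_0r_0=O(\xi^\mu)$ coming from the small angle of $\chi'$ and the boundedness of $p'p_0$) is fine. The gap is in the first piece. You enclose $\sigma$ in ``the hyperbolic triangle formed by extending its two endpoint tangent rays,'' but the hypotheses only give tangency of $\sigma$ to $\chi$ at $p$; at the other end they only say that $p_0$ \emph{lies on} $\chi'$. Nothing controls the tangent direction of $\sigma$ at $p_0$: the arc can hug $\chi$ for most of its length and then turn sharply down to $p_0$, making the endpoint tangent nearly transverse to the chord. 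Your argument slides from the correct observation that $\chi'$ makes angle $O(\xi^\mu)$ with the chord at $p_0$ to treating $\chi'$ as if it were the endpoint tangent; so only one of the ``two small angles'' you invoke in the final triangle estimate is actually available.

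The paper never touches the tangent at $p_0$. It replaces $\sigma$ by the broken line $p\,p''\,p_0$ with $p''=\chi\cap\chi'$ and applies the Busemann--Feller lemma (Lemma~\ref{bflemma}): $\chi$ supports $\conv(\sigma)$ at $p$ because of the tangency, and $\chi'$ supports it at $p_0$ because $p_0\in\chi'$ and $\sigma$ stays inside the triangle $p\,p''\,p_0$ (implicit in the set-up, satisfied in all the applications). Busemann--Feller needs only supporting lines, not tangents, which is precisely what lets one sidestep the uncontrolled endpoint tangent. It then compares $pp''+p''p_0$ with $pr_0$ directly via the feet of the perpendiculars from $p''$ and $p_0$ to $\psi$, using $\tanh(pr)=\tanh(pp'')\cos(\angle p''pr)$ and a trapezoid sine law. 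Your proposal can be repaired by exactly this substitution: bound $\len(\sigma)\le pp''+p''p_0$ by Busemann--Feller, then estimate the thin-triangle defect $pp''+p''p_0-pp_0=O(\xi^{2\mu})$ using the two base angles of $p\,p''\,p_0$ (both $O(\xi^\mu)$), and finally keep your $pp_0-pr_0=O(\xi^{2\mu})$ step. That yields a correct proof that genuinely routes through the chord $pp_0$, which the paper does not do.
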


Here and in what follows we will abuse the notation and sometimes write $pr_0$ for the length of the geodesic segment $pr_0$.

\begin{proof}
We have $\len(\sigma)-pr_0 \geq 0$. Clearly, the rays $\chi$ and $\chi'$ intersect provided that $\xi$ is sufficiently small, denote the intersection point by $p''$. Let $\sigma''$ be the broken line consisting of the segments $pp''$ and $p''p_0$. By the Busemann--Feller lemma we have $\len(\sigma'') \geq \len(\sigma)$. Thus, it is enough to show that $\len(\sigma'')-pr_0 = O(\xi^{2\mu}).$

Let $r$ be the foot of the perpendicular from $p''$ to $\psi$. We have
$$\tanh(pr)=\tanh(pp'')\cos(\angle p''pr).$$
As $\xi \rightarrow 0$ we get from (1)
$$1-\cos(\angle p''pr)=O(\xi^{2\mu}).$$
Also clearly $pp''=O(1)$, $pr=O(1)$. Thus, we can estimate
\begin{multline}
\label{comp1}
0\leq pp''-pr \leq \sinh(pp''-pr) = \cosh(pp'')\cosh(pr)(\tanh(pp'')-\tanh(pr))= \\
=O(1)\tanh(pp'')(1-\cos(\angle p''pr))=O(\xi^2).
\end{multline}

Now for the trapezoid $p''p_0r_0r$ we have from Lemma~\ref{sinlaw}
\begin{equation}
\label{comp2}
\frac{\sinh(p''p_0)}{\sinh(rr_0)}=\frac{\cosh(p''r)}{\sin(\ang p''p_0r_0)}.
\end{equation}

From
$$\tan(\ang p_0p'r_0)\cosh(p_0p')=\cot(\ang p'p_0r_0)=\tan\left(\ang p''p_0r_0-\frac{\pi}{2}\right)$$
we get $0 \leq \ang p''p_0r_0-\frac{\pi}{2} = O(\xi^\mu).$ Note that
$$\cosh(p''r)-1=O(\xi^{2\mu}),~~~~~1-\cos(\angle p_0p'r_0)=O(\xi^{2\mu}).$$

Using this and~(\ref{comp2}) we obtain
\begin{multline}
\label{comp3}
0\leq p''p_0 - rr_0 \leq 2\sinh\left(\frac{p''p_0 - rr_0}{2}\right)\leq 2\sinh\left(\frac{p''p_0 - rr_0}{2}\right)\cosh\left(\frac{p''p_0 + rr_0}{2}\right)=\\
=\sinh(p''p_0)-\sinh(rr_0)=\sinh(p''p_0)\left(1-\cos\left(\ang p''p_0r_0-\frac{\pi}{2}\right)\right)+\\+\sinh(rr_0)(\cosh(p''r)-1)
=O(\xi^{2\mu}).
\end{multline}

Altogether~(\ref{comp1}) and~(\ref{comp3}) give us
$$0\leq \len(\sigma'')-pr_0=pp''+p'p''-pr-rr_0 = O(\xi^{2\mu}).$$
\end{proof}

\begin{lm}
\label{compare2}
Fix $0<\mu<1$ and let $\xi>0$ be a parameter. Let $ \psi$ be a geodesic line in $\H^3$, $C$ be the surface consisting of all points at distance $x$ from $ \psi$ where $x=O(\xi^2)$ as $\xi$ tends to zero. Consider two points $p,p' \in C$, let $q,q'$ be their projections to $ \psi$ and $\sigma \subset C$ be an intrinsic shortest arc connecting them. Assume that $qq'=O(1)$. Then
$$0\leq \len(\sigma)-qq'=O(\xi^2).$$
\end{lm}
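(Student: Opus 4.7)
The plan is to introduce cylindrical coordinates $(t,r,\theta)$ around $\psi$, in which the hyperbolic metric of $\H^3$ takes the standard form
\[
ds^2=\cosh^2(r)\,dt^2+dr^2+\sinh^2(r)\,d\theta^2,
\]
where $t$ parametrises $\psi$ by arclength and $\theta$ is an angular coordinate in the $2$-planes perpendicular to $\psi$. Restricting to the equidistant surface $\{r=x\}$, which is exactly $C$, the induced intrinsic metric is
\[
ds^2_C=\cosh^2(x)\,dt^2+\sinh^2(x)\,d\theta^2.
\]
Both coefficients are constants, so $C$ is intrinsically isometric to a \emph{flat} cylinder of infinite height and circumference $2\pi\sinh(x)$; geodesics on $C$ lift to straight lines in the universal cover $\R^2$.

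Next, I would write $p=(t_1,\theta_1)$ and $p'=(t_2,\theta_2)$ in these coordinates. The projections $q,q'\in\psi$ then satisfy $qq'=|t_1-t_2|$, and an intrinsic shortest arc $\sigma$ has the explicit length
\[
\len(\sigma)=\sqrt{\cosh^2(x)(qq')^2+\sinh^2(x)(\Delta\theta)^2},
\]
where $\Delta\theta\in[0,\pi]$ is the reduced angular difference between $p$ and $p'$. The lower bound $\len(\sigma)\geq qq'$ is immediate from $\cosh(x)\geq 1$.

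For the upper bound, using the identity $\cosh^2(x)-1=\sinh^2(x)$ gives
\[
\len(\sigma)^2-(qq')^2=\sinh^2(x)\bigl((qq')^2+(\Delta\theta)^2\bigr).
\]
Plugging in $\sinh(x)=O(x)=O(\xi^2)$, the hypothesis $qq'=O(1)$, and $\Delta\theta\leq\pi$, the right-hand side is $O(\xi^4)$. The elementary inequality $\sqrt{a^2+b^2}\leq a+b$ for $a,b\geq 0$ then yields $\len(\sigma)-qq'=O(\xi^2)$, as required.

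I do not expect any real obstacle: the only nontrivial point is deriving the intrinsic metric on $C$ and observing it is flat, after which both bounds follow from a one-line computation with the explicit distance formula on a flat cylinder. Note in particular that we do not need the Busemann--Feller lemma that was used in the proof of Lemma~\ref{compare}.
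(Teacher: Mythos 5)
Your proof is correct, and it takes a different route from the paper. The paper never writes down the induced metric on $C$ explicitly; instead it bounds $\len(\sigma)$ from above by a two-piece competitor path: from $p$ it follows the curve of $C$ lying over the segment $qq'$ (the longitudinal arc $\sigma''$, of length exactly $qq'\cosh(x)$, hence within $O(\xi^2)$ of $qq'$), then the meridian circle arc $\sigma'$ of radius $x$ from the auxiliary point $p''$ to $p'$, whose length is $O(\xi^2)$; the triangle inequality $\len(\sigma)\leq\len(\sigma')+\len(\sigma'')$ finishes the upper bound, and the lower bound is left implicit. You instead compute the induced metric of the equidistant cylinder in Fermi coordinates, observe that it is intrinsically flat with the two constant scaling factors $\cosh(x)$ and $\sinh(x)$, and read off the exact geodesic distance $\sqrt{\cosh^2(x)(qq')^2+\sinh^2(x)(\Delta\theta)^2}$, after which both the lower bound ($\cosh(x)\geq 1$) and the upper bound ($\len(\sigma)\leq qq'+\sinh(x)\sqrt{(qq')^2+(\Delta\theta)^2}=qq'+O(\xi^2)$, using $\Delta\theta\leq\pi$ and $qq'=O(1)$) are one line of algebra. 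Your approach buys an exact formula, an explicit lower bound, and independence from competitor-path constructions; the paper's buys a coordinate-free argument in the same style as the adjacent Lemma~\ref{compare} (and, as you note, neither argument needs the Busemann--Feller lemma for this particular statement, though the paper does use it later when applying Lemma~\ref{compare2} inside the proof of Lemma~\ref{est2}). The only points to keep straight in your version are that the flat-cylinder distance uses the reduced angular difference $\Delta\theta\in[0,\pi]$ and that $qq'=|t_1-t_2|$ because $t$ is arclength along $\psi$; you handled both.
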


\begin{proof}
Let $p''$ be the point at the intersection of $C$, the plane spanned by $p$ and $ \psi$ and of the plane orthogonal to $ \psi$ at $q'$. Let $\sigma'' \subset C$ be the intrinsic shortest arc connecting $p$ to $p''$ and $\sigma' \subset C$ be the intrinsic shortest arc connecting $p'$ to $p''$. We then have $\len(\sigma'')=qq'\cosh(x)$. Thus, $0 \leq \len(\sigma'')-qq'=O(\xi^2)$. Also $\sigma'$ is an arc of a circle centered at $q'$ with radius $x$, hence $\len(\sigma')=O(\xi^2)$. Finally, $$\len(\sigma)-qq'\leq \len(\sigma')+\len(\sigma'')-qq'=O(\xi^2).$$
\end{proof}

Now we are ready to prove Lemma~\ref{est2}.

\begin{proof}
Suppose that $\alpha \neq 0$. Consider the arc $ e''_1$. It belongs to the orthogonal plane to $\Pi ^1_1$ passing through $ v ^1_1$ and $u$. Let $r_1$ be the foot of the perpendicular from $p_1$ to $ v ^1_1u$ and $q_1$ be the foot of the perpendicular from $p_1$ to $\psi$. We want to apply Lemma~\ref{compare} to $ e''_1$ and to the segment $ v ^1_1u$. Considering the spherical link at the point $u$ we get
$$\tan(\ang p_1u v ^1_1)=\tan(\theta_{p_1})\sin(\ang q_1u v ^1_1).$$
Thus, from $\theta_{p_1}=O(\|\Delta g\|^{\mu'})$ we get $\ang p_1u v ^1_1=O(\|\Delta g\|^{\mu'})$. It remains to show that the angle between the tangent ray to $ e''_1$ and $ v ^1_1u$ at $ v ^1_1$ is $O(\|\Delta g\|^{\mu'})$. Denote this angle by~$\eta_1$.

Let $v, w$ be the neighbouring vertices of $v ^1_1$ in $\pt \Sigma$. By $z_1$ denote the intrinsic distance from $v ^1_1$ to the intrinsic geodesic segment $vw$ in $\Sigma$. Let $p'_1$ be the intersection point of $ e''_2$ with $vw$. Note that the part of $e''_1$ between $ v ^1_1$ and $p'_1$ is a geodesic segment in $\H^3$ and $v^1_1p'_1 \geq z_1$. Due to Lemma~\ref{gest2}, we have $d_{\H^3}(p'_1, \Pi ^1_1)=O(|g|^{\mu'})$. Thus, from
$$\sinh(d_{\H^3}(p'_1, \Pi ^1_1))=\sin(\eta_1)\sinh( v ^1_1p'_1)$$
we get $\eta_1=O(\|\Delta g\|^{\mu'})$.

Now we can apply Lemma~\ref{compare}. We obtain
$$\len( e''_1)- v^1_1r_1 =O(\|\Delta g\|^{2\mu'})=O(\|\Delta g\|^{1+\mu}).$$

We deal just the same with the arc $ e''_2$ of $ e''$ between $ v^2_1$ and $p_2$. Let $r_2$ be the foot of the perpendicular from $p_2$ to $ v^2_1u$. We get
$$\len( e''_2)- v^2_1r_2 =O(\|\Delta g\|^{1+\mu}).$$

Now we need to bound $\len( e''_3)-r_1u-r_2u$. By $u_1$, $u_2$ denote the projections of $r_1$, $r_2$ to $ \psi$. We have $r_1u \geq u_1u$, $r_2u \geq u_2u$. 
Lemma~\ref{gest2} shows that $$\sup_{p \in  e''_3} d_{\H^3}(p, \psi) = O(\|\Delta g\|^2).$$
Hence we can choose a radius $x=O(\|\Delta g\|^2)$ such that $ e''_3$ belongs to the set of points at distance at most $x$ from $ \psi$. Let $C$ be the boundary of this set, $p''_1$ and $p''_2$ be the intersections of $C$ with the rays $u_1p_1$, $u_2p_2$, let $\sigma \subset C$ be the intrinsic shortest arc connecting $p''_1$ and $p''_2$. Lemma~\ref{compare2} implies that $\len(\sigma)-u_1u_2 = O(\|\Delta g\|^2)$. The Busemann--Feller lemma shows that 
$$\len( e''_3) \leq p_1p''_1+\len(\sigma)+p''_2p_2=\len(\sigma)+O(\|\Delta g\|^2).$$
Thus, $\len( e''_3)-u_1u_2=O(\|\Delta g\|^2).$
Hence, also $\len( e''_3)-r_1u-r_2u=O(\|\Delta g\|^2).$

Summing this up we get
$$l''_e-l'_e=O(\|\Delta g\|^{1+\mu}).$$

Now suppose that $\alpha=0$. Then the arc $e''_3$ is the union of two geodesic segments, $p_1u$ and $p_2u$. Denote similarly the points $p'_1$, $p'_2$ and the angles $\eta_1$, $\eta_2$. We have
$$d_{\H^3}(p'_1, \Pi^1_1)\leq \sinh(d_{\H^3}(p'_1, \Pi^1_1)) \leq \max\{\sin(|\phi^1_i|)\}\max\{\sinh(x^1_i)\}=O(\|\Delta g\|).$$
Hence, also $\eta_1=O(\|\Delta g\|)$ and from Lemma~\ref{compare} we get
$$\len(e''_1)+p_1u-v^1_1u=O(\|\Delta g\|^2).$$
Similarly,
$$\len(e''_2)+p_2u-v^2_1u=O(\|\Delta g\|^2)$$
and altogether
$$l''_e-l'_e=\len(e''_1)+p_1u+p_2u+\len(e''_2)-v^1_1u-v^2_1u=O(\|\Delta g\|^2),$$
which finishes the proof.
\end{proof}

For Lemma~\ref{est3} we also require

\begin{lm}
\label{compare3}
Fix $0<\mu\leq 1$ and let $\xi>0$ be a parameter. Let $p_1p_0p_2q$ be a hyperbolic quadrilateral with $\ang p_1qp_2 \geq \pi$, but with the other angles $<\pi$, such that \\
(1) the angle $\ang p_1qp_2-\pi=O(\xi)$;\\
(2) the angles $\ang p_0p_1q$ and $\ang p_0p_2q$ are $O(\xi^\mu)$;\\
(3) the lengths $p_1q$ and $p_2q$ are $O(1)$.\\
Here all asymptotics are as $\xi$ tends to zero. Let $\sigma$ be a convex arc connecting $p_1$ and $p_2$ such that $q$ belongs to the convex side. Then
$$\len(\sigma)-p_1q-p_2q=O(\xi^{2\mu}).$$
\end{lm}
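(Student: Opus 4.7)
The plan is to sandwich $\len(\sigma)$ between two bounds that each differ from $p_1q + p_2q$ by $O(\xi^{2\mu})$, proceeding in the spirit of Lemmas~\ref{compare} and~\ref{compare2}: hyperbolic trigonometry gives the key length estimates, while the Busemann--Feller lemma gives the convex comparison.

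For the lower bound, I would observe that the interior angle of the hyperbolic triangle $p_1 q p_2$ at $q$ equals $2\pi$ minus the quadrilateral angle at $q$, hence equals $\pi - O(\xi)$. Combining the hyperbolic cosine law with the expansion $\cos(\pi - O(\xi)) = -1 + O(\xi^2)$ yields
$$\cosh(p_1 p_2) = \cosh(p_1 q)\cosh(p_2 q) + \sinh(p_1 q)\sinh(p_2 q)(1 - O(\xi^2)) = \cosh(p_1 q + p_2 q) - O(\xi^2),$$
so $p_1q + p_2q - p_1p_2 = O(\xi^2)$. Since $\len(\sigma) \geq p_1 p_2$ as $\sigma$ is a curve between $p_1$ and $p_2$, this gives the lower bound with room to spare since $2\mu < 2$.

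For the upper bound, I first bound $\len(\sigma) \leq p_1 p_0 + p_0 p_2$ via the Busemann--Feller lemma. The reflex condition at $q$ forces $q$ to lie in the interior of the triangle $p_1 p_0 p_2$, and $\sigma$ being convex with $q$ on its convex side then places $\sigma$ inside this closed triangle, so one can project the broken line $p_1 p_0 p_2$ onto the convex hull of $\sigma$ with its chord $p_1 p_2$ to decrease length. Next I show $p_1 p_0 + p_0 p_2 - p_1 q - p_2 q = O(\xi^{2\mu})$ by hyperbolic trigonometry in the triangles $p_1 q p_0$ and $p_2 q p_0$. The small angles $O(\xi^\mu)$ at $p_1, p_2$ together with the constraint $\ang p_1 q p_0 + \ang p_0 q p_2 = \pi + O(\xi)$ determine $p_0$ at leading order: setting up Fermi coordinates along the nearly-straight broken path $p_1 q p_2$, the point $p_0$ has perpendicular displacement $O(\xi^\mu)$, and expanding the cosine law to second order in this displacement produces the desired cancellation when the two summands $p_1 p_0$ and $p_2 p_0$ are added.

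The main obstacle is this cancellation. The individual lengths $p_1 p_0$ and $p_2 p_0$ typically differ from $p_1 q$ and $p_2 q$ by $O(1)$, because $p_0$ can slide along the nearly-straight line from $p_1$ to $p_2$ through $q$ without being close to $q$ itself. It is only in the sum $p_1 p_0 + p_2 p_0$ that the first-order tangential deviations cancel, leaving the second-order contribution from the perpendicular displacement of order $O(\xi^\mu)$, which integrates to $O(\xi^{2\mu})$. Verifying the Busemann--Feller step also requires a careful convex-geometric argument: since both $\sigma$ and the broken line $p_1 p_0 p_2$ are convex arcs from $p_1$ to $p_2$ with $q$ on their convex sides, one has to rule out that $\sigma$ escapes the triangle $p_1 p_0 p_2$, which is where the concrete placement of $q$ relative to both arcs enters.
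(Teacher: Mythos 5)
Your proposal lines up with the paper through the Busemann--Feller step $\len(\sigma)\le p_1p_0+p_0p_2$, and your lower-bound argument via $\len(\sigma)\ge p_1p_2$ plus the cosine-law estimate $p_1q+p_2q-p_1p_2=O(\xi^2)$ is a valid (if slightly heavier) substitute for the paper's simple observation that $\len(\sigma)-p_1q-p_2q\ge 0$. The divergence is in the crucial estimate $p_1p_0+p_0p_2-p_1q-p_2q=O(\xi^{2\mu})$. You correctly identify this as the heart of the matter and correctly diagnose the obstacle: $p_1p_0-p_1q$ and $p_2p_0-p_2q$ are individually $O(1)$, and the $O(\xi^{2\mu})$ size only emerges after cancellation of the first-order tangential contributions. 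But you do not carry out the Fermi-coordinate expansion that is supposed to exhibit this cancellation; you describe it qualitatively and stop. That is the gap. Moreover, carrying it out is not a routine exercise: the tangential position of $p_0$ along the nearly-straight path $p_1qp_2$ is an uncontrolled $O(1)$ quantity, so the second-order expansion in the perpendicular displacement has a denominator that must be kept away from zero (the angle hypotheses do this, but it needs to be tracked), and the near-break at $q$ introduces a further $O(\xi)$ error that must be shown not to spoil the $O(\xi^{2\mu})$ budget.

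The paper sidesteps this entire computation with a single auxiliary construction: let $p'_0$ be the intersection of the ray $p_1q$ with the segment $p_0p_2$. A short argument gives $\ang p_0 p'_0 p_1=O(\xi^\mu)$. Then Lemma~\ref{compare} applied to the triangle $p_1p_0p'_0$ gives $p_1p_0+p_0p'_0-p_1p'_0=O(\xi^{2\mu})$, and Lemma~\ref{compare} applied to the triangle $p_2qp'_0$ gives $qp'_0+p'_0p_2-p_2q=O(\xi^{2\mu})$. Since $p'_0$ lies on segment $p_0p_2$ and on the ray $p_1q$ beyond $q$, the identities $p_0p'_0+p'_0p_2=p_0p_2$ and $p_1p'_0=p_1q+qp'_0$ make the two estimates sum to exactly the target. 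This reduces the lemma to two uses of an already-proven comparison estimate and avoids re-deriving the second-order cancellation from scratch, which is essentially what your Fermi-coordinate route would force you to do. So: right target, right diagnosis, but the decisive step is left unproven, and the paper's auxiliary-point trick is the missing idea that makes the argument close cleanly.
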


\begin{proof}
We have $\len(\sigma)-p_1q-p_2q \geq 0$. Let $\sigma'$ be the broken line consisting of the segments $p_1p_0$ and $p_2p_0$. By the Busemann--Feller lemma we have $\len(\sigma')\geq \len (\sigma)$. Thus, it is enough to show $$\len(\sigma')-p_1q-p_2q=p_1p_0+p_2p_0-p_1q-p_2q = O(\xi^{2\mu}).$$

Let the ray $p_1q$ intersect the segment $p_0p_2$ in a point $p'_0$. It is easy to check that
$$\ang p_0p'_0p_1 = O(\xi^\mu).$$
Applying Lemma~\ref{compare} to the triangle $p_1p_0p'_0$ we get
$$p_1p_0+p_0p'_0-p'_0p_1=O(\xi^{2\mu}).$$
Applying Lemma~\ref{compare} to the triangle $p_2qp'_0$ we obtain
$$qp'_0+p'_0p_2-p_2q=O(\xi^{2\mu}).$$
Altogether this gives
$$p_1p_0+p_2p_0-p_1q-p_2q = O(\xi^{2\mu}).$$
\end{proof}

Now we prove Lemma~\ref{est3}.

\begin{proof}
Suppose that $\alpha \neq 0$. Let $\eta_1$ be the angle between the tangent ray to $e''_1$ and $v^1_1u$ at $v^1_1$, $\eta_2$ be the angle between the tangent ray to $e''_2$ and $v^2_1u$ at $v^2_1$. Similarly to the proof of Lemma~\ref{est2}, from Lemma~\ref{gest2} we get that $\eta_1$ and $\eta_2$ are $O(\|\Delta g\|^{\mu'})$. Consider the union of the half-planes of the planes $v^1_1p_0u$ and $v^2_1p_0u$ containing the arc $e''_1$. Develop them to $\H^2$. Clearly, when $\|\Delta g\|$ is sufficiently small, the tangent rays to $e''_2$ at its endpoints intersect in $\H^2$. Let $p'_0$ be the intersection point. Our result follows from applying Lemma~\ref{compare3} to the quadrilateral $v^1_1p'_0v^2_1u$ and the arc $e''_2$.

For $\alpha=0$ we argue like in the proof of Lemma~\ref{est2} to obtain that $\eta_1$ and $\eta_2$ are $O(\|\Delta g\|)$. Then Lemma~\ref{compare3} gives us 
$$l''_e-l'_e=O(\|\Delta g\|^2),$$
which finishes the proof.
\end{proof}

We can finish the proof of Lemma~\ref{est1}. 

\begin{proof}[Proof of Lemma~\ref{est1}.]
Due to the discussion above, Lemma~\ref{est1} follows from Lemmas~\ref{est2} and~\ref{est3} except the case $\phi_0=0$, $\phi^2_1<\pi$, so we deal with this case here. Suppose that $\alpha \neq 0$. Let $\Pi$ be the plane containing the half-plane $\Pi^1_1$ and $v_0$ be the orthogonal projection of $v^2_1$ to $\Pi$. By $e_0$ we denote the union of the segments $v^1_1u$ and $uv_0$, by $l_0$ we denote its length. We act now similarly to the proof of Lemma~\ref{est3}. By $\eta_1$, $\eta_2$ we denote the angles between the tangent rays to $e''_2$ at its endpoints and between the segments connecting the endpoints with $u$. We get that $\eta_1$, $\eta_2$ are $O(\|\Delta g\|^{\mu'})$. Since the dihedral angle between $\Pi^2_1$ and $\Pi$ is $O(\|\Delta g\|)$, it is easy to see that when $\|\Delta g\|$ is small enough, the extension of the tangent ray to $e''_2$ at $v^2_1$ in the converse direction intersects $\Pi$ and the intersection angle is $O(\|\Delta g\|^{\mu'})$. Applying Lemma~\ref{compare} we get
$$l''_e-l_0=O(\|\Delta g\|^{2\mu'}).$$

By $l'_0$ we denote the length $v^1_1v_0$. It is easy to check that since the dihedral angle between $\Pi^2_1$ and $\Pi$ is $O(\|\Delta g\|)$, we get $l'_0-l_0=O(\|\Delta g\|^2)$ and $l'_e-l'_0=O(\|\Delta g\|^2)$. Thus, $l''_e-l'_0=O(\|\Delta g\|^{2\mu'}).$ Since $l''_e \geq l_e \geq l'_0$, we obtain $l_e-l'_0=O(\|\Delta g\|^{2\mu'})$, and, finally, $$l_e-l'_e=O(\|\Delta g\|^{2\mu'}).$$

For $\alpha=0$, similarly to the proofs of Lemmas~\ref{est2},~\ref{est3} we can obtain that $\eta_1$, $\eta_2$ are $O(\|\Delta g\|)$. The remaining part is identical to the argument above.
\end{proof}

Now we start to prove Lemmas~\ref{gest1}--\ref{gest2}. To this purpose we introduce an auxiliary surface. Since now we assume $\alpha \neq 0$. Let $\Gamma^* < {\rm Iso}^+(\H^3)$ be the smooth 1-parametric subgroup consisting of isometries with the oriented shift $t \in \R$ along $ \psi$ in the positive direction and rotating by $\frac{\alpha t}{a}$ clockwise. Note that $\Gamma < \Gamma^*$. By $\tau^j_i$ we denote the $\Gamma^*$-orbit of $ v^j_i$, which is a smooth curve, by $ M^*(g)$ we denote the closed convex hull of all $\tau^j_i$. We have $ M(g) \subset  M^*(g)$.

\begin{lm}
\label{strips}
If $g$ is sufficiently close to $g_0$, then the boundary $\partial  M^*(g)$ contains a curve $\tau^1_{i_1}$ for some $i_1$ and a curve $\tau^2_{i_2}$ for some $i_2$.
\end{lm}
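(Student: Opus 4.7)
The plan is to exhibit, for each $j \in \{1,2\}$ and each $g$ sufficiently close to $g_0$, an index $i_j$ and a supporting plane to $M^*(g)$ passing through some point of $\tau^j_{i_j}$; by $\Gamma^*$-invariance of $M^*(g)$, the whole orbit $\tau^j_{i_j}$ will then lie in $\pt M^*(g)$. I focus on the $j=1$ case, the other being symmetric. The natural candidate is $i_1^* \in \arg\max_i x^1_{i,0}$, i.e.\ a vertex in $\Pi^1$ at maximal distance from $\tilde\psi$ at $g_0$.

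First I verify the conclusion at $g=g_0$. If $\phi_0 > \pi$, the half-plane $\Pi^2$ lies entirely on one closed side of the full geodesic plane $\Pi^1_{full}$, while $\Pi^1 \subset \Pi^1_{full}$; hence every orbit $\tau^j_{i,0}\subset \Pi^1\cup\Pi^2$ is in a common closed half-space $\ol H$ bounded by $\Pi^1_{full}$, and therefore $M^*(g_0)\subset \ol H$. Thus $\Pi^1_{full}$ is a supporting plane to $M^*(g_0)$ through $v^1_{i_1^*,0}$, and symmetrically $\Pi^2_{full}$ is a supporting plane through $v^2_{i_2^*,0}$. If $\phi_0=\pi$, all orbits lie in the single plane $\Pi^1_{full}$, so $M^*(g_0)$ is 2-dimensional in $\H^3$ and coincides with its own topological boundary, and every $v^j_{i,0}$ is trivially on $\pt M^*(g_0)$.

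To propagate to nearby $g$, I pass to the Klein model and work with the linear functional $\nu_0$ vanishing on $\Pi^1_{full}$ and strictly negative on the open side containing (the interior of) $\Pi^2$. Three ingredients control the support function $h_g(\nu_0)=\sup_{p\in M^*(g)}\nu_0\cdot p$: the vertices $v^2_i(g)$ remain uniformly bounded away from $\Pi^1_{full}$ on the negative side, so $\nu_0\cdot v^2_i(g)\leq -c<0$ throughout a neighborhood of $g_0$; the endpoints of $\tilde\psi$ on $\pt_\infty\H^3$ are fixed and lie in $\pt_\infty\Pi^1_{full}$, giving limit value $0$ at the two ends of each orbit; and the vertices $v^1_i(g)$ satisfy $\nu_0\cdot v^1_i(g)$ close to $0$. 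Along any $\tau^2_i(g)$, the continuous function $\nu_0\cdot p$ has boundary limits $0$ but interior values close to the strictly negative value at $v^2_i(g)$, so its sup over the orbit is either approached only in the limit (giving value $0$, not attained in $\H^3$) or attained near $v^2_i(g)$ (giving a strictly negative value). Choosing $\nu$ slightly tilted from $\nu_0$ if necessary so that the supremum over $M^*(g)$ is attained in $\H^3$ rather than only at infinity, this supremum must be realized at some vertex $v^1_{i_1}(g)$ (since every $v^2_i(g)$-contribution is uniformly strictly smaller), and the hyperplane $\{\nu\cdot x=\nu\cdot v^1_{i_1}(g)\}$ witnesses $v^1_{i_1}(g)\in\pt M^*(g)$.

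The main obstacle is the degenerate case $\phi_0=\pi$, where at $g_0$ all vertices lie in a single plane and there is no preferred side of $\Pi^1_{full}$ to perturb toward; one must choose the direction $\nu$ depending on the actual perturbation (using the perturbation coordinates $\phi^j_i$ to determine which side the $\tau^1$-orbits collectively lie on for a given $g$). A secondary subtlety, present in both cases, is ensuring that the supremum in the support-function argument is actually realized at an interior vertex and not only asymptotically at the endpoints of $\tilde\psi$; this again forces one to tilt $\nu_0$ infinitesimally so as to push the limit values below the vertex values.
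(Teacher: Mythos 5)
The crux of your plan is the claim: ``Along any $\tau^2_i(g)$, the continuous function $\nu_0\cdot p$ has boundary limits $0$ but interior values close to the strictly negative value at $v^2_i(g)$, so its sup over the orbit is either approached only in the limit \ldots or attained near $v^2_i(g)$.'' This is false as soon as $\alpha \neq 0$. The $\Gamma^*$-orbits are spirals about $\tilde\psi$: along $\tau^2_i(g)$ the angle $\theta_p = \phi^2_i + \alpha(t_p - y^2_i)/a$ sweeps through all of $\R$, so the orbit repeatedly crosses the plane $\Pi^1_{full}$ and spends ranges of $t_p$ on the side where $\nu_0 > 0$. The sup of $\nu_0$ over $\tau^2_i(g)$ is therefore strictly positive (not ``approached only in the limit,'' not ``attained near $v^2_i$''). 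The same spiraling also invalidates the premise of your supporting-plane construction at $g_0$ when pushed to nearby $g$: the slice $\{\nu_0 = 0\} \cap M^*(g_0)$ is the whole 2-dimensional strip, so there is no isolated touching point to perturb, and the ``tilt $\nu_0$ infinitesimally'' move you flag as a secondary subtlety does not by itself arbitrate between the $\tau^1$ and $\tau^2$ contributions.

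One could conceivably rescue the idea: the positive excursions of $\nu_0$ along a $\tau^2$-orbit occur only at $|t_p| \gtrsim c/|\alpha|$, where the Euclidean (Klein-model) radius of the spiral is exponentially small in $1/|\alpha|$, while the $\tau^1$-orbits have $\nu_0$-values of order $\|\Delta g\|$. Making this precise is a genuine quantitative argument, and you have not carried it out; moreover, in the borderline regime $\phi^1_i = O(e^{-c/|\alpha|})$ the comparison is not clear. The $\phi_0 = \pi$ case, which you flag as the ``main obstacle,'' is worse still: there is no preferred side and the $\tau^1$-orbits themselves spiral to both sides of any candidate plane. The paper avoids all of this with a soft compactness argument: if $g_k \to g_0$ and $\pt M^*(g_k)$ contained no $\tau^2$-orbit, then $M^*(g_k) = \clconv(\{\tau^1_i(g_k)\})$, whose Hausdorff limit is a strip inside $\Pi^1$ only, whereas $M^*(g_k) \to M^*(g_0)$ in the Hausdorff sense and $M^*(g_0)$ has a part reaching into $\Pi^2$ as well — a contradiction, with no estimate on the spirals ever needed.
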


\begin{proof}
Clearly $\partial M^*(g)$ must contain at least one curve $\tau^j_i$. Suppose that there exists a sequence $g_k$ converging to $g$ in $\mathcal P_b(M, V)$ such that $\partial  M^*(g)$ does not contain curves of the type $\tau^2_i$. Similarly to Corollary~\ref{hausdb}, one can see that $\partial M^*(g_k)$ must converge in the Hausdorff sense to $\partial M^*(g)$. However, it is easy to see that $\clconv(\{\tau^1_i\})$ converge in the Hausdorff sense to a subset of $\Pi^1$ bounded by $\psi$ and by a curve $\tau^1_{i_0} \subset \Pi^1$ at the greatest distance from $\psi$ in $g_0$.
\end{proof}

The union of the curves $\tau^j_i$ belonging to $\partial  M^*(g)$ divide $\partial  M^*(g)$ into strips. When $g$ is sufficiently close to $g_0$, there are exactly two strips bounded by a curve $\tau^1_{i_1}$ for some $i_1$ and a curve $\tau^2_{i_2}$ for some $i_2$. We choose the one, for which the curve $\tau^2_{i_2}$ is clockwise with respect to the curve $\tau^1_{i_1}$. We denote this strip by $\Sigma^*$. From now on we denote the boundary curves of $\Sigma^*$ just by $\tau_1$ and $\tau_2$. Similarly, we denote $v^1_{i_1}$, $v^2_{i_2}$, $x^1_{i_1}$, $x^2_{i_2}$, $y^1_{i_1}$, $y^2_{i_2}$, $\phi^1_{i_1}$, $\phi^2_{i_2}$ just by $v_1$, $v_2$, $x_1$, $x_2$, $y_1$, $y_2$, $\phi_1$, $\phi_2$. Define $y:=y_2-y_1$, $\phi:=\phi_2-\phi_1$.

The surface $\Sigma^*$ is invariant with respect to $\Gamma^*$. Each point $p \in \Sigma^* \backslash (\tau_1 \cup \tau_2)$ either belongs to a segment contained in $\Sigma^*$ with vertices on $\tau_1$, $\tau_2$, or belongs to a triangle contained in $\Sigma^*$ with vertices on $\tau_1$, $\tau_2$. As there is no plane invariant with respect to $\Gamma^*$, the second option is not possible. Thus, $\Sigma^*$ is a ruled surface decomposing into the union of segments so that the decomposition is $\Gamma^*$-invariant. 

The main step is to prove analogues of Lemmas~\ref{gest1} and~\ref{gest2} for the surface $\Sigma^*$.

\begin{lm}
\label{gest1*}
Let $\phi_0>\pi$. For every $0<\mu<1$ and every $T>0$ we have
$$\sup\{d_{\H^3}(p,  \psi): p \in \Sigma^*, -T \leq t_p \leq T, \max_{1 \leq i \leq n_1}\phi^1_i+|\alpha|^\mu \leq  \theta_p \leq \min_{1 \leq i \leq n_2}\phi^2_i-|\alpha|^\mu \} = O(\|\Delta g\|^2).$$
\end{lm}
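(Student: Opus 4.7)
I would prove Lemma~\ref{gest1*} by exploiting the $\Gamma^*$-invariant ruled structure of $\Sigma^*$ and reducing the estimate to a single long generator.

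First I would establish the structural picture: for $g$ close to $g_0$ with $\alpha\neq 0$, the generators of $\Sigma^*$ are long geodesic chords whose endpoints on $\tau_1,\tau_2$ are separated along $\psi$ by height $|t_{p_2(s)}-t_{p_1(s)}|=\Theta(a/|\alpha|)$. This is the only way a $\Gamma^*$-invariant ruling of the long-way strip (which at $g_0$ degenerates to the V-shape $\Pi^1\cup\Pi^2$ through $\psi$) can exist: the endpoints of a generator must sit on $\tau_1,\tau_2$ at cross-sectional angular positions related by a full $2\pi$ wind so that, in the long-way lift inherited from the $g_0$ V-shape, the chord carries $\theta$ continuously from $\approx 0$ to $\approx \phi_0$; the rotation rate $\alpha/a$ of the helices $\tau_1,\tau_2$ forces this to happen over a height separation $\sim 2\pi a/|\alpha|$.

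Working in Fermi coordinates $(h,\phi,r)$ around $\psi$ and using the hyperboloid model, a direct computation (cf.\ the identity $\sinh(r_m)=\sinh(x)|\cos(\Delta\phi/2)|/\cosh(L/2)$ for the midpoint of a chord between two points at common Fermi radius $x$) yields the following estimate: along a geodesic chord of length $L$ between two points at Fermi radius $\leq x=O(1)$ from $\psi$, a point $p$ at arc-length position $s$ satisfies
\[
r(p)\leq C\,x\,\frac{\cosh(s-L/2)}{\cosh(L/2)}\leq C'\,x\,e^{-\min(s,L-s)}.
\]
For a long generator with $L=\Theta(1/|\alpha|)$, this is exponentially small in $\min(s,L-s)$ whenever $p$ is bounded away from both endpoints.

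Finally, to translate the hypotheses of the lemma into the middle-of-chord condition, I would use $\Gamma^*$-invariance (so that $r$ depends only on $s$, not on the generator index) together with $|t_p|\leq T$ to reduce the generator index to a compact range, and then analyze $d\theta/ds$ along a single chord. Near the endpoints where $r\approx x_j$, the chord is nearly parallel to $\psi$, so one computes $d\theta/ds=O(|\alpha|)$; integrating, the buffer condition $\theta_p\geq\max_i\phi^1_i+|\alpha|^\mu$ (and the symmetric condition on the other side) forces the arc-length distance from $p$ to both endpoints of its generator to be at least $\Omega(|\alpha|^{\mu-1})$, which tends to infinity as $\alpha\to 0$ since $\mu<1$. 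Combining with the radial estimate gives $r(p)=O(e^{-c|\alpha|^{\mu-1}})=o(\|\Delta g\|^2)$, since $\|\Delta g\|\gtrsim|\alpha|$. The main obstacle lies in the first step: rigorously identifying the ruling of $\Sigma^*$ through the singular limit $\alpha\to 0$, where the ruled surface degenerates to the non-ruled V-shape $\Pi^1\cup\Pi^2$ and the generator length blows up like $1/|\alpha|$; a secondary technical point is verifying that $d\theta/ds$ really is $O(|\alpha|)$ uniformly over the portion of the chord at Fermi radius bounded away from $0$, which requires careful bookkeeping of the hyperbolic trigonometry of the chord in Fermi coordinates.
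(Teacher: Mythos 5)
Your high-level strategy matches the paper's exactly: exploit the long $\Gamma^*$-invariant generators, establish an arc-length lower bound of order $|\alpha|^{\mu-1}$ from each endpoint forced by the $\theta_p$ buffer, and then use exponential decay of $d_{\H^3}(\cdot,\psi)$ along a geodesic chord. The gap is in the arc-length lower bound, which you leave as a heuristic ($d\theta/ds=O(|\alpha|)$ near the endpoints, then integrate) and flag but do not resolve. Two concrete points make the naive version fail. First, the hypothesis controls $\theta_p-\max_i\phi^1_i$, not $\theta_p-\theta_{p_1}$; along a long generator one has $\theta_{p_1}=\phi_1+\alpha(t_{p_1}-y_1)/a$, so $\theta_{p_1}$ drifts by $O(1)$ from $\phi_1$ as the chord winds, and integrating $d\theta/ds$ from $p_1$ does not translate the hypothesis into the needed lower bound without a correction. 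This is exactly what the paper's Lemma~\ref{gener1} supplies: it is phrased in terms of $\phi_1$ (not $\theta_{p_1}$) and carries an explicit additive constant absorbing the drift. Second, $d\theta/ds$ is not monotone along the chord --- small near both ends, large near the closest approach to $\psi$ --- so a pointwise bound valid only ``near the endpoints'' does not obviously extend uniformly over the arc from $p_1$ to $p$; the paper avoids this by proving an integrated inequality (Lemmas~\ref{gener} and \ref{gener1}) via a supporting-plane/equidistant-ellipse argument rather than by differentiating along the chord. These are precisely the subtleties that could collapse the $\Omega(|\alpha|^{\mu-1})$ estimate to $O(1)$ if mishandled. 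On the decay side, your Fermi-coordinate bound $r(p)\lesssim x\,e^{-\min(s,L-s)}$ is a clean one-step alternative to the paper's reduction of the 3D tetrahedron $p_1p_2q_2q_1$ to a planar trapezoid followed by Lemma~\ref{asymptrap}, and once the arc-length bound is in place the final step $e^{-c|\alpha|^{\mu-1}}=O(|\alpha|^2)=O(\|\Delta g\|^2)$ coincides with the paper's.
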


\begin{lm}
\label{gest2*}
For every $0<\mu<1$ and every $T>0$ we have
$$\sup\{d_{\H^3}(p, \Pi^1_1): p \in \Sigma^*, -T \leq t_p \leq T, \max_{1 \leq i \leq n_1}\phi^1_i+|\alpha|^\mu \leq  \theta_p \leq \frac{\pi}{2}  \} = O(\|\Delta g\|^\mu),$$
$$\sup\{d_{\H^3}(p, \Pi^2_1): p \in \Sigma^*, -T \leq t_p \leq T,  \phi^2_1 - \frac{\pi}{2} \leq  \theta_p \leq  \min_{1 \leq i \leq n_2}\phi^2_i-|\alpha|^\mu \} = O(\|\Delta g\|^\mu).$$ 
\end{lm}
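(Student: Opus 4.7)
The approach is to reduce, via the $\Gamma^*$-equivariance of $\Sigma^*$, to a compact family of rulings, and then to carry out explicit hyperbolic trigonometry. The surface $\Sigma^*$ is foliated by a 1-parameter family of rulings $\{R(t)\}_{t\in\R}$, each a geodesic segment connecting $\tau_1(t)$ to $\tau_2(t+\Delta)$ for some offset $\Delta=\Delta(g)$ depending continuously on $g$ with $\Delta(g_0)=0$. The isometry $\Gamma^*(s_0)$ sends $R(t)$ to $R(t+s_0)$, shifting $t_p$ by $s_0$ and $\theta_p$ by $\alpha s_0/a$. Combined with the constraint $|t_p|\le T$ and the angular constraint, this restricts both the ruling parameter $t$ and the arclength position $s$ along $R(t)$ to a bounded compact region, so it suffices to verify the bound uniformly there. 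On each ruling I would parametrize by arclength $s\in[0,L(g)]$ and compute the cylindrical coordinates $(r_p,\theta_p,t_p)$ of the point $p(s,t)$ as smooth functions of $(s,t,g)$ using the hyperbolic cosine and sine laws (Lemmas~\ref{coslaw},~\ref{sinlaw}) applied to the trapezoid formed by $p$ and its perpendicular foot on $\psi$.

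The standard identity $\sinh(d_{\H^3}(p,\Pi^1_1))=\sinh(r_p)\sin(\theta_p)$, valid for $\theta_p\in[0,\pi/2]$, then reduces the first inequality to showing $\sinh(r_p)\sin(\theta_p)=O(\|\Delta g\|^\mu)$. The angular condition $\theta_p\ge\max_i\phi^1_i+|\alpha|^\mu$, together with the continuity of $\theta_p$ along $R(t)$ (starting at $\phi^1_{i_1}+\alpha t/a=O(\|\Delta g\|)$ at $s=0$), forces $s$ to be bounded below by a quantity comparable to $|\alpha|^\mu$ divided by the local twisting rate of the ruling. Combining this with Lemma~\ref{gest1*} to control $r_p$ in the central angular regime, and a direct trapezoidal comparison in the peripheral regime (where $\theta_p$ is small enough to dominate the growth of $r_p$), yields the desired product estimate. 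The symmetric bound involving $\Pi^2_1$ follows by the analogous argument centred at the $\tau_2$-endpoint of each ruling.

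The main obstacle I expect is the bookkeeping that produces the sub-optimal exponent $\mu<1$: one must balance the lower bound $\sin\theta_p\gtrsim|\alpha|^\mu$ against the control on $\sinh r_p$, which cannot in general be made linearly small in $|\alpha|$ without losing uniform grip on the ruling's geometry. The marginal case $\phi_0=\pi$ is the most delicate, because there the ruling at $g_0$ is planar and $\Sigma^*$ degenerates to a strip in $\Pi^1\cup\Pi^2$; in this regime a simultaneous second-order deformation analysis in the parameters $\Delta$, $\alpha$, and $\phi_0-\pi$ is required, and one cannot simply invoke Lemma~\ref{gest1*} as a black box.
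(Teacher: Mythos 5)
Your high-level framing is sound: you correctly exploit the $\Gamma^*$-equivariant ruled structure of $\Sigma^*$, you isolate the identity $\sinh d_{\H^3}(p,\Pi^1_1)=\sin(\theta_p)\sinh d_{\H^3}(p,\psi)$, and you correctly flag $\phi_0=\pi$ as the delicate case for which Lemma~\ref{gest1*} cannot serve as a black box. For $\phi_0>\pi$ your outline in fact recovers the paper's argument: the angular constraint, fed through Lemma~\ref{gener1}, forces the point $p$ to sit far from both endpoints of its ruling (distance $\gtrsim |\alpha|^{\mu-1}$ in the $t$-coordinate), after which the trapezoidal comparison Lemma~\ref{asymptrap} gives the much stronger bound $d_{\H^3}(p,\psi)=O(\|\Delta g\|^2)$ — so there is no genuine ``balancing'' needed in this regime, and your worry about trading $\sin\theta_p$ against $\sinh r_p$ is misplaced there.

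The genuine gap is the $\phi_0=\pi$ case, which you acknowledge but do not solve. A ``simultaneous second-order deformation analysis in $\Delta$, $\alpha$, $\phi_0-\pi$'' is not what is needed and would be difficult to make uniform. The actual mechanism is elementary and geometric: project $p_1,p_2,p$ orthogonally to the \emph{full plane} $\Pi\supset\Pi^1_1$, obtaining feet $r_1,r_2,r$, and apply Lemma~\ref{asymptrap} to get $pr \leq 2\max_j\{p_jr_j\exp(-rr_j)\}$. Then split on whether $rr_j\geq |\alpha|^{\mu-1}$ or not. If $rr_j\geq|\alpha|^{\mu-1}$, the exponential factor already yields $O(|\alpha|^2)$. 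If $rr_j<|\alpha|^{\mu-1}$, then $|t_{p_j}-y_j|$ is $O(|\alpha|^{\mu-1})$, so using $\theta_{p_j}=\phi_j+\tfrac{\alpha(t_{p_j}-y_j)}{a}$ one gets $|\theta_{p_1}|=O(\|\Delta g\|^\mu)$ (resp. $|\theta_{p_2}-\pi|=O(\|\Delta g\|^\mu)$), hence $p_jr_j\leq\sin(|\theta_{p_j}|)\sinh x_j = O(\|\Delta g\|^\mu)$. This is precisely where the sub-optimal exponent $\mu$ enters; it is a threshold argument on the distance to the ruling endpoint, not a second-order Taylor expansion. Without this concrete step your proof does not close.
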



Note that there is a slight difference in the bounds on $\theta_p$ between Lemma~\ref{gest2} and Lemma~\ref{gest2*}. This is done on purpose to simplify our argument.
Since $ M(g) \subset  M^*(g)$, Lemmas~\ref{gest1} and~\ref{gest2} follow from Lemmas~\ref{gest1*} and~\ref{gest2*} quite easily:

\begin{proof}[Proof of Lemma~\ref{gest1}.]
Note that if $p \in \tau^j_i$, then 
$$\theta_p=\phi^j_i+\frac{\alpha}{a}(t_p-y^j_i).$$
Thus, for every $T>0$ we have
$$\sup\{\theta_p: p \in \tau^1_i, 1 \leq i \leq n_1, -T\leq t_p \leq T\} \leq \max_{1 \leq i \leq n_1} \phi^1_i+O(|\alpha|),$$
$$\inf\{\theta_p: p \in \tau^2_i, 1 \leq i \leq n_2, -T\leq t_p \leq T\} \geq \min_{1 \leq i \leq n_2} \phi^2_i+O(|\alpha|).$$


Let $p \in \Sigma$ and $p^* \in \partial  M^*(g)$ be the point with the same coordinates $( \theta_p, t_p)$.
Due to the bounds above, for sufficiently small $\alpha$, if $p \in \Sigma$, $-T \leq t_p \leq T$ and 
$$\max_{1 \leq i \leq n_1}\phi^1_i+|\alpha|^\mu \leq  \theta_p \leq \min_{1 \leq i \leq n_2}\phi^2_i-|\alpha|^\mu,$$ 
then $p^* \in \Sigma^*$. Thus, Lemma~\ref{gest1} follows from Lemma~\ref{gest1*}.
\end{proof}

\begin{proof}[Proof of Lemma~\ref{gest2}.]
As in the proof of Lemma~\ref{gest1}, for $p \in \Sigma$ let $p^* \in \partial  M^*(g)$ be the point with the same coordinates $( \theta_p, t_p)$. For sufficiently small $\alpha$, if $p \in \Sigma$, $-T \leq t_p \leq T$ and 
$$\max_{1 \leq i \leq n_1}\phi^1_i+|\alpha|^\mu \leq  \theta_p \leq \min_{1 \leq i \leq n_2}\phi^2_i-|\alpha|^\mu,$$ 
then $p^* \in \Sigma^*$, and for such $p$ Lemma~\ref{gest2} follows from Lemma~\ref{gest2*}.

But for $p \in \Sigma$ satisfying $0\leq  \theta_p \leq \max_{1 \leq i \leq n_1}\phi^1_i+|\alpha|^\mu$ we have
$$d_{\H^3}(p, \Pi ^1_1)\leq \sinh(d_{\H^3}(p, \Pi ^1_1))=\sin( \theta_p)\sinh(d_{\H^3}(p,  \psi))\leq \theta_p \sinh(\max(x^j_i))=O(\|\Delta g\|^\mu).$$
Similarly for $p$ such that $\min_{1 \leq i \leq n_2}\phi^2_i-|\alpha|^\mu\leq  \theta_p \leq \phi^2_1$ we have
$$d_{\H^3}(p, \Pi^2_1)\leq\sinh(d_{\H^3}(p, \Pi^2_1))=\sin(\phi^2_1- \theta_p)\sinh(d_{\H^3}(p,  \psi))\leq$$ $$\leq(\phi^2_1-\theta_p) \sinh(\max(x^j_i))=O(\|\Delta g\|^\mu).$$
This finishes the proof.
\end{proof}




Now we are going to prove Lemmas~\ref{gest1*}--\ref{gest2*}. We will need the following preliminaries

\begin{lm}
\label{asymptrap}
Let $p_1p_2q_1q_2$ be a hyperbolic quadrilateral, possibly self-intersecting, such that $\ang p_1q_1q_2=\ang p_2q_2q_1=\pi/2$. Consider a point $p \in p_1p_2$, let $q \in q_1q_2$ be its orthogonal projection to the line $q_1q_2$. Then $$pq \leq 2\max_{j=1,2}\{p_jq_j \exp(-qq_j)\}.$$
\end{lm}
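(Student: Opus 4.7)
The plan is to reduce to standard trapezoidal trigonometry. Drop the perpendicular from $p$ to the line $q_1q_2$, landing at $q$; this splits the quadrilateral into two right-angled sub-trapezoids $p_1pqq_1$ and $pp_2q_2q$, each of Saccheri type (with two right angles on the $q$-side). Writing $a_j := p_jq_j$, $b_j := qq_j$, and $h := pq$, the target is the key identity
\[
\tanh(h)\sinh(b_1+b_2) \;=\; \tanh(a_1)\sinh(b_2)+\tanh(a_2)\sinh(b_1).
\]

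To derive this, I would apply the first form of the cosine law (Lemma~\ref{coslaw}) to each sub-trapezoid at the vertex $p$, obtaining expressions for $\cos(\ang qpp_1)$ and $\cos(\ang qpp_2)$ in terms of $a_j$, $h$, and the auxiliary lengths $\gamma_j := pp_j$. Since $p$ lies on segment $p_1p_2$, these two angles are supplementary, so their cosines sum to zero; clearing denominators and simplifying via the addition formula $\sinh(\gamma_1+\gamma_2)=\sinh(\gamma_1)\cosh(\gamma_2)+\cosh(\gamma_1)\sinh(\gamma_2)$ yields an identity in the $\gamma_j$'s. The $\gamma_j$'s are then eliminated in favor of the $b_j$'s using the second form of the cosine law, which expresses $\cosh(\gamma_j)$ in terms of $a_j$, $b_j$, $h$; after some algebra the displayed identity emerges. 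The self-intersecting configuration, where $p_1$ and $p_2$ lie on opposite sides of the line $q_1q_2$, reduces to the same analysis after inserting the appropriate signs on the $a_j$'s.

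With the identity in hand, the elementary estimate
\[
\sinh(b_1+b_2) \;=\; \sinh(b_j)\cosh(b_{3-j})+\cosh(b_j)\sinh(b_{3-j}) \;\geq\; e^{b_{3-j}}\sinh(b_j),
\]
valid since $\cosh(x)+\sinh(x)=e^x$ and $\coth(b_j)\geq 1$, gives $\sinh(b_j)/\sinh(b_1+b_2)\leq e^{-b_{3-j}}$. Combined with $\tanh(x)\leq x$ for $x\geq 0$, this yields
\[
\tanh(h) \;\leq\; a_1 e^{-b_1}+a_2 e^{-b_2} \;\leq\; 2\max_{j=1,2}\{a_j e^{-b_j}\},
\]
from which $pq\leq 2\max_{j}\{a_j e^{-b_j}\}$ follows by inverting $\tanh$ (the bound is meaningful precisely when the right-hand side is small, where $\tanh^{-1}$ is essentially the identity). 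The main obstacle is the algebraic bookkeeping in deriving the $\tanh$-identity, specifically the elimination of $\gamma_j$, which requires several careful applications of hyperbolic addition formulas to arrive at the clean symmetric expression.
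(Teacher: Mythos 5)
Your key identity,
\[
\tanh(pq)\,\sinh(b_1+b_2)\;=\;\tanh(a_1)\sinh(b_2)+\tanh(a_2)\sinh(b_1)
\]
with $a_j:=p_jq_j$ and $b_j:=qq_j$, is correct (one can check it cleanly in the upper half-plane model by placing $q_1q_2$ on the imaginary axis and intersecting the geodesic $p_1p_2$ with the semicircle $|z|=e^{b_1}$). Your route through this symmetric interpolation formula is genuinely different from the paper's. The paper instead reduces, by a rotation-and-monotonicity argument, to the case where the lines $p_1p_2$ and $q_1q_2$ are ultra-parallel, writes $\tanh(p_0q_0)$ at the common perpendicular using Lemmas~\ref{coslaw} and~\ref{sinlaw}, and estimates the ratio $\cosh(qq_0)/\cosh(q_0q_1)$. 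Your decomposition avoids this reduction entirely and treats the two endpoints symmetrically.

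However, your final step does not close as written. From $\tanh(pq)\le C$ one cannot infer $pq\le C$; the inequality goes the wrong way, since $\tanh^{-1}(x)\ge x$ for $x\ge 0$, and the parenthetical that $\tanh^{-1}$ is ``essentially the identity'' for small arguments is only an asymptotic remark, whereas the lemma must hold for every admissible configuration. This is exactly the point the paper addresses by first observing $pq\le p_1q_1$ and multiplying the $\tanh$-bound by $p_1q_1/\tanh(p_1q_1)$.

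Your identity admits a clean repair along the same lines. Dividing numerator and denominator by $\cosh(b_1)\cosh(b_2)$ rewrites it as a convex combination
\[
\tanh(pq)=\frac{\tanh(b_2)}{\tanh(b_1)+\tanh(b_2)}\cdot\frac{\tanh(a_1)}{\cosh(b_1)}+\frac{\tanh(b_1)}{\tanh(b_1)+\tanh(b_2)}\cdot\frac{\tanh(a_2)}{\cosh(b_2)},
\]
so $\tanh(pq)\le\tanh(a_{j^*})/\cosh(b_{j^*})$ for the index $j^*$ maximizing $\tanh(a_j)/\cosh(b_j)$. In particular $\tanh(pq)\le\tanh(a_{j^*})$, hence $pq\le a_{j^*}$, and since $x/\tanh(x)$ is increasing on $(0,\infty)$,
\[
pq\le\frac{a_{j^*}}{\tanh(a_{j^*})}\tanh(pq)\le\frac{a_{j^*}}{\cosh(b_{j^*})}\le 2a_{j^*}e^{-b_{j^*}}\le 2\max_{j}\{p_jq_j\,e^{-qq_j}\}.
\]
This closes the proof, and in fact gives the slightly sharper intermediate bound $pq\le\max_j\{p_jq_j/\cosh(qq_j)\}$. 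For the self-intersecting case, inserting signs on the $a_j$'s is exactly right: the convex-combination structure is unaffected, one gets $\lvert\tanh(pq)\rvert\le\max_j\lvert\tanh(a_j)\rvert/\cosh(b_j)$, and the same argument applies.
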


\begin{proof}
%
Let $q'$ be the projective intersection point of the lines $p_1p_2$ and $q_1q_2$, possibly ideal or hyperideal. Assume that $q$ lies between $q'$ and $q_1$. We will show that $$pq \leq 2p_1q_1\exp(-qq_1).$$

If $q'$ is not hyperideal, we can rotate the line $p_1p_2$ around the point $p_1$ until we make $q'$ hyperideal so that $pq$ increases and $q$ stays between $q'$ and $q_1$. Here we mean that the point $q$ stays fixed and the point $p$ is defined as the intersection point of the new line with the perpendicular to $q_1q_2$ at $q$. Hence, we can assume that $q'$ is hyperideal, so the lines $p_1p_2$ and $q_1q_2$ are ultra-parallel. Let $p_0$, $q_0$ be the respective closest points on these lines. By construction, the point $q$ lies between $q_0$ and $q_1$. From Lemmas~\ref{coslaw} and~\ref{sinlaw} we get
$$\tanh(p_0q_0)=\frac{\tanh(p_1q_1)}{\cosh(q_0q_1)}.$$
And also
$$\tanh(pq)=\tanh(p_0q_0)\cosh(qq_0)=\frac{\tanh(p_1q_1)\cosh(qq_0)}{\cosh(q_0q_1)}=$$ 
$$=\tanh(p_1q_1)\exp(-qq_1)\frac{1+\exp(-2qq_0)}{1+\exp(-2q_0q_1)}\leq 2\tanh(p_1q_1)\exp(-qq_1).$$
It remains to note that $pq\leq p_1q_1$, thus $pq\leq \frac{p_1q_1}{\tanh(p_1q_1)}\tanh(pq).$
\end{proof}


\begin{lm}
\label{gener}
Let $\alpha \neq 0$, let $(\phi-\pi) a-\alpha y>0$ and let $p_1p_2$ be a generatrix of $\Sigma^*$, where $p_1 \in \tau_1$, $p_2 \in \tau_2$. Then $\sgn(t_{p_1}-t_{p_2})=\sgn(\alpha)$, $0< \theta_{p_2}- \theta_{p_1} < \pi$ and $$\left|\frac{t_{p_1}-t_{p_2}}{ \theta_{p_2}- \theta_{p_1}}\right| \geq \left|\frac{(\phi-\pi) a-\alpha y}{\alpha\pi}\right|.$$
\end{lm}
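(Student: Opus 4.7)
\emph{Proof plan.} The plan is to first exploit the $\Gamma^*$-invariance of $\Sigma^*$ and its ruled structure. Since the endpoint correspondence $p_1 \mapsto p_2$ along generatrices is a continuous $\Gamma^*$-equivariant bijection between the simply transitive $\Gamma^*$-spaces $\tau_1$ and $\tau_2$, it must have the form $\gamma^*_t(v_1) \mapsto \gamma^*_{t+c}(v_2)$ for a single constant $c \in \R$. Substituting into $\theta_p = \phi_j + (\alpha/a)(t_p - y_j)$ on $\tau_j$ immediately shows that $\beta := \theta_{p_2} - \theta_{p_1} = \phi + \alpha c/a$ and $s := t_{p_1} - t_{p_2} = -(y+c)$ are constants independent of the particular generatrix, and eliminating $c$ yields the key identity
$$\alpha s + a\beta = a\phi - \alpha y.$$

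The second step is establishing the two sign conditions. For $0 < \beta < \pi$: the hypothesis $(\phi-\pi)a - \alpha y > 0$ keeps $\phi$ bounded below by $\pi + \alpha y/a$, so in the perturbative regime the vertices $v_1, v_2$ sit on genuinely opposite angular sides of $\psi$ and the axis $\psi$ lies in the interior of $M^*(g) = \clconv(\tau_1 \cup \tau_2)$; hence no generatrix can meet $\psi$, which forces the angular winding of the single geodesic segment $p_1p_2$ around $\psi$ to be strictly less than $\pi$, and the choice of $\Sigma^*$ (the strip with $\tau_2$ clockwise from $\tau_1$) fixes the sign $\beta > 0$. For $\sgn(s) = \sgn(\alpha)$, the heuristic is that the twist of $\Sigma^*$ on the selected side of $\partial M^*(g)$ tracks the direction of the screw motion $\Gamma^*$; rigorously I would derive the sign of $c$ from the supporting-plane condition that $T_{p_1}\tau_1$ and $T_{p_2}\tau_2$ both lie in the tangent plane to $M^*(g)$ along the generatrix.

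Given the two sign conditions, the inequality is a short computation. Rewriting the hypothesis via the identity above gives
$$(\phi - \pi)a - \alpha y = \alpha s - a(\pi - \beta) > 0,$$
so the target inequality $|s/\beta| \cdot |\alpha|\pi \geq (\phi-\pi)a - \alpha y$ reduces to $\pi\alpha s \geq \beta\bigl(\alpha s - a(\pi - \beta)\bigr)$, i.e.\ $(\pi - \beta)(\alpha s + a\beta) \geq 0$, which holds because both factors are positive. The main obstacle I expect is the sign claim $\sgn(s) = \sgn(\alpha)$: the identity and the final algebraic step are automatic, and the bound $\beta < \pi$ follows from the interior position of $\psi$, but pinning down the orientation of the chosen strip relative to the screw direction of $\Gamma^*$ is the only step that genuinely distinguishes $\Sigma^*$ from the opposite strip of $\partial M^*(g)$ and will likely need an honest verification through the tangency condition for the supporting plane.
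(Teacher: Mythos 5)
Your approach is genuinely different from the paper's and in fact simplifies it. The paper derives the inequality by a geometric comparison: it introduces a second point $p'_1\in\tau_1$ with $\theta_{p_2}-\theta_{p'_1}=\pi$, observes that the chord $p'_1p_2$ is tangent to $\psi$, and argues that the generatrix $p_1p_2$ (lying on a supporting plane of $M^*(g)$) must have a larger angular slope than this tangent chord. Your route instead writes down the parametric identity $\alpha s+a\beta=a\phi-\alpha y$ explicitly (which the paper only uses implicitly when computing $t_{p'_1}-t_{p_2}$) and reduces the inequality to $(\pi-\beta)(\alpha s+a\beta)\geq 0$, which is a cleaner and fully algebraic derivation.

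However, you have misdiagnosed where the remaining work actually lies. You flag $\sgn(s)=\sgn(\alpha)$, i.e.\ $\alpha s>0$, as the hard step, but it falls out of pieces you already have in hand. You yourself record that the hypothesis rewrites, via the identity, as $\alpha s - a(\pi-\beta)>0$. Combined with $a>0$ and $\beta<\pi$ this gives $\alpha s > a(\pi-\beta) > 0$ immediately -- no supporting-plane or screw-orientation argument is needed. (And note that $\alpha s + a\beta = a\phi-\alpha y = [(\phi-\pi)a-\alpha y]+a\pi>0$ directly from the hypothesis, so the second factor in your final product is positive without appealing to $\beta>0$ at all.) The step that actually requires a geometric input is $\beta>0$: you dismiss it in a clause by appealing to the "choice of the clockwise strip," but this is precisely the place where the paper works, via its argument about the supporting plane through the generatrix intersecting each equidistant torus around $\psi$ in an ellipse, with $\tau_1$ tangent on one angular side of the closest point $p_0$ and $\tau_2$ on the other. (A small inaccuracy: $M^*(g)$ is the closed convex hull of \emph{all} the curves $\tau^j_i$, not just $\tau_1\cup\tau_2$; this does not affect the argument.) So the proof goes through, and more directly than you expected, but the one genuinely geometric ingredient is the one you treated as the easy part.
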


\begin{proof}
It is clear that $-\pi<  \theta_{p_2}- \theta_{p_1}<\pi$. Let $p'_1 \in \tau_1$ be a point with $\theta_{p'_1}= \theta_{p_1}-\pi$. Then we have $\sgn(t_{p'_1}-t_{p_2})=\sgn(\alpha)$. Hence, the same holds for $\sgn(t_{p_1}-t_{p_2})$.

We now show that $ \theta_{p_2}> \theta_{p_1}$. Consider the equidistant surfaces from $\psi$. The induced intrinsic metrics on such surfaces are locally Euclidean. Consider now the supporting plane $\Pi$ to $\Sigma^*$ containing the generatrix $p_1p_2$. Its intersection with each equidistant surface to $ \psi$ is an ellipse in the intrinsic metric of this surface. In $\Pi$ all such intersections are concentric figures around a point $p_0 \in \Pi$, which is the closest point from $\Pi$ to $ \psi$. For each such ellipse we call its ``left side'' all the points $p$ with $ \theta_p <  \theta_{p_0}$ and its ``right side'' the set of points with $ \theta_p >  \theta_{p_0}$. It is easy to see that by the choice of $\Sigma^*$ the curve $\tau_1$ is tangent to the corresponding ellipse in its left side, and $\tau_2$ tangents the corresponding ellipse in its right side. Hence, $ \theta_{p_1} >  \theta_{p_0} >  \theta_{p_2}$.

Due to the $\Gamma^*$-invariance of $\Sigma^*$, we can assume that $p_2= v_2$, so $t_{p_2}=y_2$ and $ \theta_{p_2}=\phi_2$. As above, let $p'_1 \in \tau_1$ be the point with $ \theta_{p_2}- \theta_{p_1'}=\pi$. We get $ \theta_{p'_1}=\phi_2-\pi$. Hence, $t_{p_1'}-t_{p_2}=\frac{(\phi-\pi) a}{\alpha}-y$. The chord $p'_1p_2$ is tangent to $\psi$, and in order for the chord $p_1p_2$ to be in the right location with respect to $\psi$, the following bound must hold
$$\left|\frac{t_{p_1}-t_{p_2}}{ \theta_{p_2}- \theta_{p_1}}\right| \geq \left|\frac{t_{p'_1}-t_{p_2}}{ \theta_{p_2}- \theta_{p'_1}}\right|.$$
Our last claim follows. 
\end{proof}

\begin{lm}
\label{gener1}
Let $\alpha \neq 0$, let $(\phi-\pi) a-\alpha y>0$ and let $p \in \Sigma^*$ be such that 
$-T \leq t_p \leq T$, $p_1 \in \tau_1$, $p_2 \in \tau_2$ be the endpoints of the generatrix containing $p$. Then there are constants $c_1, c_2 >0,$ $c_3, c_4 \geq 0$ such that in a neighborhood of $g_0$ we have
$$|t_{p_1}-t_p|\geq \frac{c_1 (\theta_p-\phi_1)}{|\alpha|}-c_2,$$
$$|t_{p}-t_{p_2}|\geq \frac{c_3(\phi_2- \theta_p)}{|\alpha|}-c_4.$$
\end{lm}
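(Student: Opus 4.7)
The plan is to decompose the angular gap $\theta_p - \phi_1$ into the two pieces $(\theta_{p_1} - \phi_1) + (\theta_p - \theta_{p_1})$ (assuming the first piece is nonnegative; the complementary case is trivial or easier) and bound each piece by a quantity of the form $\frac{|\alpha|}{\text{const}}\bigl(|t_{p_1}-t_p|+\text{const}\bigr)$. For the first piece, the parametrization of the helix $\tau_1$ directly yields $\theta_{p_1} - \phi_1 = \frac{\alpha(t_{p_1} - y_1)}{a}$, and the triangle inequality $|t_{p_1} - y_1| \leq |t_{p_1} - t_p| + |t_p| + |y_1| \leq |t_{p_1} - t_p| + T + C_y$, valid since $|t_p| \leq T$ and $|y_1|$ is bounded by some $C_y$ in a neighborhood of $g_0$, gives
$$|\theta_{p_1} - \phi_1| \leq \frac{|\alpha|\,(|t_{p_1} - t_p| + T + C_y)}{a}.$$

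For the second piece $\theta_p - \theta_{p_1}$, I would adapt the proof of Lemma~\ref{gener} to the sub-chord $p_1 p$. Specifically, consider the auxiliary curve $\tau_p := \Gamma^* \cdot p$, which lies in $\Sigma^*$ by $\Gamma^*$-invariance, and form the ruled surface $\Sigma_p := \clconv(\tau_1 \cup \tau_p)$. The supporting plane $\Pi$ to $\Sigma^*$ at the generatrix $p_1 p_2$ contains the tangent lines to both $\tau_1$ at $p_1$ and $\tau_p$ at $p$ (both tangents lie in the tangent plane to $\Sigma^*$ along the generatrix, which is $\Pi$), so $\Pi$ supports $\Sigma_p$ along the segment $p_1 p$, making it a generatrix of $\Sigma_p$. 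Applying Lemma~\ref{gener} to $\Sigma_p$, whose associated parameters are $\phi' = \theta_p - \phi_1$ and $y' = t_p - y_1$, or else revisiting the same ellipse-and-tangent-chord geometry in $\Pi$ used there, yields a slope bound of the form
$$\theta_p - \theta_{p_1} \leq \frac{|\alpha|\,(|t_{p_1} - t_p| + C')}{C}$$
for constants $C > 0$, $C' \geq 0$ uniform in a neighborhood of $g_0$.

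Combining both bounds and rearranging, with $c_1$ chosen strictly smaller than $\frac{aC}{a+C}$ and $c_2$ taken correspondingly, yields the first inequality. The second inequality, bounding $|t_p - t_{p_2}|$ in terms of $(\phi_2 - \theta_p)$, follows by the symmetric argument applied to $\tau_2$, replacing $\Sigma_p$ by $\clconv(\tau_p \cup \tau_2)$; note here that the second constants $c_3$ and $c_4$ are only required to be nonnegative, so degeneracies at the endpoint pose no issue.

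The main obstacle is verifying the sub-chord slope bound with uniform constants. One must check the sign hypothesis $(\phi' - \pi) a - \alpha y' > 0$ for $\Sigma_p$; when it fails, which roughly corresponds to $p$ being too close to $p_1$, the quantity $\theta_p - \theta_{p_1}$ is small enough (bounded by a constant multiple of $|t_{p_1}-t_p|$ via the local geometry, or directly by $\pi$) that it can be absorbed into the additive constant $c_2$ through a separate case. The constants must also be controlled as $\alpha \to 0$, the regime in which Lemma~\ref{gener} forces $|t_{p_1}-t_{p_2}|$ to grow like $1/|\alpha|$; care is required to ensure $c_1, c_3$ stay bounded away from zero and $c_2, c_4$ stay bounded in this limit, which amounts to checking that $C$ and $C'$ have limits determined by $(\phi_0-\pi) a_0/\pi$ and by the bounded geometric data of $g_0$.
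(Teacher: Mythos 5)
You take a genuinely different route from the paper on the crucial second piece, and that route has a gap. The paper bounds $\theta_p-\theta_{p_1}$ by applying the slope estimate of Lemma~\ref{gener} directly to the sub-chord $p_1p$, keeping the constant $\frac{(\phi-\pi)a-\alpha y}{\pi|\alpha|}$ tied to the parameters $\phi$, $y$ of $\Sigma^*$ itself; since $\phi\to\phi_0>\pi$ and $a, y$ converge as $g\to g_0$, this constant stays bounded away from $0$ near $g_0$ (and is then absorbed into the final algebra). You instead apply Lemma~\ref{gener} to the auxiliary strip $\Sigma_p=\clconv(\tau_1\cup\tau_p)$, but the slope constant that Lemma~\ref{gener} produces there is $\frac{(\phi'-\pi)a-\alpha y'}{\pi|\alpha|}$ with $\phi'=\theta_p-\phi_1$, $y'=t_p-y_1$, which \emph{depend on} $p$. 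As soon as $\theta_p-\phi_1$ approaches $\pi$ (or falls below it, so the hypothesis $(\phi'-\pi)a-\alpha y'>0$ fails), the numerator $(\phi'-\pi)a-\alpha y'$ tends to $0$ and $\Sigma_p$ gives no control. The claimed bound $\theta_p-\theta_{p_1}\leq \frac{|\alpha|(|t_{p_1}-t_p|+C')}{C}$ with a \emph{uniform} $C>0$ is simply not what the $\Sigma_p$-version of Lemma~\ref{gener} yields; one would need $\theta_p-\phi_1$ to stay a fixed distance above $\pi$, which is not given.

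The proposed workaround for the degenerate case does not repair this. Bounding $\theta_p-\theta_{p_1}$ by $\pi$, or by a constant multiple of $|t_{p_1}-t_p|$ with no $|\alpha|$-factor, cannot be ``absorbed into $c_2$'': in the regime $\theta_p-\phi_1\in(\epsilon,\pi+\epsilon)$ with $\epsilon>0$ fixed, the target $|t_{p_1}-t_p|\geq \frac{c_1(\theta_p-\phi_1)}{|\alpha|}-c_2$ forces $|t_{p_1}-t_p|$ to grow like $1/|\alpha|$ as $\alpha\to 0$, a divergence no bounded substitute estimate can deliver. Your first-piece bound via the helix parametrization and the final rearrangement are correct, and the instinct that the sub-chord slope bound needs its own justification is reasonable (the paper is terse on why the full-generatrix bound of Lemma~\ref{gener} transfers to the sub-chord $p_1p$), but replacing $\Sigma^*$ by $\Sigma_p$ quantitatively weakens the estimate in exactly the regime where the lemma's conclusion is nontrivial, and your case analysis there fails.
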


\begin{proof}
We have $\theta_{p_1}=\phi_1+\frac{\alpha (t_{p_1}-y_1)}{a}$. On the other hand, from Lemma~\ref{gener} we get
$$|t_{p_1}-t_p|\geq (\theta_p - \theta_{p_1})\frac{(\phi-\pi) a-\alpha y}{\pi|\alpha|}.$$
Substituting the first into the second and rearranging we get
$$|t_{p_1}-t_p|\frac{\phi a-\alpha y}{\pi a}\geq (\theta_p-\phi_1)\frac{(\phi-\pi) a-\alpha y}{\pi|\alpha|}-\sgn(\alpha)(t_p-y_1)\frac{(\phi-\pi) a-\alpha y}{\pi a},$$
from which we obtain
$$|t_{p_1}-t_p|\geq \frac{a (\theta_{p}-\phi_1)}{|\alpha|}\cdot\frac{(\phi-\pi) a-\alpha y}{\phi a-\alpha y}-(T+y_1)\cdot \frac{(\phi-\pi) a-\alpha y}{\phi a-\alpha y}.$$
The first claim follows.

Similarly we have 
$$\theta_{p_2}=\phi_2+\frac{\alpha (t_{p_2}-y_2)}{a}$$
and from Lemma~\ref{gener}
$$|t_p-t_{p_2}|\geq (\theta_{p_2}-\theta_p)\frac{(\phi-\pi) a-\alpha y}{\pi|\alpha|}.$$
From this we get
$$|t_p-t_{p_2}|\frac{\phi a-\alpha y}{\pi a}\geq (\phi_2- \theta_p)\frac{(\phi-\pi) a-\alpha y}{\pi|\alpha|}+\sgn(\alpha)(t_p-y_2)\frac{(\phi-\pi) a-\alpha y}{\pi a}$$
and in turn
$$|t_{p}-t_{p_2}| \geq \frac{a(\phi_2- \theta_p)}{|\alpha|}\cdot\frac{(\phi-\pi)a-\alpha y}{\phi a-\alpha y}-(T+y_2)\cdot \frac{(\phi-\pi)a-\alpha y}{\phi a-\alpha y}.$$
The second claim follows.
\end{proof}

Now we can prove Lemma~\ref{gest1*}.

\begin{proof}[Proof of Lemma~\ref{gest1*}.]

We may assume that $\|\Delta g\|$ is small enough so that $\phi>\pi$ and $$(\phi-\pi) a-\alpha y>0.$$
Let $p$ be a point satisfying the conditions of the lemma. It belongs to a generatrix of $\Sigma^*$, with endpoints $p_1 \in \tau_1$, $p_2 \in \tau_2$. Let $q_1, q_2, q$ be the orthogonal projections of $p_1, p_2, p$ to $ \psi$.  Due to the restrictions on $\theta_p$, from Lemma~\ref{gener1} we get that in a neighborhood of $g_0$, there exist constants $c_1, c_2>0$ such that $$q_1q=|t_{p_1}-t_p|\geq \frac{c_1}{|\alpha|^{1-\mu}},$$ $$qq_2=|t_p-t_{p_2}| \geq \frac{c_2}{|\alpha|^{1-\mu}}.$$ 

Consider the tetrahedron $p_1p_2q_2q_1$. Rotate the segment $p_2q_2$ in the plane orthogonal to $q_1q_2$. We keep $q$ fixed and by $p$ we mean the point at $p_1p_2$ such that its orthogonal projection to $q_1q_2$ is $q$. We see that the length $pq$ maximizes when the tetrahedron degenerates to a planar trapezoid. Indeed, the trajectory of $p$ is a circle in the plane orthogonal to $q_1q_2$ at $q$ centered at the intersection of this plane with the line $p_1q_2$. The farthest point from this circle to $q$ corresponds exactly to the situation when $pq$ becomes coplanar to $p_1q_2$ and lying on the same side of $ \psi$. 

Let $p'_1p'_2q'_2q'_1$ be the trapezoid with $p'_1q'_1=p_1q_1$, $p'_2q'_2=p_2q_2$ and $q_1q_2=q'_1q'_2$, $p'$ be the point at the segment $p'_1p'_2$ with the orthogonal projection $q$ to the line $q'_1q'_2$ such that $q'q'_1=qq_1$, $q'q'_2=qq_2$. From the argument above we have $pq \leq p'q'$. Applying Lemma~\ref{asymptrap} we get 
$$p'q' \leq 2\max_{j=1,2}\left\{x_j\exp \left(-\frac{c_j}{|\alpha|^{1-\mu}}\right)\right\}\leq 2\max_{j=1,2}\left\{\frac{x_j}{c_j^{\frac{2}{1-\mu}}}\right\}|\alpha|^2=O(\|\Delta g\|^2).$$
\end{proof}

\begin{proof}[Proof of Lemma~\ref{gest2*}.]
Consider the first claim of the lemma. Let $p$ be a point satisfying the conditions of the claim. We treat separately the cases $\phi_0>\pi$ and $\phi_0=\pi$. Suppose that $\phi_0>\pi$.
We may assume that $\|\Delta g\|$ is small enough so that $\phi>\pi$ and $$(\phi-\pi) a-\alpha y>0.$$
We have 
$$d_{\H^3}(p, \Pi^1_1)\leq \sinh (d_{\H^3}(p, \Pi^1_1))=\sin (\theta_p) \sinh(d_{\H^3}(p,  \psi))\leq$$ $$\leq \max_{j=1,2}\left\{\frac{\sinh(x_j)}{x_j}\right\}  \theta_p d_{\H^3}(p,  \psi).$$
The point $p$ belongs to a generatrix of $\Sigma^*$, with endpoints $p_1 \in \tau_1$, $p_2 \in \tau_2$. Let $q_1, q_2, q$ be the orthogonal projections of $p_1, p_2, p$ to $ \psi$.  From Lemma~\ref{gener1} we get that in a neighborhood of $g_0$ there exist constants $c_1, c_2>0$ such that $$q_1q=|t_{p_1}-t_p|\geq \frac{c_1}{|\alpha|^{1-\mu}},$$ $$qq_2=|t_p-t_{p_2}| \geq \frac{c_2}{|\alpha|^{1-\mu}}.$$ 

Thus, similarly to the proof of Lemma~\ref{gest1*}, from Lemma~\ref{compare} in this case we can bound
$$d_{\H^3}(p,  \psi)=O(\|\Delta g\|^2),~~~~~d_{\H^3}(p, \Pi^1_1)=O(\|\Delta g\|^2).$$

Now we deal with the case $\phi_0=\pi$. Let $\Pi$ be the plane containing the half-plane $\Pi^1_1$, and $r_1$, $r_2$, $r$ be the orthogonal projections of $p_1$, $p_2$, $p$ to $\Pi$. From Lemma~\ref{asymptrap} we get
\begin{equation}
\label{comp4}
d_{\H^3}(p, \Pi)=pr\leq 2\max_{j=1,2}\{p_jr_j \exp(-rr_j)\}.
\end{equation}

Suppose that $rr_j \geq |\alpha|^{\mu-1}$. Then
$$p_jr_j\exp(-rr_j)\leq x_j|\alpha|^2=O(|\alpha|^2).$$

Suppose that $rr_j \leq |\alpha|^{\mu-1}$. Then also $qq_j \leq |\alpha|^{\mu-1}$ and
$$|t_{p_j}-y_j| \leq T+qq_j+|y_j|\leq T+|\alpha|^{\mu-1}+|y_j| \leq c|\alpha|^{\mu-1}$$
for some constant $c>1$. We have
$$\theta_{p_j}=\phi_j+\frac{\alpha(t_{p_j}-y_j)}{a}.$$
In the case $j=1$ we get
$$|\theta_{p_1}|\leq|\phi_1|+\frac{c|\alpha|^\mu}{a} =O(\|\Delta g\|^{\mu}),$$
$$p_1r_1=d_{\H^3}(p_1, \Pi)\leq \sinh (p_1r_1)=\sin (|\theta_{p_1}|) \sinh(p_1q_1)\leq$$ $$\leq   |\theta_{p_1}| \sinh(x_1)=O(\|\Delta g\|^{\mu}).$$
In the case $j=2$ we get
$$|\theta_{p_2}-\pi|\leq |\phi_2-\pi|+\frac{c|\alpha|^{\mu}}{a}=O(\|\Delta g\|^{\mu}),$$
$$p_2r_2=d_{\H^3}(p_2, \Pi)\leq \sinh (p_2r_2)=\sin (|\theta_{p_2}-\pi|) \sinh(p_2q_2)\leq$$ $$\leq   |\theta_{p_2}-\pi| \sinh(x_2)=O(\|\Delta g\|^{\mu}).$$
In any case we see that
$$p_jr_j\exp(-qq_j)=O(\|\Delta g\|^\mu).$$
Due to~(\ref{comp4}), this finishes the proof. The second claim of the lemma is obtained similarly.
\end{proof}


\subsubsection{Differentiability for controllably polyhedral metrics}

We use the results of the previous section to show

\begin{lm}
\label{diff2}
Let $g \in \mathcal P_{cp}(M, V)$. Then $\mathcal I_V$ is differentiable at $g$.
\end{lm}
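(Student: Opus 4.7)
The plan is to reduce the statement to two local computations already at our disposal: the $C^1$-analysis of Lemma~\ref{c1} for strictly polyhedral metrics, and the strip-deformation analysis of Section~\ref{stripssec} culminating in Corollary~\ref{diff}. Since $\mc P_{sp}(M,V)$ is open in $\mc P_{cp}(M,V)$ by Lemma~\ref{open1}, it suffices to treat the peculiar case, in which Corollary~\ref{bs2c} implies that $\pt M(g) \cap \pt C(\ol g)$ is a disjoint union of finitely many simple closed geodesics $\psi_1,\dots,\psi_k$, each an edge of both $\pt M(g)$ and $\pt C(\ol g)$.

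First I would fix a geodesic triangulation $\mc T$ of $\pt M(g)$ adapted to this structure: its vertex set is $V \cup W$, where $W$ contains one marked point on each $\psi_j$, and its face decomposition refines that of $\pt M(g)$. Such a $\mc T$ is produced by applying Lemma~\ref{triang1} and Lemma~\ref{triang2} to $M(g)$ (passing if necessary to a super-large finite cover and descending) and then restricting to the boundary. As in the opening of Lemma~\ref{c1}, I would verify that on a sufficiently small neighborhood $U \subset \mc P_{cp}(M,V)$ of $g$ the triangulation $\mc T$ remains realized in $\pt M(g')$ up to a canonical isotopy fixing $V\cup W$, so that each edge length $l_e$ becomes a well-defined continuous function on $U$.

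The edges of $\mc T$ then split into two classes. If the face of $\pt M(g)$ containing $e$ stays at positive distance from $\pt C(\ol g)$, then $e$ is locally strictly polyhedral and the argument of Lemma~\ref{c1} applies verbatim on a neighborhood of $e$, yielding that $l_e$ is $C^1$ on $U$. The remaining edges lie inside one of the strips $\Sigma_j$ consisting of the one or two faces of $\pt M(g)$ whose closures contain $\psi_j$. For such an edge, I would lift $e$ and a fundamental domain for the cyclic stabilizer $\langle \gamma_j\rangle \subset \rho_{\ol g}(\pi_1(M))$ of $\tilde\psi_j$ to $\H^3$; the resulting finite vertex configuration on $\tilde\Sigma_j$ together with the shift and rotation parameters of $\gamma_j$ precisely reproduces the peculiar solid-torus configuration of Section~\ref{stripssec}. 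Under a small deformation $g' \in U$, the relevant local parameters (angles $\phi^{j,1}_i, \phi^{j,2}_i$ to the two reference half-planes, rotation $\alpha_j$, shift $a_j$, and the vertex coordinates $x^{j,l}_i, y^{j,l}_i$) vary smoothly, and Corollary~\ref{diff} then gives differentiability of $l_e$ at $g$ with differential equal to that of the auxiliary smooth function $l'_e$ obtained by unfolding across $\tilde\psi_j$.

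The main obstacle will be to justify that this local strip reduction is legitimate inside the full hyperbolic manifold: one must argue that the intrinsic shortest arc realizing $e$ in $\pt M(g')$ actually lies in the chosen fundamental copy of $\Sigma_j$ for all $g'$ close to $g$, so that the geometric estimates of Lemmas~\ref{gest1}--\ref{gest2*} apply, and that contributions from the ambient $\pi_1(M)$-action outside $\langle\gamma_j\rangle$ do not disrupt the bound $l_e-l'_e = O(\|\Delta g\|^{1+\mu})$ of Lemma~\ref{est1}. Here I would exploit the super-largeness of $\mc T$ together with the fact that all vertices of $\tilde V(g)$ not belonging to $\tilde\Sigma_j$ are at uniformly positive distance from $\tilde\psi_j$ in the Klein model, which after shrinking $U$ confines the analysis to a compact neighborhood of $\tilde\psi_j$ where Section~\ref{stripssec} applies directly. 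Once differentiability of every $l_e$ at $g$ is established, differentiability of $\mc I_V$ at $g$ follows because the chart $\mc D(\pt M, \mc T)$ identifies $\mc I_V$ near $g$ with the map $g'\mapsto (l_e(g'))_{e \in E(\mc T)}$.
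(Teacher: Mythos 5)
Your edge-type decomposition and the choice of key lemmas (Lemma~\ref{c1} for the strictly polyhedral part, Corollary~\ref{diff} for the strips) are exactly the right ingredients, but the way you set up the triangulation introduces a genuine gap. You build $\mc T$ with vertex set $V \cup W$, where $W$ marks one point on each $\psi_j$, by invoking Lemma~\ref{triang1} and Lemma~\ref{triang2} for $M(g)$ and restricting to the boundary. Those lemmas triangulate the 3-manifold (and Lemma~\ref{triang2} only does so after passing to a finite cover), and more importantly they are not what you want here: $\mc I_V$ takes values in $\mc D_c(\pt M, V)$, and a chart on that space is $\mc D_c(V, \mc T)$ for a triangulation $\mc T$ of $(\pt M, V)$ -- with vertex set exactly $V$. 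A triangulation with vertices in $V \cup W$ gives a chart on $\mc D(\pt M, V \cup W)$, a different (larger) space. The positions of the points of $W$ on the geodesics $\psi_j$ are not canonically determined by the isotopy class in $\mc D_c(\pt M, V)$: you would need to fix a section of the forgetful bundle (as the paper does later in Section~\ref{subst} with the section $\sigma$, for a different purpose), and even then, showing the edge lengths of $\mc T$ are differentiable in the $V\cup W$-chart does not directly show $\mc I_V$ itself is differentiable without an argument about the forgetful map. Moreover Corollary~\ref{diff} applies to an edge $e$ with both endpoints in the original vertex set $V$ (i.e.\ the $v^1_1$ and $v^2_1$ of Section~\ref{stripssec}); an edge with an endpoint in $W$ is not covered by its statement.

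The paper sidesteps all of this by choosing $\mc T$ as a triangulation of $(\pt M, V)$ with no extra vertices, subject to the two conditions: every edge of $g$ not equal to one of the $\psi_i$ is an edge of $\mc T$, and in each strip $\Sigma_i$ (the face, or the union of the two faces, of $\pt M(g)$ whose closure contains $\psi_i$) all interior edges of $\mc T$ connect the two boundary components of $\Sigma_i$. Such a triangulation exists and is realized by $\mc I_V(g')$ for all $g'$ near $g$; its edges fall into three smooth classes: edges of $g$ (lengths smooth), edges interior to compact polygonal faces (lengths $C^1$ by the argument of Lemma~\ref{c1}), and edges interior to a strip $\Sigma_i$ (lengths differentiable at $g$ by Corollary~\ref{diff}). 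No reduction to the peculiar case, no finite cover, and no auxiliary vertex set $W$ are needed. Your ``main obstacle'' paragraph does identify a real issue (localization to the strip and ignoring the rest of the $\pi_1$-action), and your explanation that the finite vertex configuration near a lift of $\psi_j$ together with the holonomy $\gamma_j$ reproduces the solid-torus setup of Section~\ref{stripssec} is exactly the justification the paper leaves implicit when it cites Corollary~\ref{diff}; that part of your reasoning is sound. The fix is to keep that localization argument but anchor it to a triangulation of $(\pt M, V)$ satisfying the two conditions above, rather than to a $V\cup W$-triangulation imported from the 3-dimensional machinery.
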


\begin{proof}
We start as in the proof of Lemma~\ref{c1}. Pick $g \in \mathcal P_{cp}(M, V)$, lift it to $(\rho, \tilde f) \in \mathcal R(\pi_1(M), G) \times (\H^3)^{V}$, choose a neighborhood $\tilde U$ of $(\rho, \tilde f)$ submersing smoothly onto a neighborhood $U$ of $g$ in $\mathcal P_{cp}(M, V)$ and consider $\mathcal I_V$ as a function over $\tilde U$. Let $d$ be the induced intrinsic metric on $\partial M(g)$.

Let $\psi_1, \ldots, \psi_r$ be all components of $\partial M(g) \cap \partial C(\overline g)$. If $\psi_i$ is an edge of $g$, then by $\Sigma_i$ we denote the union of $\psi_i$ and the two faces adjacent to $\psi_i$. If $\psi_i$ is not an edge of $g$, then by $\Sigma_i$ we denote the face containing $\psi_i$. Pick any triangulation $\mathcal T$ realized in $(d, V)$ such that (1) every edge of $g$ that is not one of $\psi_i$ is an edge of $\mathcal T$; (2) all $\Sigma_i$ are subdivided so that all the interior edges of $\mathcal T$ in $\Sigma_i$ connect two different boundary components of $\Sigma_i$. Then if $U$ is small enough, we get $\mathcal I_V(U) \subset \mathcal D_c(V, \mathcal T)$.

The lengths of the edges of $\mathcal T$ that are edges of $g$ are smooth functions around $g$. For each face of $g$ that is a convex hyperbolic polygon, $\mathcal T$ subdivides it, and the lengths of the edges of $\mathcal T$ in the interior of the faces are $C^1$-functions around $g$ by the same argument as in Lemma~\ref{c1}. The only remaining edges are the edges of $\mathcal T$ belonging to interiors of $\Sigma_i$. For every such edge Corollary~\ref{diff} shows that its length is a differentiable function at~$g$. This finishes the proof.
\end{proof}

\subsubsection{Differentiability for convex cores}
\label{diffcoresec}

Let $V =\emptyset$, so for each $g \in \mc P_b(M, V)$ the set $M(g)$ coincides with $C(\ol g)$. The set $\mc P_b(M, V)$ is identified with a subset of $\mc{CH}(N)$ corresponding to non-Fuchsian metrics on $N$ (recall that it is the whole $\mc{CH}(N)$ except the case when $M$ is an interval bundle over a surface). The map $\mc I_V$ coincides with a map $\mc I$ defined in the introduction, which naturally extends to the whole $\mc{CH}(N)$ when there are Fuchsian metrics.
It was shown by Bonahon in~\cite{Bon}

\begin{lm}
\label{diffcore}
The map $\mc I$ is continuously differentiable over $\mc{CH}(N)$. 
\end{lm}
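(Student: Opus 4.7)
The plan is to invoke Bonahon's framework of shear-bend cocycles, which packages the geometry of the bent surface $\pt C(\ol g)$ into a $\C$-valued transverse cocycle on a geodesic lamination that depends real-analytically on the holonomy. The real part of this cocycle recovers the induced intrinsic hyperbolic metric on $\pt C(\ol g)$ viewed as a point in $\mc T(\pt M)$, while the imaginary part encodes the bending lamination $\mu_{\ol g}$. Thus $\mc I$ factors through a holomorphic object whose restriction to the real part should be smooth.

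First, I would fix $\ol g_0 \in \mc{CH}(N)$ and choose a maximal geodesic lamination $\Lambda \subset \pt M$ containing the support $\lambda_{\ol g_0}$ of the bending lamination, so that each face of $\Lambda$ is an ideal triangle in any nearby hyperbolic structure on $\pt M$. Next, for $\ol g$ in a small neighborhood of $\ol g_0$ in $\mc{CH}(N)$, I would define the shear-bend cocycle $\sigma(\ol g) \in H^1(\Lambda; \C / 2\pi i \Z)$ using the complex cross-ratios of the endpoints of lifts of edges of $\Lambda$ to $\pt_\infty \H^3$ through $\rho_{\ol g}$; this expression is a rational function of the holonomies of loops transverse to $\Lambda$ and is therefore real-analytic in $\ol g$. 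Projecting onto the real part and invoking Bonahon's shear-coordinate parametrization of Teichm\"uller space -- which identifies $\mc T(\pt M)$ locally with an open cone in $H^1(\Lambda; \R)$ via a real-analytic diffeomorphism -- realizes $\mc I$ as a smooth map in a neighborhood of $\ol g_0$, and differentiation produces a continuous differential.

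The main obstacle is independence of the auxiliary choice of $\Lambda$, which is needed to assemble the local charts into a consistent global $C^1$-structure on $\mc I$. Two extensions $\Lambda_1, \Lambda_2$ of $\lambda_{\ol g_0}$ yield shear cocycles related by the standard base-change formulas for transverse cocycles on nested laminations, and Bonahon's compatibility results show that these base-change maps are themselves smooth where both are defined; hence the local expressions agree on overlaps. Continuity of the differential (as opposed to merely pointwise differentiability) is then obtained by combining the continuity of $\ol g \mapsto \sigma(\ol g)$ with Bonahon's continuity result for the bending map $\mc I^*$ in the weak-$*$ topology on $\mc{ML}(\pt M)$: as $\ol g$ varies, the support $\lambda_{\ol g}$ stays inside the fixed maximal $\Lambda$, so no combinatorial jumps occur and the derivative varies continuously.
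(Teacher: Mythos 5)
The paper does not actually prove this lemma; it simply cites Bonahon~\cite{Bon}. Your outline attempts to reconstruct Bonahon's argument using shear-bend cocycles, which is indeed the right general framework, but there is a genuine gap in the last step that is in fact the central difficulty the cited reference has to overcome.

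The problematic claim is that ``as $\ol g$ varies, the support $\lambda_{\ol g}$ stays inside the fixed maximal $\Lambda$, so no combinatorial jumps occur.'' This is false. Choosing a maximal $\Lambda \supset \lambda_{\ol g_0}$ at the base point does not constrain $\lambda_{\ol g}$ for nearby $\ol g$: under small perturbations of the holonomy, new pleating geodesics can appear that are transverse to the edges of $\Lambda$ and hence not contained in it. (For instance, if $\lambda_{\ol g_0}$ consists of finitely many closed geodesics and $\Lambda$ is a maximal extension, a nearby $\ol g$ can bend along geodesics that cross the spiralling edges of $\Lambda$.) Consequently, the shear-bend cocycle of $\pt C(\ol g)$ relative to $\Lambda$ is not defined in a full neighborhood of $\ol g_0$ --- the pleated surface $\pt C(\ol g)$ is not $\Lambda$-pleated --- so the purported local factorization of $\mc I$ through $H^1(\Lambda;\C/2\pi i\Z)$ breaks down.

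This is precisely why Bonahon's result is stated as $C^1$ rather than real-analytic, and why the paper's Lemma~\ref{diffcore2} is formulated at the level of tangent vectors: the maximal lamination $\lambda$ there must contain the support of $\mu$ \emph{and} the support of the variation direction $\dot\mu$, so for different tangent directions one must work with different laminations. Bonahon's actual argument proves one-sided directional differentiability along paths using a choice of $\Lambda$ adapted to the direction, then shows the resulting tangent map is linear and varies continuously --- none of which follows from fixing a single $\Lambda$ near $\ol g_0$. Your appeal to ``base-change formulas on overlaps'' is the right instinct for reconciling different choices, but it operates at the infinitesimal level (comparing directional derivatives computed with different laminations), not at the level of a fixed local chart. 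If you replace the claim of a uniform $\Lambda$ with the direction-dependent choice as in Lemma~\ref{diffcore2}, and then separately argue that the assembled tangent maps are linear and continuous in $\ol g$, you recover Bonahon's actual proof strategy.
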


We will need some additional knowledge on the differential of $\mc I$. 
We recall from the introduction that there is another map $\mc I^*: \mc{CH}(N) \ra \mc{ML}(\pt M)$ sending a convex cocompact metric on $N$ to the bending lamination of its convex core. The space $\mc{ML}(\pt M)$ does not carry a natural smooth structure, but carries a PL-structure. In~\cite{Bon} Bonahon proposed a natural relaxation of the notion of differentiability for PL-structures. Let $U \subset \R^n$ be a domain and $f: U \ra \R^k$ be a map. \emph{The tangent map} to $f$ at $x \in U$, which we denote $df_x$, is a map $\R^n \ra \R^k$ such that for every continuous curve $\tau: [0, \e] \ra U$ with $\tau(0)=x$, differentiable at $0$ with $\tau'(0)=v$, the curve $f\circ\tau$ is differentiable at zero and $df_x(v)=(f\circ\tau)'(0)$. We say that $f$ is \emph{tangentiable} at $x$ if it admits a tangent map at $x$, and $f$ is tangetiable over $U$ if it is tangentiable at every $x \in U$.

By the same construction as for differential manifolds, PL-manifolds admit tangent spaces at each point, which, however, lack the addition structure on the tangent vectors, and allow only the multiplication by non-negative scalars. For an $n$-dimensional PL-manifold, the tangent space at a point is isomorphic to $\R^n$ endowed with such a structure. One can naturally speak about tangentiable maps between PL-manifolds. For a bit more information we refer to~\cite[Section 1]{Bon}. Bonahon proved in~\cite{Bon} that the map $\mc I^*$ is tangentiable.

Let us recall from Section~\ref{cocosec} that $\mc{CH}(N)$ can be thought as a covering space over a smooth open subset of the character space $\mc X(\pi_1(M), G)$, where $G={\rm Iso}^+(\H^3)$. We need to slightly expand this construction. Let $S_i$, ${i=1 \ldots r}$, be the components of $\pt M$. Define $\mc R(\pi_1(S_i), G)$ and $\mc X(\pi_1(S_i), G)$ as in Section~\ref{cocosec} and denote $\prod_{i=1}^r \mc X(\pi_1(S_i), G)$ by $\mc X(\pt M, G)$. Let $[\rho_i] \in \mc X(\pi_1(S_i), G)$ be the characters induced by a representation $\rho_{\ol g} \in \mc R(\pi_1(M), G)$ for $\ol g \in \mc{CH}(N)$. It is known that even when the boundary of $M$ is compressible, $[\rho_i]$ belongs to the smooth part of $\mc X(\pi_1(S_i), G)$, which is locally a complex manifold of complex dimension $-3\chi(S_i)$, see, e.g.,~\cite{Gol2}. Hence, also $\mc X(\pt M, G)$ is locally a complex manifold around the induced characters, and the smooth part of $\mc X(\pi_1(M), G)$ is immersed to $\mc X(\pt M, G)$ as a complex submanifold of half dimension. 

Thurston proposed a well-known parametrization of the space of complex projective structures, which defines a local homeomorphism
$$\mc J: \mc T(\pt M) \times \mc{ML}(\pt M) \ra \mc X(\pt M, G),$$ 
see details in~\cite{KT}. The points of $\mc X(\pt M, G)$ coming from convex cocompact metrics on $M$ are in the image of $\mc J$, and $\mc I \times \mc I^*$ coincides with a local inverse of $\mc J$ on such points. Bonahon showed in~\cite{Bon} that $\mc J$ is tangentiable and that its tangent map is an isomorphism of the tangent spaces at each point. Moreover, he proposed a convenient description of the tangent map.

We need to introduce few definitions from another paper~\cite{Bon4} of Bonahon. An \emph{(abstract) pleated surface} with topological type $S$ (for a closed connected surface $S$) is a pair $(\tilde \iota, \rho)$, where $\tilde\iota: \tilde S \ra \H^3$ is a map from the universal cover $\tilde S$ of $S$ and $\rho: \pi_1(S) \ra G$ is a homomorphism, such that\\
(1) for every $\gamma \in \pi_1(S)$ we have $\tilde \iota\circ\gamma=\rho(\gamma)\circ\tilde\iota$;\\
(2) the pull-back of the intrinsic metric on $\tilde\iota(\tilde S)$ induces a hyperbolic metric $d$ on $S$;\\
(3) there exists a geodesic lamination $\lambda$ in metric $d$ such that $\tilde\iota$ sends each edge of its preimage $\tilde\lambda$ in $\tilde S$ to a geodesic in $\H^3$ and $\tilde\iota$ is totally geodesic on the complement to $\tilde\lambda$.

This notion is different from the common notion of a pleating surface in that the holonomy representation $\rho$ is not required to be faithful or to act freely or properly discontinuously on $\H^3$.

Now let us step back and consider a geodesic lamination on a hyperbolic surface $(S, d)$. An \emph{$A$-valued transverse cocycle} for $\lambda$, where $A$ is an abelian group, is an $A$-valued transverse finitely additive measure for $\lambda$. By $\mc H_A(S, \lambda)$ we denote the space of transverse cocycles for $\lambda$. For $A=\R$ this is a finite-dimensional vector space. In particular, measured laminations and tangent vectors to $\mc{ML}(\pt M)$ with the supports contained in $\lambda$ are $\R$-valued transverse cocycles for $\lambda$. (For the description of tangent vectors to $\mc{ML}(\pt M)$ we refer to~\cite{Bon2, PH}.) In~\cite{Bon4} Bonahon showed that one can associate to a pleated surface an $\R/2\pi\Z$-valued transverse cocycle measuring the bending of the surface. Moreover, he described a converse process of constructing a pleated surface from a hyperbolic surface $(S, d)$ together with an $\R/2\pi\Z$-valued transverse cocycle for $\lambda$. This determines a map
$$\mc T(S) \times \mc H_\R(S, \lambda) \ra \mc X(S, G).$$
(Here we associate to $\R$-valued transverse cocycles their $\R/2\pi\Z$-reductions.) The space $\mc T(S) \times \mc H_\R(S, \lambda)$ can be endowed with a complex structure such that $iT\mc T(S)=T\mc H_\R(S, \lambda)$, where $T\mc T(S)$ and $T\mc H_\R(S, \lambda)$ are considered as sub-bundles of the tangent bundle to $\mc T(S) \times \mc H_\R(S, \lambda)$.
Bonahon showed in~\cite{Bon4} that when $\lambda$ is maximal, the map above is a local biholomorphic homeomorphism. We consider the product of these maps for different components of $\pt M$
$$\mc J_\lambda: \mc T(\pt M) \times \mc H_\R(\pt M, \lambda) \ra \mc X(\pt M, G).$$
In~\cite{Bon} Bonahon proved


\begin{lm}
\label{diffcore2}
Let $(d, \mu) \in \mc T(\pt M) \times \mc{ML}(\pt M)$, $\dot d \in T_{d}\mc T(\pt M)$, $\dot \mu  \in T_\mu\mc{ML}(\pt M)$ and let $\lambda$ be a maximal geodesic lamination containing the supports of $\mu$ and $\dot \mu$. Then $$(d\mc J)_{(d, \mu)}(\dot d, \dot \mu)=(d\mc J_\lambda)_{(d, \mu)}(\dot d, \dot \mu).$$
\end{lm}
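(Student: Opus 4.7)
\textbf{Proof plan for Lemma~\ref{diffcore2}.} The plan is to realize the tangent pair $(\dot d,\dot\mu)$ by a single curve through $(d,\mu)$ along which the two parametrizations $\mc J$ and $\mc J_\lambda$ literally coincide, so that their tangent maps must agree on this vector. The key structural input is the following compatibility: denote by $\mc{ML}(\lambda)\subset\mc{ML}(\pt M)$ the set of measured laminations whose support is contained in $\lambda$, regarded as a subset of $\mc H_\R(\pt M,\lambda)$ via the tautological identification of a positive transverse measure with an $\R$-valued transverse cocycle. Bonahon's construction of a pleated surface from a hyperbolic metric and a transverse cocycle, when specialized to a positive Borel cocycle, reproduces the Thurston shear-bend parametrization (see~\cite{Bon4}). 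Consequently, on the subset $\mc T(\pt M)\times\mc{ML}(\lambda)$, the maps $\mc J$ and $\mc J_\lambda$ agree pointwise.

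Next I would produce a curve $\tau\colon[0,\e)\to\mc T(\pt M)\times\mc{ML}(\pt M)$ with $\tau(0)=(d,\mu)$, $\tau'(0)=(\dot d,\dot\mu)$, and whose $\mc{ML}$-component stays in $\mc{ML}(\lambda)$ for all $t\in[0,\e)$. This is where maximality of $\lambda$ enters in an essential way. A train track carrying $\lambda$ equips a neighborhood of $\mu$ in $\mc{ML}(\pt M)$ with canonical PL-coordinates (see~\cite{Bon2, PH}), with respect to which $\mc{ML}(\lambda)$ corresponds to the cell of cocycles supported on $\lambda$, and whose ambient coordinate vector space is exactly $\mc H_\R(\pt M,\lambda)$. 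Since by hypothesis both $\mu$ and $\dot\mu$ are supported on $\lambda$, the vector $\dot\mu$ belongs to the tangent cone of $\mc{ML}(\lambda)$ at $\mu$, and hence can be realized by a curve of measured laminations all supported on $\lambda$; coupled with any smooth curve in $\mc T(\pt M)$ with tangent $\dot d$ at $d$, this gives the desired $\tau$.

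With $\tau$ in hand, both $\mc J\circ\tau$ and $\mc J_\lambda\circ\tau$ are the same curve in $\mc X(\pt M,G)$ by the compatibility of the first step. By Bonahon's tangentiability of $\mc J$ at $(d,\mu)$ applied to $\tau$, and by ordinary differentiability of $\mc J_\lambda$ at $(d,\mu)$ (it is a local biholomorphism, since $\lambda$ is maximal, as proved in~\cite{Bon4}) applied to the same $\tau$ viewed inside $\mc T(\pt M)\times\mc H_\R(\pt M,\lambda)$, the two tangent maps are computed by the same derivative:
$$(d\mc J)_{(d,\mu)}(\dot d,\dot\mu)=(\mc J\circ\tau)'(0)=(\mc J_\lambda\circ\tau)'(0)=(d\mc J_\lambda)_{(d,\mu)}(\dot d,\dot\mu).$$

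The main obstacle will be the middle paragraph, i.e.\ checking that the PL-structure on $\mc{ML}(\pt M)$ near $\mu$ really identifies $\mc{ML}(\lambda)$ with a cell whose ambient tangent space is $\mc H_\R(\pt M,\lambda)$, and that the tangent cone to that cell at $\mu$ consists exactly of the $\R$-valued transverse cocycles for $\lambda$ (positive or not). Once this is granted, the argument is a formal chase through the two definitions of tangent map; the crucial point is that the classical differential of $\mc J_\lambda$, restricted to the tangent cone of $\mc{ML}(\lambda)$ inside $\mc H_\R(\pt M,\lambda)$, computes the PL-tangent map of $\mc J$.
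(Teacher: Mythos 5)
The paper does not prove this lemma — it is stated as a result of Bonahon, attributed to~\cite{Bon} — so there is no internal proof to compare against, and I evaluate the proposal on its own terms.

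Your outline is correct. The final chain of equalities is logically sound once you produce a one-sided curve $\tau$ through $(d,\mu)$ with tangent $(\dot d,\dot\mu)$ whose $\mc{ML}$-component stays in $\mc{ML}(\lambda)$; the pointwise agreement of $\mc J$ and $\mc J_\lambda$ on $\mc T(\pt M)\times\mc{ML}(\lambda)$ is the compatibility of grafting holonomy with pleated-surface holonomy for genuine positive transverse measures, and applying the ordinary chain rule to the differentiable map $\mc J_\lambda$ along $\tau$ is legitimate. You correctly flag the middle step as the obstacle. The clean way to close it: under the tautological inclusion $\mc{ML}(\lambda)\hookrightarrow\mc H_\R(\pt M,\lambda)$, the image is the \emph{polyhedral} convex cone of non-negative cocycles. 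In a train-track chart for a maximal train track snugly carrying $\lambda$, a neighborhood of $\mu$ in $\mc{ML}(\pt M)$ is identified with a neighborhood of the corresponding weight vector in this cone, and the ambient weight space is identified with $\mc H_\R(\pt M,\lambda)$. Because the cone is polyhedral, any one-sided tangent vector $\dot\mu$ to $\mc{ML}(\pt M)$ at $\mu$ which lies in $\mc H_\R(\pt M,\lambda)$ automatically satisfies $\mu+t\dot\mu\geq 0$ for small $t>0$, so the segment $t\mapsto\mu+t\dot\mu$ serves as the $\mc{ML}$-component of $\tau$. One further input you use implicitly: a non-negative transverse cocycle for $\lambda$ is a genuine (countably additive) transverse measure, so this segment actually sits in $\mc{ML}(\pt M)$ rather than merely in $\mc H_\R(\pt M,\lambda)$. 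Both the chart identification and the positivity-implies-measure fact are non-trivial but standard pieces of Bonahon's theory; with them spelled out, your proposal gives a complete proof.
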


\section{Infinitesimal rigidity}
\label{infrigsec}

This section is devoted to a proof of the following infinitesimal rigidity result.

\begin{lm}
\label{infrig}
Let either $g \in \mathcal P_{cp}(M, V)$ or $V=\emptyset$ and $g \in \mc P_p(M, V)$. Then $d\mathcal I_V$ has the full rank at $g$.
\end{lm}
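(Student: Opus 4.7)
I would split the argument according to the two cases stated.

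For $V=\emptyset$ and $g\in\mc P_p(M,\emptyset)$, the map $\mc I_V$ coincides with the map $\mc I$ of the introduction, and I would derive the full rank of $d\mc I_V$ at $g$ from Lemma~\ref{diffcore2}. At the point $(d,\mu_{\ol g})=(\mc I(\ol g),\mc I^*(\ol g))$, the support of $\mu_{\ol g}$ is a finite union of simple closed geodesics; choosing a maximal lamination $\lambda$ containing $\mathrm{supp}(\mu_{\ol g})$ together with the support of any proposed tangent $(\dot d,\dot\mu)$, Lemma~\ref{diffcore2} identifies $d\mc J$ with $d\mc J_\lambda$. Since $\mc J_\lambda:\mc T(\pt M)\times\mc H_\R(\pt M,\lambda)\to\mc X(\pt M,G)$ is a local biholomorphism satisfying $iT\mc T=T\mc H_\R$, and $\mc{CH}_h(N)\subset\mc X(\pt M,G)$ is a smooth totally real submanifold of half dimension, the tangent space to $\mc{CH}_h(N)$ pulled back by $d\mc J_\lambda^{-1}$ is transverse to $\{0\}\times T\mc H_\R$ and projects isomorphically onto $T\mc T(\pt M)$. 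This gives injectivity of $d\mc I$.

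For $g\in\mc P_{cp}(M,V)$, the strategy is to reduce infinitesimal rigidity of $\mc I_V$ to the cone-3-manifold rigidity theorem of Weiss~\cite{Wei2}. First, using Lemmas~\ref{triang1}--\ref{triang2}, I pass to a finite cover $M'(g')$ that carries a super-large geodesic triangulation $\mc T$ whose vertex set $W\supseteq V$ includes a point on every closed-geodesic edge of $g$; this presents $M'(g')$ as a compact hyperbolic cone-3-manifold $(K',\Sigma')$ triangulated by $\mc T$, parametrized locally by edge lengths in a domain $\mc C(M',\mc T)\subset\R^{E(\mc T)}$. An infinitesimal variation $\dot g\in T_g\mc P_{cp}(M,V)$ lifts to $\dot l\in T\mc C(M',\mc T)$ satisfying the linearized Schläfli conditions $d\nu_e(\dot l)=0$ on every non-singular interior edge and the analogous condition $d\nu_e^\sharp(\dot l)=0$ at every boundary edge of $\mc T$ lying inside a face of $g$. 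The hypothesis $d\mc I_V(\dot g)=0$ is precisely that $\dot l$ vanishes on all boundary edges of $\mc T$.

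The technical core is to convert $\dot l$ into a cohomology class on $(K',\Sigma')$ to which Weiss's rigidity applies. I would develop, as announced in Section~\ref{ressec}, a theory of affine automorphic vector fields: starting from $\dot l$ I produce a $\rho_{\ol{g'}}$-equivariant vector field $Y$ on $\tilde M(g')\setminus\tilde\Sigma'$ whose induced variations of the lengths of lifts of edges of $\mc T$ reproduce $\dot l$, and whose restriction to $\pt\tilde M(g')$ is an infinitesimal isometry of the induced cone-metric. Because $\dot l$ is zero on boundary edges, I can then modify $Y$ face-by-face on $\pt\tilde M(g')$ by Killing fields of the ambient $\H^3$, chosen compatibly with the equivariance, so that the modified vector field $Y'$ vanishes on $\pt\tilde M(g')$ while continuing to represent the same bulk deformation. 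The vector field $Y'$ represents an infinitesimal deformation of the cone-3-manifold structure on $(K',\Sigma')$ that fixes cone angles and is trivial on the boundary, so it extends by reflection to a deformation of the double preserving the doubled cone structure.

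Finally, the infinitesimal rigidity theorem of Weiss~\cite{Wei2} forces $Y'$ to be cohomologically trivial among equivariant vector fields, which reading backwards gives $\dot l=0$ and hence $\dot g=0$. The principal obstacle I anticipate is exactly the affine-automorphic construction in the previous paragraph: the non-smoothness of $\pt M(g)$ across bending edges, the behavior near the closed-geodesic edges of $g'$ lying in $\pt C(\ol{g'})$ (where only controllable, not strict, polyhedrality is available), and the need to match the combinatorial normalization provided by $\dot l$ with the analytic framework underlying Weiss's theorem must all be handled uniformly; this is where the restriction to controllably polyhedral $g$, rather than to arbitrary bent $g$, is used in an essential way.
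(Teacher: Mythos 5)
Your first paragraph, for the case $V=\emptyset$, does not hold up. You assert that $\mc{CH}_h(N)\subset\mc X(\pt M,G)$ is a \emph{totally real} submanifold of half dimension, but the paper is explicit (Section~\ref{diffcoresec}) that the smooth part of $\mc X(\pi_1(M),G)$ is immersed in $\mc X(\pt M,G)$ as a \emph{complex} submanifold of half dimension. This is not a typographical slip: it is the reason your dimension count buys nothing. A complex half-dimensional subspace and the (real, $i$-transversal-to-itself) subspace $\{0\}\times T\mc H_\R(\pt M,\lambda)$ have no a priori reason to be transversal, so asserting that the pull-back of $T\mc{CH}_h(N)$ under $d\mc J_\lambda^{-1}$ ``projects isomorphically onto $T\mc T(\pt M)$'' is exactly the statement you are trying to prove, namely that $d\mc I$ is injective. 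Indeed, the paper's own Corollary~\ref{locrig3} runs this exact argument in the \emph{opposite} direction: it uses the already-established non-degeneracy of $d\mc I$ (from Corollary~\ref{locrig2}) to deduce the non-degeneracy of $d\mc I^*$, precisely because the complex structure only lets you transfer rigidity between the two maps, not create it. The paper therefore does not treat the $V=\emptyset$ case through Bonahon's framework alone; Lemma~\ref{subst2.5} feeds it into the same edge-length reduction (Lemma~\ref{subst3}) and the same cone-manifold rigidity (Weiss's theorem) as the controllably polyhedral case.

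Your second part has the correct broad outline -- triangulate a finite cover, replace the boundary-metric deformation by an edge-length deformation, build affine automorphic vector fields, double along the boundary, appeal to Weiss -- and this matches the architecture of Sections~\ref{subst}--\ref{bundle}. But the step you describe as ``The hypothesis $d\mc I_V(\dot g)=0$ is precisely that $\dot l$ vanishes on all boundary edges of $\mc T$'' is where the real difficulty lives, and it is not a tautology. When $g$ is deformed, a boundary edge of $\mc T$ lying inside a face of $g$ becomes a broken intrinsic geodesic, and $d\mc I_V(\dot g)=0$ controls the intrinsic length of that broken geodesic, not the extrinsic chord length $l_e$ that parametrizes the cone-3-manifold. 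Equating the two derivatives is the content of Claim~\ref{differential} in the strictly polyhedral case and of the long estimates culminating in Corollary~\ref{diff} in the peculiar case (the hyperbolic-strip analysis of Section~\ref{stripssec} with the auxiliary ruled surface $\Sigma^*$), together with the choice of the auxiliary deformation $\dot z\in T_0\R^{V'}$ of the marked points added on the closed-geodesic edges (Lemma~\ref{subst2}). You correctly identify this as the locus of danger in your final paragraph, but flagging it as an ``anticipated obstacle'' is different from resolving it; as written, the second part is missing the differentiability analysis that makes the reduction to Lemma~\ref{subst3} legitimate at controllably polyhedral (but not strictly polyhedral) metrics.
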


We now show that it implies the corresponding local rigidity results

\begin{crl}
\label{locrig}
Let $g \in {\mathcal P}_{cp}(M, V)$. Then $\mathcal I_V$ is a local homeomorphism at $g$. 
\end{crl}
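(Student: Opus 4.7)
The plan is to deduce Corollary~\ref{locrig} from an inverse-function-theorem argument using four inputs: $\mathcal{I}_V$ is continuous (Lemma~\ref{contin}), differentiable at $g$ (Lemma~\ref{diff2}), and has invertible differential there (Lemma~\ref{infrig}); and its domain $\mathcal{P}_{cp}(M,V)$ is open in $\mathcal{P}_b(M,V)$ (Lemma~\ref{open2}), a smooth manifold of the same finite dimension $3(n-k)$ as the target $\mathcal{D}_c(\partial M, V)$.

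If $g \in \mathcal{P}_{sp}(M,V)$ is strictly polyhedral, then Lemma~\ref{c1} upgrades differentiability at $g$ to $C^1$-regularity on an open neighborhood. The classical inverse function theorem then yields a local $C^1$-diffeomorphism, hence a local homeomorphism, at $g$.

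For peculiar $g$, i.e. $g \in \mathcal{P}_{cp}(M,V) \setminus \mathcal{P}_{sp}(M,V)$, the map $\mathcal{I}_V$ may fail to be $C^1$ on any neighborhood of $g$, so the classical inverse function theorem does not apply directly. I would exploit the refined bounds of Section~\ref{stripssec}: fix a geodesic triangulation $\mathcal{T}$ of $(\partial M(g), V)$ adapted to the strips $\Sigma_i \subset \partial M(g) \cap \partial C(\overline g)$ as in the proof of Lemma~\ref{diff2}. In these edge-length coordinates the only non-$C^1$ components of $\mathcal{I}_V$ are the lengths $l_e$ of edges $e$ interior to some $\Sigma_i$. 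For each such $l_e$, Corollary~\ref{diff} supplies a smooth approximant $l'_e$ with $dl'_e|_g = dl_e|_g$ and $l_e - l'_e = O(\|\Delta g\|^{1+\mu})$ for every $\mu \in (0,1)$ (Lemma~\ref{est1}). Substituting $l'_e$ for $l_e$ defines a smooth auxiliary map $\mathcal{I}'_V$ near $g$ whose differential at $g$ equals the (invertible) differential $d\mathcal{I}_V|_g$; by the classical inverse function theorem applied to $\mathcal{I}'_V$, it is a local diffeomorphism. Because $\mathcal{I}_V$ is a continuous perturbation of $\mathcal{I}'_V$ by an error of order $O(\|\Delta g\|^{1+\mu})$, a topological argument combining invariance of domain with the local Brouwer degree of $\mathcal{I}'_V$ (unchanged under a superlinearly small perturbation) allows us to conclude that $\mathcal{I}_V$ is itself a local homeomorphism at $g$.

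The principal difficulty is the last step in the peculiar case. Pointwise differentiability at a point with invertible differential does not, on its own, imply local injectivity of a continuous map between equidimensional manifolds, so the polynomial-order estimate $O(\|\Delta g\|^{1+\mu})$ supplied by Lemma~\ref{est1} -- rather than merely the $o(\|\Delta g\|)$ that plain pointwise differentiability would furnish -- is essential to push the classical inverse-function-theorem reasoning through the smooth approximant $\mathcal{I}'_V$ and recover the desired local homeomorphism property for $\mathcal{I}_V$.
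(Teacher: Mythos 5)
Your treatment of the strictly polyhedral case is fine, but the final step in the peculiar case contains a genuine gap. From Corollary~\ref{diff}/Lemma~\ref{est1} you only get a pointwise estimate relative to the single base point $g$: writing $\mathcal I_V=\mathcal I'_V+E$, you know $\|E(g')\|=O(\|g'-g\|^{1+\mu})$. This does not control the difference $E(g_1)-E(g_2)$ for two \emph{distinct} points $g_1,g_2$ that both lie at distance $\approx\rho$ from $g$ but at distance $\ll\rho^{1+\mu}$ from each other, which is exactly what you would need to transfer local injectivity from $\mathcal I'_V$ to $\mathcal I_V$. A one-dimensional model shows the step fails as stated: $f(x)=x+x^{1+\mu}\sin(1/x)$ (extended by $f(0)=0$) is differentiable at $0$ with derivative $1$, differs from the identity by $O(|x|^{1+\mu})$, yet is injective on no neighborhood of $0$, because its derivative changes sign arbitrarily close to $0$. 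The degree argument you invoke only yields that the local Brouwer degree of $\mathcal I_V$ near $\mathcal I_V(g)$ is $\pm 1$, i.e.\ local surjectivity onto a neighborhood of $\mathcal I_V(g)$; degree $\pm1$ does not exclude multiple preimages, so it does not give a local homeomorphism. So the "principal difficulty" you identify is real, and the superlinear error bound does not resolve it.

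The paper closes this gap differently, and more cheaply: it uses that $\mathcal P_{cp}(M,V)$ is \emph{open} (Lemma~\ref{open2}) and that $\mathcal I_V$ is differentiable at \emph{every} point of this open set (Lemma~\ref{diff2}) with everywhere non-degenerate differential (Lemma~\ref{infrig}), and then appeals to an inverse function theorem valid for maps that are merely differentiable (not $C^1$) on an open set with everywhere invertible differential; this gives local injectivity, and Brouwer's invariance of domain upgrades it to a local homeomorphism. In other words, the crucial input is differentiability with invertible differential on a whole neighborhood of $g$ inside $\mathcal P_{cp}(M,V)$ -- information you have available but do not use -- rather than any quantitative rate of differentiability at the single point $g$. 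If you want to salvage your smoothing strategy, you would have to prove a uniform (neighborhood-wise) estimate comparing $\mathcal I_V$ and $\mathcal I'_V$ at pairs of nearby points, which is substantially more than Lemma~\ref{est1} provides; with the pointwise-differentiability inverse function theorem at hand, none of that is necessary.
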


\begin{proof}
The manifolds $\mathcal P_b(M, V)$ and $\mc D_c(\pt M, V)$ have the same dimension $3(n-k)$ and $\mc P_{cp}(M, V)$ is an open subset of the former. Since $\mc I_V$ is differentiable over $\mc P_{cp}(M, V)$ and the differential is everywhere non-degenerate, by the inverse function theorem for differentiable maps (see, e.g.,~\cite{Ray}) we get that $\mathcal I_V$ is locally injective. By the Brower invariance of the domain theorem, $\mathcal I_V$ is a local homeomorphism around $g$. 
\end{proof}

\begin{crl}
\label{locrig2}
Let $V =\emptyset$ and $g \in {\mathcal P}_{p}(M, V)$. Then $\mathcal I_V$ is a local $C^1$-diffeomorphism at~$g$. 
\end{crl}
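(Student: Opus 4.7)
The plan is to imitate the proof of Corollary~\ref{locrig}, but to upgrade the conclusion from ``local homeomorphism'' to ``local $C^1$-diffeomorphism'' by replacing the differentiable inverse function theorem with its classical $C^1$ version. The reason the stronger conclusion is available precisely in the case $V = \emptyset$ is that the $C^1$ regularity of $\mathcal I_V$ is, in this case, provided globally by Bonahon's Lemma~\ref{diffcore} on $\mc{CH}(N)$, whereas for controllably polyhedral metrics with vertices we only had pointwise differentiability (Lemma~\ref{diff2}).

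First, I would identify $\mathcal P_b(M,\emptyset)$ with an open subset of $\mc{CH}(N)$ as in Section~\ref{diffcoresec}: when $V = \emptyset$ we have $M(g) = C(\ol g)$ for each $g \in \mc P_b(M,\emptyset)$, the embedding $\mc P_b(M,\emptyset) \hookrightarrow \mc{CH}(N)$ has image the non-Fuchsian locus, and under this identification $\mc I_V$ coincides with the restriction of the map $\mc I : \mc{CH}(N) \to \mc T(\pt M)$. Since $\mc P_p(M,\emptyset) \subset \mc P_b(M,\emptyset)$ is open (by Lemma~\ref{open2}, or even by Lemma~\ref{open1}, since without vertices controllably polyhedral and strictly polyhedral coincide up to closed-geodesic intersections), there is an open neighborhood of $g$ in $\mc P_b(M,\emptyset)$ on which Lemma~\ref{diffcore} applies. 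This gives that $\mc I_V$ is of class $C^1$ on a neighborhood of $g$, viewed as a map between the smooth manifolds $\mc P_b(M,\emptyset)$ and $\mc D_c(\pt M,\emptyset) = \mc T(\pt M)$.

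Next, I would invoke Lemma~\ref{infrig} in its second case: since $V=\emptyset$ and $g \in \mc P_p(M,\emptyset)$, the differential $(d\mc I_V)_g$ has full rank. As in the proof of Corollary~\ref{locrig}, the source and target are smooth manifolds of the same dimension $3(n-k) = -3\chi(\pt M)$, so full rank means $(d\mc I_V)_g$ is a linear isomorphism. Combining this with the $C^1$ regularity from the previous step, the classical $C^1$ inverse function theorem produces open neighborhoods of $g$ and of $\mc I_V(g)$ between which $\mc I_V$ restricts to a $C^1$-diffeomorphism.

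There is no real obstacle here: the whole point of the corollary is to record the stronger conclusion that is available in the absence of vertices. The only thing to double-check is that the identification of $\mc P_b(M,\emptyset)$ with an open subset of $\mc{CH}(N)$ is compatible with the smooth structures (so that ``$C^1$ as a map on $\mc{CH}(N)$'' and ``$C^1$ as a map on $\mc P_b(M,\emptyset)$'' coincide), but this is the content of the construction of the topology on $\mc P_b(M,V)$ in Section~\ref{bentstruct} specialized to $V = \emptyset$, where $(\H^3)^V$ is a point and $\mc{CH}(N,\emptyset) = \mc{CH}(N)$.
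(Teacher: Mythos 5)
Your proposal is correct and follows essentially the same route as the paper: the paper's proof likewise combines the $C^1$ regularity of $\mathcal I_V$ over $\mathcal P_b(M,\emptyset)$ (Bonahon's result, Lemma~\ref{diffcore}) with the non-degeneracy of the differential from Lemma~\ref{infrig} and then applies the standard $C^1$ inverse function theorem. The only difference is cosmetic: the paper does not bother with the openness of $\mathcal P_p(M,\emptyset)$, since the $C^1$ regularity already holds on all of $\mathcal P_b(M,\emptyset)$.
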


\begin{proof}
The proof is almost the same, but here we use that $\mc I_V$ is $C^1$ over $\mc P_b(M, V)$. Hence, it is non-degenerate in a neighborhood of $g$ and we can use the standard inverse function theorem to make the desired conclusion.
\end{proof}

Particularly, we get the first claim of Part (1) of Corollary~\ref{main2}. We can also deduce Part (2).

\begin{crl}
\label{locrig3}
Let $\ol g \in \mc{CH}(N)$ have polyhedral convex core. Then the tangent map $d\mc I^*_{\ol g'}$ is an isomorphism for every $\ol g'$ sufficiently close to $\ol g$. 
\end{crl}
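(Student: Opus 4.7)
The plan is to upgrade the local $C^1$-rigidity of $\mc I$ at $\ol g$ supplied by Corollary~\ref{locrig2} to a rigidity statement for $\mc I^*$, by exploiting the complex parametrization of $\mc X(\pt M, G)$ recalled in Section~\ref{diffcoresec}. Since $\ol g$ has polyhedral convex core, the induced metric $g$ on $M \cong C(\ol g)$ lies in $\mc P_p(M, \emptyset)$, so Corollary~\ref{locrig2} says that $\mc I = \mc I_\emptyset$ is a local $C^1$-diffeomorphism at $\ol g$; in particular $d\mc I_{\ol g}$ is a linear isomorphism. Because $\mc I$ is $C^1$ on $\mc{CH}(N)$ by Lemma~\ref{diffcore}, the set where $d\mc I_{\ol g'}$ is invertible is open, so there is a neighborhood $U$ of $\ol g$ on which this holds. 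It remains to deduce that $d\mc I^*_{\ol g'}$ is an isomorphism of PL tangent spaces for every $\ol g' \in U$.

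Fix such $\ol g'$, set $d := \mc I(\ol g')$ and $\mu := \mc I^*(\ol g')$. I will first prove that $d\mc I^*_{\ol g'}$ is injective. Suppose $\dot{\ol g} \in T_{\ol g'}\mc{CH}(N)$ satisfies $d\mc I^*(\dot{\ol g}) = 0$, and put $\dot d := d\mc I(\dot{\ol g})$. Any tangent vector at $\mu$ is carried by a train track carrying $\mu$, so $\mathrm{supp}(\mu)$ and $\mathrm{supp}(d\mc I^*(i\dot{\ol g}))$ are contained in a common geodesic lamination, which extends to a maximal geodesic lamination $\lambda$. By Lemma~\ref{diffcore2}, the tangent maps $d\mc J$ and $d\mc J_\lambda$ at $(d, \mu)$ coincide on tangent vectors whose lamination component is supported in $\lambda$, so $(d\mc J_\lambda)^{-1}(\dot{\ol g}) = (\dot d, 0)$ and $(d\mc J_\lambda)^{-1}(i\dot{\ol g}) = \bigl(d\mc I(i\dot{\ol g}),\, d\mc I^*(i\dot{\ol g})\bigr)$.

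Because $\mc{CH}(N) \hookrightarrow \mc X(\pt M, G)$ is a holomorphic immersion, $T_{\ol g'}\mc{CH}(N)$ is a complex subspace of $T_{\ol g'}\mc X(\pt M, G)$; since $(d\mc J_\lambda)_{(d,\mu)}$ is a complex linear isomorphism (the differential of a local biholomorphism), its preimage $W := (d\mc J_\lambda)^{-1}\bigl(T_{\ol g'}\mc{CH}(N)\bigr)$ is a complex subspace of $T_d\mc T(\pt M) \oplus T\mc H_\R(\pt M, \lambda)$, in which multiplication by $i$ interchanges the two summands via an isomorphism $J\colon T_d\mc T(\pt M) \xrightarrow{\sim} T\mc H_\R(\pt M, \lambda)$. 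Thus $(0, J\dot d) = i(\dot d, 0) \in W$, forcing $(d\mc J_\lambda)^{-1}(i\dot{\ol g}) = (0, J\dot d)$ and hence $d\mc I(i\dot{\ol g}) = 0$. Injectivity of $d\mc I_{\ol g'}$ yields $i\dot{\ol g} = 0$, so $\dot{\ol g} = 0$.

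Both $T_{\ol g'}\mc{CH}(N)$ and $T_\mu\mc{ML}(\pt M)$ are PL-homeomorphic to $\R^{-3\chi(\pt M)}$, and $d\mc I^*_{\ol g'}$ is continuous and positively homogeneous. By Brouwer's invariance of domain, the injective continuous map $d\mc I^*_{\ol g'}$ has open image; an open positively homogeneous subset of Euclidean space containing the origin is the whole space, so $d\mc I^*_{\ol g'}$ is a continuous bijection between two copies of $\R^{-3\chi(\pt M)}$, and therefore a homeomorphism. The principal technical subtlety is arranging a single maximal $\lambda$ in Lemma~\ref{diffcore2} that accommodates both $\dot{\ol g}$ and $i\dot{\ol g}$; once this is in place, the rigidity of $\mc I$ transfers to $\mc I^*$ purely through the complex structure of $\mc X(\pt M, G)$.
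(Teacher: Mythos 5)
Your first step---showing that $d\mc I^*_{\ol g'}(\dot{\ol g})=0$ forces $\dot{\ol g}=0$---is essentially the argument the paper uses: push everything through $\mc J_\lambda$ for a maximal $\lambda$ containing the supports of $\mu$ and $d\mc I^*(i\dot{\ol g})$, use complex linearity of $d\mc J_\lambda$ together with Lemma~\ref{diffcore2} to get $d\mc I(i\dot{\ol g})=0$, and invoke invertibility of $d\mc I$ (which indeed holds on a neighborhood of $\ol g$ since $\mc I$ is $C^1$ and $d\mc I_{\ol g}$ is an isomorphism by Corollary~\ref{locrig2}).

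The gap is the jump from this to injectivity. The map $d\mc I^*_{\ol g'}$ is only the tangent map of a tangentiable map into the PL-manifold $\mc{ML}(\pt M)$: it is continuous and positively homogeneous, but not linear, and the target $T_\mu\mc{ML}(\pt M)$ carries no additive structure (only multiplication by non-negative scalars). So ``the preimage of $0$ is $\{0\}$'' does not imply injectivity, and your appeal to invariance of domain, which takes an injective map as input, is not yet justified; you announce injectivity but only prove non-degeneracy. The paper devotes a separate second step to exactly this point: if $d\mc I^*(\dot{\ol g}_1)=d\mc I^*(\dot{\ol g}_2)=\dot\mu$, choose a maximal $\lambda$ containing the supports of $\mu$ and $\dot\mu$; Lemma~\ref{diffcore2} gives $d\mc J_\lambda^{-1}(\dot{\ol g}_j)=(\dot d_j,\dot\mu)$, and since $d\mc J_\lambda^{-1}$ \emph{is} linear one may subtract, obtaining $d\mc J_\lambda^{-1}(\dot{\ol g}_1-\dot{\ol g}_2)=(\dot d_1-\dot d_2,0)$; applying Lemma~\ref{diffcore2} once more (now with vanishing lamination component) yields $d\mc I^*(\dot{\ol g}_1-\dot{\ol g}_2)=0$, hence $\dot{\ol g}_1=\dot{\ol g}_2$ by the non-degeneracy you did establish. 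With this step inserted, the remainder of your argument (openness of the image via invariance of domain plus positive homogeneity giving surjectivity) goes through and agrees with the paper.
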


\begin{proof}
Due to the invariance of domain, it is enough to show that $d\mc I^*_{\ol g'}$ is injective. We will rely only on that $d\mc I_{\ol g'}$ is non-degenerate, which is true in a neighborhood of $\ol g$ since $\mc I$ is $C^1$. Thus, it is enough to prove the lemma for $\ol g'=\ol g$. In what follows we won't be using the subscript $_{\ol g}$ for tangent maps meaning that this is the case.

Denote $\mc I \times \mc I^*(\ol g)$ by $(d, \mu)$. First let us show that $d\mc I^*$ is non-degenerate.
Suppose the converse, that there exists $\dot {\ol g}\neq 0$ such that $d\mc I^*(\dot {\ol g})=0$. 
Define 
$$P:=d\mc J(T_d\mc T(\pt M) \times\{0\}) \subset T_{\ol g}\mc X(\pt M, G).$$
Due to Lemma~\ref{diffcore2}, $P$ is a linear subspace and $iP$ is transversal to $P$. We have $\dot{\ol g} \in P$. Let $\mc J^{-1}$ be a local inverse for $\mc J$ sending $\ol g$ to $(d, \mu)$. Then $d\mc J^{-1}(i\dot{\ol g}) \in  \{0\} \times T_\mu\mc{ML}(\pt M)$. As $\mc X(\pi_1(M), G)$ is a complex submanifold of $\mc X(\pt M, G)$, we have $i\dot{\ol g}\in T_{\ol g}\mc X(\pi_1(M), G)$. We get $d\mc I(i\dot{\ol g})=0$, which contradicts to Corollary~\ref{locrig2}.



Now suppose that for $\dot{\ol g}_1, \dot{\ol g}_2 \in T_{\ol g}\mc{CH}(N)$ we have $d\mc I^*(\dot{\ol g}_1)=d\mc I^*(\dot{\ol g}_2)=\dot \mu$. Let $\lambda$ be a maximal geodesic lamination containing the supports of $\mu$ and $\dot \mu$, and let $\mc J_\lambda^{-1}$ be a local inverse for the map $\mc J_{\lambda}$ sending $\ol g$ to $(d, \mu)$. Due to Lemma~\ref{diffcore2}, we have $d\mc J_\lambda^{-1}(\dot{\ol g}_1)=(\dot d_1, \dot \mu)$, $d\mc J_\lambda^{-1}(\dot{\ol g}_2)=(\dot d_2, \dot \mu)$ for $\dot d_1, \dot d_2 \in T_d\mc T(\pt M)$. Hence, $d\mc J_\lambda^{-1}(\dot{\ol g}_1-\dot{\ol g}_2)=(\dot d_1-\dot d_2, 0)$. Due to Lemma~\ref{diffcore2}, we get $d\mc I^*(\dot{\ol g}_1-\dot{\ol g}_2)=0$, which contradicts to our first claim. 
\end{proof}

In order to pass from local rigidity to global rigidity we will need the following

\begin{prop}
\label{prop6}
Let $g_0 \in \mathcal P_{b}(M, V)$ be such that $\mc I_{V}$ is a local homeomorphism at~$g_0$. Let $V \subseteq W$, $g \in \ol{\mathcal P}_{b}(M, W)$ be a lift of $g_0$ and let $d:=\mc I_W(g)$. Then $g$ is isolated in~$\mc I_W^{-1}(d)$. 
\end{prop}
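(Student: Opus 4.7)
The plan is to take an arbitrary sequence $g_i \in \ol{\mc P}_b(M, W)$ with $\mc I_W(g_i) = d$ converging to $g$, and conclude that $g_i = g$ for all large~$i$. The key idea is a two-stage rigidity: first project down to the level of $V$-markings (where we have rigidity by assumption), then upgrade the equality to the $W$-level using the boundary constraint together with the genus hypothesis on $\pt M$.

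First I would introduce the forgetful maps
$$\pi: \ol{\mc P}_b(M, W) \ra \ol{\mc P}_b(M, V), \qquad \pi': \ol{\mc D}_c(\pt M, W) \ra \ol{\mc D}_c(\pt M, V)$$
that restrict a $W$-marking to a $V$-marking. They are continuous and satisfy $\mc I_V \circ \pi = \pi' \circ \mc I_W$, so $\pi(g_i) \ra g_0$ in $\ol{\mc P}_b(M, V)$ and $\mc I_V(\pi(g_i)) = \pi'(d) = d_0$, where $d_0 := \mc I_V(g_0)$. A short argument shows $V(g_i) = V$ for all large~$i$: the cone angles at the points of $V$ vary continuously and remain strictly less than $2\pi$, while any $w \in V(g_i) \cap (W \setminus V)$ would force the induced boundary cone-metric to have a cone-point at $w$, contradicting $\mc I_W(g_i) = d$ together with the fact that $d$ has cone angle $2\pi$ at every point of $W \setminus V$ (since $g$ is a lift of $g_0$, so $V(g)=V$). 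Hence $\pi(g_i) \in \mc P_b(M, V)$, and the hypothesis that $\mc I_V$ is a local homeomorphism at $g_0$ forces $\pi(g_i) = g_0 = \pi(g)$ for all large~$i$.

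The main obstacle is transporting this equality from $V$-markings to $W$-markings. Picking representatives $(g, f_W)$, $(g_i, f_W^i)$, the equality $\pi(g_i) = \pi(g)$ provides an isometry $\phi_i: (M, g_i) \ra (M, g)$ isotopic to the identity in $M$, with $\phi_i \circ f_W^i|_V = f_W|_V$. Replacing $(g_i, f_W^i)$ by its $\phi_i$-pushforward, the question reduces to comparing two markings of $W$ in $(M, g)$ that agree on $V$ and induce the same element $d \in \ol{\mc D}_c(\pt M, W)$. The latter equivalence yields a self-isometry $\tilde \psi_i$ of the boundary cone-metric $(\pt M, d_g)$, isotopic to the identity in $\pt M$ and intertwining the two $W$-markings. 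I would then invoke the classical fact that for a closed orientable surface of genus at least~$2$, any isometry of a hyperbolic cone-metric that is isotopic to the identity must be the identity: lifting to $\H^2$, the isometry becomes an element of $\mathrm{Iso}^+(\H^2)$ centralizing a cocompact Fuchsian group, and such a centralizer is trivial. Since isotopies preserve connected components, this applies to each component of $\pt M$ separately, giving $\tilde \psi_i = \mathrm{id}$. Consequently $\phi_i$ already witnesses the equivalence $(g_i, f_W^i) \sim (g, f_W)$, so $g_i = g$ in $\ol{\mc P}_b(M, W)$, as desired.
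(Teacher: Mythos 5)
Your proof is correct, and its skeleton matches the paper's: both reduce to the $V$-marking level via the commuting square of forgetful maps (where local injectivity of $\mc I_V$ is assumed) and then argue that the extra markings in $W\setminus V$ are forced. The paper compresses this second step into the assertion that ``$\mc I_W$ is a homeomorphism when restricted to a fiber of the vertical map\ldots by definition,'' whereas you supply an actual argument: after conjugating by $\phi_i$, the two $W$-markings are intertwined by a self-isometry $\tilde\psi_i$ of $(\pt M, d_g)$ that is isotopic to the identity and fixes $V$ pointwise, and such an isometry must be trivial, so $\phi_i$ already fixes $W$ pointwise. This is a genuine and useful elaboration of what the paper leaves implicit. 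One small caveat: your justification of the rigidity of $\tilde\psi_i$---``lifting to $\H^2$, the isometry becomes an element of $\mathrm{Iso}^+(\H^2)$ centralizing a cocompact Fuchsian group''---is literally valid only when $d_g$ has no cone points; a cone-surface does not have universal cover $\H^2$, and its holonomy on $\pi_1(\pt M\setminus V)$ need not be discrete or faithful. The statement is nonetheless true for any closed surface of negative Euler characteristic, and a clean route is the Lefschetz fixed-point theorem: an orientation-preserving isometry $\tau\neq\mathrm{id}$ of a hyperbolic cone-surface has only isolated fixed points, each of Lefschetz index $+1$, yet $L(\tau)=\chi<0$ if $\tau$ is homotopic to the identity, a contradiction. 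Finally, a minor simplification: $V(g_i)=V(\mc I_W(g_i))=V(d)=V$ holds directly for every $i$, so the cone-angle continuity argument is superfluous.
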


\begin{proof}
If $V=W$, then the claim is straightforward, so suppose that $V\subsetneq W$. By $\ol {\mc P}_b(M, V, W)$ we denote the subset of $\ol {\mc P}_b(M, W)$ of metrics $g'$ with $V(g')=V$, by $\ol {\mc D}_c(M, V, W)$ we denote the subset of $\ol {\mc D}_c(M, W)$ of metrics $d$ with $V(d)=V$. Consider the commutative diagram
\begin{center}
\begin{tikzcd}
\ol {\mc P}_b(M, V, W) \arrow[r, "\mc I_W"] \arrow[d]
& \ol {\mc D}_c(M, V, W) \arrow[d] \\
{\mathcal P}_{cp}(M, V) \arrow[r, "\mc I_{V}"]
& {\mathcal D}_{cp}(M, V)
\end{tikzcd}
\end{center}
Since the bottom arrow is a local homeomorphism around $g_0$, which is the vertical image of $g$, and by definition $\mc I_{W}$ is a homeomorphism when restricted to a fiber of the vertical map, it is easy to deduce that the restriction of $\mc I_W$ to $\ol{\mc P}_b(M, V, W)$ is locally injective. Because $\mc I_W^{-1}(d) \subset \ol{\mc P}_b(M, V, W)$, the claim follows.
\end{proof}

\subsection{Infinitesimal substitution}
\label{subst}

Let $g \in \mathcal P_{p}(M, V)$ and $\Psi$ be the set of all edges $\psi \subset \partial M(g)$ that are simple closed geodesics (note that there are finitely many of them). We orient all these geodesics arbitrarily. Let $V'$ be a finite set in bijection with $\Psi$. Consider a smooth section $\sigma$ of the bundle $\mc{CH}(N, V') \ra \mc{CH}(N)$ such that for every $\ol g \in \mc{CH}(N)$ and every $v \in V'$ the point marked by $v$ in $N(\ol g)$ belongs to the closed geodesic homotopic to $\psi$, where $\psi$ is the element of $\Psi$ corresponding to $v$. It is not hard to see that such a section indeed can be chosen smooth since for every $\gamma \in \pi_1(M)$ the axis fixed by $\rho(\gamma)$ varies smoothly as $\rho$ varies in $\mc R(\pi_1(M), G)$. By $W$ we denote the union $V \cup V'$.

By Lemmas~\ref{triang1} and~\ref{triang2} there exists a finite cover $M^c(g^c)$ of $M(g)$ admitting a geodesic triangulation $\mathcal T$ with vertices in $W^c$, where $W^c$ is the preimage of $W$.
Taking a cover provides a smooth immersion of $\mathcal P_{b}(M,V)$ to $\mathcal P_{b}(M^c, V^c)$, and if we take $\dot g \in T_g \mathcal P_{b}(M,V)$ and consider the corresponding vector $\dot g^c \in T_{g^c} \mathcal P_{b}(M^c,V^c)$, then $d\mathcal I_V(\dot g)=0$ is equivalent to $d\mathcal I_{V^c}(\dot g^c)=0$. Hence, if we show that $d\mathcal I_{V^c}(\dot g^c)$ implies $\dot g^c=0$, we also obtain that $d\mathcal I_V(\dot g)$ implies $\dot g=0$. We extend the section $\sigma$ to a smooth section $\sigma^c$ of $\mc{CH}(N^c, W^c) \ra \mc{CH}(N^c)$ with the same property that every point marked by an element of $W^c \backslash V^c$ belongs to the corresponding geodesic. Until the end of this section we will work with $M^c(g^c)$, so to simplify the reading we now omit to write $^c$ and assume that $M(g)$ has a large triangulation $\mathcal T$ with vertices in $W$ (now there can be several new vertices at each $\psi \in \Psi$, but we continue to denote by $V'$ the difference $W \backslash V$). We write $E(\mathcal T)$ for the set of edges of $\mathcal T$ and divide them in two groups: $E_{in}(\mathcal T)$ are the interior edges and $E_{bd}(\mathcal T)$ are the boundary edges.

We take the direct product $\mc P_b(M, V) \times \R^{V'}$ and immerse it in $\mc{CH}(N, W)$ as follows. Denote the elements of of $\R^{V'}$ by $z$ and their components by $z_v$ for $v \in V'$. For $\ol g \in \mc{CH}(N)$ denote $\sigma(\ol g)\in \mc{CH}(N, V')$ by $[g_{\sigma(\ol g)}, f_{\sigma(\ol g)}].$ For every $([g, f], z) \in \mc P_b(M, V) \times \R^{V'}$ we map it to $[g', f'] \in \mc{CH}(N, W)$ such that $[g', f']$ is a lift of $[g, f]$ and for every $v \in V'$ the marked point $f'(v)$ is the point on the corresponding geodesic at oriented distance $t_v$ from $f_{\sigma(\ol g)}(v)$. We identify $\mc P_b(M, V) \times \R^{V'}$ with its image and write for simplicity its elements as $(g,z)$. For each $(g, z) \in \mc P_b(M, V) \times \R^{V'}$ and for each edge $e$ of $\mathcal T$ we consider the length of the unique geodesic segment in $M(g)$ with the same endpoints as $e$ and in the same homotopy class (with fixed endpoints). This defines a smooth map
$$l: \mc P_b(M, V) \times \R^{V'} \rightarrow \R^{E(\mathcal T)}.$$
We now return to our initial metric $g$ and consider the differential $dl$ of the above map at the point $(g, 0)$.

\begin{lm}
\label{subst1}
If there exists $\dot z \in T_0 \R^{V'}$ such that $dl(\dot g, \dot z)=0$, then $\dot g=0$.
\end{lm}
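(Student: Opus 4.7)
The idea is to factor the map $l$ through the space of hyperbolic cone-3-structures $\mc C(K, \mc T)$ on the triangulated 3-manifold $K = (M, \mc T)$ (in the finite cover we have passed to), and to exploit that $l$ is essentially the coordinate chart on $\mc C(K, \mc T)$. Concretely, I would introduce an auxiliary map $h : \mc P_b(M, V) \times \R^{V'} \to \mc C(K, \mc T)$ sending $(g', z')$ to the hyperbolic cone-structure on $(M, \mc T)$ obtained by replacing each arc of $\mc T$ with its unique geodesic representative in $M(g')$, where the vertices of $V'$ are positioned along their corresponding closed geodesics according to $z'$. Since $\mc C(K, \mc T) \subset \R^{E(\mc T)}$ is coordinatized by the vector of edge lengths, the map $l$ coincides with $h$ under this identification. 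In particular, the hypothesis $dl(\dot g, \dot z) = 0$ translates into the statement that the induced cone-structure on $(M, \mc T)$ has zero infinitesimal variation at $h(g, 0)$.

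Next I would argue that $h$ is a local diffeomorphism onto its image at $(g, 0)$ by constructing a local inverse. Given a cone-structure $c \in \mc C(K, \mc T)$ close to $h(g, 0)$, gluing hyperbolic tetrahedra along $\mc T$ with the prescribed edge lengths yields a hyperbolic metric on $M$. Since $h(g, 0)$ arises from a polyhedral metric with $V(g) = V$ and smooth interior, the total dihedral angles along interior edges of $\mc T$ equal $2\pi$ at $(g, 0)$ and remain so under small deformations coming from actual bent metrics; hence $c$ yields a representative of a class in $\mc P_b(M, V)$, and the positions of the vertices of $V'$ on the closed geodesics of this recovered metric give $z'$. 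The largeness of $\mc T$ (Definition~\ref{largedfn}), guaranteed by Lemmas~\ref{triang1} and~\ref{triang2}, prevents accidental identifications of tetrahedra or non-singular edges in the universal cover of $M \setminus \Sigma$, so the gluing is globally consistent and the inverse is well-defined. Together with the unique extension from $(M, g')$ to the convex cocompact $\ol{g'}$ provided by Lemma~\ref{extens}, this recovers the class in $\mc{CH}(N, W)$.

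With $h$ a local diffeomorphism, $dh$ is an isomorphism, and in particular injective. Thus $dl(\dot g, \dot z) = dh(\dot g, \dot z) = 0$ forces $(\dot g, \dot z) = 0$ in $\mc P_b(M, V) \times \R^{V'}$, which yields the desired conclusion $\dot g = 0$ (and in fact also $\dot z = 0$).

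The main technical obstacle I foresee is verifying that the local inverse actually lands in the immersed submanifold $\mc P_b(M, V) \times \R^{V'} \hookrightarrow \mc{CH}(N, W)$ rather than in a larger ambient space of cone-3-structures: one must confirm that small perturbations of the polyhedral bent metric $g$, as parametrized by edge lengths of $\mc T$, preserve the bent-metric property (no interior singularities appearing, boundary convexity maintained) and that the vertices of $V'$ remain on closed geodesics of the deformed metric so that $z'$ is well-defined. These are essentially continuity and openness arguments relying on the openness of $\ol{\mc P}_{cp}(M, V)$ in $\ol{\mc P}_b(M, V)$ established earlier, together with the fact that the immersion $\mc P_b(M, V) \times \R^{V'} \hookrightarrow \mc{CH}(N, W)$ was built from a smooth section $\sigma$.
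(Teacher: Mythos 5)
There is a genuine gap at the heart of your argument: the claim that $h$ is a local diffeomorphism (onto its image) with the consequence that $dh$ is injective. First, a generic point of $\mc C(K,\mc T)$ near $h(g,0)$ does \emph{not} come from a hyperbolic metric on $M$: gluing tetrahedra with perturbed edge lengths produces a hyperbolic cone-3-manifold whose total dihedral angles along interior edges differ from $2\pi$, so your ``local inverse'' is only defined on the image of $h$, which has large positive codimension in $\R^{E(\mc T)}$ (compare $\dim\bigl(\mc P_b(M,V)\times\R^{V'}\bigr)=3(n-k)+|V'|$ with $|E(\mc T)|$). In particular $dh$ cannot be an isomorphism, and your justification that the $2\pi$-conditions ``remain so under small deformations coming from actual bent metrics'' is circular: those are by definition the deformations in the image of $h$, so nothing is gained. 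Second, and more fundamentally, even if you establish set-theoretic local injectivity of $h$ with a continuous inverse on its image (which is plausible: edge lengths determine the glued structure, hence the holonomy and the marked points), injectivity of the \emph{map} does not yield injectivity of the \emph{differential} — think of $x\mapsto x^3$ — and injectivity of $dl$ in the $\dot g$-direction is exactly the assertion of the lemma. To make the image a submanifold of the correct dimension with $h$ an immersion you would need precisely the infinitesimal statement you are trying to prove, so the proposal assumes its conclusion. (As a side remark, your stronger conclusion $\dot z=0$ is also not delivered by this argument.)

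For contrast, the paper proves the infinitesimal statement directly. First, developing a cyclic chain of tetrahedra of $\mc T$ along a simplicial loop representing any $\gamma\in\pi_1(M)$, the hypothesis $dl(\dot g,\dot z)=0$ forces the terminal tetrahedron not to move infinitesimally relative to the initial one, so the induced variation of $\rho_{\ol g}(\gamma)$ vanishes for every $\gamma$; hence $\dot g$ is vertical for $\mc{CH}(N,V)\to\mc{CH}(N)$. Then the residual variation is encoded by a $\rho_{\ol g}$-equivariant vector field on the lifted vertex set $\tilde W\subset\pt\tilde M(g)$ (using $\dot z$ on the lifts of $V'$); since all boundary edge lengths are infinitesimally preserved, this field extends to Killing fields on the boundary $2$-faces of $\mc T$ that agree along edges and around vertices, hence to a single Killing field on each component of $\pt\tilde M(g)$, which equivariance forces to be zero, giving $\dot g=0$. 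Some argument of this direct, infinitesimal kind is what your factorization through $\mc C(K,\mc T)$ is missing.
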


\begin{proof}
It is easy to see that $dl(\dot g, \dot z)=0$ implies that the induced deformation of $\overline g$ is trivial. Indeed, pick arbitrary $v \in W$. Every element $\gamma \in \pi_1(M)$ can be represented by a simplicial loop based at $v$ in the 1-skeleton of $\mathcal T$. Pick a cyclic sequence of tetrahedra along this loop based on a tetrahedron $T$ containing $v$ so that any two subsequent tetrahedra have a 2-face in common, and develop this sequence in $\H^3$. Let $\tilde v_1, \tilde v_2 \in \H^3$ be the lifts of $v$ that are the endpoints of the lifted loop, $\tilde T_1$ and $\tilde T_2$ be the starting and the ending lifts of $T$ so that $\tilde T_i$ contains $\tilde v_i$. The image $\rho_{\overline g}(\gamma)$ can be reconstructed by its action on any point in $\H^3$ together with a basis in the tangent space of this point. Take $\tilde v_1$ and the basis given by the edges of $\tilde T_1$. As $dl(\dot g, \dot z)=0$, we get that $\tilde v_2$ and $\tilde T_2$ infinitesimally do not move and so the induced change on $\rho_{\overline g}(\gamma)$ is trivial.

Hence, $\dot g$ is vertical with respect to the fibration $\mathcal{CH}(N, V) \rightarrow \mathcal{CH}(N)$. Thus, it represents an infinitesimal change of the locations of the points of $V$. Consider the universal cover $\tilde M(g) \subset \H^3$. Then $\dot g$ can be represented by a vector field on $\tilde V=\tilde V(g)$ equivariant with respect to $\rho_{\overline g}$. We extend this vector field to the full preimage $\tilde W$ of $W$ using $\dot z$. As $dl(\dot g, \dot z)=0$, it extends to a Killing field on each 2-face of $\mathcal T$ belonging to the boundary of $\tilde M(g)$ such that the Killing fields coincide on the edges. Moreover, as $dl(\dot g, \dot z)=0$, this vector field preserves the link of each vertex, thus it is the same Killing field on every connected component of $\partial \tilde M(g)$. Because it needs to be equivariant, it must be zero. Hence, $\dot g=0$.
\end{proof}

We now show

\begin{lm}
\label{subst2}
Let $g \in \mc P_{cp}(M, V)$. If $d\mathcal I_V(\dot g)=0$, then there exists $\dot z \in T_0 \R^{V'}$ such that for each $e \in E_{bd}(\mc T)$ we have $dl_e(\dot g, \dot z)=0$
\end{lm}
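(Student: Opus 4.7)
The plan is to use the freedom in $\dot z$ to cancel the only obstruction that survives after restricting to boundary edges. Since each boundary edge $e\in E_{bd}(\mc T)$ lies within a single face of $g$ (boundary edges are geodesics in $g$ that stay in $\partial M$, so they cannot cross any bending edge), the length $l_e(g,z)$ will factor through $\mc I_W:\mc P_b(M,V)\times\R^{V'}\to\ol{\mc D}_c(\partial M,W)$. It will therefore suffice to find $\dot z$ with $d\mc I_W(\dot g,\dot z)=0$ in $T_{d}\ol{\mc D}_c(\partial M,W)$ for $d:=\mc I_W(g,0)$.

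Next, I will represent $d\mc I_W(\dot g)$ by a pair $(\dot d,\dot V'_0)$, with $\dot d$ the actual infinitesimal change of the boundary cone-metric and $\dot V'_0\in T_{V'}\partial M$ the induced first-order displacement of the $V'$-marking (produced by the smooth section $\sigma$ together with the motion of the closed-geodesic bending edges $\psi^{(i)}\in\Psi$). The hypothesis $d\mc I_V(\dot g)=0$ means exactly that this pair is zero in $T_d\ol{\mc D}_c(\partial M,V)$, i.e.\ there is a vector field $X$ on $\partial M$ with $X|_V=0$ and $\mathcal L_X d=-\dot d$. Modulo infinitesimal isotopies fixing $V$ but not $V'$, the class of $(\dot d,\dot V'_0)$ is then represented by $(0,\dot V'_0-X|_{V'})$.

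The heart of the argument will be to show that the obstruction $\dot V'_0(v)-X(v)$ is tangent to $\psi^{(i)}$ for every $v\in V'\cap\psi^{(i)}$. Each $\psi^{(i)}$ is a simple closed geodesic of $d$ disjoint from the cone locus $V$, and in a convex hyperbolic cone-metric every essential free homotopy class of loops avoiding the singularities contains a unique closed geodesic (the metric being locally $\mathrm{CAT}(-1)$ there). Integrating $X$ to the flow $\xi_t$ gives $\xi_t^* d_t=d+O(t^2)$, so $\xi_t^{-1}(\psi_t^{(i)})$ is a closed geodesic of $\xi_t^* d_t$ in the homotopy class of $\psi_0^{(i)}$ and must therefore coincide with $\psi_0^{(i)}$ as a set to first order in $t$. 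Differentiating this identity at $t=0$ will yield that $\dot\psi_0^{(i)}-X|_{\psi^{(i)}}$ is everywhere tangent to $\psi^{(i)}$; and because $\sigma$ slides each $V'$-point along $\psi_t^{(i)}$, the component of $\dot V'_0(v)$ not coming from $\dot\psi_0^{(i)}$ already points along $\psi^{(i)}$, giving the claim.

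Finally, by the very construction of the section $\sigma$, varying $\dot z\in T_0\R^{V'}$ produces precisely the infinitesimal motions of each $v\in V'$ along its closed geodesic $\psi^{(i)}$. So I can pick the unique $\dot z$ whose effect cancels the tangential vector $\dot V'_0-X|_{V'}$; this gives $d\mc I_W(\dot g,\dot z)=0$ and hence $dl_e(\dot g,\dot z)=0$ for every $e\in E_{bd}(\mc T)$. The main obstacle I anticipate is justifying the first-order perturbation statement for closed geodesics under a smooth variation of a convex hyperbolic cone-metric (that $\psi^{(i)}_t$ depends smoothly on $t$ as a parametrised curve, with $\xi_t^{-1}(\psi_t^{(i)})=\psi_0^{(i)}+O(t^2)$ after a suitable reparametrisation), together with verifying that the section $\sigma$ interacts compatibly with the canonical identification $\partial M(g_t)\simeq\partial M$ at the level of first-order variations.
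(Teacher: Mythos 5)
There is a genuine gap at your very first reduction. The quantities $l_e$ are not intrinsic: by construction $l_e(g',z')$ is the length of an ambient geodesic segment (a chord of $M(g')$) joining the marked endpoints, and for a nearby $(g',z')$ the $V'$-marked points lie on the closed geodesics of $N(\ol g')$ homotopic to the elements of $\Psi$, which in general lie in the \emph{interior} of $M(g')$ (already when the rotation parameter $\alpha$ of a strip becomes nonzero the boundary detaches from these geodesics). Hence the chord realizing $l_e$ need not lie on $\pt M(g')$ at all, the map you call $\mc I_W$ is not even defined on $\mc P_b(M,V)\times\R^{V'}$ (the $V'$-points are not boundary points, so these pairs do not give elements of $\ol{\mc P}_b(M,W)$), and the claim that ``$l_e$ factors through $\mc I_W$'' fails. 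Establishing a first-order relation between these extrinsic chord lengths and the intrinsic marked metric at a controllably polyhedral (peculiar) $g$ is precisely the hard content of the lemma: for edges inside simply connected faces it is Claim~\ref{differential}, and for edges meeting a closed-geodesic edge $\psi$ it is Corollary~\ref{diff}, i.e.\ the whole analysis of Section~\ref{stripssec}, which shows that the intrinsic realization length is differentiable at the peculiar metric with differential equal to that of $l'_e$, in particular with vanishing derivatives in the bending directions $\phi^j_i$ and $\alpha$. Your proposal takes this equivalence for granted, so even if the later intrinsic steps were completed they would not yield $dl_e(\dot g,\dot z)=0$.

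The part of your plan that does survive is the use of the freedom in $\dot z$: the paper likewise chooses $\dot z_\psi$ to cancel the tangential first-order motion of the $V'$-points along the axis produced by the section $\sigma$, in a development normalized so that the axis of $\rho_{\ol g_t}(\gamma)$ is fixed. But the hypothesis $d\mc I_V(\dot g)=0$ is then exploited extrinsically: Corollary~\ref{diff} is what allows one to conclude $\dot x^j_i=\dot y^j_i=\dot a=0$, while $\dot\phi^j_i$ and $\dot\alpha$ may remain nonzero, and one checks directly that the corresponding rotational motions about $\tilde\psi$ displace the relevant vertices orthogonally to the planes containing the boundary chords, so those chord lengths do not change to first order. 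Your intrinsic bookkeeping (the vector field $X$, the first-order perturbation of the intrinsic closed geodesic) cannot see the parameters $\phi^j_i,\alpha$ at all, and it also runs into the problem that for $t\neq 0$ the marked $V'$-points are not points of $(\pt M, d_t)$, so the displacement $\dot V'_0\in T_{V'}\pt M$ you propose to differentiate is not defined without additional constructions.
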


\begin{proof}
If both endpoints of $e$ are in $V$ and $e$ is an edge of $g$, then the statement is clearly true for any $\dot z$. If $e$ is not an edge and belongs to a simply connected face, the statement is again true for any $\dot z$, which follows from Claim~\ref{differential} that the differential of the length of the realization of $e$ in the intrinsic metric of the boundary coincides with the differential of the extrinsic distance between its endpoints.

Consider a simple closed geodesic $\psi \in \pt M(g)$, let $\gamma \in \pi_1(M)$ be a generating element for a subgroup of $\pi_1(M)$ corresponding to $\psi$, let $F_1$ and $F_2$ be two faces of $\partial M(g)$ adjacent to $\psi$ (possibly degenerating to one) and let $\Sigma$ be their union. Develop the universal cover $\tilde \Sigma$ of $\Sigma$ to $\H^3$ and consider the coordinate system from Subsection~\ref{stripssec}. By $\tilde \psi$ denote the development of $\psi$. Let $g_t$, $t \in [0,\e]$ for some small $\e>0$, be a path tangent to $\dot g$. We consider a corresponding path of $\rho_{\ol g_t}(\gamma)$ and the trajectories of the vertices adjacent to $\Sigma$ and develop them in our coordinate system. By $V'_\psi$ denote the subset of $V'$ belonging to $\psi$. For each $v \in V'_\psi$ choose a lift $\tilde v \in \tilde \psi$ and by $T_{\tilde \psi}$ denote the subspace of $\Pi_{v \in V'_\psi} T_{\tilde v}\H^3$ consisting of tuples of tangent vectors parallel to $\tilde \psi$. The section $\sigma$ determines trajectories for points $\tilde v$. Considering the derivatives at $t=0$ we obtain an element of $T_{\tilde \psi}$. It is easy to see that $T_0\R^{V'_\psi}$ is in one-to-one correspondence with $T_{\tilde\psi}$ (more exactly, there is a natural non-degenerate affine map between them). Let $\dot z_\psi \in T_0\R^{V'_\psi}$ be the preimage of the zero vector.

The deformation $\dot g$ induces the deformation of the initial coordinates such that the deformation of the induced metric on $\Sigma$ is zero. Corollary~\ref{diff} shows that if we set all $\dot\phi^j_i=0$ and $\dot \alpha=0$, then the obtained deformation of $\Sigma$ is still zero. But this implies that all $\dot x^j_i=0$, all $\dot y^j_i=0$ and $\dot a=0$. Together with zero vectors chosen at each $\tilde v$, this  induces zero infinitesimal deformation on every geodesic segment of $\mathcal T$ in $\Sigma$. Let $\dot z \in T_0\R^{V'}$ be the vector obtained from gluing all $\dot z_\psi$. We have $dl_e(\dot g, \dot z)=0$ for all $e \in E_{bd}(\mc T)$.
\end{proof}

\begin{lm}
\label{subst2.5}
Let $V=\emptyset$ and $g \in \mc P_{p}(M, V)$. If $d\mathcal I_V(\dot g)=0$, then there exists $\dot z \in T_0 \R^{V'}$ such that for each $e \in E_{bd}(\mc T)$ we have $dl_e(\dot g, \dot z)=0$.
\end{lm}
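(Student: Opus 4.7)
The proof plan is to recycle the strip argument already worked out in Lemma~\ref{subst2}, observing that the hypotheses $V=\emptyset$ and $g\in\mc P_p(M,V)$ collapse the case analysis so that every boundary edge falls under the strip case only. First, I would unpack the geometry of $g$ under these hypotheses: since $V(g)=\emptyset$ and $g$ is polyhedral, Lemma~\ref{halfinf} and the classification of edges give $M(g)=C(\ol g)$, and the set of edges of $g$ coincides with the finite collection $\Psi$ of simple closed geodesics along which $\pt C(\ol g)$ is bent. All vertices of the triangulation $\mc T$ belong to $W=V'\subset\bigsqcup_{\psi\in\Psi}\psi$, and each face of $g$ is a compact hyperbolic surface with closed geodesic boundary components drawn from $\Psi$. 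Consequently every boundary edge $e\in E_{bd}(\mc T)$ lies inside some face $F$ and is contained in $\Sigma_\psi$ for every $\psi$ lying on $\pt F$, where $\Sigma_\psi$ denotes the union of $\psi$ with its adjacent face or faces.

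Next, for each $\psi\in\Psi$ I would repeat the strip analysis from the proof of Lemma~\ref{subst2}. Develop the universal cover $\tilde\Sigma_\psi$ to $\H^3$, introduce the local coordinates $(x^j_i,\,y^j_i,\,\phi^j_i,\,a,\,\alpha)$ from Subsection~\ref{stripssec} centered at the axis $\tilde\psi$ of $\rho_{\ol g}(\gamma_\psi)$, and track how the lifted marked points on $\tilde\psi$ move under $\dot g$. The hypothesis $d\mc I_V(\dot g)=0$ means the induced intrinsic metric on $\pt M(g)$, and in particular on $\Sigma_\psi$, is infinitesimally constant. Corollary~\ref{diff} states that the partial derivatives of this induced metric with respect to $\phi^j_i$ and $\alpha$ vanish, so setting $\dot\phi^j_i=0$ and $\dot\alpha=0$ preserves the vanishing of the intrinsic-metric derivative. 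In the resulting flat picture (where the faces meeting at $\psi$ remain coplanar along $\tilde\psi$), the remaining parameters $\dot x^j_i,\dot y^j_i,\dot a$ must vanish, because they change the intrinsic metric on $\Sigma_\psi$ non-trivially. As in Lemma~\ref{subst2}, I would then choose $\dot z_\psi\in T_0\R^{V'_\psi}$ to be the preimage of the zero element of $T_{\tilde\psi}$ under the natural non-degenerate affine map, thereby killing the residual translational motion along $\tilde\psi$ of each lifted marked point.

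Assembling the vectors across all closed geodesics yields $\dot z=\bigoplus_{\psi\in\Psi}\dot z_\psi\in T_0\R^{V'}$. For any $e\in E_{bd}(\mc T)$, choose any $\psi$ on the boundary of the face containing $e$; the strip analysis just performed shows that in the development of $\tilde\Sigma_\psi$ the two endpoints of the lift of $e$ are infinitesimally fixed and the ambient geometry is infinitesimally stationary, which gives $dl_e(\dot g,\dot z)=0$.

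The main subtlety I anticipate is the compatibility issue that arises because a single face $F$ may be bordered by several closed geodesics $\psi_1,\ldots,\psi_k$, and the strip argument applied around each $\psi_i$ independently dictates the infinitesimal motion of $F$ in the universal cover. Fortunately this compatibility is automatic: the decomposition $V'=\bigsqcup_\psi V'_\psi$ is a \emph{disjoint} union (every marked point lies on a unique closed geodesic), so the assembly of $\dot z$ is unambiguous; and the induced intrinsic metric on $F$ is globally infinitesimally zero by hypothesis, so each strip analysis forces the same vanishing deformation of $F$. The only other point worth verifying, which is a routine adaptation of the analogous step in Lemma~\ref{subst2}, is that the affine map $T_0\R^{V'_\psi}\to T_{\tilde\psi}$ remains non-degenerate in our setting so that $\dot z_\psi$ indeed exists and is uniquely determined.
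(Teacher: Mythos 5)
There is a genuine gap: your argument implicitly assumes that a first-order deformation $\dot g$ with $d\mathcal I_V(\dot g)=0$ can only bend $\pt M(g)=\pt C(\ol g)$ along the existing closed geodesics $\Psi$, so that each face moves as an infinitesimally flat plate and the whole deformation is captured by the finitely many strip coordinates $(x^j_i,y^j_i,\phi^j_i,a,\alpha)$. In the case $V=\emptyset$ neither assumption holds. First, the coordinates of Subsection~\ref{stripssec} do not even exist here: they are positions of \emph{vertices} of a strip invariant under a single loxodromic element, and the analysis of that subsection (the ruled surface $\Sigma^*$, the generatrix estimates, Corollary~\ref{diff}) is specific to the solid torus. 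A face of a polyhedral convex core is a compact totally geodesic surface with geodesic boundary and non-abelian fundamental group; its shape is governed by the restriction of the full holonomy representation, not by finitely many marked points, so Corollary~\ref{diff} says nothing about the length of an edge of $\mc T$ crossing such a face. Second, and more seriously, there is no reason the faces remain flat to first order: a tangent vector $\dot g\in T_{\ol g}\mc{CH}(N)$ with $d\mathcal I(\dot g)=0$ corresponds, via Thurston's parametrization, to $(0,\dot\mu)$ with $\dot\mu\in T_\mu\mc{ML}(\pt M)$ arbitrary, and $\dot\mu$ may be supported on geodesics (even irrational laminations) lying \emph{inside} the faces, i.e.\ new bending can appear at first order away from $\Psi$. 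Your ``flat picture'' step, where you set $\dot\phi^j_i=\dot\alpha=0$ and conclude $\dot x^j_i=\dot y^j_i=\dot a=0$, silently excludes exactly these deformations, which are the whole difficulty of this lemma.

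This is why the paper's proof does not reuse the strip argument of Lemma~\ref{subst2} but instead identifies $\dot g$ with $d\mc J(0,\dot\mu)$, chooses a maximal lamination $\lambda$ containing the supports of $\mu$ and $\dot\mu$, studies the associated pleated surfaces to show that first-order bending moves the intersection points of an edge with $\tilde\lambda$ orthogonally to the edge (which both produces the correct, generally nonzero, $\dot z$ from the trajectories of the marked points and kills $dl_e$), invokes Bonahon's Lemma~\ref{diffcore2} to replace $d\mc J_\lambda$ by $d\mc J$, and finally runs an approximation argument to pass from rational to irrational support of $\dot\mu$. None of these steps is optional, and none of them is supplied by the strip analysis you propose; in particular your prescription of $\dot z_\psi$ as the preimage of the zero vector is also not the right choice once $\dot\mu$ moves the pleated surface. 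To repair the proposal you would have to prove, not assume, that $\dot\mu$ is supported on $\Psi$ — which is false in general — or else reintroduce the pleated-surface/maximal-lamination machinery, at which point you have the paper's proof.
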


\begin{proof}
As described in Section~\ref{diffcoresec}, there is an immersion from $\mc P_b(M, V)$ to the smooth part of $\mc X(\pt M, G)$ and we identify $g$ with its image. Let $S_i$, $i=1\ldots r$, be the components of $\pt M$. By $V'_i$ we denote the subset of $V'$ corresponding to geodesics belonging to $S_i$ in $M(g)$. As in Section~\ref{bentsec}, we define the fibrations 
$$\mc X(\pi_1(S_i), G, V'_i) \ra \mc X(\pi_1(S_i), G)$$
and by $\mc X(\pt M, G, V')$ we denote the product $\prod_{i=1}^r\mc X(\pi_1(S_i), G, V'_i)$, which fibers over $\mc X(\pt M, G)$. Let $U_0$ be a small enough neighborhood of $g$ in $\mc X(\pt M, G)$ and $U'_0$ be its preimage in  $\mc X(\pt M, G, V')$. We consider a smooth section $\sigma'$ of the fibration $U'_0 \ra U_0$ extending the section $\sigma$ such that again the marked points belong to the respective geodesics. We consider the product $U_0 \times \R^{V'}$ and use $\sigma'$ to immerse it in $U'_0$ in the same way as above we immersed $\mc P_b(M, V) \times \R^{V'}$ in $\mc{CH}(N, W)$. We also define a map $l_{bd}: U_0 \times \R^{V'} \ra \R^{E_{bd}(\mc T)}$ in the same way as we defined it above, though now we care only about the boundary edges of $\mc T$.

We now use the maps $\mc J$ and $\mc J_\lambda$ from Section~\ref{diffcoresec}. Let $\mc J^{-1}$ be a local inverse to $\mc J$ defined on $U_0$, provided that $U_0$ is sufficiently small, and coinciding with $\mc I \times \mc I^*$ on $U_0 \cap \mc X(\pi_1(M), G)$. Define 
$(d, \mu):=\mc J^{-1}(g) \in \mc T(\pt M) \times \mc{ML}(\pt M)$. As $d\mc J$ at $(d,\mu)$ is an isomorphism of the tangent spaces and $d\mc I_V(\dot g)=0$, we have $$d\mc J^{-1}(\dot g) \in \{0\} \times T_\mu\mc{ML}(\pt M).$$ 
We are going to prove that for every $\dot \mu \in T_\mu \mc{ML}(\pt M)$ there exists $\dot z \in T_0 \R^{V'}$ such that $dl_{bd}(d \mc J(0, \dot \mu), \dot z)=0$. This will imply the claim of the lemma.

We first suppose that the support of $\dot\mu$ is rational. Let $\lambda$ be a maximal rational geodesic lamination containing the supports of $\mu$ and $\dot \mu$. Consider a local inverse $\mc J_\lambda^{-1}$ to the map $\mc J_\lambda$ defined over $U_0$ and sending $g$ to $(d,\mu)$.
We identify the vector space $\mc H_\R(\pt M, \lambda)$ with its tangent space and consider a curve $\mu+t\dot\mu \in \mc H_\R(\pt M, \lambda)$ for $t \in [0, \e]$ and some $\e>0$. 
Take a component $S_i$ endowed with the hyperbolic metric $d$ induced from $M(g)$, we consider it now as abstract hyperbolic surface triangulated by the restriction of $\mc T$ to $S_i$.

Look at the pleated surfaces $(\tilde\iota_t, \rho_t)$ in $\H^3$ obtained from $(S_i, d)$ by pairs $(d, \mu+ t\dot\mu)$. The starting surface is a boundary component of $\pt \tilde C(\ol g)$. Let us fix a base point on it, together with a basis in the tangent space to the base point. We can choose $(\tilde\iota_t, \rho_t)$ so that they fix the base point and the chosen basis. Since $\lambda$ is rational, it is easy to understand what happens with it: the surface starts to bend along the geodesics in $\tilde\iota_0(\tilde\lambda)$, which is a discrete set, with the speeds of the bending angles prescribed by $\dot \mu$. 

Pick an edge $e$ of $\mc T$ in $S_i$, let $\tilde e$ be its lift to $(\tilde S_i, d)$ and $p_0, \ldots, p_s$ be the intersection points of $\tilde e$ with $\tilde \lambda$ including the endpoints. The curves $\tilde\iota_t(p_j)$ are differentiable at zero. We claim that the derivative of the hyperbolic distance between $\tilde\iota_t(p_1)$ and $\tilde\iota_t(p_s)$ is zero at $t=0$. Indeed, let $x_j$ be the tangent vector at $\tilde\iota_0(p_j)$ to its trajectory, and $x^k_j$ be the result of the parallel translation of $x_j$ to $\tilde\iota_0(p_k)$ along $\tilde\iota_0(\tilde e)$ with respect to the metric connection of $\H^3$. Since the length of each segment between $\tilde\iota_0(p_j)$ and $\tilde \iota_0(p_{j+1})$ does not change infinitesimally, we have that $x^{j+1}_j-x_j$ is orthogonal to $\tilde\iota_0(\tilde e)$. Hence $x^s_1-x_s$ is orthogonal to $\tilde\iota_0(\tilde e)$ and the claim follows.

The trajectories $\tilde\iota_t(p_1)$ and $\tilde\iota_t(p_s)$ determine tangent vectors $\dot z_{v_1}, \dot z_{v_2} \in T_0\R^{V'}$ in the components corresponding to the endpoints $v_1$ and $v_2$ of $e$. The obtained vectors are independent on the choice of $\tilde e$ or $e$. By determining the tangent vector for every element of $V'$ we obtain a tangent vector $\dot z \in T_0\R^{V'}$. For each edge $e \in E_{bd}(\mc T)$ the deformation $dl_{bd, e}(d\mc J_\lambda(0, \dot \mu), \dot z)$ coincides with the deformation of the distance between $\tilde\iota_0(p_1)$ and $\tilde\iota_0(p_s)$ in the construction above. Hence, $dl_{bd}(d\mc J_\lambda(0, \dot \mu), \dot z)=0$. By Lemma~\ref{diffcore2} we get $dl_{bd}(d\mc J(0, \dot \mu), \dot z)=0$ as desired. 

If the support of $\dot \mu$ is irrational, we can approximate it by tangent vectors $\dot \mu_j$ whose supports are rational. By the previous argument, for every $j$ there exists $z_j \in T_0 \R^{V'}$ such that $dl_{bd}(d\mc J(0, \dot \mu_j), \dot z_j)=0$. This means that for every $\dot z \in T_0\R^{V'}$ the vector $dl_{bd}(d\mc J(0, \dot \mu_j), \dot z)$ belongs to the subspace $dl_{bd}(\{0\}\times T_0\R^{V'})$. By continuity of the tangent map, the same is true for $dl_{bd}(d\mc J(0, \dot \mu), \dot z)$. Hence, there exists $\dot z \in T_0\R^{V'}$ such that $dl_{bd}(d\mc J(0, \dot \mu), \dot z)=0$. This finishes the proof.
\end{proof}

There is a neighborhood of $l=l(g, 0)$ in $\R^{E(\mathcal T)}$ where the edge lengths induce a hyperbolic cone-3-manifold structure on $M$ triangulated with $\mathcal T$. Recall that for each edge $e \in E(\mathcal T)$ by $\nu_e$ we denote the total dihedral angle of $e$. Clearly, every $\dot g \in T_g \mathcal P(M, V)$ and $\dot z \in T_0\R^{V'}$ induce a deformation $\dot l=dl(\dot g, \dot z)$ such that for every $e \in E_{in}(\mathcal T)$ the infinitesimal change $d\nu_e(\dot l, \dot z)=0$. Now our aim is to show

\begin{lm}
\label{subst3}
Let $\dot l \in T_l \R^{E(\mathcal T)}$ be such that for each $e \in E_{bd}(\mathcal T)$ we have $\dot l_e=0$ and for each $e \in E_{in}(\mathcal T)$ we have $d\nu_e(\dot l)=0$. Then $\dot l=0$.
\end{lm}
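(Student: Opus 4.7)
The plan is to convert the edge-length deformation $\dot l$ into an infinitesimal deformation of the underlying hyperbolic cone-3-manifold structure, and then invoke the infinitesimal rigidity of hyperbolic cone-3-manifolds from~\cite{Wei2}. Since each tetrahedron of $\mc T$ is determined up to isometry by its six edge lengths, and each 2-face by its three, $\dot l$ gives infinitesimal deformations of the individual tetrahedra that automatically agree across shared 2-faces. Assembling them yields an infinitesimal deformation of the hyperbolic structure on $M\setminus \Sigma$, with $\Sigma$ the singular locus of the cone-structure determined by $l$; lifting to the universal cover of $M\setminus \Sigma$ gives a $\pi_1$-equivariant deformation, and the super-large property of $\mc T$ from Definition~\ref{largedfn}, produced by Lemmas~\ref{triang1}--\ref{triang2}, ensures that this assembly is unambiguous because no two translates of a lifted tetrahedron share any vertex.

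The two hypotheses on $\dot l$ translate into exactly the geometric conditions needed to apply Weiss-type rigidity. The condition $d\nu_e(\dot l)=0$ for interior edges says that the total cone angle at every edge of $\Sigma$ is infinitesimally preserved and that no new singularities appear along edges that are currently regular, so the deformation lies in the class of cone-structures with prescribed angles. The condition $\dot l_e=0$ for boundary edges means that every 2-simplex of $\mc T$ on $\pt M$ undergoes a trivial deformation, and since such a simplex is determined by its edge lengths, the intrinsic boundary metric is infinitesimally unchanged. Hence the associated cohomology class in $H^1(M\setminus\Sigma; \mf E)$, where $\mf E$ is the flat bundle of germs of Killing fields of $\H^3$, is represented by a cocycle that vanishes at $\pt M$ and preserves cone angles along $\Sigma$.

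The infinitesimal rigidity theorem of Weiss~\cite{Wei2}, in its form relative to the boundary, now gives that this class is zero; equivalently, the deformation of the hyperbolic structure on each simplex is the restriction of a single Killing field of $\H^3$, consistent under the holonomy. Because Killing fields preserve distances to first order, every edge length of $\mc T$ is infinitesimally fixed, so $\dot l = 0$. The principal obstacle in carrying this out rigorously is the third step: matching the boundary constraint from $\dot l_e=0$ on $E_{bd}(\mc T)$ with the appropriate relative-cohomological formulation of Weiss's theorem, and handling the fact that $\Sigma$ is only a sub-complex of the 1-skeleton (so that interior edges of total angle $2\pi$ are allowed to become singular to first order, which the condition $d\nu_e(\dot l)=0$ must be read as ruling out).
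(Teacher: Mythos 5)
The overall strategy — turn $\dot l$ into an infinitesimal deformation of the hyperbolic cone-structure, pass to the bundle of germs of Killing fields, and invoke Weiss's rigidity — is the one the paper follows. But there are two substantive gaps in your sketch.

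First, you propose applying Weiss's theorem ``in its form relative to the boundary'' directly on $M\setminus\Sigma$, and you flag this as the principal obstacle. Indeed it is: the statement quoted as Theorem~\ref{weiss} is for \emph{closed} cone-3-manifolds, and no relative version is established in the paper. The paper instead \emph{doubles} $M(g)$ along its boundary to produce a closed hyperbolic cone-3-manifold $K$, extends $\mc T$ to a triangulation $\mc T^D$ of $K$, and feeds in the \emph{antisymmetric} extension $\dot l^D$ of $\dot l$ (equal to $\dot l_e$ on one copy, $-\dot l_e$ on the other, and $0$ on the edges coming from $\pt M(g)$). The doubling does several things at once: it removes the boundary entirely so the closed form of Weiss's theorem applies; it makes all cone angles along the former boundary edges equal to twice the convex dihedral angle, hence $<2\pi$ as Weiss requires; and the antisymmetry, via the symmetry of the open cover $\mc U$ and the linearity of the construction $\dot l_T\mapsto\omega_{X_T}$, is exactly what shows that the integral of the resulting $1$-form $\omega$ around each meridian $\gamma_e$ vanishes. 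You would have to supply a genuinely different mechanism for the $\ker(d\tr\gamma_e)$ condition if you refuse to double.

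Second, and more seriously, your concluding sentence (``Because Killing fields preserve distances to first order, every edge length of $\mc T$ is infinitesimally fixed, so $\dot l=0$'') mistakes what the cohomological vanishing delivers. By Lemma~\ref{form}, $\omega$ being cohomologous to zero implies only that the affine automorphic field $X$ is the sum of an \emph{equivariant} field and a Killing field; after subtracting the Killing part, you are left with a possibly nontrivial equivariant affine field. Such a field is not a global Killing field and can in principle change edge lengths — for example it can translate the vertices along a singular axis. The last paragraphs of the paper's proof are devoted precisely to ruling this out: Lemma~\ref{equiv} says an equivariant affine field must vanish at vertices of valence $\geq 3$ in $\Sigma$ and be parallel to $\Sigma$ at valence-$2$ vertices, and then a concrete geometric argument involving the edges of $\mc T$ meeting the closed geodesics $\psi\subset\pt M(g)$, the fact that $\dot l_e=0$ on the boundary edges, and the presence of at least one non-orthogonal edge from $\psi$ into a face, forces all vertex vectors to vanish. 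Without this step, the conclusion $\dot l=0$ simply does not follow. So you should not treat the passage from ``$[\omega]=0$'' to ``$\dot l=0$'' as automatic; it is a nontrivial part of the argument that uses the geometry of the configuration along $\Sigma$ and $\pt M(g)$ in an essential way.
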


It is clear that the combination of Lemmas~\ref{subst1}--\ref{subst3} imply Lemma~\ref{infrig}. We prove Lemma~\ref{subst3} in the next subsections.

\subsection{Affine automorphic vector fields}

Consider the Klein model of $\H^3$ and denote by $AF$ the set of affine vector fields restricted to $\H^3$. For every non-degenerate tetrahedron $T$ with a vertex set $V(T)$ there is a canonical isomorphism
$$J_T: \prod_{v \in V(T)} T_v \H^3 \ra AF,$$
mapping four tangent vectors at each vertex of $T$ to the unique affine vector field coinciding with these vectors at $V(T)$. This isomorphism is preserved under isometries of $\H^3$.
Note that $AF$ has dimension 12 and contains the 6-dimensional subspace $KF$ of Killing fields.

We look at the action of $AF$ on the edge lengths of $T$. Consider the 6-tuple of its edge lengths as an element $l \in \R^{E(T)}$. There is a natural linear map $AF \rightarrow T_l \R^{E(T)}$ sending a vector field to the induced infinitesimal change of the edge lengths. As hyperbolic tetrahedra are infinitesimally rigid, the kernel of this map exactly equals to $KF$, hence the map is surjective and produces an isomorphism
\begin{equation}
\label{tetriso}
T_l \R^{E(T)} \cong AF / KF.
\end{equation}

Consider now a (possibly non-convex) polyhedron $P \subset \H^3$ with a geodesic triangulation $\mathcal T$ (without new vertices). Denote all the tetrahedra of $\mc T$ by $T_1, \ldots, T_r$. Pick an open cover of $P$ by sets $U_1, \ldots, U_r$ such that for each $i$ we have $T_i \subset U_i$ and $U_i$ does not contain vertices of $P$ except the vertices of $T_i$. Choose a partition of unity over $P$ with respect to $U_i$, i.e., smooth functions $\lambda_i: \H^3 \ra [0,1]$, $i=1,\ldots,r$, such that the support of $\lambda_i$ is in $U_i$ and for every $p \in P$ we have $\sum_i \lambda_i(p)=1$. We denote the set of pairs $(U_i, \lambda_i)$ by $\mc U$. By $AF(P, \mc U)$ we denote the space of vector fields of the form $\sum_i \lambda_i X_i$, where $X_i \in AF$ and if $T_i$ and $T_j$ are two tetrahedra adjacent by a 2-face, then the restriction of $X_i$ to the plane containing this face coincides with the restriction of $X_j$. We call such fields \emph{affine fields on $P$ with respect to $\mc U$}. The meaning of this construction is that we construct a vector field on $P$ by assigning an affine field $X_i$ to each tetrahedron $T_i$ so that the assigned fields coincide on the common faces. However, the resulting field will be continuous, but not smooth, so we smooth it out with the help of a partition of unity. We do it in a way that the values of the smoothed vector field does not change at the vertices of $P$.

If $V$ is the set of vertices of $P$, there is a natural isomorphism
$$J_P: \prod_{v \in V} T_v \H^3 \ra AF(P, \mc U)$$
defined by $$J_P(Y)=\sum_i \lambda_i J_{T_i}(Y|_{V(T_i)}),$$ where $Y \in \prod_{v \in V} T_v \H^3$ and $V(T_i)$ is the vertex set of $T_i$. Note that for every $v \in V$ and $Y \in \prod_{v \in V} T_v \H^3$ we have $J_P(Y)|_v=Y|_v$. There is the 6-dimensional subspace $KF(P, \mc U) \subset AF(P, \mc U)$ consisting of the fields $\sum_i \lambda_i X_i$ such that all $X_i$ are equal to the same Killing field $X$, so the restriction of an element of $KF(P, \mc U)$ to $P$ is exactly $X$.


Similarly to the consideration above, we regard the tuple of edge lengths of $\mathcal T$ as a tuple $l \in \R^{E(\mathcal T)}$ and look at the natural map $AF(P, \mathcal U) \rightarrow T_l\R^{E(\mathcal T)}$. 
Again, its kernel is equal $KF(P, \mc U)$.
We now consider cone structures on $P$ induced by deformations of $l$ in $\R^{E(\mathcal T)}$. Denote the set of the interior edges of $\mc T$ by $E_{in}(\mc T)$. For every $e \in E_{in}(\mathcal T)$ the total dihedral angle $\nu_e$ is an analytic function in a neighborhood of $l$ in $\R^{E(\mathcal T)}$. We denote
$$T^P_l \R^{E(\mathcal T)}=\left\{\bigcap_{e \in E_{in}(\mathcal T)}\ker d\nu_e \right\}.$$
Clearly, the image of $AF(P, \mathcal U)$ belongs to $T^P_l \R^{E(\mathcal T)}$. We now prove

\begin{lm}
\label{isom1}
$$T^P_l \R^{E(\mathcal T)} \cong AF(P, \mathcal T)/KF(P, \mc U).$$
\end{lm}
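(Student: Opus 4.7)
The plan is the following. The inclusion of the image of $AF(P,\mc U)$ in $T^P_l\R^{E(\mc T)}$ and the identification of the kernel as $KF(P,\mc U)$ have already been recorded just above the lemma, so only the surjectivity of the induced map
$$AF(P,\mc U)/KF(P,\mc U)\longrightarrow T^P_l\R^{E(\mc T)}$$
needs to be proved. I will produce, for every $\dot l\in T^P_l\R^{E(\mc T)}$, an element $Y\in\prod_{v\in V}T_v\H^3$ such that $J_P(Y)$ is sent to $\dot l$ by the length map.

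The construction is by propagation along the dual graph of $\mc T$. Picking a base tetrahedron $T_1$, I will first choose $Y|_{V(T_1)}\in\prod_{v\in V(T_1)}T_v\H^3$ realizing $\dot l|_{E(T_1)}$; such a choice exists and is unique up to a global Killing field, since the restriction $\prod_{v\in V(T_1)}T_v\H^3\to T_l\R^{E(T_1)}$ is surjective with six-dimensional kernel by the isomorphism~(\ref{tetriso}). Whenever a tetrahedron $T_j$ shares a $2$-face with an already-processed tetrahedron and $Y$ is known at three of its vertices, the vector at the remaining vertex $v'$ of $T_j$ will be uniquely forced by the three components of $\dot l$ along the edges emanating from $v'$, because the three differentials at $v'$ of the hyperbolic distances to the other vertices of $T_j$ are linearly independent on $T_{v'}\H^3$ (the tetrahedron being non-degenerate). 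An inductive check then shows that on every newly processed tetrahedron the six induced edge-length derivatives agree with $\dot l|_{E(T_j)}$, so that if the propagation is globally consistent it produces the desired $Y$.

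The main obstacle is exactly the consistency of this propagation. As $P$ is a topological $3$-ball, its dual cell complex is simply connected and possesses precisely one $2$-cell per interior edge of $\mc T$, so every closed loop in the dual $1$-skeleton decomposes into elementary loops each running once around some interior edge $e$. Consistency therefore reduces to the following computation, which I take to be the technical heart of the proof: after developing the cyclically ordered tetrahedra around $e$ into $\H^3$ one at a time and comparing the final assignment at the initial tetrahedron with the starting one, the resulting infinitesimal discrepancy is an infinitesimal isometry of $\H^3$ fixing $e$ pointwise, hence an infinitesimal rotation about the line $e$; by bookkeeping the angular increments accumulated across the successive developed $2$-faces, its angle can be identified with $d\nu_e(\dot l)$. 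Since $\dot l$ lies in $T^P_l\R^{E(\mc T)}$, this monodromy vanishes for every interior edge, so the assignment $Y$ will be well-defined on all of $V$, and $J_P(Y)$ will project to $\dot l$ as required.
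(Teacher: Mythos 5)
Your proposal is correct and shares the paper's skeleton: only surjectivity needs proof, it is obtained by propagating vertex velocities across shared $2$-faces, and the unique solvability at the fourth vertex is the same non-degeneracy fact (the paper derives it from infinitesimal rigidity of the tetrahedron plus the non-existence of a Killing field vanishing at three non-collinear points, you from the linear independence of the three distance differentials at the new vertex -- these are equivalent). The genuine difference is how the consistency check around an interior edge $e$ is settled. The paper avoids any monodromy computation: the already-assigned velocities give an honest first-order motion of the whole star of $e$ in $\H^3$, so the induced length changes automatically preserve $\nu_e$; since also $d\nu_e(\dot l)=0$ and $\partial\nu_e/\partial l_{e'}\neq 0$ for the closing edge $e'$, the induced change of $l_{e'}$ must equal $\dot l_{e'}$ -- a one-line reduction to a Schl\"afli-type non-vanishing. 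You instead identify the discrepancy after going once around $e$ with an infinitesimal rotation about $e$ of angle $d\nu_e(\dot l)$; this is true and standard (it is essentially the Killing-field--valued cocycle viewpoint the paper only brings in later, in Section~\ref{bundle}), but the ``bookkeeping of angular increments'' you defer is precisely where the work would sit, whereas the paper's argument dispenses with it. What your route buys is transparency about two points the paper leaves implicit: that global consistency reduces to the elementary loops around interior edges (via simple connectivity of the dual $2$-complex, i.e.\ every dual-graph loop is a product of conjugates of elementary edge loops, and the Killing-field--valued monodromy is additive and conjugation-invariant), and that this uses $P$ being a $3$-cell -- an assumption the paper's proof needs just as much, and which matches how Lemma~\ref{isom1} is used (the induction is later run on the simply connected cover in Lemma~\ref{isom2}); for a polyhedron with non-trivial $\pi_1$ the reduction, and indeed the statement, would be problematic, since $T^P_l\R^{E(\mathcal T)}$ could contain deformations changing the holonomy, which no field in $AF(P,\mc U)$ induces.
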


\begin{proof}
We need to show that $T^P_l\R^{E(\mc T)}$ is the image of our map. Pick $\dot l \in T_l^P\R^{E(\mathcal T)}$. Start from an arbitrary tetrahedron $T_1$ of $\mathcal T$. As described above, there is always an affine field inducing the restriction of $\dot l$ to $T_1$, we pick one of them. Now let $T_2$ be a tetrahedron adjacent to $T_1$ by a 2-face. We try to find a tangent vector at the fourth vertex $v$ of $T_2$ such that the induced infinitesimal lengths-change is the restriction of $\dot l$ to $T_2$. Consider the hyperboloid model of $\H^3$ in $\R^{3,1}$. Denote the other three vertices of $T_2$ by $v_i$, $i=1,\ldots,3$, and denote the edge from $v_i$ to $v$ by $e_i$. We have the system of equations
$$\langle \dot v, v_i \rangle+\langle \dot v_i, v \rangle=-\dot l_{e_i}\cosh l_{e_i},~~~~~i=1, \ldots, 3.$$
Here we identify $v_i$, $v$ with their radius-vectors in $\R^{3,1}$, $\dot v_i \in T_{v_i}\H^3$ are the chosen tangent vectors to $v_i$ and $\dot v$ is a vector that we want to find in $T_v \H^3$. Suppose that there is no solution. Then from linearity there exists a non-trivial solution to the system
$$\langle \dot v, v_i \rangle=0,~~~~~i=1, \ldots, 3.$$
Due to the infinitesimal rigidity of tetrahedra, such a solution $\dot v \in T_v\H^3$ together with the assignment of zero vectors in $T_{v_i}\H^3$ is the restriction of a global Killing field of $\H^3$, but clearly such a non-zero Killing field does not exist. 

We continue this procedure. It remains to check what happens when we close a cycle of tetrahedra around an interior edge $e$ of $\mathcal T$. Let $T$ be the last tetrahedron to close the chain and $e'$ be its last edge. The previous choices of tangent vectors at the vertices of $T$ automatically induce an affine vector field on $T$, which induces an infinitesimal change of the length $l_{e'}$. We need to see that it coincides with the one from $\dot l$. But this follows from $\frac{\partial \nu_{e}}{\partial l_{e'}} \neq 0$, as this implies that for any assignment of infinitesimal lengths-changes to all edges of tetrahedra adjacent to $e$ except $e'$ there exists a unique choice of $\dot l_{e'}$ such that $\dot \nu_e=0$. This finishes the proof. 
\end{proof}

Now let $(K, \Sigma, \mathcal T)$ be a closed triangulated hyperbolic cone-3-manifold, $L$ be its subset obtained by deleting from $K$ the singular locus $\Sigma$. By $E_0(\mc T)$ denote the set of edges not belonging to $\Sigma$, so with $\nu_e=2\pi$. Let $T_1, \ldots, T_r$ be the tetrahedra of $\mc T$. 
We need to show that there exists some special open cover of $L$.

\begin{lm}
\label{cover}
Let $\mc T$ be large. There exists an open cover of $L$ by sets $U_1, \ldots, U_r$ such that 
\begin{itemize}
\item each $U_i$ is simply-connected;
\item $U_i \supset (T_i \cap L)$;
\item the closure of each $U_i$ in $K$ does not contain vertices of $\mc T$ except the vertices of $T_i$.
\end{itemize}
\end{lm}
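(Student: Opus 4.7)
My plan is to construct each $U_i$ as a thin open neighborhood of $T_i \cap L$ inside $L$, using the largeness of $\mc T$ to ensure that the neighborhood lifts homeomorphically to the universal cover $\tilde L$ of $L$. First, observe that $T_i \cap L$ is contractible: it is a closed hyperbolic tetrahedron with (at most) some of its boundary $1$-skeleton (the edges and vertices of $T_i$ lying in $\Sigma$) removed, and a radial deformation onto any interior point of $T_i$ restricts to a deformation retraction of $T_i \cap L$. For each $i$, fix a lift $\tilde T_i$ of the open set $T_i \cap L$ in $\tilde L$, and denote by $\tilde T_i^\bullet$ its closure in the natural completion of $\tilde L$ obtained by adjoining preimages of $\Sigma$; then $\tilde T_i^\bullet$ is a closed geodesic tetrahedron in that completion.

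By Definition~\ref{largedfn}, for every $\gamma \in \pi_1(L)$ with $\gamma \neq e$, the tetrahedra $\tilde T_i^\bullet$ and $\gamma \tilde T_i^\bullet$ share no $2$-face and no edge lifted from a non-singular edge of $\mc T$. In particular $(\tilde T_i \cap \tilde L) \cap (\gamma \tilde T_i \cap \tilde L) = \emptyset$, and every non-singular $2$-face (resp.\ non-singular edge) of $\tilde T_i^\bullet$ is glued, in the lifted triangulation, to some tetrahedron distinct from every $\gamma \tilde T_i^\bullet$ with $\gamma \neq e$.

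I will then take $\tilde U_i \subset \tilde L$ to be the union of $\inter(\tilde T_i)$, the interiors of the non-singular $2$-faces and non-singular edges of $\tilde T_i^\bullet$, and a small open collar of each such face or edge extending into the adjacent tetrahedron. The collar thickness $\delta>0$ is chosen so that: (i) no collar crosses entirely through a neighboring tetrahedron; (ii) no collar reaches a vertex of the lifted triangulation that is not a vertex of $\tilde T_i^\bullet$; (iii) the closure of $\tilde U_i$ in the completion is disjoint from every $\gamma \tilde T_i^\bullet$ with $\gamma \neq e$; and (iv) the translates $\gamma \tilde U_i$ with $\gamma \neq e$ are all disjoint from $\tilde U_i$. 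Under such a choice, $\tilde U_i$ deformation retracts onto $\tilde T_i \cap \tilde L$, hence is contractible; the covering projection $\tilde U_i \to L$ is injective by (iv) and thus a homeomorphism onto the open set $U_i := p(\tilde U_i)$. Then $U_i$ is simply-connected, contains $T_i \cap L$, and by (ii) its closure in $K$ avoids every vertex of $\mc T$ outside $V(T_i)$.

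The main obstacle is producing a valid $\delta$, since the intrinsic distance in $\tilde L$ between $\tilde T_i$ and a translate $\gamma \tilde T_i$ sharing a lifted singular edge can be zero, so a naive metric $\delta$-neighborhood fails to separate them. The remedy is to parameterize each collar near a singular edge or vertex by a transverse angular (resp.\ solid-angle) coordinate: around a lifted singular edge $\tilde e$ the finitely many incident lifts of tetrahedra of $\mc T$ occupy disjoint angular sectors, and largeness guarantees that the sector of $\tilde T_i^\bullet$ is strictly separated from that of any $\gamma \tilde T_i^\bullet$ (with $\gamma \neq e$) incident to $\tilde e$; an analogous statement holds at lifted singular vertices. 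Choosing $\delta$ smaller than a uniform fraction of these angular gaps, and smaller than the positive Euclidean-type bounds needed for (i) and (ii), yields a valid collar.
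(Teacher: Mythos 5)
Your approach is conceptually correct and parallel in strategy to the paper's: both extend $T_i \cap L$ slightly across its non-singular faces and edges into the neighboring tetrahedra while refusing to wrap around singular edges or vertices. The implementation differs. You use metric $\delta$-collars in $\tilde L$ controlled by angular (and, at vertices, solid-angle) coordinates; the paper instead develops $T_i$ together with the tetrahedra adjacent across its important (non-singular) edges and its faces into $\H^3$ and builds $\hat U$ combinatorially as a union of interiors of convex hulls spanned by barycenters of the neighboring cells and of certain faces, together with the interior of $\hat T$ and the relative interiors of faces and important edges. You correctly isolate the key obstacle — the intrinsic distance in $\tilde L$ between $\tilde T_i$ and a translate $\gamma \tilde T_i$ sharing a lifted singular edge degenerates to zero — and your angular-gap remedy is sound: largeness forbids a shared $2$-face, so in the infinite spiral of lifted tetrahedra around $\tilde e$ there is at least one intervening cell between $\tilde T_i$ and each translate, giving a uniform positive dihedral-angle gap (and similarly a positive solid-angle gap at shared vertices). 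The barycentric construction in the paper has the advantage that no $\delta$ has to be chosen: the extent of the extension is determined by the barycenters, and largeness already guarantees the developed picture contains no second copy of $T_i$, so the separation is automatic. Two small remarks on your write-up: condition (iii) — disjointness of the closure of $\tilde U_i$ in the completion from $\gamma \tilde T_i^\bullet$ — cannot hold as literally stated, since when the tetrahedra share a singular edge both closures contain it; but (iii) is unnecessary, as (iv) alone gives injectivity of the covering projection on $\tilde U_i$. Also, the two substantive verifications you defer — that $\tilde U_i$ deformation retracts onto $\tilde T_i$, and that the angular/solid-angle gaps are uniformly bounded below near singular edges and vertices — are where the real work sits; they are plausible, and the paper's barycentric route is one way to make them explicit without quantitative estimates.
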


\begin{proof}
Pick a tetrahedron $T_i$ and develop it to $\H^3$, we denote the image by $\hat T$ and the set of its vertices by $\hat V$. Let us call the edges of $\hat T$ coming from an edge in $E_0(\mc T)$ \emph{important}. For each important edge of $\hat T$ develop all tetrahedra adjacent to the preimage of this edge. For every 2-face of $\hat T$ not adjacent to important edges develop also the other tetrahedron adjacent to its preimage. Enumerate the obtained tetrahedra except $\hat T$ by $\hat T_1, \ldots, \hat T_s$ and denote their set by $\hat{\mc T}$. Note that as $\mc T$ is large, among the tetrahedra of $\hat{\mc T}$ there is no other copy of $T_i$, but some other tetrahedra of $\mc T$ could be developed more than once. Denote the barycenter of $\hat T_j$ by $p_j$. Also for each 2-face of $\hat T_j$ that is not a 2-face of $\hat T$, but contains an important edge, mark the barycenter of this face and denote the set of marked points by $\hat S$. In each $\hat T_j$ by $\hat U_j$ we denote the interior of the convex hull of $p_j$, $\hat S \cap \hat T_j$ and $\hat V \cap T_j$. We are now constructing the set $\hat U$ as follows
\begin{itemize}
\item add all $\hat U_j$ to $\hat U$;
\item for each face $F$ that is not a face of $\hat T$, but is adjacent to an important edge, we add to $\hat U$ the relative interior of the triangle formed by $\hat S \cap F$ and $\hat V \cap F$;
\item we add to $\hat U$ the relative interior of every important edge;
\item we add to $\hat U$ the interior of $\hat T$.
\end{itemize}
By $U_i \subset L$ we denote the preimage of the set $\hat U$. It is not hard to see from the construction that $U_i$ is simply-connected and open in $L$. Doing this for every $T_i$ we obtain an open cover of $L$ satisfying the desired properties.
%
\end{proof}

Take an open cover $U_1, \ldots, U_r$ of $L$ from Lemma~\ref{cover} and fix a partition of unity of $L$ with respect to this cover. Denote the corresponding partition functions by $\lambda_i: L \ra [0,1]$. We denote the set of pairs $(U_i, \lambda_i)$ by $\mc U$, but we will also abuse the notation and sometimes will say just $U_i \in \mc U$. Consider a developing map $dev: \tilde L \rightarrow \H^3$. Let $\tilde{\mc U}$ be the partition of unity of $\tilde L$ that is the lift of $\mc U$. By $I$ we denote a countable set indexing elements of $\mc U$. By $AF(\tilde L, \tilde{\mc U})$ we denote the space of vector fields on $\tilde L$ of the form $\sum_{i \in I} \tilde \lambda_i X_i$ where $\tilde \lambda_i$ are the partition functions and $X_i$ are pull-backs on $\tilde L$ of affine vector fields on $\H^3$ via $dev$ such that for every two tetrahedra $\tilde T_i$ and $\tilde T_j$ in $\tilde L$ adjacent by a 2-face the fields $X_i$ and $X_j$ coincide on this face. We call such fields \emph{affine vector fields on $\tilde L$ with respect to $\mc U$}. It is easy to see that $AF(\tilde L, \tilde{\mc U})$ is independent on the choice of $dev$. By $KF(\tilde L)$ we denote the space of the Killing fields on $\tilde L$. Note that all vector fields in $KF(\tilde L)$ are pull-backs of Killing fields on $\H^3$ via $dev$ and $KF(\tilde L) \subset AF(\tilde L, \mc U)$ as any such field can be represented as $\sum_{i \in I} \tilde \lambda_i X_i$ where all $X_i$ are the same Killing field on $\tilde L$.


As previously, we consider the tuple of edge lengths of $\mathcal T$ as $l \in \R^{E(\mathcal T)}$ and by $T^L_l \R^{E(\mathcal T)}$ we denote the intersection of kernels of $d\nu_e$ for all edges $e \in E_0(\mc T)$. We want to establish an isomorphism of $T^L_l \R^{E(\mathcal T)}$ with a subspace of $AF(\tilde L, \tilde{\mc U})$ modulo $KF(\tilde L)$. 
We need to understand which vector fields from $AF(\tilde L, \mathcal U)$ well-define variations of the edge lengths of $\mathcal T$. These are so-called \emph{automorphic} ones.

This notion applies to a slightly more general context and is heavily connected with variations of a hyperbolic structure on manifolds.
For the moment, let $L$ be any open hyperbolic 3-manifold and $\mathcal {HM}(L)$ be the set of hyperbolic metrics on $L$ up to isotopies and thickenings. (For the notion of thickening, see, e.g.,~\cite[Chapter I.1.6]{CEG}. This is a simple equivalence relation for incomplete hyperbolic metrics on open manifolds.) We note that the space of Killing vector fields on $\tilde L$ always has dimension exactly 6: it is easy to see it from any developing map to $\H^3$. 
Consider a smooth path of hyperbolic metrics on $L$. It gives rise to a smooth path of developing maps from $\tilde L$ to $\H^3$ defined up to a path of global isometries of $\H^3$. In the first order it produces a vector field $X$ on $\tilde L$ defined up to Killing fields. 
It is easy to see that $X$ satisfies the \emph{automorphic property}: for every $\gamma \in \pi_1(L)$ considered as a deck transformation $\gamma: \tilde L \rightarrow \tilde L$ the vector field $X-d\gamma^{-1}(X)$
is a Killing field (various for various $\gamma$). If the path of metrics is trivial, e.g., coming from the pull-back of an isotopy, then $X$ is the sum of an equivariant vector field and a Killing vector field, i.e., the vector field above is the same for all $\gamma$.

Now we return to our setting and by $AF_a(\tilde L, \mathcal U)$ denote the space of automorphic affine vector fields on $\tilde L$ with respect to $\mathcal U$. Note that $KF(\tilde L) \subset AF_a(\tilde L, \mathcal U)$. However, we are going to see that the space $AF_a(\tilde L, \mathcal U)$ is larger and to prove

\begin{lm}
\label{isom2}
$$T^L_l \R^{E(\mathcal T)} \cong AF_a(\tilde L, \mathcal U)/KF(\tilde L).$$
\end{lm}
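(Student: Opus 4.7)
The plan is to mirror the proof of Lemma~\ref{isom1} but now on the universal cover $\tilde L$, keeping track of the deck action. I would first define a natural linear map $\Phi: AF_a(\tilde L, \mathcal U) \to T^L_l \R^{E(\mathcal T)}$, identify its kernel with $KF(\tilde L)$, and then establish surjectivity; the last step is where the real work lies.

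To define $\Phi(X)$ for $X \in AF_a(\tilde L, \mathcal U)$, take $e \in E(\mathcal T)$, pick any lift $\tilde e \subset \tilde L$ contained in some tetrahedron $\tilde T_i$, view $X|_{\tilde T_i}$ as an affine vector field on a hyperbolic tetrahedron via the developing map, and let $\dot l_e(X)$ be the resulting first-order change of the length of $\tilde e$. The automorphy condition $X - d\gamma^{-1}(X) \in KF(\tilde L)$ shows that different lifts of $e$ produce the same number, since Killing fields induce zero length variation. To check that $\Phi(X) \in T^L_l \R^{E(\mathcal T)}$, I would apply the closing-the-cycle step from the proof of Lemma~\ref{isom1} to the cyclic chain of tetrahedra in $\tilde L$ surrounding a lift of an edge $e \in E_0(\mathcal T)$: the affine pieces agree on common $2$-faces by definition of $AF(\tilde L, \tilde{\mathcal U})$, so the isomorphism~(\ref{tetriso}) around $\tilde e$ forces $d\nu_e(\Phi(X)) = 0$.

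For the kernel, if $\Phi(X) = 0$ then on each tetrahedron $\tilde T_i$ the affine piece $X_i$ induces vanishing length changes on all six edges, so by~(\ref{tetriso}) it is the restriction of a global Killing field $K_i$ of $\H^3$. Two such Killing fields agreeing on a $2$-plane coincide on all of $\H^3$, so by connectedness of $\tilde L$ all $X_i$ coincide with one $K \in KF(\tilde L)$, whence $X = \sum_i \tilde\lambda_i K = K$ since $\sum_i \tilde\lambda_i \equiv 1$. The reverse inclusion $KF(\tilde L) \subseteq \ker \Phi$ is obvious.

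Surjectivity is the main obstacle, and this is where largeness of $\mathcal T$ and the careful cover $\mathcal U$ from Lemma~\ref{cover} should do the work. Given $\dot l \in T^L_l \R^{E(\mathcal T)}$, I would fix a connected fundamental region $\mathcal F \subset \tilde L$ consisting of one lift of each tetrahedron of $\mathcal T$, together with a spanning tree in the dual graph of $\mathcal F$. Starting from an arbitrary tetrahedron of $\mathcal F$, applying the propagation step of Lemma~\ref{isom1} along the tree produces an affine field on $\mathcal F$ realizing the prescribed length changes; whenever the propagation closes a loop around a lift of some $e \in E_0(\mathcal T)$, consistency is guaranteed by $d\nu_e(\dot l) = 0$, exactly as in Lemma~\ref{isom1}. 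Now the largeness hypothesis ensures that distinct tetrahedra of $\mathcal F$ are never related by a non-trivial deck transformation, so $\{\gamma \mathcal F\}_{\gamma \in \pi_1(L)}$ tiles $\tilde L$, and for each $\gamma$ I would set the field on $\gamma \mathcal F$ to be $\gamma_*(X|_{\mathcal F}) + K_\gamma$ for a Killing field $K_\gamma$ chosen so that the gluings along the relevant $2$-faces on $\partial \mathcal F$ are satisfied. The hard part will be verifying that the assignment $\gamma \mapsto K_\gamma$ is forced to be consistent with every relation in $\pi_1(L)$: each such relation corresponds, in $\tilde L$, to a cycle of tetrahedra whose closing condition again reduces to the vanishing of $d\nu_e(\dot l)$ for the edges $e \in E_0(\mathcal T)$ crossed transversally, combined with the fact that $\gamma$ acts by isometry (so $\gamma_*$ preserves $KF$). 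Once consistency is checked, multiplying each tetrahedral affine field by the partition-of-unity functions $\tilde\lambda_i$ smooths the construction without altering the prescribed length changes, yielding the required $X \in AF_a(\tilde L, \mathcal U)$ with $\Phi(X) = \dot l$.
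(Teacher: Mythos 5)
Your forward direction and kernel identification are correct and match the paper's proof. You are right that the substance lies in surjectivity, but the route you sketch there has a genuine gap, and in fact it hides (rather than proves) the key point.

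The paper's surjectivity argument is: propagate the affine field directly on $\tilde L$, as in Lemma~\ref{isom1}; since $\tilde L$ is simply connected and the only edges inside $\tilde L$ are lifts of $E_0(\mathcal T)$ (the singular locus has been removed), the closing condition reduces to $d\nu_e(\dot l)=0$ and the propagation is path-independent, yielding a well-defined $X\in AF(\tilde L,\tilde{\mathcal U})$. The real content of the lemma is then a \emph{separate} step: proving $X$ is automorphic. The paper does this by first observing that $X_1 - d\gamma^{-1}(X'_1)$ is Killing on a single tetrahedron $\tilde T_1$ (both fields induce the same length changes there), and then showing this Killing field is the \emph{same} for all tetrahedra, via the identity $X_1 - d\gamma^{-1}(X'_1) = A_{21}\bigl(X_2 - d\gamma^{-1}(X'_2)\bigr)$, where $A_{21}$ is the linear map between the solution spaces of adjacent tetrahedra determined by the common 2-face, combined with the facts that $A'_{21}=d\gamma\circ A_{21}\circ d\gamma^{-1}$ and that $A_{21}$ fixes $KF$ pointwise.

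Your fundamental-domain/cocycle construction tries to \emph{build in} automorphy by declaring $X|_{\gamma\mathcal F}:=\gamma_*(X|_{\mathcal F})+K_\gamma$, but you do not justify that a Killing field $K_\gamma$ making the gluing across $\partial\mathcal F$ work actually exists. Two affine fields agreeing on a 2-plane do not in general differ by a Killing field, so having one common face does not just ``determine $K_\gamma$'' — it imposes a constraint that must be verified, and that constraint is precisely the automorphy you are trying to establish; the construction is circular as stated. Moreover, you locate the ``hard part'' as consistency with relations of $\pi_1(L)$ and suggest it reduces again to $d\nu_e(\dot l)=0$ — but that condition is already spent in making the propagation well-defined on $\tilde L$; it is not what yields automorphy. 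What is missing is the tetrahedron-to-tetrahedron propagation of the Killing discrepancy (the $A_{21}$ argument), which would also serve you here: once you show that $X_T - d\gamma^{-1}(X_{\gamma T})$ is a Killing field independent of $T$, you can dispense with the fundamental domain entirely and simply propagate $X$ on all of $\tilde L$ as in the paper.
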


\begin{proof}
Let $X\in AF_a(\tilde L, \mathcal U)$ and take $\gamma \in \pi_1(L)$. As $X-d\gamma^{-1}(X)$ is a Killing field, we see that for every tetrahedron $T$ of $\mathcal T$ and every two its lifts $\tilde T_1$ and $\tilde T_2$ in $\tilde L$ that differ by $\gamma$, the edge-length change induced by $X$ is the same. By construction, $X$ does not change infinitesimally the dihedral angles of the edges from $E_0(\mc T)$. Hence, all lengths changes of tetrahedra in $\tilde L$ project to a length change $\dot l \in T^L_l \R^{E(\mathcal T)}$. This defines a linear map $$AF_a(\tilde L, \mathcal T) \rightarrow T^L_l \R^{E(\mathcal T)}$$
with $KF(\tilde L)$ equal to its kernel.

Conversely, let $\dot l \in \R^{E(\mathcal T)}$. Similarly to the proof of Lemma~\ref{isom1}, we produce inductively an affine vector field $X$ on $\tilde L$ with respect to $\mc U$ inducing $\dot l$. We need to show that it is automorphic. 

Pick a tetrahedron $\tilde T_1$ of $\mathcal T$ in $\tilde L$ and $\gamma \in \pi_1(L)$. Let $\tilde T'_1$ be the tetrahedron $\gamma(\tilde T_1)$, and let $X_1$ and $X'_1$ be the components of $X$ corresponding to the tetrahedra $\tilde T_1$ and $\tilde T'_1$ respectively. As $X_1$ and $d\gamma^{-1}(X'_1)$ produce the same length change on the tetrahedron $\tilde T_1$, we see that the difference $X_1-d\gamma^{-1}(X'_1)$ is a Killing field on the tetrahedron $\tilde T_1$. 

We need to show that for any other tetrahedron $\tilde T_2$ if $\tilde T'_2:=\gamma(\tilde T_2)$ and the corresponding components of $X$ are $X_2$ and $X'_2$, then the vector field $X_2-d\gamma^{-1}(X'_2)$ is the same Killing field as $X_1-d\gamma^{-1}(X'_1)$. Assume that $\tilde T_2$ is adjacent to $\tilde T_1$ by a 2-face and that a developing map $dev: \tilde L \ra \H^3$ is fixed, so we treat the components of $X$ as elements of $AF$. The deformations of the edge lengths of $\tilde T_1$ and $\tilde T_2$ induced by $\dot l$ together with the isomorphism~(\ref{tetriso}) define respectively two affine 6-dimensional spaces $A_1, A_2 \subset AF$ parallel to $KF$ and a map $$A_{21}: A_2 \rightarrow A_1$$ mapping an affine field $X_2 \in A_2$ to the unique affine field $X_1 \in A_1$ such that the restrictions of $X_1$ and $X_2$ to the common face of $\tilde T_1$ and $\tilde T_2$ coincide. We extend $A_{21}$ linearly to the linear span of $A_2$. The latter contains $KF$ and $A_{21}$ preserves $KF$ pointwise.

Similarly we get affine subspaces $A'_1, A'_2 \subset AF$ defined by the restriction of $\dot l$ to the edge lengths of $\tilde T'_1$ and $\tilde T'_2$ and get an affine map
$$A'_{21}: A'_2 \rightarrow A'_1,$$
which we extend to the linear spans of $A'_1$, $A'_2$. We have
$$A'_{21}=d\gamma\circ A_{21} \circ d\gamma^{-1},$$
$$d\gamma(X_1)-X'_1=d\gamma\circ A_{21}(X_2)-A'_{21}(X'_2)=d\gamma\circ A_{21}(X_2-d\gamma^{-1}(X'_2)).$$
$$X_1-d\gamma^{-1}(X'_1)=A_{21}(X_2-d\gamma^{-1}(X'_2)).$$
It follows that $X_1-d\gamma^{-1}(X'_1)$ and $X_2-d\gamma^{-1}(X'_2)$ is the same Killing field. By induction we show that for every component $X_i$ of $X$, $X_i-d\gamma^{-1}(X_i)$ is the same Killing field on the whole $\tilde L$. This finishes the proof.
\end{proof}

We now describe affine equivariant vector fields on $\tilde L$ with respect to $\mc U$.

\begin{lm}
\label{equiv}
Let $X \in AF(\tilde L, \mc U)$ be equivariant and $\tilde T$ be a tetrahedron of $\mathcal T$ in $\tilde L$. Consider a developing map $dev: \tilde T \rightarrow \H^3$, identify $\tilde T$ with the image and extend $X$ to the vertices. Then $X$ is zero at each vertex that has valence greater than 2 in $\Sigma$. For vertices of valence~2, $X$ is parallel to (the $dev$-image) of the singular locus.
\end{lm}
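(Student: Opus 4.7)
My strategy is to translate the equivariance of $X$ into a rotational symmetry condition at $dev(v)$ under the holonomies of small loops in $L$ around $v$, and then to exploit the nontriviality of the cone-angle rotations around the singular edges meeting $v$.

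First, I would show that extending $X$ affinely to $v$ via any tetrahedron of $\tilde L$ meeting $v$ yields the same value $X(v) \in T_{dev(v)}\H^3$. This follows by iterating the matching condition defining $AF(\tilde L, \tilde{\mc U})$: two tetrahedra sharing a $2$-face have affine components agreeing on the whole plane of that face, in particular at any shared vertex. The star of $v$ in $\tilde L$ (at the chosen pre-image of $v$ in the completion) is a connected neighborhood whose $2$-face-adjacency dual graph among tetrahedra meeting $v$ is connected, so iterating the matching condition forces $X_i(dev(v))$ to be independent of $i$.

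Next, I would use equivariance. Let $\Gamma_v \subset \pi_1(L)$ be the subgroup stabilizing the chosen pre-image of $v$ in the completion of $\tilde L$; it is identified with $\pi_1(S_v \setminus (\Sigma \cap S_v))$, where $S_v$ is the link of $v$ in $K$. For $\gamma \in \Gamma_v$, $\rho(\gamma)$ fixes $dev(v)$ and $\gamma \tilde T$ is another tetrahedron meeting this same pre-image of $v$. The equivariance $X(\gamma \tilde p) = d\gamma(X(\tilde p))$ translates, in the developed picture, to $X_{\gamma \tilde T} = \rho(\gamma)_* X_{\tilde T}$ as affine fields on $\H^3$. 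Evaluating at the $\rho(\gamma)$-fixed point $dev(v)$ and combining with the previous step gives
$$X(v) = X_{\gamma \tilde T}(dev(v)) = d\rho(\gamma)\bigl(X_{\tilde T}(dev(v))\bigr) = d\rho(\gamma)(X(v)).$$
Finally, $\rho(\Gamma_v) \subset {\rm Stab}(dev(v)) \cong SO(3)$ is generated by rotations about the $dev$-images of the singular edges at $v$, each by the corresponding cone angle (nontrivial modulo $2\pi$ since those edges lie in $\Sigma$). For valence $2$ in $\Sigma$ the singular locus is locally a single geodesic line through $v$, giving one rotation axis; the fixed set of the rotation is this axis, yielding the valence-$2$ claim. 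For valence greater than $2$, distinct singular edges at $v$ have distinct tangent directions (being embedded geodesics), so at least two rotation axes are non-collinear, and the common fixed subspace of two nontrivial rotations about distinct axes through $dev(v)$ is $\{0\}$, forcing $X(v) = 0$.

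The most delicate point is the well-definedness step: one must confirm that the $2$-face adjacency among tetrahedra meeting $v$ in $\tilde L$ propagates the matching condition consistently around the full star, which amounts to controlling the combinatorics of the star of $v$ in $\tilde L$ together with the behavior of the partition of unity $\tilde{\mc U}$ near $v$.
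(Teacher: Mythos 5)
Your proof is correct and follows essentially the same route as the paper's: the matching condition on shared $2$-faces forces the affine extensions at a fixed preimage of $v$ to agree, and equivariance then forces $X(v)$ to be fixed by the holonomy rotations around the singular edges at $v$, whose common fixed subspace is $\{0\}$ for valence $\geq 3$ and the axis direction for valence~$2$. The only difference is organizational: you isolate well-definedness of $X(v)$ and $\rho(\Gamma_v)$-invariance as two separate steps, whereas the paper argues directly by taking a single chain of $2$-face-adjacent tetrahedra looping around an axis not parallel to $X(v)$ and deriving a contradiction.
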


\begin{proof}
Indeed, suppose that for a vertex $v$ of $\tilde T$ having valence at least 3 in $\Sigma$ the extension of $X$ is non-zero. Then there exists an axis of $\Sigma$ that, after developing to $\H^3$, is not parallel to $X$ at $v$. Consider a chain of tetrahedra adjacent by 2-faces, starting at $\tilde T$ and looping around this axis. We develop it to $\H^3$. Due to the equivariance, the extension of $X$ to $v$ in the last tetrahedron of the chain should differ from the extension in $\tilde T$. On the other hand, since all tetrahedra in the chain are adjacent by 2-faces, these extensions must coincide. This is a contradiction. 

This argument similarly shows that if $v$ has valence 2 in $\Sigma$, then the extension of $X$ must be parallel to (the $\dev$-image of) the singular axis of $\Sigma$ passing through $X$.
\end{proof}

\subsection{Bundle-valued differential forms}
\label{bundle}

Recall that $G=PSL(2, \C)$, $\tilde G=SL(2, \C)$. By $\mathfrak g=\mathfrak{sl}(2, \C)$ we denote the Lie algebra of $\tilde G$. By a result of Culler~\cite{Cul}, every holonomy map $\pi_1(L) \ra G$ lifts to a holonomy map $\pi_1(L) \ra \tilde G$. Denote the latter by $\rho$. Consider the trivial bundle $\tilde E \rightarrow \tilde L$, where $\tilde E=\tilde L \times \mathfrak g$. It is naturally equipped with a flat connection coming from the trivialization. Consider the action of $\pi_1(L)$ on $\tilde E$ where the action on $\tilde L$ is given by the deck transformations, and the action on $\mathfrak g$ is given via $Ad \circ \rho$, where $Ad$ is the adjoint representation of $\tilde G$ on $\mathfrak g$. By taking the quotient, we obtain a flat bundle $E \rightarrow L$, which is the bundle of germs of Killing fields on $L$. Let $\Omega^*(L, E)$ be the de Rham complex of $E$-valued differential forms on $L$, where the exterior derivative $d$ is induced by the flat connection on $E$. This complex is naturally isomorphic to the complex $\Omega^*(\tilde L, \tilde E, Ad\circ\rho)$ consisting of $\tilde E$-valued differential forms on $\tilde L$ that are equivariant with respect to the action $Ad\circ\rho$. This construction goes back to Weil~\cite{Weil}, see a general exposition in the book~\cite{Rag} of Raghunathan, and papers~\cite{HK, Wei, Wei2} for the applications to the infinitesimal theory of hyperbolic cone-3-manifolds.

Let $e$ be an edge of $\Sigma$ and $\gamma_e$ be a meridian around $e$. In~\cite[Theorem 3.15]{Wei2} Weiss showed that

\begin{thm}
\label{weiss}
Let $\omega \in \Omega^1(L, E)$ belong to $\ker(d\tr(\gamma_e))$ for all $e \in \Sigma$. Then $\omega$ is cohomologous to zero.
\end{thm}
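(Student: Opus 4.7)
The plan is to follow the Hodge-theoretic framework initiated by Hodgson--Kerckhoff \cite{HK} and developed further by Weiss \cite{Wei, Wei2} for hyperbolic cone-3-manifolds admitting vertices in the singular locus. The complex $\Omega^*(L, E)$ computes the cohomology of $L$ with coefficients in the flat bundle $E$ of germs of Killing fields, and classes in $H^1$ correspond to infinitesimal variations of the hyperbolic structure on $L$ modulo trivial ones. The class $[d\tr(\gamma_e)]$ measures the infinitesimal change of the trace of the meridional holonomy $\gamma_e$, which up to a non-degenerate factor is the infinitesimal change of the complex length of $e$.

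First I would set up an $L^2$ Hodge theory on $L$ with weights adapted to the tubular neighborhoods of the smooth part of $\Sigma$ and to the conical neighborhoods of the vertices of $\Sigma$, and show that the cohomology class of $\omega$ contains a representative $\omega_0$ which is both closed and co-closed in the weighted $L^2$ sense. Next, working locally near each edge $e$, I would Fourier-decompose $\omega_0$ by the rotational modes around the meridian; the hypothesis $d\tr(\gamma_e)(\omega) = 0$ eliminates exactly the mode that would change the cone angle at $e$, so that $\omega_0$ decays sufficiently fast towards the tubular core. Near each vertex $v$ of $\Sigma$ one performs an analogous decomposition using the spectral theory of the bundle of Killing fields on the spherical cone-link $\Sigma_v$, and the trace-preservation conditions for all edges of $\Sigma$ incident to $v$ rule out the ``cone-angle-changing'' modes at $v$ as well.

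Finally I would apply a Weitzenböck identity to $\omega_0$ on the truncated region $L_{\varepsilon}$ obtained by removing $\varepsilon$-tubes around $\Sigma$:
\[
\int_{L_{\varepsilon}} \bigl(|\nabla \omega_0|^2 + \langle R\,\omega_0, \omega_0 \rangle\bigr) \,dV \;=\; \text{boundary terms on } \partial L_{\varepsilon}.
\]
The curvature term $R$ has favourable sign on a hyperbolic background for $E$-valued 1-forms, while the decay obtained in the local analysis shows that the boundary terms vanish in the limit $\varepsilon \to 0$. Combining these forces $\omega_0 \equiv 0$, hence $\omega$ is exact, as required.

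The main obstacle is the local analysis at vertices of $\Sigma$ in the second step: unlike a smooth edge-point, the local model at a vertex is a cone over a 2-dimensional spherical cone-surface $\Sigma_v$ which is not isometric to the round sphere, so one cannot use $S^2$-harmonic analysis directly. One has to decompose $E$-valued harmonic 1-forms by the spectrum of a Laplace-type operator on $\Sigma_v$, identify those eigenmodes whose non-vanishing would obstruct killing $\omega_0$ by an exact form, and relate them to the trace-derivatives $d\tr(\gamma_e)$ along the meridians of the edges meeting $v$. This reduces to an infinitesimal rigidity statement for the spherical cone-structure on $\Sigma_v$ rel cone angles, which is itself proved by a Bochner-type argument on $\Sigma_v$ and is the technical heart of \cite{Wei2}.
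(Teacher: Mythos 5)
The paper does not prove this result; it is quoted directly from Weiss \cite[Theorem 3.15]{Wei2} and invoked as a black box, so there is no in-paper proof to compare your attempt against. Your sketch is a reasonable high-level reconstruction of the strategy Weiss (building on Hodgson--Kerckhoff~\cite{HK}) actually employs: weighted $L^2$ Hodge theory adapted to tubular neighborhoods of the edges and conical neighborhoods of the vertices, passage to a closed-and-co-closed representative, Fourier mode analysis near $\Sigma$ that isolates the cone-angle-changing component, and a Weitzenb\"ock integration-by-parts argument where the trace-preservation hypothesis forces the boundary contributions to vanish. You also correctly identify the vertex analysis (rigidity of the spherical cone-surface links rel cone angles) as the technical heart of \cite{Wei2}.

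Two points your sketch should make explicit, because the argument genuinely fails without them. First, the theorem as stated here carries Weiss's standing hypotheses that every cone angle of $\Sigma$ is strictly less than $2\pi$ and that no vertex link is the round sphere (or half-sphere). In the paper these conditions are built into the definition of a hyperbolic cone-3-manifold and into the way the doubled manifold $K$ in the proof of Lemma~\ref{subst3} inherits cone angles $<2\pi$ from a convex boundary; but the Bochner-type argument you outline has no sign control on the boundary terms once cone angles exceed $2\pi$, so omitting the hypothesis leaves a real gap. Second, the assertion ``show that the class of $\omega$ contains a closed-and-co-closed representative with the right decay'' is not a setup step: establishing the Fredholm theory in the weighted spaces near both edges and vertices, and identifying the precise asymptotics of harmonic forms there, is most of the content of \cite{Wei2}. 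As a strategy sketch your proposal is faithful; as a proof it defers essentially all of the work to the reference, which is exactly what the paper also does by citing it.
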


For an open hyperbolic 3-manifold $L$ there are two main ways to describe an infinitesimal deformation of the hyperbolic structure: via the first cohomology group $H^1(L, E)$ and via the automorphic vector fields. Following~\cite{HK} we now sketch why these ways are equivalent.

At each point $p \in L$ the fiber $E_p\cong \mathfrak g$ has a natural decomposition $$E_p=E_{1,p} \oplus E_{2, p}$$ into the subspace $E_{1,p}$ of infinitesimal translations at $p$ and the subspace $E_{2,p}$ of infinitesimal rotations around $p$. This produces the decomposition $E=E_1 \oplus E_2$ of $E$ into two respective sub-bundles. We remark that this decomposition is not preserved by the flat connection of $E$. The subspace $E_{1,p}$ can be naturally identified with the tangent space $T_pL$ by associating to each tangent vector the infinitesimal translation in its direction, hence the sub-bundle $E_1$ can be naturally identified with the tangent bundle $TL$, and we will use this identification freely. The Lie algebra $\mathfrak g= \mathfrak{sl}(2,\C)$ has the standard complex structure, which induces a complex structure on $E$ such that $E_2=iE_1$. For $X \in T_pL\cong E_{1,p}$ the element $iX \in E_{2, p}$ is the infinitesimal rotation with the axis parallel to $X$ and having the same norm. Here the norm on $E_{1,p}$ comes from the metric on $TL$ and the norm on $E_{1,p}$ comes from the usual metric on $\mathfrak{so}(3)$.

Lift a 1-form $\omega \in \Omega^1(L, E)$ to an equivariant 1-form $\tilde \omega \in \Omega^1(\tilde L, \tilde E, Ad\circ\rho)$. If $\omega$ is closed, so is $\tilde \omega$, and, as $\tilde L$ is simply connected, $\tilde \omega$ is exact. Hence, $\tilde \omega=ds$, where $s$ is a section of $\tilde E$. However, $s$ may be non-equivariant. But the notion of automorphicity extends to sections of $\tilde E$ and $s$ is automorphic, see, e.g.,~\cite[Lemma 2.1]{HK}. Here a section is called automorphic if for every $\gamma \in \pi_1(L)$ the section
$s-\gamma^{-1}_*s$
is closed. It follows conversely that for an automorphic section $s$ the 1-form $ds$ is equivariant. Using the complex structure on $\tilde E$, we can consider every section of $\tilde E$ as a pair of vector fields. Clearly, $s$ is automorphic (resp. equivariant or constant) if and only if the both corresponding vector fields are automorphic (resp. equivariant or Killing). It is evident that $s$ is the sum of an equivariant section and a constant section if and only if $\omega$ is exact, so it is cohomologous to zero. 

Now we need to learn the passage from an automorphic vector field $X$ on $\tilde L$ to a closed $E$-valued 1-form. Let $D$ be the exterior derivative on $\Omega^*(\tilde L, T\tilde L)$ induced by the Levi--Civita connection for the metric on $\tilde L$. Thus, $DX$ is a $(T\tilde L)$-valued 1-form. Any $(T\tilde L)$-valued 1-form can be viewed as an element of $Hom(T\tilde L, T\tilde L)$. We decompose it into the symmetric and the skew-symmetric parts using the metric on $\tilde L$. As $T_p\tilde L$ is a 3-dimensional vector space with a positively-definite metric, it is canonically isomorphic to the skew-symmetric subspace of $Hom(T_pM, T_pM)$. Indeed, if $\{e_1, e_2, e_3\}$ is an orthonormal basis of $T_pM$ and $\{e^1, e^2, e^3\}$ is the dual basis, then the isomorphism is defined by sending $e_i$ to $e_j\otimes e^h-e_h\otimes e^j$, where $(i,j,h)$ is a cyclic permutation of $(1,2,3)$. This defines a bundle-isomorphism of $T\tilde L$ and $Hom(T\tilde L, T\tilde L)$. We denote by $\curl(X)$ the vector field obtained from the skew-symmetric part of $DX$ under this isomorphism. By $s_X:=X-i\curl(X)$ we denote the \emph{canonical lift} of $X$, which is a section of $\tilde E$. Hodgson--Kerckhoff showed

\begin{lm}[\cite{HK}, Lemma 2.3]
The canonical lift of an automorphic vector field is an automorphic section. Every cohomology class in $H^1(L, E)$ can be represented by a 1-form $\omega$ such that the associated local section $s: \tilde L \rightarrow \tilde E$ is the canonical lift of the vector field determined by its real part.
\end{lm}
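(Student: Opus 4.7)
My plan is to split the lemma into its two assertions and to use the naturality of $\curl$ under local isometries throughout, together with the pointwise decomposition $E = E_1\oplus E_2$ of translations and rotations.

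For the first assertion, I would fix $\gamma \in \pi_1(L)$ and set $Y := X - \gamma^{-1}_*X$, which is a Killing field by assumption on $X$. Because $\curl$ is defined intrinsically from the Riemannian metric on $\tilde L$ and deck transformations act by local isometries, one has $\curl(\gamma^{-1}_*X) = \gamma^{-1}_*\curl(X)$, so
\[
s_X - \gamma^{-1}_*s_X \;=\; Y - i\,\curl(Y).
\]
It then suffices to verify that for any Killing field $Y$ on $\tilde L$ the section $Y - i\,\curl(Y)$ is covariantly constant for the flat connection on $\tilde E$, equivalently that it represents the germ of $Y$ in $E$ at every point. This is a direct calculation: since $DY$ is skew-symmetric for a Killing field, $\curl(Y)_p$ corresponds exactly to the infinitesimal rotational part of the isometry $Y$ at $p$, while $Y_p$ gives its translational part, and these two ingredients reassemble $Y$ as an element of the fiber $E_p$ under the standard identification $E_p = E_{1,p}\oplus E_{2,p} \cong T_pL \oplus i\,T_pL$. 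Hence $s_X - \gamma^{-1}_*s_X$ is a constant section, which is precisely the automorphic property.

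For the second assertion, given $[\omega] \in H^1(L, E)$ with representative $\omega$, I would lift to $\tilde \omega \in \Omega^1(\tilde L, \tilde E)$; since $\tilde L$ is simply connected and $\tilde E$ is trivial, $\tilde \omega = d\tilde s$ for some automorphic section $\tilde s = X - iY$, where $X$ and $Y$ are automorphic vector fields. The gauge freedom available is to replace $\omega$ by $\omega - d\psi$ for an equivariant section $\psi$ of $E$, which amounts to replacing $\tilde s$ by $\tilde s - \tilde\psi$ with $\tilde\psi = Z - iW$ and $Z, W$ equivariant vector fields. Writing out the canonical-lift condition on the modified primitive reduces to solving
\[
W - \curl(Z) \;=\; Y - \curl(X)
\]
with $Z, W$ equivariant. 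My approach would be to solve this locally by leveraging the pointwise splitting $E = E_1 \oplus E_2$ (which makes $Y - \curl(X)$ the natural rotational-discrepancy vector field of $\tilde s$), then globalize by a partition of unity together with the flatness of the connection on $E$; alternatively one may pass to a Hodge-theoretic harmonic representative of $[\omega]$ and observe that on the hyperbolic manifold $L$ harmonic $E$-valued 1-forms, by a direct computation with the Hodge star, satisfy the canonical-lift identity pointwise.

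The main obstacle is the solvability in the second assertion: we must control the equivariance of $(Z, W)$ globally, and the real difficulty is that the flat connection used to write $d\tilde s$ and the Levi-Civita connection used to define $\curl$ differ by a bracket correction coming from the non-commutativity of translations and rotations in $\mathfrak{g}$. Navigating this interaction via the splitting $E = E_1 \oplus E_2$, and in particular verifying that the rotational mismatch $Y - \curl(X)$ is always in the image of the equivariant operator $(Z, W) \mapsto W - \curl(Z)$, is the technical heart of the argument.
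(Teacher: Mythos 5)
The paper gives no proof of this lemma---it is cited verbatim from Hodgson--Kerckhoff---so there is no in-paper argument to compare against; I assess the proposal on its own merits. Your treatment of the first assertion is sound: naturality of $\curl$ under the (isometric) deck action gives $s_X-\gamma^{-1}_*s_X = Y-i\curl(Y)$ with $Y:=X-\gamma^{-1}_*X$ a Killing field, and the pointwise decomposition $E_p=E_{1,p}\oplus iE_{1,p}$ into translations and rotations shows $Y-i\curl(Y)$ is exactly the constant section of $\tilde E$ given by the element $Y\in\mathfrak g$.

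For the second assertion you do not actually complete the argument. You correctly reduce to finding equivariant fields $Z, W$ with $W-\curl(Z)=Y-\curl(X)$, but then declare this unsolved subproblem "the technical heart," and suggest two heavy fallbacks (patching with a partition of unity, or a Hodge-theoretic representative) without carrying either out. Neither is needed, because $Y-\curl(X)$ is \emph{already equivariant}---and this follows directly from what you proved in the first part. Automorphy of $\tilde s = X - iY$ says that for each $\gamma$ the section $\tilde s - \gamma^{-1}_*\tilde s$ is constant, i.e.\ equals $K_\gamma - i\curl(K_\gamma)$ for some Killing field $K_\gamma$; matching real and imaginary parts gives $X - \gamma^{-1}_*X = K_\gamma$ and $Y-\gamma^{-1}_*Y=\curl(K_\gamma)$. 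Naturality of $\curl$ applied to the first identity gives $\curl(X)-\gamma^{-1}_*\curl(X)=\curl(K_\gamma)$ too, hence $(Y-\curl(X))-\gamma^{-1}_*(Y-\curl(X))=0$ for all $\gamma$. So $(Z,W)=(0,\,Y-\curl(X))$ solves your equation, the section $\psi:=-i(Y-\curl(X))$ is equivariant and descends to $E$, and $\tilde s-\tilde\psi=s_X$ exactly, so $\omega-d\psi$ is the desired representative. In other words, $s_X$ and $\tilde s$ have the same automorphy factors because those factors are constant sections, hence canonical lifts of Killing fields, hence determined by their real parts, which agree. The Hodge-theoretic alternative you sketch should be discarded: it is not true in general that harmonic representatives satisfy the canonical-lift identity pointwise, and $L$ is open, so even setting up a usable Hodge theory here is a nontrivial additional burden that the direct argument above avoids entirely.
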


The following lemma follows from the discussion above.

\begin{lm}
\label{form}
Assume that for an automorphic vector field $X$ the 1-form $\omega \in \Omega^1(L, E)$ obtained from the projection of the 1-form $ds_X$ is cohomologous to zero. Then $X$ is the sum of an equivariant and a Killing vector field.
\end{lm}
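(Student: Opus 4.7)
The plan is to unpack the ``evident'' assertion made in the paragraph immediately preceding the lemma, namely that $s_X$ is the sum of an equivariant and a constant section of $\tilde E$ if and only if $\omega$ is cohomologous to zero. The argument will proceed in two short steps: first pass from the cohomological vanishing of $\omega$ on $L$ back to a pointwise identity for sections on $\tilde L$, then decompose along $\tilde E = E_1 \oplus E_2$ to recover the claim at the level of vector fields.

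For the first step, I will use that $X$ being automorphic implies, by the Hodgson--Kerckhoff lemma cited just above, that $s_X$ is an automorphic section, so $ds_X$ is equivariant under $Ad\circ\rho$ and descends to $\omega \in \Omega^1(L, E)$. The hypothesis $[\omega]=0$ yields $\omega = d\tau$ for some section $\tau$ of $E$; lifting to the equivariant section $\tilde\tau$ of $\tilde E$ gives $d(s_X - \tilde\tau) = 0$ on $\tilde L$. Since $\tilde E = \tilde L \times \mathfrak{g}$ carries its trivial flat connection and $\tilde L$ is simply connected, every closed section is literally constant. Thus there is some $A \in \mathfrak{g}$ with $s_X = \tilde\tau + s_A$, where $s_A$ denotes the constant section of value $A$ (equivalently, the pullback via the developing map of the Killing field on $\H^3$ generated by $A$).

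For the second step, I will project this identity onto the $E_1$ summand of $\tilde E$. The one verification required here is that the splitting $\tilde E = E_1 \oplus E_2$ is preserved by the deck action $Ad\circ\rho$; this holds because deck transformations are isometries of $\tilde L$, and so the corresponding elements of $\tilde G$ conjugate translation germs to translation germs and rotation germs to rotation germs. Consequently the $E_1$-part of $\tilde\tau$, viewed as a vector field $Y$ via the identification $E_1 \cong T\tilde L$, is equivariant; the $E_1$-part of $s_A$ is the vector field $p \mapsto A(p) \in T_p\tilde L$, which is nothing but the Killing vector field $A$ itself; and the $E_1$-part of $s_X = X - i\,\curl(X)$ equals $X$. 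Projecting $s_X = \tilde\tau + s_A$ onto $E_1$ therefore produces $X = Y + A$, the required decomposition.

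There is essentially no serious obstacle here: the lemma is almost immediate from the preceding paragraph, and the only thing to check carefully is the equivariance of the splitting $\tilde E = E_1 \oplus E_2$, which follows at once from the fact that the deck action is by isometries of $\tilde L$.
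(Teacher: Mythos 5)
Your proposal is correct and takes essentially the same route as the paper, which disposes of the lemma with ``follows from the discussion above'': from $[\omega]=0$ you get $s_X=\tilde\tau+s_A$ with $\tilde\tau$ equivariant and $s_A$ constant, and then read off the $E_1$-parts, exactly as the paper's preceding paragraph intends. The one point you spell out explicitly --- that the splitting $\tilde E=E_1\oplus E_2$ is preserved by the deck action because $Ad\circ\rho(\gamma)$ carries the translation/rotation decomposition at $p$ to that at $\gamma p$ --- is precisely what the paper leaves implicit in its ``clearly'' remark, so nothing further is needed.
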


\subsection{Proof of the infinitesimal rigidity}

We now have all the ingredients to prove Lemma~\ref{infrig}. As it was showed in Subsection~\ref{subst}, it is enough to prove Lemma~\ref{subst3}. 

\begin{proof}[Proof of Lemma~\ref{subst3}.]
Let $K$ be the closed hyperbolic cone-3-manifold obtained from $M(g)$ by reflecting it along the boundary. As usually, by $\Sigma$ we denote the singular locus and by $L$ we denote $K \backslash \Sigma$. The triangulation $\mathcal T$ extends to a triangulation $\mathcal T^D$ of~$K$. Note that due to Lemma~\ref{triang2}, $\mathcal T$ is large, and one can see that $\mc T^D$ is also large. By $\mc U$ we denote a partition of unity with respect to an open cover from Lemma~\ref{cover}. We can make $\mc U$ symmetric with respect to the antipodal involution of $K$ switching the copies of $M(g)$. Consider an infinitesimal deformation $\dot l$ of the edge lengths of $\mathcal T$ as in the statement of Lemma~\ref{subst3}. We associate to it a deformation $\dot l^D$ of the edges of $\mathcal T^D$ as follows. For every edge $e \in E(\mathcal T^D)$ coming from the boundary of $M(g)$ we set $\dot l^D_e=0$. Next, there are two isometric copies of $M(g)$ in $K$. In one copy, for every edge $e_1 \in E(\mathcal T^D)$ coming from an interior edge $e\in E(\mathcal T)$ we set $\dot l^D_{e_1}=\dot l_e$. For the second copy and an edge $e_2 \in E(\mathcal T^D)$ coming from the same interior edge $e\in E(\mathcal T)$ we set $\dot l^D_{e_1}=-\dot l_e$. It is trivial to see now that for every edge $e \in E(\mathcal T^D)$ we have $d\nu_e(\dot l^D)=0$.

From Lemma~\ref{isom2} we associate to $\dot l^D$ an affine automorphic vector field $X$ on $\tilde L$ with respect to $\mc U$. Using Subsection~\ref{bundle} we consider its canonical lift $s_X: \tilde L \rightarrow \tilde E$ and the 1-form $\omega \in \Omega^1(L, E)$ obtained from the equivariant 1-form $\tilde \omega=ds_X$.

We now remark that for every meridian $\gamma_e$ around an edge $e$ of $\Sigma$ we have $\omega \in \ker(d\tr(\gamma_e))$. Indeed, we can choose $\gamma_e$ to be symmetric with respect to the antipodal involution of $K$. Now let us consider a tetrahedron $T \subset \H^3$, a deformation $\dot l_T$ of its edges producing an affine vector field $X_T=X_{\dot l_T}$ on $T$ corresponding to a 1-form $\omega_{X_T}$, and consider a path $\gamma_T$ in $T$. Due to linearity, it is easy to see that $-X_T$ corresponds to the deformation $-\dot l_T$. Hence, from the linearity of all constructions, we get
$$\int_{\gamma_T}\omega_{X_T}=-\int_{\gamma_T}\omega_{-X_T}.$$
Thus, returning to $L$, from the symmetry we see that 
$$\int_{\gamma_e}\omega=0.$$

Theorem~\ref{weiss} implies that $\omega$ is cohomologous to zero. Lemma~\ref{form} shows that the vector field $X$ is the sum of an equivariant one and a Killing one. Subtracting the Killing part, we assume that it is equivariant. 

Let $e'$ be any edge of $\mathcal T^D$ coming from an edge $e$ of $\mathcal T$ connecting two vertices $v_1, v_2 \in V$. Note that every $v \in V$ is adjacent to at least three edges of $g$, which belong to $\Sigma$ after the doubling. Develop to $\H^3$ any tetrahedron containing $e'$ and consider the extension of $X$ to the vertices of $e$. Lemma~\ref{equiv} shows that this extension is zero. Hence, $\dot l^D_{e'}=\pm \dot l_e=0$. 

Now fix a copy of $M(g)$ in $K$, consider a geodesic $\psi \subset \partial M(g)$ and a face $F \subset \partial M(g)$ adjacent to $\psi$. Take all tetrahedra adjacent to $\psi$ having a face or an edge in $F$ and develop them to $\H^3$, we continue to denote by $\psi$ the obtained geodesic segment in $\H^3$. Let $T$ be such a tetrahedron with an edge $e$ belonging to $\psi$. By Lemma~\ref{equiv}, the extension of $X$ to every vertex of $e$ is parallel to $\psi$. As $\dot l_e=0$, we see that the extension vectors have the same direction and the same norm. This holds for all vertices $w\in \psi$. However, there exists at least one edge $e'$ of $\mathcal T$ in $F$, which is adjacent to $\psi$ by only one vertex and which is not orthogonal to $\psi$. The second endpoint of $e'$ is in $V$, hence the extension of $X$ there is zero. Since $\dot l_{e'}=0$, we get that all the obtained vectors at the points on $\psi$ are zero. 

Hence, the extension of $X$ to any vertex of $K$ is zero. Thus, we have $\dot l^D=0$, which implies $\dot l=0$. This finishes the proof of Lemma~\ref{subst3}.
\end{proof}

\section{Proofs of the main results}

\subsection{Properness}


Recall Definition~\ref{fucdef} of proto-Fuchsian metrics on $\pt M$. We denote the set of classes of such metrics by $\mc D_f(\pt M) \subset \mc D_c(\pt M, \emptyset)$. By $\mc D_f(\pt M, V)$ we denote the subset of $\ol{\mc D}_c(\pt M, V)$ corresponding to the lifts of metrics from $\mc D_f(\pt M)$ and by $\mc D_{nf}(\pt M, V)$ we denote its complement in $\ol{\mc D}_c(\pt M, V)$. While studying converging sequences in $\ol{\mc P}_b(M,V)$ we need to exclude the degenerate limits: when the sequence of underlying convex cocompact metrics on $N$ converges to a Fuchsian metric $\ol g$ and $V$ falls to the span of $C(\ol g)$. We claim that in this case the induced metric on $\pt M$ degenerates to a proto-Fuchsian one.

\begin{lm}
\label{fuchsian}
Let $\{g_i\}$ be a sequence in $\mathcal P_b(M, V)$ and $\{(\rho_i, \tilde f_i)\}$ be the sequence of its lifts to $\mc R(\pi_1(M), G) \times (\H^3)^{V}$ such that $\{\rho_i\}$ converges to a Fuchsian representation $\rho$ of a Fuchsian metric $\overline g$ and $\{\tilde f_i\}$ converges to a map $\tilde f: V \hookrightarrow \tilde \Pi$, where $\Pi$ is the geodesic plane containing $C(\overline g)$. Then the sequence $\{d_i\}$ of induced metrics on $\pt M$ converges in $\ol{\mc D}_c(\pt M, V)$ to a metric $d \in \mc {D}_f(\pt M, V)$.
\end{lm}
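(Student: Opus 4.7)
The plan is to exploit Hausdorff convergence of the convex hulls together with the assumption that the vertices collapse into the totally geodesic plane $\tilde\Pi\subset\H^3$ to identify the limit of $\partial M(g_i)$ with a ``doubled'' piece of $\tilde\Pi$, and then to verify that this double satisfies Definition~\ref{fucdef}.

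First, I would mimic the argument of Lemma~\ref{hausdd} and Corollary~\ref{hausdb}. By Lemma~\ref{hausd}, $\Lambda(\rho_i)\to\Lambda(\rho)\subset\partial_\infty\tilde\Pi$ in the Hausdorff sense, and by hypothesis the $\rho_i$-orbits of $\tilde f_i(V)$ converge to the $\rho$-orbit of $\tilde f(V)\subset\tilde\Pi$. In the Klein model taking closed convex hulls is compatible with Hausdorff limits of closed subsets of the closed unit ball, so $\tilde M(g_i)\to \tilde M_\infty:=\clconv\bigl(\rho(\pi_1(M))\cdot\tilde f(V)\cup\Lambda(\rho)\bigr)$ and $\partial\tilde M(g_i)\to\partial\tilde M_\infty$ in the Hausdorff sense. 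Since both the orbit and the limit set lie in $\tilde\Pi\cup\partial_\infty\tilde\Pi$, we have $\tilde M_\infty\subset\tilde\Pi$: the interior collapses.

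Next, I would split according to the two Fuchsian cases. If $\rho$ is Fuchsian of the first kind, $\Lambda(\rho)=\partial_\infty\tilde\Pi$, so $\tilde M_\infty=\tilde\Pi$, the quotient is a closed hyperbolic surface $S$, and $M\cong S\times[-1,1]$. If $\rho$ is Fuchsian of the second kind, then $\tilde M_\infty$ is the closed convex hull inside $\tilde\Pi$ of a discrete orbit together with a Cantor-type limit set; its quotient $S_0$ is a compact hyperbolic surface with convex piecewise-geodesic boundary without interior cone points, and $M$ is the handlebody interval-bundle over $S_0$ (using the characterization of Fuchsian ends from Section~\ref{cocosec}). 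I orient $\tilde\Pi$ so that $\partial\tilde M(g_i)$ splits into an upper and a lower piece meeting (possibly) along $\partial\tilde M(g_i)\cap\tilde\Pi$, which limits to $\partial\tilde M_\infty$ in $\tilde\Pi$. The candidate limit metric $d$ on $\partial M$ is defined by taking the doubling of $\tilde M_\infty/\rho$ along its boundary (empty in case (1), where $d$ restricts to two isotopic copies of the hyperbolic metric on $S$; case (2) gives case (2) of Definition~\ref{fucdef}).

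Fourth, I would establish that $d_i\to d$ in $\ol{\mc D}_c(\partial M,V)$. Following the strategy of Lemma~\ref{contin}, fix a geodesic triangulation $\mathcal T$ of $(\partial M,W,d)$ with $W\supseteq V$ such that every edge is the \emph{unique} shortest arc between its endpoints (obtained by a barycentric refinement). For every $w\in W$ the $\rho$-orbit of $\tilde w\in\partial\tilde M_\infty$ is approximated by the $\rho_i$-orbits of points $\tilde w_i\in\partial\tilde M(g_i)$ converging to $\tilde w$; after isotoping $g_i$ so that these orbits converge correctly, Lemma~\ref{distconv} applied to the complete convex surfaces $\partial\tilde M(g_i)\to\partial\tilde M_\infty$ (interpreted as doubles once the surfaces have collapsed) implies that the intrinsic distances on $\partial\tilde M(g_i)$ between the marked points converge to the intrinsic distances on the double of $\tilde M_\infty$. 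The uniqueness of shortest arcs in $d$ then forces the intrinsic shortest arcs between adjacent marked points on $\partial\tilde M(g_i)$ to converge individually to the edges of $\tilde{\mathcal T}$, giving convergence of the triangulation data and hence of $d_i$ to $d$ in the strong Lipschitz sense.

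The main obstacle is the transition from extrinsic (Hausdorff) convergence of the collapsing surfaces $\partial\tilde M(g_i)$ to intrinsic metric convergence of the boundaries onto the \emph{double} of $\tilde M_\infty$ rather than onto $\tilde M_\infty$ itself; here the fact that Lemma~\ref{distconv} treats complete convex surfaces as intrinsic metric spaces is essential, since the two sheets of $\partial\tilde M(g_i)$ asymptotically carry the two ``sides'' of the doubling. In case (2) one additionally has to check that the doubling along $\partial\tilde M_\infty\cap\tilde\Pi$ creates no interior cone points in $S_0$ and that the possible cone points of $d$ all arise from $V$ collapsing onto the boundary of $\tilde M_\infty$, which follows because interior points of $S_0$ are limits of regular points of $\partial M(g_i)$ lying at positive distance from $\tilde\Pi$ and thus inherit a smooth hyperbolic structure. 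Combining these pieces, $d$ satisfies Definition~\ref{fucdef} and $d\in\mc D_f(\partial M,V)$.
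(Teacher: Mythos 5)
Your overall strategy is the same as the paper's: Hausdorff convergence of $\tilde M(g_i)$ to a planar convex set in $\tilde\Pi$, a case split between the two kinds of Fuchsian ends, identification of the limit metric with the double of that set, and a triangulation/uniqueness-of-shortest-arcs argument in the style of Lemma~\ref{contin} driven by Lemma~\ref{distconv}. You also correctly locate where the difficulty sits.

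However, the step where you apply Lemma~\ref{distconv} is exactly where the argument needs more than a parenthetical. As stated, Lemma~\ref{distconv} requires the limit to be a complete convex surface in $\H^3$; when $\tilde M(g_i)$ collapses into the plane $\tilde\Pi$, the Hausdorff limit of $\partial\tilde M(g_i)$ is the two-dimensional set $\tilde M_\infty$ itself (not its relative boundary, as you write), and this set is \emph{not} a complete convex surface. Your phrase ``interpreted as doubles once the surfaces have collapsed'' names the conclusion you want without supplying the mechanism: one must define what it means for a sequence $p_i\in\partial\tilde M(g_i)$ to converge to a point of the double $D\tilde M_\infty$ rather than just to a point of $\tilde M_\infty$. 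The paper resolves this by orienting $\tilde\Pi$, fixing for each interior $p$ the two orthogonal rays $r_p^{\uparrow},r_p^{\downarrow}$, and declaring that $p_i\to p^{\uparrow}$ (resp.\ $p^{\downarrow}$) when, for all large $i$, the ray $r_p^{\uparrow}$ (resp.\ $r_p^{\downarrow}$) points into the outer half-space of some supporting plane to $\tilde M(g_i)$ at $p_i$; with this notion the analogue of Lemma~\ref{distconv} with limit $D\tilde M_\infty$ is then cited from Alexandrov. Without some such definition, the claim that intrinsic distances converge to those of the double is unsupported, and the subsequent convergence-of-triangulations argument does not get off the ground in the second-kind case (in the first-kind case there is no relative boundary, the double is two disjoint copies of $\tilde\Pi$, and Lemma~\ref{distconv} applies verbatim, as the paper notes). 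So the proposal is on the right track but leaves a genuine gap at the one place where the degenerate convergence must be formalized.
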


\begin{proof}
We claim that the proof of Lemma~\ref{contin} still applies to the current situation after slight changes. Indeed, as in Lemma~\ref{hausdd} one can see that $\tilde M(g_i)$ converge in the Hausdorff sense to $\tilde C(\ol g)$. In the Fuchsian case of the first kind Lemma~\ref{distconv} applies without problems and the proof of Lemma~\ref{contin} shows that $\{d_i\}$ converges to $d \in \ol{\mc D}_c(\pt M, V)$ that is a lift of the induced metric on $C(\ol g)$ on each component of $\pt M$ (recall that here $C(\ol g)$ is a totally geodesic surface), so $d \in {\mc D}_f(\pt M, V)$. 

In the Fuchsian case of the second kind Lemma~\ref{distconv} still applies, but the notion of the convergences should be corrected, as described in~\cite{Ale2, Ale3}. Let $\tilde S_i \subset \H^3$ be a sequence of complete convex surfaces converging in the Hausdorff sense to a planar closed convex subset $\tilde S \subset \Pi \subset \H^3$, where $\Pi$ is a geodesic plane. We consider the double $D\tilde S$ of $\tilde S$ along the relative boundary of $S$ with the induced metric. Orient $\Pi$ and for each $p$ in the relative interior of $\tilde S$, choose two rays orthogonal to $\Pi$, $r_p^{\uparrow}$ and $r_p^{\downarrow}$, denoted with respect to the orientation. Denote the two copies of $p$ in $DS$ by $p^{\uparrow}$ and $p^{\downarrow}$, depending on the copy of the relative interior of $\tilde S$ in~$D\tilde S$. We say that a sequence of points $p_i \in \tilde S_i$ converge intrinsically to $p^{\uparrow}$ (respectively to~$p^{\downarrow}$) if $p_i$ converges to $p$ and for all sufficiently large $i$ the ray $r_p^{\uparrow}$ (respectively $r_p^{\downarrow}$) points at infinity to the outer half-space of some supporting plane to $S_i$ at $p_i$. In this case Lemma~\ref{distconv} still works in the sense that if $p_i$ converge intrinsically to $p \in D\tilde S$ and $q_i$ converge intrinsically to $q \in D\tilde S$, then the intrinsic distance between $p_i$ and $q_i$ on $\tilde S_i$ converges to the intrinsic distance between $p$ and $q$ in $D\tilde S$.

In the Fuchsian case of the second king, the convex core $C(\ol g)$ is a surface with non-empty geodesic relative boundary. Let $\tilde S = \clconv(\tilde V)$ be contained in the plane $\Pi$ containing $\tilde C(\ol g)$, where $\tilde V$ is the $\rho_{\ol g}$-orbit of $\tilde f(V)$. Then the quotient $S \subset N(\ol g)$ is a totally geodesic surface with convex piecewise-geodesic relative boundary. Then the proof of Lemma~\ref{contin} works to show that $\{d_i\}$ converges to $d \in \ol{\mc D}_c(\pt M, V)$ that is a lift of the double of $S$ along its relative boundary, so $d \in {\mc D}_f(\pt M, V)$. 
\end{proof}

\begin{lm}
\label{proper}
Let $\{g_i\}$ be a sequence of metrics in $\ol{\mathcal P}_b(M, V)$ such that $I_V(g_i)=:d_i \in \ol{\mathcal D}_c(\partial M, V)$ converge to $d \in \ol{\mathcal D}_{nf}(\partial M, V)$. 
Then up to passing to a subsequence $g_i$ converge in $\ol{\mathcal P}_b(M, V)$ to $g \in \ol{\mathcal P}_b(M, V)$. We have $V(g)=V(d)$. In particular, if $d \in \mc D_c(\pt M, V)$, then $g \in \mc P_b(M, V)$.
\end{lm}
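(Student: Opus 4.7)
The plan is to extract a convergent subsequence of lifts $(\rho_i, \tilde f_i) \in \mc R(\pi_1(M), G) \times (\H^3)^V$ of $g_i$ and identify its limit as the desired $g$. First I would normalize the $G$-action: when $V \neq \emptyset$, pick $v_0 \in V$ and fix $\tilde f_i(v_0)=p_0$ together with an adjacent tangent frame; when $V=\emptyset$, normalize against a reference point in the convex core. The convergence $d_i \to d$ in the strong Lipschitz sense provides uniform upper bounds on the intrinsic diameters of $\pt M(g_i)$ and, via Lemma~\ref{sys}, a uniform lower bound on their systoles. Since extrinsic distance in $\H^3$ is bounded above by intrinsic distance on $\pt M(g_i)$, the markings $\tilde f_i$ stay in a bounded ball, and a subsequence converges: $\tilde f_i \to \tilde f$.

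For the holonomies $\rho_i$, Lemma~\ref{bflemma} implies that the nearest-point projection $\pt M(g_i) \to \pt C(\ol g_i)$ is intrinsically $1$-Lipschitz, so the same diameter and systole bounds transfer to $\pt C(\ol g_i)$. Via the induced-metric map $\mc I: \mc{CH}(N) \to \mc T(\pt M)$ and the properly discontinuous $MCG_h(M)$-action on $\mc T(\pt M)$, this confines the classes $[\ol g_i]$ to a compact subset of $\mc{CH}_h(N)$, and together with Theorem~\ref{hausd} on Hausdorff convergence of limit sets, gives a subsequential limit $\rho_i \to \rho$ in $\mc R(\pi_1(M), G)$. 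The main obstacle is to confirm that $\rho$ is the holonomy of a convex cocompact metric on $N$: discreteness and faithfulness come from J\o rgensen--Marden, while the systole lower bound on the convex core boundaries rules out accidental parabolics in the limit, forcing $\rho$ to be convex cocompact.

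The pair $(\rho, \tilde f)$ then produces, via $\clconv(\tilde f(V) \cup \Lambda(\rho))$ and Lemma~\ref{bas1}, a candidate $g \in \ol{\mc P}_b(M, V)$ unless we are in the degenerate Fuchsian case where $\tilde f(V)$ collapses onto the plane containing $C(\ol g)$. That scenario is precisely what Lemma~\ref{fuchsian} identifies, forcing $d \in \mc D_f(\pt M, V)$ and contradicting the hypothesis $d \in \mc D_{nf}(\pt M, V)$. Hence $g_i \to g$ in $\ol{\mc P}_b(M, V)$, and Lemma~\ref{contin} gives $\mc I_V(g) = d$. For the equality $V(g)=V(d)$: along edges and in the interiors of faces of $g$ the induced metric is smooth hyperbolic, so cone points of $d$ can only occur at vertices of $g$, giving $V(d) \subseteq V(g)$; conversely, at any vertex of $g$ the spherical link is a proper convex polygon with perimeter strictly less than $2\pi$ (otherwise that ``vertex'' would in fact lie in the relative interior of a geodesic segment, hence be an edge point or regular), so the cone-angle in $d$ is $<2\pi$, giving $V(g) \subseteq V(d)$. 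In particular, if $d \in \mc D_c(\pt M, V)$ then $V(g)=V$ and $g \in \mc P_b(M, V)$.
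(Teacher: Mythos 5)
Your proposal has the right overall shape — normalize, extract a subsequential limit for the holonomies and the markings, rule out the degenerate Fuchsian limit via Lemma~\ref{fuchsian}, and then identify $V(g)$ with $V(d)$ — but two of the load-bearing steps are not actually justified, and the argument would break without them.

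First, the compactness of $\{\ol g_i\}$ in $\mc{CH}(N)$ (modulo conjugation) is asserted, not proved. You transfer diameter and systole bounds from $\pt M(g_i)$ to $\pt C(\ol g_i)$ by the Busemann--Feller lemma, which is fine, and then write that ``via the induced-metric map $\mc I$ \ldots{} and the properly discontinuous $MCG_h(M)$-action'' the classes $[\ol g_i]$ are confined to a compact subset of $\mc{CH}_h(N)$. But this is precisely a properness statement for the map $\mc I_h: \mc{CH}_h(N) \ra \mc T_h(\pt M, M)$, and nothing you have cited proves it. The paper (through Lemma~4.4 of \cite{Pro3}) gets it from the Sullivan--Bridgeman--Canary bilipschitz comparison of $\pt C(\ol g)$ with $\pt_\infty N(\ol g)$, combined with the Ahlfors--Bers homeomorphism; you invoke neither. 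Your fallback — J{\o}rgensen--Marden plus the systole bound to exclude accidental parabolics — only says that \emph{if} a subsequence of $\rho_i$ converges in $\mc R(\pi_1(M), G)$, the limit is discrete, faithful, and parabolic-free; it does not produce the convergent subsequence, and even granted one, parabolic-freeness alone does not exclude geometrically infinite (degenerate) limits. Citing Theorem~\ref{hausd} here is also misplaced: that result describes the behavior of limit sets for an already-convergent sequence of representations, it is not a tool for extracting one.

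Second, and more subtly, you never control the distance from $V$ to $C(\ol g_i)$. You normalize so that $\tilde f_i(v_0) = p_0$, and the intrinsic diameter bound then pins all other $\tilde f_i(v)$ inside a bounded ball. But this normalization has used up the entire conjugation freedom, and in it the holonomies $\rho_i$ need not converge even if $[\ol g_i]$ converges in $\mc{CH}_h(N)$: if the vertices drift far from the convex core, any conjugation making the $\rho_i$ converge pushes $p_0$ off to infinity. The paper rules this out with an explicit area argument — the area of the distance-$h$ surface around $C(\ol g_i)$ is at least $\area(\pt C(\ol g_i)) \cdot \cosh^{-1}(\sqrt{h})$ (using the fact that the bending lamination has measure zero, \cite[Theorem~4.9]{CB}), and the nearest-point projection to it is contracting, so if the vertices escape to infinity the areas of $d_i$ would blow up, contradicting the convergence of $d_i$. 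Your proposal silently assumes that fixing a vertex and converging holonomies are compatible, which is exactly what this area estimate is needed to establish. The remaining pieces of your proof — the appeal to Lemma~\ref{fuchsian} to rule out proto-Fuchsian degeneration, the identification of the limit via $\clconv(\tilde f(V) \cup \Lambda(\rho))$, and the spherical-link argument for $V(g)=V(d)$ — are all sound.
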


\begin{proof}
Lemma 4.4 in~\cite{Pro3} implies that up to passing to a subsequence $\ol g_i$ converge to $\ol g \in \mc{CH}(N)$. We just recall that the proof is a combination of the Busemann--Feller lemma, the Sullivan--Bridgeman--Canary theorem~\cite{Sul, BC} producing a bilipschitz map between $\pt C(\ol g)$ and $\pt_\infty N(\ol g)$, and of a compactness criterion for Teichm\"uller spaces.

Suppose that for some $v \in V$ the distance $g_i(v, C(\overline g_i))$ tends to infinity. Then if for some other point $p \in \partial M$ the distance $g_i(p, C(\overline g_i))$ does not tend to infinity, the distance $g_i(v, p)$ tends to infinity. The Busemann--Feller lemma implies that then $d_i(v, p)$ tends to infinity, which contradicts to the convergence of $d_i$.

For $h \in \R_{>0}$ let $S_{i,h} \subset N(\overline g_i)$ be the surface at the distance $h$ from $C(\overline g_i)$. Then the area of $S_{i,h}$ is at least the area of $\partial C(\overline g_i)$ times $\cosh^{-1}(\sqrt h)$. Indeed, each face of $\partial C(\overline g_i)$ corresponds to a constant-curvature piece on $S_i$ of area equal to the area of the face times $\cosh^{-1}(\sqrt h)$. Since the union of all edges of $\partial C(\overline g_i)$ has measure zero~\cite[Theorem 4.9]{CB}, we prove the claim. 

Thereby, the distance $g_i(\partial M, C(\overline g_i))$ grows to infinity. Hence, there exists a sequence of $h_i$ growing to infinity such that $M(g_i) \supset S_{i,h_i}$. Due to the argument above and the convergence of $\overline g_i$, the areas of $S_i$ grow to infinity. Since the nearest-point projection from $\partial M(g_i)$ to $S_i$ is contracting, we get that the areas of $d_i$ grow to infinity, which is a contradiction. Thus, for each $v \in V$ the distances $g_i(v, C(\overline g_i))$ stay uniformly bounded. 

As in Lemma~\ref{hausdd}, we can choose the embeddings of $\tilde C(\overline g_i)$ and $\tilde C(\overline g)$ to $\H^3$ such that $\tilde C(\overline g_i)$ converge in the Hausdorff sense to $\tilde C(\overline g)$. As for each $v \in V$ the distance $g_i(v, C(\overline g_i))$ stay uniformly bounded, we can choose the maps $\tilde f_i: V \hookrightarrow  \H^3$ such that up to taking a subsequence it converges to a map $\tilde f: V \hookrightarrow \H^3$. By $\tilde V_i$ and $\tilde V$ we denote the orbits of $\tilde f_i(V)$ ans $\tilde f(V)$ under the respective holonomy representations. Consider $\clconv(\tilde V)$ quotiented by the action of $\rho_{\overline g}$. We get a manifold homeomorphic to $M$ except the case when $\overline g$ is Fuchsian and $\tilde V \subset \tilde C(\overline g)$. In the latter case Lemma~\ref{fuchsian} shows that $d \notin {\mc D}_{nf}(\pt M, V)$, hence this can not happen. As $\tilde V_i \subset \pt \clconv(\tilde V_i)$, we get $\tilde V \subset \pt \clconv(\tilde V)$, and the metrics $g_i$ converge to $g \in \ol{\mc P}_b(M, V)$. By Lemma~\ref{contin}, the induced metric on $\pt M(g)$ is $d$ and it is easy to see that $V(g)=V(d)$. This finishes the proof.

%
%
%
\end{proof}

%
%
%
%
%
%
%

\subsection{Existence of bent realizations}

In this section we prove the existence part of Theorem~\ref{main1}. 

\begin{proof}

Take arbitrary $d_0 \in \mathcal D_{nf}(\partial M, V)$ and $g_1 \in \mathcal P_b(M, V)$, define $$d_1:=\mathcal I_V(g_1) \in \mathcal D_c(\partial M, V).$$ 
By Lemma~\ref{connect1} there exist $W \supseteq V$ and a path $\alpha: [0, 1] \rightarrow \ol{\mc D}_{bc}(\pt M, W)$ such that $\alpha(0)$, $\alpha(1)$ are lifts of $d_0$, $d_1$ and for all $t\in (0,1)$ we have $\alpha(t) \in \mathcal D_{bc}(\partial M, W)$. We want to slightly modify the path so that also $\alpha(1) \in \mathcal D_{bc}(\partial M, W)$ and $\alpha(1)$ remains in the image of ${\mathcal I}_W$. Since $\overline{\mathcal D}_{bc}(\partial M, W)$ is open in $\ol{\mc D}_c(\pt M, W)$, we can choose a neighborhood $U\ni \alpha(1)$ in $\ol{\mc D}_c(\pt M, W)$ contained in $\overline{\mathcal D}_{bc}(\partial M, W)$. As ${\mathcal I}_W$ is continuous, ${\mathcal I}^{-1}_W(U)$ is open in $\ol {\mc P}_b(M, W)$ and there is $g \in (\mc P_b(M, W)\cap {\mc I}^{-1}_W(U))$. Let $\alpha': [0,1] \rightarrow U$ be a path connecting $\alpha(t') \in (U\cap \mc D_c(\pt M, W))$ for some $t'$ close to 1 with $\mc I_W(g) \in (U \cap \mc D_c(\pt M, W))$. 
We use Lemma~\ref{homotop} to modify $\alpha'$ to a path $\alpha'': [0,1] \rightarrow \mc D_c(\pt M, W)$ connecting the same endpoints.
As $U$ is open, we can do it so that $\alpha''(t) \in U \subset \ol{\mc D}_{bc}(\pt M, W)$. We modify $\alpha$ by replacing the restriction of $\alpha$ on $[t', 1]$ with the path $\alpha''$.

Let $T \subset [0,1]$ be a subset consisting of those $t$ that $\alpha(t)$ is in the image of ${\mathcal I}_W$. If $t \in T$ and ${t>0}$, then $\alpha(t)$ is balanced, thus Corollary~\ref{balreal} implies that $\mathcal I^{-1}_W(\alpha(t)) \subset \mathcal P_{cp}(M, W)$. Hence, Corollary~\ref{locrig} implies that $\mathcal I_W$ is a local homeomorphism at points of $\mathcal I^{-1}_W(\alpha(t))$. Thus, there is a neighborhood of $t$ in $[0,1]$ belonging to $T$. It follows that $T \cap (0, 1]$ is open. Lemma~\ref{proper} implies that $T$ is also closed. Therefore, $T=[0,1]$ and $\alpha(0)$ is in the image of ${\mathcal I}_W$. This implies that $d_0$ is in the image of $\mc I_V$ and finishes the proof.
\end{proof}

\subsection{Topological interlude}

For the proof in the next section we need to construct a covering space over $\mc P_b(M, V)$, on which we can define the induced metric map with values in $\mc D_c^\sharp(\pt M, V)$. First, define the space of \emph{pointed} convex cocompact hyperbolic metrics $\mc{PCH}(N)$ to be $\mc{CH}(N, V)$ for a set $V$ of cardinality one. Next, by a \emph{geometric end} of $N(\ol g)$ we will mean a connected component of the complement of $N(\ol g)$ to $C(\ol g)$. There is a 1:1 correspondence between components of $\pt M$ and geometric ends of $N(\ol g)$. Denote the components of $\pt M$ by $S_j$, $j=1 \ldots m$, and denote the positive real numbers by $\R_+$. Let $\mc{PCH}_j(N)$ be the subset of $\mc{PCH}(N)$ with the marked point belonging to the $j$-th geometric end. It was shown in~\cite[Section 5.1]{Pro3} that the space $\mc{PCH}_j(N)$ can be trivialized as
\begin{equation}
\label{tr1}
\mc{PCH}_j(N) \cong \mc{CH}(N) \times S_j\times \R_+
\end{equation}
such that 
the sets $(\ol g, p, \R_+)$, $\ol g \in \mc{CH}(N), p \in S_j,$ are the gradient lines of the distance function to the convex core in the $j$-th geometric end of $N(\ol g)$. In what follows we fix such a trivialization for every $j=1\ldots m$. The purpose of this is that now for every $\ol g \in \mc{CH}(N)$ and for every totally convex surface $\Sigma$ in $N(\ol g)$ (i.e., $\Sigma$ is a boundary component of a totally convex set) that does not touch $C(\ol g)$, we have a homeomorphism $S_j \ra \Sigma$ for $j$ indicating the geometric end containing $\Sigma$. Indeed, it is given by the inverse of the projection of $\Sigma$ to the $S_j$-factor in trivialization~(\ref{tr1}). Things are slightly subtler if $\Sigma$ touches the convex core, but one can see that in this case anyway the chosen trivializations furnish a continuous surjective map $S_j \ra \Sigma$. It is not injective exactly if $\Sigma$ contains an edge that is a closed geodesic. Then the preimage of this geodesic is a closed tubular neighborhood of a simple closed curve in $S_j$.

Next, consider arbitrary $V \subset \pt M$ and define $V_j:=V \cap S_j$.
Denote by $\mc{CH}^*(N, V)$ the subset of $ \mc{CH}(N, V)$ such that points from $V_j$ mark points in the $j$-th geometric end, and no two marked points belong to the same gradient line of the distance function to the convex core. (Note that the first condition implies that all the marked points are at positive distance from the convex core.) By $S_{*V}$ denote the product of $S_j^{*V_j}$, where $S_j^{*V_j} \subset S_j^{V_j}$ is the space of injective maps from $V_j$ to $S_j$. Trivializations~(\ref{tr1}) define a trivialization
\begin{equation}
\label{tr3}
\mc{CH}^*(N, V) \cong \mc{CH}(N) \times S_{*V} \times \R^V_+.
\end{equation}
Denote by $\mc{CH}^\sharp$ the universal cover of the latter space. We have a trivialization
\begin{equation}
\label{tr2}
\mc{CH}^\sharp(N, V) \cong \mc{CH}(N) \times \tilde{S_{*V}} \times \R^V_+.
\end{equation}
It is easy to see that $\mc P_b(M, V)$ belongs to $\mc{CH}^*(N, V)$. Denote its respective covering space, as a subset of $\mc{CH}^\sharp(N, V)$, by $\mc P_b^\sharp(M, V)$.


Our current goal is to define a map $\mc I^\sharp_V: {\mc P}_b^\sharp(M, V) \ra {\mc D}_c^\sharp(\pt M, V)$. Denote by $H_0(\pt M)$ the space of all self-homeomorphisms of $\pt M$ isotopic to identity, endowed with the compact-open topology. Recall that $H_0^\sharp(\pt M, V)$ denotes its normal subgroup consisting of those homeomorphisms, for whom an isotopy can be chosen fixing $V$. The inclusion $V \hookrightarrow \pt M$ can be considered as an element of $S_{*V}$. Fix over this element some element $\hat V$ of $\tilde{S_{*V}}$. Every homeomorphism $h \in H_0(\pt M)$, by the homotopy lifting, determines a self-homeomorphism of $\tilde{S_{*V}}$. By considering its evaluation at $\hat V$, we define a map $H_0(\pt M) \ra \tilde{S_{*V}}$, which factors through $H_0^\sharp(\pt M, V)$ to a map
\[{\rm ev}: H_0(\pt M)/H_0^\sharp(\pt M, V) \ra \tilde{S_{*V}}.\]
It was shown in~\cite[Lemma 3.1]{FP} that this map is a homeomorphism.

Let $g \in {\mc P}_b(M, V)$, represented by a pair $(\ol g, f)$, and $g^\sharp$ be its lift to ${\mc P}_b^\sharp(M, V)$. Trivialization~(\ref{tr1}) provides a map $\chi:  \pt M \ra \pt M(g)$, as described above. We need to slightly modify it to obtain a homeomorphism. Indeed, let $\psi$ be the union of all edges of $M(g)$ that are simple closed geodesics. The preimage $\chi^{-1}(\psi)$ is a closed tubular neighborhood $X$ of a multicurve $\lambda \subset \pt M$, which has a parametrization $X \cong [-1,1] \times \lambda$ so that $\chi$ sends each fiber $[-1,1] \times p$, $p \in \lambda$, to a point. Consider a small thickening $Y \cong [-2, 2] \times \lambda$ so that $Y$ is disjoint with $\chi^{-1}(V(g))$. Then $Z:=\chi(Y)$ is a tubular neighborhood of $\psi$ in $\pt M$. We can modify $\chi$ inside $Y$ by a fiberwise homeomorphism between $Y$ and $Z$. We continue to denote the modified map by $\chi$. Trivialization~(\ref{tr2}) and the choice of the evaluation map provide a class in $H_0(\pt M)/H_0^\sharp(\pt M, V)$, for which we choose a representative $h$. One can see that the composition $f^{-1}\circ \chi \circ h$ is the identity on $V$. Thereby, the pull-back of the induced metric on $\pt M(g)$ by $\chi \circ h$ to $(S, V)$ gives an element of ${\mc D}_c^\sharp(\pt M, V)$, which we denote by $\mc I^\sharp_V(g^\sharp)$, independent on the choice of $h$ in its class and of the modification of $\chi$ above. (Let us stress out that it is dependent only on trivialization~(\ref{tr1}) and on $\hat V$.)

\begin{rmk}
\label{deck}
The group of deck transformations of $\tilde{S_{\ast V}} \ra S_{\ast V}$ is naturally isomorphic to the pure braid group $B_0(\pt M, V)=H_0(\pt M, V)/H_0^\sharp(\pt M, V)$ from Section~\ref{conesec}, see~\cite[Remark 3.5]{FP}. Thereby, it is also the group of deck transformations of the covering map ${\mc P}_b^\sharp(M, V)\ra {\mc P}_b(M, V)$. It is also the group of deck tranformations of the covering map ${\mc D}^\sharp_c(\pt M, V) \ra {\mc D}_c(\pt M, V)$. By construction, the map $\mc I^\sharp_V$ is equivariant with respect to $B_0(\pt M, V)$.
\end{rmk}

\begin{lm}
\label{connect}
The space ${\mathcal P}_b^\sharp(M,V)$ is connected.
\end{lm}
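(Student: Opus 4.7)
The plan is to apply covering space theory. Since $\mc{CH}^\sharp(N, V) \to \mc{CH}^*(N, V)$ is the universal cover and $\mc P_b(M, V)$ is open in $\mc{CH}^*(N, V)$, the preimage $\mc P_b^\sharp(M, V)$ will be connected if and only if (a) $\mc P_b(M, V)$ is path-connected, and (b) the inclusion $\iota : \mc P_b(M, V) \hookrightarrow \mc{CH}^*(N, V)$ induces a surjection on $\pi_1$. The trivialization~(\ref{tr3}), together with the contractibility of $\mc{CH}(N) \cong \mc T(\pt M)$ (Ahlfors--Bers) and of $\R_+^V$, identifies $\pi_1(\mc{CH}^*(N, V)) \cong \pi_1(S_{*V}) \cong B_0(\pt M, V)$ via Remark~\ref{deck}.

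The geometric workhorse I plan to establish is the following: for every non-Fuchsian $\ol g \in \mc{CH}(N)$, every $R > 0$, and every $p \in S_{*V}$, placing each vertex $v$ at distance exactly $R$ from $C(\ol g)$ along the gradient line corresponding to $p(v)$ (through trivialization~(\ref{tr1})) yields a bent metric whose vertex set is exactly $V$. The argument will rest on the strict convexity of the $R$-neighborhood of $\tilde C(\ol g)$ in $\H^3$: at each boundary point $\tilde v$ of this smooth equidistant hypersurface, the tangent plane $\Pi$ touches the neighborhood only at $\tilde v$. Hence all other points of $\tilde V$ (lying on the same boundary) and of $\Lambda(\ol g) \subset \ol{\tilde C(\ol g)}$ sit strictly on the convex-core side of $\Pi$, so $\Pi$ is a supporting plane to $\clconv(\tilde V \cup \Lambda(\ol g))$ meeting it only at $\tilde v$; therefore $\tilde v$ is an extreme point, and Lemma~\ref{bas1} produces the claimed bent structure with the correct vertex set.

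With this in hand, (b) will be straightforward: for any $\beta \in \pi_1(S_{*V})$ represented by a loop $\gamma : [0,1] \to S_{*V}$, fix a non-Fuchsian $\ol g_0$ and $R > 0$ and set $\tilde\gamma(t) := (\ol g_0, \gamma(t), (R, \ldots, R))$; the claim ensures $\tilde\gamma \subset \mc P_b(M, V)$, and by construction its class in $\pi_1(\mc{CH}^*(N, V))$ is $\beta$. For (a), the level-$R$ subset $L^R := \{(\ol g, p, (R, \ldots, R)) : \ol g\ \text{non-Fuchsian}\}$ lies inside $\mc P_b(M, V)$ by the claim and is path-connected (being a product of connected factors, since the non-Fuchsian locus is open and connected in the contractible space $\mc{CH}(N)$), so it suffices to connect every $g \in \mc P_b(M, V)$ to $L^R$ within $\mc P_b(M, V)$. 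I plan to do this by raising the vertex altitudes to the common value $R$, increasing them one at a time starting from the vertex of smallest initial altitude and using that pushing a single vertex outward along its gradient line preserves its own extremality (as the relevant supporting plane moves with it), while any threatened loss of extremality of the remaining vertices can be compensated by small transverse adjustments afforded by the openness of $\mc P_b(M, V)$. The main technical obstacle is precisely this last step: because the non-bent locus in $\mc{CH}^*(N, V)$ has real codimension only one, a naive straight-line push need not stay in $\mc P_b(M, V)$, and the careful ordering plus perturbation argument is where the proof requires the most attention.
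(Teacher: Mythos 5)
Your covering-space reduction is correct and organized differently from the paper: since $\mc{CH}^\sharp(N,V)\to\mc{CH}^*(N,V)$ is the universal cover, with $\pi_1(\mc{CH}^*(N,V))\cong\pi_1(S_{*V})$ via trivialization~(\ref{tr3}), the preimage $\mc P_b^\sharp(M,V)$ of the open subset $\mc P_b(M,V)$ is connected iff $\mc P_b(M,V)$ is path-connected and $\iota_*:\pi_1(\mc P_b(M,V))\to\pi_1(\mc{CH}^*(N,V))$ is onto. The paper instead takes an arbitrary path between two lifts in the simply connected $\mc{CH}^\sharp(N,V)$ and modifies that path in place, so there is no separate $\pi_1$-surjectivity step. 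Your ``workhorse'' claim is correct and is a clean observation the paper does not make explicit here: in $\H^3$ the boundary of the $R$-neighborhood $B_R(\tilde C(\ol g))$ of a convex set is strictly convex, every supporting plane touches it in a single point, and $\clconv(\tilde V\cup\Lambda(\ol g))\subset B_R(\tilde C(\ol g))$ when all of $\tilde V$ lies on $\pt B_R(\tilde C(\ol g))$; thus every $v\in V$ is exposed and the vertex set is exactly $V$. The level set $L^R$ this produces immediately gives surjectivity of $\iota_*$. (You do not need to restrict to the non-Fuchsian locus for this — the same strict-convexity argument works for Fuchsian $\ol g$, since the vertices lie off the spanning plane — and this sidesteps the unaddressed question of whether the non-Fuchsian locus is connected.)

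The genuine gap is where you flag it, in step (a), and it is worth being precise about why the domino problem does not resolve easily. Pushing $v$ outward along its gradient line preserves the extremality of $v$ itself: if $\tilde v\notin K:=\clconv((\tilde V\setminus v)\cup\Lambda(\ol g))$, a plane separating $\tilde v$ from $K\supset\tilde C(\ol g)$ is crossed at most once by the gradient geodesic on its way from $\tilde C(\ol g)$ to infinity, so every farther point stays outside $K$. But the same push strictly enlarges $\clconv((\tilde V\setminus w)\cup\Lambda(\ol g))$ for every other $w$, because the old position of $v$ lies on the geodesic from the new position to $\tilde C(\ol g)$ and hence in the new hull; so $w$ may be swallowed. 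The proposed remedy — small transverse nudges to $w$ — displaces $w$ and may in turn threaten a third vertex, and no termination argument is given. This is in fact the same hard point buried in the paper's own sentence that one ``can continuously increase [the $\R_+$-coordinate of $v$] over a small neighborhood of $O_v$ ... so that for no other $w\in V$ the set $O_w$ enlarges,'' which is asserted without proof. So your plan shares the hard step with the paper rather than missing an idea; what is missing, in both, is a quantitative altitude-ordering/perturbation scheme guaranteeing that the process does not undo itself.
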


\begin{proof}
Take $g_0^\sharp, g_1^\sharp \in {\mathcal P}_b^\sharp(M,V)$. Since $\mc{CH}^\sharp(N, V)$ is connected, there is a path $g_t^\sharp$ connecting $g_0^\sharp$ and $g_1^\sharp$ in $\mc{CH}^\sharp(N, V)$. Represent the projection of $g_t^\sharp$ to $\mc{CH}(N, V)$ as $(\ol g_t, f_t)$, and denote $\clconv(f_t(V))$ by $M(g_t)$. It is clear that since $f_t(V)$ does not belong to $C(\ol g_t)$, the set $M(g_t)$ is homeomorphic to $M$.

For every $v \in V$ define
$$O_v:=\{t \in [0,1]: f_t(v) \in \clconv(f_t(V\backslash\{v\}))\}.$$
This is a closed set. Now for a given $v \in V$ we can continuously increase its $\R_+$-coordinate from trivialization~(\ref{tr3}) over a small neighborhood of $O_v$ in $[0,1]$ so that for no other $w \in V$ the set $O_w$ enlarges. After doing this for every vertex, we obtain that $f_t(V)$ is in a strictly convex position for every $t \in [0,1]$. This allows to modify $g_t^\sharp$ to a path in $\mc P_b^\sharp(M, V)$.

\end{proof}

\subsection{From local to global rigidity}

In this section we prove Proposition~\ref{prop1} from the introduction stating that the local rigidity of a bent metric implies global rigidity. 

\begin{proof}
Define $\mathcal P_{bp}(M, V):=\mathcal I^{-1}_V(\mathcal D_{bc}(\partial M, V))$. As $\mathcal D_{bc}(\partial M, V)$ is open in $\mathcal D_{c}(\partial M, V)$ and $\mathcal I_V$ is continuous, $\mathcal P_{bp}(M, V)$ is open in $\mathcal P_b(M, V)$. Due to Corollary~\ref{balreal}, we get $\mathcal P_{bp}(M, V) \subset \mathcal P_{cp}(M, V)$. Recall that Corollary~\ref{locrig} states that $\mc I_V$ is a local homeomorphism over $\mc P_{cp}(M, V)$, hence it is a local homeomorphism over $\mc P_{bp}(M, V)$. Lemma~\ref{proper} together with the definition of $\mathcal P_{bp}(M, V)$ imply that $\mathcal I_V$ is proper over $\mathcal P_{bp}(M, V)$. Hence, $\mathcal I_V$ is a covering map over $\mathcal P_{bp}(M, V)$. Take into account, however, that $\mathcal P_{bp}(M, V)$ and $\mc D_{bc}(\pt M, V)$ may be disconnected.

Suppose that there are $g_0, g_1 \in \mc P_b(M, V)$ such that $\mc I_V$ is a local homeomorphism around $g_0$ and $$\mathcal I_V(g_0)=\mathcal I_V(g_1) \in \mathcal D_{c}(\partial M, V).$$
We have a commutative diagram
\begin{center}
\begin{tikzcd}
\mc P^\sharp_{b} (M, V) \arrow[r, "\mc I^\sharp_V"] \arrow[d]
& \mc D^\sharp_{c}(\pt M, V) \arrow[d] \\
\mc P_{b}(M, V) \arrow[r, "\mc I_V"]
& \mc D_{c}(\pt M, V)
\end{tikzcd}
\end{center}
The vertical maps are covering maps. Moreover, their groups of deck transformations are naturally isomorphic to $B_0(\pt M, V)$ and $\mc I_V^\sharp$ is equivariant with respect to it, see Remark~\ref{deck}. It follows that there are lifts $g_0^\sharp$ and $g_1^\sharp$ of $g_0$ and $g_1$ such that
$$\mathcal I_V^\sharp(g_0^\sharp)=\mathcal I_V^\sharp(g_1^\sharp) \in \mathcal D_{c}^\sharp(\partial M, V).$$
Lemma~\ref{connect} implies that there exists a path $\zeta^\sharp: [0,1] \rightarrow {\mathcal P}_{b}^\sharp(M, V)$ connecting $g_0^\sharp$ and $g_1^\sharp$. Consider $[0,1]$ with the identified endpoints as $S^1$. By composing ${\mathcal I}_V^\sharp$ with $\zeta^\sharp$, we obtain a map $\beta^\sharp: S^1 \rightarrow {\mathcal D}_{c}^\sharp(\partial M, V)$. Lemma~\ref{connect2} implies that there exist $W \supseteq V$ and a lift $\alpha^\sharp: S^1 \rightarrow \ol{\mc D}_{bc}^\sharp(\pt M, W)$ of $\beta^\sharp$ such that the loop $\alpha^\sharp$ is contractible in $\overline {\mathcal D}_{bc}^\sharp(\partial M, W)$. Denote the projections of $\zeta^\sharp$ and $\alpha^\sharp$ to $\mc P_b(M, V)$ and $\ol{\mc D}_{bc}(\pt M, W)$ by $\zeta$ and $\alpha$ respectively. It follows that $\alpha$ is contractible in $\overline {\mathcal D}_{bc}(\partial M, W)$.

The path $\zeta: [0,1] \rightarrow {\mathcal P}_{b}(M, V)$ uniquely lifts to a path $\eta: [0,1] \rightarrow \overline{\mathcal P}_{b}(M, W)$ such that $\alpha={\mc I}_W \circ \eta$. We now construct a homotopy $\eta_s: [0,1] \rightarrow \ol{\mc P}_b(M, W)$ of the path $\eta=\eta_0$ such that $s \in [0, \e]$ for some small $\e>0$ and for all $s>0$ and $t \in [0,1]$ we have $\eta_s(t) \in \mathcal P_{bp}(M, W)$. To this purpose, for each $w\in W$ and each $t \in [0,1]$ choose an outer normal ray at $w$ in $N(\overline \eta(t))$ continuously depending on~$t$. Move all points of $W$ along these rays at distance $s$ and take the closed convex hull. It is not hard to see that for every $s$ the obtained points are in a strictly convex position in $N(\overline \eta(t))$. The closed convex hull is isotopic to $M(\eta(t))$ in $N(\overline \eta(t))$ by an isotopy sending the vertices to the respective points of $W$. This determines a class in ${\mc P}_b(M, W)$, which we denote by $\eta_s(t)$. We also denote ${\mc I}_W(\eta_s(t))$ by $\theta_s(t) \in \mc D_c(\pt M, W)$. For each $t\in [0,1]$ we see that $\theta_s(t)$ converge in $\ol{\mathcal D}_c(\partial M, W)$ to $\alpha(t) \in \ol{\mathcal D}_{bc}(\partial M, W)$ as $s \rightarrow 0$. Then, as $\overline{\mathcal D}_{bc}(\partial M, W)$ is open in $\overline{\mathcal D}_{c}(\partial M, W)$, provided that $\e$ is small enough, for all $s\in(0, \e]$ we get $\theta_s(t) \in \mathcal D_{bc}(\partial M, W)$ and so $\eta_s(t) \in \mc P_{bp}(M, W)$.

We claim that for each $s \in (0, \e]$ the metrics $\theta_s(0)$ and $\theta_s(1)$ can be connected by arcs $\theta'_s \subset \mc D_{bc}(\partial M, W)$ such that after a reparametrization of curves $\theta_s \cup \theta'_s$ as loops $\alpha'_s: S^1 \rightarrow \mc D_{bc}(\partial M, W)$ they constitute a homotopy of the loop $\alpha$. First, we consider the arcs $$\theta''_s=\{\theta_r(0): r \in [0,s]\} \cup \{\theta_r(1): r \in [0,s]\}\subset \ol{\mc D}_{bc}(\partial M, W).$$ 
We reparametrize $\theta_s \cup \theta''_s$ as loops $\alpha''_s: S^1 \rightarrow \ol{\mc D}_{bc}(\partial M, W)$ so that $\alpha''_s$ constitute a homotopy of $\alpha$, $\alpha''_s(0)=\alpha''_s(1)=\alpha(0)$, and for all $s\in (0, \e]$ and $t \in (0, 1)$ we have $\alpha''_s(t) \in \mc D_{bc}(\partial M, W)$. Moreover, we reparametrize them so that the arc $\theta_s$ corresponds to the restriction of $\alpha''_s$ to the interval $[s, 1-s]$ (we can assume that $\e<1/2$). 
Since the loop $\alpha$ is contractible in $\ol{\mc D}_{bc}(\pt M, W)$, so are the loops $\alpha''_s$, and the homotopy $\alpha''_s$ of $\alpha$ can be lifted to $\ol{\mc D}^\sharp_{bc}(\partial M, W)$. Then Lemma~\ref{homotop} can be used to modify it to have the image in $\mc D_{bc}^\sharp(\partial M, W)$ for $s>0$. Denote the projection of the modified homotopy back to $\ol{\mc D}_{bc}(\pt M, W)$ by $\alpha'_s$. Then the loops $\alpha'_s$ constitute the homotopy of $\alpha$ and for $s \in (0, \e]$ we have $\alpha'_s: S^1 \rightarrow  \mc D_{bc}(\partial M, W)$.

We claim that if a loop $\alpha': S^1 \rightarrow \mathcal D_{bc}(\partial M, W)$ is contractible in $\overline{\mathcal D}_{bc}(\partial M, W)$, then it is contractible in $\mathcal D_{bc}(\partial M, W)$.
Indeed, we take a map $\xi': \overline D^2 \rightarrow \overline{\mathcal D}_{bc}(\partial M, W)$ contracting $\alpha'$, lift it to $\overline{\mathcal D}_{bc}^\sharp(\partial M, W)$, modify it there with the help of Lemma~\ref{homotop}, and project it back to a map $\xi'':\overline D^2 \rightarrow \mathcal D_{bc}(\partial M, W)$.
Thereby, the loops $\alpha'_s$, constructed above, are contractible in $\mc D_{bc}(\partial M, W)$ and hence $(\mathcal I_W)^{-1}(\alpha'_s)$ consist of closed loops. Particularly, for each $s \in (0, \e]$ the preimage $(\mathcal I_W)^{-1}(\theta'_s)$ contains an arc $\eta'_s$ connecting $\eta_s(0)$ and $\eta_s(1)$. 


Let $U$ be a compact neighborhood of $\alpha(0)$ in $\overline{\mathcal D}_c(\partial M, W)$ containing all $\theta'_s$. By Lemma~\ref{proper}, ${\mathcal I}_W$ is proper. Hence, $X:={\mathcal I}^{-1}_W(U)$ is compact too. All arcs $\eta'_s$ belong to $X$ and we can consider their Hausdorff limit $\eta' \subset X$ (note that $\mc P_b(M, V)$ is a finite-dimensional manifold and the Hausdorff limits are well-defined; we consider $\eta'$ as a set, not as a map). Since the arcs $\theta'_s$ converge to $\alpha(0)$, we have ${\mc I}_W(\eta')=\alpha(0)$. Also we have $\eta(0), \eta(1) \in \eta'$. Since $\mc I_V$ is a local homeomorphism around $g_0$, Proposition~\ref{prop6} implies that $\eta(0)$ is isolated in ${\mathcal I}^{-1}_W(\alpha(0))$.
Therefore, $\eta(0)$ and the rest of ${\mc I}_W^{-1}(\alpha(0))$ have disjoint open neighborhoods $Y_1$ and $Y_2$. For all small enough $s$ the arcs $\eta'_s$ should belong to $Y_1\cup Y_2$ and meet both $Y_1$ and $Y_2$, which is impossible since $\eta'_s$ are connected arcs. This contradiction finishes the proof.
\end{proof}

\bibliographystyle{abbrv}
\bibliography{polyhedral}
\bigskip{\footnotesize\par
  \textsc{University of Vienna, Faculty of Mathematics, Oskar-Morgenstern-Platz 1, A-1090 Vienna, Austria} \par
  \textit{E-mail}: \texttt{roman.prosanov@univie.ac.at}
}
\end{document}